\documentclass[11pt, letterpaper, reqno]{amsart}
\usepackage[utf8]{inputenc} 	
\usepackage{microtype} 			
\usepackage{geometry} 			
\usepackage{amsmath}			
\usepackage{amsthm}		 		
\usepackage{amssymb}	 		
\usepackage{bm}					
\usepackage{mathrsfs}			
\usepackage{xcolor}				
\definecolor{darkred}{rgb}{0.6, 0.1, 0.1}
\definecolor{darkblue}{rgb}{0.2, 0.2, 0.6}
\definecolor{darkgreen}{rgb}{0.2, 0.4,.1}
\definecolor{mellowyellow}{rgb}{1,.8,.2}
\definecolor{bettercyan}{rgb}{0.1, 0.4, 0.7}
\usepackage{tikz}				
\usepackage{tikz-3dplot}
\usepackage{pgfplots}
\usepackage[all]{xy}
\pgfplotsset{compat=1.13}
\usetikzlibrary{decorations.markings,decorations.pathmorphing,shapes}
\usepackage{subcaption}
\usepackage{floatrow}
\usepackage{enumitem} 			
\usepackage{array}
\usepackage{comment}
\usepackage{lmodern}			
\usepackage[T1]{fontenc}		
\usepackage[colorlinks=true, citecolor=bettercyan, linkcolor=darkred, urlcolor=magenta]{hyperref}
\usepackage{cleveref}
\usepackage[backend=bibtex, style=alphabetic, date=year, backref=true, firstinits=true, isbn=false]{biblatex}
\setcounter{tocdepth}{2}
\addbibresource{NRL_I.bib}

\makeatletter
\renewbibmacro{in:}{}
\DeclareFieldFormat{pages}{#1}
\renewcommand*{\bibnamedash}{%
	\leavevmode\raise +0.6ex\hbox to 5.5ex{\hrulefill}.\space\space}

\InitializeBibliographyStyle{\global\undef\bbx@lasthash}

\newbibmacro*{bbx:savehash}{\savefield{fullhash}{\bbx@lasthash}}

\renewbibmacro*{author}{%
	\ifboolexpr{
		test \ifuseauthor
		and
		not test {\ifnameundef{author}}
	}
	{%
		\iffieldequals{fullhash}{\bbx@lasthash}
		{\bibnamedash\addcomma\space}
		{\printnames{author}}%
		\usebibmacro{bbx:savehash}%
		\iffieldundef{authortype}
		{}
		{%
			\setunit{\addcomma\space}%
			\usebibmacro{authorstrg}%
		}%
	}
	{\global\undef\bbx@lasthash}%
}
\makeatother
\geometry
{
	letterpaper,
	margin=1in,
	headheight=15pt
}
\numberwithin{equation}{section}

\newenvironment{proposition}
{\pushQED{\qed}\propositionx}
{\popQED\endpropositionx}
\newenvironment{propositionp}
{\pushQED{\qed}\propositionx}
{\popQED\endpropositionx}

\newenvironment{theorem}
{\pushQED{\qed}\theoremx}
{\popQED\endtheoremx}

\newenvironment{corollary}
{\pushQED{\qed}\corollaryx}
{\popQED\endcorollaryx}

\newenvironment{lemma}
{\pushQED{\qed}\lemmax}
{\popQED\endlemmax}

\theoremstyle{definition}

\theoremstyle{remark}

\newenvironment{remark}
{\pushQED{\qed}\remarkx}
{\popQED\endremarkx}


\makeatletter
\@namedef{subjclassname@2020}{\textup{2020} Mathematics Subject Classification}
\makeatother

\newcommand{\dd}{\,\mathrm{d}}

\newcommand{\cl}{\mathrm{cl}}

\newcommand{\calctwo}{\natural\mathrm{2res}} 
\newcommand{\calc}{\natural\mathrm{res}} 
\newcommand{\calczero}{\natural} 
\newcommand{\calcshort}{\natural} 

\newcommand{\la}{\ensuremath{\langle}}
\newcommand{\ra}{\ensuremath{\rangle}}
%


\newcommand{\bbB}{\mathbb{B}}
\newcommand{\bbC}{\mathbb{C}}

\newcommand{\bbM}{\mathbb{M}}
\newcommand{\bbN}{\mathbb{N}}

\newcommand{\bbR}{\mathbb{R}}
\newcommand{\bbS}{\mathbb{S}}


\newcommand{\calE}{\mathcal{E}}
\newcommand{\calF}{\mathcal{F}}

\newcommand{\calO}{\mathcal{O}}

\newcommand{\calR}{\mathcal{R}}
\newcommand{\calS}{\mathcal{S}}

\newcommand{\calU}{\mathcal{U}}
\newcommand{\calV}{\mathcal{V}}

\newcommand{\calX}{\mathcal{X}}


\newcommand{\scrV}{\mathscr{V}}


\newcommand{\frakv}{\mathfrak{v}}


\newcommand{\bfw}{\mathbf{w}}
\newcommand{\bfx}{\mathbf{x}}


\newcommand{\blank}{\llcorner\hspace{-1.55pt}\lrcorner}


\newcommand\WF{\operatorname{WF}}

\newcommand\ang[1]{\langle #1 \rangle}
\newcommand\aang[1]{\langle \! \langle #1 \rangle \! \rangle}
\newcommand\taun{\tau_{\natural}}
\newcommand\xin{\xi_{\natural}}
\newcommand\zetan{\zeta_{\natural}}
\newcommand\Psiparres{\Psi_{\mathrm{par,I,res}}}
\newcommand\Id{\operatorname{Id}}
\newcommand\rhopf{\rho_{\mathrm{pf}}}
\newcommand\rhonf{\rho_{\natural\mathrm{f}}}
\newcommand\rhodf{\rho_{\mathrm{df}}}
\newcommand\rhobf{\rho_{\mathrm{bf}}}
\newcommand\sgn{\operatorname{sgn}}


\title[Microlocal analysis of the non-relativistic limit of Klein--Gordon]{Microlocal analysis of the non-relativistic limit of the Klein--Gordon equation: Estimates}
\author{Andrew Hassell} 
\address{Department of Mathematics, Australian National University, Canberra ACT 0200, AUSTRALIA}
\email{Andrew.Hassell@anu.edu.au}
\author{Qiuye Jia}
\address{Mathematical Sciences Institute, Australian National University, Canberra ACT 2601, AUSTRALIA}
\email{Qiuye.Jia@anu.edu.au}
\author{Ethan Sussman}
\address{Department of Mathematics, Northwestern University, Evanston, IL, USA}
\email{ethan.sussman@northwestern.edu}
\author{Andr\'as Vasy}
\address{Department of Mathematics, Stanford University, California, USA}
\email{andras@math.stanford.edu}

\date{September 10, 2025.}
\subjclass[2020]{Primary 35L05, 35L15. Secondary 35B25, 35Q40, 58J47, 58J50.}

\begin{document}

\begin{abstract}
	This is the more technical half of a two-part work in which we introduce a robust microlocal framework for analyzing the non-relativistic limit of relativistic wave equations with time-dependent coefficients, focusing on the Klein--Gordon equation. 
	Two asymptotic regimes in phase space are relevant to the non-relativistic limit: one corresponding to what physicists call ``natural'' units, in which the PDE is approximable by the \textit{free} Klein--Gordon equation, and a low-frequency regime in which the equation is approximable by the usual Schr\"odinger equation. Combining the analyses in the two regimes gives global estimates which are uniform as the speed of light goes to infinity. The companion paper gives applications.
	Our main technical tools are three new pseudodifferential calculi, $\Psi_{\calczero}$ (a variant of the semiclassical scattering calculus), $\Psi_{\calc}$, and $\Psi_{\calctwo}$, the latter two of which are created by ``second microlocalizing'' the first at certain locations. This paper and the companion paper can be read in either order, since the latter treats the former as a black box.
\end{abstract}

\maketitle

\tableofcontents

\section{Introduction}
\label{sec:true_intro}
This is one half of a two-part work on the microlocal analysis of the non-relativistic limit ($c\to\infty$) of the Klein--Gordon equation 
\begin{equation}
	Pu=f, \quad P\approx P_0=\square-c^2,\quad \square = \textstyle \sum_{j=1}^d \partial_{x_j}^2 - c^{-2}\partial_t^2,\quad c\gg 1
\end{equation}
in asymptotically Minkowski settings; solutions of $Pu=f$ describe massive waves sourced by $f$, and the $c^2$ term in $P$ is the Einsteinian rest energy of such a wave. (We work on $\bbR^{1,d}$, $d\geq 1$, with metrics, background fields, etc.\ that decay at spacetime infinity. This is what we mean when we write $P\approx P_0$.) Compared to the other, more elementary, half \cite{NRL_II}, which focuses on applications, the focus of this half is on the microlocal details.
Correspondingly, we omit from this introduction a broad discussion of the non-relativistic limit and relevant literature. This can be found in \cite[\S1]{NRL_II}. Instead, the purpose of this introduction is to sketch the microlocal framework in which we work. 
(Unlike in the companion paper, we assume here that the reader is familiar with the basics of microlocal analysis. See \cite[\S3]{NRL_II} for an introduction.)

The central features of our microlocal framework are:
\begin{itemize}
	\item three new (interrelated) pseudodifferential calculi $\Psi_{\calczero},\Psi_{\calc},\Psi_{\calctwo}$, the \emph{natural}- calculus $\Psi_{\calczero}$ and its ``second- microlocalizations'',\footnote{The symbol `$\natural$' is the musical notation for ``natural.''}
	\item Fredholm setups with estimates which are uniform in the non-relativistic limit.
\end{itemize}
It can be thought of as interpolating between two existing microlocal frameworks, the treatment of the Klein--Gordon equation for fixed speed of light -- due to Vasy \cite{VasyGrenoble}, using the sc-calculus $\Psi_{\mathrm{sc}}$ of Parenti--Shubin--Melrose \cite{MelroseSC} and the corresponding compactification of phase space,  
\begin{equation}
	{}^{\mathrm{sc}}\overline{T}^* \bbM \hookleftarrow T^* \bbR^{1,d} 
\end{equation}
-- and the more recent treatment of the non-relativistic Schr\"odinger equation due to Gell-Redman--Gomes--Hassell \cite{Parabolicsc, gell2023scattering}, using the sc-analogue $\Psi_{\mathrm{par}}$ of Lascar's anisotropic calculus \cite{Lascar} and the corresponding compactification of phase space,
\begin{equation}
	{}^{\mathrm{par}}\overline{T}^* \bbM\hookleftarrow T^* \bbR^{1,d}.
\end{equation}
In each case, $\blank\in \{\mathrm{sc},\mathrm{par}\}$, the main task is the analysis in the $\blank$-characteristic set of the operator, the portion of the closure of the vanishing set of its symbol contained in the boundary of the $\blank$-phase space, outside of which elliptic regularity trivializes the problem. Here, propagation of $\blank$-Sobolev regularity holds along integral curves of the (suitably rescaled) Hamiltonian flow, ${}^{\blank}\mathsf{H}_p$, 
which has a smooth extension to the boundary faces of the $\blank$-phase space. 
A key role is played by the sources/sinks of the flow within the characteristic set, called
\emph{radial sets}. Each component of the characteristic set has two radial sets, one source and one sink, one corresponding to ``incoming'' waves and another to ``outgoing'' waves.

The possibility of interpolating between the existing analysis of the Klein--Gordon and Schr\"odinger equations is strongly suggested by the qualitative similarities between their Hamiltonian flows ${}^{\blank}\mathsf{H}_p$; see \Cref{fig:individual_flows}, which compares 
\begin{enumerate}[label=(\alph*)]
	\item the sc-Hamiltonian flow in one connected component of the sc-characteristic set of the Klein--Gordon equation to
	\item  the par-Hamiltonian flow in the par-characteristic set of the Schr\"odinger equation. 
\end{enumerate}
The latter looks like the former with the light cone widened, which means that the speed of light $c>0$ has been taken $c\to\infty$. 
Each flow is source-to-sink, with the sink over the future hemisphere of the boundary of spacetime infinity and the source over the past hemisphere. This source-to-sink flow enables the operator to be represented as a Fredholm operator between certain anisotropic Sobolev spaces, from which the well-posedness of the fundamental inhomogeneous problems (the advanced/retarded problems for the Schr\"odinger equation, and the advanced/retarded and Feynman/anti-Feynman problems for the Klein--Gordon equation) follows, with corresponding anisotropic Sobolev estimates on the solutions.

The similarity of (a) and (b) belies an important difference: the characteristic set of the Klein--Gordon equation, being a second-order hyperbolic equation, has two connected components, whereas the characteristic set of the Schr\"odinger equation, being parabolic, has only a single connected component. 
In \Cref{fig:individual_flows}, we are only showing one half of the Klein--Gordon characteristic set.
So, very roughly, one expects the analysis of the Klein--Gordon equation to degenerate to \emph{two} copies of the analysis of the Schr\"odinger equation. (This has to do with the fact that the non-relativistic Schr\"odinger equation has a ``branch,'' namely the choice of sign of the $i\partial_t$ term in the PDE, whereas the Klein--Gordon equation has no such branch.) 
More precisely, the Fredholm estimates for the Klein--Gordon equation should degenerate to a Fredholm estimate for the Schr\"odinger equation on each component of the characteristic set.
This expectation is correct, but some work will be required to make it precise.

\begin{figure}[t]
	\begin{center} 
		\includegraphics[scale=.95]{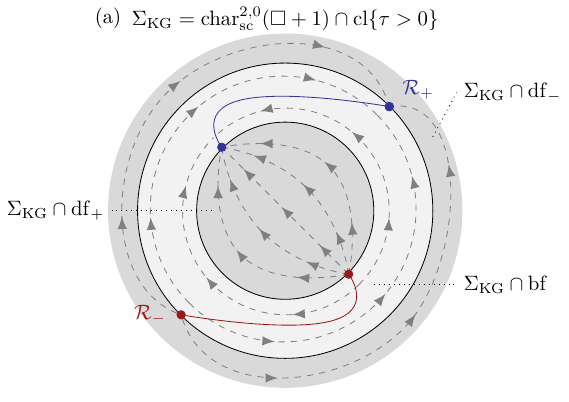}
		\hspace{-5.6em}
		\includegraphics[scale=.95]{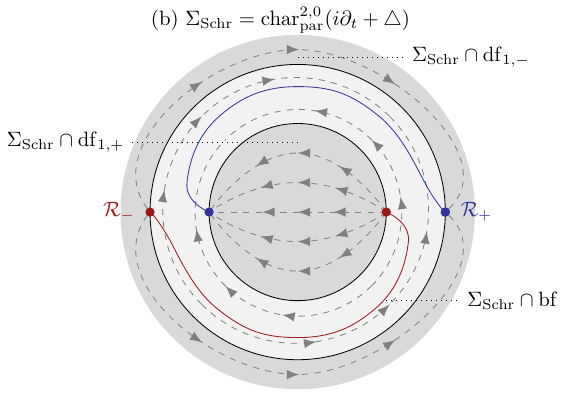}
	\end{center}
	\caption{The phase-space dynamics governing the non-relativistic limit, in the $d=1$ case. (a) The Hamiltonian flow within the positive-energy, $\{\tau>0\}$, sheet of the characteristic set $\Sigma_{\mathrm{KG}}\subset {}^{\mathrm{sc}}\overline{T}^* \bbM$ for the Klein--Gordon equation with $c=1$. The central disk (dark gray) represents one component of the portion of the characteristic set at fiber infinity (which is disconnected if $d=1$), the ``left-moving'' component $\mathrm{df}_+$. The right-moving component $\mathrm{df}_-$ is mostly hidden from view. The lighter gray annulus depicts the portion of the characteristic set over spatial infinity, $\mathrm{bf}$. (b) The Hamiltonian flow in ${}^{\mathrm{par}}\overline{T}^* \bbM$ (in which fiber infinity is $\mathrm{df}_1$ from \S\ref{subsec:phase}) for the Schr\"odinger equation, with similar notational conventions. \textit{Notice the qualitative similarity} between the two diagrams. Roughly, (a) becomes (b) upon twisting the inner disk by $45^\circ$ counter-clockwise and twisting the outermost annulus $45^\circ$ clockwise, which corresponds to the widening of the light cone as $c\to\infty$.}
	\label{fig:individual_flows}
\end{figure}

Precisification will involve a certain compactification 
\begin{equation} 
	{}^{\calctwo}\overline{T}^* \bbM \hookleftarrow T^* \bbR^{1,d}\times (0,\infty)_c
	\label{eq:misc_0}
\end{equation} 
of phase space with five boundary hypersurfaces:
\begin{itemize}
	\item fiber infinity, df\footnote{Standing for `differential face'.}, i.e.\ the boundaries of the fibers of an appropriately compactified cotangent bundle, 
	\item base (i.e.\ spacetime) infinity, bf, 
	\item  the ``natural face'' $\natural \mathrm{f} \subset \{c=\infty\}$, whose significance is explained below, and at which the Klein--Gordon operator can be approximated by the \emph{free} Klein--Gordon operator, 
	\item two ``parabolic faces'' $\mathrm{pf}_\pm$, at which the Klein--Gordon operator can be approximated by one of the two branches of the non-relativistic Schr\"odinger operator. An important fact about $\mathrm{pf}_\pm$ is that each is canonically identifiable with the par-phase space ${}^{\mathrm{par}}\overline{T}^* \bbM$.
\end{itemize}
See \Cref{fig:phase2}. That the Klein--Gordon operator $P$ can be approximated by the free Klein--Gordon operator at $\natural\mathrm{f}$ and the non-relativistic Schr\"odinger operator at $\mathrm{pf}_\pm$ is a consequence of the principal symbol/normal operator of $P$ at each face. This is just a calculation. It appears below.

Quantizing symbols on the compactification \cref{eq:misc_0} yields the pseudodifferential calculus that we are calling $\Psi_{\calctwo}$,  
\begin{equation}
	\Psi_{\calctwo} = \bigcup_{m,\ell,q_-,q_+\in \bbR}\bigcup_{\mathsf{s}}  \Psi_{\calctwo}^{m,\mathsf{s},\ell;q_-,q_+}.
\end{equation}
This has 5 orders, associated to the 5 boundary hypersurfaces of the $\calctwo$-phase space.
Here, $m,\ell,q_\pm\in \bbR$ are the orders at df, $\natural\mathrm{f}$, and $\mathrm{pf}_\pm$, respectively, 
and 
\begin{equation} 
	\mathsf{s} \in C^\infty({}^{\calctwo}\overline{T}^* \bbM)
\end{equation} 
is a variable order at bf, telling us how much decay we have in different directions/ at different frequencies.

Associated to this pseudodifferential calculus is a hierarchy of Sobolev spaces
\begin{equation}
	H_{\calctwo}^{m,\mathsf{s},\ell;q_-,q_+} = \{H_{\calctwo}^{m,\mathsf{s},\ell;q_-,q_+} (c) \}_{c>0} = \{ ( H_{\mathrm{sc}}^{m,\mathsf{s}(c)}(\bbR^{1,d}), \lVert - \rVert_{H_{\calctwo}^{m,\mathsf{s},\ell;q_-,q_+}(c) } ) \}_{c>0} ,
\end{equation}
(The precise definition of these spaces appears below.) 
Thus, the
$\calctwo$-Sobolev norms are certain norms on the usual weighted $L^2$-based Sobolev spaces 
\begin{align} 
	\begin{split} 
	H_{\mathrm{sc}}^{m,s}=H_{\mathrm{sc}}^{m,s}(\bbR^{1,d}) &= (1+t^2+\lVert x \rVert^2)^{-s/2} H^m(\bbR^{1,d}) \\&= (1+t^2+\lVert x \rVert^2)^{-s/2}  (1-\partial_t^2 +\triangle)^{-m/2} L^2(\bbR^{1+d})
	\end{split} 
\end{align} 
on spacetime. (Here we took $\mathsf{s}$ constant, for simplicity. When $\mathsf{s}$ is variable, then $H_{\mathrm{sc}}^{m,\mathsf{s}}$ is what is called an \emph{anisotropic} Sobolev space.) At the level of sets,
\begin{equation}
	H_{\calctwo}^{m,\mathsf{s},\ell;q_-,q_+}(c)  = H_{\mathrm{sc}}^{m,\mathsf{s}(c)} \subset \calS'(\bbR^{1+d}),
\end{equation}
but the key point is that we have a $c$-dependent norm which captures the regularity and decay as measured using $\Psi_{\calctwo}$. 
The meaning of the subscript ``$\calctwo$'' will be explained in due course. 

The key properties of the $\calctwo$-norms are:
\begin{itemize}
	\item for each individual $c>0$, the norm $\lVert - \rVert_{H_{\calctwo}^{m,\mathsf{s},\ell;q_-,q_+}(c)}$ is equivalent to $\lVert - \rVert_{H_{\mathrm{sc}}^{m,\mathsf{s}(c)}}$, 
	\item as $c\to\infty$, the norms degenerate, in a certain sense, to two copies of $\mathrm{par}$-Sobolev norms: 
	\begin{equation}
		\lim_{c\to\infty} H_{\calctwo}^{\infty,\mathsf{s},\ell;0,0}(\bbR^{1,d}) = e^{-ic^2 t} H_{\mathrm{par}}^{\ell,\mathsf{s}_-}(\bbR^{1,d})+ e^{ic^2 t} H_{\mathrm{par}}^{\ell,\mathsf{s}_+}(\bbR^{1,d})
		\label{eq:first_rough_splitting}
	\end{equation}
	(roughly) for some variable orders $\mathsf{s}_\pm$ gotten by restricting $\mathsf{s}$ to $\mathrm{pf}_\pm$. (See \S\ref{subsec:calc_relations} for precise statements.)
\end{itemize}
We will often leave the $c$-dependence implicit in the notation. 

As is standard in microlocal analysis, the main consideration is the microlocal regularity of solutions; here, this takes place in the $\calctwo$-phase space, \cref{eq:misc_0}.

Our main result is the uniform invertibility, in the nonrelativistic limit $c \to \infty$, of the Klein--Gordon operator $P$ between suitable function spaces based on the $\calctwo$-Sobolev spaces just introduced. To state this we introduce the spaces 
\begin{equation}\begin{aligned}
		\mathcal{X}^{m,\mathsf{s},\ell} &= \big\{ u \in H_{\calctwo}^{m,\mathsf{s},\ell;0,0} : Pu \in H_{\calctwo}^{m-1,\mathsf{s}+1,\ell-1;0,0} \big\}, \\
		\mathcal{Y}^{m,\mathsf{s},\ell} &= H_{\calctwo}^{m,\mathsf{s},\ell;0,0}.
\end{aligned}
\end{equation}
We prove the following theorem:
\begin{theorem}\label{thm:inhomog}
	Let $P$ be as in \S\ref{sec:setup} (see \cref{eq:P_def}); in other words, $P$ is the Klein--Gordon operator (possibly with additional zeroth and first-order terms) associated to some asymptotically Minkowski metric $g$. 
	Let 
	\begin{equation} 
		\mathsf{s} \in C^\infty({}^{\calctwo}\overline{T}^* \bbM)
	\end{equation} 
	denote a variable order 
	satisfying the following conditions: 
	\begin{itemize}
		\item $\mathsf{s}$ is monotonic under the $\calctwo$-Hamiltonian vector field $\mathsf{H}_p$ near each component of the characteristic set, and 
		\item (threshold condition) on each component of the characteristic set, $\mathsf{s}>-1/2$ on one radial set and $\mathsf{s}<-1/2$ on the other.
	\end{itemize}
	(It is allowed that $\mathsf{s}$ be decreasing on one component of the characteristic set and increasing on the other.)	
	 We impose two more technical conditions: 
	\begin{itemize}
		\item $\mathsf{s}$ is constant near each radial set,
		\item $|\mathsf{H}_p \mathsf{s} |^{1/2}$ is smooth near the characteristic set.\footnote{These two technical conditions can easily be arranged. For any $\delta>0$, there exists a variable order $\mathsf{s}$ satisfying our criteria and staying within a $\delta$ neighborhood of $-1/2$.}
	\end{itemize}
	Then, $P$ is an invertible operator 
	\begin{equation}
		P : \mathcal{X}^{m,\mathsf{s},\ell} \to \mathcal{Y}^{m-1,\mathsf{s}+1,\ell-1}
	\end{equation}
	for $c$ large enough.
	Moreover, the invertibility of $P$ is uniform in the sense that the norm of the inverse is uniformly bounded as $c \to \infty$; equivalently, 
	there exist $c_0,C>0$ such that the estimate
	\begin{equation}\label{eq:uniform inv}
		\lVert   u \rVert_{H_{\calctwo}^{m,\mathsf{s},\ell;0,0}} \leq C \lVert  Pu \rVert_{H_{\calctwo}^{m-1,\mathsf{s}+1,\ell-1;0,0}}
	\end{equation} 
	holds for any $c>c_0$ and 
	$u \in H_{\mathrm{sc}}^{m,\mathsf{s}(c)}$ such that $Pu \in H_{\mathrm{sc}}^{m-1,\mathsf{s}(c)+1}$.
\end{theorem}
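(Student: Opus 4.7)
The plan is to establish \cref{eq:uniform inv} by microlocal methods in $\Psi_{\calctwo}$ and then upgrade it to invertibility by the standard Fredholm-to-inverse argument tailored to the source-to-sink dynamics of the $\calctwo$-Hamilton flow. Since $\Psi_{\calctwo}$ refines the sc-calculus fiberwise in $c$, the overall structure mirrors Vasy's treatment of the Klein--Gordon equation on asymptotically Minkowski spaces in \cite{VasyGrenoble}, but now carried out on the five-faced compactification ${}^{\calctwo}\overline{T}^*\bbM$ with every estimate tracked uniformly in $c$.

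First I would assemble the uniform a priori estimate from four ingredients, each proved by a positive-commutator argument in $\Psi_{\calctwo}$:
\begin{itemize}
    \item microlocal elliptic regularity away from the $\calctwo$-characteristic set of $P$;
    \item real principal-type propagation along $\mathsf{H}_p$ in the characteristic set;
    \item radial-point estimates at each source and sink, where the threshold condition $\mathsf{s}\gtrless -1/2$ selects the direction (into vs.\ out of the radial set) in which regularity is propagated;
    \item normal-operator control at the faces $\natural\mathrm{f}$ and $\mathrm{pf}_\pm$, which is what produces uniformity as $c\to\infty$.
\end{itemize}
The first three are internal to the symbol calculus: they only require that $\Psi_{\calctwo}$ supports the usual commutant constructions with sharp sign, weight, and positivity in the operator order. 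The two technical hypotheses on $\mathsf{s}$---constancy near the radial sets and smoothness of $|\mathsf{H}_p\mathsf{s}|^{1/2}$---are exactly what is needed to quantize the relevant commutators to nonnegative operators modulo controlled errors.

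The fourth step is where the specific structure of this paper enters. The normal operator of $P$ at $\natural\mathrm{f}$ is the \emph{free} Klein--Gordon operator, whose invertibility between suitable sc-Sobolev spaces is a result of Vasy \cite{VasyGrenoble}, and the normal operators at $\mathrm{pf}_\pm$ are the two branches of the non-relativistic Schr\"odinger operator, whose invertibility between par-Sobolev spaces is due to Gell-Redman--Gomes--Hassell \cite{Parabolicsc, gell2023scattering}. The monotonicity and threshold hypotheses on $\mathsf{s}$ are precisely what align the variable-order Sobolev scales on the $\calctwo$-side with the advanced/retarded/Feynman setups for these model operators on the par- and sc-sides. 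Inverting the three normal operators and quantizing back to $\Psi_{\calctwo}$ produces a parametrix for $P$ with $c$-uniformly small error, which upgrades the a priori bound to \cref{eq:uniform inv} for $c$ sufficiently large.

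Running the symmetric argument on the formal adjoint $P^*$ with a dual variable order (so that monotonicity and the threshold condition are reversed on each component of the characteristic set) yields the matching estimate for $P^*$ and hence Fredholmness of $P:\mathcal{X}^{m,\mathsf{s},\ell}\to\mathcal{Y}^{m-1,\mathsf{s}+1,\ell-1}$ for all sufficiently large $c$. The kernel and cokernel are then trivial by the usual source-to-sink argument: an element of the kernel is smooth on the elliptic set, is propagated into the characteristic set up to and including the source radial set via the above-threshold radial estimate, and then along the flow to the sink, forcing it to vanish globally. The main obstacle, in my estimation, is not any single one of these steps but the bookkeeping at $\natural\mathrm{f}$, where the characteristic set bifurcates into the two parabolic faces $\mathrm{pf}_\pm$ and where $\mathsf{H}_p$ is tangent to all three faces simultaneously; the nested second-microlocalizations built into $\Psi_{\calctwo}$ exist precisely to regularize this junction, and the heart of the argument consists of verifying that the normal-operator symbols at $\natural\mathrm{f}$ restrict correctly to those at $\mathrm{pf}_\pm$ so that the parametrix assembly is consistent.
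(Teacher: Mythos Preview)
Your outline has the right global architecture (elliptic, propagation, radial, then normal-operator input; dual estimate for $P^*$), but it mislocates the key analytic difficulty and misdescribes how the normal operators enter.

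The main gap is your treatment of $\natural\mathrm{f}$. You propose inverting a free Klein--Gordon normal operator there. In the paper's calculi, $\natural\mathrm{f}$ is a \emph{symbolic} face: $\Psi_{\calc}$ (and hence $\Psi_{\calctwo}$) is under full symbolic control at $\mathrm{df}$, $\mathrm{bf}$, and $\natural\mathrm{f}$, so the standard positive-commutator package already gives improvement there. The only non-symbolic face is $\mathrm{pf}$ (resp.\ $\mathrm{pf}_\pm$), and that is the \emph{sole} place a normal operator is needed. There are two normal operators, not three, and they are the two Schr\"odinger operators. Your ``three normal operators'' picture, and the ensuing worry about matching symbols at the $\natural\mathrm{f}$--$\mathrm{pf}_\pm$ junction, does not correspond to what actually happens.

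A second, related gap is how the Schr\"odinger input is used. The paper does not build a parametrix by quantizing inverses of normal operators. Instead, the symbolic estimates (elliptic + propagation + radial) are run in $\Psi_{\calc}$ on the conjugated operators $P_\pm = e^{\mp ic^2 t}Pe^{\pm ic^2 t}$; these leave a residual error $\lVert u\rVert_{H_{\calc}^{-N,-N,-N,q}}$ which is trivial except at $\mathrm{pf}$. That error is then bounded by applying the \cite{Parabolicsc} half-Fredholm estimate for $N(P_\pm)$ directly as a black box, yielding a term $h^\epsilon\lVert u\rVert$ which is absorbed on the left for small $h$. Your proposal never mentions the conjugated perspective, which is the paper's device for reducing to a single parabolic face at a time; working directly in $\Psi_{\calctwo}$ you would have to handle both $\mathrm{pf}_\pm$ simultaneously, and you haven't explained how. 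Finally, your kernel-vanishing argument via propagation is unnecessary: once the estimate \eqref{eq:uniform inv} is in hand, injectivity is immediate, and surjectivity follows from the dual estimate for $P^*$ with order $-1-\mathsf{s}$ exactly as you say.
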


\begin{figure}
	\includegraphics{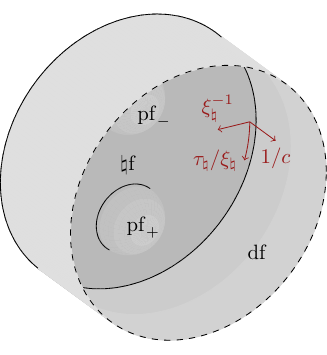}\qquad 
	\caption{The $\calctwo$-frequency space. The $\calctwo$-phase space, ${}^{\calctwo}\overline{T}^* \bbM$, is the frequency space times the radially compactified base $\bbM=\bbR^{1,d}\cup \infty \bbS^d$.}
	\label{fig:phase2}
\end{figure}

The invertibility, for each fixed $c > c_0$, of $P$ is standard. The novel aspect of \Cref{thm:inhomog} is the choice of norms such that the inverse is uniformly bounded.  

See \S\ref{sec:flow} for detailed descriptions of the characteristic sets and radial sets mentioned in this theorem, as well as the vector field $\mathsf{H}_p$.

\begin{remark} If $f \in(1+t^2+\lVert x \rVert^2)^{-1/4-\epsilon} L^2(\bbR^{1,d})$, say, uniformly in $c$ (for example, it could be independent of $c$), then, given an order $\mathsf{s} \geq -1/2 - \epsilon$ satisfying the conditions of the theorem, we can take $\ell = 1$, $m = 1$ and apply \eqref{eq:uniform inv} to find that 
	\begin{equation}
		P^{-1} f \in H_{\calctwo}^{1,\mathsf{s},1;0,0}.
	\end{equation}	
	In particular, it has order $0$ at the two parabolic faces $\mathrm{pf}_\pm$ at $c = \infty$,  which is where the Klein--Gordon operator behaves like the Schr\"odinger operator to leading order, but is order $1$ --- that is, it is $O(c^{-1})$ microlocally --- at the natural face $\natural \mathrm{f}$. This confirms the heuristic that it is the Schr\"odinger equation that governs the leading asymptotic as $c \to \infty$, at least for such $f$. This is developed in much greater detail in \cite{NRL_II}.
\end{remark}
\begin{remark}
	The two technical conditions imposed on $\mathsf{s}$ in the theorem can likely be removed. We impose them 
	in order to easily justify throwing out terms in our propagation/radial point estimates arising from derivatives falling on $\mathsf{s}$. However, these terms have the ``right'' semidefinite sign, even if we do not impose the additional technical conditions. It should therefore be possible to justify throwing them out via an appeal to the sharp G{\aa}rding inequality, adapted to our calculi.  
	The reason we have not carried this out is that we view the variable order as a technical tool rather than something of intrinsic interest.
\end{remark}

We thus obtain a solution operator $P^{-1} : \mathcal{Y}^{m-1,\mathsf{s}+1,\ell-1} \to \mathcal{X}^{m,\mathsf{s},\ell}$, for each choice of exponents $(m, \mathsf{s}, \ell)$ satisfying the conditions above. As one would expect, changing $m$, $\ell$, and changing $\mathsf{s}$ to another variable order \emph{satisfying the same monotonicity and threshold properties as the original} leads to ``the same'' inverse operator acting on a slightly different space (i.e. the two inverses agree on their common domain). However, changing $\mathsf{s}$ to a different variable order with a different threshold property (that is, below threshold instead of above threshold on a component of the radial set) leads to a genuinely different inverse. This leads to the four well-known distinct inverses to the Klein--Gordon operator mentioned above:  the forward solution, for which $\mathsf{s}$ is non-increasing with respect to time, the backward solution, for which $\mathsf{s}$ is non-decreasing with respect to time, the Feynman solution, for which $\mathsf{s}$ is non-increasing with respect to the Hamiltonian flow (forwards in time on one sheet of the characteristic set and backwards on the other), and the anti-Feynman solution, for which $\mathsf{s}$ is non-decreasing with respect to the Hamiltonian flow.
  
\begin{remark}
	\label{rem:history}
	This strategy of representing an operator whose Hamiltonian flow has a simple source-to-sink structure as a Fredholm operator between anisotropic Sobolev spaces is by now a standard technique in microlocal analysis. 
	The first example was due to Faure--Sj\"ostrand \cite{faure-sjostrand2011anosov} in the context of Anosov flows. In \cite{vasy2013asympthyp}, the fourth-named author introduced a more general framework in order to study wave equations. This was rapidly developed with coauthors and extended to a variety of settings. For an incomplete list: 
	\begin{itemize}
		\item for the Helmholtz equation on asymptotically Euclidean spaces at fixed positive energy, \cite{VasyLA}, 
		\item for the Helmholtz equation on asymptotically hyperbolic spaces and de Sitter and Kerr-de Sitter we have, besides \cite{vasy2013asympthyp}, the works \cite{ hintz-vasy2015semilinear}, 
		\item for the Helmholtz equation at low-energy, \cite{VasyN0, VasyN0L, SussmanACL}, 
		\item regarding the Schr\"odinger equation, the works of Gell-Redman--Gomes--Hassell \cite{Parabolicsc, gell2023scattering} have already been mentioned,
		\item for the Klein--Gordon equation, we have, in addition to the already mentioned \cite{VasyGrenoble}, we have \cite{BDGR, M-V-Feynman}
		\item for the massless wave equation, \cite{baskin-vasy-wunsch2015asymptrad}
		\item for Anosov flows, besides \cite{faure-sjostrand2011anosov},  we have  \cite{dyatlov-zworski2016dynamical}.
	\end{itemize} 
\end{remark}

\subsection{The laboratory vs.\ natural scales}

Part of our goal is to understand the scale-dependence of the non-relativistic limit: 
\begin{itemize}
	\item on the \emph{laboratory} scale, meaning length scales which are $\Omega(1)$ as $c\to\infty$, the non-relativistic limit is governed by the two (non-relativistic) Schr\"odinger equations, 
	\item  on the \emph{natural} scale, meaning length scales which are $O(1/c)$ as $c\to\infty$, the non-relativistic limit is governed by the \emph{free} Klein--Gordon equation.
\end{itemize}
In frequency-space terminology, the natural scale corresponds to frequencies which are large in the $c\to\infty$ limit. Specifically, letting $\xi_1,\dots,\xi_d$ denote the frequency coordinates dual to $x_1,\dots,x_d$:
\begin{enumerate}[label=(\Roman*)]
	\item the laboratory scale corresponds to $O(1)$ frequencies, 
	\item the natural scale corresponds to $\lVert \xi \rVert = \Omega(c)$.
\end{enumerate}
In other words, the natural scale corresponds to non-small values of the natural frequency $\xi_\natural = \xi/c$, which is dual to the natural coordinate $x_\natural = cx$, which is measured in what physicists call \emph{natural units}, hence our choice in terminology.

One of the upshots of this manuscript is that these are the \emph{only} frequency regimes that need to be distinguished to understand the $c\to\infty$ limit, at least as far as basic solvability theory is concerned. (We leave open the possibility that the consideration of an additional very-large frequency regime is useful for understanding scattering data in the non-relativistic limit. The usefulness for individual $c>0$ has been demonstrated in \cite{desc}, cf.\ \cite{DangVasyWrochna2024dirac}.) Indeed:
\begin{itemize}
	\item intermediate frequencies, e.g.\ $e^{i c^{1/2} x_1}$, live at the corners $\mathrm{pf}_\pm \cap \natural\mathrm{f}$ and are controlled by our estimates there, 
	\item very large frequencies, e.g.\ $e^{ic^2 x_1}$, live at the corner $\natural\mathrm{f}\cap\mathrm{df}$ and are controlled by our estimates there.
\end{itemize}

In the descriptions above, we avoided mentioning the temporal frequency $\tau$, dual to $t$. (That is, $\tau$ is the energy.) We consider 
\begin{enumerate}[label=(\Roman*)]
	\item the laboratory scale to be associated with $O(1)$ temporal frequencies and 
	\item the natural scale to be associated with $\Omega(c^2)$ temporal frequencies.
\end{enumerate}
That is, the natural scale is associated with finite, nonzero values of $\tau_\natural = \tau/c^2$. 
However, the two \emph{particular} natural-scale oscillations 
\begin{equation}
	e^{\pm ic^2 t}=e^{\pm it_\natural}
	\label{eq:usual_oscillations}
\end{equation} 
are ubiquitous in the analysis of the non-relativistic limit; see e.g.\ \cref{eq:first_rough_splitting}. These oscillations appear prominently in solutions of the Cauchy problem with $c$-independent initial data --- see \cite{NRL_II} and the wider literature cited therein --- multiplied by functions varying on the spatiotemporal laboratory scale.
So, the temporal frequencies corresponding to the spatial laboratory scale are not $\tau=O(1)$, but rather 
\begin{equation} 
	\tau= \pm c^2 + O(1),
\end{equation} 
i.e.\ laboratory-scale perturbations of the two special temporal frequencies $\tau=\pm c^2$.

\begin{remark}
	From the physicists' point-of-view, the phase factors $e^{\pm ic^2 t}$ arise because of a difference of convention between non-relativistic and relativistic quantum mechanics: in relativistic quantum mechanics, the Einsteinian ``rest energy'' $+c^2$ is counted as part of the particle's energy, whereas in non-relativistic quantum mechanics it is not. 
\end{remark}

We can now explain the correspondence between the faces $\natural\mathrm{f}$, $\mathrm{pf}_\pm$ of the $\calctwo$-phase space and the scales discussed above: the faces $\mathrm{pf}_\pm$ of the $\calctwo$-phase space correspond to laboratory scale perturbations of the frequencies 
\begin{equation}
	(\tau,\xi) = (\pm c^2,0)
	\label{eq:misc_b}
\end{equation}
associated with the oscillations \cref{eq:usual_oscillations}, and $\natural\mathrm{f}$ captures the rest of the natural scale, hence the label `$\natural\mathrm{f}$'.

\subsection{More about the natural scale}
It is straightforward to see why the natural scale is important for the analysis of the PDE. 
Because $g$ is asymptotically Minkowski, the d'Alembertian $\square_g$ differs from the Minkowski d'Alembertian 
\begin{equation}
	\square = \sum_{j=1}^d \frac{\partial^2}{\partial x_j^2} - \frac{1}{c^2} \frac{\partial^2}{\partial t^2} = \triangle - \frac{1}{c^2}  \frac{\partial^2}{\partial t^2} 
\end{equation}
by decaying terms.
Consequently, the Klein--Gordon operator $P$ differs from the free Klein--Gordon operator 
\begin{equation}
	P_0 =  \square - c^2
	\label{eq:misc_a}
\end{equation}
by decaying terms, which we can ignore for the sake of this heuristic. The essence of the non-relativistic limit lies in the competition
between the Einsteinian $c^2$ term in \cref{eq:misc_a}, which is large in the $c\to\infty$ limit, and the terms $c^{-2}\partial_t^2,\partial_{x}^2$ in the d'Alembertian, which are $O(c^{-2}),O(1)$ in the $c\to\infty$ limit, but highest order in the sense that they have the most derivatives. Replacing $\partial_x\rightsquigarrow\xi$ and $\partial_t \rightsquigarrow \tau$,
\begin{itemize} 
	\item $\partial_x^2 \rightsquigarrow\xi^2 = c^2 \xi_\natural^2$
	\item  $c^{-2}\partial_t^2 \rightsquigarrow c^{-2} \tau^2= c^2 \tau_\natural^2$.
\end{itemize}
The natural scale is therefore that on which $c^2$ is comparable to $\partial_x^2,c^{-2}\partial_t^2$. 

In order to analyze the Klein--Gordon operator on the natural scale without having to worry about the laboratory scale, we use the $\calczero$-pseudodifferential calculus that we denote by 
\begin{equation}
	\Psi_{\calczero} = \bigcup_{m,\ell\in \bbR}\bigcup_{\mathsf{s}} \Psi_{\calczero}^{m,\mathsf{s},\ell}.  
\end{equation}
This is c constructed by quantizing symbols on the compactification 
\begin{equation}
	{}^{\calczero} \overline{T}^* \bbM \hookleftarrow T^* \bbR^{1,d} \times (0,\infty)_c
\end{equation}
that results from ``blowing down'' $\mathrm{pf}_\pm$ in the $\calctwo$-phase space. 
This new compactification is called the ``$\natural$-phase space''.
The $\natural$-phase space can be described simply as a product
\begin{equation}\label{eq:Planck_intro}
	\overline{\bbR^{1,d}_{t,x}} \times \overline{\bbR^{1,d}_{\tau_{\natural},\xi_{\natural} }}  \times (0,\infty]_c,
\end{equation} 
and this is actually how we construct it.
This phase space has three boundary hypersurfaces, which we call df, bf, and $\natural\mathrm{f}$. The first two of these are canonically identifiable with the identically named boundary hypersurfaces of the $\calctwo$-phase space. The third is not literally the same as the corresponding boundary hypersurface of the $\calctwo$-phase space, but they are canonically identifiable away from $\mathrm{pf}_\pm$, and no confusion will come from using the same name.
\begin{remark}
	Although, as a manifold-with-corners (mwc), the $\calczero$-phase space is diffeomorphic to 
	\begin{equation}
		\overline{\bbR^{1,d}_{t,x}}\times \overline{\bbR^{1,d}_{\tau,\xi}}\times (0,\infty]_c \cong \bbB^{1+d}\times \bbB^{1+d}\times [0,1), 
		\label{eq:misc_024}
	\end{equation}
	the two are inequivalent as compactifications $T^* \bbR^{1,d} \times (0,\infty)_c\hookrightarrow \bbB^{1+d}\times \bbB^{1+d}\times [0,1)$. Indeed, the curves 
	\begin{equation}
	h\mapsto (\tau=\tau_\natural/h^2,\xi=\xi_\natural/h,h)
	\end{equation}
	of fixed ``natural frequency'' $(\tau_\natural,\xi_\natural)\in \bbR\times \bbR^d$ 
	hit the interior of the $\{h=0\}$ boundary hypersurface of the $\calczero$-phase space but hit the corner of the compactification in \cref{eq:misc_024}.
	What we care about is the compactification, not the topology of the ambient space.
\end{remark}

Typical vector fields in $\Psi_{\calczero}$ are
\begin{equation}
	\partial_{t_\natural} = c^{-2} \partial_t,\quad \partial_{x_{\natural,j}} = c^{-1} \partial_{x_j} 
\end{equation}
with coefficients that are symbolic functions on spacetime. The
calculus $\Psi_{\calczero}$ of $\calczero$-pseudodifferential
operators is very similar to the semiclassical calculus \cite{Zworski}
with $h=c^{-1}$ (or $h=c^{-2}$), with the difference being that, in
the $\calczero$-calculus, the time and space derivatives are treated
on unequal footing, with different powers of the ``semiclassical
parameter'' $c^{-1}$ on each. Such a setup, though without this
precise base space infinity behavior, was introduced in
\cite{Vasy:Semiclassical-X-Ray}, called the semiclassical foliation
pseudodifferential algebra there, and was developed (with a different
structure at infinity) in \cite{Vasy-Zachos:Conic} (and further
employed in \cite{Jia-Vasy:Conic-Tensor}) to study the geodesic X-ray
transform. In these X-ray transform settings there is a natural foliation by
level sets of a function; in the present setting the time function would be the analogue. To emphasize the similarities between $\natural$-analysis and semiclassical analysis, we will write $h$ interchangeably with $1/c$ below.

Thus, the free Klein--Gordon operator $P_0=\square-c^2$ lies in 
\begin{equation}
	 \operatorname{Diff}_{\calczero}^{2,0,2}=\Psi_{\calczero}^{2,0,2}\cap \operatorname{Diff}(\bbR^{1,d}),
\end{equation}
and does so in a non-degenerate way, with each term, $c^{-2} \partial_t^2$, $\partial_{x}^2$, $c^2$ being order 2 at $\natural\mathrm{f}$. (The variable-coefficient operator $P$ will be similar.)

At this point (forgetting what we know about the Cauchy problem), it may not be clear why it is necessary to consider the laboratory scale at all. Indeed, one can almost carry out a full analysis of the Klein--Gordon equation on the natural scale, using $\Psi_{\calczero}$. However, the $\natural$-Hamiltonian flow, ${}^{\calczero}\mathsf{H}_p$, degenerates, as $c\to\infty$, at two special frequencies discussed above, i.e.\ 
\begin{equation} 
	(\tau_\natural,\xi_\natural)=(\pm 1,0)
	\label{eq:misc_c}
\end{equation} 
See Appendix~\ref{sec:whynot}. Rather than being a technical glitch, this is an insurmountable obstacle to a full $\calczero$-analysis of the Klein--Gordon equation. 
The presence of the oscillations \cref{eq:usual_oscillations} in the solutions of the Cauchy problem guarantees the occurrence of a degeneration at the frequencies \cref{eq:misc_c} --- if degeneration did not occur, we would be able to use $\calczero$-analysis to prove to absence of the oscillations which we know may, in actuality, be present.

We will see in \S\ref{sec:flow} that \emph{second-microlocalizing} parabolically at the problematic points, i.e.\ blowing them up in some particular way, is sufficient to remove the degeneracy of the Hamiltonian flow: the $\calctwo$-Hamiltonian flow is source-to-sink, without degeneration. 
This second-microlocalization results in the $\calctwo$-phase space depicted in \Cref{fig:phase2}. We have described the $\calczero$-calculus as arising from blowing down $\mathrm{pf}_\pm$ in the $\calctwo$-phase space, but the reverse, constructing the $\calctwo$-phase space out of the $\calczero$-phase space, is simpler and closer to what we actually do below.

\subsection{The conjugated perspective}
\label{subsec:description}
Since the Fourier transform intertwines translation with multiplication by oscillatory exponentials, conjugating an operator via a natural oscillation has the effect of translating the corresponding natural frequency to the zero section. So, if instead of studying $P$, we study the conjugated operator
\begin{equation}
	P_\pm = e^{\mp ic^2 t} P e^{\pm i c^2 t} ,
	\label{eq:P+-_form}
\end{equation}
meaning the maps $\calS'\ni u\mapsto  e^{\mp i c^2 t} P ( e^{\pm i c^2 t} u)$,
we can second- microlocalize at the zero section, rather than at the points \cref{eq:misc_c}. 

This ``conjugated perspective'' turns out to be computationally convenient --- 
see for instance \cite{VasyLA, VasyN0L}, which applied it to analyze the Schr\"odinger--Helmholtz equation.
Second-microlocalization is often considered somewhat esoteric, even by experts, but second-microlocalization at the zero-section is a particularly simple form of second-microlocalization. 
For example, second-microlocalizing the semiclassical Sobolev spaces
at the zero section results in spaces with ordinary Sobolev regularity
in addition to ``background'' semiclassical Sobolev regularity away
from the zero section, see \cite{Vasy:Relationship}. Since the $\calczero$-calculus is similar to the semiclassical calculus, second-microlocalization at the zero section achieves something analogous here: the resultant Sobolev spaces, which we will call $H_{\calc}^{m,\mathsf{s},\ell}$,  have par-Sobolev regularity in addition to background $\calczero$-Sobolev regularity.

We will apply the conjugated perspective throughout this paper.

Of course, we can only conjugate one of the two points  \cref{eq:misc_c} to the zero section at a time. Because these two points are contained in different connected components of the $\calctwo$-characteristic set, this is not an issue; using  microlocalizers 
\begin{equation} 
	O\in \Psi_{\calczero}^{0,0,0},
\end{equation} 
we can decompose all functions $f=Of+(1-O)f$ into two parts, $Of$ and $(1-O)f$, each of which is microlocally trivial at one of the two components (and the two being microlocally trivial on \emph{different} components). 
Then, we can use the appropriate one of $P_\pm$ to study $Of$ and the other to study $(1-O)f$.  Thus, the ``bad'' component of the $\calczero$-characteristic set of $P_\pm$, the one that does not pass through the zero section, can be ignored entirely, because it corresponds to the ``good'' component of the $\calczero$-characteristic set of $P_\mp$.

The phase space 
\begin{equation}
	{}^{\calc}\overline{T}^* \bbM\hookleftarrow T^* \bbR^{1,d}\times (0,\infty)_c 
\end{equation}
that arises from second- microlocalizing the $\calczero$-phase space at the zero section we call the ``$\calc$-phase'' space. As opposed to ``$\calctwo$,'' the label ``$\calc$'' is supposed to indicate that only a single natural frequency is blown up. Correspondingly, at $c=\infty$, the $\calc$-phase space has two boundary hypersurfaces, $\natural\mathrm{f}$, and the ``parabolic face'' pf. In addition, we use bf to denote spacetime infinity and df to denote fiber infinity, as elsewhere. 
See \Cref{fig:phase} for depictions of the $\calc$-phase space.

The compactification 
\begin{equation}
	\mathrm{pf} \hookleftarrow T^* \bbR^{1,d}
\end{equation}
is equivalent to that ${}^{\mathrm{par}}\overline{T}^* \bbM \hookleftarrow T^* \bbR^{1,d}$ used by Gell-Redman--Gomes--Hassell to study the Schr\"odinger equation in \cite{Parabolicsc}; see \Cref{prop:pf=par_phase_space}. So, we may canonically identify 
\begin{equation}
	\mathrm{pf} \cong {}^{\mathrm{par}}\overline{T}^* \bbR^{1,d}. \label{eq:pf_id}
\end{equation}
(This is why the faces $\mathrm{pf}_\pm$ in the $\calctwo$-phase space corresponding to $\mathrm{pf}$ are also identifiable with the par-phase space.)
Since each positive level set of $c$ in ${}^{\calc}\overline{T}^* \bbM$ is canonically identifiable with ${}^{\mathrm{sc}}\overline{T}^* \bbM$, one way of thinking about ${}^{\calc}\overline{T}^* \bbM$ is that it interpolates between
\begin{itemize}
	\item ${}^{\mathrm{sc}}\overline{T}^* \bbM$ for $c<\infty$ and
	\item ${}^{\mathrm{par}}\overline{T}^* \bbM$ at $c=\infty$.
\end{itemize}

Correspondingly, the pseudodifferential calculus 
\begin{equation}
	\Psi_{\calc} = \bigcup_{m,\ell,q\in \bbR}\bigcup_{\mathsf{s}} \Psi_{\calc}^{m,\mathsf{s},\ell,q} 
\end{equation}
that arises from quantizing symbols on the $\calc$-phase space can be thought of as interpolating between $\Psi_{\mathrm{sc}}$ for $c<\infty$ and $\Psi_{\mathrm{par}}$ for $c=\infty$. 

Thus, $\Psi_{\calc}$ is a natural choice to interpolate between the sc-analysis of the Klein--Gordon equation and the par-analysis of the Schr\"odinger equation in the non-relativistic limit.

\subsection{Outline for remainder of manuscript}
\label{subsec:outline}

The sections in the rest of the manuscript are as follows:
\begin{itemize}
	\item In \S\ref{sec:calculus}, we develop the theory of $\Psi_{\calczero}$, $\Psi_{\calc}$, $\Psi_{\calctwo}$.  Because the $\calczero$-calculus is similar to the standard semiclassical calculus, our discussion will be telegraphic. The $\calctwo$-calculus is built out of two copies of the first. Hence, our focus is on $\Psi_{\calc}$. The gist of this section is fairly standard, but we need to check that the familiar elements of pseudodifferential calculus hold in this unfamiliar setting. 
	\item In \S\ref{sec:setup}, we state the set of Klein--Gordon operators $P$ to which our analysis applies, and we record some elementary propositions about where various related operators lie in our three pseudodifferential calculi.
	\item In \S\ref{sec:flow}, we will study ${}^{\calc}\mathsf{H}_p$, the generator of the Hamiltonian flow, minimally rescaled so as to define a vector field on the $\calc$-phase space tangent to all boundary hypersurfaces, which means that it induces a flow on the boundary. We will show that the flow has a source-to-sink structure in each component of the $\calc$-characteristic set. 
	\item In \S\ref{sec:estimates}, we prove the microlocal estimates that follow from the dynamical structure of the flow investigated in the previous section.  These include propagation (Duistermaat--H\"ormander) estimates, as well as radial point (microlocal Mourre) estimates. It is in this section that we prove \Cref{thm:inhomog}. 
\end{itemize}

In addition, we have several appendices:
\begin{itemize}
	\item \S\ref{sec:notation} is an index of notation. 
	\item \S\ref{sec:schrodinger_estimate} discusses how to generalize a certain estimate in \cite{Parabolicsc} so that we can apply it in the generality here.  
	\item \S\ref{sec:further} has some variants of the estimates in \S\ref{sec:estimates} that will be used in \cite{NRL_II}.
	\item \S\ref{sec:whynot} explains why (as must be the case for the non-relativistic limit to hold) the Klein--Gordon equation cannot be analyzed entirely in $\Psi_{\calczero}$. It is necessary to second microlocalize, i.e.\ to pass from $\Psi_{\calczero}$ to $\Psi_{\calc}$. 
	\item \S\ref{sec:parabolic} lists, for the reader's convenience, a few facts about the parabolic compactification $(\overline{\mathbb{R}^{1,d}})_{\mathrm{par}}$ of $\mathbb{R}^{1,d}$.
	\item \S\ref{sec:reduction} provides a second proof, via the formula for the Moyal star product, of the fact that elements of $\Psi_{\calc}$ can be composed, with the composition having the expected orders. This provides an alternative for \S\ref{subsec:composition}.
\end{itemize}

\section{\texorpdfstring{Development of $\Psi_{\calczero},\Psi_{\calc},\Psi_{\calctwo}$}{Development of pseudodifferential algebras} }
\label{sec:calculus}

In this technical section, we develop the theory of our three new pseudodifferential calculi $\Psi_{\calczero},\Psi_{\calc},\Psi_{\calctwo}$.
For readers that are comfortable with the construction of pseudodifferential calculi associated to various phase spaces, or willing to take for granted that such calculi work analogously to the usual H\"ormander calculus, only \S\ref{subsec:phase}, \S\ref{subsec:calc}, and \S\ref{sec:2-res_construction} are  necessary reading. 
In \S\ref{subsec:Sobolev}, we introduce the scale 
\begin{equation}
	\lVert - \rVert_{H_{\calc}^{m,\mathsf{s},\ell,q}} = \{ \lVert - \rVert_{H_{\calc}^{m,\mathsf{s},\ell,q}}(h) \}_{h\in (0,\infty)}
\end{equation}
of Sobolev norms in terms of which the main estimates of this paper are phrased, so the reader may wish to refer to that subsection as well. 

\subsection{\texorpdfstring{The natural calculus $\Psi_{\calczero}$}{The natural calculus}}\label{subsec:natural_calculus}

We begin our discussion with the phase space $\smash{{}^{\calczero}\overline{T}^* \bbM}$, which we have already introduced in the introduction.
As a manifold-with-corners, $\smash{{}^{\calczero}\overline{T}^* \bbM}$ is diffeomorphic to 
\begin{equation}
	\bbB^{1+d}\times \bbB^{1+d}\times [0,\infty)\cong \overline{\bbR^{1+d}_{t,x}}\times \overline{\bbR^{1+d}_{\tau,\xi}} \times [0,\infty)_h = {}^{\mathrm{sc}} \overline{T}^* \bbM \times [0,\infty)_h,
	\label{eq:ph0mwcid}
\end{equation}
but the partial compactification 
\begin{equation}
	T^* \bbR^{1,d}\times (0,\infty)_h \hookrightarrow \overline{\bbR^{1+d}}\times \overline{\bbR^{1+d}} \times [0,\infty) 
\end{equation}
used is
\begin{equation}
	T^* \mathbb{R}^{1,d} \times (0,\infty)_h  \ni (t,x,\tau,\xi,h) \mapsto (t,x,h^2 \tau,h\xi,h) \in {}^{\calczero}\overline{T}^* \bbM.
\end{equation}

In the sense of compactifications, 
\begin{equation}
	{}^{\calczero}\overline{T}^* \bbM = [0,1)_h \times \bbM_{t,x} \times \overline{\bbR^{1,d}_{\tau_{\calcshort},\xi_{\calcshort}  }}, 
\end{equation}
where $\tau_{\calcshort} = h^2 \tau$, $\xi_{\calcshort} = h \xi$, and $\bbM_{t,x} = \overline{\bbR^{1,d}}$. 

Notice that when one substitutes in $h=1/c$, the frequencies $\tau_{\natural}=c^{-2} \tau,\xi_{\natural}=c^{-1} \xi$ are precisely the natural scale frequency coordinates discussed in the introduction.

The three boundary hypersurfaces of ${}^{\calczero}\overline{T}^* \bbM$  are $\mathrm{zf}=\{h=0\}$, $\mathrm{bf} = \{t^2+\lVert x \rVert^2=\infty\}$, and $\mathrm{df} = \{ | \zeta_{\calcshort} |= \infty \}$, where $\zeta_{\calcshort} = (\tau_{\calcshort},\xi_{\calcshort}) \in \bbR^{1+d}$. As  boundary-defining functions for these hypersurfaces, we can take
\begin{equation} \begin{aligned}
\rho_{\mathrm{zf}} &= h, \\
	\rho_{\mathrm{df}} &= (1+h^4 \tau^2 + h^2 \xi^2)^{-1/2} = (1+\tau_{\calcshort}^2+\xi_{\calcshort}^2)^{-1/2} , \\
	\rho_{\mathrm{bf}} &= 1/\langle z \rangle = (1+t^2+\lVert x \rVert^2)^{-1/2}. 
\end{aligned}\end{equation} 

Next, we define our $\calczero$-symbol classes. Let $\delta \in (0,1/2)$ and a variable spacetime order $\mathsf{s} \in C^\infty({}^{\calczero}\overline{T}^* \bbM)$ be given. The symbol class $S^{m,\mathsf{s},0}_{\calczero,\delta}(\bbM)$ is set of conormal functions of $h$ with values in $S^{m,\mathsf{s}}_{\mathrm{sc}}$ symbols \emph{in the $\taun, \xin$ variables}. That is, we have estimates
\begin{equation} \label{eq:nat_symbol}
	|\partial_z^\alpha\partial_{\zetan}^\beta (h\partial_h)^j a(z,\zetan,h) | \leq C_{\alpha\beta j} \la z \ra^{\mathsf{s}-(1-\delta)|\alpha| + \delta |\beta|+\delta j} \la \zetan \ra^{m-|\beta|}, \quad \zetan = (\taun, \xin), \quad z=(t,x), 
\end{equation}
for all $j\in \bbN$, multi-indices $\alpha,\beta\in \bbN^{1+d}$.
This is provided with the Fr\'echet topology induced by the seminorms given by the best constants $C_{\alpha\beta j}$ in the inequality above. Then, we define 
\begin{equation}
	S_{\calczero,\delta}^{m,\mathsf{s},\ell} = h^{-\ell} S_{\calczero,\delta}^{m,\mathsf{s},0}.
\end{equation}
We then take the intersection over all $\delta > 0$ to obtain our desired symbol class:
\begin{equation}\label{eq:nat_symbol_intersection_delta}
	S_{\calczero}^{m,\mathsf{s},\ell} = \bigcap_{\delta \in (0,1/2)} S_{\calczero,\delta}^{m,\mathsf{s},\ell}.
\end{equation}

\begin{remark}
	If $\mathsf{s}$ were constant, then we could take $\delta=0$ above. The construction involving $\delta$ is needed to handle logarithmic terms that arise when differentiating $\langle z \rangle^{\mathsf{s}}$. 
\end{remark}

There is an alternative way of specifying the symbol class that we find more convenient when dealing with second- microlocalized phase spaces. That is, we express \eqref{eq:nat_symbol} in terms of regularity, in the sense of remaining in a fixed weighted $L^\infty$ space, under the repeated action of a class of vector fields. When $\delta = 0$ this class would simply be the set $\calV_{\mathrm{b}}({}^{\calczero}\overline{T}^* \bbM)$ of $b$-vector fields on ${}^{\calczero}\overline{T}^* \bbM)$, that is, those smooth vector fields tangent to the boundary. These vector fields are spanned, over $C^\infty({}^{\calczero}\overline{T}^* \bbM)$, by $h \partial_h$, $\ang{z} \partial_{z_j}$ and $\ang{\zetan} \partial_{\zeta_{\natural, k}}$ and thus \eqref{eq:nat_symbol}, for $\delta = 0$, is equivalent to the condition that the repeated action of $\calV_{\mathrm{b}}({}^{\calczero}\overline{T}^* \bbM)$ on $a$ remains in 
\begin{equation} 
\la z \ra^{\mathsf{s}} \la \zetan \ra^{m} L^\infty({}^{\calczero}\overline{T}^* \bbM).
\end{equation} 
To deal with the $\delta$ loss we take a subspace of $\calV_{\mathrm{b}}({}^{\calczero}\overline{T}^* \bbM)$ requiring an arbitrarily small but positive extra decay at the face $\mathrm{bf}$:  we denote  
\begin{equation} \label{eq:tildeVb-nat}
\tilde{\calV}_{\mathrm{b}}({}^{\calczero}\overline{T}^* \bbM) = \bigcup_{\delta \in (0,1/2)} \rho_{\mathrm{bf}}^\delta \calV_{\mathrm{b}}({}^{\calczero}\overline{T}^* \bbM).
\end{equation}
Then we have the alternative definition of the space in \eqref{eq:nat_symbol_intersection_delta}: 
\begin{equation}
S_{\calczero}^{m,\mathsf{s},\ell} = \big\{ a \in \la z \ra^{\mathsf{s}} \la \zetan \ra^{m} L^\infty({}^{\calczero}\overline{T}^* \bbM) : \big( \tilde{\calV}_{\mathrm{b}}({}^{\calczero}\overline{T}^* \bbM) \big)^M a \in \la z \ra^{\mathsf{s}} \la \zetan \ra^{m} L^\infty({}^{\calczero}\overline{T}^* \bbM) \text{ for all } M \in \mathbb{N} \big\}.
\end{equation}

For $a \in S^{m,\mathsf{s},\ell}_{\calczero}$, its (left) quantization $A=\operatorname{Op}(a)$ acts on functions by
\begin{equation}
	A(h) f(t, x)  = \frac{1}{(2\pi)^{d+1} h^{d+2}} \int_{\bbR^{d+1}} \int_{\bbR^{d+1}}   e^{i \taun (t-t')/h^2 + \xin \cdot (x-x')/h} a(h)(t, x, \taun, \xin, h) f(z') \dd^{d+1} z' \dd^{d+1} \zetan ,
	\label{eq:quant_calczero}
\end{equation}
where $z' = (t', x')$. This is, of course, nothing more than the standard quantization written in the natural coordinates $\taun, \xin$. These are the natural variables to use since the symbol $a$ is uniformly symbolic in these variables; see \cref{eq:nat_symbol}. See the discussion after \cref{eq:quant} for more details.

Let $\Psi_{\calczero}^{{m,\mathsf{s},\ell}}$ denote the set of all quantizations of elements of $S_{\calczero}^{m,\mathsf{s},\ell}$, and let
\begin{equation} \label{eq:_calczero_PsiDO_union}
	\Psi_{\calczero}=\bigcup_{m,s,\ell\in \bbR} \Psi_{\calczero}^{{m,s,\ell}}. 
\end{equation}
This is almost the usual semiclassical-scattering calculus, except with $\tau_\natural=h^2 \tau$ being the relevant semiclassical temporal frequency instead of $h\tau$. 

In contrast to the calculi $\Psi_{\calc},\Psi_{\calctwo}$ mentioned above and defined below,
$\Psi_{\calczero}$ is ``fully symbolic'', for the same reason that the usual semiclassical calculus is fully symbolic. This means that one has a ``principal symbol'' map
\begin{equation}
	\sigma_{\calczero}^{m,\mathsf{s},\ell}:\Psi_{\calczero}^{m,\mathsf{s},\ell} \to S_{\calczero}^{m,\mathsf{s},\ell} / \bigcap_{\delta>0}  S_{\calczero}^{m-1,\mathsf{s}-1+\delta,\ell-1}
\end{equation} 
which captures operators up to operators of lower order in each of the three indices.
Here, and in similar formulas below (we will not repeat mention of this), we could take $\delta=0$ when $\mathsf{s}$ is constant. 

The $\calczero$-calculus $\Psi_{\calczero}$ is a multi-graded algebra, in which $\Psi$DOs can be composed with the usual addition of orders:
\begin{propositionp} \label{prop:Planck_composition}
	For $A =\operatorname{Op}(a)\in \Psi_{\calczero}^{m_1,\mathsf{s}_1,\ell_1}, \, B=\operatorname{Op}(b) \in \Psi_{\calczero}^{m_2,\mathsf{s}_2,\ell_2}$, we have
	\begin{equation}
		AB \in \Psi_{\calczero}^{m_1+m_2,\mathsf{s}_1+\mathsf{s}_2,\ell_1+\ell_2}.
	\end{equation}
	The symbol $c$ of $C=A B$ has (in the usual precise sense) an asymptotic expansion 
	\begin{equation}
		c(z, \zetan, h) \sim  \sum_{\alpha\in \bbN^{d+1}} \frac{i^{|\alpha|}}{ \alpha! } \big((h^2 D_{\taun}, hD_{\xin})^\alpha a(z,\zetan,h) \big) D_{z}^{\alpha} b(z,\zetan,h),
		\label{eq:moyal_explicit_natural}
	\end{equation}
	where $D_{\bullet}=-i\partial_{\bullet}$ as usual and the $\alpha$-th term in the expansion is in \begin{equation} 
		\bigcap_{\delta > 0} S_{\calczero}^{m_1 + m_2 - |\alpha|,\mathsf{s}_1 + \mathsf{s}_2 - |\alpha| + \delta, \ell_1 + \ell_2 - |\alpha|}.
	\end{equation} 
	Consequently,
	\begin{equation}
		\sigma_{\calczero}^{m_1+m_2,\mathsf{s}_1+\mathsf{s}_2,\ell_1+\ell_2}(A \circ B) = \sigma_{\calczero}^{m_1,\mathsf{s}_1,\ell_1}(A) \sigma_{\calczero}^{m_2,\mathsf{s}_2,\ell_2}(B), 
	\end{equation}
	\begin{equation}
		[A,B] \in\bigcap_{\delta>0} \Psi_{\calczero}^{m_1+m_2-1,\mathsf{s}_1+\mathsf{s}_2-1+\delta,\ell_1+\ell_2-1},
	\end{equation}
	with the principal symbol of the commutator $[A,B]$ being proportional to the Poisson bracket of the principal symbols of $A,B$.
\end{propositionp}

Ellipticity in symbol classes $S^{m,\mathsf{s},\ell}_{\calczero}$ is defined in the usual way: for $a \in S^{m,\mathsf{s},\ell}_{\calczero}$ and $p \in \partial({}^{\calczero} \overline{T}^* \bbM)$, we say that $a$ is elliptic at $p$ if there exists a neighborhood $U$ of $p$ and some $\epsilon>0$ such that  
\begin{equation}
	| \rho_{\mathrm{df}}^{m} \rho_{\mathrm{bf}}^{\mathsf{s}}\rho_{\mathrm{zf}}^{l} a |  \geq \epsilon  > 0 \text{ on } U.
\end{equation}
In this case, we also say that $A = \operatorname{Op}(a)$ is elliptic at $p$. 
When this holds for every $p$, then we simply say $A$ is (totally) elliptic (as an operator in $\Psi_{\calczero}^{m,\mathsf{s},l}$). Note that $\calczero$-ellipticity is stronger than ordinary ellipticity, as it involves elliptic behavior at spacetime infinity as well as $h\to 0^+$, which applies at all finite frequencies, not just high frequencies, just as in the usual semiclassical case.

We define the operator wavefront set in an analogous way: 
the $\calczero$-operator wavefront set \begin{equation} 
	\operatorname{WF}'_{\calczero}(A) \subset \partial ({}^{\calczero} \overline{T}^*\bbM)
\end{equation} 
of $A=\operatorname{Op}(a)$ is defined to be the essential support of $a\in S_{\calczero}^{m,\mathsf{s},\ell}$. More concretely, for $p \in \partial ({}^{\calczero} \overline{T}^*\bbM)$, 
\begin{equation}
	p \notin \operatorname{WF}'_{\calczero}(A)  \iff  |\rho_{\mathrm{df}}^{-N} \rho_{\mathrm{bf}}^{-N}\rho_{\mathrm{zf}}^{-N}a| \leq C_N \text{ near } p, \text{ for all } N \in \bbR ,\text{ for some }C_N.
\end{equation}
Note that $\operatorname{WF}'_{\calczero}(A)$ is always a closed subset of $\partial ({}^{\calczero} \overline{T}^*\bbM)$.

We next define the $\calczero$-wavefront set and Sobolev norms for (parametrized) tempered distributions on $\mathbb{M}$. By a parametrized tempered distribution $u(h)$, depending on $h \in (0, 1)$, we mean a family of distributions $u(h)$ on $\bbR^{1,d}$ such that each $u(h)$ is contained in a fixed weighted Sobolev space $H^{m, s}_{\mathrm{sc}}(\bbR^{1,d})$, and its norm is tempered, i.e.\ bounded by $C h^{-N}$ for some $C, N \in \bbR$. We will omit the `parametrized' from here on, and refer to the family $u(h)$ as a tempered distribution. 

The $(m,\mathsf{s},\ell)$-order $\calczero$-wavefront set of a tempered distribution $u$ is the closed set $\operatorname{WF}_{\calczero}^{m,\mathsf{s},\ell}(u) \subset \partial ({}^{\calczero} \overline{T}^*\bbM)$ defined by

\begin{multline}
	p \notin \operatorname{WF}_{\calczero}^{m,\mathsf{s},\ell}(u) \iff \exists A \in \Psi_{\calczero}^{m,\mathsf{s},\ell} \text{ that is elliptic at } p \text{ such that } Au \in L^2(\bbM), \text{ with } \\
	\| Au(h) \|_{L^2(\bbM)} \text{ uniformly bounded in } h. 
\end{multline}

We next define a family of norms on tempered distributions. The $\calczero$-Sobolev norms with indices $(m,\mathsf{s},\ell)$ will be defined by choosing a  totally elliptic operator $\Lambda_{m,\mathsf{s},\ell}$ in $\Psi_{\calczero}^{m,\mathsf{s},\ell}$ (which can just be taken to be the quantization of a product of powers of the boundary-defining-functions), together with an invertible elliptic operator $A_{m',s',\ell'} \in  \Psi_{\calczero}^{m',s',\ell'}$ where $m' \leq m, s' \leq \mathsf{s}, \ell' \leq \ell$ and where $s$ is constant: for example, we can take 
\begin{equation} 
	A_{m',s',\ell'} = h^{-\ell'} \ang{z}^{s'} (1 + h^2 \triangle_x + h^4 D_t^2)^{m'/2}.
\end{equation}  
Then, the $\calczero$-Sobolev norms are defined by
\begin{equation}\label{eq:msl_norm}
	\lVert u \rVert_{H^{m,\mathsf{s},\ell}_{\calczero}} =\lVert \Lambda_{m,\mathsf{s},\ell}u \rVert_{L^2(\bbR^{1,d})} + 
\lVert A_{m',s',\ell'} u \rVert_{L^2(\bbR^{1,d})}	.
\end{equation}
In \cref{eq:msl_norm}, the purpose of the $A_{m',s',\ell'} u$ term is to ensure that the norm vanishes only when $u \equiv 0$. It could be removed if instead we modified $\Lambda_{m,\mathsf{s},\ell}$ to make it injective. 

We remark that there is a slight abuse of notation here:  we are \emph{not} defining a single Sobolev space $H^{m,\mathsf{s},\ell}_{\calczero}$, merely a one-parameter family of norms, $\lVert - \rVert_{H_\natural^{m,\mathsf{s},\ell}(h)}$
\begin{remark}
	Such a space could be defined, e.g.\ by defining the norm to be the supremum, over $h > 0$, of the norms in \cref{eq:msl_norm}. We do not wish to do this, because we are, in fact, interested in the value of the norm for each $h$, not merely in its supremum; and besides, we also want to consider tempered distributions for which \cref{eq:msl_norm} blows up, as $h \to 0$, in a tempered way. 
\end{remark}

Next, we state some basic facts about $\Psi_{\calczero}$.
\begin{proposition} 
\hfill
\begin{itemize}
              \item 
		For $A,B \in \Psi_{\calczero}$, $\operatorname{WF}'_{\calczero}(AB)\subseteq \operatorname{WF}'_{\calczero}(A)\cap \operatorname{WF}'_{\calczero}(B)$.

		\item If $A\in \Psi_{\calczero}^{m,\mathsf{s},\ell}$ is elliptic, then there exists a $B\in \Psi_{\calczero}^{-m,-\mathsf{s},-\ell}$ such that $AB-1,BA-1\in \Psi_{\calczero}^{-\infty,-\infty,-\infty}$.

		Consequently, for all $m',\ell'\in \bbR$ and variable order $\mathsf{s}'$, there exists an $h_0>0$ and $C > 0$ such that for all $h\in (0,h_0)$ and $u\in \calS'$, the estimate 
		\begin{equation} 
			\lVert u \rVert_{H_{\calczero}^{m',\mathsf{s}',\ell'} }\leq C \lVert Au \rVert_{H_{\calczero}^{m'-m',\mathsf{s}'-\mathsf{s},\ell'-\ell} } 
			\label{eq:misc_11n} 
		\end{equation}
		holds, in the strong sense that if the right-hand side is finite, then so is the left-hand side, and the stated inequality holds.  	
			\item More generally, if $A \in \Psi_{\calczero}^{m,\mathsf{s},\ell}$, $B \in \Psi_{\calczero}^{m_1,\mathsf{s_1},\ell_1}$  are such that $A$ is elliptic on $\operatorname{WF}'_{\calczero}(B)$, then we have for all $N \in \mathbb{R}$  
\begin{equation} 
			\lVert B u \rVert_{H_{\calczero}^{m'-m_1,\mathsf{s}' - \mathsf{s_1},\ell' - \ell_1} }\leq C \big( \lVert Au \rVert_{H_{\calczero}^{m'-m',\mathsf{s}'-\mathsf{s},\ell'-\ell} } + \lVert u \rVert_{H_{\calczero}^{-N, -N, -N}} \big). 
			\label{eq:micro-elliptic-est} 
		\end{equation}
			\end{itemize}
\end{proposition}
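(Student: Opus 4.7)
All three statements are routine symbolic consequences of Proposition~\ref{prop:Planck_composition}. For the first claim, I would work directly from the asymptotic expansion \eqref{eq:moyal_explicit_natural} of the symbol $c$ of $C=AB$: each term is a product of a derivative of $a$ and a derivative of $b$, and since the symbol classes $S_{\calczero}^{m,\mathsf{s},\ell}$ are preserved under differentiation by $\tilde{\calV}_{\mathrm{b}}({}^{\calczero}\overline{T}^* \bbM)$, rapid decay of $a$ (resp.\ $b$) near a point outside $\operatorname{WF}'_{\calczero}(A)$ (resp.\ outside $\operatorname{WF}'_{\calczero}(B)$) is inherited by all its $(h^2 D_{\taun}, h D_{\xin})^\alpha$ derivatives. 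Hence every term of the expansion vanishes to infinite order on the complement of $\operatorname{WF}'_{\calczero}(A) \cap \operatorname{WF}'_{\calczero}(B)$, and the Borel-type summation implicit in Proposition~\ref{prop:Planck_composition} transfers the conclusion to $c$ itself.

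For the second claim, I would run the standard parametrix iteration. Total ellipticity of $a$ makes $a^{-1}$ symbolic near the boundary of the $\calczero$-phase space, so setting $B_0 = \operatorname{Op}(\chi/a)$ for an appropriate cutoff $\chi$ equal to $1$ near all three boundary faces, Proposition~\ref{prop:Planck_composition} yields $I - AB_0 \in \bigcap_{\delta>0} \Psi_{\calczero}^{-1,-1+\delta,-1}$. Asymptotically summing $B \sim \sum_{k\geq 0} B_0(I - AB_0)^k$ produces a right parametrix in $\Psi_{\calczero}^{-m,-\mathsf{s},-\ell}$ with $AB - I \in \Psi_{\calczero}^{-\infty,-\infty,-\infty}$; a symmetric argument gives a left parametrix, and uniqueness modulo smoothing yields the two-sided result. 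To derive the uniform norm estimate I would write $u = BAu - (BA - I)u$, control the first term using $\calczero$-Sobolev boundedness of $B$, and observe that $BA - I$, being of order $-\infty$ in all three indices, maps any sufficiently negative-order $\calczero$-Sobolev space into $H_{\calczero}^{m',\mathsf{s}',\ell'}$ with constants uniform in $h$. The main technical obstacle here is the uniform-in-$h$ $L^2$-boundedness of elements of $\Psi_{\calczero}^{0,0,0}$: this is the analogue of Calder\'on--Vaillancourt for the natural calculus, which I would establish either by a Schur/$TT^*$ argument on the kernel written in natural frequency variables, or by reducing to the standard semiclassical case after rescaling $\taun = h^2 \tau$, $\xin = h\xi$.

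For the third claim, I would construct a microlocal parametrix. Choose $\tilde Q \in \Psi_{\calczero}^{0,0,0}$ with $\operatorname{WF}'_{\calczero}(\tilde Q)$ contained in the elliptic set of $A$ and $\operatorname{WF}'_{\calczero}(I - \tilde Q) \cap \operatorname{WF}'_{\calczero}(B) = \emptyset$, which is possible since $A$ is elliptic on the closed set $\operatorname{WF}'_{\calczero}(B)$. Applying the parametrix procedure of the second claim inside this microlocal region yields $Q \in \Psi_{\calczero}^{-m,-\mathsf{s},-\ell}$ with $QA - \tilde Q \in \Psi_{\calczero}^{-\infty,-\infty,-\infty}$; then the decomposition $B = BQA + B(\tilde Q - QA) + B(I - \tilde Q)$ splits $Bu$ into a main term controlled by $\lVert Au \rVert_{H_{\calczero}^{m'-m,\mathsf{s}'-\mathsf{s},\ell'-\ell}}$ via boundedness of $BQ \in \Psi_{\calczero}^{m_1-m,\mathsf{s}_1-\mathsf{s},\ell_1-\ell}$, together with two error terms that are in $\Psi_{\calczero}^{-\infty,-\infty,-\infty}$ (by the first claim applied to their wavefront sets) and hence controlled by $\lVert u \rVert_{H_{\calczero}^{-N,-N,-N}}$ for any $N$.
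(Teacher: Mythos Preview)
Your arguments for the first and third items are correct and match the paper's (terse) proof: both follow from the reduction formula \eqref{eq:moyal_explicit_natural} and the standard microlocal elliptic parametrix construction, respectively.

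For the second item, your parametrix construction is fine and indeed yields
\[
\lVert u \rVert_{H_{\calczero}^{m',\mathsf{s}',\ell'} } \lesssim \lVert Au \rVert_{H_{\calczero}^{m'-m,\mathsf{s}'-\mathsf{s},\ell'-\ell} } + \lVert u \rVert_{H_{\calczero}^{-N,-N,-N}},
\]
but you stop there, and this is \emph{not} the claimed estimate \eqref{eq:misc_11n}, which has no error term on the right-hand side. You have not explained how to eliminate the $\lVert u \rVert_{H_{\calczero}^{-N,-N,-N}}$ term. The paper's key additional step is to observe that, because the third index is the $h$-order (recall $\rho_{\mathrm{zf}}=h$), taking $N$ large enough so that $-N < \ell' - 1$ gives
\[
\lVert u \rVert_{H_{\calczero}^{-N,-N,-N}} \lesssim h\, \lVert u \rVert_{H_{\calczero}^{m',\mathsf{s}',\ell'} },
\]
and for $h$ sufficiently small this can be absorbed into the left-hand side. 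This absorption is precisely why the statement only asserts the estimate for $h\in(0,h_0)$; without the semiclassical gain there is no reason for a globally elliptic $A(h)$ to be injective on a fixed Sobolev space, and indeed the error-free estimate can fail for individual values of $h$. Your phrasing (``maps any sufficiently negative-order $\calczero$-Sobolev space into $H_{\calczero}^{m',\mathsf{s}',\ell'}$ with constants uniform in $h$'') goes in the wrong direction for this absorption: a uniform bound from a weak norm to a strong one does not by itself make the error small.
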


\begin{remark} We emphasize that the constants $C$ in \cref{eq:misc_11n} and \cref{eq:micro-elliptic-est}  are uniform in $h$. Although we usually will not state it explicitly, \emph{this uniformity as $h \to 0$ holds for all inequalities of norms we state in this paper.}
\end{remark}

\begin{proof}
The first point follows straightforwardly from the reduction formula \cref{eq:moyal_explicit_natural}. For the second point,   the standard elliptic parametrix construction directly gives 
\begin{equation} 
	\lVert u \rVert_{H_{\calczero}^{m',\mathsf{s}',\ell'} } \lesssim \lVert Au \rVert_{H_{\calczero}^{m'-m',\mathsf{s}'-\mathsf{s},\ell'-\ell} } +\lVert u \rVert_{H_{\calczero}^{-N_1,-N_2,-N_3}}, 
	\label{eq:misc_118}
\end{equation}
for all $N_1,N_2,N_3\in \bbR$. If we take $N_1,N_2,N_3$ sufficiently large, then 
\begin{equation} 
	\lVert u \rVert_{H_{\calczero}^{-N_1,-N_2,-N_3}}\lesssim h 	\lVert u \rVert_{H_{\calczero}^{m',\mathsf{s}',\ell'} }. 
\end{equation} 
So, for $h$ sufficiently small, the last term in \cref{eq:misc_118} can be absorbed into the left-hand side, giving \cref{eq:misc_11n}. The last point follows similarly via the standard microlocal elliptic parametrix. 
\end{proof}

Note that the absence of an ``error term'' on the right-hand side of \cref{eq:misc_11n} means that, for each $N\in \bbR$, it must be the case that $A(h)$ is injective on the ordinary Sobolev space $H_{\mathrm{sc}}^{-N,-N} = (1+t^2+r^2)^{N/2} H^{-N}$ for $h$ sufficiently small, where what ``sufficiently small'' means can depend on $N$.

For any $B\in \Psi_{\calczero}^{m,\mathsf{s},\ell}$ and $\alpha\in \bbR$, let 
\begin{equation} 
	B_\alpha=M_{\exp(i\alpha t/h^2)}\operatorname{Op}( b)M_{\exp(-i\alpha t/h^2)},
\end{equation} 
where $M_\bullet:u\mapsto \bullet u$ is the multiplication map. Multiplication by oscillatory functions corresponds to translation in frequency space. An important fact about the $\calczero$-calculus is that it is invariant under $\natural$-scale translations in frequency space:
\begin{proposition}
	If $b\in S_{\calczero}^{m,\mathsf{s},\ell}$ and $B=\operatorname{Op}(b)\in \Psi_{\calczero}^{m,\mathsf{s},\ell}$, then $B_\alpha \in \Psi_{\calczero}^{m,\mathsf{s},\ell}$ as well, and $B_\alpha= \operatorname{Op}(b_\alpha)$, 
	where $b_\alpha(\tau_{\natural},\xi_{\natural},t,x) = b(\tau_{\natural} -\alpha,\xi_{\natural},t,x)$. 
	\label{prop:translation}
\end{proposition}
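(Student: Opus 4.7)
The plan is a direct computation: substitute the quantization formula \eqref{eq:quant_calczero} for $B=\operatorname{Op}(b)$ into $B_\alpha f = e^{i\alpha t/h^2}\, B\bigl(e^{-i\alpha t'/h^2} f\bigr)$, combine the three oscillatory factors, and perform a single change of variables in the frequency integral to exhibit $B_\alpha$ as the $\calczero$-quantization of $b_\alpha$. Then I would check that $b_\alpha$ lies in the same symbol class.

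The outside factor $e^{i\alpha t/h^2}$ and the multiplication by $e^{-i\alpha t'/h^2}$ inside $\operatorname{Op}(b)$ combine with the quantization phase as
\begin{equation}
e^{i\alpha t/h^2}\, e^{i\tau_\natural(t-t')/h^2}\, e^{-i\alpha t'/h^2} \;=\; e^{i(\tau_\natural + \alpha)(t-t')/h^2}.
\end{equation}
The change of variables $\tilde\tau_\natural = \tau_\natural + \alpha$ has unit Jacobian and preserves the range of integration, so it returns the phase to its original form while replacing $b(z,\tau_\natural,\xi_\natural,h)$ by $b(z,\tilde\tau_\natural-\alpha,\xi_\natural,h) = b_\alpha(z,\tilde\tau_\natural,\xi_\natural,h)$. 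The prefactor $(2\pi)^{-d-1}h^{-d-2}$ is unaffected, and after relabeling the dummy variable this is precisely $\operatorname{Op}(b_\alpha) f$, giving the identity $B_\alpha = \operatorname{Op}(b_\alpha)$.

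It remains to verify $b_\alpha \in S_\calczero^{m,\mathsf{s},\ell}$. Derivatives commute with translation:
\begin{equation}
\partial_z^\gamma \partial_{\zeta_\natural}^\beta (h\partial_h)^j b_\alpha(z,\zeta_\natural,h) = \bigl(\partial_z^\gamma \partial_{\zeta_\natural}^\beta (h\partial_h)^j b\bigr)(z,\zeta_\natural - \alpha e_1,h),
\end{equation}
where $e_1$ is the unit vector in the $\tau_\natural$-direction. Feeding this into the symbol bound \eqref{eq:nat_symbol} for $b$ and using the elementary equivalence $\langle \zeta_\natural - \alpha e_1\rangle \sim_\alpha \langle \zeta_\natural\rangle$ gives a bound of the correct form, but a priori with the translated variable order $\mathsf{s}_\alpha(z,\zeta_\natural,h) := \mathsf{s}(z,\zeta_\natural - \alpha e_1,h)$ in place of $\mathsf{s}$. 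Because $\zeta_\natural\mapsto\zeta_\natural - \alpha e_1$ extends to a diffeomorphism of ${}^{\calczero}\overline{T}^*\bbM$ which restricts to the identity on $\mathrm{df}$ (a bounded translation is negligible at fiber infinity) and acts trivially in the compactified variables at $\mathrm{zf}$, the function $\mathsf{s}_\alpha - \mathsf{s}$ is smooth and vanishes at $\mathrm{df}\cup\mathrm{zf}$, hence is bounded; the resulting discrepancy between $\langle z\rangle^{\mathsf{s}_\alpha}$ and $\langle z\rangle^\mathsf{s}$ is then absorbed into the symbol constants $C_{\gamma\beta j}$.

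The main ``obstacle'' here is essentially bookkeeping; this proposition is a formal consequence of the $\calczero$-calculus having been defined so that $(\tau_\natural,\xi_\natural)$ are the effective symbolic frequency variables, which is precisely what makes bounded translations in frequency a benign operation. It is this invariance that will underwrite the conjugated perspective of \S\ref{subsec:description}, where conjugation by $e^{\pm ic^2 t}$ (i.e.\ $\alpha = \pm 1$ in our setting since $t/h^2 = c^2 t$) is used to move either of the special natural frequencies $(\tau_\natural,\xi_\natural) = (\pm 1, 0)$ to the zero section.
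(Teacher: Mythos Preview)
Your approach is exactly the paper's (the paper's entire proof is the single sentence ``This is clear from the quantization formula \eqref{eq:quant_calczero}''), and your phase computation and change of variables correctly establish $B_\alpha = \operatorname{Op}(b_\alpha)$.

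There is, however, a slip in your verification that $b_\alpha$ lies in $S_\calczero^{m,\mathsf{s},\ell}$ with the \emph{same} variable order $\mathsf{s}$. Boundedness of $\mathsf{s}_\alpha - \mathsf{s}$ does not let you ``absorb the discrepancy between $\langle z\rangle^{\mathsf{s}_\alpha}$ and $\langle z\rangle^{\mathsf{s}}$ into the symbol constants'': if the exponent is merely bounded by some $c>0$, the ratio $\langle z\rangle^{\mathsf{s}_\alpha - \mathsf{s}}$ can still grow like $\langle z\rangle^{c}$. What you would need is vanishing of $\mathsf{s}_\alpha - \mathsf{s}$ at $\mathrm{bf}$ (where $\langle z\rangle\to\infty$), not at $\mathrm{df}\cup\mathrm{zf}$; and in any case your assertion that the translation ``acts trivially in the compactified variables at $\mathrm{zf}$'' is false, since $\tau_\natural\mapsto\tau_\natural-\alpha$ is a nontrivial shift at $h=0$. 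What the argument honestly delivers is $b_\alpha \in S_\calczero^{m,\mathsf{s}_\alpha,\ell}$. For constant $\mathsf{s}$ this is the same space and your proof is complete; for genuinely variable $\mathsf{s}$ the two symbol classes can differ (take e.g.\ $b=\langle z\rangle^{\mathsf{s}}$ with $\mathsf{s}$ depending nontrivially on $\tau_\natural$), and the paper's one-line proof glosses over this as well.
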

\begin{proof}
	This is clear from the quantization formula \cref{eq:quant_calczero}. 
\end{proof}
We will use this later.

\begin{proposition}
	For any $\alpha\in \bbR$, the map $\calS \ni u \mapsto e^{i\alpha t / h^2} u$ extends to a uniformly bounded transformation on every $\calczero$-Sobolev space. In addition, this map is bounded from below: 
	\begin{equation}
		\lVert e^{i\alpha t/h^2}w \rVert_{H_{\calczero}^{m,\mathsf{s},\ell}}
		\geq C^{-1} \lVert w \rVert_{H_{\calczero}^{m,\mathsf{s},\ell}},
		\label{eq:misc_058}
	\end{equation}
	for a constant $C>0$ depending on $\alpha,m,\mathsf{s},\ell$.
	\label{prop:oscillation_mapping_boundedness}
\end{proposition}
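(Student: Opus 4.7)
The plan is to reduce everything to \Cref{prop:translation} together with the $L^2$-boundedness of order-zero $\calczero$-pseudodifferential operators. Recall that the norm $\lVert\cdot\rVert_{H_{\calczero}^{m,\mathsf{s},\ell}}$ is defined via the totally elliptic operator $\Lambda_{m,\mathsf{s},\ell}\in\Psi_{\calczero}^{m,\mathsf{s},\ell}$ (plus an auxiliary elliptic operator of constant order). The key observation is the conjugation identity
\begin{equation}
	\Lambda_{m,\mathsf{s},\ell}\,(e^{i\alpha t/h^2}u)=e^{i\alpha t/h^2}\,\Lambda'_{m,\mathsf{s},\ell}\,u,\qquad \Lambda'_{m,\mathsf{s},\ell}:=e^{-i\alpha t/h^2}\Lambda_{m,\mathsf{s},\ell}\,e^{i\alpha t/h^2}.
\end{equation}
By \Cref{prop:translation} (applied with $-\alpha$ in place of $\alpha$), the operator $\Lambda'_{m,\mathsf{s},\ell}$ lies in $\Psi_{\calczero}^{m,\mathsf{s},\ell}$ with symbol obtained from that of $\Lambda_{m,\mathsf{s},\ell}$ by translating the $\tau_\natural$ variable; since the $\calczero$-symbol estimates depend only on $\langle\zeta_\natural\rangle$, which is translation-equivalent under shifts of the first coordinate by a fixed constant, no orders are lost.

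Next, because multiplication by $e^{i\alpha t/h^2}$ is a unitary operator on $L^2$, the first contribution to the norm satisfies
\begin{equation}
	\lVert\Lambda_{m,\mathsf{s},\ell}(e^{i\alpha t/h^2}u)\rVert_{L^2}=\lVert\Lambda'_{m,\mathsf{s},\ell}u\rVert_{L^2}.
\end{equation}
I would then write $\Lambda'_{m,\mathsf{s},\ell}=\big(\Lambda'_{m,\mathsf{s},\ell}\circ\Lambda_{m,\mathsf{s},\ell}^{-1}\big)\circ\Lambda_{m,\mathsf{s},\ell}$, where the inverse exists modulo a uniformly smoothing residual for $h$ small by the parametrix statement in the previous proposition applied to the totally elliptic $\Lambda_{m,\mathsf{s},\ell}$. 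The composition lies in $\Psi_{\calczero}^{0,0,0}$ by \Cref{prop:Planck_composition}, and hence is uniformly $L^2$-bounded in $h$ (this is the standard $L^2$-boundedness for order-zero operators in the calculus, which follows from the composition formula and a $T^*T$/square-root argument, or directly from the elliptic estimate \cref{eq:misc_11n}). Thus $\lVert\Lambda'_{m,\mathsf{s},\ell}u\rVert_{L^2}\lesssim\lVert\Lambda_{m,\mathsf{s},\ell}u\rVert_{L^2}$, uniformly in $h$. The same argument applied to the auxiliary operator $A_{m',s',\ell'}$ handles the remaining term in the norm, yielding the upper bound $\lVert e^{i\alpha t/h^2}u\rVert_{H_{\calczero}^{m,\mathsf{s},\ell}}\leq C\lVert u\rVert_{H_{\calczero}^{m,\mathsf{s},\ell}}$.

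For the lower bound \cref{eq:misc_058}, I would simply apply the already-established upper bound with $\alpha$ replaced by $-\alpha$ to the distribution $v=e^{i\alpha t/h^2}u$:
\begin{equation}
	\lVert u\rVert_{H_{\calczero}^{m,\mathsf{s},\ell}}=\lVert e^{-i\alpha t/h^2}v\rVert_{H_{\calczero}^{m,\mathsf{s},\ell}}\leq C\lVert v\rVert_{H_{\calczero}^{m,\mathsf{s},\ell}}=C\lVert e^{i\alpha t/h^2}u\rVert_{H_{\calczero}^{m,\mathsf{s},\ell}}.
\end{equation}
I do not anticipate a serious obstacle: the entire argument is an application of \Cref{prop:translation} plus $L^2$-boundedness in $\Psi_{\calczero}^{0,0,0}$. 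The only point requiring mild care is verifying that the shifted symbol $b(\tau_\natural-\alpha,\xi_\natural,z,h)$ satisfies the same $S_{\calczero}^{m,\mathsf{s},\ell}$ estimates, with constants independent of $h$; this is immediate from $\langle\tau_\natural-\alpha\rangle\asymp\langle\tau_\natural\rangle$ with constants depending only on $\alpha$, so the dependence of $C$ on $\alpha$ is harmless.
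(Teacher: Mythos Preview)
Your proposal is correct and follows essentially the same approach as the paper: both reduce to \Cref{prop:translation} together with the $L^2$-boundedness/elliptic estimate in $\Psi_{\calczero}$, and both obtain the lower bound by applying the upper bound with $-\alpha$. The paper's version is marginally more streamlined---it picks an arbitrary elliptic $B\in\Psi_{\calczero}^{m,\mathsf{s},\ell}$ and invokes \cref{eq:misc_11n} directly, rather than unpacking the parametrix for $\Lambda_{m,\mathsf{s},\ell}$---but the content is the same.
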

\begin{proof}
	Choose $B\in \Psi_{\calczero}^{m,\mathsf{s},\ell}$ elliptic. Then, there exists some $h_0>0$ such that for all $h\in (0,h_0)$ and $u\in \calS'$, we have 
	\begin{equation}
		\lVert e^{i\alpha t/h^2} u \rVert_{H_{\calczero}^{m,\mathsf{s},\ell}} \lesssim \lVert B (e^{i\alpha t/h^2} u) \rVert_{L^2} =\lVert e^{-i\alpha t/h^2} B (e^{i\alpha t/h^2} u) \rVert_{L^2} = \lVert B_{-\alpha} u \rVert_{L^2}\lesssim \lVert u \rVert_{H_{\calczero}^{m,\mathsf{s},\ell}}.
	\end{equation}

	The lower bound \cref{eq:misc_058} follows applying this to $u=e^{-i\alpha t/h^2} w$.
\end{proof}

\subsection{\texorpdfstring{The resolved parametrized parabolic calculus, $\Psiparres$}{The resolved parametrized parabolic calculus}}\label{subsec:parIres}
In this section we introduce the second ingredient of $\Psi_{\calc}$. This is the resolved parametrized parabolic calculus, $\Psiparres$, which is constructed from the parabolic calculus involving a small parameter $h$. 
The parabolic calculus on Minkowski space $\bbR^{1,d}$ is a variant of the scattering calculus,  in which we treat the time frequency variable $\tau$ differently to the spatial coordinates $\xi_i$. This is done so that ``one time derivative is worth two spatial derivatives''; more precisely, the differential operator $D_t$ is second order in the calculus, while spatial derivatives $D_{x_j}$ are first order. This ensures that the time-derivative term in the Schr\"odinger operator $D_t + \triangle$ is a principal term that is treated as the same ``strength'' as the Laplace term, essential so that the operator is of real principal type (away from radial sets) and can be analyzed using microlocal propagation estimates. Anisotropic calculi, more general than the particular parabolic anisotropy we consider here, were introduced on local patches of $\bbR^{1,d}$ by Lascar \cite{Lascar}. 

The parabolic compactification of phase space considered here was introduced by Gell-Redman--Gomes--Hassell in \cite{Parabolicsc}. 
It is defined by
\begin{equation} 
	{}^{\mathrm{par}}\overline{T}^* \bbM = \bbM \times \overline{(\bbR^{1,d}_{\tau,\xi  })}_{\mathrm{par}},
\end{equation} 
where $\overline{(\bbR^{1,d}_{\tau,\xi  })}_{\mathrm{par}}$
is the parabolic compactification of $\bbR^{1,d}$. 

The points at fiber-infinity in this compactification can be identified with 
``parabolic'' rays   $\{ (\tau_0 s^2, v s) :s\in \bbR^+  \} $, $\tau_0\in \bbR,v\in \bbR^d$ (not both zero).
An explicit atlas for this compactification is given by 
\begin{equation} 
	\overline{(\bbR^{1,d}_{\tau,\xi  })}_{\mathrm{par}} =  \bbR^{1,d}_{\tau,\xi }  \cup \Big( \bigcup_\pm  (\overline{(\bbR^{1,d}_{\tau,\xi  })}_{\mathrm{par}} \cap \{\pm \tau>0\}) \Big) \cup \Big( \bigcup_{\pm, k\in \{1,\ldots,d\}} (\overline{(\bbR^{1,d}_{\tau,\xi  })}_{\mathrm{par}} \cap \{\pm \xi_k>0\}) \Big)\;\,
	\label{eq:misc_005}
\end{equation} 
where, in the sense of equality of compactifications,  
\begin{align}
	&\overline{(\bbR^{1,d}_{\tau,\xi  })}_{\mathrm{par}} \cap \{\pm\tau>0\} = [0,\infty)_{1/(\pm \tau)^{1/2}} \times \bbR^d_{\xi/(\pm\tau)^{1/2}}, \label{eq:misc_006} \\
	&\overline{(\bbR^{1,d}_{\tau,\xi  })}_{\mathrm{par}} \cap \{\pm \xi_k>0\} =  [0,\infty)_{\pm 1/\xi_k} \times \bbR_{\tau/\xi_k^2}\times  \bbR^{d-1}_{\hat{\xi}_k}, \label{eq:misc_007}
\end{align}
where $\hat{\xi}_k$ is the $(d-1)$-tuple $\{\xi_j/\xi_k\}_{j\neq k}$. 

We can observe that $\rho_{\mathrm{bf}} := \ang{z}^{-1}$ is a boundary defining function for spacetime infinity, and $\aang{\zeta}^{-1}$ is a boundary defining function for fiber-infinity, where 
\begin{equation} 
\aang{\zeta} = (1 + \tau^2 + |\xi|^4)^{1/4}, \quad \zeta = (\tau, \xi).
\end{equation} 

We now let $\mathsf{s}$ denote a smooth function on ${}^{\mathrm{par}}\overline{T}^* \bbM \times I$, where $I = [0, 1]_h$ denotes the space of parameters.  We also choose $0 < \delta < 1/2$ as before. We define the space of parametrized parabolic symbols  with differential order $m$, spacetime order $\mathsf{s}$ and $h$-order $q$ , denoted  $S_{\mathrm{par, I, \delta}}^{m,\mathsf{s},q}(\bbM)$ as the set of smooth functions $a(z, \zeta, h)$ on $T^* \bbR^{1,d} \times (0, 1]$ satisfying 
\begin{equation} \label{eq:par_symbol}
|\partial_z^\alpha \partial_{\xi}^\beta \partial_\tau^k (h \partial_h)^j a(z,\zeta,h)| \leq C_{\alpha k \beta} h^{-q} \ang{z}^{\mathsf{s}- |\alpha|+ \delta(|\alpha| +  |\beta| + j+ k)} \aang{\zeta}^{m-2k-|\beta|}. 
\end{equation}
These estimates are just parabolic symbol estimates of order $(m, \mathsf{s})$ with conormal behaviour in $h$. Note, however, that $\mathsf{s}$ may depend on $h$. 
Also notice a key difference with scattering-type symbol estimates is that a $\tau$-derivative reduces the order of growth in frequency by $2$, since $\tau$ has order two in the parabolic calculus.  
The best constants in \cref{eq:par_symbol} define a sequence of seminorms that give a Fr\'echet space structure to $S_{\mathrm{par, I, \delta}}^{m,\mathsf{s}}(\bbM)$. We then define 
\begin{equation}
S_{\mathrm{par, I}}^{m,\mathsf{s},q}(\bbM) = \bigcap_{\delta > 0} S_{\mathrm{par, I, \delta}}^{m,\mathsf{s},q}(\bbM).
\end{equation}

As with the natural calculus, we can express this in terms of fixed regularity under repeated applications of vector fields. We define  $\calV_{\mathrm{b}}({}^{\mathrm{par}}\overline{T}^* \bbM \times [0,1]_h)$ to be the smooth vector fields on this space tangent to the boundary,\footnote{We are not interested in the boundary at $h=1$ and ignore it completely.} and 
\begin{equation}  \label{eq:tildeVb-par}
\tilde{\calV}_{\mathrm{b}}({}^{\mathrm{par}}\overline{T}^* \bbM \times [0,1]_h) = \bigcup_{\delta > 0} \rho_{\mathrm{bf}}^\delta \calV_{\mathrm{b}}({}^{\mathrm{par}}\overline{T}^* \bbM \times [0,1]_h).
\end{equation} 
The symbol classes are characterized by 
\begin{multline}\label{eq:par_symbols_Vb_defn}
S_{\mathrm{par, I}}^{m,\mathsf{s},\ell} = \big\{ a \in h^{-q} \la z \ra^{\mathsf{s}} \la \zeta \ra^{m} L^\infty({}^{\mathrm{par, I}}\overline{T}^* \bbM) : \big( \tilde{\calV}_{\mathrm{b}}({}^{\mathrm{par, I}}\overline{T}^* \bbM) \big)^M a \in h^{-q} \la z \ra^{\mathsf{s}} \la \zeta \ra^{m} L^\infty({}^{\mathrm{par}}\overline{T}^* \bbM \times [0,1]_h) \\ \text{ for all } M \in \mathbb{N} \big\}.
\end{multline}

Then the space of parametrized parabolic pseudodifferential operators of order $(m,\mathsf{s},q)$, which is denoted by $\Psi_{\mathrm{par, I}}^{m,\mathsf{s}, q}(\bbM)$, consists of parametrized operators acting by the standard (left) quantization 
\begin{equation}
\operatorname{Op}(a) f(z, h)  = \frac{1}{(2\pi)^D} \int_{\bbR^D} \int_{\bbR^D}   e^{i \zeta\cdot (z-z')} a(z,\zeta,h) f(z') \bbR^D z' \dd^D \zeta, \quad D = 1+d. 
\label{eq:quant, par}
\end{equation}
where $a \in S_{\mathrm{par, I}}^{m,\mathsf{s}, q}(\bbM)$. 
This class is of course nothing more than $h^{-q}$ times a conormal function of $h$ with values in $\Psi_{\mathrm{par}}^{m,\mathsf{s}}(\bbM)$. 

The principal symbol of $A = \operatorname{Op}(a)$, $a \in S_{\mathrm{par,I}}^{m,\mathsf{s},q}(\bbM)$, is its image in 
\begin{equation} 
S_{\mathrm{par,I}}^{m,\mathsf{s},q}(\bbM) / \bigcap_{\delta > 0} S_{\mathrm{par,I}}^{m-1,\mathsf{s}-1+\delta,q}(\bbM)
\end{equation} 
Notice that this calculus is \emph{not} ``fully symbolic'' in the sense of the discussion at the beginning of this section. In fact, if we restrict attention to smooth, rather than conormal functions, of $h$, then 
the restriction at $h=0$ is an operator, not a function, and while this restriction or `normal operator' is multiplicative under composition, it takes values in a noncommutative algebra, namely $\Psi_{\mathrm{par}}$.

It is straightforward to check that the composition of $A \in \Psi_{\mathrm{par,I}}^{m,\mathsf{s},q}$ with $B \in \Psi_{\mathrm{par,I}}^{m',\mathsf{s}',q'}$ is an operator $C \in \Psi_{\mathrm{par,I}}^{m+m',\mathsf{s}+\mathsf{s}',q+q'}$. Moreover, the symbol of $C$ satisfies an asymptotic expansion 
\begin{multline}\label{eq:symbol exp parI}
	c(z,\zeta,h) \sim  \sum_{|\alpha| \leq k-1} \frac{i^{|\alpha|}}{ \alpha! } \big(D_\zeta^\alpha a(z,\zeta,h) \big) D_{z}^{\alpha} b(z,\zeta,h)  
	+ r(z,\zeta,h), \\ r \in S_{\mathrm{par, I}}^{m+m'-k,\mathsf{s}+\mathsf{s}' -k+\delta,q+q'}(\bbM) \ \forall \delta > 0
\end{multline}
in which the terms decrease in order both at fiber-infinity and at spacetime infinity. It follows that the principal symbol of $AB$ is the product of the principal symbols of $A$ and $B$, and the principal symbol of the commutator $i[A, B]$ is the Poisson bracket of the principal symbols of $A$ and $B$.

That was all about $\Psi_{\mathrm{par,I}}$.
We now turn to describing the resolved parabolic calculus $\Psi_{\mathrm{par,I,res}}$. 
The resolved parametrized phase space ${}^{\mathrm{par,I}}\overline{T}^*\bbM={}^{\mathrm{par,I,res}}\overline{T}^* \bbM$ is obtained from ${}^{\mathrm{par}}\overline{T}^* \bbM \times [0,1]_h$ by blowing up the intersection of $\{ h = 0 \}$ and the face at fiber-infinity, that is, $\aang{\zeta}^{-1} = 0$:
\begin{equation}
{}^{\mathrm{par,I,res}}\overline{T}^* \bbM = \Big[ {}^{\mathrm{par}}\overline{T}^* \bbM \times [0,1]_h; \{ h = 0, \aang{\zeta}^{-1} = 0 \} \Big]. 
\end{equation}
This ``resolved'' phase space has four boundary hypersurfaces: the differential face $\mathrm{df_1}$ at fiber-infinity (the lift of fiber-infinity from ${}^{\mathrm{par,I}}\overline{T}^* \bbM$); the spacetime face $\mathrm{bf}$ (the lift of spacetime-infinity from ${}^{\mathrm{par,I}}\overline{T}^* \bbM$); the parabolic face $\mathrm{pf}$ (the lift of $\{ h = 0 \}$ from ${}^{\mathrm{par,I}}\overline{T}^* \bbM$); and the new face created by blowup, which we call the front face or the natural face, and denote $\natural\mathrm{f}$ (or ff). We have boundary defining functions 
\begin{equation}\begin{aligned}
\rho_{\mathrm{df}_1} &= \frac{\aang{\zeta}^{-1}}{h +  \aang{\zeta}^{-1}}, &\qquad 
\rho_{\mathrm{bf}} &= \ang{z}^{-1}, \\
\rho_{\mathrm{par}} &= \frac{h}{h +  \aang{\zeta}^{-1}} , &\qquad
\rho_{\natural\mathrm{f}} &= h + \aang{\zeta}^{-1}.
\end{aligned}\end{equation}

Accordingly we have symbol classes with four different orders, which we denote $m$ (the order at the differential face), $\mathsf{s}$ (the order at the spacetime face), $q$ (the order at the parabolic face) and $l$ (the order at the natural face). 

We define the symbol classes on ${}^{\mathrm{par,I,res}}\overline{T}^* \bbM$  in terms of fixed regularity under repeated applications of vector fields. We define 
$\calV_{\mathrm{b}}({}^{\mathrm{par,I,res}}\overline{T}^* \bbM)$ to be the smooth vector fields on this space tangent to the boundary, and 
\begin{equation}  \label{eq:tildeVb-parIres}
\tilde{\calV}_{\mathrm{b}}({}^{\mathrm{par,I,res}}\overline{T}^* \bbM ) = \bigcup_{\delta > 0} \rho_{\mathrm{bf}}^\delta \calV_{\mathrm{b}}({}^{\mathrm{par,I,res}}\overline{T}^* \bbM ).
\end{equation} 
The symbol classes $\tilde S_{\mathrm{par, I,res}}^{m,\mathsf{s},\ell,q}$ are defined analogously to \cref{eq:par_symbols_Vb_defn}:
\begin{multline}
\tilde S_{\mathrm{par, I,res}}^{m,\mathsf{s},\ell,q} = \Big\{ a \in \rho_{\mathrm{df}}^{-m} \rho_{\mathrm{bf}}^{-\mathsf{s}} \rho_{\natural\mathrm{f}}^{-\ell}   \rho_{\mathrm{par}}^{-q} L^\infty({}^{\mathrm{par, I}}\overline{T}^* \bbM) : \big( \tilde{\calV}_{\mathrm{b}}({}^{\mathrm{par, I}}\overline{T}^* \bbM) \big)^M a \\ \in \rho_{\mathrm{df}}^{-m} \rho_{\mathrm{bf}}^{-\mathsf{s}} \rho_{\natural\mathrm{f}}^{-\ell}   \rho_{\mathrm{par}}^{-q} L^\infty({}^{\mathrm{par}}\overline{T}^* \bbM \times [0,1]_h) \text{ for all } M \in \mathbb{N}. \Big\}
\end{multline}

Let the corresponding operators, obtained from the standard quantization, be denoted $\tilde \Psi_{\mathrm{par,I, res}}^{m,\mathsf{s},l, q}$. (The tilde is used as we will modify the class of symbols, and hence operators, shortly.) 

We claim the following:
\begin{proposition}\label{eq:parIres calculus}
The operators $\tilde \Psi_{\mathrm{par,I, res}}^{\bullet, \bullet, \bullet, \bullet}$ form a multi-graded algebra, i.e. if $A \in \tilde \Psi_{\mathrm{par,I, res}}^{m,\mathsf{s},\ell, q}$ and $B \in  \tilde \Psi_{\mathrm{par,I, res}}^{m',\mathsf{s}',\ell', q'}$, then $C = A \circ B$ satisfies 
\begin{equation}
	C\in \tilde \Psi_{\mathrm{par,I, res}}^{m+m',\mathsf{s} + \mathsf{s}',\ell+\ell', q+q'}.
\end{equation}
If the left reduced symbols are $a$, $b$ and $c$ respectively, then we have an expansion similar to \eqref{eq:symbol exp parI}:  
\begin{multline}\label{eq:symbol exp parIres}
	c(z,\zeta,h) \sim  \sum_{|\alpha| \leq k-1} \frac{i^{|\alpha|}}{ \alpha! } \big(D_\zeta^\alpha a(z,\zeta,h) \big) D_{z}^{\alpha} b(z,\zeta,h)  
	+ r(z,\zeta,h), \\ r \in S_{\mathrm{par, I,res}}^{m+m'-k,\mathsf{s}+\mathsf{s}' -k+\delta, \ell + \ell' - k, q+q'}(\bbM) \ \forall \delta > 0. 
\end{multline}
Moreover, we have 
\begin{equation}\label{eq:when_parIres_=_parI}
\tilde \Psi_{\mathrm{par,I, res}}^{m,\mathsf{s},m+q, q} = \Psi_{\mathrm{par,I}}^{m,\mathsf{s}, q}.
\end{equation}
\end{proposition}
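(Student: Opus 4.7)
The plan is to run the composition through the standard oscillatory-integral formula and then verify, face by face, that the resulting symbol lies in the predicted class. Concretely, for $A = \operatorname{Op}(a)$ and $B = \operatorname{Op}(b)$, the left-reduced symbol of $C = A \circ B$ is the usual one,
\begin{equation}
	c(z,\zeta,h) = e^{iD_{\zeta} \cdot D_{z'}} \big( a(z,\zeta,h) b(z',\zeta,h) \big) \big|_{z' = z},
\end{equation}
with associated Moyal expansion
\begin{equation}
	c(z,\zeta,h) \sim \sum_{\alpha} \frac{i^{|\alpha|}}{\alpha!}\, D_\zeta^\alpha a(z,\zeta,h)\, D_z^\alpha b(z,\zeta,h).
\end{equation}
To read off the orders on the resolved space, the key computation is that $\aang{\zeta}^{-1} = \rho_{\mathrm{df}_1}\rho_{\natural\mathrm{f}}$ and $h = \rho_{\mathrm{par}}\rho_{\natural\mathrm{f}}$, so each factor of $D_\zeta$ applied to a symbol gains one order at \emph{both} $\mathrm{df}_1$ and $\natural\mathrm{f}$, while each factor of $D_z$ only costs a $\rho_{\mathrm{bf}}^{\delta}$ at $\mathrm{bf}$. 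Summing the orders, the $\alpha$-th term lies in $\tilde S^{m+m'-|\alpha|,\mathsf{s}+\mathsf{s}'-|\alpha|+\delta,\ell+\ell'-|\alpha|,q+q'}_{\mathrm{par,I,res}}$, which gives \cref{eq:symbol exp parIres}.

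For the remainder after truncating at order $k$, I would use the standard integration-by-parts / stationary phase argument with the phase $(z-z')\cdot(\zeta-\zeta')$, writing the bounds in the four coordinate charts covering ${}^{\mathrm{par,I,res}}\overline{T}^* \bbM$ (near the interior of $\natural\mathrm{f}$, near $\mathrm{df}_1 \cap \natural\mathrm{f}$, near $\mathrm{pf}\cap\natural\mathrm{f}$, and in a parI-type region away from $\natural\mathrm{f}$). The key point is that the resolved symbol estimates are expressed via the b-vector fields $\tilde{\calV}_{\mathrm{b}}({}^{\mathrm{par,I,res}}\overline{T}^* \bbM)$, and each iterated $D_\zeta^\alpha D_z^\alpha$ in the remainder can be rewritten in terms of these b-vector fields modulo bookkeeping of weights; this keeps all estimates uniform in the parameter $h$ and in position in phase space. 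I expect this remainder estimate, carried out carefully in the chart near the corner $\mathrm{df}_1 \cap \natural\mathrm{f} \cap \mathrm{pf}$, to be the most delicate step, since one must track three simultaneously small quantities ($h$, $\aang{\zeta}^{-1}$, and their ratio).

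For \eqref{eq:when_parIres_=_parI}, the observation is that the composite weight
\begin{equation}
	\rho_{\mathrm{df}_1}^{-m}\rho_{\natural\mathrm{f}}^{-\ell}\rho_{\mathrm{par}}^{-q}
	= \aang{\zeta}^{m}\, h^{-q}\, (h+\aang{\zeta}^{-1})^{m-\ell+q}
\end{equation}
reduces precisely to $\aang{\zeta}^m h^{-q}$ when $\ell = m+q$; combined with the unchanged $\ang{z}^{\mathsf{s}}$ weight, this matches the weight appearing in the parI symbol class. For the regularity piece, the blowdown $\beta : {}^{\mathrm{par,I,res}}\overline{T}^*\bbM \to {}^{\mathrm{par,I}}\overline{T}^*\bbM$ is a b-map of manifolds with corners, and it is standard (and easy to verify in local coordinates near the blown-up corner) that $\beta_* \calV_{\mathrm{b}}({}^{\mathrm{par,I,res}}\overline{T}^*\bbM) = \calV_{\mathrm{b}}({}^{\mathrm{par,I}}\overline{T}^*\bbM)$ and similarly for the $\rho_{\mathrm{bf}}^\delta$-enhanced versions in \cref{eq:tildeVb-par} and \cref{eq:tildeVb-parIres}. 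Thus iterated regularity on one side is equivalent to iterated regularity on the other, yielding $\tilde S^{m,\mathsf{s},m+q,q}_{\mathrm{par,I,res}} = S^{m,\mathsf{s},q}_{\mathrm{par,I}}$ at the level of symbols, and hence \eqref{eq:when_parIres_=_parI} at the level of operators since both are defined by the same left quantization \cref{eq:quant, par}.
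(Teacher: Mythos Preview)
Your plan is correct in outline, and your treatment of \eqref{eq:when_parIres_=_parI} is essentially what the paper does (the weight computation plus the observation that b-vector fields span b-vector fields under the blowdown $\beta$; the paper packages this as a general lemma, \Cref{lemma:blow up conormal}, about lifting conormal classes through a corner blowup). For the composition law, however, the paper takes a different and considerably shorter route: rather than re-running the stationary-phase remainder estimate chart by chart on the resolved space, it \emph{reduces} to the already-established $\mathrm{par,I}$ calculus. The key observation is the inclusion $\tilde S_{\mathrm{par,I,res}}^{m,\mathsf{s},\ell,q} \subset S_{\mathrm{par,I}}^{M,\mathsf{s},q}$ with $M = \max(m,\ell-q)$, so $A\circ B$ lands in $\Psi_{\mathrm{par,I}}^{M+M',\mathsf{s}+\mathsf{s}',q+q'}$ automatically by \eqref{eq:symbol exp parI}. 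This gives a too-weak order at $\mathrm{df}_1$, but the individual Moyal terms are manifestly in the correct resolved classes (by exactly the $D_\zeta$-gains-at-both-$\mathrm{df}_1$-and-$\natural\mathrm{f}$ bookkeeping you describe), and once $N$ is large enough the $\mathrm{par,I}$ remainder $S_{\mathrm{par,I}}^{M+M'-N,\ldots}$ is contained in any fixed $\tilde S_{\mathrm{par,I,res}}$ class. Your direct approach would give a self-contained argument independent of the $\mathrm{par,I}$ composition, at the cost of the multi-chart oscillatory-integral estimate you flag as delicate; the paper's reduction bypasses that remainder analysis entirely.
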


We will deduce this using the following lemma on lifting  functions under blowup. 
Let $X$ be a manifold with corners with boundary hypersurfaces $\{H_0,...,H_\ell\}$ among which $H_1, H_2$ are two intersecting boundary hypersurfaces of $X$. 
Let $H_0$ be a boundary hypersurface other than $H_1,H_2$ and $\rho_0$ is the boundary defining function of it. 
Define
\begin{equation} \label{eq:tildeVb-general}
\tilde{\calV}_{\mathrm{b}}(X) = \bigcup_{\delta>0} \rho_0^\delta \calV_{\mathrm{b}}(X).
\end{equation}
Let $\mathcal{A}^{0,0,..,0}(X)$ (where each index $0$ corresponds to a boundary hypersurface) denote the space of functions that is smooth in the interior of $X$ and remain bounded under repeated application of elements in $\tilde{\calV}_{\mathrm{b}}(X)$. 
Fixing a finite generating set of $\calV_{\mathrm{b}}(X)$, $\mathcal{A}^{0,0,..,0}(X)$ is equipped with the Fr\'echet topology induced from $L^\infty$-norm of products of $\rho_0^{\delta_j}$ times those vector fields applied to functions, where $\delta_j \to 0$ as $j \to \infty$.
Then we set
\begin{equation}
\mathcal{A}^{m_0,...,m_\ell}(X)
= \Pi_{i=0}^\ell \rho_i^{m_i} \mathcal{A}^{0,0,..,0}(X),
\end{equation}
where $m_0 \in C^\infty(X)$ and all other $m_i$ are constants. This has a Fr\'echet induced from that of $\mathcal{A}^{0,0,..,0}(X)$.

\begin{remark}
We write this part in this slightly abstract way so that it applies to various phase spaces that we are using. In applications, we always take $H_0$ to be the spacetime infinity, $\mathrm{bf}$, and $H_1,H_2$ are boundary faces intersecting at the place where we perform resolutions. 
\end{remark}

Now we consider 
\begin{equation}
Y = [X; H_1 \cap H_2], \qquad \beta : Y \to X,
\end{equation}
which is the manifold with corners obtained by blowing up the codimension two corner $H_1 \cap H_2$, together with its blowdown map $\beta$.
We define $\mathcal{A}^{m_0,m_1, m_2, m_3,\dots}(Y)$
in the same manner as above with $\calV_{\mathrm{b}}(X)$ (resp. $\tilde{\calV}_{\mathrm{b}}(X)$) replaced by $\calV_{\mathrm{b}}(Y)$ (resp. $\tilde{\calV}_{\mathrm{b}}(Y)$),
with weight $m_0$ at the lift of $H_0$, $m_1$ at the lift of $H_1$, $m_2$ at the lift of $H_2$, $m_3$ at the new boundary hypersurface obtained by replacing $H_1 \cap H_2$ by its inward-pointing spherical normal bundle, and weights $\mathsf{m}$ at the other boundary hypersurfaces. 
Again, we allow $m_0 \in C^\infty(Y)$ but all other orders are constants.

\begin{lemma}\label{lemma:blow up conormal} Let $X,Y$ be as above, then  we have
\begin{equation}\label{eq:conormal_lift}
\beta^* \mathcal{A}^{m_0,m_1, m_2, \dots }(X) = \mathcal{A}^{\beta^*m_0,m_1, m_2, m_1 + m_2, \dots}(Y).
\end{equation}
The key point is the equality, not mere containment, in \cref{eq:conormal_lift}.
\end{lemma}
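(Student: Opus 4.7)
The plan is to reduce to a local model in projective coordinates and verify both containments by direct chain-rule computation. Fix a point $p \in H_1 \cap H_2$ and product coordinates $(x_1, x_2, y) \in [0,\infty)^2 \times \mathbb{R}^{n-2}$ on a neighborhood $U \ni p$ in $X$, where $x_i$ defines $H_i$. The preimage $\beta^{-1}(U)$ is covered by two projective charts: in Chart A I use $(r, t, y)$ with $\beta(r,t,y) = (r, rt, y)$, so that $\mathrm{ff} = \{r=0\}$ with $\rho_{\mathrm{ff}} = r$, and the lift $\tilde H_2$ of $H_2$ is $\{t=0\}$ with $\rho_{\tilde H_2} = t$. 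Chart B is symmetric under $1 \leftrightarrow 2$. Away from $H_1 \cap H_2$, $\beta$ is a diffeomorphism and nothing is required.

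For the forward containment, a direct chain-rule computation in Chart A yields
\begin{equation}
\beta^*(\rho_{H_1}^{m_1} \rho_{H_2}^{m_2}) = r^{m_1}(rt)^{m_2} = \rho_{\mathrm{ff}}^{m_1 + m_2} \rho_{\tilde H_2}^{m_2},
\end{equation}
exhibiting the sum-of-orders $m_1 + m_2$ at $\mathrm{ff}$, together with
\begin{equation}
\beta^*(x_1 \partial_{x_1}) = r \partial_r - t \partial_t, \qquad \beta^*(x_2 \partial_{x_2}) = t \partial_t, \qquad \beta^*(\partial_{y_j}) = \partial_{y_j},
\end{equation}
each lying in $\calV_{\mathrm{b}}(Y)$. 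Since the operators on the left span $\calV_{\mathrm{b}}(X)$ locally, iterated $\calV_{\mathrm{b}}(X)$-derivatives of a representative $w \in \mathcal{A}^{0,\ldots,0}(X)$ lift to iterated $\calV_{\mathrm{b}}(Y)$-derivatives of $\beta^* w$ with the same $L^\infty$ bounds. The face $H_0$ is untouched by the blowup, so the $\rho_0^\delta$ factor defining $\tilde{\calV}_{\mathrm{b}}$ in \eqref{eq:tildeVb-general} transfers verbatim and the variable order $m_0$ pulls back smoothly to $\beta^* m_0$. This gives $\beta^*\mathcal{A}^{m_0, m_1, m_2, \ldots}(X) \subseteq \mathcal{A}^{\beta^* m_0, m_1, m_2, m_1+m_2, \ldots}(Y)$.

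The reverse containment --- which is the non-trivial content of the lemma --- uses that the previous correspondence inverts at the level of $\calV_{\mathrm{b}}$-modules:
\begin{equation}
r \partial_r = \beta^*(x_1 \partial_{x_1} + x_2 \partial_{x_2}), \qquad t \partial_t = \beta^*(x_2 \partial_{x_2}),
\end{equation}
so the standard generators of $\calV_{\mathrm{b}}(Y)$ in Chart A descend to a generating set of $\calV_{\mathrm{b}}(X)$. Given $u \in \mathcal{A}^{\beta^* m_0, m_1, m_2, m_1+m_2, \ldots}(Y)$, I factor $u = \rho_{\mathrm{ff}}^{m_1+m_2} \rho_{\tilde H_2}^{m_2} w_A$ in Chart A (with a symmetric factorization $u = \rho_{\mathrm{ff}}^{m_1+m_2} \rho_{\tilde H_1}^{m_1} w_B$ in Chart B), where $w_A, w_B$ are bounded under iterated $\tilde{\calV}_{\mathrm{b}}(Y)$-differentiation. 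On $X^\circ$ I define $v = (\beta^{-1})^* u = x_1^{m_1} x_2^{m_2} \cdot (\beta^{-1})^* w_\bullet$, which is unambiguous because the two charts together cover $\beta^{-1}(U \cap H_1 \cap H_2)$ and $w_A$, $w_B$ agree on the overlap (both being restrictions of $u$). The pushforward relations above convert iterated $\calV_{\mathrm{b}}(X)$-derivatives of $v$ into iterated $\calV_{\mathrm{b}}(Y)$-derivatives of $w_\bullet$, yielding uniform $L^\infty$-bounds (with the $\delta$-loss at $H_0$ preserved by the untouched boundary defining function $\rho_0$). Hence $v \in \mathcal{A}^{m_0, m_1, m_2, \ldots}(X)$ with $\beta^* v = u$.

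The main technical burden is careful bookkeeping across the two projective charts, plus the verification that the generating relations above actually exhaust $\calV_{\mathrm{b}}(X)$ and $\calV_{\mathrm{b}}(Y)$ locally (as $C^\infty$-modules, not just $\mathbb{R}$-spans). This is what ensures no $\calV_{\mathrm{b}}$-regularity is lost or gained under $\beta^*$, and is precisely the point that upgrades the obvious inclusion to equality in \eqref{eq:conormal_lift}.
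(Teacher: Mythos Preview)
Your proof is correct and takes essentially the same approach as the paper: both arguments reduce to the observation that a generating set of $\calV_{\mathrm{b}}(X)$ lifts under $\beta^*$ to a generating set of $\calV_{\mathrm{b}}(Y)$ (and conversely), verified in projective coordinates, from which the equivalence of the defining seminorms follows. The paper simply asserts this is ``easily checked in local coordinates,'' whereas you carry out the explicit chain-rule computation; your version is a faithful expansion of what the paper sketches.
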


\begin{proof} The essence of the proof is that a spanning set of b-vector fields on $X$ lifts to a spanning set of b-vector fields on $Y$. This is easily checked in local coordinates. 
This tells us that a generating set of $\rho_0^\delta \calV_{\mathrm{b}}(X)$ lifts to a generating set of $\rho_0^\delta \calV_{\mathrm{b}}(Y)$, which gives the desired equivalence of seminorms induced from $\tilde{\calV}_{\mathrm{b}}(X)$ and $\tilde{\calV}_{\mathrm{b}}(Y)$, which in turn gives \cref{eq:conormal_lift}.
\end{proof}

\begin{proof}[Proof of Proposition~\ref{eq:parIres calculus}]
Using \Cref{lemma:blow up conormal}, we see that symbols of order $(m,\mathsf{s}, q)$ on ${}^{\mathrm{par,I}}\overline{T}^* \bbM$ correspond exactly to symbols of order $(m,\mathsf{s}, m+q, q)$ on ${}^{\mathrm{par,I}}\overline{T}^* \bbM$. This immediately implies \cref{eq:when_parIres_=_parI}.

 More generally, symbols in $\tilde S_{\mathrm{par,I, res}}^{m, \mathsf{s}, l, q}(\bbM)$ are contained in $S_{\mathrm{par,I}}^{M, \mathsf{s}, q}(\bbM)$, where \begin{equation} 
 	M = \max(m, l-q).
 \end{equation} 
 It follows that $AB$ is contained in $\Psi_{\mathrm{par,I}}^{M+M', \mathsf{s}+\mathsf{s}', q+q'}(\bbM)$, where $M' = \max(m', l'-q')$. This is a weaker composition property  than claimed in Proposition~\ref{eq:parIres calculus}. However, we can apply the symbol expansion \eqref{eq:symbol exp parI}, and since differential operators $D_z$ reduce the order at the spacetime face, while differential operators $D_{\zeta}$ reduce the order at the differential and natural faces, the error term $r$ is eventually in 
 \begin{equation} 
 	S_{\mathrm{par,I}}^{M+M'-N, \mathsf{s}+\mathsf{s}'-N+\delta, q+q'}(\bbM),
 \end{equation} 
 which is contained in $\tilde S_{\mathrm{par,I,res}}^{m+m', \mathsf{s}+\mathsf{s}', l+l', q+q'}(\bbM)$ for large enough $N$. 
 
 Thus the quantization of the remainder term for large enough $N$ is contained in $\tilde \Psi_{\mathrm{par,I, res}}^{m+m',\mathsf{s} + \mathsf{s}',l+l', q+q'}$ --- indeed, it is contained in $\tilde \Psi_{\mathrm{par,I, res}}^{m+m' - N',\mathsf{s} + \mathsf{s}' - N',l+l' - N', q+q'}$ for large enough $N$ depending on $N'$. 
 
 On the other hand, the $k$th term in the symbol expansion \eqref{eq:symbol exp parI} is in $\tilde S_{\mathrm{par,I, res}}^{m+m'-k,\mathsf{s} + \mathsf{s}' -k+\delta,l+l'-k, q+q'}$, for every $\delta > 0$. It follows that $c$ is in $\tilde S_{\mathrm{par,I, res}}^{m+m',\mathsf{s} + \mathsf{s}',l+l', q+q'}$ and hence $C$ is in $\tilde \Psi_{\mathrm{par,I, res}}^{m+m',\mathsf{s} + \mathsf{s}',l+l', q+q'}$ and the expansion \eqref{eq:symbol exp parIres} is valid.  
\end{proof}

Let us pause to note a useful consequence of \cref{eq:when_parIres_=_parI}: it implies a uniform control on the operator norm of $A \in \tilde \Psi_{\mathrm{par,I,res}}^{m,\beta^*\mathsf{s},\ell,q}$ on parabolic Sobolev spaces. Namely, when $m'\geq m$, $\beta^* \mathsf{s} \leq	\mathsf{s}'$, $q'\geq q$, $m'+q'\geq \ell$ then 
\begin{equation}\label{eq:action on par Sob spaces}
A \in \tilde \Psi_{\mathrm{par,I,res}}^{m,\beta^*\mathsf{s},\ell,q} \Rightarrow  \lVert A(h) \rVert_{H_{\mathrm{par}}^{m'',\mathsf{s}''}\to H_{\mathrm{par}}^{m''-m',\mathsf{s}''-\mathsf{s}' } }  \leq C h^{-q'}
\end{equation}
uniformly for $h_0\in (0,1)$ and $m'' \in \bbR$, $\mathsf{s}', \mathsf{s}'' \in C^\infty({}^{\mathrm{par,I}}\overline{T}^* \bbM)$.

In fact, we will work with a slightly smaller class of $\mathrm{par, I, res}$-symbols that is classical, that is, smooth up to an overall power, at the parabolic face $\mathrm{pf}$. This will allow us to define a normal operator at $\mathrm{pf}$. These operators will be denoted $\Psi_{\mathrm{par,I, res}}^{m,\mathsf{s},l, q}$ (with no tilde), and we will show that this is also a multi-graded algebra.

We define the symbol class $S_{\mathrm{par,I,res}}^{m,\mathsf{s},\ell,q}$ (with no tilde) by 
\begin{equation}
S_{\mathrm{par,I,res}}^{m,\mathsf{s},\ell,q} = \Big\{ a \in \tilde{S}_{\mathrm{par,I,res}}^{m,\mathsf{s},\ell,q}: \Big(\frac{\mathrm{d}}{\mathrm{d}h}\Big)^k a \in \tilde{S}_{\mathrm{par,I,res}}^{m,\mathsf{s} + \delta,\ell+k,q} 
\text{ for all } \delta>0, k \in \mathbb{N} \Big\}, 
\end{equation}
where $\mathrm{d}/\mathrm{d} h$ is the partial derivative with respect to $h$ while fixing the spatial and frequency variables $\tau,\xi$.
This condition enforces smoothness at  $\mathrm{pf}$ but not at the natural face, due to the increase in order by $k$ at the natural face when we differentiate $k$ times  in $h$. 
Its quantization is by definition the operator class $\Psi_{\mathrm{par,I,res}}^{m,\mathsf{s},\ell,q} = \operatorname{Op}( S^{m,\mathsf{s},\ell,q}_{\mathrm{par,I,res}} )$, and we set
\begin{equation} 
\Psi_{\mathrm{par,I,res}} = \bigcup_{m,s,\ell,q \in \mathbb{Z}} \Psi_{\mathrm{par,I,res}}^{m,\mathsf{s},\ell,q}.
\end{equation}
By definition $\Psi_{\mathrm{par,I,res}}^{m,\mathsf{s},\ell,q}$ consists of operators $A$ such that
\begin{equation}
A \in \tilde{\Psi}_{\mathrm{par,I,res}}^{m,\mathsf{s},\ell,q} \ , \quad \Big(\frac{\mathrm{d}}{\mathrm{d}h}\Big)^k A \in \bigcap_{\delta>0} \tilde{\Psi}_{\mathrm{par,I,res}}^{m,\mathsf{s}+\delta,\ell+k,q}.
\end{equation}

We verify that this is a sub-algebra of $\tilde{\Psi}_{\mathrm{par,I,res}}$. The only point that needs a verification is that it is closed under the composition. Suppose 
\begin{equation} \label{eq:A,B_in_small_calculus}
A \in \Psi_{\mathrm{par,I,res}}^{m,\mathsf{s},\ell,q}, B \in \Psi_{\mathrm{par,I,res}}^{m',\mathsf{s}',\ell',q'}, 
\end{equation}
and we need to show
\begin{equation}
\Big(\frac{\mathrm{d}}{\mathrm{d}h}\Big)^k (A \circ B) \in \bigcap_{\delta>0} \tilde{\Psi}_{\mathrm{par,I,res}}^{m,\mathsf{s}+\delta,\ell+k,q}.
\end{equation}
But the left hand side is a sum of terms of the form
\begin{equation}
\Big(\frac{\mathrm{d}}{\mathrm{d}h}\Big)^{j}A \circ \Big(\frac{\mathrm{d}}{\mathrm{d}h}\Big)^{k-j}B,
\end{equation}
and by assumption \eqref{eq:A,B_in_small_calculus}, we have
\begin{equation}
\Big(\frac{\mathrm{d}}{\mathrm{d}h}\Big)^{j}A \in \bigcap_{\delta>0} \tilde{\Psi}_{\mathrm{par,I,res}}^{m,\mathsf{s}+\delta,\ell+j,q},
\quad  \Big(\frac{\mathrm{d}}{\mathrm{d}h}\Big)^{k-j}B \in \bigcap_{\delta>0} \tilde{\Psi}_{\mathrm{par,I,res}}^{m',\mathsf{s}'+\delta,\ell'+k-j,q'},
\end{equation}
which shows $(\frac{\mathrm{d}}{\mathrm{d}h})^{j}A \circ (\frac{\mathrm{d}}{\mathrm{d}h})^{k-j}B \in \bigcap_{\delta>0} \tilde{\Psi}_{\mathrm{par,I,res}}^{m+m',\mathsf{s}+\mathsf{s}'+\delta,\ell+\ell'+k,q+q'}$, as desired.

The advantage of the $\Psi_{\mathrm{par,I,res}}$ calculus is that there is a well-defined normal operator at the parabolic face. The normal operator $N(A)$ is defined for $A \in \Psi^{m, \mathsf{s}, \ell, 0}_{\mathrm{par,I,res}}$ and is given by restricting the full (left-reduced) symbol of $A$ to the parabolic face, and then quantizing to obtain $N(A) \in \Psi_{\mathrm{par}}^{\ell, \mathsf{s}}$. This is well-defined, since the symbol is by definition smooth at the parabolic face and so can be restricted to this face, and the restriction belongs in the parabolic symbol class of order $(\ell, \mathsf{s})$. 

\begin{proposition}\label{prop:N mult parIres} The normal operator in the $\Psi_{\mathrm{par,I,res}}$ calculus is multiplicative, that is, for $A \in \Psi^{m, \mathsf{s}, \ell, 0}_{\mathrm{par,I,res}}$, $B \in \Psi^{m', \mathsf{s}', \ell', 0}_{\mathrm{par,I,res}}$ we have 
\begin{equation}\label{eq:mult normal op parIres}
N(AB) = N(A) N(B) \in \Psi_{\mathrm{par}}^{\ell + \ell', \mathsf{s} + \mathsf{s}'}.
\end{equation}
\end{proposition}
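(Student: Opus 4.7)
The plan is to deduce this from the composition expansion \eqref{eq:symbol exp parIres} together with the definition of $N$ as a restriction-and-quantize operation. For $A \in \Psi^{m,\mathsf{s},\ell,0}_{\mathrm{par,I,res}}$ with left-reduced symbol $a$, membership in the (untilded) calculus forces $a$ to be smooth at $\mathrm{pf}$, so the restriction $a_0 := a|_{h=0}$ (taken in the interior of $\mathrm{pf}$) is well-defined as an element of $S^{\ell,\mathsf{s}|_{\mathrm{pf}}}_{\mathrm{par}}(\bbM)$, and $N(A)=\operatorname{Op}(a_0)$; analogously for $B$ and $b_0$. The goal is then to show that $c_0 := c|_{h=0}$, with $c$ the left-reduced symbol of $C=AB$, coincides with the standard parabolic left-composition of $a_0$ and $b_0$ modulo residual parabolic symbols.

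For each $k \in \bbN$ and $\delta > 0$, Proposition~\ref{eq:parIres calculus} gives
\begin{equation*}
c = \sum_{|\alpha| \leq k-1} \frac{i^{|\alpha|}}{\alpha!} (D_\zeta^\alpha a)\, D_z^\alpha b + r_k, \qquad r_k \in S^{m+m'-k,\, \mathsf{s}+\mathsf{s}'-k+\delta,\, \ell+\ell'-k,\, 0}_{\mathrm{par,I,res}}.
\end{equation*}
Because $a$, $b$, and $r_k$ all lie in the untilded calculus, each is smooth at $\mathrm{pf}$; and the operators $D_z^\alpha, D_\zeta^\alpha$, containing no $\partial_h$, commute with restriction to $h=0$. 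Restricting the displayed identity therefore yields
\begin{equation*}
c_0 = \sum_{|\alpha| \leq k-1} \frac{i^{|\alpha|}}{\alpha!} (D_\zeta^\alpha a_0)\, D_z^\alpha b_0 + r_k|_{h=0},
\end{equation*}
with $r_k|_{h=0} \in S^{\ell+\ell'-k,\, \mathsf{s}|_{\mathrm{pf}}+\mathsf{s}'|_{\mathrm{pf}}-k+\delta}_{\mathrm{par}}(\bbM)$.

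The sum on the right is exactly the $(k-1)$-st partial sum of the asymptotic expansion for the left-composition of the parabolic symbols $a_0$ and $b_0$. Since the remainder shrinks to arbitrarily negative orders at both fiber- and spacetime-infinity as $k\to\infty$, we conclude that $c_0$ agrees asymptotically, in $\Psi_{\mathrm{par}}$, with $a_0$ composed with $b_0$. Quantizing gives
\begin{equation*}
N(AB) = \operatorname{Op}(c_0) = \operatorname{Op}(a_0)\operatorname{Op}(b_0) = N(A)\, N(B)
\end{equation*}
in $\Psi^{\ell+\ell',\, \mathsf{s}|_{\mathrm{pf}}+\mathsf{s}'|_{\mathrm{pf}}}_{\mathrm{par}}$, as claimed. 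The one conceptual step requiring care is the interchange of the asymptotic composition expansion with restriction to $\mathrm{pf}$; this is precisely what is bought by passing from the tilded calculus to the smaller, smooth-at-$\mathrm{pf}$ subalgebra $\Psi_{\mathrm{par,I,res}}$, and without this smoothness the intermediate restrictions $(D_\zeta^\alpha a)|_{h=0}$, $(D_z^\alpha b)|_{h=0}$ would not be literal restrictions of functions but only leading symbols in some expansion at $\mathrm{pf}$, obscuring the identity above.
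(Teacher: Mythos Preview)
Your symbolic approach is natural but has a genuine gap at the final step. You correctly deduce that $c_0$ and the parabolic Moyal product $a_0 \star_{\mathrm{par}} b_0$ share the same asymptotic expansion to all orders, whence $c_0 - a_0 \star_{\mathrm{par}} b_0 \in S^{-\infty,-\infty}_{\mathrm{par}}$. But this only yields $N(AB) - N(A)N(B) \in \Psi^{-\infty,-\infty}_{\mathrm{par}}$, not equality: two left symbols with identical asymptotic expansions can differ by a nonzero Schwartz symbol. The step ``$\operatorname{Op}(c_0) = \operatorname{Op}(a_0)\operatorname{Op}(b_0)$'' requires $c_0 = a_0 \star_{\mathrm{par}} b_0$ exactly, which the asymptotic expansion alone does not give.

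The paper sidesteps this by working at the operator level: for Schwartz $\phi$, it shows $A\phi$ is $C^\infty$ in $h$ with values in $\mathcal{S}(\bbR^{1,d})$, and that $(A\phi)|_{h=0} = N(A)\phi$ (by writing $A - N(A) \in h\,\Psi_{\mathrm{par,I,res}}^{\max(m,\ell),\mathsf{s},\ell+1,0}$). Then $(AB\phi)|_{h=0} = N(A)N(B)\phi$ follows immediately, and since this holds for all Schwartz $\phi$, one gets exact equality of operators with no residual ambiguity.

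Your route can be repaired, but not by the expansion alone. One option is to work with the \emph{exact} oscillatory-integral formula for the left-reduced symbol of a composition and justify passing the $h\to 0$ limit through the integral using the uniform symbol bounds from the untilded calculus; another is to observe that the explicit integral remainder in the Moyal expansion restricts at $h=0$ to the parabolic integral remainder. Either fix amounts to the same kind of $h\to 0$ convergence argument the paper carries out at the operator level.
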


\begin{proof}
Consider the action of $A \in \Psi^{m, \mathsf{s}, \ell, 0}_{\mathrm{par,I,res}}$ on a Schwartz function $\phi$; we claim that this is a $C^\infty$ function of $h$ with values in $\mathcal{S}(\bbR^{1,d})$. To see this, we recall that, as above, 
\begin{equation} 
\Big(\frac{\mathrm{d}}{\mathrm{d}h}\Big)^{j}A \in \bigcap_{\delta>0} \tilde{\Psi}_{\mathrm{par,I,res}}^{m,\mathsf{s}+\delta,\ell+j,0} \subset \tilde{\Psi}_{\mathrm{par,I}}^{\max(m, \ell+j),s, 0}
\end{equation} 
provided that $s > \mathsf{s}(h)$ for every $h$. 

This is therefore a conormal function of $h$ with values in $\Psi_{\mathrm{par}}^{\max(m, \ell+j),s, 0}$ and its action on $\phi$ produces a conormal function of $h$ with values in $\mathcal{S}(\bbR^{1,d})$. Therefore, $A \phi$ is a $C^{j-1}$ function of $h$ with values in $\mathcal{S}(\bbR^{1,d})$, and since this is true for arbitrary $j$, the claim is proved. 

We next claim that $N(A) \phi$ coincides with $A\phi$ evaluated at $h=0$. To show this, we regard $N(A)$ as an element of $\Psi_{\mathrm{par,I}}^{\ell, \mathsf{s}, 0}$ independent of $h$. As such, it is also an element of $\Psi_{\mathrm{par,I,res}}^{\ell, \mathsf{s}, \ell, 0}$. Since it agrees with $A$ to leading order at the parabolic face, and since the symbol of $A$ is smooth at the parabolic face, we have 
\begin{equation} 
A - N(A) \in \Psi_{\mathrm{par,I,res}}^{\max(m,\ell), \mathsf{s}, \ell, -1}.
\end{equation} 
We can remove a factor of $h$ and we find that 
\begin{equation} 
(A - N(A)) \in h \Psi_{\mathrm{par,I,res}}^{\max(m,\ell), \mathsf{s}, \ell +1, 0}.
\end{equation}
Following the reasoning in the previous paragraph, we see that $A \phi - N(A) \phi$ is $O(h)$, which establishes the second claim. 

Finally we see that $AB \phi = A (N(B) \phi + O(h)) = N(A) N(B) \phi + O(h)$, and this shows that $N(AB) = N(A) N(B)$. 
\end{proof}

\subsection{\texorpdfstring{The resolved natural phase space ${}^{\calc}\overline{T}^* \bbM$}{The resolved natural phase space} } \label{subsec:phase}
We now turn to the main task of this section, developing the theory of the $\calc$-calculus.
First, we give a careful definition of the partially compactified phase space 
\begin{equation}
	{}^{\calc}\overline{T}^* \bbM \hookleftarrow \mathbb{R}^{1,d}_{t,x} \times \bbR^{1,d}_{\tau,\xi} \times [0,\infty)_h.
\end{equation}
We sketched
its definition already in the introduction: this phase space is the result of blowing up, in a quasihomogeneous way, the portion of the zero section 
\begin{equation} 
	{}^{\calczero} o^* \bbM = \mathrm{cl}_{{}^{\calczero}\overline{T}^* \bbM} \{\xi=0=\tau\} = \{\xi_{\calcshort} = 0 = \tau_{\calcshort} \}
\end{equation} 
in $\{h=0\}$. 
``Quasihomogeneous'' roughly means that the blowup is done parabolically in the $\tau_{\calcshort}$ direction relative to the $\xi_{\calcshort}$ directions. There are several ways of making this notion precise. As it is convenient later when constructing $\Psi_{\calc}$, we will use a gluing construction.

In this subsection, and only in this subsection, we will refer to the $\{h=0\}$ boundary hypersurface of the $\calczero$-phase space as $\mathrm{zf}$. Elsewhere, we use ``$\natural\mathrm{f}$,'' but while constructing the $\calc$-phase space it is useful to distinguish the $\{h=0\}$ face of the $\calczero$-phase space from its lift to the $\calc$-phase space.

\begin{figure}[t]
	\includegraphics{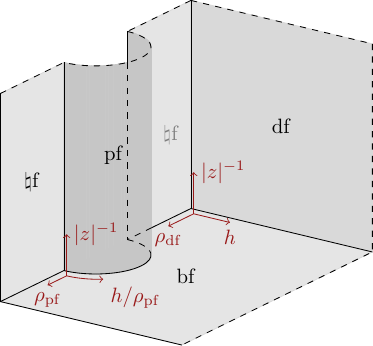}
	\quad
	\includegraphics{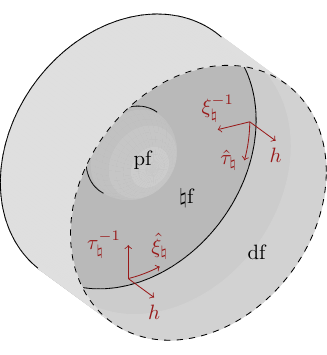}
	\caption{(Left) The combinatorial structure of the phase space ${}^{\calc}\overline{T}^* \bbM$, showing the various boundary hypersurfaces and their intersections. 
	(Right) The same phase space in the case with one spatial dimension, showing both frequency dimensions and suppressing both spacetime dimensions. Here, $\smash{\hat{\tau}_{\calczero}} =\tau_{\calcshort}/\xi_{\calcshort}$ and $\smash{\hat{\xi}_{\calcshort} }= \xi_{\calcshort}/\tau_{\calcshort}$.
	}
	\label{fig:phase}
\end{figure}

Recall from Section~\ref{subsec:parIres} the $\mathrm{par, I, res}$- phase space ${}^{\mathrm{par,I,res}}\overline{T}^* \bbM$. 
We record a system of coordinate charts covering $\mathrm{pf}\cap \mathrm{ff}$ in ${}^{\mathrm{par,I,res}}\overline{T}^* \bbM$. One way to find such a system is to note that a neighborhood of $\mathrm{pf}\cap\mathrm{ff}$ is given by 
\begin{equation}
U=\bbM\times \overline{(\bbR^{1,d}_{\tau,\xi  })}_{\mathrm{par}} \times [0,1)_{h/\varrho},
\end{equation}
where $\varrho$ is a global boundary-defining-function for the boundary of the frequency factor. So, by using the atlas given in \cref{eq:misc_006}, \cref{eq:misc_007} for the first factor, and choosing $\varrho$ to be given locally by something convenient, we get:
\begin{enumerate}[label=(\Roman*)]
	\item $\bbM\times  [0,\infty)_{1/ |\tau|^{1/2}} \times \bbR^d_{\xi/ |\tau|^{1/2}} \times [0,1)_{h|\tau|^{1/2}}$ is a smooth coordinate chart covering an open half of $U$ (one chart for the $\tau>0$ half, one for the $\tau<0$ half), 
	\label{it:parres_chart_1}
	\item $\bbM\times  [0,\infty)_{1/|\xi_k|}\times \bbR_{\tau/\xi_k^2}\times \bbR^{d-1}_{\hat{\xi}_k} \times [0,1)_{h |\xi_k|}$, as $k$ varies, covers the rest. 
	\label{it:parres_chart_2}
\end{enumerate}

Combinatorially, ${}^{\mathrm{par,I,res}}\overline{T}^* \bbM$ resembles the desired ${}^{\calc} \overline{T}^* \bbM$. 
But they are different because the former
has the ``wrong'' structure, as a compactification of $T^* \bbR^{1,d}\times (0,1)_h$, at $\mathrm{df}_1$. For example, for $h_0>0$,
\begin{equation}
{}^{\mathrm{par,I,res}}\overline{T}^* \bbM\cap \{h=h_0\} = 
{}^{\mathrm{par,I}}\overline{T}^* \bbM\cap \{h=h_0\} = {}^{\mathrm{par}} \overline{T}^* \bbM, 
\end{equation}
whereas we wanted the cross-section $\{h=h_0\}$ of the $\calc$-phase space to be ${}^{\mathrm{sc}}\overline{T}^* \bbM$. In the former, but not in the latter, $\aang{\zeta}^{-1}=(1+\tau^2+\xi^4)^{-1/4}$ is smooth.
In order to fix this, we replace a neighborhood 
\begin{equation} 
U\supset \mathrm{df}_1
\end{equation} 
of $\mathrm{df}_1$ in ${}^{\mathrm{par,I,res}}\overline{T}^* \bbM$ with a neighborhood of $\mathrm{df}$ in ${}^{\calczero}\overline{T}^* \bbM$. To do this, we use the following lemma:
\begin{lemma} In the sense of partial compactifications,  ${}^{\mathrm{par,I,res}}\overline{T}^* \bbM \backslash (\mathrm{pf}\cup \mathrm{df}_1) = {}^{\calczero } \overline{T}^* \bbM \backslash (\mathrm{df} \cup( {}^{\calczero}o^* \bbM \cap \mathrm{zf}))$. 
	\label{lem:phase_space_gluing}
\end{lemma}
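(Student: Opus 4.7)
My plan is to verify the claimed equality of partial compactifications by constructing a diffeomorphism of manifolds-with-corners that extends the identity on the common interior $T^*\bbR^{1,d}\times(0,1)_h$. Both sides contain the same spacetime boundary $\mathrm{bf}$ (standard radial compactification in $(t,x)$), so it suffices to identify the only other boundary face on each side: the natural face $\natural\mathrm{f}$ on the $\mathrm{par,I,res}$-side with $\mathrm{zf}\setminus({}^{\calczero}o^*\bbM\cap\mathrm{zf})$ on the $\calczero$-side, together with a smooth identification of the ambient coordinates near that face.

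I would first restrict chart \cref{it:parres_chart_1} (say $\tau>0$) and chart \cref{it:parres_chart_2} to the locus away from $\mathrm{pf}$---equivalently, keeping the ``projective'' coordinate $h\tau^{1/2}$ or $h|\xi_k|$ strictly positive---and then rewrite the resulting coordinates in terms of the natural frequencies $\tau_{\natural}=h^2\tau$, $\xi_{\natural}=h\xi$. In chart (I), $(1/\tau^{1/2},\xi/\tau^{1/2},h\tau^{1/2})$ becomes $(h\tau_{\natural}^{-1/2},\xi_{\natural}\tau_{\natural}^{-1/2},\tau_{\natural}^{1/2})$, smooth functions of $(h,\tau_{\natural},\xi_{\natural})$ wherever $\tau_{\natural}>0$, with smooth inverse. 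In chart (II), $(1/|\xi_k|,\tau/\xi_k^2,\hat\xi_k,h|\xi_k|)$ becomes $(h|\xi_{\natural,k}|^{-1},\tau_{\natural}\xi_{\natural,k}^{-2},\hat\xi_{\natural,k},|\xi_{\natural,k}|)$, smooth wherever $\xi_{\natural,k}\neq 0$, with smooth inverse. As the sign of $\tau$, the index $k$, and the sign of $\xi_k$ vary, these charts cover exactly the subset of $\natural\mathrm{f}$ where $(\tau_{\natural},\xi_{\natural})\neq 0$, which is precisely $\mathrm{zf}\setminus({}^{\calczero}o^*\bbM\cap\mathrm{zf})$ on the $\calczero$-side. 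The coordinate transitions being smooth in both directions, this gives the desired extension of the identity to a diffeomorphism of the two partial compactifications.

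The conceptual point I expect to drive the argument is that the parabolic blowup of the corner $\{h=0,\aang{\zeta}^{-1}=0\}$ in ${}^{\mathrm{par,I}}\overline{T}^*\bbM$ is exactly the construction that turns the parabolic rescaling $\tau_{\natural}=h^2\tau$, $\xi_{\natural}=h\xi$ into smooth coordinates on the front face: the defining function $\aang{\zeta}^{-1}$ of fiber-infinity in the parabolic compactification equals $1/|\tau|^{1/2}$ or $1/|\xi_k|$ in the two standard charts, so the ``parabolic product'' $h\aang{\zeta}$ reads as $|\tau_{\natural}|^{1/2}$ or $|\xi_{\natural,k}|$ on the blowup, and these are precisely the new smooth boundary-defining-functions for $\natural\mathrm{f}$ seen from the $\calczero$-side. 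The main obstacle is not a hard computation but bookkeeping: one must confirm that the square-root factors $\tau_{\natural}^{1/2}$ (which are non-smooth at $\tau_{\natural}=0$) cause no trouble, which is guaranteed because $\{(\tau_{\natural},\xi_{\natural})=0\}$ has been excised on both sides, and chart (II) covers the remaining directions where $\tau_{\natural}=0$ but some $\xi_{\natural,k}\neq 0$.
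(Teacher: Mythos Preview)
Your proposal is correct and follows essentially the same approach as the paper's proof: both arguments verify the identification chart by chart, using the projective coordinate charts \ref{it:parres_chart_1}, \ref{it:parres_chart_2} (away from $\mathrm{pf}$) on the $\mathrm{par,I,res}$-side and rewriting them as smooth invertible functions of the natural frequencies $(h,\tau_{\natural},\xi_{\natural})$. The paper organizes this via the atlas \eqref{eq:misc_010}, treating separately the interior piece ${}^{\mathrm{par,I}}T^*\bbM\setminus\{h=0\}$, and passes through the intermediate coordinates $(h^2\tau,\xi/\tau^{1/2},h)$ before reaching $(\tau_{\natural},\xi_{\natural},h)$, but the content is the same as your more direct substitution.

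One small point to make explicit: the charts \ref{it:parres_chart_1}, \ref{it:parres_chart_2} as stated in the paper carry the restriction $h|\tau|^{1/2}<1$ (resp.\ $h|\xi_k|<1$), so as written they cover only a neighborhood of $\mathrm{pf}\cap\natural\mathrm{f}$, not all of $\natural\mathrm{f}\setminus\mathrm{pf}$. Your argument implicitly uses that, once $\mathrm{df}_1$ has been removed, the same projective coordinates $(1/|\tau|^{1/2},\xi/|\tau|^{1/2},h|\tau|^{1/2})$ extend smoothly to $h|\tau|^{1/2}\in(0,\infty)$; this is exactly what the paper's \eqref{eq:j4w1dc} encodes by allowing $h^2\tau\in\bbR^+$. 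It would strengthen your write-up to state this extension explicitly.
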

Thus, ${}^{\mathrm{par,I,res}}\overline{T}^* \bbM$ is indistinguishable and ${}^{\calczero } \overline{T}^* \bbM$ except at fiber infinity and at the zero section of ${}^{\calczero } \overline{T}^* \bbM$ over $\{h=0\}$.
\begin{proof}
	We first show that the identity map on the interior extends to an immersion. Note that the decomposition in \cref{eq:misc_005} gives an atlas 
	\begin{multline}
		{}^{\mathrm{par,I,res}}\overline{T}^* \bbM = {}^{\mathrm{par,I}}T^* \bbM \cup \Big( \bigcup_\pm ({}^{\mathrm{par,I,res}}\overline{T}^* \bbM \cap \{\pm \tau>0\}) \Big) \\ 
		\cup \Big( \bigcup_{\pm, k\in \{1,\ldots,d\}} ( {}^{\mathrm{par,I,res}}\overline{T}^* \bbM \cap \{\pm \xi_k>0\}) \Big).
		\label{eq:misc_010}
	\end{multline}
	More precisely, letting $\beta: {}^{\mathrm{par,I,res}}\overline{T}^* \bbM \to {}^{\mathrm{par,I}}\overline{T}^* \bbM$ denote the blowdown map and 
	\begin{equation} 
		\pi: {}^{\mathrm{par,I}}\overline{T}^* \bbM \to  \overline{(\bbR^{1,d}_{\tau,\xi  })}_{\mathrm{par}} 
	\end{equation} 
	denote the projection onto the frequency variables, each term in \cref{eq:misc_010} is the preimage of the corresponding term in \cref{eq:misc_005} under the map $\pi\circ \beta$. 
	
	We now consider each term on the right-hand side of \cref{eq:misc_010}, after subtracting off $\mathrm{pf} \cup \mathrm{df}_1$:
	\begin{itemize}
		\item First consider $({}^{\mathrm{par,I}}T^* \bbM ) \backslash (\mathrm{pf} \cup \mathrm{df}_1)=({}^{\mathrm{par,I}}T^* \bbM ) \backslash \{h=0\}$. This is just $(0,\infty)_h\times \bbM_z\times \bbR^{1,d}_{\tau,\xi}$. 
		
		Since ${}^{\calczero}\overline{T}^* \bbM \backslash \{h=0\}$ is just ${}^{\mathrm{sc}}\overline{T}^* \bbM = (0,1)_h\times \bbM_z\times \bbR^{1,d}_{\tau,\xi}$,  we have $({}^{\mathrm{par,I}}T^* \bbM ) \backslash \{h=0\} = {}^{\calczero}\overline{T}^* \bbM \backslash \{h=0\}$.
		\item Now consider $({}^{\mathrm{par,I,res}}\overline{T}^* \bbM \backslash (\mathrm{pf}\cup \mathrm{df}_1)) \cap \{\pm \tau>0\}$. We discuss the $+$ case, the other being similar. Here, \cref{eq:misc_006} yields 
		\begin{equation} 
			({}^{\mathrm{par,I,res}}\overline{T}^* \bbM \backslash (\mathrm{pf}\cup \mathrm{df}_1)) \cap \{\tau>0\} = \bbM_z \times \bbR^+_{h^2 \tau } \times \bbR^d_{\xi/\tau^{1/2}} \times [0,1)_h,
			\label{eq:j4w1dc}
		\end{equation} 
		in the sense of compactifications (cf.\ \cref{it:parres_chart_1} above).\footnote{Here, we are excluding $\mathrm{pf}$ from consideration, which is why it is permissible to use $h^2 \tau$ as a final coordinate instead of $h\tau^{1/2}$. It is the latter that is a boundary-defining-function of $\mathrm{pf}$ in ${}^{\calc}\overline{T}^* \bbM$.} 
		But, $\bbR^+_{h^2 \tau } \times \bbR^d_{\xi/\tau^{1/2}} = \bbR^+_{h^2 \tau} \times \bbR^d_{h \xi}$, and 
		\begin{equation} 
			\bbM_z\times  \bbR^+_{h^2 \tau} \times \bbR^d_{h \xi} \times [0,1)_h = ({}^{\calczero}\overline{T}^* \bbM \backslash (({}^{\calczero}o^* \bbM \cap \mathrm{zf})\cup \mathrm{df} )) \cap \{\tau>0\} 
		\end{equation}  
		in the sense of compactifications.
		\item  
		Finally, consider ${}^{\mathrm{par,I,res}}\overline{T}^* \bbM \cap \{\pm \xi_k>0\}$. Again, we discuss the $+$ case, the other being similar. In the stated sector, \cref{eq:misc_007} yields 
		\begin{equation}
			({}^{\mathrm{par,I,res}}\overline{T}^* \bbM  \backslash (\mathrm{pf}\cup \mathrm{df}_1 ))\cap \{\pm \xi_k>0\} = \bbM_z\times \bbR^+_{h\xi_k} \times \bbR_{\tau/\xi_k^2}\times  \bbR^{d-1}_{\hat{\xi}_k}\times [0,1)_h 
		\end{equation}
		in the sense of compactifications (cf.\ \cref{it:parres_chart_2} above).
		Now use 
		\begin{equation} 
		\bbR^+_{h\xi_k}  \times \bbR_{\tau/\xi_k^2}\times  \bbR^{d-1}_{\hat{\xi}_k} = \bbR_{h^2 \tau}\times \bbR^{d}_{h\xi}
		\end{equation} 
		in the sense of compactifications. As 
		\begin{equation}
			\bbM_z \times \bbR_{h^2 \tau}\times \bbR^{d}_{h\xi} \times [0,1)_h = {}^{\calczero}\overline{T}^* \bbM \backslash (({}^{\calczero}o^* \bbM \cap\mathrm{zf}) \cup \mathrm{df} ) \cap \{\xi_k >0\}
		\end{equation}
		in the sense of compactifications, we conclude the desired result. 
	\end{itemize}
	So, the identity map on the interior of ${}^{\mathrm{par,I,res}}\overline{T}^* \bbM \backslash (\mathrm{pf}\cup \mathrm{df}_1)$ extends to an immersion, as claimed. 
	
	Reversing the argument, one concludes that the identity map on the interior of ${}^{\mathrm{par,I,res}}\overline{T}^* \bbM \backslash (\mathrm{pf}\cup \mathrm{df}_1)$ extends to an immersion in the other direction. So the immersion constructed previously is actually a diffeomorphism. 
\end{proof}

By \Cref{lem:phase_space_gluing}, we can glue the two mwcs ${}^{\mathrm{par,I,res}}\overline{T}^* \bbM \backslash  \mathrm{df}_1$, ${}^{\calczero}\overline{T}^* \bbM \backslash {}^{\calczero}o^* \bbM \cap \mathrm{zf}$ together, giving us a new mwc which we call ${}^{\calc}\overline{T}^* \bbM$:
\begin{equation} \label{eq:calc_phase_space_definition}
	{}^{\calc}\overline{T}^* \bbM = (({}^{\mathrm{par,I,res}}\overline{T}^* \bbM \backslash  \mathrm{df}_1)\sqcup ({}^{\calczero}\overline{T}^* \bbM \backslash ({}^{\calczero}o^* \bbM \cap \mathrm{zf}) )) / \!\sim, 
\end{equation}
where $\sim$ identifies the points identified by the diffeomorphism discussed in the previous lemma. The smooth structure is inherited from its constituents, as is the inclusion 
\begin{equation} 
T^* \bbR^{1,d}\times [0,\infty)_h \hookrightarrow 
{}^{\calc}\overline{T}^* \bbM
\end{equation} 
needed to make ${}^{\calc}\overline{T}^* \bbM$ a partial compactification of $T^* \bbR^{1,d}\times [0,\infty)_h$.

The only boundary hypersurface of ${}^{\calc}\overline{T}^* \bbM$ which has not been defined so far is 
\begin{equation} 
	\natural\mathrm{f}=((\mathrm{zf}\backslash {}^{\calczero}o^* \bbM  )\sqcup (\mathrm{ff} \backslash \mathrm{df}_1) )/\!\sim,
\end{equation} 
i.e.\ the closure of $\mathrm{zf} \backslash {}^{\calczero}o^* \bbM     \subseteq {}^{\calczero} T^* \bbM$ in ${}^{\calc} \overline{T}^* \bbM$. 

\begin{figure}[t]
	\includegraphics[scale=.95]{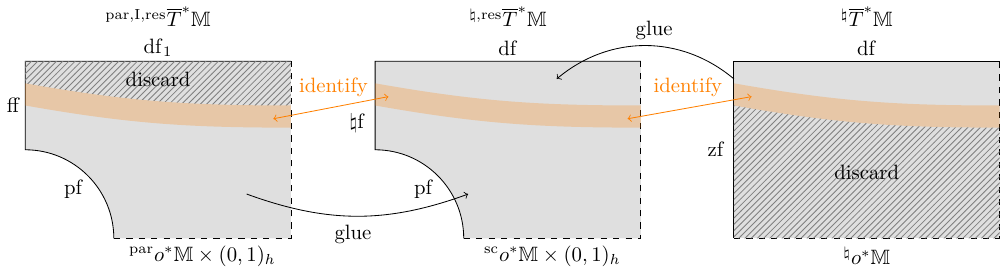}
	\caption{Illustration of the gluing procedure used to construct ${}^{\calc} \overline{T}^* \bbM$. A neighborhood of ${}^{\calczero} \overline{T}^* \bbM$  replaces a neighborhood of $\mathrm{df}_1$ in ${}^{\mathrm{par,I,res}}\overline{T}^* \bbM$. In order to accomplish this, it is important to be able to identify neighborhoods of compact subsets of the interior of $\mathrm{ff}$ and $\mathrm{zf}$, e.g.\ the orange set in the figure. This identification is done using \Cref{lem:phase_space_gluing}.
	Thus, we have canonical identifications $\natural\mathrm{f}^\circ = \mathrm{ff}^\circ = \mathrm{zf}^\circ$. Moreover, $\natural\mathrm{f}\backslash \mathrm{pf} = \mathrm{zf} \backslash {}^{\natural}o^* \bbM$ and $\natural\mathrm{f}\backslash \mathrm{df} = \mathrm{ff}\backslash \mathrm{df}_1$. 
	}
\end{figure}

We have already given boundary-defining-functions of $\mathrm{df}$ and $\mathrm{bf}$ (the latter omitted from most of the figures above). In addition:
\begin{itemize}
	\item We can take 
	\begin{equation} 
		\rho_{\mathrm{pf}} = \frac{h}{h+\chi(\tau_{\natural},\xi_{\natural} ) (1+\tau^2+\xi^4)^{-1/4}} = \frac{h}{h+\chi(\zeta_{\natural}) \aang{\zeta}^{-1} }
	\end{equation} 
	as a boundary-defining function of $\mathrm{pf}$, where $\chi\in C_{\mathrm{c}}^\infty(\bbR^{1,d})$, $0\notin \operatorname{supp}(1-\chi)$.  Indeed, this follows from a standard construction of boundary-defining-functions for the boundary hypersurfaces of mwcs constructed by blowing up a corner. Note that the cutoff $\chi$ ensures smoothness at $\mathrm{df}$.
	\item  We can take $h+\chi(\tau_{\natural},\xi_{\natural} ) (1+\tau^2+\xi^4)^{-1/4}$ as a boundary-defining-function of $\natural\mathrm{f}$, for similar reasons.
\end{itemize}

To get a better understanding of the structure of ${}^{\calc}\overline{T}^* \bbM$, note the following immediate consequence of the gluing construction:
\begin{proposition}
	The boundary hypersurface $\mathrm{pf} \subset{}^{\calc}\overline{T}^* \bbM$ is canonically diffeomorphic to ${}^{\mathrm{par}} \overline{T}^* \bbM$, and, for each $h_0>0$, the slice ${}^{\calc}\overline{T}^* \bbM\cap \{h=h_0\}$ is canonically diffeomorphic
	to ${}^{\mathrm{sc}} \overline{T}^* \bbM$. 
	\label{prop:pf=par_phase_space}
\end{proposition}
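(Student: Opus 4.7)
The proposition contains two assertions; my plan is to derive each directly from the gluing construction \cref{eq:calc_phase_space_definition}, exploiting the product structures of the two constituent spaces.

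For the slice assertion, I fix $h_0 > 0$. Since $h_0 > 0$, the slice $\{h = h_0\}$ is disjoint from the subset ${}^{\calczero} o^* \bbM \cap \mathrm{zf}$ removed in the gluing (which lies at $h=0$), so it is entirely contained in the second gluing piece of \cref{eq:calc_phase_space_definition} and coincides with the slice $\{h = h_0\}$ of ${}^{\calczero}\overline{T}^* \bbM$. By the product structure recorded in \cref{eq:ph0mwcid}, this slice is $\bbM_{t,x} \times \overline{\bbR^{1,d}}_{\zeta_\natural}$, with the interior $T^* \bbR^{1,d}$ embedded via the linear map $(\tau,\xi) \mapsto (h_0^2 \tau, h_0 \xi)$. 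This linear automorphism of $\bbR^{1,d}$ extends to a smooth diffeomorphism of the radial compactification, producing the desired canonical identification with $\bbM_{t,x} \times \overline{\bbR^{1,d}_{\tau,\xi}} = {}^{\mathrm{sc}}\overline{T}^* \bbM$.

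For the identification of $\mathrm{pf}$, I would first observe that in ${}^{\mathrm{par,I,res}}\overline{T}^* \bbM$ the faces $\mathrm{pf}$ and $\mathrm{df}_1$ are disjoint: their would-be intersection in ${}^{\mathrm{par,I}}\overline{T}^* \bbM$ was precisely the center of the blowup, which has been replaced by the new front face $\natural\mathrm{f}$. Consequently $\mathrm{pf}$ sits entirely inside the first gluing piece ${}^{\mathrm{par,I,res}}\overline{T}^* \bbM \setminus \mathrm{df}_1$, and by \cref{eq:calc_phase_space_definition} the face $\mathrm{pf} \subset {}^{\calc}\overline{T}^* \bbM$ is canonically the same manifold with corners as $\mathrm{pf} \subset {}^{\mathrm{par,I,res}}\overline{T}^* \bbM$. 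By the blowup construction in \S\ref{subsec:parIres}, the latter $\mathrm{pf}$ is, by definition, the proper transform of the face $\{h=0\} = {}^{\mathrm{par}}\overline{T}^* \bbM \times \{0\}$ under the blowup of ${}^{\mathrm{par,I}}\overline{T}^* \bbM = {}^{\mathrm{par}}\overline{T}^* \bbM \times [0,1]_h$ along the corner $\{h=0\} \cap \mathrm{df}_1$. Viewed inside $\{h=0\}$, this center is the boundary hypersurface $\mathrm{df}_1 \subset {}^{\mathrm{par}}\overline{T}^* \bbM$; since blowing up a boundary hypersurface of a manifold with corners is the identity operation, the proper transform is canonically diffeomorphic to ${}^{\mathrm{par}}\overline{T}^* \bbM$.

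Neither identification is conceptually difficult. The main bookkeeping obstacle is checking smoothness of the $\mathrm{pf}$ identification across the corner $\mathrm{pf} \cap \natural\mathrm{f}$, where the blowup and the gluing both interact nontrivially, so that the claimed diffeomorphism is genuinely a smooth map of manifolds with corners (and not merely of their interiors with the boundary patched in set-theoretically). I would discharge this by a direct local-coordinate computation using the boundary defining functions $\rho_{\mathrm{pf}}$ and $\rho_{\natural\mathrm{f}}$ listed above, expressed in the atlas used in the proof of \Cref{lem:phase_space_gluing}; the key point is that the blowup coordinates $(h |\tau|^{1/2}, \xi/|\tau|^{1/2})$ near this corner agree, on $\mathrm{pf}$, with the standard parabolic coordinates on ${}^{\mathrm{par}}\overline{T}^* \bbM$ near its fiber infinity.
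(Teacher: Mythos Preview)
Your proposal is correct and is essentially a fleshed-out version of what the paper does: the paper simply declares the proposition an ``immediate consequence of the gluing construction'' and gives no further proof, so your argument is just making explicit the two observations that (i) for $h_0>0$ the slice lies entirely in the $\calczero$-piece and (ii) $\mathrm{pf}$ lies entirely in the $\mathrm{par,I,res}$-piece (being disjoint from $\mathrm{df}_1$ after blowup), together with the standard fact that the lift of a boundary hypersurface under blowup of one of its own boundary hypersurfaces is canonically the original.

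One small comment: your final paragraph overstates the difficulty. Since $\mathrm{pf}$ is disjoint from $\mathrm{df}_1$ in ${}^{\mathrm{par,I,res}}\overline{T}^* \bbM$, the gluing in \cref{eq:calc_phase_space_definition} does not touch $\mathrm{pf}$ at all---the identification $\mathrm{pf}\cong {}^{\mathrm{par}}\overline{T}^* \bbM$ takes place entirely within ${}^{\mathrm{par,I,res}}\overline{T}^* \bbM$, so there is no interaction between the blowup and the gluing to check near $\mathrm{pf}\cap\natural\mathrm{f}$. The coordinate verification you propose would of course work, but it is not needed.
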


The identification $\mathrm{pf}\cong {}^{\mathrm{par}} \overline{T}^* \bbM$ was remarked on already in \cref{eq:pf_id}.

\subsection{\texorpdfstring{The resolved natural calculus $\Psi_{\calc}$}{The resolved natural calculus}}\label{subsec:calc}

We now explain the construction of the pseudodifferential calculus $\Psi_{\calc}$.
We give two different constructions, each of which is useful in different contexts.
One construction will define $\Psi_{\calc}$ as the image of $\operatorname{Op}$, where $\operatorname{Op}$ is the Kohn--Nirenberg quantization scheme on $\bbR^{1+d}$, of a certain class of one-parameter families of symbols on $\bbR^{1+d}$. Thus, $\Psi_{\calc}$ (in the case without variable orders) is just some set of one-parameter families of elements of the H\"ormander's uniform calculus $\Psi_\infty(\bbR^{1+d})$: 
\begin{equation}
A \in \Psi_{\calc} \Rightarrow A(h) \in \Psi_\infty \text{ for all }h>0,
\end{equation}
and in fact $A(h) \in \Psi_{\mathrm{sc}}$ for all $h>0$. (When working with variable orders, it is necessary, as is usually true, to work with a slightly larger class of operators.)
The other construction will define $\Psi_{\calc}$ by ``gluing together'' elements of the more basic pseudodifferential calculi $\Psi_{\calczero}$ (see \S\ref{subsec:natural_calculus}) and $\Psi_{\mathrm{par,I,res}}$ (see \S\ref{subsec:parIres}), each of which is associated naturally to one of the compactified phase spaces that was used to construct ${}^{\calc}\overline{T}^* \bbM$.

We begin by presenting the first construction. Let $\mathcal{V}_{\calc}$ be the Lie algebra of smooth vector fields on ${}^{\calc} \overline{T}^* \bbM$ that are tangent to all boundary faces except for $\mathrm{pf}$. This means that we are imposing smoothness at $\mathrm{pf}$ but only conormal regularity at the other faces. 
We denote
\begin{equation} \label{eq:tildeV-calc}
\tilde{\mathcal{V}}_{\calc} = \cup_{\delta > 0} \rho_{\mathrm{bf}}^\delta \mathcal{V}_{\calc}.
\end{equation}
Then we define 
\begin{multline} \label{eq:def-calc-symbol}
S_{\calc}^{m,\mathsf{s},\ell,q} = \big\{ a \in \rho_{\mathrm{df}}^{-m} \rho_{\mathrm{bf}}^{-\mathsf{s}} \rho_{ \calczero\mathrm{f} }^{-\ell} \rho_{\mathrm{pf}}^{-q} L^\infty({}^{\calc}\overline{T}^* \bbM) : \big( \tilde{\mathcal{V}}_{\calc}  \big)^M a  \in \rho_{\mathrm{df}}^{-m} \rho_{\mathrm{bf}}^{-\mathsf{s}} \rho_{ \calczero\mathrm{f} }^{-\ell} \rho_{\mathrm{pf}}^{-q} L^\infty({}^{\calc}\overline{T}^* \bbM ) \\ \text{ for all } M \in \mathbb{N} \big\}.
\end{multline}
Moreover, we define the space of classical symbols by 
\begin{equation} \label{eq:def-calc-symbol-classical}
S_{\calc, \mathrm{cl}}^{m,\mathsf{s},\ell,q} =  \rho_{\mathrm{df}}^{-m} \rho_{\mathrm{bf}}^{-\mathsf{s}} \rho_{ \calczero\mathrm{f} }^{-\ell} \rho_{\mathrm{pf}}^{-q} C^\infty({}^{\calc}\overline{T}^* \bbM).
\end{equation}
It is not hard to check that $S_{\calc, \mathrm{cl}}^{m,\mathsf{s},\ell,q}$ is contained in $S_{\calc}^{m,\mathsf{s},\ell,q}$ (and that the factor $\rho_{\mathrm{bf}}^\delta$ in \eqref{eq:tildeV-calc} is essential for this to be true). 

In this paper, we will only work with variable $\mathsf{s}$ (recall this is order capturing spacetime decay).   
Were we also to allow $m$ and $\ell$ to be variable, then we would also need to work with symbols which are instead bounded under application of vector fields in $\rho_{\calczero\mathrm{f}}^{\delta}\rho_{\mathrm{df}}^\delta \rho_{\mathrm{bf}}^\delta \mathcal{V}_{\calc}$.

For $a \in S^{m,\mathsf{s},\ell,q}_{\calc}$, we define its (left) quantization by
\begin{equation}
\operatorname{Op}(a(h)) f(z)  = \frac{1}{(2\pi)^D} \int_{\bbR^D} \int_{\bbR^D}   e^{i (z-z') \cdot \zeta} a(h)(z,\zeta) f(z') \bbR^D z' \dd^D \zeta \in \calS(\bbR_z^D),\quad D=d+1
\label{eq:quant}
\end{equation}
is defined for any $f\in \calS(\bbR^D)$. This integral is absolutely convergent if $m,\mathsf{s}$ are sufficiently negative, but otherwise it still makes sense as an iterated integral. 
By the discussion in  \cite[p.\ 246]{VasyGrenoble}, $\operatorname{Op}(a(h))$ can be extended to a continuous linear map $\calS'(\bbR^D)\to \calS'(\bbR^D)$. 
In this way, quantizing elements of $S^{m,\mathsf{s},\ell,q}_{\calc}$ results in families $\operatorname{Op}(a) = \{\operatorname{Op}(a(h)) \}_{h>0}$ of pseudodifferential operators of scattering, or Parenti--Shubin, type. Let 
\begin{equation}
\Psi_{\calc}^{m,\mathsf{s},\ell,q} = \{\operatorname{Op}(a): a \in S^{m,\mathsf{s},\ell,q}_{\calc}\}
\end{equation}
denote the set of all such families. This inherits its Fr\'echet space structure from $S^{m,\mathsf{s},\ell,q}_{\calc}$. 

Of particular importance are the ``residual'' operators
\begin{equation}
\Psi_{\calc}^{-\infty,-\infty,-\infty,q} = \bigcap_{m,s,\ell\in \bbR} \Psi_{\calc}^{m,s,\ell,q}.
\end{equation}
By definition, $S_{\calc}^{-\infty,-\infty,-\infty,0} = C^\infty([0,1)_h;\calS(T^* \bbR^{1+d}))$, which means that $\Psi_{\calc}^{-\infty,-\infty,-\infty,0}$ just consists of smooth families of elements of $\Psi_{\mathrm{sc}}^{-\infty,-\infty}$.

We now turn the the second construction of ${}^{\calc}\overline{T}^* \bbM$, using gluing. 
 Referring to the gluing construction of the $\calc$-phase space ${}^{\calc}\overline{T}^* \bbM$ in \cref{eq:calc_phase_space_definition}, we choose small neighbourhoods  $U$ of $\mathrm{df}_1 \subset  {}^{\mathrm{par,I,res}}\overline{T}^* \bbM$ and a neighbourhood $V$ of ${}^{\calczero}o^* \bbM \cap \mathrm{zf} \subset {}^{\calczero}\overline{T}^* \bbM$,  and denote the two parts in the gluing by
\begin{equation}
T_{\mathrm{par}}= {}^{\mathrm{par,I,res}}\overline{T}^* \bbM \backslash  U, \quad 
T_{\natural} = {}^{\calczero}\overline{T}^* \bbM \backslash V.
\end{equation} 
Then we have the following fact, which we state as a lemma:
\begin{lemma} \label{lemma:glue_par_calczero_orders}
For $\mathsf{s} \in C^\infty({}^{\calc}\overline{T}^* \bbM)$, there are $\mathsf{s}_{\mathrm{par}} \in C^\infty({}^{\mathrm{par,I,res}}\overline{T}^* \bbM)$ and $\mathsf{s}_{\calczero} \in C^\infty({}^{\calczero}\overline{T}^* \bbM)$ that coincide with $\mathsf{s}$ on $T_{\mathrm{par}}$ and $T_{\calczero}$ respectively. 
\end{lemma}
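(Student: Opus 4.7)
The plan is a standard cutoff-and-extend-by-zero construction. The function $\mathsf{s}$, restricted to $T_{\mathrm{par}}$, already admits an obvious smooth extension to a larger open neighborhood of $T_{\mathrm{par}}$ sitting inside ${}^{\mathrm{par,I,res}}\overline{T}^* \bbM$, courtesy of the gluing construction \eqref{eq:calc_phase_space_definition}; multiplying this extension by a suitable smooth cutoff on ${}^{\mathrm{par,I,res}}\overline{T}^* \bbM$ yields a globally smooth function agreeing with $\mathsf{s}$ on $T_{\mathrm{par}}$. The same recipe, applied symmetrically, produces $\mathsf{s}_{\calczero}$.

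In detail, I would first shrink $U$: pick an open neighborhood $U'$ of $\mathrm{df}_1$ in ${}^{\mathrm{par,I,res}}\overline{T}^* \bbM$ with $\overline{U'} \subset U$, using the standard normality of a manifold with corners. Then $W_{\mathrm{par}} := {}^{\mathrm{par,I,res}}\overline{T}^* \bbM \setminus \overline{U'}$ is open, contains the closed set $T_{\mathrm{par}}$, and avoids $\mathrm{df}_1$. By \eqref{eq:calc_phase_space_definition}, $W_{\mathrm{par}}$ is canonically identifiable with an open subset of ${}^{\calc}\overline{T}^* \bbM$ (the smooth structures agree on this locus by definition of the glued space), so the restriction of $\mathsf{s}$ transfers to a smooth function $\tilde{\mathsf{s}}_{\mathrm{par}} \in C^\infty(W_{\mathrm{par}})$.

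Next, using smooth partitions of unity on the mwc ${}^{\mathrm{par,I,res}}\overline{T}^* \bbM$, choose $\chi_{\mathrm{par}} \in C^\infty({}^{\mathrm{par,I,res}}\overline{T}^* \bbM)$ with $\chi_{\mathrm{par}} \equiv 1$ on $T_{\mathrm{par}}$ and $\operatorname{supp}(\chi_{\mathrm{par}}) \subset W_{\mathrm{par}}$; this is possible since $T_{\mathrm{par}}$ and $\overline{U'}$ are disjoint closed sets. The product $\chi_{\mathrm{par}} \tilde{\mathsf{s}}_{\mathrm{par}}$ is smooth on $W_{\mathrm{par}}$ and vanishes on an open neighborhood of $\overline{U'}$, so it extends by zero to a function $\mathsf{s}_{\mathrm{par}} \in C^\infty({}^{\mathrm{par,I,res}}\overline{T}^* \bbM)$. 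On $T_{\mathrm{par}}$, where $\chi_{\mathrm{par}} \equiv 1$, we have $\mathsf{s}_{\mathrm{par}} = \tilde{\mathsf{s}}_{\mathrm{par}} = \mathsf{s}$, as required. The construction of $\mathsf{s}_{\calczero}$ is the same argument with ${}^{\calczero}\overline{T}^* \bbM$ and $V$ in place of ${}^{\mathrm{par,I,res}}\overline{T}^* \bbM$ and $U$.

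I do not anticipate any real obstacle: the argument uses only the existence of smooth partitions of unity on manifolds with corners and the fact, built into the gluing definition, that the smooth structures on open subsets disjoint from the excised pieces agree with those of the respective constituents. Note that no compatibility between $\mathsf{s}_{\mathrm{par}}$ and $\mathsf{s}_{\calczero}$ is demanded, so the two extensions can be chosen completely independently.
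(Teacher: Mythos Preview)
Your argument is correct and is essentially the same as the paper's: the paper simply says to restrict $\mathsf{s}$ to $T_{\mathrm{par}}$ (resp.\ $T_{\calczero}$) and take a smooth extension, and your cutoff-and-extend-by-zero construction is a concrete implementation of exactly that, making explicit the open neighborhood on which $\mathsf{s}$ is already defined via the gluing identification \eqref{eq:calc_phase_space_definition}.
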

\begin{proof}
We can just consider the restriction of $\mathsf{s}$ to $T_{\mathrm{par}}$ and $T_{\calczero}$ respectively and then take $\mathsf{s}_{\mathrm{par}}, \mathsf{s}_{\calczero}$ as their smooth extensions on ${}^{\mathrm{par,I,res}}\overline{T}^* \bbM$ and ${}^{\calczero}\overline{T}^* \bbM$ respectively.
\end{proof}

The second construction of $\Psi_{\calc}$ is to write elements of $\Psi_{\calc}^{m,\mathsf{s},\ell,q}$ as `glued' elements of $\Psi_{\mathrm{par,I,res}}^{-\infty, \mathsf{s}_{\mathrm{par}} ,\ell,q}$ and $\Psi_{\calczero}^{m,\mathsf{s}_{\calczero},\ell}$:
\begin{proposition} 
	The set $\Psi_{\calc}^{m,\mathsf{s},\ell,q}$ consists precisely of those one-parameter families \begin{equation} 
		A=\{A(h)\}_{h\in (0,1)}
	\end{equation} 
	of $A(h) \in \Psi_{\mathrm{sc}}^{m,\mathsf{s}(h)}$ ($\mathsf{s}(h)$ stands for $\mathsf{s}$ restricted to fixed $h$), of the form $A=B+C$ for 
	\begin{equation}
		B = \operatorname{Op}(b)\in \Psi_{\mathrm{par,I,res}}^{-\infty, \mathsf{s}_{\mathrm{par}} ,\ell,q}, \quad b\in S_{\mathrm{par,I,res}}^{-\infty,\mathsf{s}_{\mathrm{par}},\ell,q}, \quad C = \operatorname{Op}(c) \in \Psi_{\calczero}^{m,\mathsf{s}_{\calczero},\ell}, \quad c \in S_{\calczero}^{m,\mathsf{s}_{\calczero},\ell},
	\end{equation}
	where $\mathsf{s}_{\mathrm{par}}, \mathsf{s}_{\calczero}$ coincide with $\mathsf{s}$ on $T_{\mathrm{par}}$ and $T_{\calczero}$ respectively as in Lemma~\ref{lemma:glue_par_calczero_orders}, $b$ is supported away from $\mathrm{df}_1$ and $c$ is supported away from the zero section. 
	\label{prop:gluing_pseudos}
\end{proposition}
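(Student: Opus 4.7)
The plan is to prove the two inclusions by using a microlocal partition of unity on ${}^{\calc}\overline{T}^* \bbM$ and then appealing to \Cref{lem:phase_space_gluing} to reinterpret each piece as a symbol on one of the two sub-phase-spaces. First, I would slightly enlarge the neighborhoods $U$ and $V$ chosen above so that, under the identifications of \Cref{lem:phase_space_gluing}, the interiors of $T_{\mathrm{par}}$ and $T_{\calczero}$ form an open cover of ${}^{\calc}\overline{T}^* \bbM$. I would then pick a smooth partition of unity $1 = \chi_{\mathrm{par}} + \chi_{\calczero}$ subordinate to this cover, with $\chi_{\mathrm{par}}$ vanishing identically in a neighborhood of $\mathrm{df}$ and $\chi_{\calczero}$ vanishing identically in a neighborhood of $\mathrm{pf}$.

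For the forward direction, take $A = \operatorname{Op}(a)$ with $a \in S_{\calc}^{m,\mathsf{s},\ell,q}$, and split $a = b + c$ where $b = \chi_{\mathrm{par}} a$ and $c = \chi_{\calczero} a$. Since $b$ vanishes in a neighborhood of $\mathrm{df}$, the zero-extension of $b$ to ${}^{\mathrm{par,I,res}}\overline{T}^* \bbM$ is smooth and vanishes to infinite order at $\mathrm{df}_1$. Via the diffeomorphism of \Cref{lem:phase_space_gluing}, the three remaining boundary-defining-functions $\rho_{\mathrm{bf}}, \rho_{\natural\mathrm{f}}, \rho_{\mathrm{pf}}$ of ${}^{\calc}\overline{T}^* \bbM$ match their namesakes on ${}^{\mathrm{par,I,res}}\overline{T}^* \bbM$ (on the overlap), so the orders $\mathsf{s}, \ell, q$ are preserved; using the conormal characterization \cref{eq:def-calc-symbol} this gives $b \in S_{\mathrm{par,I,res}}^{-\infty, \mathsf{s}_{\mathrm{par}}, \ell, q}$, where $\mathsf{s}_{\mathrm{par}}$ is as in \Cref{lemma:glue_par_calczero_orders}. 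Symmetrically, $c$ vanishes near $\mathrm{pf}$ and extends by zero to an element of $S_{\calczero}^{m, \mathsf{s}_{\calczero}, \ell}$; the $q$-order at $\mathrm{pf}$ drops out because $c$ vanishes identically there. Applying $\operatorname{Op}$ yields the desired decomposition $A = B + C$.

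For the reverse direction, suppose $B = \operatorname{Op}(b) \in \Psi_{\mathrm{par,I,res}}^{-\infty, \mathsf{s}_{\mathrm{par}}, \ell, q}$ with $b$ supported away from $\mathrm{df}_1$, and $C = \operatorname{Op}(c) \in \Psi_{\calczero}^{m, \mathsf{s}_{\calczero}, \ell}$ with $c$ supported away from the zero section over $\mathrm{zf}$. Because the supports miss the portions of the respective phase spaces that are removed in the gluing construction, \Cref{lem:phase_space_gluing} lets me regard $b$ and $c$ as smooth functions on subsets of ${}^{\calc}\overline{T}^* \bbM$ which extend by zero to the whole space. Checking that $a := b+c$ lies in $S_{\calc}^{m,\mathsf{s},\ell,q}$ then reduces, via \cref{eq:def-calc-symbol}, to the fact that any element of $\tilde{\mathcal{V}}_{\calc}$ restricts on $T_{\mathrm{par}}$ to an element of $\tilde{\calV}_{\mathrm{b}}({}^{\mathrm{par,I,res}}\overline{T}^* \bbM)$ and on $T_{\calczero}$ to an element of $\tilde{\calV}_{\mathrm{b}}({}^{\calczero}\overline{T}^* \bbM)$. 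Finally $\operatorname{Op}(a) = B + C$ is then in $\Psi_{\calc}^{m,\mathsf{s},\ell,q}$.

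The main technical obstacle is this last correspondence of vector-field classes and the matching of boundary-defining-functions under the gluing: one must verify in the explicit coordinate charts \cref{it:parres_chart_1}--\cref{it:parres_chart_2} (and in the $\calczero$-coordinates) that the $\mathrm{b}$-vector-fields on the two constituent phase spaces, restricted to $T_{\mathrm{par}}$ and $T_{\calczero}$ respectively, span (over the appropriate smooth functions) the restriction of $\mathcal{V}_{\calc}$, and that the defining-function identifications $\rho_{\mathrm{bf}}, \rho_{\natural\mathrm{f}}, \rho_{\mathrm{pf}}$ (on the $\mathrm{par,I,res}$ side) and $\rho_{\mathrm{bf}}, \rho_{\natural\mathrm{f}}, \rho_{\mathrm{df}}$ (on the $\calczero$ side) are indeed consistent on the overlap. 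This is a local, chart-by-chart computation analogous to \Cref{lemma:blow up conormal}, and, once established, makes both inclusions routine.
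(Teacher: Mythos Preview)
Your proposal is correct and follows essentially the same approach as the paper: split the full left symbol using a cutoff (the paper uses a single $\chi$ supported away from $\mathrm{df}$ and identically $1$ near $\mathrm{pf}$, setting $b=\chi a$, $c=(1-\chi)a$), and conversely just add $b+c$. The paper treats the compatibility of symbol classes and boundary-defining-functions that you flag as a ``technical obstacle'' as immediate from the gluing construction of \S\ref{subsec:phase}, so your more careful discussion of the vector-field correspondence is extra detail rather than a different argument.
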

\begin{proof}
	First suppose that $A$ is given.
	Let $a$ be the full left symbol of $A$. Then, we can define $b=\chi a$ and $c= (1-\chi) a$ for $\chi \in C^\infty({}^{\calc}\overline{T}^* \bbM )$ supported away from $\mathrm{df}$ and identically $1$ near $\mathrm{pf}$. These have the desired properties.\\
	Conversely, if $b,c$ are as above, then $a=b+c\in S_{\calc}^{m,\mathsf{s},\ell,q}$ and correspondingly $A \in \Psi_{\calc}^{m,\mathsf{s},\ell,q}$. 
\end{proof}

\subsection{The composition law } 
\label{subsec:composition}

Next we verify that operators in $\Psi^{m, \mathsf{s}, \ell, q}_{\calc}$ satisfy basic properties we require for a pseudodifferential algebra. In order to do so, a further decomposition of elements in $\Psi^{m, \mathsf{s}, \ell, q}_{\calc}$ is useful, i.e.,
for $P \in \Psi^{m, \mathsf{s}, \ell, q}_{\calc}$, we can write it as
\begin{align} \label{eq:decomposition_resolved}
P = P_1 + P_2 + P_3,
\end{align}
where $P_1$ has left symbol supported near $\mathrm{pf}$, $P_2$ has left symbol supported near $\mathrm{df}$ (in particular, these two supports are disjoint), and $P_3$ has left symbol supported away from both $\mathrm{pf}$ and $\mathrm{df}$.
This decomposition can be obtained just by multiplying the left symbol of $P$ by a partition of unity.

In addition, suppose $\mathsf{s}_{\mathrm{par}}, \mathrm{s}_{\calczero}$ are as in Lemma~\ref{lemma:glue_par_calczero_orders}, then those parts in the decomposition belong to those building block calculi with following orders:
\begin{equation}
P_1 \in \Psi_{ \mathrm{par,I,res} }^{-\infty,\mathsf{s}_{\mathrm{par}} , \ell,q}, \quad
P_2 \in \Psi_{\calczero}^{m,\mathrm{s}_{\calczero},\ell}, \quad
P_3 \in \Psi_{ \mathrm{par,I,res} }^{-\infty,\mathsf{s}_{\mathrm{par}} , \ell,q} \cap \Psi_{\calczero}^{m,\mathrm{s}_{\calczero},\ell}.
\end{equation}


Now we use this decomposition to law in the $\calc$-calculus:
\begin{proposition} \label{prop:calc_composition}
Suppose $\mathsf{s}_1, \mathsf{s}_2 \in C^\infty({}^{\calc}\overline{T}^* \bbM)$, $A \in \Psi^{m_1, \mathsf{s}_1, \ell_1, q_1}_{\calc}, B \in \Psi^{m_2, \mathsf{s}_2, \ell_2, q_2}_{\calc}$, then 
\begin{equation} \label{eq:calc-composition}
A \circ B \in \Psi^{m_1+m_2, \mathsf{s}_1+\mathsf{s}_2 , \ell_1+\ell_2, q_1+q_2}_{\calc}.
\end{equation}
In addition, denoting the left symbol of $A,B$ by $a,b$, then the left symbol of $A \circ B$, which we denote by $a \star b$, has the asymptotic expansion
\begin{equation} \label{eq:expansion-calc-composition}
a \star b(t,x,\tau,\xi) \sim \sum_{\alpha \in \bbN^{d+1} } \frac{1}{\alpha !} \partial_{\tau,\xi}^\alpha a(t,x,\tau,\xi) D_{t,x}^\alpha b(t,x,\tau,\xi),
\end{equation} 
where $\sim$ means
\begin{equation} \label{eq:expainsion-calc-error-membership}
a \star b(t,x,\tau,\xi) - \sum_{ |\alpha| \leq N } \frac{1}{\alpha !} \partial_{\tau,\xi}^\alpha a(t,x,\tau,\xi) D_{t,x}^\alpha b(t,x,\tau,\xi)
\in \bigcap_{\delta > 0} S_{\calc}^{m_1+m_2-N-1,\mathsf{s}_1+\mathsf{s}_2-N-1+\delta,\ell_1+\ell_2-N-1,q_1+q_2}.
\end{equation}
\end{proposition}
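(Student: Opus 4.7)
The approach is to exploit the decomposition \eqref{eq:decomposition_resolved} together with the gluing characterization \Cref{prop:gluing_pseudos}, reducing the composition in $\Psi_{\calc}$ to compositions in the constituent algebras $\Psi_{\mathrm{par,I,res}}$ and $\Psi_{\calczero}$. First, I would write $A = A_1 + A_2 + A_3$ and $B = B_1 + B_2 + B_3$ as in \eqref{eq:decomposition_resolved}, where indices $1$, $2$, $3$ correspond respectively to ``supported near $\mathrm{pf}$'', ``supported near $\mathrm{df}$'', and ``supported away from both'', and expand $AB$ into the nine cross-terms $A_i B_j$. The orders are matched using \Cref{lemma:glue_par_calczero_orders} to select smooth extensions of $\mathsf{s}_1, \mathsf{s}_2$ onto each constituent phase space.

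The four products $A_iB_j$ with $(i,j)\in\{1,3\}^2$ all have both factors in $\Psi_{\mathrm{par,I,res}}$, and \Cref{eq:parIres calculus} controls them. Symmetrically, the four products with $(i,j)\in\{2,3\}^2$ have both factors in $\Psi_{\calczero}$, and \Cref{prop:Planck_composition} controls them. The two remaining products $A_1B_2$ and $A_2B_1$ involve symbols with strictly disjoint $\zeta$-supports (bounded $\zeta$ for the $\mathrm{pf}$-piece, large $\zeta$ for the $\mathrm{df}$-piece). Every term in their formal Moyal expansion vanishes identically, since $\partial_{\tau,\xi}^\alpha$ and $D_{t,x}^\alpha$ preserve $\zeta$-supports. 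A non-stationary phase argument on
\begin{equation*}
(a\star b)(z,\zeta) = (2\pi)^{-D}\iint e^{-iw\cdot\eta}\,a(z,\zeta+\eta)\,b(z+w,\zeta)\,\dd w\,\dd\eta,
\end{equation*}
integrating by parts in $w$ (where on the support of the integrand one has $|\eta|\sim\aang{\zeta}$, since $\aang{\zeta+\eta}$ is bounded while $\aang{\zeta}$ is large), shows these compositions lie in $\bigcap_{m,\mathsf{s},\ell}\Psi_{\calc}^{m,\mathsf{s},\ell,q_1+q_2}$, which is absorbed into the target class. Reassembling the nine contributions via \Cref{prop:gluing_pseudos} gives \eqref{eq:calc-composition}.

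For the expansion \eqref{eq:expansion-calc-composition}, I would observe that the $\alpha$-th terms coming from the two subcalculi agree once rewritten in common $(\tau,\xi)$ variables: $\alpha!^{-1}\partial_{\tau,\xi}^\alpha a\cdot D_{t,x}^\alpha b$ lies in $S_{\calc}^{m_1+m_2-|\alpha|,\mathsf{s}_1+\mathsf{s}_2-|\alpha|+\delta,\ell_1+\ell_2-|\alpha|,q_1+q_2}$ by direct inspection of \eqref{eq:def-calc-symbol}, since $D_{t,x}$ reduces the $\mathrm{bf}$-order (with the $\delta$ loss absorbed into $\tilde{\mathcal{V}}_{\calc}$), $\partial_{\tau,\xi}$ reduces the orders at both $\mathrm{df}$ and $\natural\mathrm{f}$, and neither derivative changes the order at $\mathrm{pf}$. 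The remainder estimate \eqref{eq:expainsion-calc-error-membership} combines the standard remainders from the two subcalculus expansions after re-expressing orders via \Cref{lemma:blow up conormal}.

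The main obstacle will be verifying the remainder \eqref{eq:expainsion-calc-error-membership} with simultaneous sharp control at all four boundary hypersurfaces of ${}^{\calc}\overline{T}^*\bbM$: each subcalculus expansion tracks only three of the four orders (the $\mathrm{pf}$/$\natural\mathrm{f}$ structure is visible in $\Psi_{\mathrm{par,I,res}}$, whereas the $\mathrm{df}$/$\natural\mathrm{f}$ structure is visible in $\Psi_{\calczero}$). Reconciling the two descriptions requires careful tracking of how the blowdown map and the gluing diffeomorphism of \Cref{lem:phase_space_gluing} relate boundary-defining functions near the corners $\mathrm{pf}\cap\natural\mathrm{f}$ and $\natural\mathrm{f}\cap\mathrm{df}$ --- essentially another application of \Cref{lemma:blow up conormal} combined with the smoothness condition at $\mathrm{pf}$ built into the definition of $\Psi_{\mathrm{par,I,res}}$ versus $\tilde{\Psi}_{\mathrm{par,I,res}}$.
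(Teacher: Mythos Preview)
Your proposal is correct and follows essentially the same route as the paper: decompose via \eqref{eq:decomposition_resolved}, handle the ``pure'' blocks in $\Psi_{\mathrm{par,I,res}}$ and $\Psi_{\calczero}$ respectively, dispatch the mixed terms $A_1B_2$, $A_2B_1$ by non-stationary phase, and assemble the asymptotic expansion by summing the subcalculus expansions. The obstacle you flag in your last paragraph is resolved in the paper more directly than you anticipate: rather than tracking boundary-defining functions through \Cref{lemma:blow up conormal}, one simply observes that the $\mathrm{par,I,res}$-side compositions have left symbols supported away from $\mathrm{df}_1$ (hence are $\calc$-symbols with differential order $-\infty$), while the $\calczero$-side compositions have left symbols supported away from the blown-up zero section (hence are $\calc$-symbols with $\mathrm{pf}$-order $-\infty$), so the ``missing'' fourth order in each subcalculus is trivially controlled by support.
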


\begin{proof}
We can write $A,B$ as
\begin{align*}
A = A_1+A_2+A_3 , \quad B= B_1+B_2+B_3,
\end{align*}
with $A_i,B_i$ having the property of $P_i$ in (\ref{eq:decomposition_resolved}).
We have 
\begin{equation}
A \circ B = \sum_{i,j=1}^3 A_i \circ B_j.
\end{equation}

Let $\mathsf{s}_{\mathrm{par},k}, \mathrm{s}_{\calczero,k}$, $k=1,2$ be variable orders as in Lemma~\ref{lemma:glue_par_calczero_orders} associated to $\mathsf{s}_k$.
Then for terms $A_1B_3,A_1B_1,A_3B_1$, 
we apply the composition law in $\Psi_{\mathrm{par,I,res}}$ to see these compositions are pseudodifferential operators are in 
\begin{equation}
\Psi_{\mathrm{par,I,res}}^{-\infty,\mathsf{s}_{\mathrm{par},1}+\mathsf{s}_{\mathrm{par},2}, \ell_1+\ell_2,q_1+q_2},
\end{equation}
and their left symbols, by \eqref{eq:symbol exp parIres}, modulo a term that is Schwartz at all boundary faces of ${}^{ \mathrm{par,I,res} } T^* \bbM$ except for $\mathrm{pf}$, is supported away from $\mathrm{df}_1$, thus this symbol is in $S_{\calc}^{-\infty,\mathsf{s}_{1}+\mathsf{s}_{2}, \ell_1+\ell_2,q_1+q_2}$ (here we changed the order associated to $\mathrm{bf}$, as they coincide on the region where the symbol is not residual), which in turn shows
\begin{equation}
A_1B_3,A_1B_1,A_3B_1 \in \Psi_{\calc}^{-\infty,\mathsf{s}_{1}+\mathsf{s}_{2}, \ell_1+\ell_2,q_1+q_2}.
\end{equation}

For terms $A_2B_3,A_2B_2,A_3B_2, A_3B_3$, we can apply the composition law in $\Psi_{\calczero}$ to see 
\begin{equation}
A_2B_3,A_2B_2,A_3B_2, A_3B_3 \in \Psi_{\calczero}^{m_1+m_2, \mathsf{s}_{\calczero,1}+\mathsf{s}_{\calczero,2}, \ell_1+\ell_2 },
\end{equation}
and their left symbol, by \eqref{eq:moyal_explicit_natural}, modulo a term that is Schwartz at all boundary faces in ${}^{\calczero} T^* \bbM$ is supported away from the $\{\tau_{\calczero}=0,\xi_{\calczero}=0,h=0\}$. Thus this left symbol is in $S_{\calc}^{m_1+m_2, \mathsf{s}_{1}+\mathsf{s}_{2}, \ell_1+\ell_2,-\infty}$, and we have 
\begin{equation}
A_2B_3,A_2B_2,A_3B_2, A_3B_3 \in \Psi_{\calc}^{m_1+m_2, \mathsf{s}_{1}+\mathsf{s}_{2}, \ell_1+\ell_2,-\infty}.
\end{equation}

Finally, we show that terms $A_1B_2$ and $A_2B_1$ give residual contributions.
We take $A_1B_2$ as example. Suppose $A_1$ has (left) full symbol $a_1$
and $B_2$ has full symbol $b_2$. Then $A_1B_2$ acts by

\begin{align}
\begin{split} 
A_1B_2u(t,x) = & (2\pi)^{-n-1}\int e^{ i((t-t')\tau'+(x-x')  \cdot \xi') }
a_1(h,t,x,\tau',\xi')
h^{-n-2} \Big(e^{ i((t'-t'')\frac{\tau_{\calczero}}{h^2}+(x'-x'')\cdot \frac{\xi_{\calczero}}{h}) } 
\\& b_2(h,t',x',\tau_{\calczero},\xi_{\calczero}) u(t'',x'') \dd{t}''\dd{x}'' \dd\tau_{\calczero}\dd\xi_{\calczero} \Big)
\dd{t}'\dd{x}' \dd\tau'\dd\xi'
\\ = &(2\pi)^{-2n-2} h^{-n-2}\int e^{i (t,x) \cdot(\frac{\tau_{\calczero}}{h^2}, \frac{\xi_{\calczero}}{h}) + i (t'-t,x'-x) \cdot (\frac{\tau_{\calczero}}{h^2}-\tau',\frac{\xi_{\calczero}}{h}-\xi')} a_1(h,t,x,\tau',\xi') 
\\ & b_2(h, t',x',\tau_{\calczero},\xi_{\calczero}) \hat{u}\Big(\frac{\tau_{\calczero}}{h^2},\frac{\xi_{\calczero}}{h}\Big) \dd\tau_{\calczero} \dd\xi_{\calczero} \dd{t}'\dd{x}' \dd\tau'\dd\xi',
\end{split} 
\end{align}
which is the action of the operator with symbol
\begin{multline}
c_{12}(h, t,x,\tau_{\calczero},\xi_{\calczero})
= (2\pi)^{-n-1} h^{-n-2}  \int e^{-i(t-t',x-x') \cdot (\frac{\tau_{\calczero}}{h^2}-\tau',\frac{\xi_{\calczero}}{h}-\xi')  } a_1(h, t,x,\tau',\xi')
\\  b_2(h, t',x',\tau_{\calczero},\xi_{\calczero}) \dd{t}'\dd{x}' \dd\tau'\dd\xi'.
\end{multline}
By the disjoint support condition, we know
that on the support of $a_1(h, t,x,\tau',\xi')  b_2(h, t',x',\tau_{\calczero},\xi_{\calczero})$, the phase does not have any critical point.
Then we apply a non-stationary argument, i.e., repeated integration by parts with respect to vector fields that leave the oscillatory factor 
\begin{equation} 
	e^{-i(t-t',x-x') \cdot (\frac{\tau_{\calczero}}{h^2}-\tau',\frac{\xi_{\calczero}}{h}-\xi')  }
\end{equation} 
invariant. This allows us to gain factors of 
%
%
%
$\la (\tau_{\calczero},\xi_{\calczero}) \ra^{-N}h^N \la (t,z) \ra^{-N}$ for any $N$, which means that the symbol $c$ is in $S^{-\infty,-\infty,-\infty,-\infty}_{\calc}$. Thus the composition $A_1 B_2$ is in $\Psi_{\calc}^{-\infty, -\infty, -\infty, -\infty}$. 


Now we turn to the proof of \eqref{eq:expansion-calc-composition}.
Notice that the asymptotic expansion of the same form (but of course, interpreted differently!) holds in $\mathrm{par,I,res}$ and $\calczero$-calculus respectively, thus we know the left symbol of $A_i \circ B_j$ has the expansion
\begin{equation}
c_{ij}(h,t,x,\tau,\xi) \sim \sum_{\alpha \in \bbN^{d+1} } \frac{1}{\alpha !} \partial_{\tau,\xi}^\alpha a_i(h,t,x,\tau,\xi)  D_{t,x}^\alpha b_j(h, t,x,\tau,\xi).
\end{equation}
For $(i,j) =(1,3),(1,1),(3,1)$, the $\sim$ means 
\begin{multline}
c_{ij}(h,t,x,\tau,\xi) - \sum_{ |\alpha| \leq N } \frac{1}{\alpha !} \partial_{\tau,\xi}^\alpha a_i(h,t,x,\tau,\xi)  D_{t,x}^\alpha b_j(h, t,x,\tau,\xi)
\\ \in \bigcap_{\delta > 0} S_{\mathrm{par,I,res}}^{ -\infty, \mathsf{s}_{\mathrm{par},1}+\mathsf{s}_{\mathrm{par},2} -N-1+\delta, \ell_1+\ell_2 -N-1, q_1+q_2},
\end{multline} 
and all terms in the expansion are supported away from $\mathrm{df}_1$. 
On the other hand, for $(i,j)=(2,3),(3,3),(3,2),(2,2)$, we have the same expansion but written in the $(\taun, \xin)$ coordinates, and $\sim$ means  
\begin{multline}
c_{ij}(h,t,x,\taun,\xin) - \sum_{ |\alpha| \leq N } \frac{1}{\alpha !} (h^2 \partial_{\taun}, h \partial_{\xin})^\alpha a_i(h,t,x,\taun,\xin)  D_{t,x}^\alpha b_j(h, t,x,\taun,\xin) \\
\in \bigcap_{\delta > 0} S_{\calczero}^{ m_1+m_2 -N-1, \mathsf{s}_{\calczero,1}+\mathsf{s}_{\calczero,2} -N-1+\delta, \ell_1+\ell_2 -N-1},
\end{multline} 
and all terms in the expansion are supported away from $\{\tau_{\calczero}=0,\xi_{\calczero}=0,h=0\}$.

As the expansion only happens when $\partial_{\tau,\xi}^\alpha a_i$ and $D_{t,x}^\alpha b_j$ are evaluated at the same place, we know the the corresponding expansions with $(i,j)=(1,2),(2,1)$ are identically zero (which is what we expect, as we verified that their contributions are residual). 
Thus, summing all those expansions, and using the support condition (which implies that $\mathsf{s}_{i}$ coincides with $\mathsf{s}_{\mathrm{par},i}$ and $\mathsf{s}_{{\calczero},i}$ on the region where they take effect), 
we know the symbol of $A \circ B$ has the expansion as in \eqref{eq:expansion-calc-composition}, and the error term has membership as in \eqref{eq:expainsion-calc-error-membership}.
\end{proof}

Consider also the Poisson bracket $\{a,b\}$ of $a,b \in C^\infty(T^* \bbR^{1,d}) $, which we define using the sign convention
\begin{equation}
\{a,b\} =  (\partial_\tau a)(\partial_t b) - (\partial_t a)(\partial_\tau b) + \sum_{j=1}^d ( (\partial_{\xi_j} a)(\partial_{x_j} b)-(\partial_{x_j} a)(\partial_{\xi_j} b)  )   .
\label{eq:poisson_convention}
\end{equation}
\begin{lemma}
	For $a\in S^{m,\mathsf{s},\ell,q}_{\calc}$, and $b\in S^{m',\mathsf{s}',\ell',q'}_{\calc}$, we have
	\begin{equation} 
		\{a,b\} \in S^{m+m'-1,\mathsf{s}+\mathsf{s}'-1,\ell+\ell'-1,q+q'}_{\calc}.
	\end{equation} 
\end{lemma}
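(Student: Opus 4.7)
The plan is to extract $\{a,b\}$ from the symbol expansion of the Moyal product, directly applying the composition Proposition~\ref{prop:calc_composition}. This circumvents the need to analyze how each of $\partial_t,\partial_x,\partial_\tau,\partial_\xi$ acts on the symbol orders across the several boundary faces of ${}^{\calc}\overline{T}^*\bbM$, a case analysis complicated by the mixed scattering/parabolic structure at $\mathrm{df},\natural\mathrm{f},\mathrm{pf}$.

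First, I would invoke Proposition~\ref{prop:calc_composition} for the operators $A=\operatorname{Op}(a)\in \Psi_{\calc}^{m,\mathsf{s},\ell,q}$ and $B=\operatorname{Op}(b)\in \Psi_{\calc}^{m',\mathsf{s}',\ell',q'}$. By the expansion \eqref{eq:expansion-calc-composition} together with the remainder estimate \eqref{eq:expainsion-calc-error-membership} taken with $N=1$, the left symbols of $AB$ and $BA$ satisfy
$$a\star b = ab + \sum_{k} (\partial_{\zeta_k} a)\,(D_{z_k} b) + r_{ab},\qquad b\star a = ba + \sum_{k} (\partial_{\zeta_k} b)\,(D_{z_k} a) + r_{ba},$$
with $\zeta=(\tau,\xi)$, $z=(t,x)$, and $r_{ab},r_{ba}\in \bigcap_{\delta>0} S_{\calc}^{m+m'-2,\,\mathsf{s}+\mathsf{s}'-2+\delta,\,\ell+\ell'-2,\,q+q'}$.

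Second, subtracting these identities, the pointwise products $ab$ and $ba$ cancel and the $|\alpha|=1$ terms assemble into $-i\{a,b\}$ via $D=-i\partial$ and the sign convention \eqref{eq:poisson_convention}. On the other hand, applying \eqref{eq:expainsion-calc-error-membership} with $N=0$ to $a\star b - ab$ and $b\star a - ba$ separately places each, and hence their difference $a \star b - b \star a$, in $\bigcap_{\delta>0} S_{\calc}^{m+m'-1,\,\mathsf{s}+\mathsf{s}'-1+\delta,\,\ell+\ell'-1,\,q+q'}$; meanwhile $r_{ab}-r_{ba}$ lies in a strictly lower-order class. Combining these facts yields the claim (modulo the $\delta$-loss at $\mathrm{bf}$, the standard concession to variable orders, implicit here in the statement).

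I do not anticipate a substantive obstacle: once Proposition~\ref{prop:calc_composition} is in hand, the argument is bookkeeping. The only subtle point is the $\delta$-loss at $\mathrm{bf}$, which ultimately reflects the fact that $\partial_{z_j}\in \rho_{\mathrm{bf}}^{1-\delta}\tilde{\calV}_{\calc}$ rather than $\rho_{\mathrm{bf}}\tilde{\calV}_{\calc}$ when $\mathsf{s}$ is nonconstant.
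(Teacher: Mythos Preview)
Your proposal is correct and follows precisely the paper's approach: the paper's proof is the single line ``This follows from the symbol expansion \eqref{eq:expansion-calc-composition},'' and you have simply unpacked that reference by subtracting the $N=1$ expansions of $a\star b$ and $b\star a$ and reading off the order of the difference. The $\delta$-loss at $\mathrm{bf}$ you flag is indeed present in the paper's own remainder estimate \eqref{eq:expainsion-calc-error-membership} and in the related Proposition~\ref{prop:composition}, so your caveat is apt.
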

\begin{proof}
	This follows from the symbol expansion \cref{eq:expansion-calc-composition}.
\end{proof}

\begin{remark}
There is another more `global' proof of the composition law stated above and also the expansion \cref{eq:expansion-calc-composition}. This alternative proof is included in \Cref{sec:reduction}.
\end{remark}

\subsection{\texorpdfstring{Some basic properties of the $\calc$ calculus}{Some basic properties of the resolved natural calculus}}  \label{sec:calc-basic-properties}
An immediate consequence of \Cref{prop:calc_composition} is the following: 
\begin{proposition}
	If $\mathsf{s},\mathsf{s}'$ are variable orders and $m,m',\ell,\ell',q,q'\in \bbR$, and if $a\in S_{\calc}^{m,\mathsf{s},\ell,q}, b\in S_{\calc}^{m',\mathsf{s}',\ell',q'}$, then, for every $\varepsilon>0$, 
	\begin{enumerate}[label=(\Roman*)]
		\item $\operatorname{Op}(a)\circ \operatorname{Op}(b) -\operatorname{Op}(ab)\in \bigcap_{\delta > 0} \Psi_{\calc}^{m+m'-1,\mathsf{s}+\mathsf{s}'-1 + \delta,\ell+\ell'-1,q+q'}$, 
		\item $[\operatorname{Op}(a), \operatorname{Op}(b)]- i \operatorname{Op}(\{a,b\}) \in \bigcap_{\delta > 0} \Psi_{\calc}^{m+m'-2,\mathsf{s}+\mathsf{s}'-2+\delta,\ell+\ell'-2,q+q'}$.	
			\end{enumerate}
	\label{prop:composition}
\end{proposition}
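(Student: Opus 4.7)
The plan is to observe that both statements are essentially immediate consequences of the composition law \Cref{prop:calc_composition}: one simply truncates the asymptotic expansion \eqref{eq:expansion-calc-composition} at low order and identifies the leading terms.

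For (I), the $\alpha=0$ contribution in the expansion of the left-reduced symbol $a\star b$ of $\operatorname{Op}(a)\circ \operatorname{Op}(b)$ is exactly $ab$. Taking $N=0$ in \eqref{eq:expainsion-calc-error-membership} then yields
\[
a\star b - ab \in \bigcap_{\delta > 0} S_{\calc}^{m+m'-1,\,\mathsf{s}+\mathsf{s}'-1+\delta,\,\ell+\ell'-1,\,q+q'},
\]
and quantizing gives (I).

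For (II), I would apply the expansion with $N=1$ to the reduced symbols of both $\operatorname{Op}(a)\circ \operatorname{Op}(b)$ and $\operatorname{Op}(b)\circ \operatorname{Op}(a)$, and subtract. The $\alpha=0$ terms $ab$ and $ba$ cancel. Writing $\zeta=(\tau,\xi)$, $z=(t,x)$ and using $D_{t,x}=-i\partial_{t,x}$, the $|\alpha|=1$ contribution to $a\star b - b\star a$ is
\[
\sum_{k} \bigl[(\partial_{\zeta_k} a)(D_{z_k} b) - (\partial_{\zeta_k} b)(D_{z_k} a)\bigr] = -i\sum_k \bigl[(\partial_{\zeta_k} a)(\partial_{z_k} b) - (\partial_{z_k} a)(\partial_{\zeta_k} b)\bigr],
\]
which is proportional to $\{a,b\}$ by the convention \eqref{eq:poisson_convention}. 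Thus the left-reduced symbol of $[\operatorname{Op}(a),\operatorname{Op}(b)] - i\operatorname{Op}(\{a,b\})$ coincides, up to the sign fixed by the convention, with the $N=1$ remainder from \eqref{eq:expansion-calc-composition}, which by \eqref{eq:expainsion-calc-error-membership} lies in $\bigcap_{\delta>0} S_{\calc}^{m+m'-2,\mathsf{s}+\mathsf{s}'-2+\delta,\ell+\ell'-2,q+q'}$; quantizing gives (II).

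There is essentially no obstacle here — all the heavy lifting was carried out in the proof of \Cref{prop:calc_composition}. The only bookkeeping required is matching the sign convention of \eqref{eq:poisson_convention} against the $D=-i\partial$ convention used in the expansion, which determines the correct proportionality constant between $[\operatorname{Op}(a),\operatorname{Op}(b)]$ and $\operatorname{Op}(\{a,b\})$ modulo the lower-order error term.
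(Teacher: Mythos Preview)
Your proposal is correct and follows essentially the same approach as the paper's proof: both parts are obtained by truncating the asymptotic expansion \eqref{eq:expansion-calc-composition} at $N=0$ and $N=1$ respectively, with the paper's version being slightly terser about the sign bookkeeping you spell out.
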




\begin{proof}
	\begin{enumerate}[label=(\Roman*)]
		\item Follows from \eqref{eq:expainsion-calc-error-membership} with $N=0$.
		\item Notice that $[\operatorname{Op}(a),\operatorname{Op}(b)] = \operatorname{Op}(a\star b) - \operatorname{Op}(b\star a)= \operatorname{Op}(a\star b - b\star a)$, the result follows from comparing $a\star b$ and $b\star a$ up to terms with $N=1$ in \eqref{eq:expainsion-calc-error-membership}.


	\end{enumerate}
\end{proof}

For $A=\operatorname{Op}(a)$, $a\in S_{\calc}^{m,\mathsf{s},\ell,q}$, we define its $\calc$-principal symbol by 
\begin{equation} 
\sigma_{\calc}^{m,\mathsf{s},\ell,q}(A) = a \bmod \bigcap_{\delta > 0} S_{\calc}^{m-1,\mathsf{s}-1+\delta,\ell-1,q} 
\in S_{\calc}^{m,\mathsf{s},\ell,q} / \bigcap_{\delta > 0} S_{\calc}^{m-1,\mathsf{s}-1+\delta,\ell-1,q}.
\end{equation} 
Note that  the sequence 
\begin{equation}\label{eq:short exact natres}
0 \to \bigcap_{\delta > 0} \Psi_{\calc}^{m-1,\mathsf{s}-1 + \delta,\ell-1,q} \to \Psi_{\calc}^{m,\mathsf{s},\ell,q} \overset{\sigma_{\calc}^{m,\mathsf{s},\ell,q}}{\longrightarrow} S_{\calc}^{m,\mathsf{s},\ell,q} / \bigcap_{\delta > 0} S_{\calc}^{m-1,\mathsf{s}-1+\delta,\ell-1,q} \to 0 
\end{equation}
is a short exact sequence and $\sigma_{\calc}^{m,\mathsf{s},\ell,q}$ is an algebra homomorphism. 

The preceding proposition gives us the basic algebraic properties of principal symbols:
\begin{corollary}
	For any $A\in \Psi_{\calc}^{m,\mathsf{s},\ell,q} ,B\in \Psi_{\calc}^{m',\mathsf{s}',\ell',q'}$, \begin{align}
	\sigma_{\calc}^{m+m',\mathsf{s}+\mathsf{s}',\ell+\ell',q+q'}(AB) &= \sigma_{\calc}^{m,\mathsf{s},\ell,q}(A)\sigma_{\calc}^{m',\mathsf{s}',\ell',q'}(B) \\ 
	\sigma_{\calc}^{m+m'-1,\mathsf{s}+\mathsf{s}'-1,\ell+\ell'-1,q+q'}([A,B]) &= i\{\sigma_{\calc}^{m,\mathsf{s},\ell,q}(A),\sigma_{\calc}^{m',\mathsf{s}',\ell',q'}(B)\}
	\end{align}
	both hold.
\end{corollary}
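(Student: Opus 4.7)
The proof is a direct bookkeeping exercise extracting principal symbols from \Cref{prop:composition}, so my plan is just to make the passage from the ``modulo lower-order operators'' statements there to the equalities in the quotient space explicit.

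For the first identity, I would choose left-reduced full symbols $a \in S_{\calc}^{m,\mathsf{s},\ell,q}$, $b \in S_{\calc}^{m',\mathsf{s}',\ell',q'}$ with $A = \operatorname{Op}(a)$, $B = \operatorname{Op}(b)$, so that by definition
\[
\sigma_{\calc}^{m,\mathsf{s},\ell,q}(A) = [a], \qquad \sigma_{\calc}^{m',\mathsf{s}',\ell',q'}(B) = [b]
\]
in the respective quotients from \cref{eq:short exact natres}. By part (I) of \Cref{prop:composition}, $AB - \operatorname{Op}(ab) \in \bigcap_{\delta>0} \Psi_{\calc}^{m+m'-1,\mathsf{s}+\mathsf{s}'-1+\delta,\ell+\ell'-1,q+q'}$, so $AB$ and $\operatorname{Op}(ab)$ have the same image under $\sigma_{\calc}^{m+m',\mathsf{s}+\mathsf{s}',\ell+\ell',q+q'}$. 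Thus $\sigma_{\calc}^{m+m',\mathsf{s}+\mathsf{s}',\ell+\ell',q+q'}(AB) = [ab] = [a][b]$, which is the multiplicativity statement. One small check is that the product $[a][b]$ is well-defined as a map of quotients, i.e.\ that if $a$ is replaced by $a + r$ with $r \in \bigcap_\delta S_{\calc}^{m-1,\mathsf{s}-1+\delta,\ell-1,q}$, then $rb$ lies in $\bigcap_\delta S_{\calc}^{m+m'-1,\mathsf{s}+\mathsf{s}'-1+\delta,\ell+\ell'-1,q+q'}$; this is immediate from the product rule for the orders appearing in the symbol class definition \cref{eq:def-calc-symbol}.

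For the commutator identity, I apply part (II) of \Cref{prop:composition}:
\[
[A,B] - i\operatorname{Op}(\{a,b\}) \in \bigcap_{\delta>0} \Psi_{\calc}^{m+m'-2,\mathsf{s}+\mathsf{s}'-2+\delta,\ell+\ell'-2,q+q'},
\]
which is residual relative to the order $(m+m'-1,\mathsf{s}+\mathsf{s}'-1,\ell+\ell'-1,q+q')$ of interest. Hence
\[
\sigma_{\calc}^{m+m'-1,\mathsf{s}+\mathsf{s}'-1,\ell+\ell'-1,q+q'}([A,B]) = i\,[\{a,b\}].
\]
What remains is to verify that $[\{a,b\}]$ depends only on $[a]$ and $[b]$, so that it can legitimately be written as $\{\sigma_{\calc}(A),\sigma_{\calc}(B)\}$. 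If $a' = a + r$ with $r \in \bigcap_\delta S_{\calc}^{m-1,\mathsf{s}-1+\delta,\ell-1,q}$, then by the Leibniz rule applied to \cref{eq:poisson_convention}, each term in $\{r,b\}$ has one extra derivative in either a base or a fiber variable, and consulting the weights on $\rho_{\mathrm{df}}, \rho_{\mathrm{bf}}, \rho_{\natural\mathrm{f}}$ (recall differentiation in $\tau,\xi$ lowers the orders at $\mathrm{df}$ and $\natural\mathrm{f}$, while differentiation in $t,x$ lowers the order at $\mathrm{bf}$) shows $\{r,b\} \in \bigcap_\delta S_{\calc}^{m+m'-2,\mathsf{s}+\mathsf{s}'-2+\delta,\ell+\ell'-2,q+q'}$, which is residual at the relevant order; similarly for $b$.

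The only ``obstacle'' is this last well-definedness check, which really just amounts to confirming that the orders at the four boundary faces behave as one expects under the Poisson bracket --- a calculation that is built into the symbol calculus by construction. No part of the argument requires more than the symbolic expansion \cref{eq:expansion-calc-composition} already proved.
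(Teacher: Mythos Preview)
Your proof is correct and follows exactly the approach the paper intends: the corollary is stated immediately after \Cref{prop:composition} with no separate proof, since it is meant to be read as a direct consequence of parts (I) and (II) there. Your explicit well-definedness checks for the product and Poisson bracket on principal symbol classes are a welcome elaboration but not something the paper spells out.
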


We also record here that the usual formula for the (left-reduced) symbol of the adjoint of a pseudodifferential operator, namely $a^*(z, \zeta) = e^{i\ang{D_z, D_\zeta}} a(z, \zeta)$, shows that the adjoint of $A \in \Psi_{\calc}^{m, \mathsf{s}, \ell, q}$ is also in $\Psi_{\calc}^{m, \mathsf{s}, \ell, q}$; see \cite[Theorem 18.1.7]{Ho3}. We omit the proof, but note that the asymptotic expansion of $e^{i\ang{D_z, D_\zeta}} a(z, \zeta)$ works similarly to \eqref{eq:expansion-calc-composition} in that the terms, and the remainder term, decrease in order at $\mathrm{df}$, $\mathrm{bf}$ and $\mathrm{{\natural}f}$,  but not at $\mathrm{pf}$.

\begin{proposition}
	Suppose that $A \in \Psi_{\calc}^{0,0,0,0}$. Then, for any $u\in L^2(\bbR^D)$, we have $A(h) u\in L^2(\bbR^D)$ for each $h>0$. Moreover, for each $h_0\in (0,1)$, there exists a constant $C>0$, depending on $A$, such that 
	\begin{equation}
	\lVert A(h) u \rVert_{L^2(\bbR^D)} \leq C \lVert u \rVert_{L^2(\bbR^D)}
	\end{equation}
	for all $u\in L^2(\bbR^D)$ and $h\in (0,h_0)$. Thus, $A(h)$ is a family of bounded linear operators on $L^2(\bbR^D)$, uniformly bounded as $h\to 0^+$. 
	\label{prop:L2_boundedness}
\end{proposition}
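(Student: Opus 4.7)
The strategy is to split $A$ via the decomposition \eqref{eq:decomposition_resolved} and invoke uniform $L^2$-boundedness from each constituent calculus. Write $A = P_1 + P_2 + P_3$ where $P_1$ has left symbol supported near $\mathrm{pf}$ (away from $\mathrm{df}$), $P_2$ supported near $\mathrm{df}$ (away from $\mathrm{pf}$), and $P_3$ supported away from both. Since the $\mathrm{bf}$-order $\mathsf{s} \equiv 0$ is constant, \Cref{lemma:glue_par_calczero_orders} allows the variable orders to be chosen identically zero, so that
\begin{equation}
  P_1 \in \Psi_{\mathrm{par,I,res}}^{-\infty, 0, 0, 0}, \qquad P_2, P_3 \in \Psi_{\calczero}^{0, 0, 0}.
\end{equation}

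For the parabolic piece, the mapping property \eqref{eq:action on par Sob spaces}, applied with $m'' = \mathsf{s}'' = m' = \mathsf{s}' = q' = 0$ (all the constraints listed in that statement being satisfied), yields $\|P_1(h)\|_{L^2 \to L^2} \leq C$ uniformly for $h \in (0, h_0)$ since $H_{\mathrm{par}}^{0,0}(\bbR^{1,d}) = L^2(\bbR^{1,d})$.

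For the natural pieces, the $\calczero$-quantization \eqref{eq:quant_calczero} of $a \in S_{\calczero}^{0,0,0}$ coincides with the standard Kohn--Nirenberg quantization on $\bbR^{1+d}$ of the ``unwound'' symbol $\tilde{a}(z, \tau, \xi) := a(z, h^2 \tau, h\xi)$. Because $\mathsf{s} \equiv 0$ is constant, the symbol estimates \eqref{eq:nat_symbol} hold with $\delta = 0$ (per the remark following \eqref{eq:nat_symbol_intersection_delta}), giving $|\partial_z^\alpha \partial_{\zetan}^\beta a| \leq C_{\alpha\beta} \la z \ra^{-|\alpha|} \la \zetan \ra^{-|\beta|}$. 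Applying the chain rule yields
\begin{equation}
  |\partial_z^\alpha \partial_\tau^{k} \partial_\xi^{\beta'} \tilde{a}(z, \tau, \xi)| \leq C_{\alpha k \beta'}\, h^{2k + |\beta'|} \la z \ra^{-|\alpha|} (1 + h^4 \tau^2 + h^2 |\xi|^2)^{-(k + |\beta'|)/2},
\end{equation}
and an elementary case analysis on the ranges of $(h^2|\tau|, h|\xi|)$ shows the factor $h^{2k + |\beta'|} (1 + h^4 \tau^2 + h^2 |\xi|^2)^{-(k + |\beta'|)/2}$ is bounded uniformly in $(\tau, \xi, h) \in \bbR^{1+d} \times (0,1]$. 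Hence $\tilde{a}$ lies in a bounded subset of $C_b^\infty(\bbR^{2(1+d)})$ uniformly in $h$, and the classical Calder\'on--Vaillancourt theorem gives $\|P_i(h)\|_{L^2 \to L^2} \leq C$ uniformly in $h$ for $i = 2, 3$.

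The principal technical point is the uniform-in-$h$ boundedness of the pulled-back symbol $\tilde{a}$, which ultimately reflects the design of the $\calczero$-phase space: $h^2 \partial_\tau$ and $h \partial_\xi$ are smooth boundary-tangent vector fields there, so that each classical frequency derivative is compensated by a positive power of $h$. Combining the three bounds yields $\|A(h)u\|_{L^2} \leq C \|u\|_{L^2}$ uniformly in $h \in (0, h_0)$, as claimed.
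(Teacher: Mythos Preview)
Your proof is correct, but it follows a genuinely different route from the paper's. The paper argues intrinsically within $\Psi_{\calc}$ using H\"ormander's square root trick: it constructs $B \in \Psi_{\calc}^{0,0,0,0}$ with $A^*A + B^*B = C^2\cdot\Id + R$, where $R$ has arbitrarily negative orders at $\mathrm{df},\mathrm{bf},\natural\mathrm{f}$ (hence is uniformly Hilbert--Schmidt), and then reads off the bound $\|Au\|^2 \leq C^2\|u\|^2 + \langle Ru,u\rangle$. This relies on the $\calc$-composition law (\Cref{prop:calc_composition}) already in hand, so it is self-contained at the level of the new calculus.

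Your approach instead exploits the gluing description of $\Psi_{\calc}$: you split $A$ into a parabolic piece handled by the $\mathrm{par,I,res}$ mapping property \eqref{eq:action on par Sob spaces} and $\calczero$-pieces handled by unwinding to the standard quantization and invoking Calder\'on--Vaillancourt. This is arguably more elementary, since it reduces directly to boundedness results in the constituent calculi and avoids the parametrix/asymptotic-sum machinery implicit in the square root trick. (Incidentally, your case analysis on $(h^2|\tau|,h|\xi|)$ is unnecessary: both $h^{2k+|\beta'|}$ and $(1+h^4\tau^2+h^2|\xi|^2)^{-(k+|\beta'|)/2}$ are trivially $\leq 1$.) The paper's approach, on the other hand, is the one that generalizes cleanly to variable orders and to proving sharper statements like a G{\aa}rding inequality, since it works directly with the principal symbol in the $\calc$-phase space.
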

\begin{proof}
This follows from H\"ormander's ``square root trick'' (see \cite[Theorem 18.1.11]{Ho3}). Namely, using the symbol calculus we construct an operator $B \in \Psi_{\calc}^{0,0,0,0}$ such that 
\begin{equation}\label{eq:square root trick}
A^* A + B^* B = C^2 \cdot \Id + R,
\end{equation}
where $C^2$ is strictly larger than $\sup \sigma_L(A^* A)$ and $R \in \Psi_{\calc}^{-m,-s,-l,0}$ is an error term where the orders $(-m, -s, -l)$ can be taken as negative as desired. Then for sufficiently negative $(-m, -s, -l)$, $R(h)$ is a family of Hilbert-Schmidt operators with uniformly bounded Hilbert-Schmidt norm, so is uniformly bounded on $L^2(\bbR^D)$, and \eqref{eq:square root trick} immediately implies the uniform boundedness of $A$. 
\end{proof}

Below, we will use the symbol `$\lesssim$' to mean less than or equal to some constant multiple of, where the constant is independent of $h\in (0,h_0)$ and the function $u$. However, the ``constant'' is not required to be uniform as $h_0\to 1^-$,  nor independent of any of the pseudodifferential operators appearing. 
Moreover, it will be understood that when we write $L^2$, we mean $L^2(\bbR^D)$.
For instance, the conclusion of the previous proposition can be written 
\begin{equation} 
	\lVert A u \rVert_{L^2}\lesssim \lVert u \rVert_{L^2}.
\end{equation} 

The differential operators in this calculus will be denoted $\operatorname{Diff}_{\calc}^{m,s,\ell,q} = \Psi_{\calc}^{m,s,\ell,q} \cap \operatorname{Diff}(\bbR^{1+d})$. We pause to note the membership of spatial and temporal partial derivatives in this calculus.

\begin{lemma} $\partial_t \in \operatorname{Diff}_{\calc}^{1,0,2,0}$, and $\partial_{x_j}\in \operatorname{Diff}_{\calc}^{1,0,1,0}$ for each $j\in \{1,\dots,d\}$.
	\label{lem:derivatives}
\end{lemma}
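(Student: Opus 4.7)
This is a symbol computation. First, I observe that under the standard left quantization \eqref{eq:quant} the oscillatory kernel $(2\pi)^{-D} e^{i(z-z')\cdot \zeta}$ is an eigenfunction of $\partial_t$ (resp.\ $\partial_{x_j}$) with eigenvalue $i\tau$ (resp.\ $i\xi_j$), so $\partial_t = \operatorname{Op}(i\tau)$ and $\partial_{x_j} = \operatorname{Op}(i\xi_j)$ on $\calS$. Thus the lemma reduces to the symbol memberships
\begin{equation}
	i\tau \in S_{\calc,\mathrm{cl}}^{1,0,2,0}, \qquad i\xi_j \in S_{\calc,\mathrm{cl}}^{1,0,1,0},
\end{equation}
which, by the definition \cref{eq:def-calc-symbol-classical} of the classical symbol class, amount to showing
\begin{equation}
	\rho_{\mathrm{df}}\,\rho_{\natural\mathrm{f}}^{2}\,\tau \in C^\infty({}^{\calc}\overline{T}^{*}\bbM), \qquad \rho_{\mathrm{df}}\,\rho_{\natural\mathrm{f}}\,\xi_j \in C^\infty({}^{\calc}\overline{T}^{*}\bbM).
\end{equation}
Since both $\tau$ and $\xi_j$ are independent of $z$, the order at $\mathrm{bf}$ is trivially $0$ and I need not track $\rho_{\mathrm{bf}}$.

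I then verify smoothness face-by-face using the two coordinate systems inherited from the gluing description of ${}^{\calc}\overline{T}^{*}\bbM$ in \S\ref{subsec:phase}. \emph{Near $\mathrm{pf}$}, one uses coordinates $(h,z,\tau,\xi)$ with $\tau,\xi$ bounded; here $\chi(\zeta_\natural) \equiv 1$, so $\rho_{\natural\mathrm{f}} = h + \aang{\zeta}^{-1}$ is smooth and nonvanishing (as $\aang{\zeta}$ is bounded), and $\rho_{\mathrm{df}}$ is likewise smooth and nonvanishing. The functions $\tau$ and $\xi_j$ are smooth and bounded coordinates, so the products are smooth. \emph{Near $\mathrm{df}$ (possibly at the corner with $\natural\mathrm{f}$) and away from $\mathrm{pf}$}, one uses $\calczero$-coordinates $(\rho_{\mathrm{df}}, \hat\zeta_\natural, h, z)$ with $\rho_{\mathrm{df}} = \aang{\zeta_\natural}^{-1}$ and $\hat\zeta_\natural = \zeta_\natural/\aang{\zeta_\natural}$; on this support $\chi(\zeta_\natural) \equiv 0$, so $\rho_{\natural\mathrm{f}} = h$. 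Using $\xi_j = \xi_{\natural,j}/h = \hat\xi_{\natural,j}/(\rho_{\mathrm{df}} h)$ and $\tau = \tau_{\natural}/h^2 = \hat\tau_\natural/(\rho_{\mathrm{df}} h^2)$ yields
\begin{equation}
	\rho_{\mathrm{df}}\,\rho_{\natural\mathrm{f}}\,\xi_j = \hat\xi_{\natural,j}, \qquad \rho_{\mathrm{df}}\,\rho_{\natural\mathrm{f}}^{2}\,\tau = \hat\tau_\natural,
\end{equation}
both smooth and bounded. The region near $\natural\mathrm{f}$ away from $\mathrm{df}$ and $\mathrm{pf}$ is covered by the same computation with $\rho_{\mathrm{df}}$ bounded away from $0$.

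I do not anticipate any real obstacle: the only subtlety is the mild bookkeeping at the corner $\mathrm{df}\cap\natural\mathrm{f}$ coming from the gluing construction, but once one uses the $\calczero$-chart there the computation is an elementary manipulation of the relations $\tau_\natural = h^2 \tau$ and $\xi_\natural = h\xi$.
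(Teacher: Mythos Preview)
Your approach is correct in spirit and closely parallels the paper's: both reduce the lemma to the symbol memberships $\tau \in S_{\calc}^{1,0,2,0}$ and $\xi_j \in S_{\calc}^{1,0,1,0}$, and both exploit the gluing description of ${}^{\calc}\overline{T}^*\bbM$. The paper packages this slightly differently, simply asserting $\tau \in S_{\calczero}^{1,0,2}$ and $\tau \in S_{\mathrm{par,I,res}}^{2,0,2,0}$ (and similarly for $\xi_j$), then invoking \Cref{prop:gluing_pseudos} to glue these into a $\calc$-symbol. Your direct chart-by-chart verification of smoothness of $\rho_{\mathrm{df}}\rho_{\natural\mathrm{f}}^2\tau$ and $\rho_{\mathrm{df}}\rho_{\natural\mathrm{f}}\xi_j$ is equivalent and arguably more concrete.

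However, there is a small gap in your chart coverage: your two regions (``near $\mathrm{pf}$ with $\tau,\xi$ bounded'' and ``away from $\mathrm{pf}$ in $\calczero$-coordinates'') do not cover the corner $\mathrm{pf}\cap\natural\mathrm{f}$, where $\zeta_\natural\to 0$ yet $\aang{\zeta}\to\infty$ and $h\to 0$. In particular, your claim that $\rho_{\natural\mathrm{f}}=h+\aang{\zeta}^{-1}$ is \emph{nonvanishing} near $\mathrm{pf}$ fails precisely at this corner. The fix is easy: use one of the charts from \S\ref{subsec:phase} near $\mathrm{pf}\cap\natural\mathrm{f}$, e.g.\ $\rho_{\natural\mathrm{f}}=|\tau|^{-1/2}$, $\hat\xi=\xi/|\tau|^{1/2}$, $\rho_{\mathrm{pf}}=h|\tau|^{1/2}$, in which $\rho_{\natural\mathrm{f}}^2\tau=\operatorname{sign}(\tau)$ and $\rho_{\natural\mathrm{f}}\xi_j=\hat\xi_j$ are manifestly smooth. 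The paper's approach sidesteps this bookkeeping by invoking $S_{\mathrm{par,I,res}}$, which already encodes the corner $\mathrm{pf}\cap\mathrm{ff}$.
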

\begin{proof}
	\begin{itemize}
		\item The full symbol of $\partial_t$ is (up to a factor of $i$) just $\tau = h^{-2} \tau_{\natural}$, and $\tau_{\natural} \in S^{1,0,0}_{\calczero}$, so $\tau \in S_{\calczero}^{1,0,2}$. Also, $\tau \in S_{\mathrm{par,I,res}}^{2, 0, 2, 0}$. 
Combining these two pieces of information, we have $\tau \in S_{\calc}^{1,0,2,0}$, which implies $\partial_t \in \Psi_{\calc}^{1,0,2,0}$.
		\item Similarly, the full symbol of $\partial_{x_j}$ is (up to a factor of $i$) just $\xi_j = h^{-1} \xi_{j,\natural\mathrm{f}}$. Since $\xi_{j,\natural\mathrm{f}} \in S_{\calczero}^{1,0,0}$, we have $\xi_j \in S_{\calczero}^{1,0,1}$. Also, $\xi_j \in S_{\mathrm{par,I,res}}^{1, 0, 1, 0}$. Combining these two pieces of information, $\xi_j \in S_{\calc}^{1,0,1,0}$, which implies $\partial_{x_j} \in \Psi_{\calc}^{1,0,1,0}$.
	\end{itemize}
\end{proof}


\subsection{Elliptic estimates, wavefront sets and Sobolev spaces}
\label{subsec:Sobolev}

We say that a symbol $a\in S^{0,0,0,0}_{\calc}$ is elliptic at a point $p \in \mathrm{df}\cup \mathrm{bf}\cup \natural\mathrm{f}$ if $|a|>c$ in some neighborhood of $p$ for some $c>0$. 
More generally, we say that $a\in S^{m,\mathsf{s},\ell,q}_{\calc}$ is elliptic at $p$ if it has the form 
\begin{equation} 
	a = \rho_{\mathrm{df}}^{-m}\rho_{\mathrm{bf}}^{-\mathsf{s}} \rho_{\natural\mathrm{f}}^{-\ell} \rho_{\mathrm{pf}}^{-q} a_0
\end{equation} 
for $a_0$ elliptic at $p$. If $a$ is elliptic at all $p \in \mathrm{df}\cup \mathrm{bf}\cup \natural\mathrm{f}$, then it is just said to be elliptic. In this paper, when we use the term elliptic, it is in this strong sense.

Let $\operatorname{ell}_{\calc}^{m,\mathsf{s},\ell,q}(a)$ denote the set of $p$ at which $a$ is elliptic, and let  $\smash{\operatorname{char}_{\calc}^{m,\mathsf{s},\ell,q}(a)}\subseteq \mathrm{df}\cup \mathrm{bf}\cup \natural\mathrm{f}$ denote its complement in $\mathrm{df}\cup \mathrm{bf}\cup \natural\mathrm{f}$. Similarly, we can define the elliptic set and characteristic set of an operator $A \in \Psi_{\calc}$ as the elliptic set or characteristic set of any representative of the principal symbol.


If $A \in \Psi_{\calc}^{m,s,\ell,q}$ is the left quantization of $a \in S_{\calc}^{m,s,\ell,q}$, let $\operatorname{WF}'_{\calc}(A)$ denote the complement of the set of points in $\mathrm{df}\cup \natural\mathrm{f}\cup \mathrm{bf}$ around which there exists an open neighborhood $U$ such that
\begin{equation}  \label{eq: calc residual symbol class}
a \in S_{\calc}^{-\infty,-\infty,-\infty,q}= \bigcap_{k\in \bbN} S_{\calc}^{-k,-k,-k,q} \quad \text{locally in } U. 
\end{equation}

\begin{remark}
The set $\WF'_{\calc}(A)$ is independent of the parameter $q$ such that $A \in \Psi_{\calc}^{m,s,\ell,q}$: if $q' = q + m$, $m \in \mathbb{N}$ then we also have $A \in \Psi_{\calc}^{m,s,\ell,q'}$ and then the corresponding condition \eqref{eq: calc residual symbol class} with $q$ replaced by $q'$ is trivially satisfied. On the other hand, if $q' = q-m$ and it happens that $A \in \Psi_{\calc}^{m,s,\ell,q'}$, which is a stronger statement than $A \in \Psi_{\calc}^{m,s,\ell,q}$, then we have both that $a \in S_{\calc}^{m,s,\ell,q'}$ and, locally in $U$, $a \in S_{\calc}^{-\infty,-\infty,-\infty,q}$. From this we deduce, using the smoothness of $h ^{-q'}a$ at the parabolic face, that $a \in S_{\calc}^{-\infty,-\infty,-\infty,q'}$ in $U$, so the stronger statement \eqref{eq: calc residual symbol class} with $q$ replaced by $q'$ is satisfied. 
\end{remark}
This set is known as the essential support, microlocal support or operator wavefront set of $A$ (or of $a$). Note that 
\begin{equation} 
	\operatorname{WF}'_{\calc}(A) = \varnothing \iff  A \in \Psi_{\calc}^{-\infty,-\infty,-\infty,q}, 
\end{equation}
\emph{not} $\operatorname{WF}'_{\calc}(A) = \varnothing \iff  A \in \Psi_{\calc}^{-\infty,-\infty,-\infty,-\infty}$. 

An easy consequence of the symbol expansion \cref{eq:expansion-calc-composition} is the behaviour of the operator wavefront set of pseudodifferential operators under products:
\begin{equation}\label{eq:WFproducts}
\operatorname{WF}_{\calc}(\operatorname{Op}(a)\circ \operatorname{Op}(b) ) \subseteq \operatorname{WF}_{\calc}(\operatorname{Op}(a))\cap \operatorname{WF}_{\calc}(\operatorname{Op}(b) ).
\end{equation}

The standard microlocal elliptic parametrix construction goes through in $\Psi_{\calc}$:
\begin{proposition}
	Let $C\subset \mathrm{df}\cup \mathrm{bf}\cup\natural\mathrm{f}$ be closed. 
	If $A \in \Psi_{\calc}^{m,\mathsf{s},\ell,q}$ is elliptic at every point in $C$, then there exists a $B\in \smash{\Psi_{\calc}^{-m,-\mathsf{s},-\ell,-q}}$ such that 
	\begin{equation}
	\operatorname{WF}_{\calc}'(AB-1) \cap C  = \varnothing = \operatorname{WF}_{\calc}'(BA-1) \cap C. 
	\end{equation}
	In particular, if $A$ is elliptic, then $AB -1 = R_1$ and $BA-1 = R_2$ for $R_1,R_2 \in \Psi_{\calc}^{-\infty,-\infty,-\infty,0} = C^\infty([0,1)_h; \Psi_{\mathrm{sc}}^{-\infty,-\infty} )$. 
	\label{prop:parametrix_const}
\end{proposition}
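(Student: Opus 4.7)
My plan is to run the standard parametrix construction, tracking the orders at each of the four boundary faces carefully. First I would fix a symbolic cutoff $\chi\in S_{\calc}^{0,0,0,0}$ with $\chi\equiv 1$ on an open neighborhood $V$ of $C$ and with $\operatorname{supp}\chi$ contained in the elliptic set of $A$; in the globally elliptic case one takes $\chi\equiv 1$. Writing $a=\rho_{\mathrm{df}}^{-m}\rho_{\mathrm{bf}}^{-\mathsf{s}}\rho_{\natural\mathrm{f}}^{-\ell}\rho_{\mathrm{pf}}^{-q}a_{0}$ with $a_{0}$ bounded away from zero on $\operatorname{supp}\chi$, I define the initial parametrix symbol
\begin{equation*}
b_{0}=\chi\,\rho_{\mathrm{df}}^{m}\rho_{\mathrm{bf}}^{\mathsf{s}}\rho_{\natural\mathrm{f}}^{\ell}\rho_{\mathrm{pf}}^{q}/a_{0}\in S_{\calc}^{-m,-\mathsf{s},-\ell,-q},
\end{equation*}
chosen so that $ab_{0}=\chi$, and set $B_{0}=\operatorname{Op}(b_{0})$.

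By \Cref{prop:composition} the product $AB_{0}$ satisfies $AB_{0}-\operatorname{Op}(\chi)\in\bigcap_{\delta>0}\Psi_{\calc}^{-1,-1+\delta,-1,0}$, so writing $F=\operatorname{Op}(\chi-1)$ I obtain the decomposition
\begin{equation*}
AB_{0}=I+E_{0}+F,\qquad E_{0}\in\bigcap_{\delta>0}\Psi_{\calc}^{-1,-1+\delta,-1,0},
\end{equation*}
with $\operatorname{WF}'_{\calc}(F)\cap V=\varnothing$, and hence $\operatorname{WF}'_{\calc}(F)\cap C=\varnothing$. The key step is then to invert $I+E_{0}$ microlocally via the formal Neumann series $\sum_{j\geq 0}(-E_{0})^{j}$. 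To make sense of this sum I first establish an asymptotic summation lemma in $\Psi_{\calc}$: given $A_{j}\in\bigcap_{\delta>0}\Psi_{\calc}^{-j,-j+\delta,-j,0}$, there is a $T\in\Psi_{\calc}^{0,0,0,0}$ with $T-\sum_{j<N}A_{j}\in\bigcap_{\delta>0}\Psi_{\calc}^{-N,-N+\delta,-N,0}$ for every $N$. This is the standard Borel-type summation: one multiplies each symbol by a rescaled cutoff in a joint boundary-defining function for $\mathrm{df}\cup\mathrm{bf}\cup\natural\mathrm{f}$, approaching $1$ slowly enough that every seminorm built from $\tilde{\mathcal{V}}_{\calc}$ in \eqref{eq:tildeV-calc} converges on the tail, while preserving smoothness of the symbol at $\mathrm{pf}$.

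Applying this summation with $A_{j}=(-E_{0})^{j}$, where \Cref{prop:calc_composition} gives $E_{0}^{j}\in\bigcap_{\delta>0}\Psi_{\calc}^{-j,-j+\delta,-j,0}$, I obtain $T\in\Psi_{\calc}^{0,0,0,0}$ with $(I+E_{0})T-I\in\Psi_{\calc}^{-\infty,-\infty,-\infty,0}$. Setting $B=B_{0}T\in\Psi_{\calc}^{-m,-\mathsf{s},-\ell,-q}$, I compute
\begin{equation*}
AB-I=\bigl[(I+E_{0})T-I\bigr]+FT,
\end{equation*}
where the bracketed term is residual and $\operatorname{WF}'_{\calc}(FT)\subseteq\operatorname{WF}'_{\calc}(F)$ by \eqref{eq:WFproducts}, disjoint from $C$. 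A symmetric construction starting from a left initial parametrix yields $B'\in\Psi_{\calc}^{-m,-\mathsf{s},-\ell,-q}$ with $\operatorname{WF}'_{\calc}(B'A-I)\cap C=\varnothing$, and the usual associativity identity $B'-B\equiv B'(I-AB)-(I-B'A)B$ microlocally near $C$ shows $B$ is already a two-sided microlocal parametrix. The second assertion follows from the special case $\chi\equiv 1$, in which $F=0$ and the entire remainder is in $\Psi_{\calc}^{-\infty,-\infty,-\infty,0}$.

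The principal obstacle is the asymptotic summation lemma, since the Borel construction must simultaneously respect the hybrid symbol class $S_{\calc}^{m,\mathsf{s},\ell,q}$ from \eqref{eq:def-calc-symbol} (conormal at three faces with the $\delta$-loss, classical at $\mathrm{pf}$) and the variable order $\mathsf{s}$. Once this is in place, every other ingredient — principal symbol of $AB_{0}$, composition of powers of $E_{0}$, product behaviour of the wavefront set — is an immediate application of the symbol-calculus results already established.
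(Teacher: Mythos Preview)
Your proposal is correct and follows essentially the same approach as the paper: define an initial approximate inverse $B_0=\operatorname{Op}(\chi/a)$, improve it to a microlocal parametrix via a Neumann-type asymptotic summation, and then use the standard associativity trick to pass from one-sided to two-sided. The only organizational difference is that you explicitly separate the lower-order error $E_0$ from the cutoff error $F=\operatorname{Op}(\chi-1)$ before summing, whereas the paper sums $B_0(1-AB_0)^j$ directly ``in a neighborhood of $C$''; these are equivalent, and your identification of the asymptotic summation as the one non-routine ingredient matches the paper's treatment.
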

\begin{proof}
	By the smooth version of Urysohn's lemma, there exists a $\chi \in C^\infty({}^{\calc}\overline{T}^* \bbM)$ such that $\chi=1$ identically on some neighborhood of $C$ and such that $\chi=0$ identically on a neighborhood of the characteristic set of $A$ as well as on a neighborhood of any points in the interior of $\mathrm{pf}$ and the interior of phase space at which $h^{q} a$ vanishes, where $a$ is a full left symbol of $A$. It follows that $b=\chi/a$ is a well-defined $\calc$-symbol of orders $-m,-\mathsf{s},-\ell,-q$. So, letting $B_0 = \operatorname{Op}(b_0)$, we have that $AB_0 -1$, $B_0A-1$ are both characteristic on some neighborhood $U$ of $C$. Now, via asymptotic summation, we can find $B_{\mathrm{R}} \in \Psi_{\calc}^{-m,-\mathsf{s},-\ell,-q}$ such that 
	\begin{equation}
	B_{\mathrm{R}} \sim \sum_{j=0}^\infty B_0 (1-AB_0)^j
	\end{equation}
	in some neighborhood of $C$. Then, a formal computation yields $A B_{\mathrm{R}} \sim 1$, so the operator $R_1 = AB_{\mathrm{R}} - 1$ satisfies $R_1 \in \Psi_{\calc}^{-\infty,-\infty,-\infty,0}$ near $C$, meaning than the full left symbol of $R_1$ lies in $S_{\calc}^{-\infty,-\infty,-\infty,0}$ near $C$. This is another way of saying that 
	\begin{equation} 
		\operatorname{WF}_{\calc}'(AB_{\mathrm{R}}-1) \cap C  = \varnothing.
	\end{equation} 
	Similarly, one constructs $B_{\mathrm{L}}$ such that $B_{\mathrm{L}} A - 1 = R_2$ satisfies  $R_2 \in \Psi_{\calc}^{-\infty,-\infty,-\infty,0}$ near $C$.

	It remains to show that it is possible to take $B_{\mathrm{L}} = B_{\mathrm{R}}$. We can write 
	\begin{equation}
	B_{\mathrm{R}} + R_2 B_{\mathrm{R}}=
	(B_{\mathrm{L}} A) B_{\mathrm{R}} = 
	B_{\mathrm{L}} A B_{\mathrm{R}} = B_{\mathrm{L}}( A B_{\mathrm{R}}) = B_{\mathrm{L}} + B_{\mathrm{L}} R_1, 
	\end{equation}
	so $B_{\mathrm{L}} - B_{\mathrm{R}}$ has essential support away from $C$. It follows that $B_{\mathrm{R}} A - 1 = R_2 + (B_{\mathrm{R}} - B_{\mathrm{L}} ) A$ does as well. So, we can take $B=B_{\mathrm{L}} = B_{\mathrm{R}}$. 
\end{proof}

For orders  $m,\mathsf{s},\ell,q$, and for $u\in \calS'(\bbR^D)$, we can define a function $\lVert u \rVert_{H_{\calc}^{m,\mathsf{s},\ell,q } } : (0,1)_h\to [0,\infty]$ by 
\begin{equation}
\lVert u \rVert_{H_{\calc}^{m,\mathsf{s},\ell,q} }(h)= \lVert u \rVert_{H_{\calc}^{m,\mathsf{s},\ell,q}(h) }  = \lVert \Lambda_{m,\mathsf{s},\ell,q}(h) u(h) \rVert_{L^2} + \lVert h^{-q} R_{m,\mathsf{s},\ell,q} u(h) \rVert_{L^2}
\end{equation}
for some fixed elliptic $\Lambda_{m,\mathsf{s},\ell,q} \in \smash{\Psi_{\calc}^{m,\mathsf{s},\ell,q}}$, where
$R_{m,\mathsf{s},\ell,q}\in \Psi_{\calc}^{-\infty,-\infty,-\infty,0}$ is the remainder in the left parametrix construction: 
\begin{equation} 
\tilde{\Lambda}_{-m,-\mathsf{s},-\ell,-q} \Lambda_{m,\mathsf{s},\ell,q} + R_{m,\mathsf{s},\ell,q} = 1
\end{equation} 
for a two-sided parametrix $\tilde{\Lambda}_{-m,-\mathsf{s},-\ell,-q} \in \smash{\Psi_{\calc}^{-m,-\mathsf{s},-\ell,-q}}$.

\begin{proposition}[Sobolev boundedness of $\calc$-$\Psi$DOs]
	If $A \in \Psi_{\calc}^{m,\mathsf{s},\ell,q}$, then, for all $m',\mathsf{s}',\ell',q'$ and $h_0 \in (0,1)$, there exists a $C>0$ such that 
	\begin{equation}
	\lVert A u \rVert_{H_{\calc}^{m'-m,\mathsf{s}'-\mathsf{s},\ell'-\ell,q'-q }  } \leq C \lVert u \rVert_{H_{\calc}^{m',\mathsf{s}',\ell',q'} }
	\end{equation}
	holds for all $h\in (0,h_0)$ and $u\in \calS'(\bbR^D)$. 
	\label{prop:Sobolev_boundedness}
\end{proposition}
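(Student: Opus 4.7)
The proposition is the standard Sobolev-boundedness statement for our calculus; its proof is a routine reduction to the $L^2$-boundedness of order-zero operators. The three inputs needed are the composition law (\Cref{prop:calc_composition}), the elliptic parametrix construction (\Cref{prop:parametrix_const}), and the uniform $L^2$-boundedness of operators in $\Psi_{\calc}^{0,0,0,0}$ (\Cref{prop:L2_boundedness}).

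The first step is to form
\[
T := \Lambda_{m'-m,\mathsf{s}'-\mathsf{s},\ell'-\ell,q'-q} \, A \, \tilde{\Lambda}_{-m',-\mathsf{s}',-\ell',-q'},
\]
where $\tilde{\Lambda}_{-m',-\mathsf{s}',-\ell',-q'}$ denotes a two-sided parametrix of $\Lambda_{m',\mathsf{s}',\ell',q'}$. By the composition law the four orders cancel, so $T \in \Psi_{\calc}^{0,0,0,0}$, and \Cref{prop:L2_boundedness} yields a uniform $L^2$-bound. Using the parametrix identity $\tilde{\Lambda}_{-m',\dots}\Lambda_{m',\dots} = 1 - R_{m',\dots}$ with $R_{m',\dots}\in \Psi_{\calc}^{-\infty,-\infty,-\infty,0}$, I would decompose
\[
\Lambda_{m'-m,\dots}\, A u \;=\; T\,\Lambda_{m',\dots} u \;+\; \Lambda_{m'-m,\dots}\, A\, R_{m',\dots} u,
\]
and estimate the first term by $C\lVert \Lambda_{m',\dots}u\rVert_{L^2} \le C\lVert u\rVert_{H_{\calc}^{m',\mathsf{s}',\ell',q'}}$.

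The second step is to absorb the remaining residual contributions --- both the second term above and the analogous term $h^{-(q'-q)} R_{m'-m,\dots} A u$ making up the residual part of the target norm --- into the residual part $\lVert h^{-q'} R_{m',\dots} u \rVert_{L^2}$ of the source norm. By composition, $\Lambda_{m'-m,\dots}AR_{m',\dots}$ and $R_{m'-m,\dots}A$ are residual at $\mathrm{df}$, $\mathrm{bf}$, and $\natural\mathrm{f}$, with $\mathrm{pf}$-orders $q'$ and $q$ respectively. Using the identity $h = \rho_{\mathrm{pf}}\rho_{\natural\mathrm{f}}$, each such operator may be written as $h^{-\star}$ times a smooth family (in $h\in[0,h_0)$) of elements of $\Psi_{\mathrm{sc}}^{-\infty,-\infty}$, and such families are uniformly $L^2$-bounded. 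Splitting the input as $u = \tilde{\Lambda}_{-m',\dots}\Lambda_{m',\dots}u + R_{m',\dots}u$ and tracking $h$-powers through the further composition, the first piece is then controlled by $\lVert \Lambda_{m',\dots}u\rVert_{L^2}$ and the second by $h^{-q'}\lVert R_{m',\dots}u\rVert_{L^2}$, whose sum is by definition $\lVert u\rVert_{H_{\calc}^{m',\mathsf{s}',\ell',q'}}$.

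The only point requiring care is the bookkeeping of $h$-powers at $\mathrm{pf}$ and $\natural\mathrm{f}$: because $h^{-1}$ has order one at \emph{both} these faces, multiplication by $h^{-\star}$ shifts orders simultaneously at $\mathrm{pf}$ and $\natural\mathrm{f}$, and one must check the resulting cancellations work out once the residual factor is composed with a $\Lambda$-type operator. This is enabled by the $-\infty$ order at $\natural\mathrm{f}$ carried by any residual factor, which is stable under such shifts. No analytical input deeper than \Cref{prop:L2_boundedness} is required.
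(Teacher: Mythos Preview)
Your proposal is correct and follows essentially the same route as the paper: sandwich $A$ between $\Lambda_{m'-m,\dots}$ and $\tilde{\Lambda}_{-m',\dots}$ to reduce to $L^2$-boundedness of order-zero operators, then handle the residual pieces by a second application of the parametrix identity. The only cosmetic difference is that the paper keeps the residual bookkeeping entirely inside $\Psi_{\calc}$ (observing directly that $R_1 h^{q'}$ and $R_1\tilde{\Lambda}_{-m',\dots}$ lie in $\Psi_{\calc}^{-\infty,-\infty,-\infty,0}$ and are hence uniformly $L^2$-bounded), whereas you phrase it as ``$h^{-\star}$ times a smooth family in $\Psi_{\mathrm{sc}}^{-\infty,-\infty}$''; these are equivalent since $\Psi_{\calc}^{-\infty,-\infty,-\infty,0} = C^\infty([0,1)_h;\Psi_{\mathrm{sc}}^{-\infty,-\infty})$.
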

\begin{proof}
	We have 
	\begin{equation} 
	\lVert A u \rVert_{H_{\calc}^{m'-m,\mathsf{s}'-\mathsf{s},\ell'-\ell,q'-q }  }  = 	\lVert \Lambda_{m'-m,\mathsf{s}'-\mathsf{s},\ell'-\ell,q'-q } A u \rVert_{L^2} + \lVert h^{q-q'} R_{m'-m,\mathsf{s}'-\mathsf{s},\ell'-\ell,q'-q} Au \rVert_{L^2}.
	\label{eq:id_220_445}
	\end{equation}  
	Abbreviating $R_0 = R_{m',\mathsf{s}',\ell',q'}$, we can bound the first term on the right-hand side of \cref{eq:id_220_445} as follows:
	\begin{multline}
	\lVert \Lambda_{m'-m,\mathsf{s}'-\mathsf{s},\ell'-\ell,q'-q } A u \rVert_{L^2} = 	\lVert \Lambda_{m'-m,\mathsf{s}'-\mathsf{s},\ell'-\ell,q'-q } A (\tilde{\Lambda}_{-m',-\mathsf{s}',-\ell',-q'}  \Lambda_{m',\mathsf{s}',\ell',q'} + R_0) u \rVert_{L^2} 
	\\ \lesssim \lVert \Lambda_{m',\mathsf{s}',\ell',q'} u \rVert_{L^2}+ \lVert R_1 u \rVert_{L^2}
	\end{multline}
	where $R_1 =  \Lambda_{m'-m,\mathsf{s}'-\mathsf{s},\ell'-\ell,q'-q } A R_0 \in \Psi_{\calc}^{-\infty,-\infty,-\infty,q'}$. Above, we used that
	\begin{equation}
	\Lambda_{m'-m,\mathsf{s}'-\mathsf{s},\ell'-\ell,q'-q } A \tilde{\Lambda}_{-m',-\mathsf{s}',-\ell',-q'}  \in \Psi_{\calc}^{0,0,0,0}, 
	\end{equation}
	which follows from \Cref{prop:composition} and implies, by \Cref{prop:L2_boundedness}, uniform $L^2$-boundedness. Now, 
	\begin{multline}
	\lVert R_1 u \rVert_{L^2} \leq \lVert R_1 \tilde{\Lambda}_{-m',-\mathsf{s}',-\ell',-q'} \Lambda_{m',\mathsf{s}',\ell',q'}  u \rVert_{L^2} + \lVert R_1 R_0 u \rVert_{L^2}   \\ 
	\lesssim \lVert \Lambda_{m',\mathsf{s}',\ell',q'}  u \rVert_{L^2} + \lVert h^{-q'} R_0 u \rVert_{L^2}  = \lVert u \rVert_{H^{m',\mathsf{s}',\ell',q'}_{\calc} },
	\end{multline}
	since $R_1 h^{q'},R_1 \tilde{\Lambda}_{-m',-\mathsf{s}',-\ell',-q'} \in \Psi_{\calc}^{-\infty,-\infty,-\infty,0}$.

	Likewise, we can bound the other term on the right-hand side of \cref{eq:id_220_445} as follows: 
	\begin{multline}
	\lVert h^{q-q'}R_{m'-m,\mathsf{s}'-\mathsf{s},\ell'-\ell,q'-q} A u \rVert_{L^2}  \lesssim \lVert h^{q-q'}R_{m'-m,\mathsf{s}'-\mathsf{s},\ell'-\ell,q'-q} A \tilde{\Lambda}_{-m',-\mathsf{s}',-\ell',-q'}  \Lambda_{m',\mathsf{s}',\ell',q'} u \rVert_{L^2} \\ 
	+ \lVert h^{q-q'}R_{m'-m,\mathsf{s}'-\mathsf{s},\ell'-\ell,q'-q} A R_0u \rVert_{L^2} \lesssim \lVert \Lambda_{m',\mathsf{s}',\ell',q'} u \rVert_{L^2} + \lVert h^{-q'}R_0 u \rVert_{L^2} =  \lVert u \rVert_{H^{m',\mathsf{s}',\ell',q'}_{\calc} },
	\end{multline}
	again using \Cref{prop:composition} and \Cref{prop:L2_boundedness}.

	Combining everything, we have shown $\lVert A u \rVert_{H_{\calc}^{m'-m,\mathsf{s}'-\mathsf{s},\ell'-\ell,q'-q }  } \lesssim \lVert u \rVert_{H_{\calc}^{m',\mathsf{s}',\ell',q'} }$.
\end{proof}

\begin{remark} Using Proposition~\ref{prop:Sobolev_boundedness} one can show that different choices of $\Lambda_{m,\mathsf{s},\ell,q}$ and its parametrix lead to an equivalent  $\lVert \cdot \rVert_{H_{\calc}^{m,\mathsf{s},\ell,q } } $ norm. 
\end{remark}

We can now prove a basic elliptic estimate.
\begin{proposition}
	If $A \in \Psi_{\calc}^{m,\mathsf{s},\ell,q}$ and $Q\in \Psi_{\calc}^{0,0,0,0}$ satisfies $\operatorname{WF}'_{\calc}(Q)\subseteq \operatorname{Ell}^{m,\mathsf{s},\ell,q}_{\calc}(A)$, then, for any $N\in \bbN$ and $h_0\in (0,1)$, there exists a $C>0$ such that 
	\begin{equation}
	\lVert Q u \rVert_{H_{\calc}^{m',\mathsf{s}',\ell',q'} } \leq C\big( 	\lVert A u \rVert_{H_{\calc}^{m'-m,\mathsf{s}'-\mathsf{s},\ell'-\ell,q'-q} } + \lVert u \rVert_{H_{\calc}^{-N,-N,-N,q'} } \big) 
	\end{equation}
	holds for all $h\in (0,h_0)$ and $u \in \calS'(\bbR^D)$, in the strong sense that if the right-hand side is finite then so is the left-hand side and the stated inequality holds.
	\label{prop:elliptic}
\end{proposition}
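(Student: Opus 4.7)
The plan is to use a standard microlocal elliptic parametrix argument, invoking \Cref{prop:parametrix_const} (parametrix construction) and \Cref{prop:Sobolev_boundedness} (Sobolev boundedness) already established for $\Psi_{\calc}$. The only wrinkle is to keep track of the parabolic order $q'$ carefully, since the residual class $\Psi_{\calc}^{-\infty,-\infty,-\infty,0}$ only has order $0$ at $\mathrm{pf}$; this is precisely the reason the $q'$ order appears unchanged in the error term on the right-hand side.

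First, I would choose a closed set $C \subset \mathrm{df} \cup \mathrm{bf} \cup \natural\mathrm{f}$ that is a neighborhood of $\operatorname{WF}'_{\calc}(Q)$ and is contained in $\operatorname{Ell}^{m,\mathsf{s},\ell,q}_{\calc}(A)$ (possible by the hypothesis and the closedness of $\operatorname{WF}'_{\calc}(Q)$, together with a smooth Urysohn-type construction on ${}^{\calc}\overline{T}^* \bbM$). Then \Cref{prop:parametrix_const} yields a parametrix $B \in \Psi_{\calc}^{-m,-\mathsf{s},-\ell,-q}$ with $BA - \mathrm{Id} = R$ where $\operatorname{WF}'_{\calc}(R) \cap C = \varnothing$. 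Writing
\begin{equation}
Qu = QBAu - QRu,
\end{equation}
the composition formula \cref{eq:WFproducts} gives $\operatorname{WF}'_{\calc}(QR) \subseteq \operatorname{WF}'_{\calc}(Q) \cap \operatorname{WF}'_{\calc}(R) = \varnothing$, so $QR \in \Psi_{\calc}^{-\infty,-\infty,-\infty,0}$ globally.

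Next, I would estimate the two terms separately using \Cref{prop:Sobolev_boundedness}. Since $QB \in \Psi_{\calc}^{-m,-\mathsf{s},-\ell,-q}$ by \Cref{prop:calc_composition}, Sobolev boundedness gives
\begin{equation}
\lVert Q B (Au) \rVert_{H_{\calc}^{m',\mathsf{s}',\ell',q'}} \lesssim \lVert Au \rVert_{H_{\calc}^{m'-m,\mathsf{s}'-\mathsf{s},\ell'-\ell,q'-q}}.
\end{equation}
For the remainder, given $N \in \mathbb{N}$, choose $M$ large enough that $M \geq N + \max(m',\mathsf{s}',\ell')$. Since $QR \in \Psi_{\calc}^{-M,-M,-M,0}$, \Cref{prop:Sobolev_boundedness} yields
\begin{equation}
\lVert QRu \rVert_{H_{\calc}^{m',\mathsf{s}',\ell',q'}} \lesssim \lVert u \rVert_{H_{\calc}^{m'-M,\mathsf{s}'-M,\ell'-M,q'}} \lesssim \lVert u \rVert_{H_{\calc}^{-N,-N,-N,q'}},
\end{equation}
where the last inclusion uses the obvious monotonicity of $\calc$-Sobolev norms in the first three orders (which can be proved just by another application of \Cref{prop:Sobolev_boundedness} to the identity operator, viewed in a calculus of negative orders).

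Summing these two estimates yields the claimed inequality, and the ``strong sense'' statement follows because each intermediate step interprets everything as distributional pairings that make sense whenever the right-hand side is finite. I do not foresee any substantial obstacle: the argument is a direct transcription of the standard H\"ormander elliptic estimate, and the only item requiring care is matching the parabolic order $q'$ throughout, which is compatible precisely because the parametrix in \Cref{prop:parametrix_const} has error in $\Psi_{\calc}^{-\infty,-\infty,-\infty,0}$ (not in $\Psi_{\calc}^{-\infty,-\infty,-\infty,-\infty}$).
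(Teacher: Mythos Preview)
Your proposal is correct and follows essentially the same approach as the paper: construct a microlocal parametrix $B$ via \Cref{prop:parametrix_const}, split $Qu = QBAu - QRu$, and estimate each piece using \Cref{prop:Sobolev_boundedness}, noting that $QR \in \Psi_{\calc}^{-\infty,-\infty,-\infty,0}$ so the error term carries the unchanged parabolic order $q'$. The paper's version is slightly terser (it does not explicitly introduce the intermediate closed set $C$ or the auxiliary $M$), but the argument is the same.
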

\begin{proof}
	By the parametrix construction, \Cref{prop:parametrix_const}, there exists a $B\in  \Psi_{\calc}^{-m,-\mathsf{s},-\ell,-q} $ such that $BA-1,AB-1$ have essential support disjoint from that of $Q$. Thus, 
	\begin{align}
	\begin{split} 
	\lVert Q u \rVert_{H_{\calc}^{m',\mathsf{s}',\ell',q'}}  &\leq \lVert Q (1-BA) u \rVert_{H_{\calc}^{m',\mathsf{s}',\ell',q'}} + \lVert Q BA u \rVert_{H_{\calc}^{m',\mathsf{s}',\ell',q'}} \\ &\lesssim \lVert u \rVert_{H_{\calc}^{-N,-N,-N,q'} }  + \lVert Q BA u \rVert_{H_{\calc}^{m',\mathsf{s}',\ell',q'}}.
	\end{split}
	\end{align}
	Using \Cref{prop:Sobolev_boundedness}, $\lVert Q BA u \rVert_{H_{\calc}^{m',\mathsf{s}',\ell',q'}} \lesssim 	\lVert A u \rVert_{H_{\calc}^{m'-m,\mathsf{s}'-\mathsf{s},\ell'-\ell,q'-q} } $, so the claim follows. 
\end{proof}

Later on, we will need the following:
\begin{lemma}
	If $A\in \Psi_{\calczero}^{m,\mathsf{s},\ell}$ has $\operatorname{WF}'_{\calczero}(A)$ disjoint from the zero section, then, for all $m',\ell',q',N\in \bbR$ and variable order $\mathsf{s}'$, 
	\begin{equation}
		\lVert A u \rVert_{H_{\calc}^{m',\mathsf{s}',\ell',q'} } \lesssim \lVert Au \rVert_{H_{\calczero}^{m',\mathsf{s}',\ell' }} + \lVert u \rVert_{H_{\calczero}^{-N,-N,-N}} 
	\end{equation}
	holds for all $u\in \calS'$, in the strong sense that if the right-hand side is finite, then so is the left-hand side, and the stated inequality holds.
	\label{lem:pf_elim}
\end{lemma}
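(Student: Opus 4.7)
The plan rests on the identification of the $\calc$- and $\calczero$-phase spaces away from the blown-up locus: $\mathrm{pf}$ on the $\calc$-side and the zero section ${}^{\calczero}o^*\bbM$ on the $\calczero$-side. Since $\operatorname{WF}'_{\calczero}(A)$ avoids the zero section, the full $\calczero$-symbol of $A$ is Schwartz (in the $\calczero$-symbol topology) near the zero section, and pulled back to ${}^{\calc}\overline{T}^*\bbM$ it is Schwartz near $\mathrm{pf}$. In particular, $A$ viewed as a $\calc$-operator is residual at $\mathrm{pf}$, lying in $\Psi_{\calc}^{m,\mathsf{s},\ell,q}$ for every $q \in \bbR$.

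The first step is a microlocal splitting in $\Psi_{\calczero}$: I will pick $Q \in \Psi_{\calczero}^{0,0,0}$ with $Q \equiv I$ microlocally on $\operatorname{WF}'_{\calczero}(A)$ and with $\operatorname{WF}'_{\calczero}(Q)$ still disjoint from the zero section. Because $\Psi_{\calczero}$ is fully symbolic, the elliptic parametrix construction gives the decomposition $A = QA + \tilde R$ with $\tilde R \in \Psi_{\calczero}^{-\infty,-\infty,-\infty}$. By the triangle inequality, it then suffices to bound $\lVert QAu\rVert_{H_{\calc}^{m',\mathsf{s}',\ell',q'}}$ and $\lVert \tilde Ru\rVert_{H_{\calc}^{m',\mathsf{s}',\ell',q'}}$ separately.

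For the first term, fix a $\calc$-elliptic $\Lambda \in \Psi_{\calc}^{m',\mathsf{s}',\ell',q'}$ with $\calc$-parametrix residual $R_\calc \in \Psi_{\calc}^{-\infty,-\infty,-\infty,0}$ defining the norm. Since $\operatorname{WF}'_{\calc}(Q)$ is disjoint from $\mathrm{pf}$, the products $\Lambda Q$ and $h^{-q'} R_\calc Q$ have $\calc$-operator wavefront sets in the region where ${}^{\calc}\overline{T}^*\bbM \setminus \mathrm{pf}$ and ${}^{\calczero}\overline{T}^*\bbM \setminus {}^{\calczero}o^*\bbM$ are canonically identified; there $\rho_{\natural\mathrm{f}}^{\calc} = h$ and $\rho_{\mathrm{df}}^{\calc} = \rho_{\mathrm{df}}^{\calczero}$. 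Hence $\Lambda Q$ identifies with an element of $\Psi_{\calczero}^{m',\mathsf{s}',\ell'}$ and $h^{-q'}R_\calc Q$ with an element of $\Psi_{\calczero}^{-\infty,-\infty,-\infty}$, the weight $h^{-q'}$ being absorbed by the $\natural\mathrm{f}$-residual decay of $R_\calc$ (since on $\mathrm{supp}(Q)$, $h$-decay and $\rho_{\natural\mathrm{f}}$-decay coincide). Uniform $\calczero$-Sobolev boundedness then yields $\lVert QAu\rVert_{H_{\calc}^{m',\mathsf{s}',\ell',q'}} \lesssim \lVert Au\rVert_{H_{\calczero}^{m',\mathsf{s}',\ell'}}$.

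For the residual term, I compose on the right with a $\calczero$-parametrix: take $\Lambda_0 \in \Psi_{\calczero}^{-N,-N,-N}$ elliptic with parametrix $\tilde\Lambda_0 \in \Psi_{\calczero}^{N,N,N}$ and residual $R_0 \in \Psi_{\calczero}^{-\infty,-\infty,-\infty}$, so that $u = \tilde\Lambda_0\Lambda_0 u + R_0 u$. Then $\tilde Ru = (\tilde R\tilde\Lambda_0)(\Lambda_0 u) + (\tilde R R_0) u$ with $\tilde R\tilde\Lambda_0$ and $\tilde R R_0$ both in $\Psi_{\calczero}^{-\infty,-\infty,-\infty}$. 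These residual families have Schwartz kernels with arbitrary polynomial decay in $h$, so composing on the left with the $\calc$-operators $\Lambda$ or $h^{-q'}R_\calc$ yields uniformly $L^2$-bounded families, producing
\begin{equation*}
\lVert \tilde Ru\rVert_{H_{\calc}^{m',\mathsf{s}',\ell',q'}} \lesssim \lVert \Lambda_0 u\rVert_{L^2} + \lVert R_0 u\rVert_{L^2} \lesssim \lVert u\rVert_{H_{\calczero}^{-N,-N,-N}}.
\end{equation*}
Combining the two bounds gives the claim. The main technical obstacle is verifying that composition of a $\calc$-operator of arbitrary finite order with a $\calczero$-residual operator yields a uniformly $L^2$-bounded family; this reduces to a kernel estimate, since $\calczero$-residuality provides Schwartz decay in $h$ together with the $\calczero$-spatial and frequency variables, which is strong enough to dominate any finite-order $\calc$ growth, including the $h^{-q'}$ weight in the residual part of the $\calc$-norm.
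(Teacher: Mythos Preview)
Your proof is correct and follows essentially the same approach as the paper: choose a $\calczero$-microlocalizer $Q$ away from the zero section that covers $\operatorname{WF}'_{\calczero}(A)$, observe that $\Lambda Q$ (with $\Lambda$ the $\calc$-elliptic operator defining the norm) lies in $\Psi_{\calczero}^{m',\mathsf{s}',\ell'}$, and handle the remaining residual pieces as $\Psi_{\calczero}^{-\infty,-\infty,-\infty}$ operators. The paper's version is slightly more streamlined in two ways: it first reduces to $q'=0$ (which eliminates the need to track the $h^{-q'}$ weight), and it treats the residual contribution directly as $R_{\calc}A\in\Psi_{\calc}^{-\infty,-\infty,-\infty,-\infty}=\Psi_{\calczero}^{-\infty,-\infty,-\infty}$ rather than inserting a second parametrix on $u$. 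One small correction to your justification: the reason $h^{-q'}R_\calc Q$ is $\calczero$-residual is not solely the $\natural\mathrm{f}$-residuality of $R_\calc$, but rather the combination of that with $Q\in\Psi_{\calc}^{0,0,0,-\infty}$ (since $Q$ is microsupported away from $\mathrm{pf}$), which together force the composition into $\Psi_{\calc}^{-\infty,-\infty,-\infty,-\infty}$.
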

\begin{proof}
	It suffices to consider the case $q'=0$. Note that 
	\begin{equation} 
		R_{m',\mathsf{s}',\ell',0} A \in \Psi_{\calc}^{-\infty,-\infty,-\infty,-\infty} = \Psi_{\calczero}^{-\infty,-\infty,-\infty}
	\end{equation} 
	and $\Lambda_{m',\mathsf{s}',\ell',0} Q \in \Psi_{\calczero}^{m',\mathsf{s}',\ell'}$ for any $Q\in \Psi_{\calczero}^{0,0,0}$ whose essential support is disjoint from the zero section. We can choose $Q$ such that $Q$ is elliptic on $\operatorname{WF}'_{\calczero}(A)$, and then the microlocal parametrix construction gives 
	\begin{equation} 
		\operatorname{WF}'_{\calczero}(1-QQ')\cap \operatorname{WF}'_{\calczero}(A)=\varnothing
	\end{equation} for some $Q'\in \Psi_{\calczero}^{0,0,0,0}$. 
	So, in 
	\begin{multline}
		\lVert A u \rVert_{H_{\calc}^{m',\mathsf{s}',\ell',0} } = \lVert \Lambda_{m',\mathsf{s}',\ell',0} A u \rVert_{L^2} +\lVert R_{m',\mathsf{s}',\ell',0} A u \rVert_{L^2 } \\ \lesssim 
		\lVert \Lambda_{m',\mathsf{s}',\ell',0} \underbrace{(1-QQ') A}_{\in \Psi_{\calczero}^{-\infty,-\infty,-\infty}} u \rVert_{L^2} + \lVert \Lambda_{m',\mathsf{s}',\ell',0} QQ' A u \rVert_{L^2} +\lVert \underbrace{R_{m',\mathsf{s}',\ell',0} A}_{\in \Psi_{\calczero}^{-\infty,-\infty,-\infty}} u \rVert_{L^2 }, 
	\end{multline}
	the only term on the right-hand side that is not bounded above by $\lesssim \lVert u \rVert_{H_{\calczero}^{-N,-N,-N}} $ is the middle one, and this satisfies 
	\begin{equation}
		\lVert \underbrace{\Lambda_{m',\mathsf{s}',\ell',0} Q}_{\in \Psi_{\calczero}^{m',\mathsf{s}',\ell'}}Q' A u \rVert_{L^2} \lesssim \lVert Q' A u \rVert_{H_{\calczero}^{m',\mathsf{s}',\ell'} } \lesssim \lVert A u \rVert_{H_{\calczero}^{m',\mathsf{s}',\ell'} }.
	\end{equation}
\end{proof}

\subsection{Normal operators}
\label{subsec:normal} 
The fact that principal symbols only capture elements of $\Psi_{\calc}$ modulo terms with better decay at $\mathrm{df}\cup \mathrm{bf}\cup \natural\mathrm{f}$, in particular \emph{not} at $\mathrm{pf}$, means that $\Psi_{\calc}$ is ``not symbolic'' at $\mathrm{pf}$. Morally, the reason is because there is an additional, operator-valued ``symbol'' or normal operator residing at $\mathrm{pf}$, inherited from the normal operator in the $\mathrm{par, I, res}$-calculus. 

Let $A$ be an element of $\Psi_{\calc}^{m,\mathsf{s},\ell,0}$ with left-reduced symbol $a\in S_{\calc}^{m,\mathsf{s},\ell,0}$. We define 
\begin{equation}
	N(A) = \operatorname{Op}( a |_{\mathrm{pf}}) \in \Psi_{\mathrm{par}}^{\ell, \mathsf{s}\big|_{\mathrm{pf}}}. 
\end{equation}

\begin{proposition}\label{prop:N mult natres} The normal operator in the $\Psi_{\calc}$ calculus is multiplicative, that is, for $A \in \Psi^{m, \mathsf{s}, \ell, 0}_{\calc}$, $B \in \Psi^{m', \mathsf{s}', \ell', 0}_{\calc}$ we have 
\begin{equation}\label{eq:mult normal op natres}
N(AB) = N(A) N(B) \in \Psi_{\mathrm{par}}^{\ell + \ell', \mathsf{s}|_{\mathrm{pf}} + \mathsf{s}'|_{\mathrm{pf}}}.
\end{equation}
\end{proposition}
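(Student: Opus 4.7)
My plan is to reduce multiplicativity of $N$ on $\Psi_{\calc}$ to the already established multiplicativity on $\Psi_{\mathrm{par,I,res}}$ (Proposition~\ref{prop:N mult parIres}), via the decomposition \eqref{eq:decomposition_resolved}. Write $A = A_1 + A_2 + A_3$ and $B = B_1 + B_2 + B_3$, with the subscript-$1$ pieces supported near $\mathrm{pf}$, the subscript-$2$ pieces near $\mathrm{df}$, and the subscript-$3$ pieces away from both. Since the definition of $N$ is linear, and since $A_2, A_3, B_2, B_3$ have left symbols vanishing identically near $\mathrm{pf}$, we have $N(A_2)=N(A_3)=N(B_2)=N(B_3)=0$, and hence $N(A) = N(A_1)$ and $N(B) = N(B_1)$. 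The claim that $N(AB) \in \Psi_{\mathrm{par}}^{\ell+\ell', \mathsf{s}|_{\mathrm{pf}} + \mathsf{s}'|_{\mathrm{pf}}}$ follows automatically from Proposition~\ref{prop:calc_composition} and the definition of $N$, so the substance lies in the identity $N(AB) = N(A)N(B)$.

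By bilinearity $N(AB) = \sum_{i,j=1}^3 N(A_i B_j)$, and I would show that only the $(1,1)$ term survives. I split the cross terms into three types, following the structure of the proof of Proposition~\ref{prop:calc_composition}. For the four pairs $(i,j) \in \{(1,1), (1,3), (3,1), (3,3)\}$, both factors lie in $\Psi_{\mathrm{par,I,res}}$ (using that $A_3,B_3$ have symbols supported away from $\mathrm{df}$, so they fit naturally into the $\mathrm{par,I,res}$-calculus), hence Proposition~\ref{prop:N mult parIres} gives $N(A_i B_j) = N(A_i) N(B_j)$; combined with the vanishing of $N(A_3), N(B_3)$, only the $(1,1)$ term is nonzero and it equals $N(A_1) N(B_1)$. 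For the pairs $(1,2)$ and $(2,1)$, the compositions are residual in $\Psi_{\calc}^{-\infty,-\infty,-\infty,-\infty}$ by the non-stationary phase argument in the proof of Proposition~\ref{prop:calc_composition}, and a residual operator has left symbol Schwartz at every face, in particular at $\mathrm{pf}$, so $N = 0$.

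The main point to check is the remaining case, $i,j \in \{2,3\}$. Here both $A_i, B_j$ lie in $\Psi_{\calczero}$ with left symbols supported away from $\mathrm{pf}$. The $\calczero$-composition formula \eqref{eq:moyal_explicit_natural} produces a full left symbol consisting of an asymptotic expansion whose terms are supported in $\operatorname{supp}(a_i) \cap \operatorname{supp}(b_j)$, which remains bounded away from $\mathrm{pf}$, together with a residual term that is Schwartz at every boundary face of ${}^{\calczero}\overline{T}^* \bbM$. Since $\mathrm{pf} \subset {}^{\calc}\overline{T}^* \bbM$ lies over a subset of the $\calczero$-face $\{h=0\}$ under the blowdown, Schwartz behaviour at $\{h=0\}$ translates into Schwartz behaviour at $\mathrm{pf}$ after lifting to ${}^{\calc}\overline{T}^* \bbM$. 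Therefore the restriction of the full left symbol to $\mathrm{pf}$ vanishes, giving $N(A_i B_j)=0$ for these pairs.

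Putting everything together yields $N(AB) = N(A_1 B_1) = N(A_1) N(B_1) = N(A) N(B)$. In my view, the only real technical point is the careful bookkeeping in the third case: one must translate the Schwartz behaviour on ${}^{\calczero}\overline{T}^* \bbM$ to the correct decay statement at $\mathrm{pf} \subset {}^{\calc}\overline{T}^* \bbM$. Everything else is an exercise in reducing to pieces already analyzed in Proposition~\ref{prop:N mult parIres} and the proof of Proposition~\ref{prop:calc_composition}.
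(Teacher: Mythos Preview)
Your proof is correct but takes a genuinely different route from the paper's. The paper argues operator-theoretically: it establishes the auxiliary claim that $A\phi = N(A)\phi + O_{\mathcal{S}(\bbR^D)}(h)$ for every Schwartz $\phi$, by decomposing $A=A_1+A_2+A_3$ and showing $(A_2+A_3)\phi = O_{\mathcal{S}}(h^\infty)$ via a further microlocal splitting $\Id = \Phi + (\Id - \Phi)$ with $\Phi = \varphi(h^2\triangle_x + h^4 D_t^2)$; multiplicativity then follows immediately from $AB\phi = A(N(B)\phi + O(h)) = N(A)N(B)\phi + O(h)$. You instead decompose both $A$ and $B$ and compute $N(A_iB_j)$ symbolically, term by term, reducing everything to Proposition~\ref{prop:N mult parIres} and the casework already carried out inside the proof of Proposition~\ref{prop:calc_composition}.

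Your approach is more economical in that it reuses the composition proof wholesale and avoids any new analysis of the action on test functions. The paper's approach has the advantage of producing the useful side result $A\phi = N(A)\phi + O_{\mathcal{S}}(h)$, which is an independent characterization of the normal operator. One small remark on your argument for $i,j\in\{2,3\}$: rather than invoking Borel summation and tracking supports, it is cleaner simply to cite the conclusion already drawn in the proof of Proposition~\ref{prop:calc_composition}, namely that these compositions lie in $\Psi_{\calc}^{m_1+m_2,\mathsf{s}_1+\mathsf{s}_2,\ell_1+\ell_2,-\infty}$; order $-\infty$ at $\mathrm{pf}$ gives $N=0$ directly.
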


\begin{proof} The proof is similar to the proof of Proposition~\ref{prop:N mult parIres}. We establish that for any fixed Schwartz function $\phi$, we have 
\begin{equation}\label{eq:normal A natres}
A \phi = N(A) \phi + O_{\mathcal{S}(\bbR^D)} (h). 
\end{equation}
The proof cannot work exactly as before, however, because $N(A)$ cannot be viewed as an element of the $\calc$-calculus, since $N(A)$ is associated to the parabolic scaling and its symbol is smooth with respect to the `wrong' differentiable structure at $\mathrm{df}$. Instead, we decompose $A = A_1 + A_2 + A_3$ as in the proof of Proposition~\ref{prop:calc_composition}, and note that only $A_1$ has a nonzero normal operator. Thus $N(A) = N(A_1)$, and we observe that the previous proof applies to $A_1$ since it is in $\Psi_{\mathrm{par,I,res}}$.  So it suffices to show that $(A_2 + A_3) \phi = O_{\mathcal{S}(\bbR^D)}(h)$. In fact, we will show that $(A_2 + A_3) \phi = O_{\mathcal{S}(\bbR^D)}(h^\infty)$. We will do this for $A_2$; the argument for $A_3$ is identical (in fact there is no reason to split $A_2 + A_3$ in this argument). 

To do this, we decompose $\Id = \Phi + (\Id - \Phi)$, where $\Phi = \varphi(h^2 \triangle_x + h^4 D_t^2)$ and $\varphi(\lambda) \in C_c^\infty(\bbR_\lambda)$ has support in a small neighbourhood of zero and satisfies $\varphi(\lambda) \equiv 1$ for small $\lambda$. We have $\Phi \in \Psi_{\mathrm{par,I,res}}^{-\infty, 0, 0, 0}$, while $\Id - \Phi \in \Psi_{\calc}^{0, 0, 0, -\infty}$, so this decomposes the identity operator into pieces similarly to the proof of Proposition~\ref{prop:calc_composition}. If the support of $\varphi$ is sufficiently small then $A_2 \Phi \in \Psi_{\calc}^{-\infty, 0, -\infty, -\infty}$ for sufficiently small $h$, so we can treat $A_2 \Phi$ as a smooth family of operators in $\Psi_{\mathrm{sc}}^{-\infty, 0}$ which vanish rapidly as $h \to 0$. It follows that $A_2 \Phi \phi$ is $O_{\mathcal{S}(\bbR^D)}(h^\infty)$. On the other hand, $(\Id - \Phi) \phi$ is easily seen to be $O_{\mathcal{S}(\bbR^D)}(h^\infty)$ and then the Sobolev estimates from Proposition~\ref{prop:Sobolev_boundedness} show that $A_2 (\Id - \Phi) \phi = O_{\mathcal{S}(\bbR^D)}(h^\infty)$. This proves \eqref{eq:normal A natres} and then the proof of the Proposition is completed exactly as for Proposition~\ref{prop:N mult parIres}.
\end{proof}

We can now state a more precise short exact sequence, improving on \eqref{eq:short exact natres}, which also captures improvement at the parabolic face. To state this we introduce the notation 
\begin{equation}
(S_{\calc}^{m,\mathsf{s},\ell,q} / S_{\calc}^{m-1,\mathsf{s}-1+\delta,\ell-1,q}) \times|_{\mathrm{con}} \Psi_{\mathrm{par}}^{\ell-q,\mathsf{s}|_{\mathrm{pf}} }
\end{equation}
where $\times|_{\mathrm{con}}$ (the `con' stands for `consistent') means that we are 
	looking at the subset of the direct product consists of pairs $([a],A_{\mathrm{par}})$, where $[a] \in S_{\calc}^{m,\mathsf{s},\ell,q} / S_{\calc}^{m-1,\mathsf{s}-1+\delta,\ell-1,q}$ and $A_{\mathrm{par}} \in \Psi_{\mathrm{par}}^{\ell-q,\mathsf{s}|_{\mathrm{pf}} }$, satisfying the matching condition $\sigma_{\mathrm{par}}^{\ell,\mathsf{s}|_{\mathrm{pf}}}(A_{\mathrm{par}}) = [a]|_{\mathrm{pf} \cap \natural\mathrm{f}}$. That is, the principal symbol of $A_{\mathrm{par}}$ in the parabolic calculus matches with the restriction of $[a]$ to the boundary of the parabolic face $\mathrm{pf}$. 

%
\begin{proposition}
	We have a short exact sequence of algebra homomorphisms
	\begin{equation}
		0 \to \bigcap_{\delta>0} \Psi_{\calc}^{m-1,\mathsf{s}-1+\delta,\ell-1,-1}\to  \Psi_{\calc}^{m,\mathsf{s},\ell,0}  \overset{(\sigma,N)}{\to} (S_{\calc}^{m,\mathsf{s},\ell,0} / \bigcap_{\delta > 0} S_{\calc}^{m-1,\mathsf{s}-1+\delta,\ell-1,0}) \times|_{\mathrm{con}} \Psi_{\mathrm{par}}^{\ell,\mathsf{s}|_{\mathrm{pf}} } \to 0.
	\end{equation}
	\end{proposition}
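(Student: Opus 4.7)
The plan is to verify, in order: well-definedness (i.e.\ that $(\sigma,N)(A)$ lies in the consistent fiber product), the algebra homomorphism property, exactness at the middle (the kernel is the stated residual class), and surjectivity. The first three steps are essentially bookkeeping, and the surjectivity step (building a $\calc$-operator with prescribed principal symbol and prescribed normal operator) is the main obstacle.

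For well-definedness, let $A=\operatorname{Op}(a)$ with $a\in S_{\calc}^{m,\mathsf{s},\ell,0}$. Since elements of $S_{\calc}^{m,\mathsf{s},\ell,0}$ are classical (smooth) at $\mathrm{pf}$, the restriction $a|_{\mathrm{pf}}$ is well-defined and, under the identification $\mathrm{pf}\cong {}^{\mathrm{par}}\overline{T}^*\bbM$, lies in the parabolic symbol class $S_{\mathrm{par}}^{\ell,\mathsf{s}|_{\mathrm{pf}}}$ because $\mathrm{df}_1 \subset{}^{\mathrm{par}}\overline{T}^*\bbM$ corresponds to $\mathrm{pf}\cap\natural\mathrm{f}$ with $\rho_{\natural\mathrm{f}}$ restricting to a boundary-defining function for $\mathrm{df}_1$. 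Taking its parabolic principal symbol and then restricting to fiber infinity in par matches taking the principal $\calc$-symbol and restricting to $\mathrm{pf}\cap\natural\mathrm{f}$; hence $\sigma_{\mathrm{par}}^{\ell,\mathsf{s}|_{\mathrm{pf}}}(N(A)) = [a]|_{\mathrm{pf}\cap\natural\mathrm{f}}$. The homomorphism property follows from multiplicativity of $\sigma$ (symbol expansion \eqref{eq:expansion-calc-composition}) together with multiplicativity of $N$ (\Cref{prop:N mult natres}).

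For the kernel computation, suppose $\sigma(A)=0$ and $N(A)=0$. The first condition gives $a\in \bigcap_{\delta>0} S_{\calc}^{m-1,\mathsf{s}-1+\delta,\ell-1,0}$. The second says $a|_{\mathrm{pf}}\equiv 0$ (by injectivity of left quantization at the symbolic level on the parabolic normal). Since $a$ is smooth at $\mathrm{pf}$, Taylor expansion in a boundary-defining function gives $a = \rho_{\mathrm{pf}} \tilde a$ with $\tilde a$ of the same regularity; the factor of $\rho_{\mathrm{pf}}$ improves the pf-order by one, placing $a$ in $\bigcap_{\delta>0} S_{\calc}^{m-1,\mathsf{s}-1+\delta,\ell-1,-1}$. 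Conversely, any operator in this residual space obviously has vanishing principal symbol and vanishing normal operator, so the kernel is exactly as claimed.

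The main work is surjectivity. Given a consistent pair $([a_0], A_{\mathrm{par}})$, pick a representative $a_0\in S_{\calc}^{m,\mathsf{s},\ell,0}$ and let $b$ denote the full left-reduced parabolic symbol of $A_{\mathrm{par}}\in \Psi_{\mathrm{par}}^{\ell,\mathsf{s}|_{\mathrm{pf}}}$. Both $b$ and $a_0|_{\mathrm{pf}}$ are parabolic symbols of order $(\ell,\mathsf{s}|_{\mathrm{pf}})$ on $\mathrm{pf}$, and the consistency hypothesis exactly says that their principal parabolic symbols coincide. Hence $b - a_0|_{\mathrm{pf}} \in \bigcap_{\delta>0} S_{\mathrm{par}}^{\ell-1,\mathsf{s}|_{\mathrm{pf}}-1+\delta}$. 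The remaining task is to extend this parabolic symbol off $\mathrm{pf}$ to a $\calc$-symbol $c \in \bigcap_{\delta>0} S_{\calc}^{m-1,\mathsf{s}-1+\delta,\ell-1,0}$ with $c|_{\mathrm{pf}} = b - a_0|_{\mathrm{pf}}$. One concrete recipe is to multiply by a smooth cutoff $\chi\in C^\infty({}^{\calc}\overline{T}^*\bbM)$ equal to $1$ near $\mathrm{pf}$ and supported in a collar on which $\mathrm{pf}$ is a tubular neighbourhood (so a projection $\pi:\operatorname{supp}\chi\to \mathrm{pf}$ is available) and set $c = \chi\cdot \pi^*(b-a_0|_{\mathrm{pf}})$; the $\calc$-order bounds for $c$ follow from the parabolic order bounds for $b-a_0|_{\mathrm{pf}}$ together with \Cref{lemma:glue_par_calczero_orders} for the variable spacetime order and the fact that the pullback preserves conormality under $\tilde{\mathcal{V}}_{\calc}$ in the collar. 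Then $A := \operatorname{Op}(a_0 + c)$ lies in $\Psi_{\calc}^{m,\mathsf{s},\ell,0}$, has $\sigma(A)=[a_0]$ (since $c$ is subprincipal), and satisfies $N(A) = \operatorname{Op}((a_0+c)|_{\mathrm{pf}}) = \operatorname{Op}(b) = A_{\mathrm{par}}$. The technical point requiring care is that the extension respects the variable order $\mathsf{s}$ and the classicality at $\mathrm{pf}$; smoothness of $\mathsf{s}$ and of $\chi$ across $\mathrm{pf}$ ensures both.
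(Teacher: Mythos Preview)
Your proof is correct and follows essentially the same approach as the paper. Both arguments verify the kernel by combining the principal-symbol short exact sequence with the vanishing of the restriction at $\mathrm{pf}$ (the paper is slightly more careful here, working with the renormalized symbol $\mathbf{a}=\rho_{\mathrm{df}}^{m-1}\rho_{\mathrm{bf}}^{\mathsf{s}-1}\rho_{\natural\mathrm{f}}^{\ell-1}a$ to handle the $\rho_{\mathrm{bf}}^\delta$ loss in the definition of $S_{\calc}$, rather than invoking Taylor expansion directly), and both establish surjectivity by extending the subprincipal parabolic correction $b-a_0|_{\mathrm{pf}}$ off $\mathrm{pf}$ via a cutoff and adding it back to $a_0$. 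Your inclusion of the well-definedness check (that the image lands in the consistent fiber product) is a useful addition that the paper leaves implicit.
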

\begin{proof}
	From the principal symbol short-exact sequence \eqref{eq:short exact natres}, the kernel of $(\sigma,N)$ consists of those elements $A \in \bigcap_{\delta>0} \Psi_{\calc}^{m-1,\mathsf{s}-1+\delta,\ell-1,0}$ with $N(A)=0$. Let $a \in \bigcap_{\delta>0} S_{\calc}^{m-1,\mathsf{s}-1+\delta,\ell-1,0}$ be the full symbol of $A$ and 
\begin{equation}
\textbf{a} = \rho_{\mathrm{df}}^{(m-1)} \rho_{\mathrm{bf}}^{(\mathsf{s}-1)} \rho_{\calczero \mathrm{f} }^{(l-1)}a
\in \bigcap_{\delta>0} S_{\calc}^{0,\delta,0,0}
\end{equation}
be the normalized symbol of $A$. 
Then $N(A)=0$ is equivalent to 
\begin{equation} \label{eq:renomalized-a-pf-vanish}
\textbf{a} \big|_{\mathrm{pf}} = 0
\end{equation}
by the injectivity of left quantization. 
Since $\rho_{\mathrm{bf}}^\delta \textbf{a}$ is smooth in $\rho_{\mathrm{pf}}$ for any $\delta>0$ \footnote{This can be seen by noticing that $\partial_{\rho_{\mathrm{pf}}}^k(\rho_{\mathrm{bf}}^\delta \textbf{a}) = (\rho_{\mathrm{bf}}^{\delta/k}\partial_{\rho_{\mathrm{pf}}})^k(\textbf{a})$ in suitable coordinates and recalling \cref{eq:tildeV-calc}, \cref{eq:def-calc-symbol}.  }, in combination with \cref{eq:renomalized-a-pf-vanish}, we know
\begin{equation}
\textbf{a} \in  \bigcap_{\delta>0} S_{\calc}^{0,\delta,0,-1},
\end{equation}
which in turn gives
\begin{equation}
a \in \bigcap_{\delta>0} S_{\calc}^{m-1,\mathsf{s}-1+\delta,\ell-1,-1}
\end{equation}
and therefore $A \in \bigcap_{\delta>0} \Psi_{\calc}^{m-1,\mathsf{s}-1+\delta,\ell-1,-1}$.

	Similarly, if $([a_0], A_{\mathrm{par}}) \in (S_{\calc}^{m,\mathsf{s},\ell,0} / S_{\calc}^{m-1,\mathsf{s}-1 + \delta,\ell-1,0}) \times|_{\text{con}} \Psi_{\mathrm{par}}^{\ell,\mathsf{s}\big|_{\mathrm{pf}}}$ for every $\delta > 0$, then let $a_1$ be the left-reduced symbol of $A_{\mathrm{par}}$ in the parabolic calculus. Then $a_0$ can be restricted to $\mathrm{pf}$ and it agrees with $a_1$ up to an element $b$ of $S_{\mathrm{par}}^{\ell -1,\mathsf{s}\big|_{\mathrm{pf}} -1+\delta}$ for every $\delta > 0$. 
We can extend $b$ to a neighbourhood of $\mathrm{pf}$ and cut off, so that it is an element $\tilde b$ of $S_{\calc}^{m-1,\mathsf{s}-1 + \delta,\ell-1,0}$. Replacing $a_0$ by $a = a_0 +\tilde b$, this satisfies $[a] = [a_0]$ and it now agrees with $b$ at $\mathrm{pf}$. 
Such $a\in S_{\calc}^{m,\mathsf{s},\ell,0}$ satisfies
	\begin{equation}
	a \bmod \bigcap_{\delta>0} S_{\calc}^{m-1,\mathsf{s}-1+\delta,\ell-1,0} = [a_0] 
	\end{equation} 
	and $ a \big|_{\mathrm{pf}} = a_1$. Let $A=\operatorname{Op}(a)$. Then, $(\sigma,N)$ maps $A$ to $([a_0],a_1)$. 
	
	The algebra homomorphism property is a direct consequence of \eqref{eq:short exact natres} and Proposition~\ref{prop:N mult natres}. 
\end{proof}

\subsection{Relation between the $\Psi_{\calc}$ calculus and the parabolic calculus}
\label{subsec:calc_relations}

In order to utilize the main results of \cite{Parabolicsc}, we need to relate the calculus $\Psi_{\calc}$ used here to the parabolic calculus $\Psi_{\mathrm{par}}$ used there. There is however a technical complication to overcome, which is that we obliged to use different variable spacetime orders for the two calculi: a variable order in $\Psi_{\calc}$ will typically have nontrivial dependence in the $h$ direction (or more precisely, near $\mathrm{pf}$, in the $h \aang{\zeta}$ direction), so it will not coincide with any variable order lifted from the parabolic face. 
Simply allowing a family of variable orders on the parabolic phase space parametrized by $h$ won't resolve this issue since the smooth structure of our $\calc$-phase space at $\mathrm{df}$ at $h>0$ comes from the homogeneous, instead of parabolic, compactification.
To deal with this we accept a slight loss in the variable order and bound a given $\calc$-order above and below by two orders lifted from the parabolic phase space. 
The following lemma, which is virtually just the application of \Cref{prop:gluing_pseudos} to estimates of Sobolev regularity in the setting with $\mathsf{s}$ a variable order, is all we need:
\begin{lemma}
	Let $\mathsf{s} \in C^\infty({}^{\calc}\overline{T}^* \bbM;\bbR )$ denote a variable order, and fix variable orders
	\begin{equation} 
		\underline{\mathsf{s}}, \overline{\mathsf{s}} \in C^\infty({}^{\mathrm{par}}\overline{T}^* \bbM;\bbR )
		\label{eq:sss}
	\end{equation} 
	such that $\beta^*\underline{\mathsf{s}} \leq \mathsf{s} \leq \beta^*\overline{\mathsf{s}}$ in some neighborhood $\{\rho_{\mathrm{pf}} < \varepsilon \}$ of $\mathrm{pf}$, for some $\varepsilon>0$, where $\beta: {}^{\calc}\overline{T}^* \bbM\backslash \mathrm{df} \to {}^{\mathrm{par}}\overline{T}^* \bbM$ is the (smooth) map which blows down $\natural\mathrm{f}$ and then forgets the semiclassical parameter.
	Now fix $\chi \in S_{\calc}^{-\infty,0,0,0}$ such that $\operatorname{supp} \chi \Subset \{\rho_{\mathrm{pf}} <\varepsilon\}$, so that $Q=\operatorname{Op}(\chi)$ satisfies 
	\begin{equation}
		Q\in \Psi_{\calc}^{-\infty,0,0,0}, \quad \operatorname{WF}'_{\calc}(Q)\Subset \{\rho_{\mathrm{pf}}<\varepsilon\} .
	\end{equation}
	Then, for any $m,\ell,q \in \bbR$, 
	\begin{equation}
		A\in \Psi_{\calc}^{m,\mathsf{s},\ell,q} \Rightarrow AQ,QA \in h^{-q} L^\infty([0,1]; \Psi_{\mathrm{par}}^{\ell-q,\overline{\mathsf{s}} } )  .
	\end{equation}
	Conversely, if $A\in \Psi_{\mathrm{par}}^{\ell,\underline{\mathsf{s}}}$, then $AQ,QA \in \Psi_{\calc}^{-\infty,\mathsf{s},\ell,0 }$, where we consider $A$ to be an $h$-independent family of pseudodifferential operators.
	\label{lem:par_Planck_comp}
\end{lemma}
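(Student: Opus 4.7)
The strategy is to use the gluing characterization in \Cref{prop:gluing_pseudos} of $\Psi_{\calc}$ as (morally) a fibered product of $\Psi_{\mathrm{par,I,res}}$ and $\Psi_{\calczero}$, combined with the identification \eqref{eq:when_parIres_=_parI} that allows passage between $\Psi_{\mathrm{par,I,res}}$ and $\Psi_{\mathrm{par,I}}$ (and hence $\Psi_{\mathrm{par}}$). The microlocalizer $Q$ will be used to discard the $\Psi_{\calczero}$ piece in the forward direction, and, in the reverse direction, to keep the symbol supported away from $\mathrm{df}_1$ so that it retains the differential structure needed for the $\calc$-calculus.

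For the forward direction, the first step will be to observe that $AQ$ has microlocal support contained in $\operatorname{WF}'_{\calc}(Q) \Subset \{\rho_{\mathrm{pf}}<\varepsilon\}$, by \eqref{eq:WFproducts}. Decomposing $AQ = B + C$ via \Cref{prop:gluing_pseudos}, the $\Psi_{\calczero}$-piece $C$ has left symbol supported away from the zero section in the $\calczero$-phase space, which under the gluing identifications lies entirely away from $\mathrm{pf}$. Since $AQ$ is microlocally trivial away from $\mathrm{pf}$, $C$ is residual. The remaining piece $B$ lies in $\Psi_{\mathrm{par,I,res}}^{-\infty,\mathsf{s}_{\mathrm{par}},\ell,q}$, where \Cref{lemma:glue_par_calczero_orders} allows us to arrange that $\mathsf{s}_{\mathrm{par}}$ agrees with $\mathsf{s}$ on $\operatorname{WF}'_{\calc}(Q)$ and, thanks to $\mathsf{s} \leq \beta^*\overline{\mathsf{s}}$, satisfies $\mathsf{s}_{\mathrm{par}} \leq \overline{\mathsf{s}}$ globally. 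Applying \eqref{eq:when_parIres_=_parI} with $m=\ell-q$ identifies $B$ with an element of $\Psi_{\mathrm{par,I}}^{\ell-q,\mathsf{s}_{\mathrm{par}},q}$, which by definition is $h^{-q}$ times a conormal family of elements of $\Psi_{\mathrm{par}}^{\ell-q,\mathsf{s}_{\mathrm{par}}(h)}$. Since $\mathsf{s}_{\mathrm{par}}(h) \leq \overline{\mathsf{s}}$ uniformly in $h$, the corresponding uniform inclusion of parabolic symbol classes gives $B \in h^{-q}L^\infty([0,1];\Psi_{\mathrm{par}}^{\ell-q,\overline{\mathsf{s}}})$, as desired. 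The argument for $QA$ is symmetric.

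For the reverse direction, $A \in \Psi_{\mathrm{par}}^{\ell,\underline{\mathsf{s}}}$ viewed as an $h$-independent family lies in $\Psi_{\mathrm{par,I}}^{\ell,\underline{\mathsf{s}},0}$, hence by \eqref{eq:when_parIres_=_parI} in $\tilde{\Psi}_{\mathrm{par,I,res}}^{\ell,\underline{\mathsf{s}},\ell,0}$; the vanishing of its $h$-derivatives places it in the smaller class $\Psi_{\mathrm{par,I,res}}^{\ell,\underline{\mathsf{s}},\ell,0}$. Composing with $Q$, viewed via \Cref{prop:gluing_pseudos} as an element of $\Psi_{\mathrm{par,I,res}}^{-\infty,0,0,0}$ (its symbol being supported near $\mathrm{pf}$, hence away from $\mathrm{df}_1$), \Cref{eq:parIres calculus} gives $AQ \in \Psi_{\mathrm{par,I,res}}^{-\infty,\underline{\mathsf{s}},\ell,0}$ with microlocal support contained in $\operatorname{WF}'_{\calc}(Q)$. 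The converse direction of \Cref{prop:gluing_pseudos} then realizes $AQ$ as an element of $\Psi_{\calc}^{-\infty,\beta^*\underline{\mathsf{s}},\ell,0}$ (the parabolic differential structure is invisible here because the order at $\mathrm{df}$ is $-\infty$), and since $\beta^*\underline{\mathsf{s}} \leq \mathsf{s}$ on $\operatorname{supp}\chi$, we may weaken the bf-order to $\mathsf{s}$, giving $AQ \in \Psi_{\calc}^{-\infty,\mathsf{s},\ell,0}$.

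The main technical obstacle will be the bookkeeping of variable orders across the three phase spaces: we must ensure that the extension $\mathsf{s}_{\mathrm{par}}$ produced by \Cref{lemma:glue_par_calczero_orders} can be pointwise dominated by the $h$-independent lift $\overline{\mathsf{s}}$ (not just in the limit $h\to 0$), and that the resulting inclusion of $h$-parametrized symbol classes is uniform rather than merely pointwise in $h$. A secondary subtlety is the mismatch between the parabolic differential structure at fiber infinity used by $\Psi_{\mathrm{par,I,res}}$ and the homogeneous one used by $\Psi_{\calc}$; the fact that $Q$ has differential order $-\infty$ in both conventions is precisely what makes the identifications in both directions harmless.
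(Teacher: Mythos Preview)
Your proposal is correct and follows essentially the same route as the paper. The paper works directly at the symbol level (analyzing the Moyal product $a\star\chi$ and tracing it through the chain $S_{\calc}^{-\infty,0,\ell,0}=S_{\mathrm{par,I,res}}^{-\infty,0,\ell,0}\subseteq \tilde S_{\mathrm{par,I,res}}^{\ell,0,\ell,0}=S_{\mathrm{par,I}}^{\ell,0,0}$ after first reducing to $q=0$), whereas you route the argument through the operator-level decomposition of \Cref{prop:gluing_pseudos}; but the substantive steps---the localization to $\operatorname{supp}\chi$, the order comparison $\mathsf{s}\leq\beta^*\overline{\mathsf{s}}$ there, the passage via \eqref{eq:when_parIres_=_parI}, and (in the converse) the observation that $h$-independence yields smoothness at $\mathrm{pf}$---are identical.
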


\begin{proof}
	Multiplying through by $h^q$, it suffices to prove the $q=0$ case. 
	
	Note that 
	\begin{equation} 
		Q(h) \in \Psi_{\mathrm{sc}}^{-\infty,0}\cap \Psi_{\mathrm{par}}^{-\infty,0}
	\end{equation} 
	for each individual $h>0$. Consequently, in each of the cases above, we can write $AQ = \operatorname{Op}(a\star \chi)$ and $QA = \operatorname{Op}(\chi \star a)$, where $a=\sigma_L(A)$ is the full left-reduced symbol of $a$. We just need to investigate the properties of the Moyal products $a\star \chi$ and $\chi \star a$.  We will discuss the case of $a\star \chi$, and $\chi \star a$ is analogous. 
	
	First suppose that $A\in \Psi_{\calc}^{m,\mathsf{s},\ell,0}$. Then $a\in S_{\calc}^{m,\mathsf{s},\ell,0}$. The Moyal product $a\star \chi$ lies in the same symbol spaces as $a\chi$, so 
	\begin{equation}
		a\star \chi \in S_{\calc}^{-\infty,\mathsf{s},\ell,0}  
	\end{equation} 
	
	We can weaken this slightly to
	
	\begin{align}
		\begin{split} 
			\rho_{\mathrm{bf}}^{ \beta^*\overline{\mathsf{s}} } a \star \chi 
           \in S_{\calc}^{-\infty,0,\ell,0} 
           =  S_{\mathrm{par,I,res}}^{-\infty,0,\ell,0}  
           \subseteq \tilde S_{\mathrm{par,I,res}}^{\ell,0,\ell,0} = S_{\mathrm{par,I}}^{\ell, 0,0}
		\end{split} 
	\end{align}
where we recall from Section~\ref{subsec:parIres} that $S_{\mathrm{par,I,res}}^{m,s,\ell,q}$ is the subspace of $\tilde S_{\mathrm{par,I,res}}^{m,s,\ell,q}$ consisting of symbols that are smooth, not just conormal, at $\mathrm{pf}$. In this display, we used the fact that $\beta^* \overline{\mathsf{s}} \geq \mathsf{s}$ on the essential support of $a\star \chi$. The final equality is an application of Lemma~\ref{lemma:blow up conormal}.  As a technical note, $S_{\mathrm{par,I}}^{\ell,0,0}$ here should be understood as the symbol class defined as in the variable order setting, i.e., bounded under iterated application of vector fields as in \eqref{eq:par_symbols_Vb_defn}. 
         
So, with $\mathcal{A}([0, 1)_h; \bullet)$ denoting the class of conormal functions taking value in certain Fr\'echet space $\bullet$ with each seminorm bounded, we have
	\begin{equation}
		a \star \chi \in \mathcal{A}([0,1)_h; S_{\mathrm{par}}^{\ell,\overline{\mathsf{s}}}) \subset L^\infty([0,1)_h, S_{\mathrm{par}}^{\ell,\overline{\mathsf{s}}}).
	\end{equation}
	Quantizing, we get $AQ \in L^\infty([0,1]_h;\Psi_{\mathrm{par}}^{\ell,\overline{\mathsf{s}} })$, as desired.
	
	We now discuss the ``converse'' direction. Suppose that $A\in \Psi_{\mathrm{par}}^{\ell,\underline{\mathsf{s}}}$, so that $a\in S_{\mathrm{par}}^{\ell,\underline{\mathsf{s}}}$. 
Viewing $\underline{\mathsf{s}}$ as a variable order on ${}^{\mathrm{par,I,res}}\overline{T}^* \bbM$ that is constant in $h$, then using Lemma~\ref{lemma:blow up conormal} we have 
	\begin{equation} 
		a \in \tilde S_{\mathrm{par,I,res}}^{\ell,\underline{\mathsf{s}},\ell,0}. 
	\end{equation} 
Smoothness in $h$ improves this to 	$a \in S_{\mathrm{par,I,res}}^{\ell,\underline{\mathsf{s}},\ell,0}$. 
		So, using the support condition on $\chi$, we know
	\begin{equation} 
		a \star \chi \in S_{\mathrm{par,I,res}}^{-\infty,\underline{\mathsf{s}},\ell,0}
	\end{equation}
	Because $\mathsf{s} \geq \beta^* \underline{\mathsf{s}}$ on the essential support of $\chi$, this can be weakened to
	\begin{equation} 
		a\star \chi \in 
		S_{\calc}^{-\infty,\ell,\mathsf{s},0}.
	\end{equation}  
	Quantizing, we get $AQ \in \Psi_{\calc}^{-\infty,\mathsf{s},\ell,0}$, as desired. 
\end{proof}

From \Cref{lem:par_Planck_comp}, we get:
\begin{proposition}
	For $j=0,1,2$, let $Q_j = \operatorname{Op}(\chi_j)$ for $\chi_j$ as in \Cref{lem:par_Planck_comp} satisfying 
	\begin{equation}
		\operatorname{supp} \chi_1 \Subset \chi^{-1}_2(\{1\}) \subset \operatorname{supp} \chi_2 \Subset \chi^{-1}_3(\{1\}) \subset \operatorname{supp} \chi_3 \Subset \{\rho_{\mathrm{pf}} < \varepsilon \}.
	\end{equation}
	In addition, let $m,\ell,q\in \bbR$, and let $\mathsf{s},\underline{\mathsf{s}},\overline{\mathsf{s}}$ be as in \Cref{lem:par_Planck_comp}. Then,  for any $N\in \bbN$, 
	\begin{align}
		h^{-q}\lVert Q_1(h) u \rVert_{H_{\mathrm{par}}^{\ell-q,\underline{\mathsf{s}}}} &\lesssim  \lVert  Q_2(h) u \rVert_{H_{\calc }^{m,\mathsf{s},\ell,q}  } + \lVert u \rVert_{H_{\calczero}^{-N,-N,-N} } \label{eq:k41}\\
		\lVert  Q_2(h) u \rVert_{H_{\calc }^{m,\mathsf{s},\ell,q}  } &\lesssim h^{-q} \lVert Q_3(h) u \rVert_{H_{\mathrm{par}}^{\ell-q,\overline{\mathsf{s}}} }  + \lVert u \rVert_{H_{\calczero}^{-N,-N,-N} }.
	\end{align}
	\label{prop:par_Planck_comp_full}
\end{proposition}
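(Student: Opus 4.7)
\smallskip

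\textbf{Proof plan.} Both inequalities will follow from the transfer mechanism of \Cref{lem:par_Planck_comp} combined with standard microlocal arguments, once we exploit the nested support conditions on $\chi_1, \chi_2, \chi_3$. The key observation is that if $\chi$ vanishes identically on an open neighborhood of $\operatorname{supp} \chi'$ (or vice versa), then the Moyal product $\chi \star \chi'$ and $\chi' \star \chi$ are residual in \emph{all} $\calc$-indices, because each term in the asymptotic expansion \eqref{eq:expansion-calc-composition} is a pointwise product, one factor of which vanishes identically. Applied to our setting, since $\chi_2 \equiv 1$ on an open neighborhood of $\operatorname{supp} \chi_1$, the products $(1-Q_2) Q_1$ and $Q_1 (1-Q_2)$ both lie in $\Psi_{\calc}^{-\infty,-\infty,-\infty,-\infty}$, and likewise $Q_2 (1-Q_3), (1-Q_3) Q_2 \in \Psi_{\calc}^{-\infty,-\infty,-\infty,-\infty}$.

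For \eqref{eq:k41}, I fix an elliptic $\Lambda_{\mathrm{par}} \in \Psi_{\mathrm{par}}^{\ell-q,\underline{\mathsf{s}}}$ so that $\lVert Q_1 u \rVert_{H_{\mathrm{par}}^{\ell-q,\underline{\mathsf{s}}}} \sim \lVert \Lambda_{\mathrm{par}} Q_1 u \rVert_{L^2}$, modulo a parabolic residual term. By the converse direction of \Cref{lem:par_Planck_comp}, $\Lambda_{\mathrm{par}} Q_1 \in \Psi_{\calc}^{-\infty,\mathsf{s},\ell,0}$, hence $h^{-q} \Lambda_{\mathrm{par}} Q_1 \in \Psi_{\calc}^{-\infty,\mathsf{s},\ell,q}$. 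Writing $Q_1 = Q_1 Q_2 + Q_1(1-Q_2)$, the second summand contributes a residual term of the desired form. For the main term, the $\calc$-Sobolev boundedness (\Cref{prop:Sobolev_boundedness}) applied to $h^{-q} \Lambda_{\mathrm{par}} Q_1 \in \Psi_{\calc}^{-\infty,\mathsf{s},\ell,q}$ gives
\begin{equation}
\lVert h^{-q} \Lambda_{\mathrm{par}} Q_1 Q_2 u \rVert_{L^2} \lesssim \lVert Q_2 u \rVert_{H_{\calc}^{m,\mathsf{s},\ell,q}},
\end{equation}
which is what we want.

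For the second inequality, the strategy is symmetric. I fix an elliptic $\Lambda_{\calc} \in \Psi_{\calc}^{m,\mathsf{s},\ell,q}$ so that $\lVert Q_2 u \rVert_{H_{\calc}^{m,\mathsf{s},\ell,q}} \sim \lVert \Lambda_{\calc} Q_2 u \rVert_{L^2}$ up to the residual term in the definition of the $\calc$-norm. Now the direct direction of \Cref{lem:par_Planck_comp} yields $\Lambda_{\calc} Q_2 \in h^{-q} L^\infty([0,1];\Psi_{\mathrm{par}}^{\ell-q,\overline{\mathsf{s}}})$, i.e.\ $h^q \Lambda_{\calc} Q_2$ is a uniformly bounded family of operators $H_{\mathrm{par}}^{\ell-q,\overline{\mathsf{s}}} \to L^2$. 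Decomposing $Q_2 = Q_3 Q_2 + (1-Q_3) Q_2$, the second contribution is residual by the support argument above, while
\begin{equation}
\lVert \Lambda_{\calc} Q_2 Q_3 u \rVert_{L^2} = h^{-q} \lVert (h^q \Lambda_{\calc} Q_2) Q_3 u \rVert_{L^2} \lesssim h^{-q} \lVert Q_3 u \rVert_{H_{\mathrm{par}}^{\ell-q,\overline{\mathsf{s}}}},
\end{equation}
as desired.

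The main subtlety to verify carefully is that all residual remainders produced above are dominated by $\lVert u \rVert_{H_{\calczero}^{-N,-N,-N}}$ for any prescribed $N$. This follows because operators in $\Psi_{\calc}^{-\infty,-\infty,-\infty,-\infty}$ are smooth families (in $h$) of Schwartz-kernel operators on $\bbR^{1,d}$ whose operator norms on any weighted Sobolev space are themselves Schwartz in $h$; combined with the parametrix error $R_{m,\mathsf{s},\ell,q}$ from the definition of the $H_{\calc}$-norm, which is already in $\Psi_{\calc}^{-\infty,-\infty,-\infty,0}$, the resulting mapping properties into $L^2$ are easily controlled by any $\calczero$-Sobolev norm of negative order. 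The only non-routine bookkeeping is tracking that the variable orders $\underline{\mathsf{s}}$ and $\overline{\mathsf{s}}$ indeed sandwich $\mathsf{s}$ throughout the essential support of each $Q_j$, which is precisely the hypothesis arranged via the localization $\{\rho_{\mathrm{pf}}<\varepsilon\}$ together with \Cref{lem:par_Planck_comp}.
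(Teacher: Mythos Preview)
Your approach is essentially the paper's: pick an elliptic operator in the target calculus, apply \Cref{lem:par_Planck_comp} to transfer the composite into the source calculus, decompose via the nested cutoffs, and bound the cross term as residual. The paper in addition first reduces to $q=0$ using $\lVert \,\cdot\, \rVert_{H_{\calc}^{m,\mathsf{s},\ell,q}} \sim h^{-q}\lVert \,\cdot\, \rVert_{H_{\calc}^{m,\mathsf{s},\ell-q,0}}$, which slightly streamlines the bookkeeping (and note a small slip in your orders: with $\Lambda_{\mathrm{par}}\in\Psi_{\mathrm{par}}^{\ell-q,\underline{\mathsf{s}}}$, \Cref{lem:par_Planck_comp} gives $\Lambda_{\mathrm{par}}Q_1\in\Psi_{\calc}^{-\infty,\mathsf{s},\ell-q,0}$, not $\Psi_{\calc}^{-\infty,\mathsf{s},\ell,0}$; your conclusion about $h^{-q}\Lambda_{\mathrm{par}}Q_1$ is nonetheless correct).

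There is, however, a genuine gap in your treatment of the residual terms. Your argument that $Q_1(1-Q_2)$ is ``residual in \emph{all} $\calc$-indices'' because each term in the Moyal expansion \eqref{eq:expansion-calc-composition} vanishes is not sufficient: the remainder in \eqref{eq:expainsion-calc-error-membership} gains at $\mathrm{df}$, $\mathrm{bf}$, and $\natural\mathrm{f}$ but \emph{not} at $\mathrm{pf}$, so this reasoning only yields $Q_1(1-Q_2)\in\Psi_{\calc}^{-\infty,-\infty,-\infty,0}$. An operator in that class is merely a smooth-in-$h$ family of smoothing operators, with no $h^\infty$ vanishing, and its action on $u$ cannot in general be bounded by $\lVert u\rVert_{H_{\calczero}^{-N,-N,-N}}$ (which carries an $h^N$ weight). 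The paper closes this gap by invoking the oscillatory-integral computation used for the $A_1B_2$ term in the proof of \Cref{prop:calc_composition}: since $\chi_1$ is supported near $\mathrm{pf}$ while $1-\chi_2$ is supported away from it, the frequency supports are separated on a scale $\sim 1/h$, and a non-stationary-phase argument produces factors of $h^N$, giving genuine residuality at $\mathrm{pf}$ as well. You should replace the Moyal-expansion justification with this argument.
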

\begin{proof}
	It suffices to prove the $q=0$ case since $ \lVert  \bullet \rVert_{H_{\calc }^{m,\mathsf{s},\ell,q} }$ is equivalent to  $h^{-q}\lVert  \bullet \rVert_{H_{\calc }^{m,\mathsf{s},\ell-q,0} } $ . 
	\begin{itemize}
		\item 
		There exists an elliptic, invertible $\Lambda \in \Psi_{\mathrm{par}}^{\ell,\underline{\mathsf{s}}}$, so
		\begin{align} 
			\begin{split} 
				\lVert Q_1 u \rVert_{H_{\mathrm{par}}^{\ell,\underline{\mathsf{s}}} }  &\lesssim \lVert \Lambda Q_1u \rVert_{L^2}  \leq \lVert \Lambda Q_1Q_2 u  \rVert_{L^2} +\lVert \Lambda Q_1 (1-Q_2)u  \rVert_{L^2} \\
				&\lesssim \lVert Q_2 u \rVert_{H_{\calc}^{m,\mathsf{s},\ell,0} } + \lVert u \rVert_{H_{\calczero}^{-N,-N,-N}}
			\end{split} 
		\end{align} 
		Here, we used the fact that $\Lambda Q_1 \in \Psi_{\calc}^{-\infty,\mathsf{s},\ell,0}$ according to \Cref{lem:par_Planck_comp}, and also that -- since the microlocal supports of $Q_1$ and $1 - Q_2$ are disjoint,  the operator $ \Lambda Q_1 (1-Q_2) \in \Psi_{\calc}^{-\infty,-\infty,-\infty,-\infty}$, similar to the composition of $A_1 B_2$ in the proof of Proposition~\ref{prop:calc_composition}.
		\item Likewise, for elliptic $\Lambda \in \Psi_{\calc}^{m,\mathsf{s},\ell,0}$, then $\lVert Q_2 u \rVert_{H_{\calc}^{m,\mathsf{s},\ell,0} } \lesssim \lVert \Lambda Q_2 u \rVert_{L^2} +\lVert Q_2 u \rVert_{H_{\calc}^{-N,-N,-N,0}}$. We bound each of the terms on the right-hand side as follows:
		\begin{align}
			\begin{split} 
				\lVert \Lambda Q_2 u \rVert_{L^2 }&\leq  \lVert \Lambda Q_2 Q_3u \rVert_{L^2} + \lVert \Lambda Q_2 (1-Q_3)u \rVert_{L^2} \\
				&\lesssim \lVert Q_3 u \rVert_{H_{\mathrm{par}}^{\ell,\overline{\mathsf{s}}} } + \lVert u \rVert_{H_{\calczero}^{-N,-N,-N}},
			\end{split} 
		\end{align}
		since \Cref{lem:par_Planck_comp} gives $\Lambda Q_2 \in L^\infty([0,h_0]; \Psi_{\mathrm{par}}^{\ell,\overline{\mathsf{s}}} )$, and also using $\Lambda Q_2 (1-Q_3) \in \Psi_{\calczero}^{-\infty,-\infty,-\infty}$.
	\end{itemize}
\end{proof}

\subsection{The twice-resolved natural calculus $\Psi_{\calctwo}$} 
\label{sec:2-res_construction}
As discussed in the Introduction --- see the discussion around \eqref{eq:misc_c} --- we actually need to resolve the natural phase space at \emph{two} points in frequency space at $h=0$, namely $(\taun, \xin) = (\pm 1, 0)$. For any $\alpha \in \bbR$, the map $(\taun, \xin) \mapsto (\taun + \alpha, \xin)$ is a symplectomorphism of the natural phase space ${}^{\calczero}\overline{T}^* \bbM$ which is implemented analytically by $u \mapsto e^{i\alpha t/h^2} u$ or, at the operator level, by $B \mapsto B_\alpha=M_{\exp(i\alpha t/h^2)}\operatorname{Op}( b)M_{\exp(-i\alpha t/h^2)}$, where $M_\bullet:u\mapsto \bullet u$ is the multiplication map. This is an algebra isomorphism on $\Psi_{\calczero}$, as in Proposition~\ref{prop:translation}. Thus, we can equally well define the blow up of ${}^{\calczero}\overline{T}^* \bbM$ at any $\natural$-frequency point $(\taun, \xin) = (\alpha, 0)$ at $h=0$. Let us define 
the compactification  ${}^{\calc,\pm}\overline{T}^* \bbM \hookleftarrow T^* \bbR^{1,d}\times (0,\infty)_h$
to be the space obtained from ${}^{\calczero}\overline{T}^* \bbM$ by 
blowing up 
\begin{equation} 
	\{ \xi_{\calcshort}=0, \tau_{\calcshort} = \pm 1 ,h=0\} \subset {}^{\calczero}\overline{T}^* \bbM
\end{equation}
in the same manner used to obtain ${}^{\calc}\overline{T}^* \bbM$.
Then, we glue these two spaces together, with the `obvious' identification, to create 
\begin{equation}
{}^{\calctwo}\overline{T}^* \bbM=
({}^{\calc,+}\overline{T}^* \bbM \backslash  \{ \xi_{\calcshort}=0, \tau_{\calcshort} = - 1 \} ) \sqcup ({}^{\calc,-}\overline{T}^*\bbM \backslash \{ \xi_{\calcshort}=0, \tau_{\calcshort} =  1 \}) / \sim,
\end{equation}
where $\sim$ identifies points that are not in the lift of one of $\{ \xi_{\calcshort}=0, \tau_{\calcshort} = \pm 1 \}$. Thus, ${}^{\calctwo}\overline{T}^* \bbM$ is locally identical to ${}^{\calc}\overline{T}^* \bbM$ up to a translation in the $\taun$-direction of $\pm 1$, but it has \emph{two} parabolic faces rather than one. We denote these, i.e.\ the lift of $\{ \xi_{\calcshort}=0, \tau_{\calcshort} = \pm 1 ,h=0\}$ in ${}^{\calctwo}\overline{T}^*\bbM$ by $\mathrm{pf}_\pm$ and denote the blow-down map ${}^{\calctwo}\overline{T}^*\bbM \to {}^{\calczero}\overline{T}^*\bbM$ by $\beta_{\calctwo}$. We use the same notation for the other faces as for ${}^{\calc}\overline{T}^*\bbM$, i.e.\ $\mathrm{df},\mathrm{bf},\calczero\mathrm{f}$ are inherited from the two copies of ${}^{\calc}\overline{T}^*\bbM$ after the identification $\sim$ is made. Notice that the point $(\taun, \xin) = (0, 0)$ at $h=0$ in ${}^{\calctwo}\overline{T}^* \bbM$ is just another point on the $\calczero\mathrm{f}$-face; nothing special happens at this point. 
See \Cref{fig:phase2} for a depiction of ${}^{\calctwo}\overline{T}^* \bbM$. 

To define symbol spaces on this phase space, allowing a variable order $\mathsf{s}$ at $\mathrm{bf}$ but constant orders at the other boundary hypersurfaces, we use the same approach as for $\Psi_{\calc}$. 
Let $\mathcal{V}_{\calctwo}$ be the Lie algebra of smooth vector fields on ${}^{\calctwo} \overline{T}^* \bbM$ that are tangent to all boundary faces except for $\mathrm{pf}_+$ and $\mathrm{pf}_-$. This means that we are imposing smoothness at $\mathrm{pf}_\pm$ but only conormal regularity at the other boundary hypersurfaces. 
We then define
\begin{equation}
\tilde{\mathcal{V}}_{\calctwo} = \bigcup_{\delta > 0} \rho_{\mathrm{bf}}^\delta \mathcal{V}_{\calctwo}.
\end{equation}
Then we set, for orders $m$ at $\mathrm{df}$, $\ell$ at $\calczero\mathrm{f}$, and $q_\pm$ at $\mathrm{pf}_\pm$, and boundary defining functions $\rho_{\bullet}$ at the boundary hypersurface $\bullet$ of ${}^{\calctwo}\overline{T}^* \bbM$,
\begin{multline}\label{eq:symbol 2natres}
S_{\calctwo}^{m,\mathsf{s},\ell,q_+, q_-} = \Big\{ a \in \rho_{\mathrm{df}}^{-m} \rho_{\mathrm{bf}}^{-\mathsf{s}} \rho_{ \calczero\mathrm{f} }^{-\ell} \rho_{\mathrm{pf}_+}^{-q_+} \rho_{\mathrm{pf}_-}^{-q_-}  L^\infty({}^{\calctwo}\overline{T}^* \bbM) : \\ \big( \tilde{\mathcal{V}}_{\calctwo}  \big)^M a  \in \rho_{\mathrm{df}}^{-m} \rho_{\mathrm{bf}}^{-\mathsf{s}} \rho_{ \calczero\mathrm{f} }^{-\ell}\rho_{\mathrm{pf}_+}^{-q_+} \rho_{\mathrm{pf}_-}^{-q_-} L^\infty({}^{\calctwo}\overline{T}^* \bbM )  \text{ for all } M \in \mathbb{N}. \Big\}
\end{multline}

We have, under pointwise multiplication,
\begin{equation}
	S^{m,\mathsf{s},\ell;q_-,q_+}_{\calctwo}	\cdot S^{m',\mathsf{s}',\ell';q_+',q_-'}_{\calctwo} \subseteq  	S^{m+m',\mathsf{s}+\mathsf{s}',\ell+\ell';q_++q'_+,q_-+q'_-}_{\calctwo},
\end{equation}
for any other variable order $\mathsf{s}'\in C^\infty({}^{\calc} \overline{T}^* \bbM;\bbR)$. 
 
	Via translation up or down by one unit in $\tau_\natural$, any variable order $\mathsf{s} \in  C^\infty({}^{\calctwo} \overline{T}^* \bbM;\bbR)$ determines two variable orders 
	\begin{equation} \label{eq:spm}
		\mathsf{s}_\pm \in C^\infty({}^{\calc} \overline{T}^* \bbM\backslash \{\xi_\natural,\tau_\natural=\mp 2, h=0\} ;\bbR),
	\end{equation} 
	which, using the canonical identification $\mathrm{pf}\cong {}^{\mathrm{par}}\overline{T}^* \bbM$, yield variable par-orders. 
The behavior of $\mathsf{s}_\pm$ near $\{\xi_\natural,\tau_\natural=\mp 2, h=0\}$ will be unimportant for our purposes, so for convenience, we redefine $\mathsf{s}_\pm$ near this point so that each becomes a globally defined (smooth) variable order on the $\calc$-phase space.
In addition, $\mathsf{s}_0$ will denote a variable order on ${}^{\calctwo}\overline{T}^* \bbM$ that coincides with $\mathsf{s}$ on the complement of a small neighborhood of $\mathrm{pf}_+ \cup \mathrm{pf}_-$ (small enough to be contained in the region where $\mathsf{s}_\pm$ deviates from the relevant translate of $\mathsf{s}$) and is constant near $\mathrm{pf}_\pm$. Consequently, 
\begin{equation}\label{eq:s0}
	\mathsf{s}_0 \in C^\infty({}^{\calczero}\overline{T}^* \bbM;\bbR) 
\end{equation}
is (or more precisely may be viewed as) a variable order on ${}^{\calczero}\overline{T}^* \bbM$. We use these variable orders in the next proposition.

\begin{proposition} \label{prop:2_res_symbol_decomposition}
For any $b \in S^{m,\mathsf{s},\ell;q_-,q_+}_{\calctwo}$, we can decompose it as
\begin{equation} \label{eq:symbol_decomposition}
b = b_+(z,\tau_{\calcshort}-1,\xi_{\calcshort})
+b_-(z,\tau_{\calcshort}+1,\xi_{\calcshort})+b'(z,\tau_{\calcshort},\xi_{\calcshort}),
\end{equation}
where $b_\pm \in S^{-\infty,\mathsf{s}_\pm,\ell,q_\pm}_{\calc}$ have supports near $\mathrm{pf}$, and $b' \in S^{m,\mathsf{s}_0,\ell}_{\calczero}$ has  support away from $\mathrm{pf}_-\cup\mathrm{pf}_+$. In particular, 
\begin{equation} 
	b=b_\pm(z,\tau_{\calcshort}\mp 1,\xi_{\calcshort})
\end{equation} 
near $\mathrm{pf}_\pm$.
\end{proposition}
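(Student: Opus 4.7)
The plan is to carry out a partition-of-unity argument that exploits the fact that, by the very construction of ${}^{\calctwo}\overline{T}^* \bbM$, a neighborhood of $\mathrm{pf}_\pm$ is (after translating $\tau_{\calcshort}$ by $\mp 1$) diffeomorphic to a neighborhood of $\mathrm{pf}$ in ${}^{\calc}\overline{T}^* \bbM$, while the complement of neighborhoods of $\mathrm{pf}_\pm$ is (under the blow-down $\beta_{\calctwo}$) canonically identified with the corresponding subset of ${}^{\calczero}\overline{T}^* \bbM$.

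First, I would fix nested neighborhoods $U_\pm' \Subset U_\pm$ of $\mathrm{pf}_\pm$ in ${}^{\calctwo}\overline{T}^* \bbM$, chosen small enough that $\overline{U_+} \cap \overline{U_-} = \varnothing$, that $U_\pm$ lies inside the region where $\mathsf{s}$ agrees with the translate of $\mathsf{s}_\pm$ used in \eqref{eq:spm}, and that $U_+ \cup U_-$ is disjoint from the region where $\mathsf{s}_0$ in \eqref{eq:s0} has been modified away from $\mathsf{s}$. I then pick a smooth partition of unity $\chi_+ + \chi_- + \chi' = 1$ subordinate to $\{U_+, U_-, {}^{\calctwo}\overline{T}^* \bbM \setminus (\overline{U_+'} \cup \overline{U_-'})\}$, with $\chi_\pm \equiv 1$ on a still smaller neighborhood of $\mathrm{pf}_\pm$. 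Setting $b_\pm^{\sharp} = \chi_\pm b$ and $b' = \chi' b$ then gives a decomposition of $b$ into three symbols with the desired supports; the in-particular clause at the end of the proposition follows at once because $\chi_\mp = \chi' = 0$ near $\mathrm{pf}_\pm$.

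The substantive step is to verify the symbol memberships. For $b'$, this is immediate: on the support of $\chi'$ the blow-down $\beta_{\calctwo}$ is a diffeomorphism onto its image in ${}^{\calczero}\overline{T}^* \bbM$, and under this identification $\tilde{\mathcal V}_{\calctwo}$ restricts to (and is spanned by) elements of $\tilde{\mathcal V}_{\mathrm{b}}({}^{\calczero}\overline{T}^* \bbM)$ from \eqref{eq:tildeVb-nat}. The defining estimate \eqref{eq:symbol 2natres} for $b$ therefore translates directly into the characterization of $S_{\calczero}^{m,\mathsf{s}_0,\ell}$ via \eqref{eq:nat_symbol_intersection_delta}, noting that the $q_\pm$ orders disappear because $\rho_{\mathrm{pf}_\pm}$ is bounded below on $\operatorname{supp}\chi'$. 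For $b_\pm^{\sharp}$, I would apply the translation $T_\pm: (\tau_{\calcshort}, \xi_{\calcshort}) \mapsto (\tau_{\calcshort} \mp 1, \xi_{\calcshort})$, which is a diffeomorphism sending $U_\pm$ into a neighborhood of $\mathrm{pf} \subset {}^{\calc}\overline{T}^* \bbM$ (this is precisely the gluing identification used to build ${}^{\calctwo}\overline{T}^* \bbM$). Under $T_\pm$, the boundary hypersurfaces $\mathrm{df}, \mathrm{bf}, \natural\mathrm{f}, \mathrm{pf}_\pm$ pull back to $\mathrm{df}, \mathrm{bf}, \natural\mathrm{f}, \mathrm{pf}$, the spaces of vector fields $\tilde{\mathcal V}_{\calctwo}$ and $\tilde{\mathcal V}_{\calc}$ correspond, and the boundary-defining functions correspond up to smooth nonvanishing factors on the relevant supports. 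Defining $b_\pm$ by $b_\pm(z, \tau_{\calcshort}, \xi_{\calcshort}) = b_\pm^{\sharp}(z, \tau_{\calcshort} \pm 1, \xi_{\calcshort})$ therefore yields elements of $\rho_{\mathrm{bf}}^{-\mathsf{s}_\pm}\rho_{\natural\mathrm{f}}^{-\ell}\rho_{\mathrm{pf}}^{-q_\pm} L^\infty$, stable under $\tilde{\mathcal V}_{\calc}$; by the characterization \eqref{eq:def-calc-symbol} we conclude $b_\pm \in S_{\calc}^{-\infty,\mathsf{s}_\pm,\ell,q_\pm}$ (with $m = -\infty$ coming from the support of $\chi_\pm$ being disjoint from $\mathrm{df}$).

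The only place where one has to be slightly careful is matching the variable orders: this is precisely why $\mathsf{s}_\pm$ and $\mathsf{s}_0$ were set up as they were in \eqref{eq:spm} and \eqref{eq:s0}, and I would phrase the choice of neighborhoods $U_\pm$ above so that $\chi_\pm b$ is supported inside the region where $\mathsf{s}$ coincides with the appropriate translate of $\mathsf{s}_\pm$, and $\chi' b$ inside the region where $\mathsf{s} = \mathsf{s}_0$. I do not expect any real obstacle here beyond bookkeeping; the content of the proposition is essentially that the gluing definition of ${}^{\calctwo}\overline{T}^* \bbM$ is compatible with the symbol classes, which is a tautology once the partition of unity and translation have been set up.
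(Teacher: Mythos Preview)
Your proof is correct and takes essentially the same approach as the paper: the paper's proof is the single line ``Multiply $b$ by a partition of unity on ${}^{\calctwo}\overline{T}^* \bbM$ with appropriate support conditions,'' and your argument is a careful unpacking of exactly that, including the verification of the symbol memberships via the gluing identification and the matching of variable orders.
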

 
\begin{proof}
Multiply $b$ by a partition of unity on ${}^{\calctwo}\overline{T}^* \bbM$ with appropriate support conditions.
\end{proof}

Now we turn to consider the quantization of symbols in $S^{m,\mathsf{s},\ell;q_-,q_+}_{\calctwo}$.
For $b \in S^{m,\mathsf{s},\ell;q_-,q_+}_{\calctwo}$, decomposing it as in \Cref{prop:2_res_symbol_decomposition}, let
\begin{equation}  \label{eq:2_res_operator_definition}
\operatorname{Op}(b) = e^{it/h^2}B_+e^{-it/h^2} + e^{-it/h^2}B_-e^{it/h^2}+B',
\end{equation}
where $B_\pm = \operatorname{Op}(b_\pm)\in \Psi_{\calc}^{m,\mathsf{s}_\pm,\ell,q_\pm}$ and $B'=\operatorname{Op}(b')\in \Psi_{\calczero}^{m,\mathsf{s}_0,\ell}$ are defined using \cref{eq:quant}, \cref{eq:quant_calczero} respectively. As we always use the standard left quantization on $\bbR^D$ in all of our calculi, different choices of $(b_+,b_-,b')$ lead to the same $B$.

Then we define the class of $\calctwo$ pseudodifferential operators to be
\begin{equation}
\Psi^{m,\mathsf{s},\ell;q_-,q_+}_{\calctwo}
= \{  \operatorname{Op}(b): b \in S^{m,\mathsf{s},\ell;q_-,q_+}_{\calctwo}  \}.
\end{equation}
Evidently, $\Psi_{\calczero}^{m,\mathsf{s},\ell} \subseteq \Psi_{\calctwo}^{m,\mathsf{s},\ell;\ell,\ell}$, and if $A\in \Psi_{\calczero}^{m,\mathsf{s},\ell}$ has $\operatorname{WF}'_{\calczero}(A)$ disjoint from $\mathrm{pf}_\pm$, then 
\begin{equation}
	A \in \Psi^{m,\mathsf{s},\ell;-\infty,-\infty}_{\calctwo}.
\end{equation}

For $A \in \Psi^{m,\mathsf{s},\ell;q_-,q_+}_{\calctwo}$, we define the elliptic set, characteristic set and operator wavefront set analogously to the definitions for $\Psi_{\calc}$. These are open, closed and closed subsets,  respectively, of $\mathrm{df} \cup \natural\mathrm{f} \cup \mathrm{bf}$. 
For example, we define $\WF^{\prime}_{\calctwo}(A)$
similarly to the definition before \eqref{eq: calc residual symbol class},  as the subset of $\mathrm{df} \cup \natural\mathrm{f} \cup \mathrm{bf}$ determined by:
\begin{equation}
p\notin \WF^{\prime}_{\calctwo}(A) \iff \exists \chi \in C^\infty({}^{\calctwo}\overline{T}^* \bbM), \chi(p)=1, \chi a \in S_{\calctwo}^{-\infty,-\infty,-\infty;q_-,q_+}.
\end{equation}

Before stating the composition law of $\Psi_{\calctwo}$, we prove a few lemmas. The first one concerns the case when two operators has disjoint wavefront sets.
\begin{lemma} \label{lem:Planck-2res_composition_residual}
For each $j\in \{1,2\}$, let $B_j \in \Psi^{m_j,\mathsf{s}_j,\ell_j,q_j}_{\calc}$, $\alpha_j\in \bbR$, and $C_j$ denote the result of translating the image of 
\begin{equation}
	\operatorname{WF}^{\prime}_{\calc}(B_j)\cup \mathrm{pf} 
\end{equation}
under the blowdown map ${}^{\calc}\overline{T}^* \bbM\to {}^{\calczero}\overline{T}^* \bbM$ by $+\alpha_j$ units in the $\tau_\natural$-direction. Then, if $C_1\cap C_2=\varnothing$, 
\begin{equation}
	e^{-i \alpha_1 t/h^2} B_1 e^{i (\alpha_1-\alpha_2) t/h^2} B_2 e^{i\alpha_2 t/h^2}  \in \Psi_{\calczero}^{-\infty,-\infty,-\infty}.
\end{equation}
\end{lemma}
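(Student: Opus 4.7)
The first move is to factor the expression as a product of two conjugated operators. Splitting the middle exponential as $e^{i(\alpha_1-\alpha_2)t/h^2} = e^{i\alpha_1 t/h^2}\,e^{-i\alpha_2 t/h^2}$ gives
\begin{equation*}
	e^{-i\alpha_1 t/h^2} B_1 e^{i(\alpha_1-\alpha_2)t/h^2} B_2 e^{i\alpha_2 t/h^2} = \widetilde{B}_1 \widetilde{B}_2, \qquad \widetilde{B}_j := e^{-i\alpha_j t/h^2} B_j e^{i\alpha_j t/h^2},
\end{equation*}
so it suffices to prove $\widetilde{B}_1 \widetilde{B}_2 \in \Psi_{\calczero}^{-\infty,-\infty,-\infty}$.

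The second step is to identify the $\calczero$-essential support of each $\widetilde{B}_j$. Using Proposition \ref{prop:gluing_pseudos}, decompose $B_j = B_j^{\mathrm{par}} + B_j^{\calczero}$ with $B_j^{\mathrm{par}} \in \Psi_{\mathrm{par,I,res}}^{-\infty,\mathsf{s}_{j,\mathrm{par}},\ell_j,q_j}$ supported near $\mathrm{pf}$ and $B_j^{\calczero} \in \Psi_{\calczero}^{m_j,\mathsf{s}_{j,\calczero},\ell_j}$ supported away from $\mathrm{pf}$. For the second summand, Proposition \ref{prop:translation} applied with parameter $-\alpha_j$ shows directly that $\widetilde{B}_j^{\calczero} \in \Psi_{\calczero}^{m_j,\mathsf{s}_{j,\calczero},\ell_j}$ and that $\operatorname{WF}'_{\calczero}(\widetilde{B}_j^{\calczero})$ is the appropriate $\tau_{\natural}$-translate of the $\calczero$-blowdown of $\operatorname{WF}'_{\calc}(B_j)$. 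For the first summand, the same calculation at the level of the quantization integral \eqref{eq:quant} shows that $\widetilde{B}_j^{\mathrm{par}}$ is of the ``translated parabolic'' form in \eqref{eq:2_res_operator_definition}, with essential support concentrated at the corresponding translate of the image of $\mathrm{pf}$. Combining the two pieces, the essential support of $\widetilde{B}_j$ in ${}^{\calczero}\overline{T}^* \bbM$ is precisely $C_j$.

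Expanding the product into four terms, it suffices to show that each is residual in $\Psi_{\calczero}$. The $\widetilde{B}_1^{\calczero}\widetilde{B}_2^{\calczero}$ term is immediate: $\Psi_{\calczero}$ is an algebra and its Moyal expansion \eqref{eq:moyal_explicit_natural} is supported in $\operatorname{WF}'_{\calczero}(\widetilde{B}_1^{\calczero}) \cap \operatorname{WF}'_{\calczero}(\widetilde{B}_2^{\calczero}) = \varnothing$, so the composition is in $\Psi_{\calczero}^{-\infty,-\infty,-\infty}$. For the three remaining (mixed or purely-par) terms, one writes the composition as an iterated oscillatory integral in the natural frequency variables and integrates by parts against the phase, exactly as for the $A_1 B_2$ term in the proof of \Cref{prop:calc_composition}: the disjoint-support hypothesis $C_1 \cap C_2 = \varnothing$ guarantees that the two symbols never have overlapping $\calczero$-frequency support, so the phase is non-stationary on the joint support of the integrand, producing arbitrary decay factors $\langle(\tau_\natural,\xi_\natural)\rangle^{-N} h^N \langle(t,x)\rangle^{-N}$ and thus membership in $\Psi_{\calczero}^{-\infty,-\infty,-\infty}$.

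\textbf{Main obstacle.} The delicate part is the mixed and par--par terms: the $\mathrm{par}$-type factors are not elements of $\Psi_{\calczero}$ (they can grow like $h^{-q_j}$ near their translated parabolic face), so one cannot invoke the $\calczero$-composition law directly. The key point that resolves this is that $C_1 \cap C_2 = \varnothing$ forces the two translated parabolic faces of $\widetilde{B}_1$ and $\widetilde{B}_2$ to be separated in $\calczero$-phase space. Hence, on the joint support of the integrand in the iterated quantization integral, the offending $h$-growth of one factor occurs only where the other factor's symbol is rapidly decaying, and the non-stationary-phase bound above absorbs any polynomial loss in $h$. Bookkeeping the $\tau_\natural$-shifts induced by the three exponential factors through the integration-by-parts argument is where one must be most careful, but the argument is otherwise parallel to the treatment of $A_1 B_2$ in \Cref{prop:calc_composition}.
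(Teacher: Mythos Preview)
Your proof is correct and, at its core, follows the same idea as the paper's: once the expression is rewritten as $\widetilde{B}_1\widetilde{B}_2$, the disjointness $C_1\cap C_2=\varnothing$ forces every term of the Moyal-type expansion of the composed symbol to vanish, so the result is residual. The paper compresses this into a single sentence, simply citing the expansion \eqref{eq:expansion-calc-composition} and noting that the entire expansion vanishes; it does not spell out how to handle the fact that the conjugated operators $\widetilde{B}_j$ are not themselves in $\Psi_{\calc}$ (their parabolic faces have been translated by $\alpha_j$). Your decomposition $B_j=B_j^{\mathrm{par}}+B_j^{\calczero}$ via \Cref{prop:gluing_pseudos} and the case-by-case non-stationary-phase analysis makes this explicit, in effect re-running the relevant parts of the proof of \Cref{prop:calc_composition} in the translated setting. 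What your approach buys is transparent bookkeeping of the $h^{-q_j}$ growth near the shifted parabolic faces and a clear justification of why that growth is harmless under the disjoint-support hypothesis; what the paper's one-liner buys is brevity, relying on the reader to recognize that the Moyal formula and its remainder control are universal for left quantization and that disjoint essential supports in the $\calczero$-phase space force uniform residual behaviour.
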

Note that the composition $e^{-i \alpha_1 t/h^2} B_1 e^{i (\alpha_1-\alpha_2) t/h^2} B_2 e^{i\alpha_2 t/h^2}  $ is \emph{a priori} well-defined as a one-parameter family of sc-pseudodifferential operators.
\begin{proof}
Follows immediately from \eqref{eq:expansion-calc-composition} as the entire expansion vanishes now.
\end{proof}

We also need the following result. 

\begin{proposition}
	For $A\in \Psi_{\calczero}^{m,\mathsf{s},\ell}$ and $B\in \Psi_{\calctwo}^{m',\mathsf{s}',\ell';q_-,q_+}$, if $\operatorname{WF}'_{\calczero}(A)$ is disjoint from $\mathrm{pf}_-\cup \mathrm{pf}_+$, then 
	\begin{equation}
		AB,BA \in \Psi_{\calczero}^{m+m',\mathsf{s}'',\ell+\ell'}, 
	\end{equation}
	where $\mathsf{s}''$ is any variable order given by $\mathsf{s}+\mathsf{s}'$ outside of some sufficiently small neighborhood of $\mathrm{pf}_-\cup \mathrm{pf}_+$. Moreover, if $B=\operatorname{Op}(b)$, then  
	\begin{equation}
		\sigma_{\calczero}^{m+m',\mathsf{s}'',\ell+\ell'}(AB)=
		\sigma_{\calczero}^{m+m',\mathsf{s}'',\ell+\ell'}(BA) = b \sigma_{\calczero}^{m,\mathsf{s},\ell}(a), 
		\label{eq:misc_180}
	\end{equation}
	\begin{equation}
		\operatorname{WF}'_{\calczero}(AB),\operatorname{WF}'_{\calczero}(BA) \subseteq \operatorname{WF}'_{\calczero}(A)\cap \operatorname{WF}_{\calctwo}^{\prime}(B).
		\label{eq:misc_190}
	\end{equation}
	 \label{prop:Planck-2res_composition_residual}
\end{proposition}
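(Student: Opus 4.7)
The plan is to decompose $B$ via \Cref{prop:2_res_symbol_decomposition} and \eqref{eq:2_res_operator_definition} as $B = \tilde B_+ + \tilde B_- + B'$, where $\tilde B_\pm = e^{\pm it/h^2} B_\pm e^{\mp it/h^2}$ with $B_\pm \in \Psi_\calc^{m',\mathsf{s}'_\pm,\ell',q_\pm}$ microsupported near $\mathrm{pf}$, and $B' \in \Psi_\calczero^{m',\mathsf{s}'_0,\ell'}$ microsupported away from $\mathrm{pf}_\pm$. I then analyze the three terms in $AB = A\tilde B_+ + A\tilde B_- + AB'$ (and the symmetric pieces of $BA$) separately. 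The contribution from $B'$ is handled directly by the $\calczero$-composition law \Cref{prop:Planck_composition}: $AB',B'A \in \Psi_\calczero^{m+m',\mathsf{s}+\mathsf{s}'_0,\ell+\ell'}$, with the expected product principal symbol and wavefront containment. Since $\mathsf{s}+\mathsf{s}'_0$ agrees with $\mathsf{s}+\mathsf{s}'$ off a neighborhood of $\mathrm{pf}_\pm$, this is an admissible choice of $\mathsf{s}''$.

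The heart of the argument is showing $A\tilde B_\pm, \tilde B_\pm A \in \Psi_\calczero^{-\infty,-\infty,-\infty}$. For this I apply \Cref{lem:Planck-2res_composition_residual} after viewing $A$ as an element of the $\calc$-calculus: by \Cref{prop:gluing_pseudos}, $A \in \Psi_\calc^{m,\tilde{\mathsf{s}},\ell,0}$ for any smooth extension $\tilde{\mathsf{s}}$ of $\mathsf{s}$ to ${}^\calc\overline{T}^* \bbM$. With $(B_1,B_2) = (A,B_\pm)$ and $(\alpha_1,\alpha_2)$ chosen to realize the conjugation by $e^{\pm it/h^2}$, the disjointness hypothesis $C_1\cap C_2=\varnothing$ of the lemma translates into the statement that the translate of $\operatorname{WF}'_\calc(B_\pm)\cup\mathrm{pf}$ sitting above $\mathrm{pf}_\pm$ in ${}^\calczero\overline{T}^* \bbM$ must be disjoint from the image of $\operatorname{WF}'_\calc(A)\cup\mathrm{pf}$ there. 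The latter sits inside $\operatorname{WF}'_\calczero(A)\cup\{\tau_\natural=0=\xi_\natural\}$, and neither piece meets $\mathrm{pf}_\pm$: the first by the standing hypothesis on $A$, the second trivially. Hence the lemma applies and yields residuality.

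Assembling, $AB - AB' \in \Psi_\calczero^{-\infty,-\infty,-\infty}$, whence $AB \in \Psi_\calczero^{m+m',\mathsf{s}'',\ell+\ell'}$ with $\sigma_\calczero(AB) = \sigma_\calczero(AB') = \sigma_\calczero(A)\,b'$. To obtain \eqref{eq:misc_180} with $b$ in place of $b'$, observe that $b - b'$ is supported near $\mathrm{pf}_\pm$, whereas $\sigma_\calczero(A)$ is residual there by the wavefront hypothesis, so $\sigma_\calczero(A)(b-b')$ contributes only a residual term. The wavefront containment \eqref{eq:misc_190} is then inherited from the corresponding containment for $AB'$, together with the residuality of the shifted pieces. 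The argument for $BA$ is entirely symmetric. The main subtlety throughout, and the one deserving care, is the bookkeeping of variable orders at $\mathrm{bf}$, which is accommodated by the latitude granted by the statement in choosing $\mathsf{s}''$.
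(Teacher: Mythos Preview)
Your proof is correct and follows essentially the same approach as the paper: decompose $B$ into the shifted $\Psi_{\calc}$-pieces $\tilde B_\pm$ and the $\Psi_{\calczero}$-piece $B'$, handle $AB'$ by the $\calczero$-composition law, and kill the $A\tilde B_\pm$ terms via \Cref{lem:Planck-2res_composition_residual} after shrinking the microsupport of $B_\pm$ close enough to $\mathrm{pf}$ that the disjointness hypothesis $C_1\cap C_2=\varnothing$ is met. Your explicit discussion of why $b$ and $b'$ give the same principal symbol, and your handling of the variable order $\mathsf{s}''$, are more detailed than the paper's; the one step you add that the paper leaves implicit is viewing $A$ as an element of $\Psi_{\calc}$, and here your citation of \Cref{prop:gluing_pseudos} is slightly imprecise (that proposition characterizes $\Psi_{\calc}$ rather than asserting $\Psi_{\calczero}\subset\Psi_{\calc}$), but the paper makes the same tacit identification and the underlying nonstationary-phase argument goes through regardless.
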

\begin{proof}
	We discuss $AB$, the alternate order $BA$ being similar.
	Write 
	\begin{equation}
			B = e^{it/h^2}B_+e^{-it/h^2} + e^{-it/h^2}B_-e^{it/h^2}+B',
	\end{equation}
	where $B_\pm \in \Psi_{\calc}$ is micro-supported near $\mathrm{pf}_\pm$ and $B'\in \Psi_{\calczero}$ is micro-supported away from $\mathrm{pf}_\pm$. We can choose $B_\pm$ micro-supported arbitrarily close to $\mathrm{pf}_\pm$, and in addition we may assume without loss of generality that 
	\begin{equation}
		\operatorname{WF}'_{\calczero}(B')\subseteq  \operatorname{WF}_{\calctwo}^{\prime }(B).
		\label{eq:misc_192}
	\end{equation}
	Then,
	\begin{equation}
		AB = Ae^{it/h^2}B_+e^{-it/h^2} + Ae^{-it/h^2}B_-e^{it/h^2}+AB'.
	\end{equation}
	We already know $AB'\in  \Psi_{\calczero}^{m+m',\mathsf{s}'',\ell+\ell'}$. As long as we choose $B_\pm$ to be microsupported sufficiently close to $\mathrm{pf}$ (which we can do), then \Cref{lem:Planck-2res_composition_residual} tells us that 
	\begin{equation}
		Ae^{it/h^2}B_+e^{-it/h^2} + Ae^{-it/h^2}B_-e^{it/h^2} \in \Psi_{\calczero}^{-\infty,-\infty,-\infty}.
	\end{equation}
	So, $AB \in  \Psi_{\calczero}^{m+m',\mathsf{s}'',\ell+\ell'}$, and its principal symbol is the same as that of $AB'$, which is $b \sigma_{\calczero}^{m,\mathsf{s},\ell}(a)$, so \cref{eq:misc_180} holds.
	
	 Similarly, the microsupport of $AB$ is the same as that of $AB'$, and, using the multiplicativity of essential support in the $\calczero$-calculus, 
	 \begin{equation}
	 	\operatorname{WF}'_{\calczero}(AB') \subseteq \operatorname{WF}'_{\calczero}(A) \operatorname{WF}'_{\calczero}(B') \subseteq \operatorname{WF}'_{\calczero}(A)\cap \operatorname{WF}_{\calctwo}^{\prime}(B),
	 \end{equation}
	 using \cref{eq:misc_192}.
\end{proof}

We are now in a position to show that $\Psi_{\calctwo}$ is a multi-graded algebra. 
\begin{proposition} \label{prop:2res composition}
	For $A \in \Psi^{m_1,\mathsf{s}_1,\ell_1;q_{1,-},q_{1,+}}_{\calctwo}, B \in \Psi^{m_2,\mathsf{s}_2,\ell_2;q_{2,-},q_{2,+}}_{\calctwo}$, we have
	\begin{equation}
		C = A \circ B \in \Psi^{m_1+m_2,\mathsf{s}_1+\mathsf{s}_2,\ell_1+\ell_2;q_{1,-}+q_{2,-},q_{1,+}+q_{2,+}}_{\calctwo}.
		\label{eq:misc_196}
	\end{equation}
	\begin{equation}
		\operatorname{WF}_{\calctwo}^{\prime}(C)\subseteq \operatorname{WF}_{\calctwo}^{\prime }(A)\cap  \operatorname{WF}_{\calctwo}^{\prime }(B).
		\label{eq:misc_197}
	\end{equation}
\end{proposition}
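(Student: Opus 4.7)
My plan is to reduce the $\calctwo$-composition to the already-established composition laws in $\Psi_{\calc}$ (\Cref{prop:calc_composition}), in $\Psi_{\calczero}$ (\Cref{prop:Planck_composition}), and the two cross-calculus results \Cref{lem:Planck-2res_composition_residual} and \Cref{prop:Planck-2res_composition_residual}. First, I would apply \Cref{prop:2_res_symbol_decomposition} together with \cref{eq:2_res_operator_definition} to write
\begin{equation*}
	A = e^{it/h^2}A_+ e^{-it/h^2} + e^{-it/h^2}A_- e^{it/h^2} + A',
\end{equation*}
and analogously for $B$, with $A_\pm\in \Psi_{\calc}^{m_1,\mathsf{s}_{1,\pm},\ell_1,q_{1,\pm}}$ microsupported arbitrarily close to $\mathrm{pf}$, with $A'\in \Psi_{\calczero}^{m_1,\mathsf{s}_{1,0},\ell_1}$ microsupported away from $\mathrm{pf}_+\cup \mathrm{pf}_-$, and with $\mathsf{s}_{1,\pm}, \mathsf{s}_{1,0}$ derived from $\mathsf{s}_1$ via \cref{eq:spm}--\cref{eq:s0}; analogous data $B_\pm, B', \mathsf{s}_{2,\pm}, \mathsf{s}_{2,0}$ are extracted from $B$.

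Expanding $AB$ yields nine terms, which I would sort into three classes. The two diagonal terms simplify to $e^{\pm it/h^2}(A_\pm B_\pm)e^{\mp it/h^2}$, and \Cref{prop:calc_composition} places $A_\pm B_\pm$ in $\Psi_{\calc}^{m_1+m_2,\mathsf{s}_{1,\pm}+\mathsf{s}_{2,\pm},\ell_1+\ell_2,q_{1,\pm}+q_{2,\pm}}$, which (by linearity of $\mathsf{s}\mapsto \mathsf{s}_\pm$) constitutes the $\mathrm{pf}_\pm$ ingredients of the target $\calctwo$-decomposition of $C$. The two off-diagonal terms, of the form $e^{\pm it/h^2}A_\pm e^{\mp 2it/h^2}B_\mp e^{\pm it/h^2}$, would be dispatched by \Cref{lem:Planck-2res_composition_residual} with $\alpha_1 = \mp 1, \alpha_2 = \pm 1$: because the microsupports of $A_\pm, B_\mp$ may be arranged to be arbitrarily close to $\mathrm{pf}$, the translated sets $C_1, C_2$ of that lemma sit near $\tau_\natural = \mp 1, \pm 1$ respectively and are therefore disjoint, placing each off-diagonal term in $\Psi_{\calczero}^{-\infty,-\infty,-\infty}\subset \Psi_{\calctwo}^{-\infty,-\infty,-\infty;-\infty,-\infty}$. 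The remaining five terms each involve $A'$ or $B'$: $A'B'$ is handled by \Cref{prop:Planck_composition}, while each of the four mixed terms $A'(e^{\pm it/h^2}B_\pm e^{\mp it/h^2})$ and $(e^{\pm it/h^2}A_\pm e^{\mp it/h^2})B'$ is handled by \Cref{prop:Planck-2res_composition_residual}; all five collectively land in $\Psi_{\calczero}^{m_1+m_2,\cdot,\ell_1+\ell_2}$ with microsupport disjoint from $\mathrm{pf}_+\cup \mathrm{pf}_-$, which is precisely the $B'$-type ingredient of a $\calctwo$-decomposition of $C$. Assembling all three classes gives \cref{eq:misc_196}.

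The wavefront inclusion \cref{eq:misc_197} would follow by making this bookkeeping microlocal: for any $p$ outside $\operatorname{WF}'_{\calctwo}(A)\cap \operatorname{WF}'_{\calctwo}(B)$, I would choose the symbol-level decompositions so that, near $p$, the relevant constituent of $A$ or $B$ is residual in its sub-calculus, after which the multiplicativity of essential support in $\Psi_{\calc}$ (cf.\ \cref{eq:WFproducts}) and in $\Psi_{\calczero}$ forces every one of the nine terms to be residual near $p$. The main obstacle I anticipate is mostly clerical: one must verify that the variable orders $\mathsf{s}_{j,\pm}$ and $\mathsf{s}_{j,0}$ harvested from the sub-calculus compositions genuinely reconstitute the $\calctwo$-order $\mathsf{s}_1+\mathsf{s}_2$. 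This, however, is built into \Cref{prop:2_res_symbol_decomposition}: by construction $\mathsf{s}_{j,\pm}$ coincides with the translate of $\mathsf{s}_j$ on the microsupport of $A_\pm, B_\pm$, while $\mathsf{s}_{j,0}$ coincides with $\mathsf{s}_j$ on the microsupport of $A', B'$.
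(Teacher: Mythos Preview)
Your proposal is correct and follows essentially the same route as the paper: decompose $A$ and $B$ into their $\mathrm{pf}_\pm$ and $\natural$-pieces, handle the diagonal terms via the $\calc$-composition law, the off-diagonal $\pm\mp$ terms via \Cref{lem:Planck-2res_composition_residual}, and the terms involving $A'$ or $B'$ via \Cref{prop:Planck-2res_composition_residual}. The paper's write-up is slightly terser (it immediately reduces to the case $A'=B'=0$ rather than enumerating the five mixed terms), but the content is identical; your remark on reconstituting the variable order $\mathsf{s}_1+\mathsf{s}_2$ from the sub-calculus orders is a point the paper leaves implicit.
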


\begin{proof}
	By definition we can write
	\begin{align}
		A &= e^{it/h^2}A_+e^{-it/h^2} + e^{-it/h^2}A_-e^{it/h^2}+A', \\
		B &= e^{it/h^2}B_+e^{-it/h^2} + e^{-it/h^2}B_-e^{it/h^2}+B',
	\end{align}
	where $A_\pm, B_\pm \in \Psi_{\calc}$ are the quantizations of symbols supported near $\mathrm{pf}_\pm$ and $A',B'\in \Psi_{\calczero}$ are the quantizations of symbols supported away from $\mathrm{pf}_\pm$. 
	We can choose $A_\pm,B_\pm$ to be microsupported within frequency balls in $\bbR^{1,d}_{\tau_\natural,\xi_\natural}$ of radius $<1$ around the origin.

	We can apply
	\Cref{prop:Planck-2res_composition_residual} to handle the contributions from $A',B'$, so it suffices to consider the case $A',B'=0$. 
	Then we have
	\begin{align}
		\begin{split} 
		AB = &e^{it/h^2}A_+B_+e^{-it/h^2} + e^{-it/h^2}A_-B_-e^{it/h^2} 
		+ e^{it/h^2}A_+(e^{-2it/h^2}B_-e^{2it/h^2})e^{-it/h^2} 
		\\& + e^{-it/h^2}A_-(e^{2it/h^2}B_+e^{-2it/h^2})e^{it/h^2}. \label{eq:misc_stm}
		\end{split} 
	\end{align}
	Also, we have 
	\begin{equation}
		\operatorname{WF}_{\calctwo}^{\prime }(A) = \operatorname{WF}_{\calctwo}^{\prime }(e^{it/h^2}A_+e^{-it/h^2} )\sqcup \operatorname{WF}_{\calctwo}^{\prime }(e^{-it/h^2}A_-e^{it/h^2} ),
	\end{equation}
	and similarly for $B$.

	Using the composition law and wavefront set bounds in \Cref{prop:composition} we know that 
	\begin{equation} 
		A_\pm B_\pm \in \Psi^{m_1+m_2,\mathsf{s}_1+\mathsf{s}_2,\ell_1+\ell_2,q_{1,\pm}+q_{2,\pm}}_{\calc}.
	\end{equation}
	On the other hand, \Cref{lem:Planck-2res_composition_residual} tells us that 
	\begin{equation}
		e^{it/h^2}A_+(e^{-2it/h^2}B_-e^{2it/h^2})e^{-it/h^2} ,  e^{-it/h^2}A_-(e^{2it/h^2}B_+e^{-2it/h^2})e^{it/h^2}\in \Psi_{\calczero}^{-\infty,-\infty,-\infty}.
	\end{equation}
	We therefore get \cref{eq:misc_196}. The statement \cref{eq:misc_197} also follows.
\end{proof}

We define the $\calctwo$-Sobolev norm $\lVert \cdot \rVert_{H_{\calctwo}^{m,\mathsf{s},\ell;q_-,q_+}}$,  for $h$ sufficiently small, by choosing a decomposition $\Id = Q_+ + Q_-$, $Q_\pm \in \Psi_{\calczero}^{0,0,0}$, such that 
\begin{equation}\label{eq:Qpm-properties}
\begin{aligned}
\operatorname{WF}'_{\calczero}(\Id - Q_+) \cap \{ \taun = +1, \xin = 0, h = 0 \} &= \varnothing, \\
\operatorname{WF}'_{\calczero}(\Id - Q_-) \cap \{ \taun = -1, \xin = 0, h = 0 \} &= \varnothing.
\end{aligned}\end{equation}
We also suppose that, with $T_{\pm 1}$ denoting the map $(h, z, \taun, \xin) \mapsto (h, z, \taun \pm 1, \xin)$, we have 
\begin{equation}
\mathsf{s}_\pm = \mathsf{s} \circ T_{\pm 1} \text{ on } \operatorname{WF}'_{\calczero}(Q_\pm). 
\label{eq:spm orders}\end{equation}
We then define 
\begin{equation} \label{eq:definition_2res norm}
\lVert u \rVert_{H_{\calctwo}^{m,\mathsf{s},\ell;q_-,q_+}}
 =   \lVert e^{-it/h^2}Q_+u\rVert_{H_{\calc}^{m,\mathsf{s}_+,\ell,q_+}} +  \lVert e^{it/h^2} Q_-u\rVert_{H_{\calc}^{m,\mathsf{s}_-,\ell,q_-}}  .\end{equation}
 Notice that the modulation $e^{-it/h^2}$ in the first term has the effect of $T_{-1}$ on the phase space, which moves $\mathrm{pf}_+$ to $\mathrm{pf}$, the parabolic face in the $\calc$-phase space. Similarly, the modulation $e^{it/h^2}$ in the second term has the effect of moving $\mathrm{pf}_-$ to $\mathrm{pf}$. 
%
It is not hard to check that different choices of $Q_\pm$ lead to equivalent norms. In Section~\ref{sec:estimates} we will choose $Q_\pm$ with an additional property with respect to the characteristic variety of our operator $P$; see \eqref{eq:misc_320}.

\section{The operator}
\label{sec:setup}

\subsection{Geometric assumptions}
\label{subsec:geometric_assumptions}

For each $c>0$, let $\eta=\eta(c) = -c^2 \dd t^2 + \dd x^2$ denote the exact Minkowski metric on $\bbR^{1,d}$ with speed of light $c$.  
We consider geometric perturbations with coefficients that are classical symbols. 
Concretely, the class of $m$-th order classical symbols is defined by 
\begin{equation}
S^{m}_{\cl}(\bbR^{1,d};\bbR)=(1+t^2+r^2)^{m/2} C^\infty(\overline{\bbR^{1,d}};\bbR).
\end{equation}

The metric $g=\{g(c)\}_{c>1}$ that we will consider is a family of Lorentzian metrics on $\bbR^{1,d}$ of the form 
\begin{equation} 
	g(c) = \eta(c) +  \alpha \dd t^2 + \sum_{j=1}^d \frac{w_j}{c}\mathrm{d} t\, \mathrm{d} x_j +\frac{1}{c^2} \sum_{j,k=1}^d h_{j,k} \dd x_j \dd x_k
	\label{eq:metric_form}
\end{equation}
for 
\begin{align} 
	\begin{split}
		h_{j,k}=h_{k,j},\alpha,w_j &\in C^\infty([0,1)_{1/c};  S^{-1}_{\cl}(\bbR^{1,d};\bbR)) \\ 
		&= C^\infty([0,1)_{1/c}; (1+t^2+r^2)^{-1/2} C^\infty(\overline{\bbR^{1,d}};\bbR)) ,
	\end{split}
\end{align} 
where $j,k\in \{1,\dots,d\}$. 
So, $(1-\alpha/c^2)^{1/2}$ is the \emph{lapse function} of the time-function $t$, and $w_j/c^2$ is the spacetime \emph{wind}, to use the standard descriptive terminology.

We assume that the metric $g$ is non-trapping for each $c>0$. For applications to the Cauchy problem, we will also assume that $g$ is globally hyperbolic with the Minkowski coordinate $t$ a time-function for $g$ and $\{t=0\}$ a Cauchy surface for the spacetime. Consequently, the Cauchy problem with initial data on the spacelike hypersurface $\{t=0\}$ is globally well-posed. Under the stated assumptions on $g$, these conditions are automatically satisfied as long as $c$ is sufficiently large. Since we are concerned with the $c\to\infty$ limit, the assumptions in this paragraph do not limit applicability. 

Now consider the d'Alembertian $\square_{g}=-\mathrm{d}^* \mathrm{d}$, defined using the sign convention
\begin{equation}
	\square_g = \frac{1}{\sqrt{|g|}} \sum_{i,j=0}^d \frac{\partial}{\partial z^i} \Big( \sqrt{|g|} g^{ij} \frac{\partial}{\partial z^j} \Big) =  \sum_{i,j=0}^d \Big( g^{ij} \frac{\partial^2}{\partial z^i \partial z^j} + \frac{\partial g^{ij}}{\partial z^i} \frac{\partial}{\partial z^j} + \frac{g^{ij}}{2|g|} \frac{\partial |g|}{\partial z^i} \frac{\partial}{\partial z^j} \Big),
	\label{eq:4hk6nv}
\end{equation}
where $z^0 = t$ and $\smash{z^j=x_j}$, and where $g^{ij}$ are the entries of the inverse of the matrix $\smash{\{g_{ij}\}_{i,j=0}^d}$. 
We think of $\square_g$ as a one-parameter family of differential operators on $\bbR^{1,d}$, parametrized by $c$ (suppressing this $c$-dependence in the notation). 

It must be remembered that our Minkowski d'Alembertian $\square$ is also a one-parameter family of operators, depending on $c$. 

\begin{proposition}
	Letting $\square = -c^{-2}\partial_t^2  + \triangle = -c^{-2} \partial_t^2 +\sum_{j=0}^d \partial_{x_j}^2$ denote the exact Minkowski d'Alembertian, 
	\begin{multline}
		\square_g - \square \in  \operatorname{span}_{C^\infty([0,1)_{1/c} ; S_{\cl}^{-1}(\bbR^{1,d};\bbR)) } \Big\{ \frac{1}{c^4} \frac{\partial^2}{\partial t^2}, \frac{1}{c^3} \frac{\partial^2}{\partial t \partial x_j}, \frac{1}{c^2} \frac{\partial^2}{\partial x_j x_k} \Big\} \\ 
		+ \operatorname{span}_{C^\infty([0,1)_{1/c} ; S_{\cl}^{-2}(\bbR^{1,d};\bbR)) } \Big\{ \frac{1}{c^3}\frac{\partial}{\partial t},\frac{1}{c^2} \frac{\partial}{\partial x_j} \Big\},
		\label{eq:48k44ds}
	\end{multline}
	where $j,k$ vary over $1,\dots,d$. 
	\label{prop:dAlembert_dif_form}
\end{proposition}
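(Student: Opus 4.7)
The plan is to expand both sides of $\square_g - \square$ starting from the coordinate formula \cref{eq:4hk6nv}. Since $\eta$ has constant coefficients, the lower-order terms in that formula vanish when $g$ is replaced by $\eta$, and we get the identity
\begin{equation*}
\square_g - \square = (g^{ij} - \eta^{ij})\partial_i\partial_j + (\partial_i g^{ij})\partial_j + \frac{g^{ij}}{2|g|}(\partial_i |g|)\partial_j.
\end{equation*}
The whole statement then reduces to two claims: (i) that the perturbation $\tilde g^{ij} := g^{ij} - \eta^{ij}$ has the index-dependent structure
\begin{equation*}
\tilde g^{tt} \in C^\infty([0,1)_{1/c}; c^{-4}S^{-1}_{\cl}),\quad \tilde g^{tj} \in C^\infty([0,1)_{1/c}; c^{-3}S^{-1}_{\cl}),\quad \tilde g^{jk} \in C^\infty([0,1)_{1/c}; c^{-2}S^{-1}_{\cl}),
\end{equation*}
and (ii) that each partial derivative of a classical symbol of order $m$ lies in $S^{m-1}_{\cl}$, so the resulting first-order terms land in the $S^{-2}_{\cl}$ classes advertised in \cref{eq:48k44ds}.

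For (i), I would invoke the Neumann series
\begin{equation*}
g^{-1} = \eta^{-1}\sum_{k \geq 0} (-\varepsilon\,\eta^{-1})^k, \qquad \varepsilon := g - \eta,
\end{equation*}
which converges for large $c$ because every entry of $\varepsilon \eta^{-1}$ is $O(1/c^2)$: indeed $\varepsilon_{tt}/\eta^{tt} \sim \alpha/c^2$, $\varepsilon_{tj}\eta^{jj} \sim w_j/c$ and $\eta^{tt}\varepsilon_{tj} \sim w_j/c^3$, and $\varepsilon_{jk}\eta^{kk} \sim h_{jk}/c^2$. Computing the $k=1$ term gives the leading contributions
\begin{equation*}
\tilde g^{tt} = -(\eta^{tt})^2\alpha + O(c^{-6}) = -\alpha/c^4 + \cdots,\qquad \tilde g^{tj} = w_j/(2c^3) + \cdots,\qquad \tilde g^{jk} = -h_{jk}/c^2 + \cdots,
\end{equation*}
and all higher-order terms $k \geq 2$ carry additional factors of $c^{-2}$ relative to the leading term with the same index configuration, so each $\tilde g^{\bullet\bullet}$ stays in the advertised class. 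Smoothness in $1/c$ and the classical-symbol character in $(t,x)$ are preserved throughout because matrix inversion via the Neumann series is just composition of smooth bilinear operations on the classes $C^\infty([0,1)_{1/c}; S^{-1}_{\cl})$.

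Claim (ii) is the usual computation that $\partial_t, \partial_{x_j} : S^m_{\cl} \to S^{m-1}_{\cl}$, applied to each $\tilde g^{ij}$ in turn. Combined with (i) this shows $\partial_i g^{ij}$ has the right structure: for the $\partial_t$ coefficient, terms like $\partial_t g^{tt}$ land in $c^{-4}S^{-2}_{\cl} \subset C^\infty([0,1)_{1/c}; S^{-2}_{\cl})\cdot c^{-3}$ and $\partial_{x_j} g^{jt}$ lands directly in $c^{-3}S^{-2}_{\cl}$, and similarly for the $\partial_{x_j}$ coefficients. The remaining term $g^{ij}(\partial_i \log|g|)\partial_j /2$ is handled by writing $|g| = c^2 \det(-g/c^2)$, where the matrix in parentheses is a classical symbol perturbation of $\mathrm{diag}(1, -I)$, so $\log|g|$ is a constant plus a classical symbol of order $-1$ (using that $\log(1+x)$ is analytic near $0$), whence $\partial_i \log|g| \in S^{-2}_{\cl}$; multiplying by $g^{ij}$ and comparing powers of $c$ shows this contribution lies in the same span as the previous first-order terms.

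The main obstacle is none beyond careful bookkeeping: one must simultaneously track the distinct scalings $\eta_{tt} = -c^2$ versus $\eta_{jk} = \delta_{jk}$ as they propagate through the Neumann series, the $c$-powers hidden inside $\varepsilon$, and the one-step decay gain from each spacetime derivative. Once these are organized by matching the index type of each $\tilde g^{ij}$ against the corresponding term in \cref{eq:48k44ds}, the containment is immediate.
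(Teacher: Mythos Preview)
Your approach is essentially the same as the paper's: expand $\square_g - \square$ via the coordinate formula \cref{eq:4hk6nv}, compute $g^{-1} - \eta^{-1}$ by a Neumann-type series, and use that spacetime derivatives lower the classical-symbol order by one. The one substantive difference is in how the series is organized. You use $g^{-1} = \eta^{-1}\sum_{k\geq 0}(-\varepsilon\eta^{-1})^k$; the paper instead writes $g^{-1} = \eta^{-1/2}(\mathrm{id}+M)^{-1}\eta^{-1/2}$ with $M = \eta^{-1/2}(g-\eta)\eta^{-1/2}$.

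This matters because, as your own list of entries shows, $\varepsilon\eta^{-1}$ is \emph{not} uniformly $O(c^{-2})$: its $(t,j)$ entry $\varepsilon_{tj}\eta^{jj}\sim w_j/c$ is only $O(c^{-1})$. So your sentence ``every entry of $\varepsilon\eta^{-1}$ is $O(1/c^2)$'' is false as written, and your assertion that each higher Neumann term picks up an extra $c^{-2}$ then needs justification. It is in fact true, but only because the $O(c^{-1})$ entries (at $(t,j)$) and $O(c^{-3})$ entries (at $(j,t)$) always pair up in products of $\varepsilon\eta^{-1}$ to give $O(c^{-4})$; you have to check this index configuration by index configuration. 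The paper's symmetric conjugation makes $M$ genuinely uniformly $c^{-2}$-small in every entry, after which the block structure of $\eta^{-1/2}\big((\mathrm{id}+M)^{-1}-\mathrm{id}\big)\eta^{-1/2}$ falls out in one line. Your route works once you supply that pairing argument; the paper's $\eta^{-1/2}$ trick does the bookkeeping for free.
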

\begin{proof}
	Let us work out the form of $|g|= |\!\operatorname{det}(g)|$ and the $(1+d)\times (1+d)$ matrix $g^{-1}$. 
	Let $\sqrt{\eta}$ be the diagonal matrix with entries $ci,1,\dots,1$. Then, $\det(g) = \det(\sqrt{\eta}) \det (\mathrm{id}+M) \det(\sqrt{\eta})= -c^2 \det(\mathrm{id} + M)$ for 
	\begin{equation} 
		M  = \eta^{-1/2} (g - \eta) \eta^{-1/2} = \frac{1}{c^2} 
		\begin{pmatrix}
			- \alpha & -i \bfw \\
			-i \bfw^\intercal  & h
		\end{pmatrix} \in c^{-2} C^\infty([0,1)_{1/c};S_{\cl}^{-1} (\bbR^{1,d}; \bbC^{(1+d)\times (1+d)}) ), 
	\end{equation} 
	where $\bfw = (w_1,\dots,w_j)$ and $h=\{h_{j,k}\}_{j,k=1}^d$. 
	It follows that 
	\begin{equation} 
		\det(\mathrm{id}+M) -1 \in c^{-2} C^\infty([0,1)_{1/c};S_{\cl}^{-1}(\bbR^{1,d})),
	\end{equation} 
	and also that $(\mathrm{id}+M)^{-1} -\mathrm{id} \in  c^{-2} C^\infty([0,1)_{1/c};S_{\cl}^{-1} (\bbR^{1,d}; \bbC^{(1+d)\times (1+d)}))$. Consequently, $g^{-1}$ has the form 
	\begin{multline}
		g^{-1} -\eta^{-1} = \eta^{-1/2} ((\mathrm{id}+M)^{-1} - \operatorname{id}) \eta^{-1/2} \\ \in 
		\begin{pmatrix}
			c^{-4} C^\infty([0,1)_{1/c};S_{\cl}^{-1}(\bbR^{1,d};\bbR) ) & c^{-3} C^\infty([0,1)_{1/c};S_{\cl}^{-1} (\bbR^{1,d} ; \bbR^d )) \\ 
			c^{-3} C^\infty([0,1)_{1/c};S_{\cl}^{-1} (\bbR^{1,d};\bbR^d))& c^{-2} C^\infty([0,1)_{1/c};S_{\cl}^{-1} (\bbR^{1,d}; \bbR^{d\times d}))
		\end{pmatrix} .
		\label{eq:ginv_form}
	\end{multline}
	So, the various terms in \cref{eq:4hk6nv} are given by
	\begin{align}
		\begin{split} 
			\sum_{i,j=0}^d  g^{ij} \frac{\partial^2}{\partial z^i \partial z^j} - \square &\in \operatorname{span}_{C^\infty([0,1)_{1/c} ; S_{\cl}^{-1}(\bbR^{1,d})) } \Big\{ \frac{1}{c^4} \frac{\partial^2}{\partial t^2}, \frac{1}{c^3} \frac{\partial^2}{\partial t \partial x_j}, \frac{1}{c^2} \frac{\partial^2}{\partial x_j \partial x_k} \Big\}, \\ 
			\frac{\partial g^{ij}}{\partial z^i} \frac{\partial}{\partial z^j} ,\frac{g^{ij}}{2|g|} \frac{\partial |g|}{\partial z^i} \frac{\partial}{\partial z^j} &\in  \operatorname{span}_{C^\infty([0,1)_{1/c} ; S_{\cl}^{-2}(\bbR^{1,d})) } \Big\{ \frac{1}{c^3}\frac{\partial}{\partial t}, \frac{1}{c^2} \frac{\partial}{\partial x^j} \Big\}.
		\end{split}
	\end{align}
\end{proof}

\begin{corollary}  For $g$ as in \eqref{eq:metric_form},
	$\square_g \in \operatorname{Diff}_{\calczero}^{2,0,2}$. 
\end{corollary}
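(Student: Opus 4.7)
The plan is to observe that this is essentially a bookkeeping corollary of \Cref{prop:dAlembert_dif_form} combined with the membership of the basic derivatives $\partial_t, \partial_{x_j}$ in the $\calczero$-calculus. First I would establish (or rather, simply recall from the analogue of \Cref{lem:derivatives} transposed to $\Psi_{\calczero}$) that
\begin{equation}
\partial_t = h^{-2} \partial_{t_{\natural}} \in \operatorname{Diff}_{\calczero}^{1,0,2}, \qquad \partial_{x_j} = h^{-1} \partial_{x_{\natural,j}} \in \operatorname{Diff}_{\calczero}^{1,0,1},
\end{equation}
since $\partial_{t_\natural}$ and $\partial_{x_{\natural,j}}$ are $\calczero$-vector fields of differential order $1$ and $\natural\mathrm{f}$-order $0$. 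From this it is immediate that
\begin{equation}
c^{-2} \partial_t^2 \in \operatorname{Diff}_{\calczero}^{2,0,2}, \qquad \triangle = \textstyle \sum_j \partial_{x_j}^2 \in \operatorname{Diff}_{\calczero}^{2,0,2},
\end{equation}
so the flat Minkowski d'Alembertian $\square$ already lies in $\operatorname{Diff}_{\calczero}^{2,0,2}$.

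Second, I would check that multiplication by a function $f \in C^\infty([0,1)_{1/c}; S_{\cl}^{s}(\bbR^{1,d};\bbR))$ is an operator in $\Psi_{\calczero}^{0,s,0}$. This is essentially tautological from the definition \cref{eq:nat_symbol} of the $\calczero$-symbol class: such an $f$ is independent of $\zeta_{\natural}$, conormal in $h$ (in fact smooth up to $h=0$), and symbolic of order $s$ in spacetime. Hence multiplication by an element of $C^\infty([0,1)_{1/c}; S_{\cl}^{-k}(\bbR^{1,d};\bbR))$ lands in $\Psi_{\calczero}^{0,-k,0}$ for $k=1,2$.

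Combining these two facts with the composition property of \Cref{prop:Planck_composition}, each of the building-block terms in the decomposition \cref{eq:48k44ds} from \Cref{prop:dAlembert_dif_form} falls into $\operatorname{Diff}_{\calczero}^{m,\mathsf{s},\ell}$ with $(m,\mathsf{s},\ell) \leq (2,0,2)$ componentwise. For example:
\begin{equation}
c^{-4} \partial_t^2 \cdot S_{\cl}^{-1} \subset h^4 \cdot \operatorname{Diff}_{\calczero}^{2,-1,4} = \operatorname{Diff}_{\calczero}^{2,-1,0},
\end{equation}
\begin{equation}
c^{-3} \partial_t \partial_{x_j} \cdot S_{\cl}^{-1} \subset h^3 \cdot \operatorname{Diff}_{\calczero}^{2,-1,3} = \operatorname{Diff}_{\calczero}^{2,-1,0},
\end{equation}
\begin{equation}
c^{-2} \partial_{x_j} \partial_{x_k} \cdot S_{\cl}^{-1} \subset \operatorname{Diff}_{\calczero}^{2,-1,0},
\end{equation}
and the first-order remainder terms $c^{-3}\partial_t \cdot S_{\cl}^{-2}$ and $c^{-2} \partial_{x_j} \cdot S_{\cl}^{-2}$ land in $\operatorname{Diff}_{\calczero}^{1,-2,-1}$. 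Using the evident inclusions $\operatorname{Diff}_{\calczero}^{m,\mathsf{s},\ell} \subseteq \operatorname{Diff}_{\calczero}^{m',\mathsf{s}',\ell'}$ whenever $m\leq m'$, $\mathsf{s}\leq \mathsf{s}'$, $\ell\leq \ell'$, every such term is absorbed into $\operatorname{Diff}_{\calczero}^{2,0,2}$. Summing and adding $\square$ gives $\square_g \in \operatorname{Diff}_{\calczero}^{2,0,2}$.

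There is no real obstacle here; the only mild point of care is keeping track of the unequal weights of $\partial_t$ and $\partial_{x_j}$ at $\natural\mathrm{f}$, so that the powers of $c$ appearing in \cref{eq:48k44ds} exactly compensate (or over-compensate) the $h^{-2}$, $h^{-1}$ factors contributed by the derivatives. The perturbation terms in fact sit in strictly smaller subclasses (with $\mathsf{s} \leq -1$ and $\ell \leq 0$), a quantitative refinement which will be useful elsewhere but is not needed for the corollary as stated.
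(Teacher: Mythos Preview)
Your argument is correct and is exactly the bookkeeping the paper has in mind; the paper states this corollary without proof, treating it as immediate from \Cref{prop:dAlembert_dif_form} together with the memberships $\partial_t\in\operatorname{Diff}_{\calczero}^{1,0,2}$, $\partial_{x_j}\in\operatorname{Diff}_{\calczero}^{1,0,1}$ (cf.\ the proof of \Cref{lem:derivatives}, which records the same facts in the $\calc$-context). Your explicit order-count for each term of \cref{eq:48k44ds} is accurate, including the observation that the perturbative terms actually lie in the strictly smaller class $\operatorname{Diff}_{\calczero}^{2,-1,0}$.
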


The $O(c^{-4})$ term in the coefficient of $\partial_t^2$ in $\square_g - \square$ will end up being important.  We call this $\aleph \in S_{\cl}^{-1}(\bbR^{1,d};\bbR)$. It is defined by
\begin{multline} 
	\square_g -\square - c^{-4} \aleph \partial_t^2 \in \operatorname{span}_{C^\infty([0,1)_{1/c} ; S_{\cl}^{-1}(\bbR^{1,d};\bbR)) } \Big\{ \frac{1}{c^5} \frac{\partial^2}{\partial t^2}, \frac{1}{c^3} \frac{\partial^2}{\partial t \partial x_j} , \frac{1}{c^2} \frac{\partial}{\partial x_j x_k}\Big\} 
	\\ + \operatorname{span}_{C^\infty([0,1)_{1/c} ; S_{\cl}^{-2}(\bbR^{1,d};\bbR)) } \Big\{ \frac{1}{c^3}\frac{\partial}{\partial t},\frac{1}{c^2} \frac{\partial}{\partial x_j} \Big\}.
	\label{eq:aleph_def}
\end{multline} 
The physical interpretation of this coefficient is that it is proportional to the asymptotic mass of the spacetime.

\subsection{Operators}
\label{subsec:operators}

We now specify the class of differential operators treated in this paper. Let $P = \{P(c)\}_{c>1}$ denote a family of differential operators on $\bbR^{1,d}$ of the form 
\begin{equation}
	P = \square_{g} -  c^2 + \frac{i\beta}{c^2} \frac{\partial}{\partial t} + \sum_{j=1}^d iB_j \frac{\partial}{\partial x_j} + W,
	\label{eq:P_def}
\end{equation}
where  
\begin{equation} 
	\beta ,B_j,W\in C^\infty([0,1)_{1/c}; S_{\cl}^{-1}(\bbR^{1,d})).
	\label{eq:misc_062}
\end{equation}  
Note that, by assumption, these functions (as well as the coefficients of $\square_g$) can be restricted to $c=\infty$, i.e. there is a well-defined limit, in $S_{\cl}^{-1}(\bbR^{1,d})$, as $c \to \infty$. 

When proving symbolic estimates, it will be convenient for $P$ to be $L^2$-symmetric at the first subleading order in the sense of decay, so we require that 
\begin{equation}
	\Im \beta , \Im B_j, \Im W \in C^\infty([0,1)_{1/c}; S_{\cl}^{-2}(\bbR^{1,d})).
	\label{eq:misc_158}
\end{equation}
This assumption is not crucial for most developments below. Dropping it results in a shift of the thresholds in the radial point estimates in \S\ref{sec:estimates} (replacing $-1/2$ with some other $s_0\in \bbR$), but the analysis is similar.

In addition, since the analysis of the Schr\"odinger equation is cleanest when the operator is $L^2$-symmetric modulo a scalar term, we require that
\begin{equation}
	\Im B_j|_{c=\infty} = 0.
	\label{eq:misc_170}
\end{equation}
Then, $P$ is symmetric (with respect to the $L^2(\bbR^{1,d})$-pairing with density induced by $g$) to leading order in $c$. 
This will be used in \S\ref{sec:schrodinger_estimate} (see \eqref{eq: time derivative of mass-2}).\footnote{ 
As one can see from eq.\ \eqref{eq: time derivative of mass-2}, this is used to avoid a $H^{1/2}$-level term in the derivative of the $L^2$-norm of the Schrödinger solution. In principle, one can treat this term and avoid the assumption by exploiting local smoothing type estimates of solutions to Schrödinger equations. But that is an unnecessary complication for our purpose.  }

\begin{proposition}\label{prop:P form}
	The conjugated operators $P_\pm$  defined by \cref{eq:P+-_form} are given by
	\begin{equation} 
		P_\pm (c) = \square_g  \mp 2 i \partial_t +i \beta c^{-2} \partial_t \mp  \beta + i \sum_{j=1}^dB_j \partial_{x_j} + W -  \aleph+ \calE 
	\end{equation} 
	for a differential operator $\calE$ of the form 
	\begin{equation} 
		\calE\in c^{-1}\operatorname{span}_{ C^\infty([0,1)_{1/c} ; S_{\cl}^{-1}(\bbR^{1,d}))} \{ c^{-1}\partial_t, \partial_{x_j} ,1  \}.
	\end{equation} 
	In particular, $\calE \in \operatorname{Diff}_{\calc}^{1,-1,0,-1}$.
	\label{prop:conjugate_form}
\end{proposition}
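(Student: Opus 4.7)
The plan is a direct computation. First I would record the two facts that drive the conjugation: for any smooth coefficient $a$ independent of $t$, conjugation by $e^{\pm ic^2 t}$ fixes $a$ and all $\partial_{x_j}$, while
\begin{equation}
    e^{\mp ic^2 t}\,\partial_t\, e^{\pm ic^2 t} = \partial_t \pm ic^2, \qquad e^{\mp ic^2 t}\,\partial_t^2\, e^{\pm ic^2 t} = \partial_t^2 \pm 2ic^2 \partial_t - c^4.
\end{equation}
Since the inverse metric components $g^{ij}$ and the coefficients $\beta, B_j, W$ are $t$-dependent but are only multiplication operators, conjugation commutes with multiplication by them. In particular, for the Minkowski part $\square = -c^{-2}\partial_t^2 + \triangle$, we get $e^{\mp ic^2 t}(\square-c^2)e^{\pm ic^2 t} = \square \mp 2i\partial_t$, which produces the targeted $\mp 2i\partial_t$ term and cancels the $-c^2$.

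Next I would apply \cref{eq:aleph_def} to write
\begin{equation}
    \square_g = \square + c^{-4}\aleph\,\partial_t^2 + R_1 + R_2
\end{equation}
with $R_1$ second order of the forms $c^{-5}s\,\partial_t^2$, $c^{-3}s\,\partial_t\partial_{x_j}$, $c^{-2}s\,\partial_{x_j}\partial_{x_k}$ for coefficients $s\in C^\infty([0,1)_{1/c};S_{\cl}^{-1})$, and $R_2$ first order with coefficients in $C^\infty([0,1)_{1/c};S_{\cl}^{-2})$. Conjugating each term produces the same term plus strictly smaller corrections: e.g.\ the key one is
\begin{equation}
    e^{\mp ic^2 t}\,(c^{-4}\aleph\,\partial_t^2)\,e^{\pm ic^2 t} = c^{-4}\aleph\,\partial_t^2 \pm 2ic^{-2}\aleph\,\partial_t - \aleph,
\end{equation}
which produces the advertised $-\aleph$. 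Similarly, conjugating $i\beta c^{-2}\partial_t$ yields $i\beta c^{-2}\partial_t \mp \beta$, producing the $\mp\beta$ term. Each remaining conjugation correction takes one of the forms $c^{-1}\cdot s\cdot c^{-1}\partial_t$, $c^{-1}\cdot s \cdot \partial_{x_j}$, or $c^{-1}\cdot s\cdot 1$ with $s\in C^\infty([0,1)_{1/c};S_{\cl}^{-1})$ (using $S_{\cl}^{-2}\subset S_{\cl}^{-1}$); collecting all of them gives the operator $\calE$ of the stated form.

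Finally, the membership $\calE\in\operatorname{Diff}_{\calc}^{1,-1,0,-1}$ is a bookkeeping check using \Cref{lem:derivatives}: the base operators $c^{-1}\partial_t$, $\partial_{x_j}$, $1$ lie in $\operatorname{Diff}_{\calc}^{1,0,1,-1}$, $\operatorname{Diff}_{\calc}^{1,0,1,0}$, $\operatorname{Diff}_{\calc}^{0,0,0,0}$ respectively (using $c^{-1}=h=\rho_{\mathrm{pf}}\rho_{\natural\mathrm{f}}$ to absorb a power of $h$); the extra overall factor $c^{-1} s$ with $s\in S_{\cl}^{-1}$ contributes $-1$ to each of the orders $\mathsf{s}$, $\ell$, $q$ (because $s$ is an $\mathrm{sc}$-symbol of $\mathrm{bf}$-order $-1$ and $c^{-1}$ contributes at $\mathrm{pf}$ and $\natural\mathrm{f}$). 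All three resulting classes embed into $\operatorname{Diff}_{\calc}^{1,-1,0,-1}$ since each of their orders is bounded above by the corresponding order in $(1,-1,0,-1)$.

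The computation is entirely bookkeeping; the only place a tiny bit of care is required is to notice that the nonzero contribution to the conjugated second-order terms in $R_1$ (specifically from $c^{-5}s\,\partial_t^2$ and $c^{-3}s\,\partial_t\partial_{x_j}$) produces, after cancellation of the leading $\pm 2ic^2$ factors in the expansions above, only first and zeroth order pieces with enough powers of $c^{-1}$ to fit the claimed class --- so nothing of order $\geq 2$ is left uncancelled outside of $\square_g$ itself.
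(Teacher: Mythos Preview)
The proposal is correct and follows essentially the same direct-computation approach as the paper's own proof: decompose $\square_g-\square$ via \cref{eq:aleph_def}, conjugate term by term using the expansion of $e^{\mp ic^2 t}\partial_t^k e^{\pm ic^2 t}$, and collect the leftover first- and zeroth-order corrections into $\calE$. The paper organizes the contributions slightly differently (it lists separately the scalar piece $e^{\mp ic^2 t}(\square_g-\square)e^{\pm ic^2 t}|_{\text{zeroth order}}=-\aleph+\cdots$, the $\mp\beta$ from the $i\beta c^{-2}\partial_t$ term, and the first-order pieces from conjugating the second-order part of $\square_g-\square$) and defers the explicit order-check $\calE\in\operatorname{Diff}_{\calc}^{1,-1,0,-1}$ to the proof of the subsequent corollary, but the content is the same.
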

The notation ``$\calE$'' is meant to indicate that it is to be regarded as a ``small'' error term. 
\begin{proof}
	First, consider the case where $P=\square- c^2$. Then, $P_\pm(c) = \square \mp 2 i  \partial_t$. 
	In the general case, we need to add the following terms (in addition to those in $P-\square - c^2$) which arise during conjugation: 
	\begin{itemize}
		\item The function $\exp(\mp i c^2 t ) (\square_g - \square) \exp(\pm i c^2 t)$. By \cref{eq:aleph_def}, this has the form 
		\begin{equation}
			\exp(\mp i c^2 t ) (\square_g - \square) \exp(\pm i c^2 t) = -  \aleph + c^{-1} C^\infty([0,1)_{1/c}; S_{\cl}^{-1}(\bbR^{1,d}) ).
		\end{equation}
		\item The function $e^{\mp ic^2 t}  c^{-2} (i\beta) \partial_t e^{\pm ic^2 t} = \mp  \beta$.
		\item The derivatives that show up when one conjugates the operator $\square_g - \square$, these resulting from half of the derivatives in the second-order terms in $\square_g-\square$ falling on the exponential. These terms have the form 
		\begin{equation}
			\pm \frac{2 i  \aleph}{c^2} \frac{\partial}{\partial t} + 
			\operatorname{span}_{ C^\infty([0,1)_{1/c}; S_{\cl}^{-1}(\bbR^{1,d}))}  \Big\{ \frac{1}{c^3} \frac{\partial}{\partial t}, \frac{1}{c} \frac{\partial}{\partial x_j}\Big\}
		\end{equation}
		by \cref{eq:aleph_def}. 
	\end{itemize}
	These terms can be taken part of $\calE$. 
\end{proof}

Recall the definition of classical $\calc$-symbols, equation \eqref{eq:def-calc-symbol-classical}. 
\begin{corollary}
	$P_\pm\in \operatorname{Diff}_{\calc}^{2,0,2,0}$, and we can select a representative $p$ of $\sigma_{\calc}^{2,0,2,0}(P_\pm )$ such that  
	\begin{equation} 
		p \in 
		S_{\calc,\cl}^{2,0,2,0} 
		\label{eq:principal_symbol_phg}
	\end{equation} 
	is given by $p= -g^{-1}(-,-) \pm 2  \tau $.
	\label{prop:principal_symbol}
\end{corollary}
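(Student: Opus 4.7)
The statement rests on two claims: (i) $P_\pm \in \operatorname{Diff}_{\calc}^{2,0,2,0}$, and (ii) $p = -g^{-1}(\zeta,\zeta) \pm 2\tau$ is a classical representative of the principal symbol. Both are term-by-term bookkeeping of $\calc$-orders, starting from the explicit formula for $P_\pm$ in Proposition~\ref{prop:conjugate_form} and using Lemma~\ref{lem:derivatives} for the orders of coordinate derivatives together with the composition law of Proposition~\ref{prop:calc_composition} and the identity $c^{-1} = h$. Crucially, away from $\mathrm{df}$ we have $h = \rho_{\natural\mathrm{f}}\rho_{\mathrm{pf}}$, so each factor of $c^{-1}$ lowers the $\natural\mathrm{f}$- and $\mathrm{pf}$-orders each by $1$.

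For (i), I would handle each summand of
\begin{equation*}
P_\pm = \square_g \mp 2i\partial_t + i\beta c^{-2}\partial_t \mp \beta + i\textstyle\sum_j B_j \partial_{x_j} + W - \aleph + \calE
\end{equation*}
separately. Using Lemma~\ref{lem:derivatives} and composition, a monomial $\partial_t^{\alpha_0}\partial_x^{\alpha'}$ lies in $\operatorname{Diff}_{\calc}^{|\alpha|,\,0,\,2\alpha_0 + |\alpha'|,\,0}$; multiplication by $c^{-k}$ reduces the $\natural\mathrm{f}$- and $\mathrm{pf}$-orders each by $k$. This places $\square = \triangle - c^{-2}\partial_t^2$ in $\operatorname{Diff}_{\calc}^{2,0,2,0}$, and, by Proposition~\ref{prop:dAlembert_dif_form}, every monomial in $\square_g - \square$ (which carries exactly enough extra $c^{-1}$ factors and $S_{\cl}^{-1}$ coefficients) lies in $\bigcap_\delta \operatorname{Diff}_{\calc}^{2,-1+\delta,0,-2}\subset \operatorname{Diff}_{\calc}^{2,0,2,0}$. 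The remaining terms are similar: $\mp 2i\partial_t \in \operatorname{Diff}_{\calc}^{1,0,2,0}$; $c^{-2}\partial_t = h^2 \partial_t \in \operatorname{Diff}_{\calc}^{1,0,0,-2}$, with the $S_{\cl}^{-1}$ coefficient $\beta$ contributing an extra $-1$ at $\mathrm{bf}$; the zeroth-order coefficients $\beta, W, \aleph\in S_{\cl}^{-1}\subset S_{\calc}^{0,-1,0,0}$; $iB_j\partial_{x_j}\in \operatorname{Diff}_{\calc}^{1,-1,1,0}$; and $\calE\in \operatorname{Diff}_{\calc}^{1,-1,0,-1}$ is given by hypothesis. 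In aggregate, $P_\pm\in \operatorname{Diff}_{\calc}^{2,0,2,0}$.

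For (ii), the principal symbol is taken modulo $\bigcap_{\delta>0}S_{\calc}^{1,-1+\delta,1,0}$. The bookkeeping just done shows that every summand in $P_\pm$ other than the leading parts of $\square_g$ and $\mp 2i\partial_t$ lies already in this residual class; the first-order subprincipal terms inside $\square_g$, with coefficients $\partial_i g^{ij}$ and $g^{ij}\partial_i|g|/(2|g|)$, carry $S_{\cl}^{-1}$ coefficients, so they sit in $S_{\calc}^{1,-1,1,0}$ and are likewise residual. What survives is the second-order principal symbol of $\square_g$, namely $-g^{-1}(\zeta,\zeta)$, together with the symbol $\pm 2\tau$ of $\mp 2i\partial_t$, giving the claimed $p$.

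Classicality is then immediate: $-g^{-1}(\zeta,\zeta)$ is a quadratic polynomial in $\zeta$ whose coefficients are classical symbols on $\overline{\bbR^{1,d}}$ smooth in $h=1/c$, and polynomials in $\zeta$ lift to smooth functions on ${}^{\calc}\overline{T}^*\bbM$—smoothness at $\mathrm{pf}$ being automatic since polynomials are smooth in any coordinate chart, including the parabolic coordinates describing $\mathrm{pf}$. The linear term $\pm 2\tau$ is handled identically. Hence $p\in S_{\calc,\cl}^{2,0,2,0}$. The only potential pitfall in this argument is the extra $h$-powers creating highly negative $q$-orders at $\mathrm{pf}$, but since the class $\operatorname{Diff}_{\calc}^{2,0,2,0}$ and its residual ideal place no positive lower bound on $q$, nothing is lost.
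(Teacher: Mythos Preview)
Your proof is correct and follows essentially the same approach as the paper's: term-by-term bookkeeping of $\calc$-orders using Lemma~\ref{lem:derivatives} and the factorization $h=\rho_{\natural\mathrm{f}}\rho_{\mathrm{pf}}$, followed by identifying which terms survive modulo $S_{\calc}^{1,-1+\delta,1,0}$. One minor imprecision: the first-order coefficients of $\square_g$ are actually in $S_{\cl}^{-2}$ (see Proposition~\ref{prop:dAlembert_dif_form}), not just $S_{\cl}^{-1}$, though your weaker statement suffices for the argument.
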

\begin{proof}
	By \Cref{lem:derivatives}, we have $\partial_t^2 \in \operatorname{Diff}_{\calc}^{2,0,4,0}$ and $\triangle \in \operatorname{Diff}_{\calc}^{2,0,2,0}$. 	
	Since $\partial_t^2$ is suppressed by a factor of $c^{-2}$ in $\square$, which improves its orders at $\natural\mathrm{f},\mathrm{pf}$ by $2$, we have 
	\begin{equation} 
		\square \in \operatorname{Diff}_{\calc}^{2,0,2,0}.
	\end{equation} 
	The terms on the right-hand side of \cref{eq:48k44ds} are all in $\operatorname{Diff}_{\calc}^{2,-1,0,-2}$, so $\square_g \in \operatorname{Diff}_{\calc}^{2,0,2,0}$.
	The other terms in \Cref{prop:conjugate_form} lie in $\operatorname{Diff}_{\calc}^{2,0,2,0}$ as well. Note in particular that
	\begin{equation}
		\calE \in \operatorname{Diff}_{\calc}^{1,-1,0,-1}. 
		\label{eq:misc_177}
	\end{equation} 
	So, we conclude $P_\pm\in \operatorname{Diff}_{\calc}^{2,0,2,0}$.

	From this description,  
	\begin{equation}
		\sigma_{\calc}^{2,0,2,0} ( P_\pm )= \sigma_{\calc}^{2,0,2,0}(\square_g) \pm 2 \sigma_{\calc}^{2,0,2,0}(-i\partial_t) .
		\label{eq:misc_178}
	\end{equation}
	We could throw out terms involving $\beta,B,W$ because, though they are not subleading at $\mathrm{pf}$, the principal symbol of elements of $\Psi_{\calc}^{2,0,2,0}$ was defined only modulo elements of $S_{\calc}^{1,-1,1,0}$.  Evaluating each term on the right-hand side of \cref{eq:misc_178} yields the formula for $p$ in the proposition. The frequency coordinates $\tau,\xi$ are fully polyhomogeneous on ${}^{\calc}\overline{T}^* \bbM$, and we computed their orders in the proof of the previous lemma.
	Plugging this information into the formula for $p$ yields \cref{eq:principal_symbol_phg}.
\end{proof}

Note that the \emph{unconjugated} Klein--Gordon operator $\square-c^2$ satisfies 
\begin{equation} 
	\square-c^2\in \operatorname{Diff}_{\calc}^{2,0,2,2},
\end{equation}
not $\square-c^2\in \operatorname{Diff}_{\calc}^{2,0,2,0}$. This is one reason why it is useful to conjugate the $c^2$ term away, as well as the sense in which this constant term is dominant in the $c\to\infty$ limit; it is second-order at $\mathrm{pf}$, whereas all the other terms in the operator are zeroth-order.

\begin{proposition} \label{prop:normal}
	The $\calc$-normal operators of $P_\pm$ at $\mathrm{pf}$ are given by 
	\begin{equation}\label{eq:Ppm normal}
		N(P_\pm) = \mp 2 i \partial_t + \triangle  + \Big( \pm  \beta + \sum_{j=1}^d i B_j\partial_{x_j} + W\Big)\Big|_{c=\infty} - \aleph,  
	\end{equation}	
	where $\beta,B_j,W,\aleph$ are as above.
\end{proposition}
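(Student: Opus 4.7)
The plan is to compute $N(P_\pm)$ directly from the definition: restrict the full left symbol of $P_\pm$ to the parabolic face $\mathrm{pf}$ and then recognize the restriction as the left-reduced symbol of a parabolic operator. The starting point is the explicit decomposition supplied by \Cref{prop:conjugate_form}, together with \Cref{prop:principal_symbol}, which tells us that $P_\pm \in \operatorname{Diff}_{\calc}^{2,0,2,0}$ with a classical representative of its principal symbol; hence the full symbol of $P_\pm$ is smooth at $\mathrm{pf}$ and can be restricted there unambiguously.

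The main work is sorting which summands of $P_\pm$ survive the restriction to $\mathrm{pf}^\circ$. The error term $\calE \in \operatorname{Diff}_{\calc}^{1,-1,0,-1}$ has order $-1$ at $\mathrm{pf}$, so its symbol vanishes to first order there. The term $i\beta c^{-2}\partial_t$ has full symbol $-\beta h^{2}\tau$, and since $\mathrm{pf}^\circ$ is the locus $\{h=0\}$ with $(\tau,\xi)$ bounded, this symbol restricts to zero. By the same argument, the piece $-c^{-2}\partial_t^2$ of the Minkowski $\square$ has symbol $h^{2}\tau^{2}$ which vanishes at $\mathrm{pf}^\circ$, and each term in $\square_g - \square$ enumerated in \Cref{prop:dAlembert_dif_form} carries a factor $c^{-j}$ for some $j \geq 2$ multiplying a polynomial of degree $\leq 2$ in the frequency variables. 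Its symbol is thus $O(h^{j})$ times a quantity bounded on $\mathrm{pf}^\circ$ and restricts to zero there as well.

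What remains is the sum of the symbols of $\triangle$ (namely $-|\xi|^{2}$), of $\mp 2i\partial_t$ (namely $\pm 2\tau$), and of the lower-order terms $\mp\beta$, $iB_j\partial_{x_j}$, $W$, and $-\aleph$. Because the coefficient functions $\beta$, $B_j$, $W$ lie in $C^\infty([0,1)_{1/c}; S_{\cl}^{-1}(\bbR^{1,d}))$ and $\mathrm{pf}$ is contained in $\{c=\infty\}$, their restrictions are obtained by evaluating at $c=\infty$, while $\aleph$ is already $c$-independent. Reinterpreting the restricted symbol as the left-reduced symbol of a parabolic operator on ${}^{\mathrm{par}}\overline{T}^* \bbM$ and quantizing produces the formula \eqref{eq:Ppm normal}.

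No step is especially subtle; the only point that requires a moment of care is the claim that each summand's symbol extends smoothly to $\mathrm{pf}$, which is automatic here because each summand is a differential operator whose left symbol is polynomial in the frequency variables. The proof therefore reduces to bookkeeping: determining which of these polynomial symbols survive the limit $h \to 0$ taken with $(\tau,\xi)$ held bounded.
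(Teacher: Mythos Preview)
Your proof is correct and is an explicit unpacking of the paper's one-line argument, which simply cites \Cref{prop:dAlembert_dif_form} and \Cref{prop:conjugate_form}; you have spelled out precisely which summands of $P_\pm$ survive the restriction $h\to 0$ with $(\tau,\xi)$ bounded and why. Note only that the $\mp\beta$ you obtain from \Cref{prop:conjugate_form} disagrees in sign with the $\pm\beta$ printed in \eqref{eq:Ppm normal}; this is a typo in the statement, not an error in your argument.
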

\begin{proof} 
	Immediate from \Cref{prop:dAlembert_dif_form} and \Cref{prop:conjugate_form}. 
\end{proof}
Notice that the term $\aleph$, whose physical interpretation we described above, appears in this normal operator. This is the mechanism via which gravity contributes an effective Newtonian potential to the non-relativistic limit.

For later use, let us examine the terms in $P_\pm$ that fail to be symmetric, with respect to the $L^2$-pairing in terms of the Euclidean measure.
Let 
\begin{equation} 
	P_1 = \Im \{ P_\pm \} = (2i)^{-1} ( P_\pm - P_\pm^*),
\end{equation} 
where $*$ denotes the Euclidean $L^2$-based adjoint (which is also the $L^2(\bbR^{1,d},\eta(c'))$-based adjoint for every $c'$). Then:
\begin{proposition} 
	Even if we do not assume \cref{eq:misc_170}, only \cref{eq:misc_158}, the operator $P_1$ satisfies 
	\begin{equation}
		P_1 \in \operatorname{span}_{ C^\infty([0,1)_{1/c}; S_{\cl}^{-2}(\bbR^{1,d};\bbC))}  \Big\{ \frac{1}{c^2} \frac{\partial}{\partial t},\frac{\partial}{\partial x_j},1\Big\} \subset \operatorname{Diff}_{\calc}^{1,-2,1,0}.
		\label{eq:gdf31}
	\end{equation}
	Assuming \cref{eq:misc_170}, then this is improved to $P_1 \in \operatorname{Diff}_{\calc}^{1,-2,0,-1}$.
	\label{prop:P1}
\end{proposition}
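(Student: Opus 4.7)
The plan is to compute $P_1 = \tfrac{1}{2i}(P_\pm - P_\pm^*)$ summand-by-summand, using the explicit formula for $P_\pm$ from \Cref{prop:conjugate_form} together with the standard rules $(fA)^* = A^*\bar f$, $\partial_{z^j}^* = -\partial_{z^j}$, $i^* = -i$, and $[\partial_{z^j}, f] = (\partial_{z^j} f)$ to put each expression into left-reduced form. Summing the contributions then yields $P_1$, and the remaining work is to track the orders at each of the four boundary hypersurfaces of $^{\calc}\overline{T}^*\bbM$.

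The second-order summands are handled first. The term $\mp 2 i \partial_t$ is formally self-adjoint (since $D_t = -i\partial_t$ is), hence contributes $0$ to $P_1$. For $\square_g$, the inverse-metric coefficients $g^{ij}$ are real, so $\square_g - \square_g^*$ is a first-order operator (with a zeroth-order correction) whose coefficients are first (resp.\ second) derivatives of $g^{ij}$. By \cref{eq:ginv_form} the perturbations $g^{tt}+c^{-2},\, g^{ti},\, g^{ij}-\delta^{ij}$ scale as $c^{-4}S^{-1}_{\cl},\, c^{-3}S^{-1}_{\cl},\, c^{-2}S^{-1}_{\cl}$ respectively, and differentiation preserves the $c$-factor while improving the spacetime decay to $S^{-2}_{\cl}$. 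Reassembling, the $\square_g$-contribution to $P_1$ sits inside $\operatorname{span}_{C^\infty([0,1);S^{-2}_{\cl})}\{c^{-2}\partial_t, c^{-1}\partial_{x_j}, 1\}$, which is contained in the target class $\operatorname{Diff}_{\calc}^{1,-2,1,0}$, and in fact in the improved class $\operatorname{Diff}_{\calc}^{1,-2,0,-1}$ since the $\partial_{x_j}$-coefficient already carries an extra factor of $c^{-1} = h = \rho_{\mathrm{pf}}\rho_{\natural\mathrm{f}}$.

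For the remaining summands a direct computation gives
\begin{equation*}
\begin{aligned}
\Im(ic^{-2}\beta\,\partial_t) &= ic^{-2}(\Im \beta)\partial_t - \tfrac12 c^{-2}(\partial_t\bar\beta), \qquad \Im(\mp\beta) = \mp\Im\beta,\\
\Im(iB_j\partial_{x_j}) &= i(\Im B_j)\partial_{x_j} - \tfrac12 (\partial_{x_j}\bar B_j), \qquad \Im(W) = \Im W,
\end{aligned}
\end{equation*}
and $\Im(-\aleph) = 0$ since $\aleph$ is real. By \cref{eq:misc_158}, $\Im\beta,\Im B_j, \Im W \in C^\infty([0,1);S^{-2}_{\cl})$; and because $\beta, B_j\in C^\infty([0,1);S^{-1}_{\cl})$, we have $\partial_t\bar\beta, \partial_{x_j}\bar B_j\in C^\infty S^{-2}_{\cl}$. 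The contribution from $\calE$ is treated the same way, starting from $\calE\in \operatorname{Diff}_{\calc}^{1,-1,0,-1}$ (\Cref{prop:conjugate_form}) and noting that the adjoint operation does not worsen orders in the $\calc$-calculus, so that $\Im(\calE)$ lies in the appropriate span. Assembling these pieces yields the first claim \eqref{eq:gdf31}. Under \cref{eq:misc_170} we additionally have $\Im B_j\in c^{-1}C^\infty S^{-2}_{\cl}$, so the $\partial_{x_j}$-coefficient $i\Im B_j$ gains a factor $c^{-1}=\rho_{\mathrm{pf}}\rho_{\natural\mathrm{f}}$; a term-by-term inspection of the remaining first-order contributions then upgrades the membership to $\operatorname{Diff}_{\calc}^{1,-2,0,-1}$.

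The main obstacle is purely organizational. No single step is technical, but the proliferation of small terms, together with the need to distinguish genuine $c$-scaling (each power of $c^{-1}$ improving orders at \emph{both} $\mathrm{pf}$ and $\natural\mathrm{f}$ because $c^{-1}=\rho_{\mathrm{pf}}\rho_{\natural\mathrm{f}}$) from spacetime decay (which only affects the $\mathrm{bf}$-order through $\rho_{\mathrm{bf}}=\ang{z}^{-1}$), makes it easy to misattribute an order. Particular care is required in reducing each commutator correction to left-symbol form and in verifying that the zeroth-order remainders produced along the way still satisfy the stated order bounds.
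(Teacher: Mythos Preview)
Your approach—compute $P_1=\Im(P_\pm)$ summand by summand from the formula in \Cref{prop:conjugate_form}—is the same in spirit as the paper's, but the paper organizes the computation differently and more efficiently. Rather than working post-conjugation, the paper uses that conjugation by the unitary $e^{\pm ic^2 t}$ commutes with taking imaginary parts to write
\[
P_1-\Im\Big\{\tfrac{i\beta}{c^2}\partial_t\mp\beta+\textstyle\sum_j iB_j\partial_{x_j}+W\Big\}=e^{\mp ic^2 t}\,(\Im\square_g)\,e^{\pm ic^2 t},
\]
computes $\Im(\square_g-\square)\in\operatorname{span}_{S^{-2}_{\cl}}\{c^{-3}\partial_t,c^{-2}\partial_{x_j}\}$ once (using only the reality of the metric coefficients), and then conjugates this first-order object. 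This packaging absorbs $\calE$ back into the $\square_g$ piece and avoids having to analyze $\Im(\calE)$ on its own.

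Your treatment of $\calE$ has a genuine gap. From $\calE\in\operatorname{Diff}_{\calc}^{1,-1,0,-1}$ and ``adjoints do not worsen orders'' you only obtain $\Im(\calE)\in\operatorname{Diff}_{\calc}^{1,-1,0,-1}$, which has spacetime order $-1$, not the $-2$ required by \eqref{eq:gdf31}. The missing observation is that the coefficients of $\calE$ inherited from the real metric are either real or of the form $i\times(\text{real})$, so that for each first-order piece the imaginary part is a zeroth-order term built from one additional derivative of the coefficient, hence in $S^{-2}_{\cl}$ rather than $S^{-1}_{\cl}$. Without making this explicit, your claim that $\Im(\calE)$ lies in the target span is unjustified. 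Relatedly, your stated orders for the $\square_g$-contribution, $\operatorname{span}_{S^{-2}_{\cl}}\{c^{-2}\partial_t,c^{-1}\partial_{x_j},1\}$, are looser than what you actually need for the improved claim: a zeroth-order $S^{-2}_{\cl}$ term with \emph{no} $c$-factor has $\mathrm{pf}$-order $0$, not $-1$, so your assertion that this piece lies in $\operatorname{Diff}_{\calc}^{1,-2,0,-1}$ does not follow from the span you wrote down. (The true zeroth-order contribution carries at least a $c^{-2}$.) For the improved claim more generally, your ``term-by-term inspection'' does not address the zeroth-order pieces $\mp\Im\beta$, $\Im W$, and $-\tfrac12\partial_{x_j}\bar B_j$, which under \eqref{eq:misc_158} alone carry no $c$-factor; the paper's one-line justification here is equally brief on this point.
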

\begin{proof}
	First, we show (without using \cref{eq:misc_158} or \cref{eq:misc_170}) that 
	\begin{multline}
		P_1 - \Im \Big\{  \frac{i\beta}{c^2} \frac{\partial}{\partial t} \mp \beta + \sum_{j=1}^d iB_j \frac{\partial}{\partial x_j} + W \Big\} \\ \in \operatorname{span}_{ C^\infty([0,1)_{1/c}; S_{\cl}^{-2}(\bbR^{1,d};\bbC))}  \Big\{ \frac{1}{c^3} \frac{\partial}{\partial t}, \frac{1}{c^2} \frac{\partial}{\partial x_j},\frac{1}{c}\Big\} \subset \operatorname{Diff}_{\calc}^{1,-2,-1,-1}.
		\label{eq:P1_stronger}
	\end{multline}
	\Cref{eq:misc_158} then yields \cref{eq:gdf31}. 
	
	In order to prove \cref{eq:P1_stronger}, note that we can write 
	\begin{align}
		\begin{split} 
			P_1 - \Im \Big\{  \frac{i\beta}{c^2} \frac{\partial}{\partial t} \mp \beta + \sum_{j=1}^d iB_j \frac{\partial}{\partial x_j} + W \Big\}  = \Im \tilde{\square}_{g,\pm}  &= M_{e^{\mp i c^2 t}} (\Im \square_g) M_{e^{\pm ic^2 t} } \\ &= M_{e^{\mp i c^2 t}} (\Im (\square_g-\square) )M_{e^{\pm ic^2 t} } .
		\end{split} 
	\end{align}
	\Cref{prop:dAlembert_dif_form} (and using that the coefficients of $\square_g-\square$ are real) now gives
	\begin{equation}
		\Im (\square_g - \square) \in
		\operatorname{span}_{C^\infty([0,1)_{1/c} ; S_{\cl}^{-2}(\bbR^{1,d};\bbR)) } \Big\{ \frac{1}{c^3}\frac{\partial}{\partial t},\frac{1}{c^2} \frac{\partial}{\partial x_j}\Big\}.
	\end{equation}
	Consequently, 
	\begin{equation}
		M_{e^{\mp i c^2 t}} \Im (\square_g-\square) M_{e^{\pm ic^2 t} } \in \operatorname{span}_{C^\infty([0,1)_{1/c} ; S_{\cl}^{-2}(\bbR^{1,d};\bbC)) } \Big\{ \frac{1}{c^3}\frac{\partial}{\partial t},\frac{1}{c^2} \frac{\partial}{\partial x_j}, \frac{1}{c}\Big\},
	\end{equation}
	so we conclude \cref{eq:P1_stronger}. 	
	
	Using  \Cref{prop:dAlembert_dif_form} and \Cref{prop:conjugate_form}, we have $P_\pm  - (\square\pm 2i\partial_t) \in \scrV$ for 
	\begin{equation}
		\scrV= \operatorname{span}_{C^\infty([0,1 )_{1/c} ; S_{\cl}^{-1}(\bbR^{1,d}) ) } \Big\{ \frac{1}{c^4} \frac{\partial^2}{\partial t^2}, \frac{1}{c^3} \frac{\partial^2}{\partial t \partial x_j} , \frac{1}{c^2} \frac{\partial^2}{\partial x_jx_k}, \frac{1}{c^2} \frac{\partial}{\partial t}, \frac{\partial}{\partial x_j}, 1\Big\}.
	\end{equation}
	The operator $\square \mp 2 i \partial_t$ is symmetric with respect to the usual $L^2$-inner product, and the adjoint of any element of $\scrV$ is another member of $\scrV$, so
	\begin{equation} 
		P_\pm^* - (\square \mp 2i\partial_t) \in \scrV
	\end{equation}  
	as well. Consequently, $P_1\in \scrV$. But, it follows from the definition of $P_\pm$ that the coefficients of all of the second-order terms in it are real, and this implies that $\Im \{P_\pm\}$ is a first-order differential operator. \Cref{eq:gdf31} must therefore hold. 

	For the last part of the claim, just notice that the contribution from $\Im (i B_j\partial_{x_j})$-terms will have extra $c^{-1}$-decay when we assume \cref{eq:misc_170}. So it lies in $\operatorname{Diff}_{\calc}^{1,-2,0,-1}$. 
    The conclusion follows since all other terms in $P_1$  lies in this class as well.
\end{proof}

\section{Classical dynamics in the compactified phase space}
\label{sec:flow}

In this section, we discuss the dynamics associated to the conjugated Klein--Gordon operator $P_\pm$ from the $\calc$-perspective, which was introduced in \S\ref{sec:calculus}. The principal symbol $p$ was computed in \Cref{prop:principal_symbol}.\footnote{As a reminder, $p$ depends on the sign $\pm$ in $P_\pm$, but we will suppress this dependence in the notation.} It remains to discuss its characteristic set and the structure of its Hamiltonian vector field $H_p$. 
What we will see is an interpolation between the structure of the Hamiltonian flow for the Klein--Gordon equation and that for the Schr\"odinger equation; compare 
\begin{itemize}
	\item \Cref{fig:individual_flows}, which shows the appropriately scaled flows of the Klein--Gordon and Schr\"odinger equations within their characteristic sets (in ${}^{\mathrm{sc}}\overline{T}^* \bbM$ and ${}^{\mathrm{par}}\overline{T}^* \bbM$ respectively), with
	\item  \Cref{fig:radial_sets}, \Cref{fig:Schrflow_with_pf} which show the flow generated by $H_p$ in the corresponding portion of the $\calc$-characteristic set (in ${}^{\calc} \overline{T}^* \bbM$).
\end{itemize}
In short, \textit{we have a source-to-sink flow within the ``good'' component $\Sigma$ of the $\calc$-characteristic set.}

The terms in the operator $P_\pm$ 
that are not present in $(\square_g-c^2)_\pm$ are subprincipal at $\mathrm{df}$, $\natural\mathrm{f}$ and  $\mathrm{bf}$, so as far as the principal symbol of $P_\pm$ is concerned, these can be ignored. They play no role in this section.

The metric perturbation $g-\eta$ \emph{does} enter the principal symbol, but only via terms which are suppressed as $c\to\infty$ or $|r|,|t|\to\infty$.  
More precisely, 
\begin{equation} 
p-p_0 \in S_{\calc}^{2,-1,1,0}, 
\end{equation}  
where 
\begin{equation}
p_0 = h^2\tau^2 - \xi^2 \pm 2 \tau, 
\label{eq:p0}
\end{equation}
which equals to $p$ when $g$ is the exact Minkowski metric $\eta$. (As a reminder, $h=1/c$.)
Consequently, when doing computations at $\natural\mathrm{f}\cup \mathrm{bf}$, instead of $p$ we can usually work with $p_0$.

\subsection{Computation of the characteristic set}
\label{subsec:char}
The characteristic set $\operatorname{char}_{\calc}^{2,0,2,0}(P_\pm)$ is given by the  vanishing set of
	\begin{equation} 
	\mathsf{p} = \rho_{\mathrm{df}}^2 \rho_{\natural\mathrm{f}}^2 \sigma_{\calc}^{2,0,2,0}(P_\pm)= \rho_{\mathrm{df}}^2 \rho_{\natural\mathrm{f}}^2 p, 
\end{equation}
 minus the interior of the $\calc$-phase space.
This subsection is devoted to making precise and proving the following intuitive proposition: the characteristic set consists of hyperboloids over spacetime infinity and the corresponding points at fiber infinity and $\mathrm{pf}$. Consequently, the characteristic set consists of two connected components.

Only one of the two connected components of the characteristic set intersects $\mathrm{pf}$. We call it $\Sigma$. We call the other component $\Sigma_{\mathrm{bad}}$ and it can mostly be ignored.
Each of $\Sigma_{\mathrm{bad}},\Sigma$ corresponds to one of the components $\Sigma_-,\Sigma_+$ of the characteristic set 
\begin{equation} 
\Sigma_-\cup \Sigma_+= \operatorname{char}_{\calczero}^{2,0,2}(\square-c^2)
\end{equation} 
of the \emph{unconjugated} Klein--Gordon operator. Switching the sign of $P_\pm$ switches which of $\Sigma_-,\Sigma_+$ corresponds to the ``good'' set $\Sigma$ and which corresponds to $\Sigma_{\mathrm{bad}}$. (We will see later that, using a microlocal partition of unity, we never need to consider functions with $\calc$-wavefront set on both sheets, which is why it suffices to only study one at a time.)

Note that 
\begin{equation}
\mathsf{p}|_{\natural\mathrm{f} \cup \mathrm{bf}} = \mathsf{p}_0|_{\natural\mathrm{f} \cup \mathrm{bf}}, \label{eq:pp0}
\end{equation}
where $\mathsf{p}_0 =\rho_{\mathrm{df}}^2 \rho_{\natural\mathrm{f}}^2 p_0$; that is
\begin{equation} 
\rho_{\mathrm{df}}^{-2}\rho_{\mathrm{pf}}^2 \mathsf{p}_0 =\tau_{\natural}^2 - \xi_{\natural}^2 \pm 2 \tau_{\natural} = h^2( h^2 \tau^2 - \xi^2 \pm 2 \tau), 
\label{eq:8i80ac4}
\end{equation} 
assuming that we have chosen our boundary-defining-functions such that $h=\rho_{\mathrm{pf}}\rho_{\natural\mathrm{f}}$.

\begin{remark}
	Provided we assume that the terms $\alpha$, $w_j$ and $h_{ij}$ in the metric (\cref{eq:metric_form}) are classical symbols, the function $\mathsf{p}$ is a $C^\infty$ function on $\smash{{}^{\calc}\overline{T}^*\bbM}$. Moreover, we have 
	\begin{equation}
	\mathsf{p} = \mathsf{p}_0 \text{ modulo } \rho_{\natural\mathrm{f}}^2 \rho_{\mathrm{bf}} C^\infty({}^{\calc}\overline{T}^*\bbM)
	\end{equation} 
	Our calculations below show that the differential of $\mathsf{p}_0$ restricted to either $\mathrm{bf}$ or $\natural\mathrm{f}$ or $\natural\mathrm{f} \cap \mathrm{df}$ is nonvanishing. This and \cref{eq:pp0} show that, at least for $h$ sufficiently small, the differential of $\mathsf{p}$ near $\mathrm{bf} \cup \natural\mathrm{f}\cup \mathrm{df}$ is nonvanishing. It follows that $\Sigma,\Sigma_{\mathrm{bad}}$ are each a union of $C^\infty$ submanifolds of the respective boundary hypersurfaces (with appropriate behavior at their intersections). This also follows from the explicit formulae in local coordinates for the vanishing set of $\mathsf{p}$,     which can be found below. If we allow $\alpha,w_{j}$, $h_{ij}$ to be merely conormal, then analogous statements apply, replacing $C^\infty$ with $C^k$ for some $k\geq 0$.
	
	If $\alpha$, $w_{j}$, $h_{ij}$ are not classical, this only affects the characteristic set at $\mathrm{df}$.
\end{remark}

We can read off \cref{eq:8i80ac4} that, except possibly at fiber infinity and $\mathrm{pf}$, the symbol $\mathsf{p}_0$ vanishes at $\{(\tau_\natural \pm 1)^2 - \xi_\natural^2 = 1\}$. That is, 
\begin{equation}
\mathsf{p}^{-1}_0(\{0\}) \backslash (\mathrm{pf}\cup \mathrm{df}) \subseteq \{(\tau_{\natural}\pm 1)^2 -\xi_{\natural}^2 = 1\} \subseteq {}^{\calczero} T^* \bbM. 
\end{equation}
The equation 
\begin{equation} 
	(\tau_{\natural}\pm 1)^2 -\xi_{\natural}^2 = 1
\end{equation} 
defines a two-sheeted hyperboloid in each fiber of ${}^{\calczero} T^* \bbM$.
One of the sheets intersects the zero section $\{\zeta_{\natural}=0\}$ of ${}^{\calczero}T^* \bbM$. This sheet is the one in $\{\pm\tau_{\natural} \geq 0\}$. (This will be the one corresponding to $\Sigma$.)
The other sheet is in $\{\pm \tau_{\natural} \leq -2\}$. 
 (This will be the one corresponding to $\Sigma_{\mathrm{bad}}$.)

Next, we check that nothing unexpected happens at $\mathrm{pf}$ or $\mathrm{df}$. 
Let 
\begin{align}
\begin{split} 
\Sigma_0 &= \operatorname{cl}_{{}^{\calc}\overline{T}^* \bbM }(\{ (\tau_{\natural}\pm 1)^2 -\xi_{\natural}^2 = 1 : \pm  \tau_{\natural}\geq 0 \} \backslash (\mathrm{pf} \cup \mathrm{df})) \\
\Sigma_{0,\mathrm{bad}} &= \operatorname{cl}_{{}^{\calc}\overline{T}^* \bbM }(\{ (\tau_{\natural}\pm 1)^2 -\xi_{\natural}^2 = 1 :\pm \tau_{\natural}\leq -2 \} \backslash (\mathrm{pf} \cup \mathrm{df}))
\end{split}
\end{align}
denote the closures in the $\calc$-phase space of the two components of $\mathsf{p}^{-1}_0(\{0\}) \backslash (\mathrm{pf}\cup \mathrm{df}) $ discussed in the previous paragraph. 
\begin{figure}[h] 
	\floatbox[{\capbeside\thisfloatsetup{capbesideposition={left,bottom},capbesidewidth=8cm,capbesidesep=none}}]{figure}[\FBwidth]
	{\caption{
			The hyperboloid $\{(\tau_{\natural}^2\pm 1 )^2 - \xi_{\natural}^2 = 1 \}$ in the radially compactified $\calczero$-frequency space, with the portions corresponding to the sheets $\Sigma_0,\Sigma_{0,\mathrm{bad}}$ shown. Here, $\nu_{\natural},\eta_{\natural}$ are the components of $\xi_{\natural}$ dual to $r=\lVert x \rVert$, $\theta=x/r$ respectively.}}
	{
		\begin{tikzpicture}[scale=.85]
		\def\mm{5.5}
		\def\fram{7}
		\def\b{1} 	
		\begin{axis}[
		hide axis,
		axis equal image,
		xmin=-\fram,xmax=\fram,
		ymin=-\fram,ymax=\fram]
		\draw[fill=gray!10] (0,0) circle (\mm);
		\draw[dotted] (0,-\mm) -- (0, +\mm);
		\draw[dotted] (-\mm,0) -- (+\mm,0);
		\draw[darkred,dashed] (0,1.5) ellipse (28.5pt and 3pt);
		\draw[darkred,dashed] (0,3.2) ellipse (50.5pt and 3pt);
		\draw[gray,dashed] (0,-3.7) ellipse (28.5pt and 3pt);
		\addplot [domain=-1.6:1.6, color=gray] ({-1.41\b*sinh(\x)},{-1.41\b*(.8*cosh(\x)+1)});
		\addplot [domain=-1.86:1.86, color=darkred] ({-1.41\b*sinh(\x)},{1.41\b*(cosh(\x)-1)});
		\draw[->] (0,0) -- (0,2.5);
		\node[above right] (nu) at (0,1.7) {$\pm\tau_{\natural}$};
		\draw[->] (0,0) -- (2.5,0);
		\node[below] (aleph) at (2,0) {$\nu_{\natural}$};
		\draw[->] (0,0) -- (-1.2,-1.2);
		\node[below left] (eta) at (-1.2,-1.2) {$\eta_{\natural} $};
		\node[darkred] at (-4,1.8) {$\Sigma_0$};
		\node[gray] at (2.5,-2.5) {$\Sigma_{0,\mathrm{bad}}$};
		\end{axis}
		\end{tikzpicture} 
}
\end{figure}
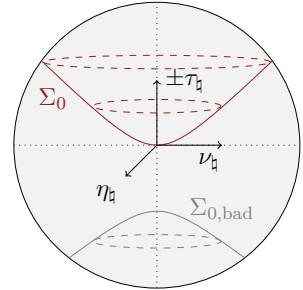

By the continuity of $\mathsf{p}_0$ on ${}^{\calc}\overline{T}^* \bbM$, $\mathsf{p}_0$ must vanish on each of $\Sigma_0,\Sigma_{0,\mathrm{bad}}$, so $\mathsf{p}_0^{-1}(\{0\}) \supseteq \Sigma_0 \cup \Sigma_{0,\mathrm{bad}}$.
\begin{proposition}
	$\mathsf{p}_0^{-1}(\{0\})= \Sigma_0 \sqcup \Sigma_{0,\mathrm{bad}}$. 
	\label{prop:char0}
\end{proposition}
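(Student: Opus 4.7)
My plan is to show the set equality $\mathsf{p}_0^{-1}(\{0\}) = \Sigma_0 \cup \Sigma_{0,\mathrm{bad}}$ and then verify disjointness. The inclusion $\Sigma_0 \cup \Sigma_{0,\mathrm{bad}} \subseteq \mathsf{p}_0^{-1}(\{0\})$ is immediate from the continuity of $\mathsf{p}_0$ on ${}^{\calc}\overline{T}^* \bbM$ combined with \cref{eq:8i80ac4}, which shows vanishing on each sheet away from $\mathrm{pf}\cup\mathrm{df}$. For the reverse inclusion I would work in charts at each of the boundary hypersurfaces $\natural\mathrm{f}$, $\mathrm{pf}$, $\mathrm{df}$ that do not appear on the interior.

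At points of $\natural\mathrm{f}$ away from $\mathrm{pf}\cup\mathrm{df}$, the identity $\rho_{\mathrm{df}}^{-2}\rho_{\mathrm{pf}}^{2}\mathsf{p}_0 = \taun^2 - \xin^2 \pm 2\taun$ directly identifies the zero set with the two-sheeted hyperboloid $(\taun\pm 1)^2 - \xin^2 = 1$, which splits by the sign of $\pm\taun$ into the two components $\Sigma_0\cap \natural\mathrm{f}^{\circ}$ and $\Sigma_{0,\mathrm{bad}}\cap \natural\mathrm{f}^{\circ}$. Near the interior of $\mathrm{pf}$ I would use the chart $(t,x,\tau,\xi,h)$ inherited from the parabolic compactification, in which $p_0 = h^2\tau^2 - \xi^2 \pm 2\tau$ is smooth and its restriction to $\{h=0\}$ is the Schr\"odinger symbol $-\xi^2\pm 2\tau$. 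A zero $(t,x,\tau_0,\xi_0,0)$ then satisfies $\tau_0 = \pm\xi_0^2/2$; choosing the bounded branch $\tau(h) = \tau_0 + O(h^2)$ of $h^2\tau^2 \pm 2\tau = \xi_0^2$ produces an interior curve in $\mathsf{p}_0^{-1}(\{0\})$ limiting to that point, and since $\pm\taun(h) = \pm h^2\tau(h) = h^2\xi_0^2/2 + O(h^4)\geq 0$ along it, the limit lies in $\Sigma_0$. At $\mathrm{df}\setminus\mathrm{pf}$, using the radial coordinates $(\hat{\tau}_\natural, \hat{\xi}_\natural, \rho_{\mathrm{df}})$ on the $\calczero$-compactification, the restriction of the appropriately rescaled $\mathsf{p}_0$ at $\rho_{\mathrm{df}}=0$ is $\hat{\tau}_\natural^2 - \hat{\xi}_\natural^2$, whose zero locus is the light cone $|\hat{\tau}_\natural| = |\hat{\xi}_\natural|$, and this splits by the sign of $\pm\hat{\tau}_\natural$ into the limit sets of the two sheets. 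The corner $\mathrm{pf}\cap\mathrm{df}_1$ is handled by the parabolic charts \cref{eq:misc_006}--\cref{eq:misc_007} with the same algebraic argument in the rescaled variables.

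For the disjointness, note that by construction $\Sigma_0\subseteq \overline{\{\pm\taun\geq 0\}}$ while $\Sigma_{0,\mathrm{bad}} \subseteq \overline{\{\pm\taun\leq -2\}}$. These two regions are separated on the interior and at $\natural\mathrm{f}$ by the slab $\{-2<\pm\taun<0\}$. At $\mathrm{pf}$, where $\taun=0$, only $\Sigma_0$ has limit points; $\Sigma_{0,\mathrm{bad}}$ is bounded away from $\mathrm{pf}$ by $|\taun|\geq 2$. At $\mathrm{df}$, the limit sets are the disjoint open half-cones $\pm\hat{\tau}_\natural>0$ and $\pm\hat{\tau}_\natural<0$ of the light cone.

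I expect the main obstacle is the book-keeping at the corner $\mathrm{pf}\cap\mathrm{df}_1$, where the gluing construction \cref{eq:calc_phase_space_definition} of the $\calc$-phase space is most intricate and several coordinate patches from \Cref{subsec:phase} must be tracked; at every other location the argument collapses to the single quadratic-branch computation sketched above.
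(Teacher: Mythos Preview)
Your approach is essentially the same as the paper's: check in coordinate charts covering each boundary stratum and verify that every zero of $\mathsf{p}_0$ there is a limit of interior zeros on the appropriate sheet, then check disjointness by tracking the sign of $\pm\taun$ (or its projective analogue). The paper carries this out in the same three regions you identify, with the same computations at $\mathrm{df}$, at $\mathrm{pf}^\circ$, and at the remaining corner.

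One correction to your book-keeping: the corner you flag as ``$\mathrm{pf}\cap\mathrm{df}_1$'' does not exist in ${}^{\calc}\overline{T}^*\bbM$. In the gluing construction \cref{eq:calc_phase_space_definition}, the face $\mathrm{df}_1$ from ${}^{\mathrm{par,I,res}}\overline{T}^*\bbM$ is removed and replaced by $\mathrm{df}$ from ${}^{\calczero}\overline{T}^*\bbM$; the resulting $\mathrm{df}$ and $\mathrm{pf}$ are disjoint. The corner you actually need is $\mathrm{pf}\cap\natural\mathrm{f}$, and it is handled exactly as you suggest, via the charts \ref{it:parres_chart_1}--\ref{it:parres_chart_2} from \S\ref{subsec:phase} (the paper writes out both, finding e.g.\ $\mathsf{p}_0 = \rho_{\mathrm{pf}}^2 - \hat\xi^2 \pm 2\operatorname{sign}(\tau)$ in chart \ref{it:parres_chart_1}). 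Similarly, your phrase ``$\mathrm{df}\setminus\mathrm{pf}$'' is just $\mathrm{df}$. With this relabeling your outline matches the paper's proof.
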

We are saying two things here: first, that $\mathsf{p}_0$ does not vanish at any unexpected points at fiber infinity or $\mathrm{pf}$, and, second, that $\Sigma_0,\Sigma_{0,\mathrm{bad}}$ do not unexpectedly intersect at fiber infinity or $\mathrm{pf}$.

\begin{proof} The proof is via checking the situation in various coordinate charts covering the various corners of ${}^{\calc}\overline{T}^* \bbM$.

	First, we check at and near $\mathrm{df}$. In terms of $\hat{\tau}_{\natural\mathrm{f}} = \rho_{\mathrm{df}}\tau_{\natural}$ and $\hat{\xi}_{\natural\mathrm{f}} = \rho_{\mathrm{df}}\xi_{\natural}$, which are both smooth functions near $\mathrm{df}$, 
		\begin{equation}
		\mathsf{p}_0  =  \hat{\tau}_{\natural\mathrm{f}}^2 - \hat{\xi}_{\natural\mathrm{f}}^2 \pm 2 \rho_{\mathrm{df}} \hat{\tau}_{\natural\mathrm{f}}
		\end{equation}
		if we take $\rho_{\mathrm{pf}}=1$ near $\mathrm{df}$. At $\mathrm{df}$, this can only vanish away from the equator $\{\hat{\tau}_{\natural\mathrm{f}} = 0\}$. Away from the equator, we can take $\rho_{\mathrm{df}} = 1/|\tau_{\natural}|$. Then, $\hat{\tau}_{\natural\mathrm{f}}= \operatorname{sign}(\tau)$, so 
		\begin{equation}
		\mathsf{p}_0 = 1  - \hat{\xi}_{\natural\mathrm{f}}^2 \pm 2 \rho_{\mathrm{df}} \operatorname{sign}(\tau), \quad \mathsf{p}_0^{-1}(\{0\})\cap \mathrm{df} = \{\hat{\xi}_{\natural\mathrm{f}}^2 = 1 \}.
		\label{eq:p0char_df_form}
		\end{equation}
		So, if $\pm \hat{\tau}_{\natural\mathrm{f}}>0$, then $\Sigma_0 = \{\hat{\xi}_{\natural\mathrm{f}}^2 =1+2 \rho_{\mathrm{df}} \}$ locally. Otherwise, that is in the coordinate chart $\pm \hat{\tau}_{\natural\mathrm{f}}<0$, $\Sigma_0$ is not present, instead $\Sigma_{0,\mathrm{bad}}$ is.  
		Likewise, if $\pm \hat{\tau}_{\natural\mathrm{f}}>0$, then $\Sigma_{0,\mathrm{bad}}$ does not intersect this coordinate chart, and if $\pm \hat{\tau}_{\natural\mathrm{f}} <0$, then 
		\begin{equation} 
		\Sigma_{0,\mathrm{bad}} = \{\hat{\xi}_{\natural\mathrm{f}}^2 =1-2 \rho_{\mathrm{df}} \}
		\end{equation} 
		locally. In either case, the points in $\mathsf{p}_0^{-1}(\{0\})$ at $\mathrm{df}$ are precisely the limit points of $\mathsf{p}_0^{-1}(\{0\})\backslash \mathrm{df}$ and therefore lie in $\Sigma_0 \sqcup \Sigma_{0,\mathrm{bad}}$.

	Next, consider the situation in and near $\mathrm{pf}\backslash \natural\mathrm{f}$. Here, $h,\tau,\xi$ (together with the base variables) form a smooth coordinate chart. In terms of these, $\mathsf{p}_0 = p_0 = h^2 \tau^2-\xi^2 \pm 2 \tau$ if we choose $\rho_{\mathrm{df}}=1$ near $\mathrm{pf}$ and $\rho_{\natural\mathrm{f}}=1$ near the point in question. So,
		\begin{equation} 
		 \mathsf{p}_0^{-1}(\{0\}) = \{ h^2 \tau^2 \pm 2 \tau - \xi^2 = 0\}
		 \end{equation}
locally. 
		 For $h>0$, this just gives the two-sheeted hyperboloid found earlier. When $h=0$, we instead get the paraboloid $\{2\tau = \pm \xi^2\}$. This is what the sheet $\Sigma_0$ degenerates to as $h\to 0$ (see \Cref{fig:degen}), so each point in the paraboloid is a limit point of points in $\Sigma_0$ with $h>0$. So, $\{2\tau = \pm \xi^2\}\subset \Sigma_0$. The set $\Sigma_{0,\mathrm{bad}}$ evidently does not intersect $\mathrm{pf}\backslash \natural\mathrm{f}$.

	Finally, consider $\mathrm{pf}\cap \natural\mathrm{f}$. Since we are away from $\mathrm{df}$, we may take $\rho_{\mathrm{df}}=1$. Then, \cref{eq:8i80ac4} becomes $\rho_{\mathrm{pf}}^2 \mathsf{p}_0 = h^2( h^2 \tau^2 - \xi^2 \pm 2 \tau)$.
		 \begin{itemize}
		 	\item Using the coordinate system $\rho_{\natural\mathrm{f}} = 1/|\tau|^{1/2}$, $\hat{\xi} = \xi / |\tau|^{1/2}$, and $\rho_{\mathrm{pf}} = h |\tau|^{1/2}$ (\cref{it:parres_chart_1} in \S\ref{subsec:phase}, cf.\ \cref{eq:j4w1dc}), we have 
		 	\begin{equation} 
		 	\mathsf{p}_0 =  \rho_{\mathrm{pf}}^2-\hat{\xi}^2\pm 2\operatorname{sign}(\tau)  .
		 	\label{eq:z31rvz}
		 	\end{equation}
		 	Near $\mathrm{pf}$, this is only vanishing when $\operatorname{sign}(\tau) = \pm 1$, in which case $	\mathsf{p}_0^{-1}(\{0\}) = \Sigma_0= \{ - \rho_{\mathrm{pf}}^2+\hat{\xi}^2 = 2\}$
		 	locally. Again, we find that the points in $\mathsf{p}_0^{-1}(\{0\})$ are just limit points of those points discussed previously. 
		 	\item Using the coordinate system $\rho_{\natural\mathrm{f}} = 1/|\xi_k|$, $\hat{\tau} = \tau/ |\xi_k|^2$, $\hat{\xi}_k = \{\xi_j/\xi_k\}_{j\neq k}$, and $\rho_{\mathrm{pf}} = h |\xi_k|$ (\cref{it:parres_chart_2} in \S\ref{subsec:phase}), we have 
		 	\begin{equation}
		 	\mathsf{p}_0 =   \rho_{\mathrm{pf}}^2 \hat{\tau}^2 - 1 - \hat{\xi}_k^2 \pm 2 \hat{\tau} .
		 	\end{equation} 
		 	At $\mathrm{pf}$, this is given by  $1+\hat{\xi}_k^2 \mp 2 \hat{\tau}$, which vanishes for $\hat{\tau} = \pm 2^{-1}(1+\hat{\xi}_k^2)$. So, this coordinate chart does not contain any points in $\mathsf{p}_0^{-1}(\{0\})$ that we have not found previously.		 \end{itemize}

	So, $\mathsf{p}_0^{-1}(\{0\})= \Sigma_0 \cup \Sigma_{0,\mathrm{bad}}$.

	 The explicit expressions above show that $\Sigma_0 \cap \Sigma_{0,\mathrm{bad}}=\varnothing$ everywhere.
\end{proof}

\begin{proposition}
	The characteristic set
	$\operatorname{char}_{\calc}^{2,0,2,0}(P_\pm)$ consists of two connected components,  one of which is in the closure of the region where $\pm \tau_\natural<-1$ and the other of which is the region where $\pm \tau_\natural >-1$. More precisely, 
	\begin{align}
	\begin{split} 
	\Sigma_{\mathrm{bad}} &= \operatorname{char}_{\calc}^{2,0,2,0}(P_\pm) \cap \operatorname{cl}_{{}^{\calc}\overline{T}^* \bbM}\{\pm \tau_{\natural} <-1 \}, \\ \Sigma &=\operatorname{char}_{\calc}^{2,0,2,0}(P_\pm) \cap \operatorname{cl}_{{}^{\calc}\overline{T}^* \bbM} \{\pm \tau_{\natural} >-1 \}.
	\end{split} 
	\label{eq:l94945}
	\end{align}
	Moreover, the portions of $\Sigma,\Sigma_{\mathrm{bad}}$ at $\mathrm{bf}\cup \natural\mathrm{f}$ are what they are for the free Klein--Gordon equation. That is, $\Sigma=\Sigma_{0}$ and $\Sigma_{\mathrm{bad}}=\Sigma_{0,\mathrm{bad}}$ there. 
\end{proposition}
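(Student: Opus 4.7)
The plan is to combine the preceding analysis of the ``free'' symbol $\mathsf{p}_0$ on $\mathrm{bf}\cup\natural\mathrm{f}$ with a separate analysis at fiber infinity that exploits the Lorentzian structure of $g$. First, I would observe that $p-p_0\in S_{\calc}^{2,-1,1,0}$, which implies that the rescaled difference $\mathsf{p}-\mathsf{p}_0=\rho_{\mathrm{df}}^2\rho_{\natural\mathrm{f}}^2(p-p_0)$ vanishes to positive order at both $\mathrm{bf}$ and $\natural\mathrm{f}$. Consequently $\mathsf{p}|_{\mathrm{bf}\cup\natural\mathrm{f}}=\mathsf{p}_0|_{\mathrm{bf}\cup\natural\mathrm{f}}$, so Proposition~\ref{prop:char0} gives directly that $\operatorname{char}_{\calc}^{2,0,2,0}(P_\pm)\cap(\mathrm{bf}\cup\natural\mathrm{f})=\Sigma_0\sqcup\Sigma_{0,\mathrm{bad}}$, settling the ``moreover'' claim. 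The explicit coordinate formulas from the proof of Proposition~\ref{prop:char0} further locate $\Sigma_0\subseteq\overline{\{\pm\tau_{\natural}\geq 0\}}$ and $\Sigma_{0,\mathrm{bad}}\subseteq\overline{\{\pm\tau_{\natural}\leq -2\}}$, which already yields the claimed sign dichotomy on $\mathrm{bf}\cup\natural\mathrm{f}$.

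Next I would examine $\mathrm{df}\setminus(\mathrm{bf}\cup\natural\mathrm{f})$, where the perturbation is no longer negligible. Using the principal symbol $p=-g^{-1}(-,-)\pm 2\tau$ and the fact that $\pm 2\tau$ is of order $1$ in frequency while $-g^{-1}(\zeta,\zeta)$ is of order $2$, the linear term drops out after rescaling, giving $\mathsf{p}|_{\mathrm{df}}=-\rho_{\mathrm{df}}^2\rho_{\natural\mathrm{f}}^2 g^{-1}(\zeta,\zeta)|_{\mathrm{df}}$, a nonzero rescaling of the dual-metric quadratic form on cofibers. Since $g$ is Lorentzian with timelike $\partial_t$, the null cone of $g$ in each cofiber has exactly two connected components, which at fiber infinity descend to two disjoint $(d-1)$-sphere components in $\mathrm{df}$, distinguished by the sign of $\tau$. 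In the charts $\rho_{\mathrm{df}}=1/|\tau_{\natural}|$, $\hat{\xi}_{\natural\mathrm{f}}=\xi_{\natural}/|\tau_{\natural}|$ used previously, the characteristic set at $\mathrm{df}$ takes the form $|\hat{\xi}_{\natural\mathrm{f}}|_{g_\infty}^2=1$ for an effective Riemannian quadratic form $|\cdot|_{g_\infty}^2$ depending on the basepoint, matching the Minkowski formula up to a basepoint-dependent deformation but preserving the two-component topology.

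Finally I would glue the two local pictures. The two analyses are consistent at the corner $\mathrm{df}\cap\mathrm{bf}$ since $\mathsf{p}=\mathsf{p}_0$ there, and the identity $\operatorname{sign}(\tau)=\operatorname{sign}(\tau_{\natural})$ in the overlap charts shows that the ``$\pm\tau>0$'' component at $\mathrm{df}$ matches the closure of $\{\pm\tau_{\natural}>-1\}$ and the ``$\pm\tau<0$'' component matches the closure of $\{\pm\tau_{\natural}<-1\}$. Disjointness of $\Sigma$ and $\Sigma_{\mathrm{bad}}$ follows from Proposition~\ref{prop:char0} on $\mathrm{bf}\cup\natural\mathrm{f}$ and from the separation of future/past null hemispheres on $\mathrm{df}$. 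The main obstacle I anticipate is the sign bookkeeping at the triple corner $\mathrm{df}\cap\mathrm{bf}\cap\natural\mathrm{f}$ and near the equator $\{\hat{\tau}_{\natural\mathrm{f}}=0\}$ of $\mathrm{df}$, where both hemispheres formally come close; however, at the equator the cofiber equation $|\hat{\xi}_{\natural\mathrm{f}}|_{g_\infty}^2=1$ with the constraint $\hat{\tau}_{\natural\mathrm{f}}=0$ has no solutions (since the light cone is not horizontal), so the two components remain separated, and the rest is a straightforward continuity argument using the charts already introduced.
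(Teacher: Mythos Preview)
Your proposal is correct and follows essentially the same approach as the paper: use $\mathsf{p}|_{\mathrm{bf}\cup\natural\mathrm{f}}=\mathsf{p}_0|_{\mathrm{bf}\cup\natural\mathrm{f}}$ together with Proposition~\ref{prop:char0} to handle $\mathrm{bf}\cup\natural\mathrm{f}$, then identify the characteristic set at $\mathrm{df}$ with the future/past null directions of $g$, and match the two pictures at the corner. The paper's version is terser and invokes directly the hypothesis that $t$ is a global time function (equivalently, $\mathrm{d}t$ is timelike) to separate the two null hemispheres, whereas you phrase the same point as ``the light cone is not horizontal'' when checking the equator $\{\hat{\tau}_{\natural\mathrm{f}}=0\}$; these are the same assertion.
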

The inequality $\pm \tau_\natural \lessgtr -1$ in \cref{eq:l94945} can be replaced by  $\pm \tau_\natural \lessgtr E$ for any $E\in (-2,0)$ without changing $\Sigma,\Sigma_{\mathrm{bad}}$, since these sets are disjoint from $\cup_{\delta \in (0,1)} \operatorname{cl}_{{}^{\calc}\overline{T}^* \bbM} \{|\pm \tau_\natural+1|<\delta \}$.
\begin{proof}
	Since the characteristic set is $\mathsf{p}_0^{-1}(\{0\})$ at $\mathrm{bf}\cup \natural\mathrm{f}$, we only need to discuss the situation at $\mathrm{df}^\circ$. (Recall that the characteristic set, by definition, does not have any points in the interior of the phase space or in the interior of $\mathrm{pf}$; any point in the characteristic set in $\mathrm{pf}$ must be in at least one of $\mathrm{bf},\natural\mathrm{f}$.)
	
	At $\mathrm{df}$, $\operatorname{char}_{\calc}^{2,0,2,0}(P_\pm) = \mathsf{p}^{-1}(\{0\})$ consists precisely of null directions in the cosphere bundle. The null directions are all either future-directed or past-directed, and each set of null directions is a smooth submanifold of the cosphere bundle. Since the metric is asymptotically flat, at $\partial \bbM$ the future-directed null directions converge to those for the Minkowski metric, which are the points in $\mathsf{p}_0^{-1}(\{0\})$ found previously. 
	
	So, $\operatorname{char}_{\calc}^{2,0,2,0}(P_\pm)$ consists of two components. One, which we call $\Sigma$, consists of the points in $\Sigma_0$ in $\mathrm{bf}\cup \natural\mathrm{f}$ and, in the $+$ case, all of the future-directed null directions, and, in the $-$ case, all of the past-directed null directions. Similarly, $\Sigma_{\mathrm{bad}}$ consists of the points in $\Sigma_{0,\mathrm{bad}}$ in $\mathrm{bf}\cup \natural\mathrm{f}$ and all of the remaining null directions. From the description above, each of these is connected, they are disjoint, and their union is the whole characteristic set.
	
	The formula \cref{eq:l94945} is correct at $\mathrm{bf}\cup \natural\mathrm{f}$, since it holds there with $\Sigma_0,\Sigma_{0,\mathrm{bad}}$ in place of $\Sigma,\Sigma_{\mathrm{bad}}$, and at $\mathrm{df}$ these formulas pick out the future-directed or past-directed null directions (because we are assuming that $t$ is a global time function).
\end{proof}

The upshot of the discussion above is summarized in \Cref{fig:charFig}.

A worthwhile fact is that, as far as the frequency variables and semiclassical parameter in the $\calc$- phase space are concerned (meaning ignoring the fact that spacetime infinity needs multiple coordinate charts to cover), we can cover a neighborhood of $\Sigma$ with four coordinate charts.  Indeed, in the proof of \Cref{prop:char0}, we observed the following:
\begin{enumerate}
\item In the interior of $\natural\mathrm{f}$ we can use coordinates $\taun$, $\xin$ and $h$. \label{eq:nat_coord}
\item Away from the zero section of ${}^{\calczero}\overline{T}^* \bbM$ (that is, ``near'' $\mathrm{df}$) and near $\Sigma$, we can use 
	\begin{equation} 
	\rho_{\mathrm{df}} = 1/|\tau_{\natural}|,\; \hat{\xi}_{\natural\mathrm{f}} = \xi_{\natural}/|\tau_{\natural}|,\text{ and }\rho_{\natural\mathrm{f}} = h
	\label{eq:proj_coord}
	\end{equation} as smooth coordinates. (And we can take $\rho_{\mathrm{pf}} =1$ here.) Specializing \cref{eq:p0char_df_form} to the case relevant to $\Sigma$,  namely $\pm \hat{\tau}_{\natural\mathrm{f}}>0$,
	\begin{equation}
	\mathsf{p}_0 = 1 - \hat{\xi}^2_{\natural\mathrm{f}} +2\rho_{\mathrm{df}}, 
	\label{eq:p0_df_form}
	\end{equation}
	$\Sigma_0 =\{\hat{\xi}^2_{\natural\mathrm{f}} = 1+2\rho_{\mathrm{df}}\}$. Since $\Sigma_0=\Sigma$ at $\mathrm{bf}\cup \natural\mathrm{f}$, we have 
	\begin{equation}
	\Sigma \cap (\mathrm{bf} \cup \natural\mathrm{f}) = \{\hat{\xi}^2_{\natural\mathrm{f}} = 1+2\rho_{\mathrm{df}}\}
	\label{eq:misc_5454}
	\end{equation}
	near $\mathrm{df}$.
	\label{it:df_coord}
	\item 
	Except at $\natural\mathrm{f}\cup \mathrm{df}$, we can just use the usual frequencies $\tau,\xi$ as smooth coordinates, and we can take $\rho_{\mathrm{df}}=1$, $\rho_{\natural\mathrm{f}}=1$, $\rho_{\mathrm{pf}} = h$. This leads to $\mathsf{p}_0 = p_0 = -h^2 \tau^2 + \xi^2 \mp 2 \tau$. The portion of $\Sigma$ here is contained in $\mathrm{bf}$, where it is equal to $\Sigma_0$, so 
	\begin{equation}
	\Sigma\cap \mathrm{bf}\backslash (\natural\mathrm{f}\cup\mathrm{df})  = \{h^2 \tau^2 - \xi^2 \pm 2 \tau =0\}
	\end{equation}
	here.
	\label{it:pf_coord}
	\item Finally, to cover the corner $\Sigma\cap \natural\mathrm{f} \cap \mathrm{pf}$, we can use 
	\begin{equation} 
		\rho_{\natural\mathrm{f}} = 1/|\tau|^{1/2}, \hat{\xi} = \xi / |\tau|^{1/2},\text{ and }\rho_{\mathrm{pf}} = h |\tau|^{1/2}
		\label{eq:parabolic_proj_coord}
	\end{equation} 
	(\cref{it:parres_chart_1} in \S\ref{subsec:phase}, cf.\ \cref{eq:j4w1dc}), which leads to \cref{eq:z31rvz}, which, specializing to the $\pm \tau>0$ case relevant to $\Sigma$, becomes 
	\begin{equation} 
	\mathsf{p}_0 =  \rho_{\mathrm{pf}}^2-\hat{\xi}^2+2, \quad  \Sigma_0 \cap \mathrm{bf} \cup \natural\mathrm{f}  = \{ - \rho_{\mathrm{pf}}^2 +\hat{\xi}^2 = 2\}\subset \mathrm{bf} \cup \natural\mathrm{f} .
	\label{eq:misc_4545}
	\end{equation}
	In $\natural\mathrm{f}$,  $\Sigma=\Sigma_0$, so $\Sigma \cap \natural\mathrm{f}\cap \mathrm{pf} = \{  -\rho_{\mathrm{pf}}^2+\hat{\xi}^2 = 2\}$. 
\end{enumerate}

\begin{remark}
	Another useful fact is that $\Sigma$ intersects $\operatorname{cl}_{{}^{\calc}\overline{T}^* \bbM} \{\xi=0,h>0\}$
	only in $\mathrm{bf}\cap \mathrm{pf}\backslash \natural\mathrm{f}$. Concretely, this means that in the coordinate charts above, it is only in the coordinate chart (\ref{it:pf_coord}) that the symbol vanishes at some point with vanishing spatial frequency. This can be seen in \Cref{fig:charFig}, in which the characteristic set intersects the horizontal hyperplane bisecting the figure only at $\mathrm{bf}\cap \mathrm{pf}\backslash \natural\mathrm{f}$. 
	\label{rem:xi_vanishing}
\end{remark}

If one wants to use spherical coordinates in the base, instead of $\tau,\xi$, it is best to work with the frequency variables dual to spherical coordinates, specifically $\nu$ dual to $r=\lVert x \rVert$ and $r\eta \in T^* \bbS^{d-1}$ dual to the angular variable $\theta = x/r$. 
(The reason for letting $r\eta$ be the angular frequency rather than $\eta$ is this definition of $\eta$ is what makes $\eta$ a smooth coordinate even over base infinity. That is, this definition of $\eta$ is an ``sc-frequency.'' From a physicist's point of view, one wants all frequency coordinates to be measured using the same system of units.)
The coordinates $\nu,\eta$ extend smoothly to $\mathrm{bf}^\circ$. Then, $\nu_{\natural} = h \nu$ and $\eta_{\natural} = h \eta$ extend smoothly to $h=0$, and
\begin{equation}
\hat{\nu}_{\natural\mathrm{f}} = \rho_{\mathrm{df}} \nu_{\natural}, \quad \hat{\eta}_{\natural\mathrm{f}} = \rho_{\mathrm{df}} \eta_{\natural}, 
\end{equation}
also extend smoothly to $\mathrm{df}$. Similar coordinates can also be used near $\mathrm{pf}\cap \natural\mathrm{f}$.

\subsection{The Hamiltonian vector field}
\label{subsec:H}

We now study the Hamiltonian vector field $H_p$ associated to the $\calc$-principal symbol $p$ of $P_\pm$. The Hamiltonian vector field is defined by the usual formula,
\begin{equation} \label{eq:Hp-def}
H_p = (\partial_\tau p) \partial_t - (\partial_t p) \partial_\tau +\sum_{j=1}^d ( (\partial_{\xi_j}p)\partial_{x_j} - (\partial_{x_j} p) \partial_{\xi_j}   ),
\end{equation}
and likewise for $H_{p_0}$.
Since $p_0 = h^2 \tau^2 - \xi^2 \pm 2 \tau$,  
\begin{equation}
H_{p_0} =   2(h^2 \tau \pm 1  )\partial_t - 2\xi\cdot \partial_x.
\label{eq:Hp0}
\end{equation}
Here, $\xi \cdot \partial_x$ is the generator of the geodesic flow in the spatial directions.

It follows from \cref{eq:principal_symbol_phg} that the vector field $\mathsf{H}_p$ defined by 
\begin{equation} \label{eq:renormalized_Hvf}
	\mathsf{H}_p = 2^{-1}\rho_{\mathrm{df}} \rho_{\mathrm{bf}}^{-1} \rho_{\natural\mathrm{f}} H_p \in \calV_{\mathrm{b}}({}^{\calc}\overline{T}^* \bbM )
\end{equation} 
is a b-vector field on $\smash{{}^{\calc}\overline{T}^* \bbM}$. Consequently, $\mathsf{H}_p$ induces a flow on the boundary $\partial \smash{{}^{\calc}\overline{T}^* \bbM}$. And, since $\mathsf{H}_p \mathsf{p} \propto \mathsf{H}_p p = 0$ on the characteristic set, the characteristic set is invariant under the induced flow.
What we must understand is the dynamical structure of the flow within the good sheet $\Sigma$ of the characteristic set. In particular, the \emph{radial set} 
\begin{equation} 
	\calR = \text{vanishing set of }\mathsf{H}_p\text{ in }\Sigma
	\label{eq:R}
\end{equation} 
will be important. 
\begin{remark}\label{rem:vanishing sense}
	In \cref{eq:R}, we mean the ordinary vanishing set of $\mathsf{H}_p$, considered as a smooth vector field on the compactified phase space. We shall see that, \emph{as a b-vector field} on $\smash{{}^{\calc}\overline{T}^* \bbM}$, $\mathsf{H}_p$ never vanishes. That is, at points where it vanishes as an ordinary smooth vector field, the component normal to the boundary always vanishes simply, not quadratically. That is, the $\partial_{\rho_{\mathrm{bf}}}$ component will look like $\rho_{\mathrm{bf}} \partial_{\rho_{\mathrm{bf}}}$, not $\rho_{\mathrm{bf}}^2 \partial_{\rho_{\mathrm{bf}}}$. The former vanishes at bf when considered as a smooth vector field, but not a b-vector field, whereas the latter vanishes when considered as either.
	
	The fact that the coefficient of $\partial_{\rho_{\mathrm{bf}}}$ vanishes only simply is a crucial observation, as the action of this b-normal component on a power of $\rho_{\mathrm{bf}}$ is the main source of positivity in our positive commutator estimates of \S\ref{sec:estimates}. This is the usual way in which radial point estimates work.
\end{remark}

In this subsection, we will compute $\calR$ in various coordinate charts. Since the definition of $\calR$ does not depend on the choice of boundary-defining-functions, we are free to choose whatever definitions are convenient for the computation at hand. 
Those computations are simplified by noting that, at any one of $\mathrm{f} \in \{\mathrm{df},\mathrm{bf},\natural\mathrm{f}\}$, contributions to $p$ that are subprincipal at $\mathrm{f}$ do not affect $\mathsf{H}_p$ on $\mathrm{f}$. For example, 
\begin{equation} 
	\mathsf{H}_p = \mathsf{H}_{p_0}
\end{equation} 
on $\natural\mathrm{f}\cup \mathrm{bf}$.  Thus, for the purposes of this subsection, namely computing $\calR$, we can replace $\mathsf{H}_p$ with $\mathsf{H}_{p_0}$ when analyzing the situation restricted to any of $\mathrm{bf},\natural\mathrm{f}$. Later we will be concerned with the linearization of $\mathsf{H}_p$ at $\calR$, to which the deviation $\mathsf{H}_p-\mathsf{H}_{p_0}$ can contribute even at $\mathrm{bf},\natural\mathrm{f}$ (in a limited manner).

\begin{figure}[t]
	\begin{center}
		\includegraphics{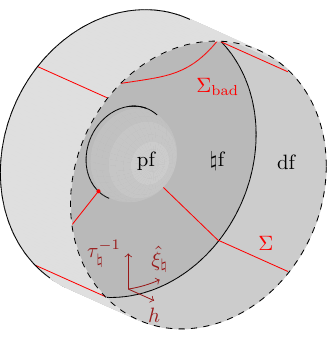}
		\quad 
		\includegraphics{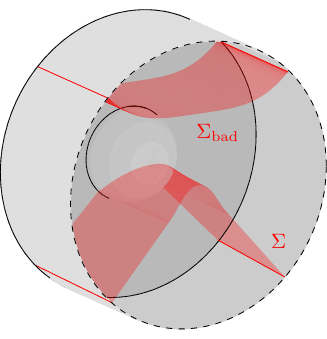}
	\end{center}
	\caption{The $\calc$-characteristic set of $P_+$ over the interior of $\bbM$ (left) and over $\partial \bbM$ (right), with spacetime variables ignored. The orientation of the figure is such that $\tau$ is increasing downwards, as in \Cref{fig:phase}. 
	}
	\label{fig:charFig}
\end{figure}


\begin{proposition}
	$\calR$ is contained in $\mathrm{bf}$.
\end{proposition}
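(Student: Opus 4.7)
The plan is to verify, stratum by stratum, that $\mathsf{H}_p$ is nonvanishing at every point of $\Sigma\setminus\mathrm{bf}$. Since the characteristic set lies in $\mathrm{df}\cup\mathrm{bf}\cup\natural\mathrm{f}$ and the faces $\mathrm{df}$ and $\mathrm{pf}$ are disjoint in ${}^{\calc}\overline{T}^*\bbM$, the points of $\Sigma\setminus\mathrm{bf}$ fall into four strata: $\Sigma\cap\natural\mathrm{f}^\circ$, $\Sigma\cap\mathrm{df}^\circ$, $\Sigma\cap(\mathrm{df}\cap\natural\mathrm{f})\setminus\mathrm{bf}$, and $\Sigma\cap(\natural\mathrm{f}\cap\mathrm{pf})\setminus\mathrm{bf}$. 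I would check each using the coordinate charts listed at the end of \S\ref{subsec:char}.

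The preliminary observation is that $p-p_0 \in S_{\calc}^{2,-1,1,0}$ vanishes at $\mathrm{bf}\cup\natural\mathrm{f}$ to the order required so that the rescaled difference $\mathsf{H}_p-\mathsf{H}_{p_0} = \tfrac{1}{2}\rho_{\mathrm{df}}\rho_{\mathrm{bf}}^{-1}\rho_{\natural\mathrm{f}}H_{p-p_0}$ vanishes on $\mathrm{bf}\cup\natural\mathrm{f}$. Thus for the three strata that touch $\natural\mathrm{f}$, I would substitute $H_{p_0} = 2(\taun\pm 1)\partial_t - 2\xi\cdot\partial_x$ into \eqref{eq:renormalized_Hvf} and compute in the appropriate chart. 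In chart \eqref{eq:nat_coord} this gives $\mathsf{H}_{p_0}|_{\natural\mathrm{f}^\circ} = -\rho_{\mathrm{bf}}^{-1}\xin\cdot\partial_x$; the analogous computations in charts \eqref{eq:proj_coord} and \eqref{eq:parabolic_proj_coord} yield, at the relevant corners, $-\rho_{\mathrm{bf}}^{-1}\hat{\xi}_\natural\cdot\partial_x$ at $\mathrm{df}\cap\natural\mathrm{f}$ and $-\rho_{\mathrm{bf}}^{-1}\hat{\xi}\cdot\partial_x$ at $\natural\mathrm{f}\cap\mathrm{pf}$. Using the explicit descriptions \eqref{eq:p0_df_form}, \eqref{eq:misc_5454}, \eqref{eq:misc_4545} of $\Sigma$, the spatial frequency vanishes only when $\taun\in\{0,\mp 2\}$ at $\natural\mathrm{f}^\circ$; but $\taun=0$ has been blown up into $\mathrm{pf}$ and $\taun=\mp 2$ belongs to $\Sigma_{\mathrm{bad}}$, not $\Sigma$. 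At the two corners $|\hat{\xi}_\natural|^2 = 1+2\rho_{\mathrm{df}} = 1$ and $|\hat{\xi}|^2 = 2+\rho_{\mathrm{pf}}^2 = 2$, both strictly positive. Hence $\mathsf{H}_p\neq 0$ in these three strata.

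The main obstacle is the remaining stratum $\Sigma\cap\mathrm{df}^\circ$, where the metric perturbation genuinely contributes to $p$ at fiber infinity over interior spacetime points. Here I would argue geometrically: at $\mathrm{df}$, the principal part of $p$ is (up to a positive smooth factor) the dual metric form $-g^{ij}\zeta_i\zeta_j$, so $\Sigma\cap\mathrm{df}^\circ$ consists of the nonzero null covectors of the Lorentzian metric $g$ over the interior of $\bbM$. The Hamiltonian vector field satisfies $\partial_{\zeta_i}p = -2g^{ij}\zeta_j + (\text{lower order in }\zeta)$, and at a nonzero null covector $\zeta$ the raised vector $g^{ij}\zeta_j$ is itself nonzero by non-degeneracy of $g$. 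Consequently $\rho_{\mathrm{df}}\partial_{\zeta_i}p = -2g^{ij}(\rho_{\mathrm{df}}\zeta_j)+O(\rho_{\mathrm{df}})$ extends smoothly and nonvanishingly to $\mathrm{df}$; since both $\rho_{\natural\mathrm{f}}$ and $\rho_{\mathrm{bf}}$ are positive on $\mathrm{df}^\circ\setminus\mathrm{bf}$, the $\partial_z$-component of $\mathsf{H}_p=\tfrac{1}{2}\rho_{\mathrm{df}}\rho_{\mathrm{bf}}^{-1}\rho_{\natural\mathrm{f}}H_p$ does not vanish. Combining the four strata yields $\calR\subseteq\mathrm{bf}$.
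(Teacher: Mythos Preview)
Your proof is correct and follows essentially the same approach as the paper: the same four-fold stratification of $\Sigma\setminus\mathrm{bf}$, the same reduction to $\mathsf{H}_{p_0}$ on the three $\natural\mathrm{f}$-strata via $p-p_0\in S_{\calc}^{2,-1,1,0}$, and the same use of the explicit descriptions of $\Sigma$ in the charts of \S\ref{subsec:char} to rule out vanishing of the spatial-frequency component. Your treatment of $\mathrm{df}^\circ$ spells out explicitly (via non-degeneracy of $g$) what the paper summarizes in one line as ``the null-geodesic flow on the cosphere bundle is nonvanishing''; the content is the same.
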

\begin{proof} 
We need to show that $\mathsf{H}_p$ is nonvanishing on 
\begin{enumerate}[label=(\roman*)]
	\item the portion of the characteristic set at fiber infinity but away from the other boundary hypersurfaces, i.e.\  $\mathrm{df}^\circ \cap \Sigma$,
	\item the portion of the characteristic set at the corner between fiber infinity and $\natural\mathrm{f}$ but outside of $\mathrm{bf}$ i.e.\ $\mathrm{df} \cap \natural\mathrm{f} \cap \Sigma\backslash \mathrm{bf}$,
	\item  the portion of the characteristic set in $\natural\mathrm{f}$ but away from the other boundary hypersurfaces, i.e.\  $\natural\mathrm{f}^\circ \cap \Sigma$, and
	\item the portion of the characteristic set at the corner between $\natural\mathrm{f}$ and $\mathrm{pf}$ but outside of $\mathrm{bf}$, i.e.\  $\natural\mathrm{f} \cap \mathrm{pf} \cap \Sigma\backslash \mathrm{bf}$.
\end{enumerate}
No points in the interior of $\mathrm{pf}$ need to be considered, since $\calR$ is defined to be a subset of $\Sigma$ and $\Sigma$ is defined to be disjoint from $\mathrm{pf}^\circ$. 
Only for (i) do we need to consider $\mathsf{H}_p$. For the other three regions, it suffices to consider only $\mathsf{H}_{p_0}$.

In case (i), it suffices to note that $\mathsf{H}_p$ is a nonzero multiple of  the null-geodesic flow on the cosphere bundle of the base. This vector field is nonvanishing. 

In case (ii), we can use the projective coordinates \cref{eq:proj_coord}. In terms of these, 
\begin{equation}
\rho_{\mathrm{bf}} \mathsf{H}_{p_0} = \pm h (1+ \rho_{\mathrm{df}}) \partial_t - \hat{\xi}_{\natural\mathrm{f}} \cdot \partial_x.
\label{eq:Hp0_df_form}
\end{equation}
When $h=0$, $\rho_{\mathrm{bf}} \mathsf{H}_{p_0} |_{\mathrm{df}}$ vanishes when $\smash{\hat{\xi}_{\natural\mathrm{f}}} = 0$, but we see from \cref{eq:p0_df_form} that $\smash{\hat{\xi}_{\natural\mathrm{f}}}$ is never zero on $\Sigma$ in the region under investigation. 

For case (iii), we compute $\mathsf{H}_{p_0}$ using the natural coordinates 
$\tau_{\natural}  = h^2 \tau$ and $\xi_{\natural} = h \xi$.
Choosing $\rho_{\natural\mathrm{f}} = h$ as a boundary-defining-function of $\natural\mathrm{f}$ valid except at $\mathrm{pf}$, we get 
	\begin{equation}
	\rho_{\mathrm{df}}^{-1} \rho_{\mathrm{bf}} \mathsf{H}_{p_0} = h(\tau_{\natural} \pm 1 ) \partial_t  - \xi_{\natural} \cdot \partial_x .
	\label{eq:8tj44x3} 
	\end{equation}
	So, in the interior $\natural\mathrm{f}^\circ$, where $(x, t)$ are valid coordinates (as we are away from the spacetime boundary), 
	$\mathsf{H}_{p_0}$ can vanish only if $\xi_{\natural} = 0$. But Remark~\ref{rem:xi_vanishing} states that this does not happen on $\Sigma$. Indeed: on $\Sigma \cap \natural\mathrm{f}^\circ\subset \{\smash{\xi_{\natural}^2} = \smash{\tau_{\natural}^2} \pm 2 \tau_{\natural} \}$, for $\xi_\natural$ to vanish
	means that $\tau_{\natural}\in \{0,\mp2\}$. However, in  $\natural\mathrm{f}^\circ$, 
	\begin{equation}
	\xi_\natural = 0 \Rightarrow \tau_\natural \neq 0, 
	\end{equation} 
	so $\tau_{\natural}=0$ is not an option. If instead $\tau_{\natural} = \mp2$, then we would be in $\Sigma_{\mathrm{bad}}$ rather than $\Sigma$. So, $\calR$ is disjoint from $\natural\mathrm{f}^\circ$.

In case (iv), it suffices to use the coordinates in \cref{eq:parabolic_proj_coord}. (Indeed, this coordinate chart covers $\Sigma\cap \mathrm{pf}\cap \natural\mathrm{f}$, as we saw in the previous subsection.) In terms of these coordinates, and choosing $\rho_{\mathrm{df}}=1$ near $\mathrm{pf}$, we have 
	\begin{equation}
	\rho_{\mathrm{bf}} \mathsf{H}_{p_0} =  \pm \rho_{\natural\mathrm{f}}( \rho_{\mathrm{pf}}^2 + 1 ) \partial_t -\hat{\xi} \cdot \partial_x.
	\label{eq:Hp0_pfPlf_form}
	\end{equation} 
	At $\natural\mathrm{f}$, the right-hand side vanishes if and only if $\hat{\xi}=0$, but we see from \cref{eq:z31rvz} that  $\Sigma\cap \mathrm{pf}\cap \natural\mathrm{f}$ does not contain any points with $\smash{\hat{\xi}}=0$. This follows from Remark~\ref{rem:xi_vanishing} again.
\end{proof}

\begin{remark} We will note later that the radial set $\calR$ does intersect the boundary hypersurfaces $\mathrm{pf}$, $\mathrm{\natural\mathrm{f}}$ and $\mathrm{df}$ where they intersect $\mathrm{bf}$; that is, $\calR$ has a nonempty intersection with all three of $\mathrm{pf} \cap \mathrm{bf}$, $\mathrm{\natural\mathrm{f}} \cap \mathrm{bf}$ and $\mathrm{df} \cap \mathrm{bf}$.
\end{remark}
\begin{remark} Contrast with the situation on $\Sigma_{\mathrm{bad}}$; $\mathsf{H}_{p_0}$ \emph{does} vanish on $\Sigma_{\mathrm{bad}}$ over every point in $\bbR^{1,d}$, namely in $\natural\mathrm{f}^\circ$ when $\tau_{\natural} = \mp2$ and $\xi_{\natural}=0$. This is one sense in which $\Sigma_{\mathrm{bad}}$ is problematic. The advantage of working in ${}^{\calc}\overline{T}^* \bbM$, as opposed to ${}^{\calczero}\overline{T}^* \bbM$, is that this problematic behavior is only occurring on one component of the characteristic set, whereas in the latter both components have the same problem --- see \S\ref{sec:whynot}. Blowing up the zero section of  ${}^{\calczero}\overline{T}^* \bbM$ over $\{h=0\}$ turned out be precisely what is needed to remove this unwanted vanishing, at least on one component of the characteristic set.
	
Note that we could do the analogous blowup on $\Sigma_{\mathrm{bad}}$, which is equivalent to working with the $\calctwo$-calculus. This would resolve all problems. However, it is simpler to focus on one component of the characteristic set at a time (and to conjugate the operator so that the component intersects the zero section).	
\end{remark}

To analyze the situation over $\partial \bbM$, i.e.\ at $\mathrm{bf}$, we start by considering the region away from $\mathrm{df},\mathrm{pf}$, taking $\rho_{\mathrm{df}}=1$ and $\rho_{\natural\mathrm{f}}=h$ there. In region (\ref{eq:nat_coord}) enumerated above, we have by  \cref{eq:renormalized_Hvf} and \cref{eq:8tj44x3}, 
\begin{equation}\label{eq:Hvf_nat_face}
h \rho_{\mathrm{bf}}^{-1} H_{p_0} = 2\rho_{\mathrm{bf}}^{-1} (h(\tau_{\natural} \pm 1 ) \partial_t  - \xi_{\natural} \cdot \partial_x ) .
\end{equation}
In a local coordinate patch near $\partial \bbM$ we can take $\rho_{\mathrm{bf}}^{-1}$ to be either $\pm t$ or $\pm x_j$, where $t$, resp. $x_j$ is a dominant coordinate in the sense that $|t| \geq c |z|$, resp. $|x_j| \geq c |z|$ for some $c > 0$ locally, $z=(t,x)$, $|z|^2=t^2+\lVert x\rVert^2$. The vector field on the right-hand side of \cref{eq:Hvf_nat_face} is then homogeneous of degree zero in $z$ and hence extends to a smooth vector field on the radial compactification. This vector field vanishes (in the usual sense, cf. Remark~\ref{rem:vanishing sense}) at a point of $\mathrm{bf}$ if and only if it is \emph{radial} there, that is, if and only if it is a multiple of $t \partial_t + x \cdot \partial_x$. (Hence the term `radial set' for the points where this happens.) This gives a convenient way to identify $\calR$. 
For $\omega\in \partial \bbM$ and nonzero $z=(t,\bfx)\in \bbR^{1,d}$, we write $\omega \parallel z $ to mean that $\omega \in \mathrm{cl}_{\bbM} \operatorname{span}_\bbR z$, i.e.\ that $\omega$ is either parallel or anti-parallel to $z$, or 0. 
It follows from \cref{eq:8tj44x3} that $\mathsf{H}_{p_0}$ vanishes at a point in $\mathrm{bf}\backslash (\mathrm{pf} \cup \mathrm{df})$ with base coordinate $\omega$ and frequency coordinates $\tau_{\natural} \in \bbR,\xi_{\natural}\in \bbR^d$ if and only if 
\begin{equation}
\omega \parallel ( h (\tau_{\natural}\pm 1),- \xi_{\natural} ). 
\label{eq:id_008}
\end{equation}
If $h>0$, then, for each point in the frequency-space hyperboloid $\{\xi_{\natural}^2 = \smash{\tau_{\natural}^2} \pm 2 \tau_{\natural} \}\subset \bbR^{1+d}$, this picks out exactly two possible $\omega$. Specifically, we can parametrize the sheet $\Sigma$ using the map $\xi_{\natural} \mapsto ( \tau_{\natural} = \pm (\smash{(1+\xi_{\natural}^2 )^{1/2}}-1), \xi_{\natural} )$, and therefore the relevant $\omega$'s are given by
\begin{equation}\label{eq:omega_defn}
\omega_\varsigma(\xi_{\natural}) =  \varsigma \bbR^+ \cdot ( h (1+\xi_{\natural}^2 )^{1/2} , \mp \xi_{\natural} ), \qquad \varsigma \in \{-,+\},
\end{equation}
identifying points of $\partial \bbM$ with the corresponding rays through the origin. 
If $\xi_{\natural}=0$, then $\omega_\varsigma(\xi_{\natural})$ is the north $(+)$/south $(-)$ pole of $\bbM$. Otherwise, $\omega_\varsigma(\xi_{\natural})$ will be some other ray with the absolute value of the slope being strictly less than $h = c^{-1}$. The rays with `slope' $|t|/\lVert x \rVert$ exactly equal to $h$ are on the light cone, which is widening as $h^{-1} = c\to\infty$ in the expected way. 
If $h=0$, then \cref{eq:id_008} is satisfied as long as $\omega$ lies on the equator of $\partial \bbM$ and $\omega\parallel \xi_{\natural}$. In other words, the possible solutions are $\omega_\varsigma(\xi_{\natural}) = \varsigma \bbR^+ (0,\mp \xi_{\natural})$. 

Our sign conventions are such that $\omega_+$ is always in the future closed hemisphere of $\partial \bbM$ while $\omega_-$ is always in the past closed hemisphere. 

The vanishing points of $\mathsf{H}_{p_0}$ found in the previous paragraph, since they lie at $\mathrm{bf}$, must be vanishing points of $\mathsf{H}_p$, that is in our set $\calR$. 
To summarize, we found two portions of radial set, whose intersection with $\mathrm{bf}\backslash (\mathrm{pf}\cup \mathrm{df})$ is given by
\begin{equation}
\{(\omega_\varsigma(\xi_{\natural}), \pm(- 1 + (1+\xi^2_{\natural\mathrm{f}})^{1/2}),\xi_{\natural} , h  ) \in {}^{\calc} T^* \bbM \backslash (\mathrm{pf}\cup \mathrm{df})\} \subset {}^{\calczero} T^* \bbM.
\end{equation}
By the continuity of $\mathsf{H}_p$ on ${}^{\calc}\overline{T}^* \bbM$, if we define 
\begin{equation}\label{eq:Rclosure}
\calR_\varsigma = \mathrm{cl}_{{}^{\calc}\overline{T}^* \bbM} \{(\omega_\varsigma(\xi_{\natural}), \pm(- 1 + (1+\xi^2_{\natural\mathrm{f}})^{1/2}),\xi_{\natural} , h  ) 
\}, 
\end{equation}
then $\mathsf{H}_p$ is vanishing on the entirety of $\calR_\pm$. I.e.\ $\calR_\pm \subseteq \calR$. 

\begin{figure}[t]
	\begin{center}
		\hspace{4em}
		\includegraphics[scale=1.1]{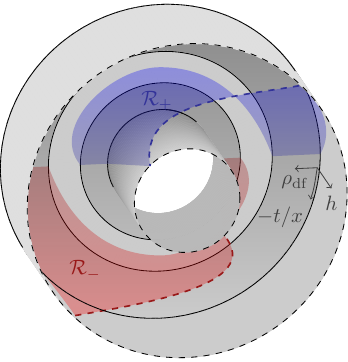}
	\end{center}
	\caption{The radial sets $\calR_+,\calR_-$ inside of the manifold-with-corners $\Sigma\cap \mathrm{bf}$ in the case when there are $d=1$ spatial dimensions. Unlike in most of the other figures so far, we have included the degrees of freedom parametrizing $\partial \bbM$, which in the $d=1$ case is a circle $\bbS^1_\varphi$ (where $\varphi=\operatorname{arctan}(t/x)$ is the angular coordinate in the figure). The inner and outer cylinders are the portions of $\Sigma\cap \mathrm{bf}$ at fiber infinity, and the interstitial region is the rest of $\Sigma\cap \mathrm{bf}$. The ``back wall'' is where $h=0$. 
	When $d=1$, the blowup used to create $\mathrm{pf}$ disconnects $\Sigma\cap \mathrm{bf} \cap \natural\mathrm{f}$ into two components, the two darker gray annuli. \textit{Note}: the cross section of this figure with $h=1$ has been displayed already as \Cref{fig:individual_flows}(a), except that figure has the flow at df drawn. The cross section with $h=0$ (the ``back plate'' of the figure) is similar to  \Cref{fig:individual_flows}(b); it is depicted in \Cref{fig:Schrflow_with_pf}. }
	\label{fig:radial_sets}
\end{figure}

\begin{proposition}\label{prop:calR}
The radial set $\calR$ can be written as $\calR= \calR_- \cup \calR_+$, where $\calR_\varsigma$ are the components of $\calR$; i.e. they are disjoint and connected. Moreover, the rescaled Hamiltonian vector field $\mathsf{H}_{p}$ is a source at $\calR_-$ and a sink at $\calR_+$. 
\end{proposition}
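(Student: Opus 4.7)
The plan is to first complete the identification $\calR = \calR_- \cup \calR_+$, then address disjointness and connectedness of $\calR_\pm$, and finally compute the linearization of $\mathsf{H}_p$ at each component to verify the source/sink character. Since it has already been established in the preceding propositions that $\calR \subset \mathrm{bf}$ and that $\calR \cap (\mathrm{bf}^\circ \setminus (\mathrm{df} \cup \mathrm{pf})) = \calR_- \sqcup \calR_+$ via the parametrization \eqref{eq:omega_defn}, what remains for the first claim is to track the vanishing set of $\mathsf{H}_{p_0}$ into the corners $\mathrm{bf} \cap \mathrm{df}$, $\mathrm{bf} \cap \mathrm{pf} \setminus \natural\mathrm{f}$ and $\mathrm{bf} \cap \mathrm{pf} \cap \natural\mathrm{f}$, and check that no additional vanishing points appear there. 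At $\mathrm{bf} \cap \mathrm{df}$ I would use the projective frequency coordinates \eqref{eq:proj_coord} and the expression \eqref{eq:Hp0_df_form} together with the base radial condition $\omega \parallel (0, -\hat{\xi}_{\natural\mathrm{f}})$ (and the constraint $\hat{\xi}_{\natural\mathrm{f}}^2 = 1 + 2\rho_{\mathrm{df}}$ from \eqref{eq:misc_5454}) to check that the limit points of $\calR_\pm$ are the only solutions; at $\mathrm{bf} \cap \mathrm{pf} \cap \natural\mathrm{f}$ the analogous calculation uses the parabolic-projective coordinates \eqref{eq:parabolic_proj_coord} together with \eqref{eq:Hp0_pfPlf_form} and the constraint \eqref{eq:misc_4545}; at $\mathrm{bf} \cap \mathrm{pf} \setminus \natural\mathrm{f}$ the chart (\ref{it:pf_coord}) applies and the calculation is similar to the Minkowski case.

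For disjointness and connectedness: disjointness is immediate from \eqref{eq:omega_defn}, since for each $h \in [0,1)$ and $\xi_\natural \in \mathbb{R}^d$ the points $\omega_+(\xi_\natural)$ and $\omega_-(\xi_\natural)$ lie in opposite closed hemispheres of $\partial \bbM$, and the two hemispheres meet only on the equator, where $\omega_\pm$ coincides with $\mp \xi_\natural / |\xi_\natural|$ only when $h = 0$; but then the time components of $\omega_+$ and $\omega_-$ vanish together and the spatial components have opposite signs (being $\mp\xi_\natural$ scaled by $\pm 1$), so they still differ. Connectedness of each $\calR_\varsigma$ follows from the fact that the parametrization
\begin{equation}
    (\xi_\natural, h) \mapsto (\omega_\varsigma(\xi_\natural), \pm(-1+(1+\xi_\natural^2)^{1/2}), \xi_\natural, h) \in \calR_\varsigma \cap (\mathrm{bf} \setminus (\mathrm{pf} \cup \mathrm{df}))
\end{equation}
has connected domain $\mathbb{R}^d \times (0,1)$, and $\calR_\varsigma$ is defined as the closure of its image in \eqref{eq:Rclosure}.

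The main work, and the main technical obstacle, is the source/sink computation. I would first compute $\mathsf{H}_p \rho_{\mathrm{bf}}$ at $\calR_\varsigma$: working in a chart where $\rho_{\mathrm{bf}} = 1/|z|$ with $\hat{z} = z/|z|$, a direct computation from \eqref{eq:Hp-def} and \eqref{eq:renormalized_Hvf} using $\mathsf{H}_p|_{\mathrm{bf}} = \mathsf{H}_{p_0}|_{\mathrm{bf}}$ gives
\begin{equation}
    \mathsf{H}_p \rho_{\mathrm{bf}} \big|_{\calR_\varsigma} = -\varsigma\, \rho_{\mathrm{df}} \rho_{\mathrm{bf}} \sqrt{h^2(1+\xi_\natural^2) + \xi_\natural^2},
\end{equation}
which has sign $-\varsigma$ times a strictly positive function, proving that $\mathsf{H}_p$ points into $\mathrm{bf}$ at $\calR_+$ and out of $\mathrm{bf}$ at $\calR_-$ (as b-normal component). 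Then I would compute the tangential linearization of $\mathsf{H}_p$ at $\calR_\varsigma$ within $\Sigma \cap \mathrm{bf}$, i.e., modulo vectors tangent to $\calR_\varsigma$ itself. In the interior chart of $\natural\mathrm{f}$ with base angular coordinates near $\omega_\varsigma$ and frequency coordinate $\xi_\natural$, the linearization decouples into a base-angular block and a frequency block, each proportional (with coefficient $-\varsigma$ times a positive quantity) to a negative-definite matrix on $T_{\calR_\varsigma} \Sigma \cap \mathrm{bf} / T\calR_\varsigma$; the hard part here will be checking that this semidefiniteness persists uniformly across all boundary charts, in particular at the corners $\mathrm{bf} \cap \mathrm{df}$, $\mathrm{bf} \cap \mathrm{pf}$, and $\mathrm{bf} \cap \mathrm{pf} \cap \natural\mathrm{f}$, where the coordinate systems change and factors of $\rho_{\mathrm{df}}$, $\rho_{\mathrm{pf}}$, $\rho_{\natural\mathrm{f}}$ must be tracked carefully; this requires redoing the linearization calculation in each of the four coordinate charts listed in \S\ref{subsec:char} and verifying that the eigenvalues transverse to $\calR_\varsigma$ are of uniform sign $-\varsigma$. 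Since the subprincipal terms $p - p_0 \in S_{\calc}^{2,-1,1,0}$ vanish to one higher order at $\mathrm{bf}$ than $p_0$ does, they do not affect the linearization at $\calR$, so it is legitimate to carry out this computation with $p_0$ throughout, which reduces the bookkeeping substantially.
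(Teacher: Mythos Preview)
Your approach is correct and would succeed, but the paper takes a more streamlined route. Rather than separately (i) identifying the vanishing set of $\mathsf{H}_{p_0}$ in the remaining corner charts, (ii) computing $\mathsf{H}_p\rho_{\mathrm{bf}}$, and then (iii) computing the tangential linearization chart by chart, the paper introduces coordinates adapted to $\calR_\varsigma$: in a chart with $x_1$ the dominant spatial variable, set $s = t/x_1 + h(\tau_\natural+1)/\xi_{\natural,1}$ and $w_j = x_j/x_1 - \xi_{\natural,j}/\xi_{\natural,1}$. These vanish precisely on $\calR_\varsigma$, and in the coordinates $(s,w,\rho_{\mathrm{bf}},\tau_\natural,\xi_\natural,h)$ one computes
\[
\mathsf{H}_{p_0} = \xi_{\natural,1}\bigl(s\partial_s + \rho_{\mathrm{bf}}\partial_{\rho_{\mathrm{bf}}} + w\cdot\partial_w\bigr),
\]
an Euler-type vector field whose source/sink character is immediate from the sign of $\xi_{\natural,1}$. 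This one formula simultaneously pins down the vanishing set, exhibits the p-submanifold structure, and verifies the source/sink property. The same $s,w$ remain smooth and transverse across $\natural\mathrm{f}\cap\mathrm{df}$ and $\natural\mathrm{f}\cap\mathrm{pf}$, so no separate corner computations are needed there; near $\mathrm{pf}\cap\mathrm{bf}\setminus\natural\mathrm{f}$ an analogous trick with $v_j = x_j/t + \xi_j/(h^2\tau+1)$ (using $t$ dominant) works.

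One small correction to your plan: since $p_0$ is independent of $z$, the vector field $H_{p_0}$ has no $\partial_\tau,\partial_\xi$ components, so the frequency directions (together with $h$) are tangent to $\calR_\varsigma$ and parametrize it. There is therefore no ``frequency block'' in the transverse linearization --- only the base-angular directions and the $\rho_{\mathrm{bf}}$-normal contribute. Your proposed computation would of course discover this, but the adapted-coordinate argument makes it transparent from the outset and also feeds directly into the construction of the quadratic defining function in the next proposition.
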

\begin{proof} 
We compute the Hamiltonian vector field on $\Sigma$ in new coordinates that exhibit both the vanishing set of the Hamiltonian vector field, and its source/sink structure. For simplicity, we consider the case of $P_+$. The case of $P_-$ is similar, with some changes of sign. 

First, in the interior of $\natural\mathrm{f}$, we have frequency coordinates $\taun$ and $\xin$, boundary defining function $\rho_{\mathrm{\natural\mathrm{f}}}=h$, boundary defining function $\rho_{\mathrm{bf}}$ which we take, after a rotation of the $x$ coordinates if necessary, to be $1/x_1$ where $x_1 \geq 0$ is a dominant spatial variable, and projective coordinates $t/x_1$, $x_j/x_1$, for $j \geq 2$. Also we take $\rho_{\mathrm{df}}=1$ here, as usual. We recall that $\xin \neq 0$ on $\Sigma$ in the interior of $\natural\mathrm{f}$; see Remark~\ref{rem:xi_vanishing}. Next, if $\xi_{\natural, 1} = 0$, then the Hamiltonian vector field does not vanish, as follows from the discussion above. 
So it remains to check points where $\xi_{\natural, 1} \neq 0$.

Given the relation \cref{eq:omega_defn}, and since we are specifically treating $P_+$ here, we see that $\sgn x_1 = - \varsigma \sgn \xi_{\natural, 1}$.  We introduce the coordinates 
\begin{equation}\label{eq:sw1}\begin{aligned}
s &= \frac{t}{x_1} + \frac{h (\taun + 1)}{\xi_{\natural, 1}}, \\
w_j &= \frac{x_j}{x_1} - \frac{\xi_{\natural, j}}{\xi_{\natural, 1}}, \quad j \geq 2
\end{aligned}\end{equation}
which vanish at $\calR_\varsigma$, $\varsigma = - (\sgn x_1)(\sgn \xi_{\natural, 1})$, as follows from \cref{eq:omega_defn}. 
Thus our spacetime variables are now $\rho_{\mathrm{bf}}=1/x_1, s, w$, which together with $\taun, \xin, h$ form a complete coordinate system. In these variables we can compute that 
\begin{equation}\label{eq:Hvf_nat_face 2}
\mathsf{H}_{p_0} = \xi_{\natural, 1}(  s \partial_s  + \rho_{\mathrm{bf}} \partial_{\rho_{\mathrm{bf}}} + w \cdot \partial_w ) .
\end{equation}
This is a sink at $\calR_+$ and a source at $\calR_-$; we emphasize that this is for $P_+$; it will be the other way around for $P_-$. 

This same calculation is also valid at $\natural\mathrm{f} \cap \mathrm{df}$, in a neighbourhood of $\Sigma$, since the functions $s$ and $w$ remain smooth, with linearly independent differentials, there. In this region, we normalize the Hamiltonian vector field by multiplying also by $\rhodf$, as in \cref{eq:renormalized_Hvf}. This removes the $\xi_{\natural, 1}$ factor at the front (up to a sign). So, $\calR_+$ is a sink and $\calR_-$ is a source, uniformly up to $\mathrm{df}$.

In fact, the coordinates $s$ and $w$ \emph{also} remain smooth in a neighbourhood of $\Sigma \cap \natural\mathrm{f} \cap \mathrm{pf}$, in a region where both $x_1 \geq 0$ and $\xi_1$ are dominant variables. To see this, we switch to coordinates $\rhopf = h\xi_1$ which locally defines $\mathrm{pf}$, $\rhonf = 1/|\xi_1|$ which locally defines $\natural\mathrm{f}$, $\sqrt{\tau}/\xi_1$, and $\xi_j/\xi_1$ for the frequency variables, and $\rhobf = 1/x_1$, $t/x_1, x_j/x_1$ for the spacetime variables as before. In these coordinates we can write 
\begin{equation}\label{eq:sw2}\begin{aligned}
s &= \frac{t}{x_1} + (\sgn \xi_1) \rhonf \Big( \rhopf^2 \Big( \frac{\tau}{\xi_1^2} \Big) + 1 \Big), \\
w_j &= \frac{x_j}{x_1} -  \frac{\xi_{j}}{\xi_{1}}, \quad j \geq 2
\end{aligned}\end{equation}
showing that these are smooth functions. The Hamiltonian vector field here is normalized by replacing the factor $h$ in \cref{eq:Hvf_nat_face 2} with $\rhonf$, and this has the effect of removing the $\xi_{\natural, 1}$ factor and replacing it with $\sgn \xi_1$. We thus obtain 
\begin{equation}\label{eq:Hvf_nat_face pf}
\mathsf{H}_{p_0}= (\sgn \xi_1) (  s \partial s  + \rho_{\mathrm{bf}} \partial_{\rho_{\mathrm{bf}}} + w \cdot \partial_w ),
\end{equation}
so the conclusion is the same as above. 

It remains to analyze the Hamiltonian vector field at $\mathrm{pf} \cap \mathrm{bf}$, and away from $\natural\mathrm{f}$. In this region we can use $h$ as a boundary defining function for $\mathrm{pf}$ and $\xi, \tau$ as frequency coordinates. As a boundary defining function for $\mathrm{bf}$ we can take $1/|t|$, since $t/\lVert x \rVert$ is nonvanishing in this region where the Hamiltonian vector field vanishes, i.e. $t$ is a dominant spacetime variable provided we stay outside of a neighbourhood of $\natural\mathrm{f}$. This follows from  \cref{eq:Hp0} which shows that the coefficient of $\partial_t$ is nonvanishing in this region (remembering that $\tau>0$ on $\Sigma$ for $P_+$). We thus use coordinates $\rhobf = 1/|t|$, $x_j/t$, $1 \leq j \leq n$, as spacetime coordinates. We then define
$$
v_j = \frac{x_j}{t} + \frac{\xi_j}{h^2 \tau +1}, \quad 1 \leq j \leq n,
$$
which vanish on $\calR_\varsigma$. 
In terms of coordinates $\rhobf = 1/t, v, \xi, \tau, h$, we have 
\begin{equation}\label{eq:Hvf_pf_face}
|t| ((h^2 \tau +1) \partial_t - \xi \cdot \partial_x ) = - \sgn t(h^2 \tau + 1) (  \rhobf \partial_{\rhobf} + v \cdot \partial_v ) .
\end{equation}
Thus the renormalized Hamilton vector field vanishes precisely at $\{ \rhobf = 0, v = 0 \}$.

We now check that the set $\{ \rhobf = 0, v = 0 \}$ is in the closure  \cref{eq:Rclosure}. To do this, we set $\xin = h \xi$ and write \cref{eq:Rclosure} in terms of $\tau = \taun/h^2$ and $\xi$. We find that 
$$
\omega_\varsigma(h \xi)   = \varsigma \bbR^+ (h^2 \tau \pm 1, - \xi) .
$$
Using $x/t$ as coordinates for $\omega_\varsigma$, this implies that 
$$
\frac{x}{t} = - \frac{\xi}{h^2 \tau + 1} \Longrightarrow v = 0. 
$$
Thus  $\{ v = 0, \ \rhobf = 0 \}$ is contained in the closure of \cref{eq:Rclosure}. Moreover, it is immediate from \cref{eq:Hvf_pf_face} that this is a sink at $\calR_+$ and a source at $\calR_-$. This completes the analysis of the vanishing set of $H_{p_0}$ in all regions and shows that it coincides with the closure of \eqref{eq:Rclosure}. Since the coordinates $v_j$ have differentials that are linearly independent from $\rhopf$, $\rhonf$ and the renormalized symbol $\mathsf{p}_0$,  the set $\{ v = 0, \ \rhobf = 0 \}$ is a smooth p-submanifold (see \Cref{remark:p-submanifolds}) of $\Sigma$ and, in turn, of ${}^{\calc}\overline{T}^* \bbM$. 

The connectedness of $\calR_\varsigma$ follows from the definition \cref{eq:Rclosure}, since it is the closure of the  image of $(h, \xin) \in \bbR_+ \times \bbR^d$ under the continuous map in \cref{eq:Rclosure}. The disjointness at $\natural\mathrm{f}$ follows from the observation that $x/\lVert x \rVert = \pm \varsigma  \xin/|\xin|$ on $\calR_\varsigma$, and at $\mathrm{pf}$ from the fact that we either have $x/\lVert x \rVert = \pm \varsigma  \xi/|\xi|$ on $\calR_\varsigma$ (near the equator), or $t = \varsigma \infty$ (away from the equator). 
\end{proof}

\begin{remark} This proof illustrates that the radial sets $\calR_{\varsigma}$ have two distinct limits as $h \to 0$. One is the limit at the natural face, which is smooth in the variables $\taun, \xin$ and can be viewed as the limit of the `arms' of the hyperbola $\taun^2 = |\xin|^2 + 1$. This lies over the `equator', where $t/\lVert x \rVert = 0$ (so called from the conventional diagram of compactified spacetime in which the time axis is vertical). The second is the limit at the parabolic face, which is smooth in the variables $\tau, \xi$ and can be viewed as the limit of the `tip' of the hyperbola near the point $\taun = h^2 \tau =  1, \xin = 0$. That is, it is the limit of the paraboloid that is the quadratic approximation to the graph of the parabola at the minimum point of $\tau$, viewed as a function of $\xi$ on the radial set. This part of the radial set lies over the whole of the `northern'  or `southern' hemisphere at spacetime infinity. These two different limits correspond to the natural scale and the laboratory scale as discussed in \S\ref{sec:true_intro}. The fact that the radial sets are smooth in the $\calc$-phase space, and that the Hamiltonian vector field is uniformly nondegenerate in the way in which it vanishes at these radial sets, explains the claim in the introduction that the natural and laboratory scales are the only frequency scales that need to be considered in the limit $c = 1/h \to \infty$. 
\end{remark}

\begin{remark}
It will be convenient in Section~\ref{sec:estimates} to introduce the notation 
\begin{equation}\label{eq:Rschr}
\calR_{\varsigma}^{\mathrm{Schr}} = \calR_{\varsigma} \cap \mathrm{pf} = \calR_{\varsigma} \cap \mathrm{pf} \cap \mathrm{bf} , \quad \varsigma \in \{ +, - \}, 
\end{equation} 
for the intersection of $\calR_{\varsigma}$ with the parabolic face $\mathrm{pf}$. We can identify these with the radial sets for the induced Schr\"odinger operator $\mp 2D_t - \Delta$ at the parabolic face --- see \cite[Section 3.3]{Parabolicsc}.  
\end{remark}

\begin{remark}
Writing $r = \lVert x \rVert$, we observe that 
\begin{equation}
\calR_\varsigma \cap \mathrm{cl}_{{}^{\calc}\overline{T}^* \bbM} \{r=0\} \cap (\natural\mathrm{f} \cup \mathrm{df}) = \varnothing;
\label{eq:north_south_R}
\end{equation}
see \Cref{fig:radial_sets}. 
This fact can be seen from the proof above, where we found that, at $h=0$, we have $\lVert x \rVert/t = 0$ on the radial set only at $\mathrm{pf} \cap \mathrm{bf}$.  
So, $\calR_\varsigma \cap \mathrm{cl}_{{}^{\calc}\overline{T}^* \bbM} \{r=0\} $ is covered by the coordinate chart in \eqref{it:pf_coord} in \S\ref{subsec:char}. 
\end{remark}

\begin{remark} \label{remark:p-submanifolds}
In the inclusions 
\begin{equation}
\calR_\pm \subset \Sigma \cap \mathrm{bf} \subset \mathrm{bf} \subset {}^{\calc}\overline{T}^* \bbM, 
\end{equation}
each term is a p-submanifold of the next. This means that, locally, there exists a coordinate chart so that each submanifold is given by the vanishing of a subset of the coordinates. Our proof above actually constructs such coordinate charts. For example, in the interior of $\natural\mathrm{f} \cap \mathrm{bf}$, we take the coordinate system $s, w, \rhobf, \taun, \xin, h$ above and substitute $\mathsf{p} = \taun^2 \pm 2\taun - |\xin|^2$ for $\taun$. Then we have 
\begin{equation}
\calR_\pm = \{ s = 0, w = 0, \rhobf = 0, \mathsf{p} = 0 \} \subset  
\Sigma \cap \mathrm{bf} = \{ \rhobf = 0, \mathsf{p}  = 0 \} \subset \mathrm{bf} = \{ \rhobf = 0 \}.
\end{equation}
\end{remark}


The inclusion $\calR_\pm \hookrightarrow \Sigma\cap \mathrm{bf}$ is depicted in \Cref{fig:radial_sets}.

\

A consequence of $\calR_\varsigma$ being a p-submanifold of ${}^{\calc}\overline{T}^* \bbM$ is the existence of a \emph{quadratic-defining-function} thereof. By definition, a quadratic defining-function of a submanifold is a nonnegative function that vanishes to second order at the submanifold, and such that the Hessian is nondegenerate on the normal bundle of the submanifold.

\begin{figure}[t!]
	\floatbox[{\capbeside\thisfloatsetup{capbesideposition={left,bottom},capbesidewidth=8cm,capbesidesep=none}}]{figure}[\FBwidth]
	{\caption{
			A zoomed in view of $\calR_+$, as a subset of $\Sigma$, near $\calR_+\cap \natural\mathrm{f}$, showing how $\calR_+$ is a p-submanifold of $\Sigma$. Here $\hat{t}=t/x$.}}
	{\includegraphics[scale=1]{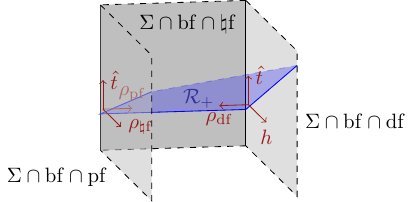}}
\end{figure}

Actually, because $\calR$ is a p-submanifold of $\Sigma$, it will be more convenient to take $\varrho_\varsigma$ to be a quadratic-defining-function for $\calR_\varsigma$ within $\mathsf{p}^{-1}(\{0\})$, which means a smooth nonnegative-valued function $\varrho_\varsigma$ such that $\mathsf{p}^2 +\varrho_\varsigma$ is a global quadratic-defining-function for $\calR_\varsigma$.

\begin{proposition}\label{prop:qdf} There exists a quadratic-defining-function $\varrho_\varsigma$ of $\calR_\varsigma$ within $\mathsf{p}^{-1}(\{0\})$ such that
	\begin{equation}
		\mathsf{H}_p \varrho_\varsigma = \mp \varsigma \iota \varrho_\varsigma \mp \varsigma F+E 
		\label{eq:Hp_qdf}
	\end{equation}
	(where $\iota,F,E$ can depend on $\pm,\varsigma$) for symbols $\iota \in C^\infty({}^{\calc} \overline{T}^* \bbM)$ and $F,E \in S^{0,0,0,0}_{\calc}$ such that 
	\begin{itemize}
		\item $\iota>\varepsilon$ on $\calR_\varsigma$, for some $\varepsilon>0$, 
		\item $F\geq 0$ everywhere, 
		\item $E$ is vanishing cubically at $\calR_\varsigma$, meaning that $E/\varrho_\varsigma^{3/2} \in L^\infty$.
	\end{itemize}
\end{proposition}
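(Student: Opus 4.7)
The strategy is to leverage the source/sink structure of $\mathsf{H}_p$ at $\calR_\varsigma$ established in Proposition~\ref{prop:calR} to construct $\varrho_\varsigma$ as a global quadratic Lyapunov function. In each of the coordinate charts used in the proof of Proposition~\ref{prop:calR} to cover a neighborhood of $\calR_\varsigma$, I would take $\varrho_{\varsigma,\alpha}$ to be the sum of squares of the local smooth defining functions of $\calR_\varsigma$ within $\Sigma$ --- namely $\rho_{\mathrm{bf}}$, $s$, $w$ in the charts of \eqref{eq:sw1} and \eqref{eq:sw2}, and $\rho_{\mathrm{bf}}$, $v$ in the chart near $\mathrm{pf}\cap\mathrm{bf}$ away from $\natural\mathrm{f}$. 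The explicit normal forms \eqref{eq:Hvf_nat_face 2}, \eqref{eq:Hvf_nat_face pf}, \eqref{eq:Hvf_pf_face} then yield $\mathsf{H}_{p_0}\varrho_{\varsigma,\alpha}=\mp 2\varsigma\iota_\alpha\varrho_{\varsigma,\alpha}$ \emph{exactly}, with $\iota_\alpha>0$ smooth near $\calR_\varsigma$, since each of these normal forms is a positive multiple of the Euler vector field in the chosen defining functions.

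The subprincipal correction $\mathsf{H}_p-\mathsf{H}_{p_0}$ comes from $p-p_0\in S_{\calc}^{2,-1,1,0}$ and corresponds to a b-vector field whose coefficients carry an additional factor of $\rho_{\mathrm{bf}}$ compared with those of $\mathsf{H}_{p_0}$. Applied to $\varrho_{\varsigma,\alpha}$, this contributes, modulo $O(d^3)$ (where $d$ denotes distance to $\calR_\varsigma$), only off-diagonal cross terms of the form $\rho_{\mathrm{bf}}\cdot s$ and $\rho_{\mathrm{bf}}\cdot w_j$, plus a $\rho_{\mathrm{bf}}^2$ diagonal correction. Patching the $\varrho_{\varsigma,\alpha}$ via a partition of unity $\{\chi_\alpha\}$ chosen locally constant near $\calR_\varsigma$ --- so that the commutator errors $\mathsf{H}_p(\chi_\alpha)$ are supported strictly off $\calR_\varsigma$ --- produces a smooth nonnegative function $\varrho^0_\varsigma$ on a neighborhood $\mathcal{U}$ of $\calR_\varsigma$ with $\mathsf{H}_p\varrho^0_\varsigma=Q+O(d^3)$, where $Q$ is a smooth quadratic form on the normal bundle to $\calR_\varsigma$. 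Proposition~\ref{prop:calR} guarantees that the linearization of $\mathsf{H}_p$ along this normal bundle has all its eigenvalues of sign $\mp\varsigma$, so $\mp\varsigma Q$ is positive-definite; a pointwise diagonalization then writes $\mp\varsigma Q=\iota\varrho^0_\varsigma+F$, with $\iota$ equal to (twice) the infimum over $\calR_\varsigma$ of the smallest eigenvalue magnitude (strictly positive) and $F\geq 0$ the positive-semidefinite excess.

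To extend off $\mathcal{U}$, let $\phi\in C^\infty({}^{\calc}\overline{T}^*\bbM)$ be a cutoff equal to $1$ near $\calR_\varsigma$ and supported in $\mathcal{U}$, and set $\varrho_\varsigma=\phi\varrho^0_\varsigma+(1-\phi)\Psi$ for a fixed strictly positive bounded $\Psi\in S_{\calc}^{0,0,0,0}$, giving a global $\varrho_\varsigma\geq 0$ with $\varrho_\varsigma>0$ off $\calR_\varsigma$. Extend $\iota$ to a smooth positive function on all of ${}^{\calc}\overline{T}^*\bbM$, and away from $\mathcal{U}$ add to $F$ any nonnegative symbol needed so that the identity \eqref{eq:Hp_qdf} holds after placing the leftover into $E$. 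Because $\varrho_\varsigma$ is bounded below off $\mathcal{U}$, the required cubic-vanishing bound $E/\varrho_\varsigma^{3/2}\in L^\infty$ is automatic there; on $\mathcal{U}$ it follows from the construction in the previous step. The most delicate point is the linear-algebra argument: the off-diagonal contributions from $(\mathsf{H}_p-\mathsf{H}_{p_0})\varrho^0_\varsigma$ could in principle destroy positive-definiteness of $\mp\varsigma Q$, but the saving grace is that Proposition~\ref{prop:calR} already establishes positive-definiteness of the full linearization independently of coordinates, so $F$ is merely the quadratic Lyapunov excess at a strictly hyperbolic fixed point.
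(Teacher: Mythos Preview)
Your overall architecture---local sum-of-squares defining functions, partition of unity, cubic error bookkeeping---matches the paper's, but there are two gaps.

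The minor one: you cannot choose the partition of unity $\{\chi_\alpha\}$ to be locally constant near $\calR_\varsigma$, because $\calR_\varsigma$ is a connected submanifold passing through all the charts, so the transition regions necessarily meet $\calR_\varsigma$. The fix is easy (and is what the paper does): the commutator error $\sum_\alpha \varrho_{\varsigma,\alpha}\,\mathsf{H}_p\chi_\alpha$ vanishes cubically at $\calR_\varsigma$ simply because $\varrho_{\varsigma,\alpha}$ vanishes quadratically and $\mathsf{H}_p\chi_\alpha$ vanishes to first order (since $\mathsf{H}_p$ itself vanishes at $\calR_\varsigma$ as an ordinary vector field). So this term goes into $E$ regardless of where the $\chi_\alpha$ transition.

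The real gap is your last paragraph, where you try to dispose of the off-diagonal contribution from $\mathsf{H}_p-\mathsf{H}_{p_0}$ by appealing to Proposition~\ref{prop:calR}. That proposition's proof analyzes only $\mathsf{H}_{p_0}$; the source/sink claim for $\mathsf{H}_p$ is stated there but not proved independently of the present proposition. More importantly, even granting that the linearization $L$ of $\mathsf{H}_p$ has all eigenvalues of sign $\mp\varsigma$ (which is true, since the perturbation is strictly triangular in the coordinates $(\rho_{\mathrm{bf}},s,w)$), this does \emph{not} imply that $\mp\varsigma Q$ is positive-definite for your particular $\varrho^0_\varsigma$. Definiteness of $Q$ is equivalent to definiteness of the symmetrization $L+L^{\top}$ with respect to the inner product given by $\varrho^0_\varsigma$, and a triangular perturbation of $-c\cdot\mathrm{Id}$ can easily have indefinite symmetrization if the off-diagonal entry exceeds $2c$. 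The metric perturbation is only $O(\langle z\rangle^{-1})$, so those off-diagonal entries are fixed, possibly large, constants along $\calR_\varsigma$; nothing in your argument bounds them. The paper resolves this by replacing $s^2+|w|^2+\rho_{\mathrm{bf}}^2$ with $s^2+|w|^2+\Upsilon\rho_{\mathrm{bf}}^2$ for $\Upsilon\gg 1$: the cross terms $O(\rho_{\mathrm{bf}})\cdot s$ and $O(\rho_{\mathrm{bf}})\cdot w_j$ can then be bounded by $\tfrac12\iota(s^2+|w|^2)+C\Upsilon^{-1}\cdot\Upsilon\rho_{\mathrm{bf}}^2$ and absorbed. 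Without this (or an equivalent device, such as constructing a genuinely $L$-adapted Lyapunov quadratic form varying smoothly over $\calR_\varsigma$), the decomposition $\mp\varsigma Q=\iota\varrho^0_\varsigma+F$ with $F\geq 0$ is not justified.
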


\begin{proof} This will follow easily from the local coordinate representations of $\mathsf{H}_{p_0}$ in the previous proof, once we have noticed the following: it suffices to construct $\varrho_+,\iota,F,E$ locally and then patch together the constructions with a partition of unity. Indeed, if $\calU$ is a finite open cover of ${}^{\calc}\overline{T}^* \bbM$ and $\{\chi_U\}_{U\in \calU}$ is a partition of unity subordinate to it, and if we have $\varrho_U,\iota_U \in C^\infty(U;[0,\infty))$ and 
	\begin{equation} 
		F_U,E_U\in S^{0,0,0,0}_{\calc}
	\end{equation} 
	defined in $U$ such that $\varrho_U$ is a quadratic-defining-function for $\calR_+$ in $U$, $\iota_U>\varepsilon_U>0$ on $\calR_+ \cap U$, for some constant $\varepsilon_U>0$, $F_U\geq 0$ on $U$ (in the cases we consider, we will actually have $F_U=0$, but we do not assume this yet), $E_U$ is vanishing cubically at $\calR_+$ in $U$, and $\mathsf{H}_p \varrho_U = - \varsigma \iota_U \varrho_U - \varsigma F_U+E_U$ in $U$, then 
	\begin{equation}
	\varrho_+ = \sum_{U\in \calU} \chi_U \varrho_U 
	\end{equation}
	is a quadratic-defining-function of $\calR_\varsigma$. Then, choosing $\iota \in C^\infty({}^{\calc}\overline{T}^* \bbM; [0,\infty))$ such that $\varepsilon<\iota|_{\calR} < \min_{U\in \calU} \varepsilon_U$ for some $\varepsilon>0$ and $\iota\leq \iota_U$ on each $U\in \calU$,  \cref{eq:Hp_qdf} holds if we define
	\begin{equation}
	F = \sum_{U\in \calU} \chi_U ((\iota_U - \iota)\varrho_U+F_U) ,\quad E = \sum_{U\in \calU} (\chi_U E_U + \varrho_U \mathsf{H}_p \chi_U).
	\end{equation} 
	The symbol $F$ satisfies $F\geq 0$, and $E$ is vanishing cubically at $\calR_+$ (because each $\varrho_U$ is vanishing quadratically at $\calR_+$ within the support of $\mathsf{H}_p \chi_U$ and because $\mathsf{H}_p$ vanishes at $\calR_+$, which means that so does $\mathsf{H}_p \chi_U$).
	
	Also, note that whether a given locally defined $\varrho_+$ satisfies \cref{eq:Hp_qdf} for some symbols $\iota,F,E$ with the desired properties does not depend on the choice of boundary-defining-functions $\rho_{\mathrm{f}}$ going into the definition of $\mathsf{H}_p$, so, when constructing the symbol $\varrho_+$ in local coordinates, we are free to choose whatever locally-defined boundary-defining-functions are convenient for that particular coordinate chart. 

These observations reduce the proof to a local construction in convenient coordinates. And this is almost immediate from the previous proof: for example, in the first case, in the interior of $\natural\mathrm{f} \cap \mathrm{bf}$, we take $\rho_{+} = s^2 + |w|^2 + \rhobf^2$ if $p=p_0$, and similarly for the other coordinate charts. If $p$ differed from $p_0$ by terms decaying quadratically at spacetime infinity, then $(\mathsf{H}_p-\mathsf{H}_{p_0})\varrho_+$ would vanish cubically at $\calR_+$, in which case we could use the same $\varrho_+$ as the free case but add $(\mathsf{H}_p-\mathsf{H}_{p_0})\varrho_+$ to $E$.  

However, because $p$ can differ from $p_0$ by terms which are only decaying \emph{linearly} at spacetime infinity, we need a slightly different argument. (This issue arises already in the sc-analysis of the Klein--Gordon equation for long-range metric perturbations, so is not new. The argument we give for handling such long-range terms is a variant of the standard one.)
Indeed, classical $O(\varrho_{\mathrm{bf}})$ terms in $\mathsf{H}_p$ enter the linearization of $\mathsf{H}_p$ at the radial set. (If the terms are only conormal, then it does not quite make sense to speak of the linearization.) The problem is only in the coefficients of the partial derivatives in $\mathsf{H}_p$ besides $\partial_{\varrho_{\mathrm{bf}}}$; the reason $\partial_{\varrho_{\mathrm{bf}}}$ is not an issue is that $\mathsf{H}_p-\mathsf{H}_{p_0}$ is $O(\varrho_{\mathrm{bf}})$ \emph{as a b-vector field}, which means that the coefficient of $\partial_{\varrho_{\mathrm{bf}}}$ in $\mathsf{H}_p-\mathsf{H}_{p_0}$ is $O(\varrho_{\mathrm{bf}}^2)$, and when this hits $\varrho_+$ we generate a term which is $O(\varrho_{\mathrm{bf}}^3)$ and therefore vanishing cubically at $\calR_+$ (and therefore treatable as part of the $E$ term). 

The key point is that terms like $O(\varrho_{\mathrm{bf}})\partial_{w_j}$, $O(\varrho_{\mathrm{bf}})\partial_s$ do not change the source/sink structure at $\calR_+$.
In order to see this, we can use 
\begin{equation} 
	\varrho_{+} = s^2 + |w|^2 + \Upsilon \rhobf^2
\end{equation} 
for some $\Upsilon \gg 1$, instead of the previous choice of $\varrho_+$. This alternative choice also works in the $p=p_0$ case, and we can choose $\iota$ to not depend on $\Upsilon$ (as follows immediately from the formulas for $\mathsf{H}_{p_0}$ above). 
Now, if one of $O(\varrho_{\mathrm{bf}})\partial_{w_j}$, $O(\varrho_{\mathrm{bf}})\partial_s$ hits $\varrho_+$, we generate $O(\varrho_{\mathrm{bf}}) w_j$ or $O(\varrho_{\mathrm{bf}}) s$. For each $\varepsilon>0$, these are boundable by $2^{-1}\iota \varrho_+$ if we choose $\Upsilon$ large enough relative to $\iota$. So, our new choice of $\varrho_+$ satisfies \cref{eq:Hp_qdf}, except with $\iota/2$ in place of $\iota$ and $2^{-1} \iota \varrho_+ + \varsigma (\mathsf{H}_p-\mathsf{H}_{p_0})\varrho_+ $ added to the $F$ term.
\end{proof}

\begin{proposition}
	$\calR_\pm$ are global sources/sinks, in the sense that if $U_\pm$ are open neighborhoods of $\calR_\pm$ in ${}^{\calc}\overline{T}^* \bbM$, then every integral curve of $\mathsf{H}_p$ (or $-\mathsf{H}_p$) beginning in $U_-$ and contained in $\Sigma$ reaches $U_+$ in finite parameter time.
	\label{prop:sourcesink_global}
\end{proposition}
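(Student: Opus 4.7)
The plan is to combine the local source/sink dynamics at $\calR_\pm$ established in Propositions~\ref{prop:calR} and~\ref{prop:qdf} with the compactness of $\Sigma$ over any bounded range of $h$. Since the conclusion is monotone in the size of $U_\pm$, it suffices to prove the statement after shrinking $U_\pm$; taking $U_\pm = \{\varrho_\pm < \delta_\pm\}$ with $\delta_\pm > 0$ sufficiently small, the normal form of Proposition~\ref{prop:qdf} yields
\begin{equation}
\mathsf{H}_p \varrho_- \geq \tfrac{1}{2} \iota_0 \varrho_- \text{ on } U_-, \qquad \mathsf{H}_p \varrho_+ \leq -\tfrac{1}{2} \iota_0 \varrho_+ \text{ on } U_+,
\end{equation}
for some $\iota_0 > 0$; the cubic error $E$ is absorbed via smallness of $\varrho_\pm$, and $F \geq 0$ has the favorable sign in both inequalities. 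Two consequences follow immediately. First, $U_+$ is forward-invariant under the flow of $\mathsf{H}_p$, since $\varrho_+$ strictly decreases along every orbit in $U_+$. Second, along any orbit $\gamma$ with $\gamma(0) \in U_- \setminus \calR_-$, $\varrho_-(\gamma(\cdot))$ grows at exponential rate at least $\iota_0/2$ while $\gamma$ stays in $U_-$, so $\gamma$ exits $U_-$ in finite parameter time; the strict positivity of $\mathsf{H}_p \varrho_-$ on $\partial U_- = \{\varrho_- = \delta_-\}$ further ensures that $\gamma$ is transverse to $\partial U_-$ at exit and cannot return to $U_-$.

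It remains to bound the time that $\gamma$ spends in the compact intermediate region $K := (\Sigma \cap \{h \leq h_0\}) \setminus (U_- \cup U_+)$, on which $\mathsf{H}_p$ is nowhere vanishing by Proposition~\ref{prop:calR}. The key step of the proof is to construct a smooth Lyapunov function $\phi \in C^\infty(\Sigma \cap \{h \leq h_0\})$ with $\mathsf{H}_p \phi \geq c_0 > 0$ on $K$ and $\phi$ bounded there; given such $\phi$, the parameter time spent in $K$ by any orbit is at most $(\sup_K \phi - \inf_K \phi)/c_0 < \infty$, which completes the proof. Near $\calR_-$ one takes $\phi = f(\varrho_-)$ with $f'(0) > 0$, so that $\mathsf{H}_p \phi = f'(\varrho_-) \mathsf{H}_p \varrho_-$ is strictly positive off $\calR_-$; near $\calR_+$ one takes $\phi = g(\varrho_+)$ with $g'(0) < 0$; in the interior of $K$, flow-box coordinates about each point yield local candidates with $\mathsf{H}_p \phi \equiv 1$. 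These local pieces are then patched via a finite partition of unity.

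The main technical obstacle is the patching, since derivatives of the partition-of-unity cutoffs can in general introduce terms of uncontrolled sign into $\mathsf{H}_p \phi$. This difficulty will be resolved by first establishing that $\mathsf{H}_p$ admits no nontrivial recurrence in $\Sigma$, using the explicit coordinate descriptions from \S\ref{subsec:H}: for fixed $h > 0$, orbits are reparametrizations of null geodesics of the asymptotically Minkowski metric $g$, which are nontrapping by hypothesis and therefore accumulate only at $\calR_\pm$; at $h = 0$, the orbits on each of the faces $\mathrm{bf}$, $\natural\mathrm{f}$, and $\mathrm{pf}$ admit explicit source-to-sink descriptions from the formulas for $\mathsf{H}_{p_0}$ derived in the proof of Proposition~\ref{prop:calR}. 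Once recurrence is ruled out, the stable set of $\calR_+$ and the unstable set of $\calR_-$ are each open in $\Sigma$ and together cover $\Sigma$ by connectedness, which yields the global source-to-sink structure and allows the patching construction of $\phi$ to go through; the case of $-\mathsf{H}_p$ is symmetric, with the roles of $\calR_+$ and $\calR_-$ interchanged.
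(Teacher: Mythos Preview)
Your approach is more abstract than the paper's. The paper argues by direct case analysis: for $h>0$ it invokes the known source-to-sink structure of the sc-Hamiltonian flow for Klein--Gordon (nontrapping null geodesics); at $h=0$ it splits further into orbits in $\mathrm{pf}$ (handled by the Schr\"odinger results of \cite{Parabolicsc}) and orbits in $\natural\mathrm{f}$, where an explicit coordinate computation shows trajectories move monotonically in a dominant spatial direction and therefore pass from the chart containing $\calR_-$ to the one containing $\calR_+$. No Lyapunov function appears.

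Your argument has a structural circularity. You propose to bound transit time through $K$ via a patched Lyapunov function $\phi$, acknowledge the partition-of-unity difficulty, and then claim it is resolved by first ruling out recurrence. But you never explain \emph{how} absence of recurrence makes the patching go through --- it does not, by itself; Conley's construction of complete Lyapunov functions is not a naive partition-of-unity argument. More to the point, once you have ruled out recurrence and established that the stable and unstable sets of $\calR_\pm$ exhaust $\Sigma$, the conclusion is already in hand and $\phi$ is superfluous. (The ``by connectedness'' step is also imprecise: the correct argument is that on a compact invariant set with no recurrence outside $\calR_\pm$, every $\omega$-limit set lies in $\calR_-\cup\calR_+$, and $\calR_-$ being a source forces it into $\calR_+$.) Strip away the Lyapunov scaffolding and what remains is exactly the paper's case analysis --- nontrapping for $h>0$, explicit formulas at $h=0$ --- which you sketch correctly but more thinly than the paper, which treats the chart transition on $\natural\mathrm{f}$ with some care.
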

Cf.\ \Cref{fig:individual_flows}.
\begin{figure}[t]
	\begin{center}
		\includegraphics[scale=1]{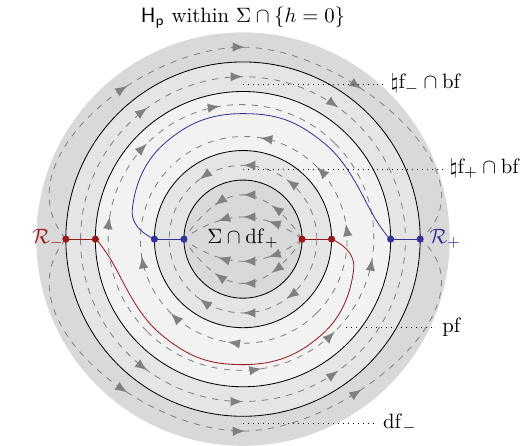}
	\end{center}
	\caption{The Hamiltonian flow in the $h=0$ cross section of \Cref{fig:radial_sets}, except we have also depicted the flow in $\mathrm{df}_\pm$. (Recall that the drawing is done in the $d=1$ case, when $\Sigma_+\cap\mathrm{df}$ consists of two components, a left-moving component and a right-moving component.) Cf.\ \Cref{fig:individual_flows}(b).}
	\label{fig:Schrflow_with_pf}
\end{figure}

\begin{proof} 
We consider only the case of $P_+$ for simplicity, with $P_-$ being analogous.

Let $\gamma$ be an integral curve of $\mathsf{H}_p$ within $\Sigma$. The goal is to show that $\gamma$ tends to $\calR_-$ in one direction and $\calR_+$ in the other. If $\gamma$ is not contained in $\{h=0\}$, then this follows from the global source/sink structure of the sc-Hamiltonian flow for the Klein--Gordon equation, so we only need to consider the case $\gamma \subset \{h=0\}$. But also, since $\Sigma\cap \{h=0\}\subseteq \natural\mathrm{f}\cup \mathrm{bf}$, where $\mathsf{H}_p=\mathsf{H}_{p_0}$, $\gamma$ is an integral curve of $\mathsf{H}_{p_0}$.
	The set $\{h=0\}$ is $\natural\mathrm{f}\cup \mathrm{pf}$, but if $\gamma\subset \mathrm{pf}$, then $\gamma\backslash \natural\mathrm{f}$ is an integral curve for the par-Hamiltonian flow for the Schr\"odinger equation (and $\calR$ is the radial set for that flow) over $\partial \bbM$, so the fact that it travels between the radial sets follows from the global source/sink structure of the flow in \cite{Parabolicsc}. So, we need only to consider the case $\gamma\subset \natural\mathrm{f}$.

To do this, we return to the coordinate system $(s, w, \rhobf, \taun, \xin, h)$ at the beginning of of the proof of Proposition~\ref{prop:calR}. We note that this is a large neighbourhood of the radial set; in fact, it covers the region where $x_1 > 0$. There is a similar coordinate system with $x_1 < 0$; here we would define $\rhobf = -1/x_1$ and the other variables are defined as in \eqref{eq:sw1}. Because of the change of sign in the definition of $\rhobf$, the Hamiltonian vector field in this case takes the form 
\begin{equation}\label{eq:Hvf_nat_face 22}
\mathsf{H}_{p_0} = -\xi_{\natural, 1}(  s \partial_s  + \rho_{\mathrm{bf}} \partial_{\rho_{\mathrm{bf}}} + w \cdot \partial_w )
\end{equation}
and this is a neighbourhood of $\calR_{-\varsigma}$, if the original is a neighbourhood of $\calR_{\varsigma}$ (remembering that $\varsigma = - (\sgn x_1)(\sgn \xi_{\natural, 1})$). The two coordinate systems cover the whole region away from the closure of $x_1 = 0$, that is, the hypersurface $x_1/\ang{z} = 0$ and have uniform transition. So it will be enough to show that every trajectory that starts at $\calR_{-\varsigma}$, in the coordinate system for $x_1/\ang{z} < -\epsilon$ for some positive $\epsilon \ll 1$, passes into the coordinate system with $x_1/\ang{z} > \epsilon$. This is clear for the trajectories in the interior of spacetime, as the Hamiltonian vector field is \eqref{eq:Hp0} with a nonzero component in the $x_1$ direction, since $\xi_{\natural, 1} < 0$. On the other hand, at spacetime infinity, if $|x_1/\ang{z}| \leq \epsilon$ then, after a spatial rotation we may assume that either $x_2$ or $t$ is a dominant variable. In the first case, we may set $\rhobf = 1/x_2$ locally, and then we find that $x_2 h H_{p_0}$ is nonzero applied to $x_1/x_2$. In the second case, we may set $\rhobf = 1/t$ locally, and then we find that $t h H_{p_0}$ is nonzero applied to $x_1/t$. In either case, we pass from the $x_1 < 0$ coordinate system to the $x_1 > 0$ coordinate system, and therefore all trajectories emanating from $\calR_{-\varsigma}$ reach $\calR_{\varsigma}$. 
\end{proof} 

\begin{remark} There may seem something paradoxical about the proof above, as it is evident from the form of the vector field \eqref{eq:sw1}, for example, that the flow exists for all parameter time and remains in the coordinate chart. The confusion arises as we used $1/x_1$ as a boundary defining function for $\mathrm{bf}$ in \eqref{eq:sw1}. This clearly is singular when $x_1/\ang{z} = 0$, and since $\rhobf^{-1}$ is used to renormalize the Hamiltonian vector field on the left-hand side, this has the effect of slowing down the flow so that the hypersurface  $x_1/\ang{z} = 0$ is never actually reached. Under a suitable local boundary defining function for $\mathrm{bf}$, as in the proof immediately above, we see that the hypersurface $x_1/\ang{z} = 0$ is reached and crossed in finite time. 
\end{remark}

We shall also  need the following, which gets at the fact that $\calR_\varsigma$ is a source/sink of $\mathsf{H}_p$ in $\{ \mathsf{p} = 0 \} \subset {}^{\calc}\overline{T}^* \bbM$ and not just in $\Sigma\cap \mathrm{bf}$.
\begin{proposition}\label{prop:sign} 
	
Let $\mathsf{s}$ be a variable spacetime order that is constant near radial sets, and let $\rho_{\mathrm{df}}, \rho_{\mathrm{bf}}, \rho_{\natural\mathrm{f}}$ and  $\rho_{\mathrm{pf}}$ be global boundary defining functions for the respective boundary faces of ${}^{\calc}\overline{T}^* \bbM$. Define 
\begin{equation} 
		a=a_{m,\mathsf{s},\ell,q}= \rho_{\mathrm{df}}^{m}\rho_{\mathrm{bf}}^{\mathsf{s}} \rho_{\natural\mathrm{f}}^{\ell} \rho_{\mathrm{pf}}^q.
\end{equation} 
Then, the symbol 
	\begin{equation} 
		\alpha=\alpha_{m,\mathsf{s},\ell,q} \in S^{0,0,0,0}_{\calc} 
	\end{equation} 
	defined by $\alpha = a^{-1} \mathsf{H}_p a$ (which is a symbol of order $(0,0,0,0)$ because $\mathsf{H}_p\in \calV_{\mathrm{b}} ({}^{\calc}\overline{T}^* \bbM)$) satisfies $\mp \varsigma \alpha|_{\calR_\varsigma} >0$ if $\mathsf{s}|_{\calR_\varsigma}>0$ and $\mp \varsigma\alpha|_{\calR_\varsigma}<0$ if $\mathsf{s}|_{\calR_\varsigma}<0$. 
	\label{prop:alpha}
\end{proposition}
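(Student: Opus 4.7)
The plan is to compute $\alpha$ as a logarithmic derivative of $a$ and show that, on $\calR_\varsigma$, only the $\rho_{\mathrm{bf}}$-contribution survives. Since $\mathsf{H}_p$ is a b-vector field, for each boundary-defining-function $\rho_{\mathrm{f}}$ we have $\mathsf{H}_p \rho_{\mathrm{f}} = c_{\mathrm{f}} \rho_{\mathrm{f}}$ for some $c_{\mathrm{f}} \in C^\infty({}^{\calc}\overline{T}^* \bbM)$, so $\mathsf{H}_p \log \rho_{\mathrm{f}} = c_{\mathrm{f}}$ is smooth and a symbol of order zero at every face. Writing $\log a = m \log \rho_{\mathrm{df}} + \mathsf{s}\log \rho_{\mathrm{bf}} + \ell \log \rho_{\natural\mathrm{f}} + q \log \rho_{\mathrm{pf}}$ and applying $\mathsf{H}_p$ yields
\begin{equation}
\alpha = m\, c_{\mathrm{df}} + \mathsf{s}\, c_{\mathrm{bf}} + \ell\, c_{\natural\mathrm{f}} + q\, c_{\mathrm{pf}} + (\mathsf{H}_p \mathsf{s})\log \rho_{\mathrm{bf}}.
\end{equation}
The last term, which is \emph{a priori} singular at $\mathrm{bf}$, vanishes identically in a neighborhood of $\calR_\varsigma$ because of the hypothesis that $\mathsf{s}$ is constant near each radial set; this is where that hypothesis plays its essential role and confirms that $\alpha \in S^{0,0,0,0}_{\calc}$ to begin with.

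Next I would show that $c_{\mathrm{df}}|_{\calR_\varsigma} = c_{\natural\mathrm{f}}|_{\calR_\varsigma} = c_{\mathrm{pf}}|_{\calR_\varsigma} = 0$. For each $\mathrm{f} \in \{\mathrm{df}, \natural\mathrm{f}, \mathrm{pf}\}$, at any point $p_0 \in \calR_\varsigma$ with $\rho_{\mathrm{f}}(p_0) > 0$, the function $\log \rho_{\mathrm{f}}$ is smooth near $p_0$, and $\mathsf{H}_p$ vanishes at $p_0$ as a smooth vector field (this is the defining property of $\calR_\varsigma$, cf.\ Remark~\ref{rem:vanishing sense}), so $c_{\mathrm{f}}(p_0) = \mathsf{H}_p(\log \rho_{\mathrm{f}})(p_0) = 0$. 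The set $\calR_\varsigma \setminus \mathrm{f}$ is open and dense in $\calR_\varsigma$ (since $\calR_\varsigma$, as given by \eqref{eq:Rclosure}, has interior points away from each of $\mathrm{df}, \natural\mathrm{f}, \mathrm{pf}$). Because $c_{\mathrm{f}}$ is smooth on all of ${}^{\calc}\overline{T}^* \bbM$ and vanishes on this dense subset of $\calR_\varsigma$, it vanishes on all of $\calR_\varsigma$ by continuity.

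Consequently $\alpha|_{\calR_\varsigma} = \mathsf{s}|_{\calR_\varsigma}\, c_{\mathrm{bf}}|_{\calR_\varsigma}$, and it remains to pin down $\sgn c_{\mathrm{bf}}|_{\calR_\varsigma}$. The computations in the proof of Proposition~\ref{prop:calR} --- especially formula \eqref{eq:Hvf_nat_face 2}, and its analogues \eqref{eq:Hvf_nat_face pf} and \eqref{eq:Hvf_pf_face} covering the pieces of $\calR_\varsigma$ at the corners $\mathrm{bf}\cap\natural\mathrm{f}$, $\mathrm{bf}\cap\natural\mathrm{f}\cap\mathrm{pf}$, and $\mathrm{bf}\cap\mathrm{pf}$ --- express $\mathsf{H}_{p_0}$ in local coordinates adapted to $\calR_\varsigma$ with an explicit coefficient $\mu$ of $\rho_{\mathrm{bf}}\partial_{\rho_{\mathrm{bf}}}$ (namely $\xi_{\natural,1}$, $(\sgn\xi_1)$, or $-(\sgn t)(h^2\tau+1)$ respectively, up to the renormalization factors). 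In each case the sign of $\mu$ on $\calR_\varsigma$ is precisely $\mp\varsigma$ (this is exactly the source/sink dichotomy --- $\calR_+$ a sink, $\calR_-$ a source for $P_+$, reversed for $P_-$), so $c_{\mathrm{bf}}|_{\calR_\varsigma} = \mu|_{\calR_\varsigma}$ has sign $\mp\varsigma$, and at $\mathrm{bf}$ we have $\mathsf{H}_p = \mathsf{H}_{p_0}$ so the metric perturbation is invisible. Therefore $\mp\varsigma\,\alpha|_{\calR_\varsigma} = (\mp\varsigma\, c_{\mathrm{bf}}|_{\calR_\varsigma})\,\mathsf{s}|_{\calR_\varsigma} > 0$ iff $\mathsf{s}|_{\calR_\varsigma} > 0$, as required.

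The main (minor) obstacle is the density argument in the middle step: one must verify that none of the boundary hypersurfaces $\mathrm{df}, \natural\mathrm{f}, \mathrm{pf}$ engulfs $\calR_\varsigma$ entirely, which is immediate from the parametrization \eqref{eq:Rclosure} but should be stated explicitly. Everything else is a matter of carefully transporting the source/sink computation already done in Proposition~\ref{prop:calR} into this logarithmic-derivative formulation.
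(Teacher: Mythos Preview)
Your proof is correct and arrives at the same conclusion via the same final computation, but the middle step---showing that the contributions from $\rho_{\mathrm{df}}, \rho_{\natural\mathrm{f}}, \rho_{\mathrm{pf}}$ vanish on $\calR_\varsigma$---proceeds differently from the paper. The paper observes that these three boundary-defining-functions can be chosen to depend only on the frequency variables $(\tau,\xi)$ and $h$, and since $\mathsf{H}_{p_0}$ (to which one first reduces, using $\mathsf{H}_p - \mathsf{H}_{p_0} \in \rho_{\mathrm{bf}}\calV_{\mathrm{b}}$) has no $\partial_\tau, \partial_\xi, \partial_h$ components, these functions are annihilated identically, not merely on $\calR_\varsigma$. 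Your density argument instead exploits directly that $\mathsf{H}_p$ vanishes as an ordinary vector field at every point of $\calR_\varsigma$, so $c_{\mathrm{f}}$ vanishes on the dense open subset $\calR_\varsigma \setminus \mathrm{f}$ and hence everywhere by continuity. The paper's route is more explicit and yields the slightly stronger statement that $\alpha = \mathsf{s}\,\rho_{\mathrm{bf}}^{-1}\mathsf{H}_{p_0}\rho_{\mathrm{bf}}$ plus terms vanishing at $\calR \cap \{h=0\}$ (used later in the regularization argument); your route is more structural and would transfer to settings where no such convenient choice of boundary-defining-functions is available. One small imprecision: the phrase ``at $\mathrm{bf}$ we have $\mathsf{H}_p = \mathsf{H}_{p_0}$'' is not quite enough for the normal component $c_{\mathrm{bf}}$; what you need (and what holds) is $\mathsf{H}_p - \mathsf{H}_{p_0} \in \rho_{\mathrm{bf}}\calV_{\mathrm{b}}$, so that the difference contributes $O(\rho_{\mathrm{bf}})$ to $c_{\mathrm{bf}}$.
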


\begin{proof}
The validity of the proposition does not depend on the choice of boundary-defining-functions, so we can verify the proposition by working locally and choosing whichever local boundary-defining-functions are convenient. We will use the coordinate charts from the proof of Proposition~\ref{prop:calR}. We next observe that $\mathsf{H}_p - \mathsf{H}_{p_0} \in \rho_{\mathrm{bf}} \calV_{\mathrm{b}}({}^{\calc} \overline{T}^* \bbM)$, which implies that 
	\begin{equation}
		 a^{-1}(\mathsf{H}_p-\mathsf{H}_{p_0}) a \in \rho_{\mathrm{bf}} S^{0,0,0,0}_{\calc} , 
	\end{equation}
	so this does not affect the sign of $\alpha$ at the radial sets. So, it suffices to compute 
	the signs with $\mathsf{H}_p$ replaced by $\mathsf{H}_{p_0}$.
	
We note that the boundary defining functions $\rho_{\mathrm{df}},  \rho_{\natural\mathrm{f}}$ and  $\rho_{\mathrm{pf}}$ can all be chosen to be functions of $\xi, \tau$ and $h$. These are constant under $\mathsf{H}_{p_0}$, so it is really only the factor $\rho_{\mathrm{bf}}^{\mathsf{s}}$ that plays a role. Thus the sign of $a^{-1} \mathsf{H}_{p_0} a$ is determined by the sign of the $ \rhobf \partial_{\rhobf}$ term in the various coordinate systems in the proof of Proposition~\ref{prop:calR}, which yields the signs in the statement of the Proposition. 
\end{proof}

\section{Uniform estimates}
\label{sec:estimates}

In \S\ref{sec:calculus}, we developed the theory of the pseudodifferential calculus $\Psi_{\calc}$. The upshot of that development is that $\Psi_{\calc}$ behaves as a typical pseudodifferential calculus; it is under symbolic control at the boundary hypersurfaces $\mathrm{df},\mathrm{bf},\natural\mathrm{f}$ (but not $\mathrm{pf}$) of the compactified phase space on which symbols live. Consequently, we can carry out standard microlocal arguments: elliptic estimates and propagation estimates.
Moreover, we showed in \S\ref{sec:flow} that the Hamiltonian flow has a source-to-sink structure within $\Sigma$, with source/sinks $\calR_-,\calR_+$, just like what one sees in existing microlocal treatments of the Klein--Gordon and Schr\"odinger equations. So, we can carry out a version of the microlocal analysis that is uniform in the nonrelativistic limit. This is the goal of this section.

The only consequence of the non-symbolic nature of $\Psi_{\calc}$ at $\mathrm{pf}$ is that error terms that arise from the symbolic constructions in \S\ref{subsec:elliptic_propagation}, \S\ref{subsec:radial_point} will not be suppressed there. Improving the error terms will require a global argument involving the normal operator $N(P_\pm)$. We computed in \S\ref{sec:setup} that this is the (time-dependent) Schr\"odinger operator.  

The relevant solvability theory and estimates for this model problem were treated in \cite{Parabolicsc}, and we present the adaptation needed in our setting in Appendix~\S\ref{sec:schrodinger_estimate}.
See \S\ref{subsec:remainder} for the details of this key part of our argument. For the reader unfamiliar with similar arguments, let us just point out that it should be unsurprising that understanding the nonrelativistic limit of the Klein--Gordon equation requires as input the solvability theory of the Schr\"odinger equation. Here is where that input is used.
Besides this, the arguments -- and in particular the symbolic constructions -- are all standard. We will therefore take liberty with the level of details provided.

In \S\ref{subsec:combined}, we combine the estimates for $P_-,P_+$ to get global estimates on $u$ in terms of $Pu$, with no error terms on the right-hand side. The estimates in this subsection are phrased using the $\calctwo$-Sobolev spaces $H_{\calctwo}^{m,\mathsf{s},\ell;0,0}$,  which are ideal for the task.

Roughly, the progression of error terms in the estimates below is:
\begin{multline}
	\lVert u \rVert_{H_{\calctwo}^{m,\mathsf{s},\ell;0,0}}\overset{\S\ref{subsec:radial_point}}{\longrightarrow} \lVert u \rVert_{H_{\calctwo}^{-N,-N,-N;0,0}} \\  \overset{\S\ref{subsec:remainder}}{\longrightarrow} 
	\lVert Q_+u \rVert_{H_{\calctwo}^{-N,\mathsf{s},-N; -\epsilon,-\infty}} + \lVert Q_-u \rVert_{H_{\calctwo}^{-N,\mathsf{s},-N; -\infty,-\epsilon}}  \overset{\S\ref{subsec:combined} }{\longrightarrow} h^\epsilon\lVert u \rVert_{H_{\calctwo}^{-N, \mathsf{s}, -N; 0, 0}} \overset{\text{absorb}}{\longrightarrow} 0,
\end{multline}
where $N\in \bbR$ is arbitrary, $Q_\pm$ microlocalize to one component of the characteristic set of $P$, and $\epsilon>0$ is possibly small (but nonzero). The final step (which is also to be found in \S\ref{subsec:combined}) is just absorbing the $O(h^\epsilon)$ error term involving $u$ into the left-hand side of the estimate; we can do this as long as $h$ is sufficiently small and $N$ is sufficiently large.
Theorem~\ref{thm:inhomog} is obtained after this last step.

\subsection{Elliptic and propagation estimates}
\label{subsec:elliptic_propagation}
In this section, $m,\ell,q\in \bbR$ are arbitrary and $\mathsf{s} \in C^\infty({}^{\calc}\overline{T}^* \bbM;\bbR)$ is a variable order. We always assume that $\mathsf{s}$ is constant in a neighbourhood of the radial set $\calR$; this allows us to apply  Proposition~\ref{prop:sign}. 

First, we state the elliptic estimate, which is an immediate corollary of \Cref{prop:elliptic}.
\begin{proposition}\label{prop:S5elliptic}
	If $Q,G\in \Psi_{\calc}^{0,0,0,0}$ satisfy
	\begin{equation} 
		\operatorname{WF}'_{\calc}(Q)\subseteq \operatorname{Ell}^{0,0,0,0}_{\calc}(G)\cap \operatorname{Ell}^{2,0,2,0}_{\calc}(P_\pm), 
	\end{equation}then, for any $N\in \bbN$, 
	\begin{equation}\label{eq:elliptic estimate}
		\lVert Q u \rVert_{H_{\calc}^{m,\mathsf{s},\ell,q} } \lesssim 	\lVert GP_\pm u \rVert_{H_{\calc}^{m-2,\mathsf{s},\ell-2,q} } + \lVert u \rVert_{H_{\calc}^{-N,-N,-N,q} } 
	\end{equation}
	holds for all $h>0$ and $u \in \calS'$, in the strong sense that if the right-hand side is finite then so is the left-hand side and the stated inequality holds.
	\label{eq:elliptic_P}
\end{proposition}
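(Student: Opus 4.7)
The estimate is an immediate corollary of Proposition~\ref{prop:elliptic} applied with $A = P_\pm$, supplemented by one further application of the microlocal elliptic parametrix construction (Proposition~\ref{prop:parametrix_const}) to insert the cutoff $G$ on the right-hand side. The plan is to produce an operator identity of the form
\begin{equation*}
Q = (QB\tilde{G})\,(GP_\pm) + R,
\end{equation*}
with $QB\tilde{G} \in \Psi_{\calc}^{-2,0,-2,0}$ and $R \in \Psi_{\calc}^{-\infty,-\infty,-\infty,0}$, after which \eqref{eq:elliptic estimate} follows at once from Sobolev boundedness (Proposition~\ref{prop:Sobolev_boundedness}).

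To construct this identity, I would first invoke Proposition~\ref{prop:parametrix_const} with $A = P_\pm$ on $\operatorname{WF}'_{\calc}(Q)$ to obtain $B \in \Psi_{\calc}^{-2,0,-2,0}$ with $\operatorname{WF}'_{\calc}(BP_\pm - \Id) \cap \operatorname{WF}'_{\calc}(Q) = \varnothing$; composing on the left with $Q$ yields $Q = QBP_\pm + R_1$ for some $R_1 \in \Psi_{\calc}^{-\infty,-\infty,-\infty,0}$. Since $\operatorname{WF}'_{\calc}(QB) \subseteq \operatorname{WF}'_{\calc}(Q) \subseteq \operatorname{Ell}_{\calc}^{0,0,0,0}(G)$, a second application of the parametrix construction produces $\tilde{G} \in \Psi_{\calc}^{0,0,0,0}$ with $QB = QB\tilde{G}G + R_2$, $R_2 \in \Psi_{\calc}^{-\infty,-\infty,-\infty,0}$. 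Combining these two identities gives the displayed one with $R = -R_1 + R_2 P_\pm$; the second summand remains residual at $\mathrm{df}, \mathrm{bf}, \natural\mathrm{f}$ because composition with the finite-order operator $P_\pm \in \Psi_{\calc}^{2,0,2,0}$ only shifts orders by a finite amount at those three faces.

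I do not expect any substantive obstacle. The one subtlety worth flagging is that none of the parametrix remainders improves the order at $\mathrm{pf}$: the calculus is not fully symbolic there, so ``residual'' means order $0$ at $\mathrm{pf}$ rather than $-\infty$. This is precisely the reason the $q$-index is preserved rather than improved on both sides of \eqref{eq:elliptic estimate}, in particular in the error term $\lVert u \rVert_{H_{\calc}^{-N,-N,-N,q}}$.
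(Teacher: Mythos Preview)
Your proposal is correct and matches the paper's approach: the paper simply states that the proposition ``is an immediate corollary of \Cref{prop:elliptic}'', and your argument is a careful unpacking of that claim. A slightly more streamlined route is to apply Proposition~\ref{prop:elliptic} directly with $A = GP_\pm$ (since the product of the principal symbols of $G$ and $P_\pm$ is elliptic on $\operatorname{WF}'_{\calc}(Q)$), which yields \eqref{eq:elliptic estimate} in one step; but your two-step version is equally valid, and your remark about the $q$-index not improving at $\mathrm{pf}$ is exactly the right observation.
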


Note that $\lesssim$ allows the constant to depend on the upper bound $h_0>0$ of allowable $h$'s. In other words, what the notation ``$a\lesssim b$'' means is that there exists, for each $h_0$ sufficiently small, a $C>0$ such that, for all $h\in (0,h_0)$, the inequality $a\leq C b$ holds.

In \Cref{prop:sourcesink_global}, we proved that $\mathsf{H}_p$ has a global source-to-sink flow in $\Sigma$, with one of $\calR_-,\calR_+$ being the source and the other being the sink. More precisely,
\begin{itemize}
	\item $\pm\mathsf{H}_p$ flows from $\calR_-$ to $\calR_+$, i.e.\ 
	\item $\mp \mathsf{H}_p$ flows from $\calR_+$ to $\calR_-$, 
\end{itemize} 
where `$\pm$' is the sign in $P_\pm$. So, when we are looking at non-negative energy, the Hamiltonian flow goes from the radial set over the past hemisphere of $\partial \bbM$ to the radial set over the future hemisphere; see \Cref{fig:Schrflow_with_pf}. This is reversed when looking at non-positive energy.
 
In the next result, which is a standard propagation estimate holding where $\mathsf{H}_p$ does not vanish, we assume that  $\mathsf{s}$ is nonincreasing as one follows the $\mp \varsigma\mathsf{H}_p$ flow  from $\calR_\varsigma$ to $\calR_{-\varsigma}$, meaning that 
\begin{equation}\label{eq:s monotone}
	\mp \varsigma\mathsf{H}_p \mathsf{s} \leq 0 \text{ in a neighborhood of } \Sigma. 
\end{equation}
In addition, we assume that $\sqrt{\pm\varsigma\mathsf{H}_p \mathsf{s}}$ is a smooth function. The only subtlety is at the boundary of the support of $\mathsf{H}_p\mathsf{s}$, but the desired property can be satisfied by a standard bump function type construction, except for that it is shifted by $-\frac{1}{2}$ in values.

\begin{remark}
This extra assumption is used in the proof of the propagation estimate in \Cref{prop:global_propagation} below. In particular, it enables us to write terms introduced by differentiating $\mathsf{s}$ in the form $B^*B$ for some operator $B$ (at least to the leading order) so that has the favored sign in the estimate.
One can remove this assumption by proving the sharp G\r{a}rding's inequality in the current setting.
\end{remark}

\begin{proposition} Assume that $\mathsf{s}$ is as above. 
Let $\varsigma \in \{-,+\}$. Let $B,G,Q,Z \in \Psi_{\calc}^{0,0,0,0}$ satisfy
\begin{itemize}
		\item  $(\calR\cup \Sigma_{\mathrm{bad}}) \cap \operatorname{WF}'_{\calc}(B) = \varnothing$, i.e.\ the essential support of $B$ intersects neither $\calR=\calR_-\cup\calR_+$ nor the ``bad'' component $\Sigma_{\mathrm{bad}}$ of the characteristic set;
	\item for every point $q \in \operatorname{WF}'_{\calc}(B) \cap \Sigma$ there is a bicharacteristic segment $\gamma([0, r])$ with 
	$\gamma(0) \in \operatorname{Ell}_{\calc}^{0,0,0,0}(Q)$ and $\gamma(r) = q$, flowing in the same direction as $\mp \varsigma\mathsf{H}_p$. 
	\item All of the bicharacteristic segments $\gamma([0, r])$ in the previous item belong to the elliptic set of $G$:
	\begin{equation}
	\gamma([0,r])\subset \operatorname{Ell}_{\calc}^{0,0,0,0}(G)
	\end{equation} 
	i.e.\ $G$ is elliptic on all bicharacteristic segments ``between''  $\operatorname{WF}'_{\calc}(B)$ and $\operatorname{Ell}_{\calc}^{0,0,0,0}(Q)$.
	\item $\operatorname{WF}'_{\calc}(B) \backslash \operatorname{Ell}_{\calc}^{0,0,0,0}(G)  \subseteq \operatorname{Ell}_{\calc}^{0,0,0,0}(Z)$, i.e.\ 
	$Z$ is elliptic over the portion of the essential support of $B$ not already covered by the elliptic set of $G$;\footnote{Note that this portion of the essential support of $B$ must be entirely outside of $\Sigma\cup \Sigma_{\mathrm{bad}}$, hence in the elliptic set of $P_\pm$.} 

\end{itemize} 
Then, for any $N$, 
\begin{equation}
\lVert Bu \rVert_{ H_{\calc}^{ m,\mathsf{s},\ell,q } }
\lesssim \lVert Qu \rVert_{ H_{\calc}^{ m,\mathsf{s},\ell,q } } + \lVert GP_\pm u \rVert_{ H_{\calc}^{ m-1,\mathsf{s}+1,\ell-1,q}}   + \lVert ZP_\pm u \rVert_{ H_{\calc}^{ m-2,\mathsf{s},\ell-2,q}}  + \lVert u \rVert_{ H_{\calc}^{-N,-N,-N,q}} ,
\end{equation}
holds for all $u \in \calS'$, in the strong sense that if the right-hand side is finite then so is the left-hand side and the stated inequality holds.
\label{prop:global_propagation}
\end{proposition}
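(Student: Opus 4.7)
The plan is to execute the standard positive commutator argument, adapted to the $\Psi_{\calc}$ calculus. I will construct a self-adjoint commutant
\begin{equation*}
A=A^{*} \in \Psi_{\calc}^{2m-1,\,2\mathsf{s}-1,\,2\ell-1,\,2q}
\end{equation*}
with principal symbol $a^{2}$, use the identity
\begin{equation*}
\langle i[P_\pm,A]u,u\rangle \;=\; 2\Im\langle Au,P_\pm u\rangle + \langle iA(P_\pm-P_\pm^{*})u,u\rangle,
\end{equation*}
extract an $L^{2}$-coercive main term from the left-hand side, and control the right-hand side via weighted Cauchy--Schwarz together with the hypothesized microlocal information on $Qu$, $GP_\pm u$, and $ZP_\pm u$. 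The skew part $P_\pm-P_\pm^{*}$ is lower order by \Cref{prop:P1}, so it only contributes to the residual term.

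The commutant will be built by a H\"ormander--Melrose-style construction along the $\mp\varsigma\mathsf{H}_p$-flow. Writing $a = \rho_{\mathrm{df}}^{m-1/2}\rho_{\mathrm{bf}}^{\mathsf{s}-1/2}\rho_{\natural\mathrm{f}}^{\ell-1/2}\rho_{\mathrm{pf}}^{q}\,\tilde{a}$ with $\tilde{a}\in S^{0,0,0,0}_{\calc}$, I will choose $\tilde{a}$ by flowing a cutoff supported in $\operatorname{Ell}_{\calc}^{0,0,0,0}(Q)$ forward under $\mp\varsigma\mathsf{H}_p$ until the orbit just covers $\operatorname{WF}'_{\calc}(B)$, and then cutting off transversely to the flow so that $\operatorname{supp}\tilde{a}\subseteq \operatorname{Ell}_{\calc}^{0,0,0,0}(Q)\cup \operatorname{Ell}_{\calc}^{0,0,0,0}(G)\cup \operatorname{Ell}_{\calc}^{0,0,0,0}(Z)$. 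Using \Cref{prop:composition} to compute the principal Poisson bracket and identifying the four contributions to $\mathsf{H}_p a^{2}$ --- the main propagation term, the initial-cutoff term, the boundary of $\Sigma$, and the derivative of the $\rho_{\mathrm{bf}}^{2\mathsf{s}-1}$ weight through the variable part of $\mathsf{s}$ --- yields a principal symbol identity of the schematic form
\begin{equation*}
\mp\varsigma\,\{p,a^{2}\} \;=\; b^{2} - e + f_{G} + f_{Z} + r,
\end{equation*}
with $b$ elliptic on $\operatorname{WF}'_{\calc}(B)$, $e,f_{G},f_{Z}\geq 0$ supported in the elliptic sets of $Q$, $G$, and $Z\setminus\Sigma$ respectively, and $r$ the variable-order remainder. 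The monotonicity hypothesis \eqref{eq:s monotone} ensures that $r$ has the same sign as $b^{2}$, and the assumption that $\sqrt{\pm\varsigma\mathsf{H}_p\mathsf{s}}$ is smooth then lets me realize $r$ as a square at the principal level, so no appeal to a sharp G\r{a}rding inequality is needed.

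Quantizing this symbolic identity, pairing against $u$, controlling the $f_{Z}$-contribution by the elliptic estimate \Cref{prop:S5elliptic} (which produces the $\lVert ZP_\pm u\rVert_{H^{m-2,\mathsf{s},\ell-2,q}_{\calc}}$ term on the right), splitting $A$ symmetrically between the two factors of $\langle Au,P_\pm u\rangle$ so that weight $\rho_{\mathrm{bf}}$ is shifted appropriately, and applying the weighted AM--GM inequality then absorbs a small multiple of $\lVert Bu\rVert_{H^{m,\mathsf{s},\ell,q}_{\calc}}^{2}$ into the left-hand side and leaves $\lVert GP_\pm u\rVert_{H^{m-1,\mathsf{s}+1,\ell-1,q}_{\calc}}^{2}$ on the right, yielding the claimed estimate. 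The main technical obstacle is the standard \emph{a priori} regularity issue: the pairings above require justification for an arbitrary tempered distribution $u$. This will be handled in the customary way by replacing $a$ with a regularization $a_{\varepsilon}=a\,\phi_{\varepsilon}$ with $\phi_{\varepsilon}\in S^{-N,-N,-N,0}_{\calc}$ converging to $1$ in a slightly weaker symbol class as $\varepsilon\to 0^{+}$, verifying that the constants in the resulting estimate are $\varepsilon$-uniform (where the smoothness of $\sqrt{\pm\varsigma\mathsf{H}_p\mathsf{s}}$ again enters to keep the regularized $r_{\varepsilon}$ term a uniform square), and then passing to the limit. The non-fully-symbolic character of $\Psi_{\calc}$ at $\mathrm{pf}$ is no obstruction here, since the $\mathrm{pf}$-order $q$ is identical on both sides of the estimate and errors of the same order at $\mathrm{pf}$ propagate harmlessly through the argument.
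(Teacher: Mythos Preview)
Your proposal is correct and follows essentially the same approach as the paper, which does not give a detailed proof but simply refers to \cite{VasyGrenoble} and states that ``the method here is identical'' --- a standard positive commutator argument with the commutant constructed via the symbol calculus. Your sketch fills in precisely the details the paper omits (the flow-based construction of $\tilde a$, the handling of the variable-order term via the smoothness of $\sqrt{\pm\varsigma\mathsf{H}_p\mathsf{s}}$, and the regularization, which the paper later remarks is ``more straightforward'' here than in the radial point case since one can regularize to arbitrarily high order in all indices).
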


So, assuming that we have control of $P_\pm u$ on $\Sigma\backslash \calR$, then microlocal regularity of $u$ on $\Sigma\backslash \calR$ propagates, in the direction in which $\mathsf{s}$ is nonincreasing, along the bicharacteristics, although not (yet) as far as $\calR_{-\varsigma}$ as this cannot be reached in finite parameter time. 
See \cite{VasyGrenoble} for a careful exposition of the method of proof for this sort of estimate. The method here is identical. It is a standard positive commutator argument, where the commutant is constructed via the symbol calculus.

We will use the propositions in this subsection many times below.

\subsection{Radial point estimates}
\label{subsec:radial_point}

We now turn to the radial point estimates. We have eight different estimates: two choices of sign in $P_\pm$ (which we are suppressing in the notation --- throughout this subsection $\pm$ is the same sign in the subscript of $P_\pm$, not $\varsigma$), two choices of $\varsigma\in \{-,+\}$ specifying which radial set $\calR_\varsigma$ in $\Sigma$ we are looking at, and two sorts of estimates at each depending on $\mathsf{s}$ is larger or smaller that the threshold there. Roughly, assuming that $\mathsf{s}$ is monotone under the Hamiltonian flow (with the right sign), and assuming that $P_\pm u$ is under sufficient control, we have:
\begin{itemize}
	\item a ``below-threshold'' estimate, which allows us to propagate the control of Sobolev norms near $\calR_\varsigma$ into $\calR_\varsigma$,
	if $\mathsf{s}<-1/2$ near $\calR_\varsigma$. 

	
	\item an ``above-threshold'' estimate: if one have a priori $s_0>-1/2$-order decay near $\calR_\varsigma$, then one can control the Sobolev norm near $\calR_\varsigma$ through an ``subelliptic'' estimate (so-called as it has a loss of regularity of order 1 compared with the elliptic estimate at $\mathrm{df},\mathrm{bf},\natural\mathrm{f}$ --- compare the exponents of the $GP_\pm u$ term in \eqref{eq:misc_j54} compared to the elliptic estimate \eqref{eq:elliptic estimate}).
\end{itemize}

Both of the proofs are $\calc$-analogues of standard arguments --- see \cite{VasyGrenoble} for a detailed presentation of those standard arguments in the context of the sc-calculus. What we will do is present the details of the above threshold estimate and then explain the modifications required to prove the below threshold estimate. Afterwards, we will combine the above and below threshold estimates at different radial sets in $\Sigma$. This is where we make critical use of the fact that the order $\mathsf{s}$ is allowed to be variable. 

\begin{proposition} Fix $\varsigma \in \{-,+\}$. Suppose that $s_0>-1/2$, $m,\ell,q\in \bbR$ and $\mathsf{s}\in C^\infty({}^{\calc}\overline{T}^* \bbM)$ satisfies \eqref{eq:s monotone} is constant near $\calR_\varsigma$ as well as      
	\begin{equation} 
		\mathsf{s}|_{\calR_\varsigma}>-1/2, \; \mathsf{s} \text{ is a constant near } \calR_\varsigma.
	\end{equation} 
	Then, if $B,G ,Z,Q\in \Psi_{\calc}^{0,0,0,0}$  satisfy
	\begin{itemize}
	\item  
		$(\calR_{-\varsigma}\cup \Sigma_{\mathrm{bad}}) \cap \operatorname{WF}'_{\calc}(B) = \varnothing$,
		i.e.\ the essential support of $B$ intersects neither $\calR_{-\varsigma}$ nor the ``bad'' component $\Sigma_{\mathrm{bad}}$ of the characteristic set,
		\item $G$ is elliptic on all bicharacteristic segments between $\operatorname{WF}'_{\calc}(B) \cap \Sigma$ and $\calR_\varsigma$, including ellipticity at $\calR_\varsigma$, 
		\item $\operatorname{WF}'_{\calc}(B) \backslash \operatorname{Ell}_{\calc}^{0,0,0,0}(G) \subseteq \operatorname{Ell}_{\calc}^{0,0,0,0}(Z)$, i.e.\ the portion of the essential support of $B$  not already contained in the elliptic set of $G$ is covered by the elliptic set of $Z$, 
		\item $\calR_\varsigma \subseteq\operatorname{Ell}_{\calc}^{0,0,0,0}(Q)$, i.e.\ $Q$ is elliptic on the radial set $\calR_\varsigma$, 
	\end{itemize}
	then, for any $N\in \bbR$, the estimate 
	\begin{equation}
		\lVert B u \rVert_{H_{\calc}^{m,\mathsf{s},\ell,q}}   \lesssim \lVert G P_\pm u \rVert_{H_{\calc}^{m-1,\mathsf{s}+1,\ell-1,q}} +\lVert Z P_\pm u \rVert_{H_{\calc}^{m-2,\mathsf{s},\ell-2,q}}  + \lVert Qu \rVert_{H_{\calc}^{-N,s_0,-N,q}} + \lVert u \rVert_{H_{\calc}^{-N,-N,-N,q} }
		\label{eq:misc_j54}
	\end{equation}
	holds, in the usual strong sense, for all $u\in \calS'$.
	\label{prop:propagation_out}
\end{proposition}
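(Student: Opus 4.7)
The strategy is a positive commutator argument at the source/sink $\calR_\varsigma$, adapted to the $\calc$-calculus and exploiting the source/sink structure of $\mathsf{H}_p$ established in Propositions \ref{prop:calR}, \ref{prop:qdf}, and \ref{prop:sign}. It proceeds in close parallel to the above-threshold sc-radial point estimates of \cite{VasyGrenoble}. The non-symbolicity of $\Psi_{\calc}$ at $\mathrm{pf}$ presents no obstacle here, because on both sides of \cref{eq:misc_j54} the $\mathrm{pf}$-order $q$ is the same: every symbolic identity we will use holds modulo operators of the \emph{same} order $q$ at $\mathrm{pf}$, for which the calculus is well-behaved.

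Write $\mathsf{s}_0 = \mathsf{s}|_{\calR_\varsigma}$, a real number since $\mathsf{s}$ is constant near $\calR_\varsigma$. With $\varrho_\varsigma$ the quadratic defining function from Proposition \ref{prop:qdf}, choose $\phi \in C^\infty_{\mathrm{c}}([0,\delta))$ with $\phi \geq 0$, $\phi' \leq 0$, $\phi(0) = 1$, and $\sqrt{-\phi\phi'}$ smooth (standard), and shrink $\delta$ so that $\{\varrho_\varsigma < \delta\} \subseteq \operatorname{Ell}_{\calc}^{0,0,0,0}(Q) \cap \operatorname{Ell}_{\calc}^{0,0,0,0}(G) \cap \{\mathsf{s} \equiv \mathsf{s}_0\}$. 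Set
\begin{equation}
a = \phi(\varrho_\varsigma)\, \rho_{\mathrm{df}}^{-(m-1/2)} \rho_{\mathrm{bf}}^{-(\mathsf{s}_0 + 1/2)} \rho_{\natural\mathrm{f}}^{-(\ell-1/2)} \rho_{\mathrm{pf}}^{-q}, \quad A = \operatorname{Op}(a) \in \Psi_{\calc}^{m-1/2,\, \mathsf{s}_0 + 1/2,\, \ell - 1/2,\, q}.
\end{equation}
Since $\mathsf{s}_0 + 1/2 > 0$, $A$ grows at $\mathrm{bf}$, so we regularize by taking $a_\eta = a\cdot (1 + \eta\rho_{\mathrm{bf}}^{-1})^{-1}$ so that $A_\eta = \operatorname{Op}(a_\eta)$ is uniformly bounded in $\Psi_{\calc}^{m-1/2,\, \mathsf{s}_0 - 1/2,\, \ell - 1/2,\, q}$ for $\eta \in (0,1]$ and $A_\eta \to A$ in a slightly larger class as $\eta \to 0^+$.

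Compute $i[P_\pm, A_\eta^* A_\eta]$ via Proposition \ref{prop:composition}; its principal symbol is $H_p(a_\eta^2) = 2 a_\eta \mathsf{H}_p a_\eta$ up to the standard rescaling $\mathsf{H}_p = \tfrac{1}{2} \rho_{\mathrm{df}}\rho_{\mathrm{bf}}^{-1}\rho_{\natural\mathrm{f}} H_p$. Applying the product rule to $a = \phi(\varrho_\varsigma) w$ and substituting $\mathsf{H}_p\varrho_\varsigma = \mp\varsigma \iota \varrho_\varsigma \mp\varsigma F + E$ from \cref{eq:Hp_qdf} together with $\mathsf{H}_p w = -\alpha_{m-1/2,\mathsf{s}_0+1/2,\ell-1/2,q}\cdot w$ from Proposition \ref{prop:sign}, we obtain three contributions at $\calR_\varsigma$: (i) a diagonal term $-2\phi^2 w^2 \alpha$, whose sign is determined by Proposition \ref{prop:sign} and is the ``good'' sign precisely when $\mathsf{s}_0 + 1/2 > 0$ --- this is the origin of the threshold $-1/2$; (ii) a term $\mp\varsigma\, 2\phi w^2 \phi'(\iota\varrho_\varsigma + F)$ of the same ``good'' sign, since $\phi' \leq 0$, $\iota > 0$, $F \geq 0$; (iii) a cubic error $2\phi w^2 \phi' E$ with $E = O(\varrho_\varsigma^{3/2})$, absorbed into (i)+(ii) by the Young inequality. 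Hence $\mp\varsigma H_p(a_\eta^2) = b_\eta^2 + e_\eta$ with $b_\eta$ uniformly elliptic at $\calR_\varsigma$ in $\Psi_{\calc}^{m, \mathsf{s}_0, \ell, q}$ and $e_\eta$ microsupported in $\Sigma \cap \operatorname{Ell}_{\calc}^{0,0,0,0}(G) \setminus \calR_\varsigma$. The subprincipal contribution from the antiselfadjoint part of $P_\pm$, lying in $\Psi_{\calc}^{1,-2,0,-1}$ by Proposition \ref{prop:P1}, is one order lower at $\mathrm{bf}$ than the main term and does not affect the threshold.

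Pair the identity $\langle i[P_\pm, A_\eta^* A_\eta]u, u\rangle = -2\varsigma \operatorname{Im}\langle A_\eta P_\pm u, A_\eta u\rangle - 2\varsigma\langle (\operatorname{Im} P_\pm) A_\eta^* A_\eta u, u\rangle$, apply Cauchy--Schwarz on the first right-hand term --- distributing $A_\eta^* A_\eta$ as a product of $\Psi_{\calc}^{m-1, \mathsf{s}_0 + 1, \ell -1, q}$ and $\Psi_{\calc}^{m, \mathsf{s}_0, \ell, q}$, which produces exactly the orders of the $GP_\pm u$ term in \cref{eq:misc_j54} --- and use the elliptic estimate of Proposition \ref{prop:S5elliptic} on $\operatorname{WF}'_{\calc}(B) \setminus \Sigma$ to handle the $ZP_\pm u$ term. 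This yields a bound on $\lVert b_\eta u\rVert_{L^2}^2$ uniformly in $\eta$. The subtle point is absorbing the commutator corrections from $P_\pm$ with the regularizer $(1+\eta\rho_{\mathrm{bf}}^{-1})^{-1}$: these produce terms of order $(m, \mathsf{s}_0, \ell, q)$, no worse than the main term, and their finite limit as $\eta \to 0^+$ requires the a priori regularity $Qu \in H_{\calc}^{-N, s_0, -N, q}$ with $s_0 > -1/2$ --- this is precisely where the threshold hypothesis on $s_0$ enters \cref{eq:misc_j54}. Passing $\eta \to 0^+$, promoting $b_0 u \in L^2$ to $Bu \in H_{\calc}^{m, \mathsf{s}, \ell, q}$ via the microlocal elliptic parametrix (Proposition \ref{prop:parametrix_const}) on a neighborhood of $\calR_\varsigma$, and extending to all of $\operatorname{WF}'_{\calc}(B) \cap \Sigma$ via the propagation estimate (Proposition \ref{prop:global_propagation}) using the monotonicity hypothesis \eqref{eq:s monotone} on $\mathsf{s}$, yields \cref{eq:misc_j54}. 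The main subtlety is the regularization step; the $\calc$-specific features enter only through the explicit structure of Propositions \ref{prop:qdf}, \ref{prop:sign}, and \ref{prop:P1}, which are exactly what is needed for the standard positive commutator trick to carry through.
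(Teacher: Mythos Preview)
Your overall strategy is correct and matches the paper's: a positive commutator argument at $\calR_\varsigma$ using the source/sink structure from Propositions \ref{prop:qdf} and \ref{prop:sign}, combined with the propagation estimate to cover the rest of $\operatorname{WF}'_{\calc}(B)\cap\Sigma$. However, the execution has two genuine gaps.

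\textbf{First, the iteration is missing.} A single positive commutator step with your commutant $a_\eta^2$ produces an error term of the form $\lVert \overline{B} u\rVert_{H_{\calc}^{m-1/2,\mathsf{s}-1/2,\ell-1/2,q}}$, coming from the remainder $R$ one order down in the symbolic identity. This is \emph{not} the term $\lVert Qu\rVert_{H_{\calc}^{-N,s_0,-N,q}}$ in \cref{eq:misc_j54}. The paper bridges this gap by constructing a sequence of variable orders $\mathsf{s}_j$, each still above threshold at $\calR_\varsigma$ but dropping by $1/2$ outside the elliptic set of $Q$, and iterating the one-step estimate until the error term has orders $\leq (-N,s_0,-N)$. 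Your sentence ``their finite limit as $\eta\to 0^+$ requires the a priori regularity $Qu\in H_{\calc}^{-N,s_0,-N,q}$'' conflates the role of $s_0$ in justifying the regularized pairing with the role of $s_0$ as the target order of the error term; these are two separate issues, and the latter genuinely requires iteration.

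\textbf{Second, the regularization is underspecified.} One power $(1+\eta\rho_{\mathrm{bf}}^{-1})^{-1}$ does not suffice: to make the regularized pairings well-defined for $u\in H_{\mathrm{sc}}^{-N,s_0}$ you need regularizers $(1+\varepsilon\rho_{\mathrm{df}}^{-1})^{-K}(1+\varepsilon\rho_{\mathrm{bf}}^{-1})^{-K'}$ with $K$ arbitrary (since the df-regularizer contributes $O(h)$ to $\alpha$ and never spoils the sign) but $K'$ constrained to the interval $(2(\mathsf{s}|_{\calR_\varsigma}-s_0),\,1+2\mathsf{s}|_{\calR_\varsigma})$, which is nonempty precisely when $s_0>-1/2$. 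This is where the threshold for $s_0$ actually enters. A minor related point: your commutant lacks a cutoff $\chi(\mathsf{p})$ localizing to $\Sigma$; since Proposition \ref{prop:qdf} gives $\varrho_\varsigma$ only as a quadratic defining function \emph{within} $\{\mathsf{p}=0\}$, the set $\{\varrho_\varsigma<\delta\}$ need not be contained in a neighborhood of $\calR_\varsigma$, and without the $\chi(\mathsf{p})$ factor you cannot separate the elliptic region (handled by the $ZP_\pm u$ term) from the characteristic region.
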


\begin{remark} The improvement on Proposition~\ref{prop:global_propagation} is that here $B$ can be elliptic at $\calR_{\varsigma}$. Thus, this proposition tells us that, assuming we know a priori that $u$ has above threshold regularity at $\calR_{\varsigma}$ (as given by the finiteness of the $Qu$ norm with $s_0 > -1/2$), then if $P_\pm u$ is regular (order $s+1$) at $\calR_{\varsigma}$, it follows that $u$ is also regular (order $s$) at $\calR_{\varsigma}$. 
\end{remark}

\begin{proof}
	We prove the estimate at $\calR_+$, and the proof of the estimate at $\calR_-$ is analogous. Also, it suffices to consider the $q=0$ case, since the general case just involves multiplying both sides of the desired estimate by some power of $h$. 
	
	Let $U \subset \operatorname{Ell}_{\calc}^{0,0,0,0}(Q)$ be a neighbourhood of $\calR_+$.  It is straightforward to construct a sequence of variable orders  $\{\mathsf{s}_j\}_{j=1}^\infty$, each constant in a fixed neighbourhood of $\calR$, such that 
	\begin{itemize}
	\item $\mathsf{s}_0=\mathsf{s}$, 
		\item $\mathsf{s}_{j} - 1/2 \leq \mathsf{s}_{j+1}\leq \mathsf{s}_{j}$ everywhere,
		\item $\mathsf{s}_j|_{\calR_{\varsigma}}>-1/2$, 
		\item $\mathsf{s}_{j+1} =  \mathsf{s}_j-1/2$ in the complement of $U$, 		
		\item $\mathsf{s}_j$ is (non-strictly) monotonic under the Hamiltonian flow near $\Sigma$, in the same direction as $\mathsf{s}$, 
	\end{itemize}
	for each $j\in \mathbb{N}$. Moreover,  we can arrange that for some $J \in \mathbb{N}$, we have $\mathsf{s}_J \leq s_0$ on $U$ and $\mathsf{s}_J \leq -N$ on the complement of $U$; the second condition is of course an immediate consequence of the fourth item above, for sufficiently large $J$. Given this sequence of variable orders, we will show inductively that the estimate
	\begin{multline}
		\lVert B u \rVert_{H_{\calc}^{m,\mathsf{s},\ell,0} }  \lesssim \lVert G P_\pm u \rVert_{H_{\calc}^{m-1,\mathsf{s}+1,\ell-1,0}}  + \lVert Z P_\pm u \rVert_{H_{\calc}^{m-2,\mathsf{s},\ell-2,0} }  \\ 
		+ \lVert \overline{B} u \rVert_{H_{\calc}^{m-j/2,\mathsf{s}_j,\ell-j/2,0} }  + \lVert u \rVert_{H_{\calc}^{-N,-N,-N,0} }
		\label{eq:misc_j56}
	\end{multline}
	holds for all $u\in \calS'$, in the usual strong sense, for any $\overline{B} \in \Psi_{\calc}^{0,0,0,0}$ satisfying the same requirements as $B$ above and in addition 
	\begin{equation}
		\operatorname{Ell}_{\calc}^{0,0,0,0}(\overline{B}) \supseteq \operatorname{WF}'_{\calc}(B) \cup \calR_\varsigma. 
	\end{equation}
	The desired result \cref{eq:misc_j54} follows from knowing \cref{eq:misc_j56} for $j \geq J$ sufficiently large so that $m - j/2 \leq -N$ and $\ell - j/2 \leq -N$, since then the $\overline{B}u$ term in \cref{eq:misc_j56} satisfies the elliptic estimate 
	\begin{equation}
		\lVert \overline{B} u \rVert_{H_{\calc}^{m-j/2,\mathsf{s}_j,\ell-j/2,0} } \lesssim \lVert Qu \rVert_{H_{\calc}^{-N,s_0,-N,0} } +\lVert u \rVert_{H_{\calc}^{-N,-N,-N,0} }.
		\label{eq:misc_273}
	\end{equation}
	So, we can conclude \cref{eq:misc_j54}.

	The base case $j=1$ is the crux of the problem, whereas the inductive step is straightforward. Indeed, suppose that we know \cref{eq:misc_j56} for some particular value of $j$ and smaller values of $j$. 
	Now choose $\tilde{B}\in \Psi_{\calc}^{0,0,0,0}$ (satisfying the same hypotheses as $B$) such that 
	\begin{equation}
			\operatorname{Ell}_{\calc}^{0,0,0,0}(\overline{B})\supseteq  \operatorname{WF}'_{\calc}(\tilde{B})\supseteq\operatorname{Ell}_{\calc}^{0,0,0,0}(\tilde{B})\supseteq \operatorname{WF}'_{\calc}(B).
	\end{equation}
	Then, \cref{eq:misc_j56} applies with $\tilde{B}$ in place of $\overline{B}$. Applying
	and \cref{eq:misc_j56} with $\tilde{B}$ in place of $B$, $\mathsf{s}_j$ in place of $\mathsf{s}$, and $m-j/2,\ell-j/2$ in place of $m,\ell$, gives 
	\begin{multline}
		\lVert \tilde{B} u \rVert_{H_{\calc}^{m-j/2,\mathsf{s}_j,\ell-j/2,0} }  \lesssim \lVert G P_\pm u \rVert_{H_{\calc}^{m-1,\mathsf{s}+1,\ell-1,0}}  + \lVert Z P_\pm u \rVert_{H_{\calc}^{m-2,\mathsf{s},\ell-2,0} }  \\ 
		+ \lVert \overline{B} u \rVert_{H_{\calc}^{m-(j+1)/2,\mathsf{s}_{j+1},\ell-(j+1)/2,0} }  + \lVert u \rVert_{H_{\calc}^{-N,-N,-N,0} },
	\end{multline}
	where we used that $\mathsf{s}_j\leq \mathsf{s}$ and $\mathsf{s}_{j+1}\geq \mathsf{s}_j-1/2$. We have therefore proven \cref{eq:misc_j56} for $j+1$. 
	
	So, we only need to prove that 
	\begin{multline}
		\lVert B u \rVert_{H_{\calc}^{m,\mathsf{s},\ell,0} }  \lesssim \lVert G P_\pm u \rVert_{H_{\calc}^{m-1,\mathsf{s}+1,\ell-1,0}}  + \lVert Z P_\pm u \rVert_{H_{\calc}^{m-2,\mathsf{s},\ell-2,0} }  \\ 
		+ \lVert \overline{B} u \rVert_{H_{\calc}^{m-1/2,\mathsf{s}_1,\ell-1/2,0} }  + \lVert u \rVert_{H_{\calc}^{-N,-N,-N,0} }
		\label{eq:misc_j31}
	\end{multline}
	to conclude the proposition. 
	First, we will prove this for highly regular $u$, meaning its orders at $\mathrm{df}, \mathrm{bf}$ and $\natural\mathrm{f}$ are sufficiently large  (in which case it is not necessary to assume that $\overline{B}$ is elliptic at $\calR_\varsigma$). Then we will discuss the (really quite standard, but with standard subtleties) regularization argument needed to handle general $u$. 
	This is where we require that $\overline{B}$ is elliptic at $\calR_\varsigma$.

	Assuming that $u$ is sufficiently decaying at $\mathrm{df}$ and $\mathrm{bf}$, for any individual $h>0$, and $L^2$-symmetric pseudodifferential operator $A$, we have
	\begin{equation}
		\langle i ( [A,P_\pm] + (P_\pm-P_\pm^*)A) u,u \rangle_{L^2} = 2 \Im \langle Au,P_\pm u \rangle_{L^2}, 
		\label{eq:misc_445}
	\end{equation}
	where the $L^2$-inner product is conjugate linear in the first slot. Now suppose that $A\in \Psi_{\calc}^{m',\mathsf{s}',\ell',0}$, where $\mathsf{s}'$ is constant in a neighbourhood of $\calR_{\varsigma}$.
	
	Since $P\in \operatorname{Diff}_{\calc}^{2,0,2,0}$,  we have \begin{equation}
		[A,P_\pm] + (P_\pm-P_\pm^*) A \in \Psi_{\calc}^{m'+1,\mathsf{s}'-1,\ell'+1,0}, 
	\end{equation}
	and  this has principal symbol 
	\begin{equation}
		\sigma_{\calc}^{m'+1,\mathsf{s}'-1,\ell'+1,0}(i( [A,P_\pm] + (P_\pm-P_\pm^*) A) )  = i\{p,a\}+p_1 a = H_p a + p_1 a ,  
	\end{equation}
	where $a$ is a representative of the principal symbol of $A$, and where $p_1$ is a principal symbol of $i(P_\pm-P_\pm^*)$. (Cf.\ \Cref{prop:composition}.)
	
	We saw in \Cref{prop:P1} that $P_\pm-P_\pm^*$ lies in $\Psi_{\calc}^{1,-2,1,0}$, so $p_1$ lies in $S_{\calc}^{1,-2,1,0}$.

	There exists $\chi \in C_{\mathrm{c}}^\infty(\bbR)$ such that $\chi(t) = 1$ for $|t| \leq 1/2$, $\chi(t) = 0$ for $|t| \geq 1$, and  such that $-2\operatorname{sign}(t) \chi'(t) \chi(t) = \chi_0(t)^2$ for some $\chi_0\in C_{\mathrm{c}}^\infty(\bbR)$. (Indeed, such a $\chi$ can be built from $\exp(-1/t)$.) For each $\digamma\in \bbR^+$, let $\chi_\digamma(t) = \chi(\digamma t)$, and correspondingly let 
	\begin{equation}
	\chi_{0,\digamma}(t) = \digamma^{1/2} \chi_0(\digamma t), 
	\end{equation} 
	so that $-2\operatorname{sign}(t) \chi_\digamma'(t) \chi_\digamma(t) = \chi_{0,\digamma}^2(t)$. 
	We now consider $a$ defined by 
	\begin{equation}\label{eq:asymboldefn}
		a = \chi_\digamma(\mathsf{p})^2 \chi_\digamma(\varrho_+)^2  \rho_{\mathrm{df}}^{m'} \rho_{\mathrm{bf}}^{\mathsf{s}'} \rho_{\natural\mathrm{f}}^{\ell'},
	\end{equation}
	where $\varrho_+$ is a quadratic-defining-function for $\calR_+$ within $\mathsf{p}^{-1}(\{0\})$ as in Proposition~\ref{prop:qdf}, thus 
	$a$ is supported in a small neighborhood of $\calR_+$ that gets smaller as $\digamma$ grows. We assume that $\digamma$ is chosen large enough so that the order $\mathsf{s}'$ is constant on the support of $a$. 
	
	Let $w = \rho_{\mathrm{df}}^{-1} \rho_{\mathrm{bf}}\rho_{\natural\mathrm{f}}^{-1}$, so that $\mathsf{H}_p = 2^{-1} w^{-1} H_p$. 
	Then, letting $\iota,F,E$ be as in Proposition~\ref{prop:qdf} and $\alpha=\alpha_{m',\mathsf{s}',\ell',0}$ be as in Proposition~\ref{prop:alpha},  we have (recalling $\mathsf{p} = \rho_{\mathrm{df}}^2 \rho_{\natural\mathrm{f}}^2 \sigma_{\calc}^{2,0,2,0}(P_\pm)$) 
	\begin{multline}
		\mathsf{H}_pa = \bigg( 2 \chi_\digamma'(\mathsf{p}) \chi_\digamma(\mathsf{p}) \mathsf{p} (\rho_{\mathrm{df}}^{-2} \rho_{\natural\mathrm{f}}^{-2} \mathsf{H}_p \rho_{\mathrm{df}}^2 \rho_{\natural\mathrm{f}}^2 )\chi_\digamma(\varrho_+)^2  + \chi_\digamma(\mathsf{p})^2 \chi_{0,\digamma}(\varrho_+)^2   (\iota \varrho_+ + F - E) \\ + \alpha  \chi_\digamma(\mathsf{p})^2 \chi_\digamma(\varrho_+)^2 \bigg) \rho_{\mathrm{df}}^{m'}  \rho_{\mathrm{bf}}^{\mathsf{s}'} \rho_{\natural\mathrm{f}}^{\ell'}.
		\label{eq:misc_b41}
	\end{multline}
(Our assumption that $a$ is supported where the order $\mathsf{s}'$ is constant means that there is no term where the Hamiltonian vector field hits $\mathsf{s}'$.) 
	Assume that $\mathsf{s}'<0$ on $\calR_+$. 
	For all $\digamma>0$ sufficiently large, and for all $\delta$ sufficiently small (where what ``sufficiently small'' means depends on $\digamma$), we define symbols $b,e,h \in S^{0,0,0,0}_{\calc}$ 
\begin{align}
		\begin{split} 
			b &=   \chi_\digamma(\mathsf{p})\chi_\digamma(\varrho_+)\sqrt{\alpha-\delta \chi_\digamma(\mathsf{p})^2 \chi_\digamma(\varrho_+)^2  + 2^{-1} w^{-1} p_1}, \\ 
			e &= \chi_\digamma(\mathsf{p}) \chi_{0,\digamma}(\varrho_+) \sqrt{\iota \varrho_+ + F -E}, \\ 
			h &= 2 \chi_\digamma'(\mathsf{p}) \chi_\digamma(\mathsf{p}) \mathsf{p}(\rho_{\mathrm{df}}^{-2} \rho_{\natural\mathrm{f}}^{-2} \mathsf{H}_p \rho_{\mathrm{df}}^2 \rho_{\natural\mathrm{f}}^2 )\chi_\digamma(\varrho_+)^2.
		\end{split} 
	\end{align}
	such that 
	\begin{align}
		\begin{split}
			\mathsf{H}_p a + 2^{-1} w^{-1} p_1 a  &= (\delta \chi_\digamma(\mathsf{p})^4 \chi_\digamma(\varrho_+)^4 + b^2 + e^2+h) \rho_{\mathrm{df}}^{m'} \rho_{\mathrm{bf}}^{\mathsf{s}'} \rho_{\natural\mathrm{f}}^{\ell'}, \\
			\text{or equivalently: } H_p a + p_1 a  &= 2 (\delta \chi_\digamma(\mathsf{p})^4 \chi_\digamma(\varrho_+)^4 + b^2 + e^2+h) w \rho_{\mathrm{df}}^{m'} \rho_{\mathrm{bf}}^{\mathsf{s}'} \rho_{\natural\mathrm{f}}^{\ell'}. 
			\label{eq:misc_b412}
		\end{split}
	\end{align}
Key in our ability to do this is that, since $\mathsf{s}'<0$ on $\calR_+$, \Cref{prop:alpha} says that $\alpha>0$ on $\calR_+$. So, since the function $p_1$ lies in $S_{\calc}^{1,-2,1,0}$, implying $w^{-1} p_1 \in S_{\calc}^{0,-1,0,0}= O(\rho_{\mathrm{bf}})$, 	if $\digamma$ is sufficiently large then
	\begin{equation} 
		\alpha>-2^{-1} w^{-1} p_1
	\end{equation}
	on the support of $\chi_\digamma(\mathsf{p})\chi_\digamma(\varrho_+)$. Thus, for $\delta$ sufficiently small, $b$ is well-defined and smooth. Likewise, as long as $\digamma$ is sufficiently large, the factor $\chi_{0,\digamma}(\varrho_+)$ of $e$ is supported in a small annular neighborhood of $\calR_+$, where $\iota \varrho_+ + F > E$ (since $E$ vanishes cubically at $\calR_+$, whereas $\iota\varrho_+$ only vanishes quadratically, and since $F\geq 0$ the $F$ term only helps). So, $e$ is well-defined.

	Quantizing both sides of \Cref{eq:misc_b412}, we obtain operators  
	\begin{align}
		\begin{split} 
		A := 2^{-1}(\operatorname{Op}(a) + \operatorname{Op}(a)^*) &\in \Psi_{\calc}^{-m',-\mathsf{s}',-\ell',0} \\ 
		B_0 := \operatorname{Op}( \sqrt{w \rho_{\mathrm{df}}^{m'}\rho_{\mathrm{bf}}^{\mathsf{s}'}\rho_{\natural\mathrm{f}}^{\ell'}} b), \;
		E_0 := \operatorname{Op}(\sqrt{w \rho_{\mathrm{df}}^{m'}\rho_{\mathrm{bf}}^{\mathsf{s}'}\rho_{\natural\mathrm{f}}^{\ell'}} e) &\in \Psi_{\calc}^{(1-m')/2,(-\mathsf{s}'-1)/2,(1-\ell')/2,0} \\ 
		H := \operatorname{Op}( w \rho_{\mathrm{df}}^{m'}\rho_{\mathrm{bf}}^{\mathsf{s}'}\rho_{\natural\mathrm{f}}^{\ell'}  h) &\in  \Psi_{\calc}^{1-m',-\mathsf{s}'-1,1-\ell',0} \\ 
		\Lambda =\operatorname{Op}\big(\sqrt{w\rho_{\mathrm{df}}^{-m'}\rho_{\mathrm{bf}}^{-\mathsf{s}'}\rho_{\natural\mathrm{f}}^{-\ell'} }\big) &\in \Psi_{\calc}^{(m'+1)/2,(\mathsf{s}'-1)/2,(\ell'+1)/2,0}
		\end{split}
	\label{eq:quantizing}\end{align}
	such that 
	\begin{equation}
		i( [A,P_\pm] + (P_\pm-P_\pm^*) A ) = \delta A \Lambda^* \Lambda A  + B_0^*B_0 + E_0^*E_0  + H + R
		\label{eq:misc_ooo}
	\end{equation}
	for some $R \in \Psi_{\calc}^{-m',-2-\mathsf{s}',-\ell',0}$ from the principal symbol short-exact sequence \eqref{eq:short exact natres}. Since our quantization procedure preserves essential supports, the operators $A,B_0,E_0,H$ all have essential support inside $\operatorname{supp} \chi_{\digamma}(\mathsf{p}) \cap \operatorname{supp}\chi_\digamma(\varrho_+)$. It follows that $R$ does as well.

	Plugging \cref{eq:misc_ooo} into \cref{eq:misc_445}, we get 
	\begin{equation}
		2 \Im \langle Au,P_\pm u \rangle_{L^2} = \delta \lVert \Lambda A u \rVert_{L^2}^2 + \lVert B_0 u \rVert_{L^2}^2 + \lVert E_0 u \rVert_{L^2}^2 + \langle Hu,u \rangle_{L^2} + \langle Ru ,u \rangle_{L^2}.
	\end{equation}
	So, 
	\begin{align}
		\begin{split} 
			\lVert B_0u \rVert_{L^2}^2 +\delta \lVert \Lambda A u \rVert_{L^2}^2&\leq \lVert B_0u \rVert_{L^2}^2 +\lVert E_0u \rVert_{L^2}^2 +\delta \lVert \Lambda A u \rVert_{L^2}^2 \\ 
			&\leq    2 |\langle Au,P_\pm u \rangle_{L^2} | + |\langle Hu,u \rangle_{L^2}| + |\langle Ru,u \rangle_{L^2}|.
		\end{split} 
		\label{eq:misc_h55}
	\end{align}
	Let $\tilde{\Lambda}$  denote a two-sided parametrix for $\Lambda$ on some neighborhood of  $\operatorname{WF}'_{\calc}(A)$, which we can choose to be symmetric and have essential support contained in $O$, which is an arbitrary fixed neighborhood of $\operatorname{WF}'_{\calc}(A)$. We have 
	\begin{equation}
		\langle Au,P_\pm u \rangle = \langle \Lambda A u ,\tilde{\Lambda} P_\pm u \rangle_{L^2} + \langle u , R_0 u \rangle_{L^2} 
	\end{equation}
	for some $R_0\in \Psi_{\calc}^{-\infty,-\infty,-\infty,0}$.  Then, we use $2|\langle \Lambda A u ,\tilde{\Lambda} P_\pm u \rangle_{L^2}|\leq (\delta/2) \lVert \Lambda A u \rVert_{L^2}^2 + (2/\delta) \lVert \tilde{\Lambda} P u \rVert_{L^2}^2$ to go from \cref{eq:misc_h55} to 
	\begin{align}
		\begin{split} 
		\lVert B_0u \rVert_{L^2}^2&\leq \lVert B_0u \rVert_{L^2}^2 +(\delta/2) \lVert \Lambda A u \rVert_{L^2}^2     \\ &\leq  (2/\delta) \lVert \tilde{\Lambda} P_\pm  u \rVert_{L^2}^2 + |\langle Hu,u \rangle_{L^2}| + |\langle Ru,u \rangle_{L^2}| + C_N\lVert u \rVert_{H^{-N,-N,-N,0}_{\calc } }^2 ,
		\end{split} 
	\end{align}
	for some $C_N>0$, where the last term comes from bounding $\langle u,R_0u \rangle_{L^2}$.
	
	Note that, $h$ has a $\chi_{\digamma}'(\mathsf{p})$-factor, hence the wavefront set of $H$ is away from the characteristic set, and if $\digamma$ is sufficiently large, then $\operatorname{WF}'_{\calc}(H)$ is contained entirely in $\operatorname{Ell}_{\calc}^{2,0,2,0}(G\tilde{P})$, so an elliptic estimate yields 
	\begin{equation}
		|\langle Hu,u \rangle_{L^2}| \lesssim \lVert G P_\pm u \rVert_{H_{\calc}^{-(3+m')/2,-(1+\mathsf{s}')/2,-(3+\ell')/2,0 } }^2 + \lVert u \rVert_{H_{\calc}^{-N,-N,-N,0} }^2 . 
	\end{equation}
	Likewise, if $\digamma$ is sufficiently large, then $\operatorname{WF}'_{\calc }(R)$ will be contained in the elliptic set of $\overline{B}$ (which was introduced in \eqref{eq:misc_j56}), so that
	\begin{equation}
		|\langle Ru,u \rangle_{L^2}| \lesssim \lVert \overline{B} u \rVert^2_{H_{\calc }^{-m'/2,-(2+\mathsf{s}')/2,-\ell'/2,0} } +  \lVert u \rVert_{H_{\calc }^{-N,-N,-N,0} }^2 
	\end{equation}
	Also, if $\digamma$ is sufficiently large, then we can choose $O \supset \operatorname{WF}'_{\calc }(\tilde{\Lambda})$ so that it is contained in the elliptic set of $G$. Consequently,
	\begin{equation}
		\lVert \tilde{\Lambda} P_\pm u \rVert_{L^2} \lesssim \lVert GP_\pm u \rVert_{H_{\calc}^{-(1+m')/2,(1-\mathsf{s}')/2,-(1+\ell')/2,0} }+\lVert u \rVert_{H_{\calc }^{-N,-N,-N,0}} .
	\end{equation}
	So, altogether, we get 
	\begin{multline}
		\lVert B u\rVert_{H_{\calc}^{(1-m')/2,(-\mathsf{s}'-1)/2,(1-\ell')/2,0} } \lesssim  \lVert GP_\pm u \rVert_{H_{\calc}^{-(1+m')/2,(1-\mathsf{s}')/2,-(1+\ell')/2,0} }\\ + \lVert \overline{B} u \rVert_{H_{\calc }^{-m'/2,-(2+\mathsf{s}')/2,-\ell'/2,0} } + \lVert u \rVert_{H_{\calc}^{-N,-N,-N,0} }
		\label{eq:misc_297}
	\end{multline}
	in the case where the essential support of $B$ is contained in $B_0$. 
	This gives us microlocal control of $u$ near $\calR_\varsigma$. 
	Combining this with elliptic and propagation estimates (Propositions~\ref{prop:S5elliptic} and \ref{prop:global_propagation}), we get 
	\begin{multline}
		\lVert B u\rVert_{H_{\calc}^{(1-m')/2,(-\mathsf{s}'-1)/2,(1-\ell')/2,0} } \lesssim  \lVert GP_\pm u \rVert_{H_{\calc}^{-(1+m')/2,(1-\mathsf{s}')/2,-(1+\ell')/2,0} } \\ +\lVert Z P_\pm u \rVert_{H_{\calc}^{-(3+m')/2,-(1+\mathsf{s}')/2,-(3+\ell')/2,0}}  + \lVert \overline{B} u \rVert_{H_{\calc }^{-m'/2,-(2+\mathsf{s}')/2,-\ell'/2,0} } + \lVert u \rVert_{H_{\calc}^{-N,-N,-N,0} }
	\end{multline}
	for general $B$ as in the proposition statement.

	This almost becomes the desired estimate \cref{eq:misc_j31} if we take $m'=1-2m$, $\mathsf{s}'= -1-2\mathsf{s}$, and $\ell'=1-2\ell$. 
	The requirement that $\mathsf{s}>-1/2$ on $\calR_+$, imposed in the proposition statement, is consistent with the only requirement imposed on $\mathsf{s}'$ so far, that $\mathsf{s}'<0$ on $\calR_+$. The stated choices of $m',\mathsf{s}',\ell'$ are therefore allowed. The only reason that the estimate that results from these choices is not exactly \cref{eq:misc_j31} is because the variable order in the $\overline{B} u$ term is $\mathsf{s}-1/2$, not $\mathsf{s}_1$. But we are assuming $\mathsf{s}_1\geq \mathsf{s}-1/2$ so we have actually proved a (potentially) slightly stronger estimate than what we require. 
	
	Having now dealt with the case where $u$ is highly regular, we need to explain the regularization argument required to handle general $u\in \calS'$. 
	The only place we needed $u$ to be Schwartz was to justify those $L^2$-pairings and integration by parts (i.e., identities like $\la Au, P_\pm u \ra = \la P_\pm^*A u , u \ra$). We remove this restriction by introducing regularizers. This process is also needed in the proof of Proposition~\ref{prop:global_propagation}; however, that argument is more straightforward as there is no threshold condition in that case, so one can regularize in both differential and decay orders to arbitrarily high order. The radial point setting is slightly more delicate so we provide more details here. 
	
	The idea is to replace $a$ with $a_{\varepsilon}$, for each $\varepsilon>0$, given by
	\begin{equation}
		a_{\varepsilon} =a \rho_{\natural\mathrm{f}}^{\ell'} \Big( \frac{1}{1+\varepsilon \rho_{\mathrm{df}}^{-1} } \Big)^K\Big( \frac{1}{1+\varepsilon \rho_{\mathrm{bf}}^{-1} } \Big)^{K'},
		\label{eq:misc_299}
	\end{equation}
	for some $K,K'\in \bbR$. 
	We are not regularizing in $\natural\mathrm{f}$ here. One might expect it necessary to regularize in all orders, but all we need is to regularize enough to justify the various algebraic manipulations for each individual $h>0$.
	
	The derivative $\mathsf{H}_p a_{\varepsilon}$ is computed as before. We get 
\begin{multline}
		\mathsf{H}_pa = \Big( 2 \chi_\digamma'(\mathsf{p}) \chi_\digamma(\mathsf{p}) \mathsf{p} (\rho_{\mathrm{df}}^{-2} \rho_{\natural\mathrm{f}}^{-2} \mathsf{H}_p \rho_{\mathrm{df}}^2 \rho_{\natural\mathrm{f}}^2 )\chi_\digamma(\varrho_+)^2  \rho_{\mathrm{df}}^{m'} \rho_{\mathrm{bf}}^{\mathsf{s}'} \rho_{\natural\mathrm{f}}^{\ell'} + \chi_\digamma(\mathsf{p})^2 \chi_{0,\digamma}(\varrho_+)^2  \rho_{\mathrm{df}}^{m'} \rho_{\mathrm{bf}}^{\mathsf{s}'} \rho_{\natural\mathrm{f}}^{\ell'} (\iota \varrho_+ + F - E) \\ + \alpha_{K, K', \epsilon}  \chi_\digamma(\mathsf{p})^2 \chi_\digamma(\varrho_+)^2  \rho_{\mathrm{df}}^{m'} \rho_{\mathrm{bf}}^{\mathsf{s}'} \rho_{\natural\mathrm{f}}^{\ell'} \Big) \Big( \frac{1}{1+\varepsilon \rho_{\mathrm{df}}^{-1} } \Big)^K\Big( \frac{1}{1+\varepsilon \rho_{\mathrm{bf}}^{-1} } \Big)^{K'},
		\label{eq:misc_b4114}
	\end{multline}
where we get the right-hand side of \eqref{eq:misc_b41} multiplied by the regularizing factors in \eqref{eq:misc_299} and we get additional terms from applying $\mathsf{H}_p$ to the regularizing factors; these terms are incorporated into 
	\begin{equation}
		\alpha_{K,K',\varepsilon} := \alpha + \frac{ K\varepsilon}{\varepsilon + \rho_{\mathrm{df}} } \rho_{\mathrm{df}}^{-1} \mathsf{H}_p \rho_{\mathrm{df}} +  \frac{ K'\varepsilon}{\varepsilon + \rho_{\mathrm{bf}} } \rho_{\mathrm{bf}}^{-1} \mathsf{H}_p \rho_{\mathrm{bf}}. 
	\end{equation}
We also recall from the proof of \Cref{prop:alpha} that 
\begin{equation} 
\alpha = \mathsf{s}' \rho_{\mathrm{bf}}^{-1} \mathsf{H}_p \rho_{\mathrm{bf}} + \text{ terms vanishing at } \calR \cap \{ h = 0 \}. 
\end{equation} 
It follows that the sign of $\alpha_{K,K',\varepsilon}$ is equal to the sign of $\alpha$  (positive under the assumptions that $\mathsf{s}' < 0$ and we are working near $\calR_+$)  provided that $K' < -\mathsf{s}'$ and $h$ is sufficiently small. 
%
	Equivalently, in terms of $\mathsf{s}$, this says
	\begin{equation}
		K' <1+2\mathsf{s}|_{\calR_+} = 2(\mathsf{s}|_{\calR_+} +1/2).
		\label{eq:misc_301}
	\end{equation}
	This does allow $K' > 0$, due to the above-threshold condition $\mathsf{s}|_{\calR_+} > -1/2$, but it is limited by the upper bound \eqref{eq:misc_301}. 
	On the other hand, there is no condition on $K$ because $\rho_{\mathrm{df}}^{-1} \mathsf{H}_p \rho_{\mathrm{df}} = O(h)$  (as in the proof of \Cref{prop:alpha}). 
	So, we can take $K$ to be arbitrarily large (which corresponds to an arbitrarily large amount of regularization), but not $K'$ (we cannot induce arbitrarily much decay). 
	
	Given such $K'$, we can define, for $\digamma>0$ sufficiently large and $\delta$ sufficiently small relative to $\digamma$, we can define a symbol 
	\begin{equation}\begin{aligned}
		b_\varepsilon &=   \chi_\digamma(\mathsf{p})\chi_\digamma(\varrho_+)\sqrt{\alpha_{K,K',\varepsilon}-\delta_{K,K',\varepsilon}  \chi_\digamma(\mathsf{p})^4 \chi_\digamma(\varrho_+)^4  + 2^{-1} w^{-1} p_1  },  \\
		&\delta_{K,K',\varepsilon} = \delta \Big( \frac{1}{1+\varepsilon \rho_{\mathrm{df}}^{-1} } \Big)^K\Big( \frac{1}{1+\varepsilon \rho_{\mathrm{bf}}^{-1} } \Big)^{K'}
	\end{aligned} \end{equation} 
	in addition to $e,h$ defined as before. 
	Then, 
	\begin{equation}
			H_p a_\varepsilon + p_1 a_\varepsilon = (\delta_{K,K',\varepsilon} \chi_\digamma(\mathsf{p})^4 \chi_\digamma(\varrho_+)^4 + b^2_\varepsilon + e^2+h) w \rho_{\mathrm{df}}^{m'} \rho_{\mathrm{bf}}^{\mathsf{s}'} \rho_{\natural\mathrm{f}}^{\ell'}\Big( \frac{1}{1+\varepsilon \rho_{\mathrm{df}}^{-1} } \Big)^K\Big( \frac{1}{1+\varepsilon \rho_{\mathrm{bf}}^{-1} } \Big)^{K'}.
	\end{equation}
	Because $\alpha_{K,K',\varepsilon}$ satisfies $L^\infty$-bounds which are uniform in $\varepsilon$, the chosen $\digamma$ and $\delta$ can be independent of $\varepsilon$. 
	Then, a key fact, which follows from uniform symbolic bounds on $1/(1+\varepsilon \rho_{\mathrm{df}}^{-1})$ as $\varepsilon \to 0^+$, is that $b_\varepsilon,a_\varepsilon$ are uniform families of symbols as $\varepsilon \to 0^+$. So, quantizing, we get 
	\begin{equation}
		A_\varepsilon = 2^{-1}(\operatorname{Op}(a_\varepsilon) + \operatorname{Op}(a_\varepsilon)^*) \in L^\infty([0,1)_\varepsilon;\Psi_{\calc}^{-m',-\mathsf{s}',-\ell',0} ), 
	\end{equation}
	\begin{multline}
		B_{0,\varepsilon} =  \operatorname{Op}\Big( \sqrt{w \rho_{\mathrm{df}}^{m'}\rho_{\mathrm{bf}}^{\mathsf{s}'}\rho_{\natural\mathrm{f}}^{\ell'}} b_\varepsilon \Big( \frac{1}{1+\varepsilon \rho_{\mathrm{df}}^{-1} } \Big)^{K/2}\Big( \frac{1}{1+\varepsilon \rho_{\mathrm{bf}}^{-1} } \Big)^{K'/2}\Big) \\ \in L^\infty([0,1)_\varepsilon;\Psi_{\calc}^{(1-m')/2,(-\mathsf{s}'-1)/2,(1-\ell')/2,0})
	\end{multline}
	and so on (with uniform estimates on essential supports as well). These satisfy 
	\begin{equation} 
	i( [A_\varepsilon,P_\pm] + (P_\pm-P_\pm^*) A_\varepsilon ) = \delta A_\varepsilon \Lambda^* \Lambda A_\varepsilon  + B_{0,\varepsilon}^*B_{0,\varepsilon} + E_{0,\varepsilon}^*E_{0,\varepsilon}  + H_\varepsilon + R_\varepsilon, 
	\end{equation} 
	where all of the operators are uniform families of pseudodifferential operators of the orders given previously. We can now go through the same computation that led to \cref{eq:misc_297}, and the manipulations will make sense if the various norms and inner products which appeared make sense distributionally, for each $\varepsilon>0$. 
	That is, we want each term in 
	\begin{equation}
		2 \Im \langle A_\varepsilon u,P_\pm u \rangle_{L^2} = \delta \lVert \Lambda A_\varepsilon u \rVert_{L^2}^2 + \lVert B_{0,\varepsilon} u \rVert_{L^2}^2 + \lVert E_{0,\varepsilon} u \rVert_{L^2}^2 + \langle H_\varepsilon u,u \rangle_{L^2} + \langle R_\varepsilon u ,u \rangle_{L^2}.
		\label{eq:misc_307}
	\end{equation}
	to be well-defined, for each $\varepsilon>0$, for the $u$'s in question.
	It turns out that, if 
	$u(h) \in H_{\mathrm{sc}}^{-N,s_0}$ near the radial set in question, and therefore on the essential support of the various operators appearing above (assuming that $P_\pm u$ is regular enough to propagate that control --- otherwise, the estimates we are trying to prove all have infinite right-hand sides and therefore hold trivially), then, if $K>2m+2N$ and
	\begin{equation}
		2(\mathsf{s}|_{\calR_+} - s_0) < K'  < 1+2\mathsf{s}|_{\calR_+}, 
	\end{equation}
	(the interval being nonempty because $s_0>-1/2$), the manipulations make sense. For example, consider the $L^2$-norm of $B_{0,\varepsilon} u$. Under the stated assumptions, $B_{0,\varepsilon} u$ lies in
	\begin{equation}
		\Psi_{\mathrm{sc}}^{(1-m'-K)/2,(-\mathsf{s}'-1-K')/2}H_{\mathrm{sc}}^{-N,s_0} = \Psi_{\mathrm{sc}}^{m-K/2,\mathsf{s}-K'/2}H_{\mathrm{sc}}^{-N,s_0} \subseteq L^2,
	\end{equation}
	so the norm $\lVert B_{0,\varepsilon} u \rVert$ is finite. One similarly checks that the other terms in \cref{eq:misc_307}  are well-defined norms and distributional pairings. We will skip the details, because it suffices to do the check for each individual $h>0$, where it is part of the proof of the above-threshold radial point estimates in \cite{VasyGrenoble}.
  In addition, we note here the subtlety that the range of $K'$ only allows us to make sense of $\la AP_\pm-P_\pm^*A u , u \ra$ instead of $\la AP_\pm  u , u \ra$ and $\la P_\pm^*A u, u \ra$ individually. But this can be overcome by introducing a further regularizing family and we refer readers to the discussion after \cite[Equation~(5.62)]{VasyGrenoble} for more details.

	The argument now proceeds as before to get \cref{eq:misc_297}, except with $B_{0,\varepsilon}$ in place of $B_0$. This estimate holds for each individual $h$ and $\varepsilon>0$, and the constant in uniform in these. So, to complete the proof one just needs 
	\begin{equation}
		\lVert B_{0} u \rVert_{L^2}\leq \sup_{\varepsilon \in [0,1)}  \lVert B_{0,\varepsilon} u \rVert_{L^2} 
	\end{equation}
	when the right-hand side is known to be finite. This is an often cited consequence of Banach--Alaoglu: we can choose $\varepsilon_j\to 0^+$ such that $B_{0,\varepsilon_j}u $ converges weakly to some $w\in L^2$ satisfying
	\begin{equation}
		\lVert w \rVert_{L^2}\leq \sup_{\varepsilon \in [0,1)}  \lVert B_{0,\varepsilon} u \rVert_{L^2} .
	\end{equation} 
	But, $B_{0,\varepsilon}u\to B_0u$ in $\calS'$ (since $b_{\varepsilon}\to b$ as $\varepsilon\to 0^+$ in a large space of symbols), so we must have $w=B_{0}u$. 
\end{proof}

The combination of Propositions~\ref{prop:global_propagation} and \ref{prop:propagation_out} show that one can propagate regularity of $u$ from $\calR_\varsigma$ right across $\Sigma \setminus \calR_{-\varsigma}$, provided that $P_\pm u$ is sufficiently regular and that $u$ is known to be above threshold microlocally near $\calR_\varsigma$. It remains to propagate regularity into the other radial set $\calR_{-\varsigma}$. To do this, we need to assume that $\mathsf{s}$ is \emph{below} threshold, that is, $< -1/2$, near $\calR_{-\varsigma}$.  It suffices to work in a small neighbourhood of $\calR_{-\varsigma}$, in which we can assume that $\mathsf{s}$ is constant. For convenience, we assume that $\varsigma = +$ and express the conditions on the operators $B$, $E$, $G$ in terms of a quadratic defining function $\varrho_-$ for $\calR_{-\varsigma} = \calR_-$.

The ``below-threshold'' radial point estimate is:
\begin{proposition} Arbitrarily fix $\varsigma = +$. Suppose that $m,\ell,q\in \bbR$ and that $\mathsf{s}\in C^\infty({}^{\calc}\overline{T}^* \bbM)$ satisfies \eqref{eq:s monotone} and 
	\begin{equation} 
		\mathsf{s}|_{\calR_-}<-1/2, \; \mathsf{s} \text{ is a constant near } \calR_-.
	\end{equation} 

Assume that 	$E, B,G\in \Psi_{\calc}^{0,0,0,0}$ are such that, for some sufficiently small parameter $\epsilon >0$,  we have 
	\begin{itemize}
	\item $\operatorname{WF}'_{\calc}(B) \subset \{ \varrho_- + |\mathsf{p}| \leq \epsilon \}$, 
	\item The  region $\{ \epsilon \leq \varrho_- \leq 2\epsilon, \  \mathsf{p} = 0  \}$ is contained in $\operatorname{Ell}_{\calc}^{0,0,0,0}(E)$, 
	\item $G$ is elliptic on $\{  \varrho_- + |\mathsf{p}| \leq 4\epsilon \}$. 
	\end{itemize}
	Then
	\begin{equation}
		\lVert B u \rVert_{H_{\calc}^{m,\mathsf{s},\ell,q}}   \lesssim \lVert G P_\pm u \rVert_{H_{\calc}^{m-1,\mathsf{s}+1,\ell-1,q}} 
		+ \lVert E u \rVert_{H_{\calc}^{m,\mathsf{s},\ell,q}}  + \lVert u \rVert_{H_{\calc}^{-N,-N,-N,q} }
		\label{eq:misc_j57}
	\end{equation}
	holds in the usual strong sense, for all $u\in \calS'(\bbR^{1,d})$. 
	\label{prop:propagation_in}
\end{proposition}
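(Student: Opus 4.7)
The plan is to run the same positive-commutator argument as in the proof of Proposition~\ref{prop:propagation_out}, mirrored at $\calR_-$, with the crucial change that the positivity of the leading commutator term now comes directly from the choice of weight, without any a priori regularity hypothesis on $u$. Taking dual orders $m' = 1 - 2m$, $\mathsf{s}' = -1 - 2\mathsf{s}$, $\ell' = 1 - 2\ell$, the below-threshold hypothesis $\mathsf{s}|_{\calR_-} < -1/2$ is equivalent to $\mathsf{s}'|_{\calR_-} > 0$, and Proposition~\ref{prop:sign} then gives the favorable sign of $\alpha$ at $\calR_-$ (say $\alpha|_{\calR_-} > 0$ in the $P_+$ case; the other sign is handled identically).

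I will use a commutant of the form
\begin{equation*}
a = \chi_\digamma(\mathsf{p})^2 \chi_\digamma(\varrho_-)^2 \rho_{\mathrm{df}}^{m'} \rho_{\mathrm{bf}}^{\mathsf{s}'} \rho_{\natural\mathrm{f}}^{\ell'},
\end{equation*}
with $\varrho_-$ a quadratic-defining function of $\calR_-$ within $\mathsf{p}^{-1}(\{0\})$ as furnished by Proposition~\ref{prop:qdf}. Computing $\mathsf{H}_p a$ as in \eqref{eq:misc_b41}, three contributions appear: the $\alpha$-term, which is now positive on the whole support of $a$ after shrinking $\digamma$; the annular boundary term with coefficient $\pm(\iota \varrho_- + F - E)$, which has the \emph{opposite} sign to the above-threshold case because $\calR_-$ is a source (so $\mathsf{H}_p \varrho_- = +\iota \varrho_- + F - E$ up to cubic error); and the $\chi_\digamma'(\mathsf{p})$-term supported off the characteristic set. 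Writing $H_p a + p_1 a$ as a sum of squares plus one square with a \emph{minus} sign (coming from the annular term) plus the off-characteristic term, and quantizing gives operators $A$, $B_0$, $E_0$, $H$ satisfying
\begin{equation*}
\delta \lVert \Lambda A u \rVert_{L^2}^2 + \lVert B_0 u \rVert_{L^2}^2 = 2 \Im \langle A u, P_\pm u \rangle_{L^2} + \lVert E_0 u \rVert_{L^2}^2 - \langle H u, u \rangle_{L^2} - \langle R u, u \rangle_{L^2}.
\end{equation*}
The geometric point is that, for $\digamma$ chosen large enough that $\operatorname{supp} \chi_\digamma(\varrho_-) \subset \{ \varrho_- \leq \epsilon \}$, the essential support of $E_0$ lies in the annulus $\{ \epsilon/2 \leq \varrho_- \leq \epsilon \}$, which is contained in $\operatorname{Ell}_{\calc}^{0,0,0,0}(E)$ by hypothesis. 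Microlocal elliptic estimates (Proposition~\ref{prop:S5elliptic}) bound $\lVert E_0 u \rVert$ by $\lVert E u \rVert_{H_{\calc}^{m,\mathsf{s},\ell,q}}$ modulo residual terms; $\langle H u, u \rangle$ is absorbed into $\lVert G P_\pm u \rVert$ by choosing $\digamma$ large so $\operatorname{WF}'_{\calc}(H) \subset \operatorname{Ell}_{\calc}^{2,0,2,0}(G) \cap \operatorname{Ell}_{\calc}^{0,0,0,0}(G)$; and $\langle R u, u \rangle$ is a residual $H^{-N,-N,-N,q}_{\calc}$ term. The inner product $\langle A u, P_\pm u \rangle$ is handled by a standard Cauchy--Schwarz / parametrix splitting that absorbs half of $\delta \lVert \Lambda A u \rVert^2$ on the left and produces $\lVert G P_\pm u \rVert$ on the right. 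After unwinding the dual orders, and noting that for $\digamma$ large, $B_0$ is $\calc$-elliptic on $\operatorname{WF}'_{\calc}(B) \subset \{ \varrho_- + |\mathsf{p}| \leq \epsilon \}$, a final microlocal elliptic estimate yields \eqref{eq:misc_j57}.

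The main obstacle, as in Proposition~\ref{prop:propagation_out}, is the distributional justification: the manipulations above are only literal for sufficiently regular $u$. The fix is a regularization by replacing $a$ with $a_\varepsilon$ as in \eqref{eq:misc_299}, using factors $(1+\varepsilon \rho_{\mathrm{df}}^{-1})^{-K}(1+\varepsilon \rho_{\mathrm{bf}}^{-1})^{-K'}$. This modifies $\alpha$ by terms of the form $K' \varepsilon(\varepsilon + \rho_{\mathrm{bf}})^{-1} \rho_{\mathrm{bf}}^{-1} \mathsf{H}_p \rho_{\mathrm{bf}}$. In contrast to the above-threshold setting, where the positivity of $\alpha_{K, K', \varepsilon}$ forced the upper bound $K' < -\mathsf{s}' = 1 + 2\mathsf{s}|_{\calR_+}$, in the present below-threshold setting $\mathsf{s}'|_{\calR_-} > 0$ and one verifies that $\alpha_{K, K', \varepsilon} > 0$ at $\calR_-$ for \emph{any} $K' > -\mathsf{s}'|_{\calR_-}$, which is no constraint at all since the right-hand side is negative. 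Hence $K$ and $K'$ may be chosen arbitrarily large, which is exactly the freedom needed so that every $L^2$-pairing in the $\varepsilon$-regularized identity makes sense distributionally for $u \in \calS'(\bbR^{1,d})$. The uniform $\varepsilon$-estimate on $\lVert B_{0,\varepsilon} u \rVert_{L^2}$ descends to an estimate on $\lVert B_0 u \rVert_{L^2}$ by the standard Banach--Alaoglu weak-limit argument at the end of the proof of Proposition~\ref{prop:propagation_out}.
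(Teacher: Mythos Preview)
Your approach is essentially the same as the paper's: construct the commutant $a$ as in the above-threshold argument, observe that the sign of $\alpha$ (or equivalently, the relative sign of the $b^2$ and $e^2$ terms) flips, so that the annular term $\lVert E_0 u\rVert_{L^2}^2$ now appears on the right-hand side and must be controlled via the hypothesis on $E$; then regularize with arbitrarily large $K,K'$, which the below-threshold condition permits. This is correct.

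Two small bookkeeping points are worth tightening. First, your choice ``$\digamma$ large enough that $\operatorname{supp}\chi_\digamma(\varrho_-)\subset\{\varrho_-\le\epsilon\}$'' gives $\digamma\ge 1/\epsilon$, which puts the support of $\chi_{0,\digamma}(\varrho_-)$ in $\{1/(2\digamma)\le\varrho_-\le 1/\digamma\}\subset\{0<\varrho_-\le\epsilon\}$, \emph{not} in the region $\{\epsilon\le\varrho_-\le 2\epsilon\}$ where $E$ is assumed elliptic. The paper takes $\digamma=1/(2\epsilon)$, so that $\chi_\digamma(\varrho_-)\equiv 1$ on $\{\varrho_-\le\epsilon\}$ (making $B_0$ elliptic on $\operatorname{WF}'_{\calc}(B)$) while $\chi_{0,\digamma}(\varrho_-)$ is supported exactly in $\{\epsilon\le\varrho_-\le 2\epsilon\}$. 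Second, $\operatorname{WF}'_{\calc}(E_0)$ is not entirely contained in $\operatorname{Ell}_{\calc}^{0,0,0,0}(E)$: at points of the annulus with $\mathsf{p}\neq 0$, $E$ need not be elliptic, but $GP_\pm$ is. So the elliptic estimate on $\lVert E_0 u\rVert_{L^2}$ should be split as in the paper, producing both an $\lVert Eu\rVert$ and an $\lVert GP_\pm u\rVert$ contribution.
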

\begin{proof}[Proof sketch]
	The proof proceeds very similarly to the above threshold estimate. We choose $\digamma = (2\epsilon)^{-1}$, assume $q=0$ as before,  and define symbol $a$ as in \eqref{eq:asymboldefn}. The main difference is that, for $\mathsf{s}<-1/2$ on $\calR_+$, $\mathsf{s}'=-1-2\mathsf{s}$ satisfies $\mathsf{s}'>0$ on $\calR_+$, which means that $\alpha=\alpha_{m',\mathsf{s}',\ell',0}$ in Proposition~\ref{prop:alpha} is now negative. So, instead of \cref{eq:misc_b412}, we can instead arrange
	\begin{align}
		\begin{split}
			\mathsf{H}_p a + w^{-1} p_1 a  &= (-\delta \chi_\digamma(\mathsf{p})^4 \chi_\digamma(\varrho_+)^4 - b^2 + e^2+h) \rho_{\mathrm{df}}^{m'} \rho_{\mathrm{bf}}^{\mathsf{s}} \rho_{\natural\mathrm{f}}^{\ell'} \\ 
			H_p a + p_1 a  &= 2(-\delta \chi_\digamma(\mathsf{p})^4 \chi_\digamma(\varrho_+)^4 - b^2 + e^2+h) w \rho_{\mathrm{df}}^{m'} \rho_{\mathrm{bf}}^{\mathsf{s}'} \rho_{\natural\mathrm{f}}^{\ell'},
		\end{split}
	\end{align}
	where the symbols are defined as before with the exception of $b$, which is now defined, for $\digamma$ sufficiently large and $\delta$ small, to be
	\begin{equation}\label{eq:newb}
			b =   \chi_\digamma(\mathsf{p})\chi_\digamma(\varrho_+)\sqrt{-\alpha - \delta \chi_\digamma(\mathsf{p})^2 \chi_\digamma(\varrho_+)^2  - 2^{-1}w^{-1} p_1}.
	\end{equation} 
	Quantizing as in \eqref{eq:quantizing}, we get, for $B_0 = \operatorname{Op}( \sqrt{w \rho_{\mathrm{df}}^{m'}\rho_{\mathrm{bf}}^{\mathsf{s}'}\rho_{\natural\mathrm{f}}^{\ell'}} b)$ with $b$ as in 
	\eqref{eq:newb}  and $R \in \Psi_{\calc}^{-m',-2-\mathsf{s}',-\ell',0}$,
	\begin{equation}
		i( [A,P_\pm] + (P_\pm-P_\pm^*) A ) = -\delta A \Lambda^* \Lambda A  - B_0^*B_0 + E_0^*E_0  + H + R.
		\label{eq:misc_oo1}
	\end{equation}
	The estimates of $\lVert B_0 u \rVert_{L^2}$, if $u$ is Schwartz, can now proceed as before, except, because the $E_0^2$ term in \cref{eq:misc_oo1} has the opposite sign of the $B_0^2$ term, all of our estimates will have an extra $\lVert E_0 u \rVert_{L^2}$ term on the right-hand side. But by construction, $\operatorname{WF}'_{\calc}(E_0)$ is contained in $\{ \epsilon \leq \varrho_+ \leq 2\epsilon \}$ (using the support properties of $\chi_\digamma'$ and our choice $\digamma = 1/(2\epsilon)$), so on $\operatorname{WF}'_{\calc}(E_0)$, either $E$ is elliptic (where $\mathsf{p} = 0$) or $GP$ is elliptic (where $\mathsf{p} \neq 0$).  Hence we have an elliptic estimate 
	\begin{equation}
		\lVert E_0 u \rVert_{L^2}  \lesssim \lVert G P_\pm u \rVert_{H_{\calc}^{m-2,\mathsf{s},\ell-2,0}}  + \lVert E u \rVert_{H_{\calc}^{m,\mathsf{s},\ell,0}}  + \lVert u \rVert_{H_{\calc}^{-N,-N,-N,0} }. 
	\end{equation}
	The estimate on $\lVert B_0 u \rVert_{L^2}$ then implies the desired estimate \cref{eq:misc_j57}, at least for $u$ decaying sufficiently at $\mathrm{bf}$, since $B_0$ is elliptic on $\operatorname{WF}'_{\calc}(B)$.

	To prove the estimate for general $u$ requires a regularization argument, as before. Now though, an arbitrarily high amount of regularization, both in the sense of decay and regularity, is consistent with the threshold condition $\mathsf{s}|_{\calR_+}<-1/2$. That is, using the notation from above, we can take $K,K'$ arbitrarily large. So, for each $N\in \bbR$, we can take $K,K'$ large enough so that the estimates go through for all 
	\begin{equation}
		u(h)\in H_{\mathrm{sc}}^{-N,-N}.
		\label{eq:misc_319}
	\end{equation}
	On the other hand, if \cref{eq:misc_319} is not valid then \cref{eq:misc_j57} holds vacuously since then the right-hand side is infinite.  
	\end{proof}

Combining the above-threshold estimates on $\calR_{-\varsigma}$ with the below-threshold estimates on $\calR_{\varsigma}$, and using the global propagation estimate \Cref{prop:global_propagation} in between, we get:
\begin{proposition}
	Fix $\varsigma \in \{-,+\}$. Suppose that $m,\ell,q\in \bbR$ and that $\mathsf{s}\in C^\infty({}^{\calc}\overline{T}^* \bbM)$ is monotonic under the Hamiltonian flow near $\Sigma$ and satisfies
	\begin{equation} 
		\mathsf{s}|_{\calR_\varsigma} > -1/2, \qquad \mathsf{s}|_{\calR_{-\varsigma}} < -1/2,
		\label{eq:both_thresholds}
	\end{equation}
	and in addition satisfies our two additional technical conditions: $\mathsf{s}$ is constant near each radial set and such that $|\mathsf{H}_p \mathsf{s}|^{1/2}$ is smooth near each component of the characteristic set.  
	Let $G,B,Q,Z\in \Psi^{0,0,0,0}_{\calc}$ satisfy
	\begin{itemize}
		\item $\Sigma\subset \operatorname{Ell}_{\calc}^{0,0,0,0}(G)$, i.e.\ $G$ is elliptic on the good sheet of the characteristic set,  
		\item $\operatorname{WF}'_{\calc}(B) \backslash \operatorname{Ell}_{\calc}^{0,0,0,0}(G) \subseteq \operatorname{Ell}_{\calc}^{0,0,0,0}(Z)$, i.e.\ the portion of the essential support of $B$ not already contained in the elliptic set of $G$ is covered by the elliptic set of $Z$,
		\item $\Sigma_{\mathrm{bad}} \cap \operatorname{WF}'_{\calc}(B)=\varnothing$, i.e.\ the essential support of $B$ does not intersect the ``bad'' component of the characteristic set,  
		\item $\operatorname{Ell}_{\calc}^{0,0,0,0}(Q)\supseteq \calR_{\varsigma}$, i.e.\ $Q$ is elliptic on the radial set $\calR_\varsigma$.  
	\end{itemize}
	Then, for every $N,\in \bbR$ and $s_0>-1/2$, the estimate 
	\begin{multline} \label{eq:symbolic_global}
			\lVert B u \rVert_{H_{\calc}^{m,\mathsf{s},\ell,q}}   \lesssim \lVert G P_\pm u \rVert_{H_{\calc}^{m-1,\mathsf{s}+1,\ell-1,q}} + \lVert Z P_\pm u \rVert_{H_{\calc}^{m-2,\mathsf{s},\ell-2,q}}  \\ + \lVert Q u \rVert_{ H_{\calc}^{-N,s_0,-N,q}}  + \lVert u \rVert_{H_{\calc}^{-N,-N,-N,q} } 
	\end{multline}
	holds, in the usual strong sense.
	\label{prop:symbolic_global}
\end{proposition}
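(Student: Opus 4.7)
The plan is to combine the elliptic estimate (\Cref{prop:S5elliptic}), the propagation estimate (\Cref{prop:global_propagation}), the above-threshold radial point estimate (\Cref{prop:propagation_out}), and the below-threshold radial point estimate (\Cref{prop:propagation_in}) by means of a microlocal partition of unity on $\operatorname{WF}'_{\calc}(B)$. Using a smooth cutoff, I would write $B = B_{\mathrm{ell}} + B_\varsigma + B_{\mathrm{prop}} + B_{-\varsigma}$ modulo $\Psi_{\calc}^{-\infty,-\infty,-\infty,0}$, with essential supports respectively contained in (i) the elliptic set $\operatorname{Ell}_{\calc}^{2,0,2,0}(P_\pm)$, (ii) a small neighborhood of $\calR_\varsigma$ inside the region where $\mathsf{s}$ is constant, (iii) the remainder of $\Sigma$ between small neighborhoods of $\calR_\varsigma$ and $\calR_{-\varsigma}$, and (iv) a small neighborhood of $\calR_{-\varsigma}$ inside the region where $\mathsf{s}$ is constant. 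This decomposition is possible precisely because $\operatorname{WF}'_{\calc}(B) \cap \Sigma_{\mathrm{bad}} = \varnothing$.

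I would then bound the four pieces in turn. For $B_{\mathrm{ell}} u$, the hypothesis $\operatorname{WF}'_{\calc}(B) \setminus \operatorname{Ell}(G) \subseteq \operatorname{Ell}(Z)$ combined with $G$ being elliptic on $\Sigma$ reduces \Cref{prop:S5elliptic} to an estimate in terms of $\|GP_\pm u\|$, $\|ZP_\pm u\|$, and a residual term. For $B_\varsigma u$, the above-threshold condition $\mathsf{s}|_{\calR_\varsigma} > -1/2$ permits \Cref{prop:propagation_out} to be applied with the operator $Q$ from the statement, producing exactly the right-hand side of \eqref{eq:symbolic_global}. For $B_{\mathrm{prop}} u$, I would invoke \Cref{prop:global_propagation} to propagate control outward from $\calR_\varsigma$ along integral curves of $\mp\varsigma\mathsf{H}_p$; by \Cref{prop:sourcesink_global}, every point of $\operatorname{WF}'_{\calc}(B_{\mathrm{prop}})$ is reachable in finite parameter time from a neighborhood of $\calR_\varsigma$, and the monotonicity hypothesis \eqref{eq:s monotone} supplies the sign needed. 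The resulting $\|Qu\|$-type term, which lives near $\calR_\varsigma$, is absorbed into the bound for $B_\varsigma u$ after slightly enlarging cutoffs.

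For the last piece $B_{-\varsigma} u$, which sits near the below-threshold radial set, I would apply \Cref{prop:propagation_in} (valid since $\mathsf{s}|_{\calR_{-\varsigma}} < -1/2$), choosing its operator $E$ to be microsupported in a thin annular neighborhood of $\calR_{-\varsigma}$ lying inside the region already controlled by the previous propagation step; the resulting $\|Eu\|$ term is then dominated by the bound on $B_{\mathrm{prop}} u$. Summing the four bounds produces \eqref{eq:symbolic_global}. The main obstacle is bookkeeping: arranging the nested microlocal cutoffs and their small enlargements so that the essential-support, ellipticity, and bicharacteristic-reachability hypotheses of all four constituent propositions are met simultaneously using only the single triple $(G, Z, Q)$ provided in the statement. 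Beyond this, the argument is a standard concatenation of radial-point and propagation estimates of the sort described, for example, in \cite{VasyGrenoble}.
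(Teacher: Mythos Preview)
Your proposal is correct and matches the paper's approach. The paper does not give a detailed proof of this proposition; it is simply introduced with the sentence ``Combining the above-threshold estimates on $\calR_{-\varsigma}$ with the below-threshold estimates on $\calR_{\varsigma}$, and using the global propagation estimate \Cref{prop:global_propagation} in between, we get:'' (the $\varsigma$'s in that sentence appear to be swapped, but the intent is clear). Your four-piece microlocal partition of unity is exactly the standard way to implement this combination, and your identification of which constituent estimate handles which piece is correct. One small efficiency: since \Cref{prop:propagation_out} already incorporates propagation away from $\calR_\varsigma$ (note its hypothesis allows $\operatorname{WF}'_{\calc}(B)$ to extend along bicharacteristics, not just sit near $\calR_\varsigma$), you could in principle merge your $B_\varsigma$ and $B_{\mathrm{prop}}$ pieces into one application of that proposition, but your finer decomposition is equally valid.
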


In the previous estimate and in the above-threshold estimate, $Qu$ is playing the mostly qualitative role of enforcing the lack of above-threshold wavefront set at the radial set in question, but this term also entered quantitatively via \cref{eq:misc_273}. Having terms involving $u$ (but not through $P_\pm u$) on the right-hand side makes these estimates harder to use. So, we note:

\begin{proposition}
	The estimates \cref{eq:misc_j54} in \Cref{prop:propagation_out} and  \cref{eq:symbolic_global} in \Cref{prop:symbolic_global} continue to hold without the $Qu$ term on the right-hand side, but only for those $u$ and $h$ for which the left-hand side is known to be finite. Moreover, if $N$ is sufficiently large and $\mathsf{s}>s_0$ on the essential support of $Q$, then
	\begin{equation}
		\lVert Q u \rVert_{ H_{\calc}^{-N,s_0,-N,q}} \lesssim \lVert G P_\pm u \rVert_{H_{\calc}^{m-1,\mathsf{s}+1,\ell-1,q}} + \lVert Z P_\pm u \rVert_{H_{\calc}^{m-2,\mathsf{s},\ell-2,q}} + \lVert u \rVert_{H_{\calc}^{-N,-N,-N,q} } 
		\label{eq:misc_318}
	\end{equation}
	holds for such $u\in \calS'$ and $h$. 
	\label{prop:symbolic_global_better}
\end{proposition}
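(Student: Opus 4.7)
The proposition consists of two parts. The second part, estimate \eqref{eq:misc_318}, bounds $\lVert Qu \rVert$ directly in terms of the right-hand side of the original estimates without the $Qu$ term. The first part then follows from the second by direct substitution into the original estimates. I would prove \eqref{eq:misc_318} first and then deduce the first claim.

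The strategy for \eqref{eq:misc_318} is to apply the above-threshold radial point estimate \cref{eq:misc_j54} (in the setting of \Cref{prop:propagation_out}), or the combined estimate \cref{eq:symbolic_global} (in the setting of \Cref{prop:symbolic_global}), with the role of $B$ in the original estimate played by $Q$ itself. The essential technical ingredient is a new variable order $\mathsf{s}^{\ast}$ satisfying $\mathsf{s}^{\ast} = s_0$ on $\operatorname{WF}'_{\calc}(Q)$, $\mathsf{s}^{\ast} \leq \mathsf{s}$ everywhere, and all of the monotonicity, constancy near the radial sets, threshold ($\mathsf{s}^{\ast}|_{\calR_\varsigma} > -1/2$), and smoothness conditions required of the original variable order. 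Such a $\mathsf{s}^{\ast}$ exists thanks to the hypothesis $\mathsf{s} > s_0$ on $\operatorname{WF}'_{\calc}(Q)$ together with $s_0 > -1/2$. Applying the estimate with $B \to Q$ and $\mathsf{s} \to \mathsf{s}^{\ast}$, and with the role of the a priori regularity term played by some new operator $Q_1$ elliptic at $\calR_\varsigma$, yields
\begin{equation*}
\lVert Qu \rVert_{H_{\calc}^{-N, s_0, -N, q}} \lesssim \lVert GP_\pm u \rVert_{H_{\calc}^{-N-1, s_0+1, -N-1, q}} + \lVert ZP_\pm u \rVert_{H_{\calc}^{-N-2, s_0, -N-2, q}} + \lVert Q_1 u \rVert_{H_{\calc}^{-N, s_0', -N, q}} + \lVert u \rVert_{H_{\calc}^{-N, -N, -N, q}}
\end{equation*}
for some $-1/2 < s_0' < s_0$. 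Since $\mathsf{s}^{\ast} \leq \mathsf{s}$ and $N$ is taken large enough that $-N-2 \leq m - 2$ and $-N-2 \leq \ell - 2$, the first two terms on the right are controlled by the corresponding terms in the RHS of \eqref{eq:misc_318}.

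The residual $\lVert Q_1 u \rVert$ term is handled by iterating the same construction, choosing at each step a microlocalizer with strictly smaller essential support and a slightly reduced threshold. The iteration is closed using the assumed finiteness of the LHS $\lVert Bu \rVert$: after finitely many steps, the essential support of the residual operator can be arranged so that a microlocal elliptic estimate, combined with the propagation estimate \Cref{prop:global_propagation} and the hypotheses relating $G, Z$ to $\operatorname{WF}'_{\calc}(B)$, bounds the residual directly in terms of $\lVert Bu \rVert_{H_{\calc}^{m, \mathsf{s}, \ell, q}}$ and background terms. Feeding this bound back into the original estimate \cref{eq:misc_j54} for $\lVert Bu \rVert$ and absorbing then completes the proof of \eqref{eq:misc_318}. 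The first claim follows immediately: for $u$ with LHS finite, apply the original estimate to obtain $\lVert Bu \rVert_{\text{LHS}} \lesssim \text{RHS} + \lVert Qu \rVert_{H_{\calc}^{-N, s_0, -N, q}} + \lVert u \rVert_{H_{\calc}^{-N, -N, -N, q}}$, then use \eqref{eq:misc_318} to absorb the $\lVert Qu \rVert$ term into the remaining RHS contributions.

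The main obstacle is closing the iterative bootstrap cleanly. Because the threshold $s_0^{(k)}$ cannot be driven below $-1/2$, the iteration cannot reduce the residual to an arbitrary background order purely by symbolic reduction, in contrast to the order reduction by $1/2$ in the proof of \cref{eq:misc_j54}. The resolution, as indicated, exploits the assumed finiteness of the LHS to terminate the iteration via a microlocal elliptic or propagation estimate once the residual's essential support has been sufficiently reduced; establishing this termination carefully — in particular verifying that the successive microlocalizers can always be chosen to make the essential supports decrease and be eventually controlled by $B$ — is the technical heart of the argument.
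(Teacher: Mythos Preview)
Your approach has a genuine gap at the absorption step. After your iteration (or even after a single step), you arrive at a bound of the form
\[
\lVert Qu \rVert_{H_{\calc}^{-N,s_0,-N,q}} \lesssim (\text{RHS of \eqref{eq:misc_318}}) + C \lVert Bu \rVert_{H_{\calc}^{m,\mathsf{s},\ell,q}},
\]
and you then feed this into the original estimate for $\lVert Bu\rVert$ and ``absorb''. But absorption requires the constant $C$ in front of $\lVert Bu\rVert$ to be small, and nothing in your iteration produces a small constant: each application of the radial point estimate introduces a fixed implicit constant, and composing them only multiplies these. Your proposed mechanism for closing the loop --- shrinking the essential support of the successive microlocalizers $Q_k$ --- cannot work either, since every $Q_k$ in \Cref{prop:propagation_out} is required to be elliptic on $\calR_\varsigma$, so its essential support is bounded below by a fixed neighborhood of the radial set.

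The paper's proof bypasses the iteration entirely by proving a direct interpolation bound
\[
\lVert Qu \rVert_{H_{\calc}^{-N,s_0,-N,q}} \leq C_0\,\varepsilon\, \lVert Bu \rVert_{H_{\calc}^{m,\mathsf{s},\ell,q}} + C(\varepsilon)\, \lVert u \rVert_{H_{\calc}^{-N,-N,-N,q}}
\]
for every $\varepsilon>0$. This follows (after reducing to the case where $B$ is elliptic on $\operatorname{WF}'_{\calc}(Q)$) from the elementary symbolic inequality $\rho_{\mathrm{bf}}^{-s_0} \leq \varepsilon\, \rho_{\mathrm{bf}}^{-\mathsf{s}} + C(\varepsilon)\, \rho_{\mathrm{bf}}^{N}$ on the essential support, which is valid precisely because $\mathsf{s} > s_0 > -N$ there. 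The small parameter $\varepsilon$ is what makes the absorption legitimate. Once the first claim is established this way, \eqref{eq:misc_318} follows by substituting the improved estimate for $\lVert Bu\rVert$ back into the interpolation bound --- the reverse of the order you proposed.
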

\begin{proof}
	It suffices to consider the case where $N$ is sufficiently large, $\mathsf{s}>s_0$ on the essential support of $Q$, and $B$ in \Cref{prop:propagation_out} or \Cref{prop:symbolic_global} is elliptic on the essential support of $Q$. (Indeed, one can use the propagation estimate \cref{prop:global_propagation} to shrink the essential support of $Q$ to an arbitrarily small neighbourhood of the above-threshold radial set.)  Then, we prove the following interpolation bound: 
	\begin{equation}
		\lVert Qu \rVert_{H_{\calc}^{-N,s_0,-N,q}} \leq C_0 \varepsilon \lVert Bu \rVert_{H_{\calc}^{m,\mathsf{s},\ell,q}} + C(\varepsilon) \lVert u \rVert_{H_{\calc}^{-N,-N,-N,q}}, 
		\label{eq:interpolation}
	\end{equation}
	which, for every $\varepsilon>0$, holds for some $C(\varepsilon)$ and $C_0>0$ independent of $\varepsilon$.

    To prove this, we choose an operator $O \in \Psi_{\calc}^{0,0,0,0}$  that is the identity microlocally on the essential support of $Q$ (i.e.\ such that the essential support of $O-1$ is disjoint from the essential support of $Q$) and such that  $\mathsf{s}>s_0$ on its essential support.  
	Then, $OB$ is elliptic on the essential support of $Q$, and we have
    \begin{equation} \label{eq:interpolation-norm0}
\lVert Qu \rVert_{H_{\calc}^{-N,s_0,-N,q}} \leq C (\lVert OBu \rVert_{H_{\calc}^{-N,s_0,-N,q}} +  \lVert u \rVert_{H_{\calc}^{-N,-N,-N,q}})
    \end{equation}
by the elliptic estimate in \Cref{prop:elliptic}. In addition, when $\min_{\calO} \mathsf{s}>s_0>-N$ (where $\calO$ is the essential support of $o$), we have the inequality
\begin{equation} \label{eq:interpolation-symbol1}
\rho_{\mathsf{bf}}^{-s_0} \leq \epsilon \rho_{\mathrm{bf}}^{-\mathsf{s}} + C(\epsilon) \rho_{\mathrm{bf}}^{N} \text{ \; on \; } \calO.
\end{equation}
Then this shows
\begin{equation} \label{eq:interpolation-norm1}
\lVert OBu \rVert_{H_{\calc}^{-N,s_0,-N,q}} \leq \epsilon \lVert OBu \rVert_{H_{\calc}^{-N,\mathsf{s},-N,q}} + C(\epsilon) \lVert OBu \rVert_{H_{\calc}^{-N,-N,-N,q}}.
\end{equation}
Substituting \cref{eq:interpolation-norm1} into \cref{eq:interpolation-norm0},  bounding the $OBu$-norm by $C_0$ times the same norm of $Bu$, and bounding $\lVert OBu \rVert_{H_{\calc}^{-N,-N,-N,q}}$ by $C \lVert u \rVert_{H_{\calc}^{-N,-N,-N,q}}$,  we obtain \cref{eq:interpolation}.

 Finally, using \cref{eq:interpolation} to replace the $Qu$ terms in the estimates \cref{eq:misc_j54} or \cref{eq:symbolic_global}  
     with a $Bu$ term, we can absorb the $Bu$ term into the left-hand side of the estimate if $\varepsilon$ is sufficiently small, at least for those $u$ and $h$ such that
	\begin{equation}
		\lVert Bu (h)\rVert_{H_{\calc}^{m,\mathsf{s},\ell,q}} <\infty.
	\end{equation}
	So, we conclude that the estimates in question hold without the $Qu$ terms if the left-hand side is finite. Plugging in the resulting estimate to \cref{eq:interpolation}, we get \cref{eq:misc_318}.
\end{proof}

\subsection{Estimates for the remainder}
\label{subsec:remainder}

Our next goal is to improve upon the estimate in \Cref{prop:symbolic_global} to get an estimate, for each $N\in \bbR$, such that $u$ appears on the right-hand side (not counting through $P_\pm u$) measured only in the very weak Sobolev norm
\begin{equation} 
	\| u \|_{H_{\calczero}^{-N,-N,-N}} . 
\end{equation} 
In order to do this, what we need to estimate is the term 
\begin{equation}
	\lVert u \rVert_{H_{\calc}^{-N,-N,-N,q} } 
	\label{eq:misc_315}
\end{equation}
appearing in \Cref{prop:symbolic_global}, \Cref{prop:symbolic_estimate_for_Cauchy}.
We want to bring in the main results of \cite{Parabolicsc} to estimate this. The reason is that, since the norms in \cref{eq:misc_315} are already negligibly low order at every boundary hypersurface of our phase space except $\mathrm{pf}$, 
and because 
\begin{equation}
	N(P_\pm)\approx P_\pm
\end{equation}
at $\mathrm{pf}$, we can interchange the two operators when estimating the norms in \cref{eq:misc_315}. Since $N(P_\pm)$ is the operator in the Schr\"odinger equation, the Fredholm estimates for the Schr\"odinger equation developed in \cite{Parabolicsc} allow us to bound $u$ in terms of $N(P_\pm)u$ and therefore $P_\pm u$ (modulo terms which are suppressed as $h\to 0^+$), which is what we want.

First, we rephrase the estimates in \cite{Parabolicsc} in terms of the $\calc$-Sobolev norms. We recall the notation $\calR_{\varsigma}^{\mathrm{Schr}}$ defined in  \eqref{eq:Rschr}. 

\begin{lemma}
	Let $m,\ell,q\in \bbR$, and let $\mathsf{s}\in C^\infty({}^{\calc}\overline{T}^* \bbM )$ be monotonic under the Hamiltonian flow and satisfy
	\begin{equation}
		\mathsf{s}|_{\calR_{-\varsigma}\cap \mathrm{pf}}<-1/2 < \mathsf{s}|_{\calR_{\varsigma}\cap\mathrm{pf}}.
	\end{equation} 
	Let $\Pi \in \Psi_{\calczero}^{0,0,0}$. 
	For any $\delta>0$ and $s_0>-1/2$, there exist $O_1 ,O_2 \in \Psi_{\calczero}^{-\infty,0,0}$ satisfying the conditions of  \Cref{prop:par_Planck_comp_full} (with $\epsilon$ there depending on $\delta,s_0$), 
such that for each $N\in \bbR$, the estimate
	\begin{equation}
		\lVert O_1  \Pi u \rVert_{H_{\calc}^{m,\mathsf{s},\ell,q}} \lesssim \lVert N(P_\pm ) O_2  \Pi u \rVert_{H_{\calc}^{-N,\mathsf{s}+1+\delta,\ell-1,q} } + \lVert u \rVert_{H_{\calczero}^{-N,-N,-N} }
		\label{eq:misc_317}
	\end{equation}
	holds for all $u\in \calS'$ and $h>0$ such that  $\operatorname{WF}_{\mathrm{par}}^{\ell',s_0}(\Pi u(h) ) \cap \calR_{-\varsigma}^{\mathrm{Schr}} = \varnothing$. 
	\label{lem:Schrodinger_input_rephrased}
\end{lemma}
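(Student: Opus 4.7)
The plan is to reduce the estimate to the parabolic Fredholm estimate for the Schr\"odinger normal operator $N(P_\pm)$ (this is where the threshold conditions enter, via the wavefront hypothesis on $\Pi u$) by sandwiching the $\calc$-norms on the left- and right-hand sides between parabolic norms at $\mathrm{pf}$ using \Cref{prop:par_Planck_comp_full}. The appendix \S\ref{sec:schrodinger_estimate} provides the needed generalization of the main estimate of \cite{Parabolicsc} to the variable-order parabolic setting and allows for the leading-order subprincipal terms appearing in $N(P_\pm)$ computed in \Cref{prop:normal}.

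Concretely, choose variable orders $\underline{\mathsf{s}}, \overline{\mathsf{s}} \in C^\infty({}^{\mathrm{par}}\overline{T}^* \bbM)$ such that $\beta^* \underline{\mathsf{s}} \leq \mathsf{s} \leq \beta^*\overline{\mathsf{s}}$ in a neighborhood $\{\rho_{\mathrm{pf}} < \varepsilon\}$ of $\mathrm{pf}$ (with $\varepsilon$ depending on $\delta$), and such that both $\underline{\mathsf{s}}, \overline{\mathsf{s}}$ are monotonic under the Schr\"odinger Hamiltonian flow on $\mathrm{pf} \cong {}^{\mathrm{par}}\overline{T}^* \bbM$ and satisfy the analogous threshold conditions
\begin{equation}
	\overline{\mathsf{s}}|_{\calR_{\varsigma}^{\mathrm{Schr}}} > -1/2 > \overline{\mathsf{s}}|_{\calR_{-\varsigma}^{\mathrm{Schr}}}, \qquad \underline{\mathsf{s}}|_{\calR_{\varsigma}^{\mathrm{Schr}}} > -1/2 > \underline{\mathsf{s}}|_{\calR_{-\varsigma}^{\mathrm{Schr}}}.
\end{equation}
By shrinking $\varepsilon$, we may also arrange that $\overline{\mathsf{s}} \leq \mathsf{s} + \delta$ on $\{\rho_{\mathrm{pf}} < \varepsilon\}$. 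Select cutoffs $O_1, O_2 \in \Psi_{\calczero}^{-\infty,0,0}$, each with essential support in $\{\rho_{\mathrm{pf}} < \varepsilon\}$, arranged so that the nesting conditions in \Cref{prop:par_Planck_comp_full} are satisfied both for $(O_1, O_2)$ (estimating the LHS) and for $(O_2, O_2')$ for some $O_2'$ slightly larger than $O_2$ (which we can absorb into $O_2$ by relabeling).

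With these choices, the second inequality in \Cref{prop:par_Planck_comp_full} yields
\begin{equation}
	\lVert O_1 \Pi u \rVert_{H_{\calc}^{m,\mathsf{s},\ell,q}} \lesssim h^{-q} \lVert O_2 \Pi u \rVert_{H_{\mathrm{par}}^{\ell-q, \overline{\mathsf{s}}}} + \lVert u \rVert_{H_{\calczero}^{-N,-N,-N}}.
\end{equation}
Since $\operatorname{WF}_{\mathrm{par}}^{\ell', s_0}(\Pi u) \cap \calR_{-\varsigma}^{\mathrm{Schr}} = \varnothing$ and $O_2 \in \Psi_{\calczero}^{-\infty, 0, 0}$ is microlocally the identity near $\mathrm{pf}$ where the cutoff is supported, the same above-threshold wavefront condition holds for $O_2 \Pi u$, so the parabolic Fredholm estimate for $N(P_\pm)$ from Appendix~\S\ref{sec:schrodinger_estimate} applies and gives
\begin{equation}
	\lVert O_2 \Pi u \rVert_{H_{\mathrm{par}}^{\ell-q, \overline{\mathsf{s}}}} \lesssim \lVert N(P_\pm) O_2 \Pi u \rVert_{H_{\mathrm{par}}^{\ell - q - 1, \overline{\mathsf{s}}+1}} + \lVert u \rVert_{H_{\calczero}^{-N,-N,-N}},
\end{equation}
where the error term in \cite{Parabolicsc} (measured in a weak parabolic Sobolev norm) is absorbed into the $\calczero$-norm by monotonicity of Sobolev spaces and the fact that $O_2 \Pi$ is a uniformly bounded family in this weak scale. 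Finally, the first inequality in \Cref{prop:par_Planck_comp_full}, applied now to the product $N(P_\pm) O_2 \Pi u$ (replacing $u$) with the orders $(-N, \mathsf{s}+1+\delta, \ell-1, q)$ on the $\calc$-side, yields
\begin{equation}
	h^{-q} \lVert N(P_\pm) O_2 \Pi u \rVert_{H_{\mathrm{par}}^{\ell - q - 1, \overline{\mathsf{s}}+1}} \lesssim \lVert N(P_\pm) O_2 \Pi u \rVert_{H_{\calc}^{-N, \mathsf{s}+1+\delta, \ell-1, q}} + \lVert u \rVert_{H_{\calczero}^{-N,-N,-N}},
\end{equation}
using that $\overline{\mathsf{s}} + 1 \leq \mathsf{s} + 1 + \delta$ on the relevant neighborhood. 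Chaining the three displays gives the claimed estimate \eqref{eq:misc_317}.

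The main obstacle is the third paragraph's ingredient: one must upgrade the Fredholm estimate of \cite{Parabolicsc}, which is stated there for constant orders and for the free Schr\"odinger operator, to the setting needed here, namely variable orders $\overline{\mathsf{s}}$ on ${}^{\mathrm{par}}\overline{T}^* \bbM$ and the presence of the subprincipal potential, drift, and effective-mass terms in $N(P_\pm)$ from \Cref{prop:normal}. The variable-order generalization is routine given the threshold hypotheses, and the subprincipal terms only shift the threshold by a computable amount; both points are taken care of in Appendix~\S\ref{sec:schrodinger_estimate}. All other steps are bookkeeping with the variable orders $\underline{\mathsf{s}}, \overline{\mathsf{s}}$ bracketing $\mathsf{s}$ at $\mathrm{pf}$.
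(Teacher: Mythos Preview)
Your approach is essentially the same as the paper's: sandwich the $\calc$-norms between parabolic norms via \Cref{prop:par_Planck_comp_full} and apply the parabolic Fredholm estimate for $N(P_\pm)$ from \S\ref{sec:schrodinger_estimate} in the middle. The construction of $\overline{\mathsf{s}}$ with $\mathsf{s}\leq\beta^*\overline{\mathsf{s}}\leq\mathsf{s}+\delta$ near $\mathrm{pf}$ is exactly what the paper does (your $\underline{\mathsf{s}}$ is never actually used and can be dropped).

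There is one genuine gap in your final step. The first inequality of \Cref{prop:par_Planck_comp_full} has a cutoff $Q_1$ on the left-hand side, so applying it with $v=N(P_\pm)O_2\Pi u$ only controls $\lVert Q_1' \, N(P_\pm)O_2\Pi u\rVert_{H_{\mathrm{par}}}$ for some cutoff $Q_1'$, not the full par-norm that the Schr\"odinger estimate produces. The paper closes this by introducing a \emph{third} cutoff $O_3$ (with $\operatorname{WF}'_{\calczero}(O_2)\Subset\operatorname{Ell}(O_3)$), splitting
\[
N(P_\pm)O_2\Pi u = O_3 N(P_\pm)O_2\Pi u + (1-O_3)N(P_\pm)O_2\Pi u,
\]
applying \Cref{prop:par_Planck_comp_full} to the first piece, and observing that $(1-O_3)N(P_\pm)O_2\in\Psi_{\calczero}^{-\infty,-\infty,-\infty}$ (disjoint essential supports, as in the proof of \Cref{prop:calc_composition}), so the second piece is bounded by $\lVert u\rVert_{H_{\calczero}^{-N,-N,-N}}$. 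Your phrase ``$O_2'$ slightly larger than $O_2$ (which we can absorb into $O_2$ by relabeling)'' gestures at this but does not supply the argument. Also, the residual error on the right in that step is $\lVert N(P_\pm)O_2\Pi u\rVert_{H_{\calczero}^{-N,-N,-N}}$, not $\lVert u\rVert_{H_{\calczero}^{-N,-N,-N}}$; a further line is needed to absorb it (straightforward since $N(P_\pm)O_2\Pi\in\Psi_{\calczero}$ with finite orders).

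A minor inaccuracy: \cite{Parabolicsc} already handles variable orders; the adaptation in \S\ref{sec:schrodinger_estimate} is to accommodate non-compactly-supported subprincipal terms, not to pass from constant to variable orders.
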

The reader may take $\Pi=1$ if they like. We have kept it in to emphasize the microlocal nature of the estimate.
\begin{proof}
We first show that we can find variable orders $\overline{\mathsf{s}} \in C^\infty({}^{\mathrm{par}}\overline{T}^* \bbM )$ and $\tilde{\mathsf{s}} \in C^\infty({}^{\calc} \overline{T}^* \bbM)$ such that 
	\begin{equation}
		\overline{\mathsf{s}}|_{\calR_{-\varsigma}^{\mathrm{Schr}}} < -1/2 < \overline{\mathsf{s}}|_{\calR_\varsigma^{\mathrm{Schr}}} 
	\end{equation}
	and 
	\begin{itemize}
		\item $\overline{\mathsf{s}}$ is monotonic under the Hamiltonian flow,
		\item $\mathsf{s} \leq \overline{\mathsf{s}} \leq \tilde{\mathsf{s}}$ in some neighborhood $U$ of $\mathrm{pf}$ that is disjoint with $\mathrm{df}$. 
		Here we interpret $\overline{\mathsf{s}}$ as a smooth function on ${}^{\mathrm{par,I}} \overline{T}^* \bbM$ by extending it to be constant in $h$, and then lifting it to a smooth function on ${}^{\mathrm{par,I,res}} \overline{T}^* \bbM$ via pullback by the blowdown map $\beta^*$. Recall from Lemma~\ref{lem:phase_space_gluing} that ${}^{\mathrm{par,I,res}} \overline{T}^* \bbM$ is locally identical to  ${}^{\calc} \overline{T}^* \bbM$ over $U$.
		\item  $|\mathsf{s}-\tilde{\mathsf{s}}|<\delta$.
	\end{itemize}
We define
\begin{equation}
\epsilon_0 = \min_{\calR} |\mathsf{s}+\frac{1}{2}|.
\end{equation}	
Now take $\epsilon \in (0,\frac{ \min\{\epsilon_0,\delta\}}{3})$, and set
\begin{equation}
\overline{\mathsf{s}} = \mathsf{s}|_{\mathrm{pf}}+\epsilon.
\end{equation}
Recalling that $\mathrm{pf}$ is diffeomorphic to ${}^{\mathrm{par}}\overline{T}^* \bbM$ canonically, $\overline{\mathsf{s}}$ can be viewed as a smooth function on ${}^{\mathrm{par}}\overline{T}^* \bbM$ and then extended as above to a smooth function on $U$. 
We choose a local coordinate system $(\rho_{\mathrm{pf}},\mathsf{X})$ on $U$, where $\mathsf{X}$ are constant on the fibers of the blowdown map $\beta$. Thus $\overline{\mathsf{s}}(\rho_{\mathrm{pf}},\mathsf{X}) = \mathsf{s}(0,\mathsf{X}) + \epsilon$ in this coordinate system. 
By the smoothness of $\mathsf{s}$, we can take $U$ so small that 
\begin{equation}
\mathsf{s}(0,\mathsf{X}) - \epsilon <  \mathsf{s}(\rho_{\mathrm{pf}},\mathsf{X})< \mathsf{s}(0,\mathsf{X}) +\epsilon   \;  \text{ on } \; U,
\end{equation}
then taking $\tilde{\mathsf{s}}=\mathsf{s}+2\epsilon$, the inequality $\mathsf{s} \leq \overline{\mathsf{s}} \leq \tilde{\mathsf{s}}$ is satisfied in $U$. The other properties are satisfied by the choice of $\epsilon$ and the monotonicity of $\mathsf{s}$ itself along the Hamiltonian flow.

	Since $\overline{\mathsf{s}}$ satisfies the hypotheses required in \cite[Thm.\ 1.1]{Parabolicsc}, combining the adaptation in \Cref{sec:schrodinger_estimate}, for each $\ell'\in \bbR$ and 
    \begin{equation}  \label{eq: cal X definition, schrodinger}
	v \in \calX^{\ell',\overline{\mathsf{s}}}_{\mathrm{Schr}} := \{v\in H_{\mathrm{par}}^{\ell',\overline{\mathsf{s}}} : N(P_\pm)v \in H_{\mathrm{par}}^{\ell'-1,\overline{\mathsf{s}}+1} \}, 
    \end{equation}
	we have
	\begin{equation}
		\lVert v \rVert_{H_{\mathrm{par}}^{\ell',\overline{\mathsf{s}}}} \lesssim \lVert N(P_\pm)v\rVert_{H_{\mathrm{par}}^{\ell'-1,\overline{\mathsf{s}}+1}},
		\label{eq:par_estimate}
	\end{equation}
	where the constant does not depend on $v$. 
	Actually, what is proven in \cite[\S6]{Parabolicsc} is that \cref{eq:par_estimate} holds in the sense that if the right-hand side is finite, and if  
	\begin{equation}
		\operatorname{WF}_{\mathrm{par}}^{\ell',s_0 }(v) \cap \calR_{-\varsigma}^{\mathrm{Schr}} = \varnothing
		\label{eq:par_threshold_cond}
	\end{equation}
	holds for some $s_0>-1/2$, then the left-hand side is finite and the inequality holds.  
	Indeed, if $ N(P_\pm)v \notin H_{\mathrm{par}}^{\ell'-1,\overline{\mathsf{s}}+1}$, then the right-hand side of \cref{eq:par_estimate} is infinite, so the estimate holds trivially, and otherwise \eqref{eq:par_threshold_cond} and the various radial point and propagation estimates in \cite{Parabolicsc} show that $u\in \smash{H_{\mathrm{par}}^{\ell',\overline{\mathsf{s}}}}$ (which means $u$ is in $\smash{\calX^{\ell',\overline{\mathsf{s}}}_{\mathrm{Schr}}}$ defined in \eqref{eq: cal X definition, schrodinger}) and the inequality \cref{eq:par_estimate} holds. 
	
	Let $O_j$ be as in \Cref{prop:par_Planck_comp_full}. For each individual $h>0$, $O_j\Pi(h) $ lies in $\Psi_{\mathrm{sc}}^{-\infty,0}=\Psi_{\mathrm{par}}^{-\infty,0}$, since $\operatorname{WF}'_{\calczero}(O_j)$ is disjoint from $\mathrm{df}$. So, the incoming/outgoing condition for $\Pi u$ implies the same for $O_2\Pi u(h)$: 
	\begin{equation} 
		\operatorname{WF}_{\mathrm{par}}^{-N,s_0 }(O_2\Pi  u) \cap \calR_{-\varsigma}^{\mathrm{Schr}} = \varnothing.
	\end{equation} 

	So, if the parameter $\varepsilon>0$ in the definition of $O_1,O_2,O_3$ is sufficiently small (where what ``sufficiently small'' means depends on $U$), then by \Cref{prop:par_Planck_comp_full} we have
	\begin{align*}
		\begin{split}
			h^q\lVert O_1 \Pi u \rVert_{H_{\calc}^{m,\mathsf{s},\ell,q} }  &\lesssim \lVert O_2 \Pi u \rVert_{H_{\mathrm{par}}^{\ell-q,\overline{\mathsf{s}}} }  + \lVert   u \rVert_{H_{\calczero}^{-N,-N,-N} }  \\ &\lesssim \lVert N(P_\pm) O_2 \Pi u \rVert_{H_{\mathrm{par}}^{\ell-q-1,\overline{\mathsf{s}}+1 } } + \lVert   u \rVert_{H_{\calczero}^{-N,-N,-N} } \\
			\leq \lVert O_3 &N(P_\pm) O_2 \Pi u \rVert_{H_{\mathrm{par}}^{\ell-q-1,\overline{\mathsf{s}}+1 } } + \lVert (1-O_3)N(P_\pm) O_2\Pi u \rVert_{H_{\mathrm{par}}^{\ell-q-1,\overline{\mathsf{s}}+1 } }  + \lVert   u \rVert_{H_{\calczero}^{-N,-N,-N} }
		\end{split}
	\end{align*} 
	and
	\begin{align}
		\begin{split} 
		\lVert O_3 N(P_\pm) O_2\Pi u \rVert_{H_{\mathrm{par}}^{\ell-q-1,\overline{\mathsf{s}}+1 } }  
		&\lesssim h^q \lVert N(P_\pm ) O_2\Pi u \rVert_{H_{\calc}^{-N,\mathsf{s}+\delta+1,\ell-1,q} } + \lVert N(P_\pm )O_2\Pi  u \rVert_{H_{\calczero}^{-N,-N,-N} }.
		\end{split} 
	\end{align}
	Also, choosing elliptic, invertible $\Lambda \in \Psi_{\mathrm{par}}^{\ell-q-1,\overline{\mathsf{s}}+1}$, 
	\begin{equation}
		\lVert (1-O_3)N(P_\pm) O_2\Pi u \rVert_{H_{\mathrm{par}}^{\ell-q-1,\overline{\mathsf{s}}+1 } } \lesssim \lVert \Lambda (1-O_3)N(P_\pm) O_2\Pi u \rVert_{L^2} \lesssim \lVert u \rVert_{H_{\calc}^{-N,-N,-N}}, 
	\end{equation}
	since $(1-O_3)N(P_\pm) O_2 \in \Psi_{\calczero}^{-\infty,-\infty,-\infty}$ and therefore $\Lambda (1-O_3)N(P_\pm) O_2 \in \Psi_{\calczero}^{-\infty,-\infty,-\infty}$.
	
	Combining these inequalities establishes  \cref{eq:misc_317}. 
\end{proof}

We now apply this to bound $\lVert u \rVert_{H_{\calc}^{-N,-N,-N,q}}$. Initially, we make an additional assumption on our variable order $\mathsf{s}$, namely, that it is above $+1/2$ at the above threshold radial set $\calR_{\varsigma}$ (that is, more than $1$ over the threshold value of $-1/2$). This extra assumption will be eliminated in Proposition~\ref{prop:error_bounding2} (at the cost of a somewhat weaker estimate, but one that still suffices for our purposes). 

\begin{proposition}
	Consider the setup of \Cref{prop:symbolic_global}, and choose $N$ large enough such that $\mathsf{s}>-N$. In addition, we assume that $\mathsf{s} > 1/2$ in a neighbourhood of the above-threshold radial set $\calR_{\varsigma}$, and  we assume we are given some 
	\begin{equation} 
		\Pi,O\in \Psi_{\calczero}^{0,0,0}
	\end{equation} 
	such that $\operatorname{WF}'_{\calczero}(1-O),\operatorname{WF}'_{\calczero}(1-\Pi)$ are both disjoint from the zero section of ${}^{\calczero}\overline{T}^* \bbM$. 
	Then, 
	\begin{equation}
		\lVert \Pi u \rVert_{H_{\calc}^{-N,-N,-N,q} } \lesssim \lVert P_\pm  u \rVert_{H_{\calc}^{-N-1,\mathsf{s},-N-1,q} } + h\lVert  Ou \rVert_{H_{\calc}^{-N, \mathsf{s},-N+2,q  }} + \lVert u \rVert_{H_{\calczero}^{-N,-N,-N}}
		\label{eq:misc_3p3}
	\end{equation}
	holds, in the usual strong sense, for all $u$ and $h$ satisfying the incoming/outgoing condition 
\begin{equation}	
	\operatorname{WF}_{\mathrm{par}}^{-N,s_0 }(u(h)) \cap \calR_{-\varsigma}^{\mathrm{Schr}} = \varnothing.
\label{eq:in-out condition}\end{equation}
	\label{prop:error_bounding}
\end{proposition}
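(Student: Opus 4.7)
The proof combines the microlocal $\calc$-estimates already established with the normal operator estimate at $\mathrm{pf}$ provided by Lemma~\ref{lem:Schrodinger_input_rephrased}, which brings in the parabolic Fredholm theory for the Schr\"odinger equation from \cite{Parabolicsc}. The key conceptual point is that the one place where the $\calc$-calculus fails to be fully symbolic --- the parabolic face $\mathrm{pf}$ --- is handled by the Schr\"odinger estimate, while the rest is handled symbolically.

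I would first microlocally split $\Pi u = O_1 \Pi u + (1 - O_1)\Pi u$, with $O_1 \in \Psi_\calczero^{-\infty, 0, 0}$ chosen as in Lemma~\ref{lem:Schrodinger_input_rephrased} so that $1-O_1$ is microsupported away from $\mathrm{pf}$. The piece $(1-O_1)\Pi u$ is then controlled by Lemma~\ref{lem:pf_elim} (its $\calczero$-wavefront misses the zero section where $\mathrm{pf}$ lives, so the $\calc$-norm is controlled by the $\calczero$-norm) together with $L^2$-boundedness of $(1-O_1)\Pi$, yielding $\|(1-O_1)\Pi u\|_{H_\calc^{-N,-N,-N,q}} \lesssim \|u\|_{H_\calczero^{-N,-N,-N}}$, absorbed into the third term of \eqref{eq:misc_3p3}.

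For the remaining piece $O_1 \Pi u$, I would apply Lemma~\ref{lem:Schrodinger_input_rephrased} with the \emph{shifted} variable order $\mathsf{s}' := \mathsf{s} - 1 - \delta$ for $\delta > 0$ small. The hypothesis $\mathsf{s} > 1/2$ near $\calR_\varsigma$ is exactly what guarantees $\mathsf{s}' > -1/2$ at $\calR_\varsigma \cap \mathrm{pf}$; the below-threshold and monotonicity conditions at the other radial set transfer trivially. Since $\mathsf{s}' \geq -N$ for $N$ large, this yields
\[
\|O_1 \Pi u\|_{H_\calc^{-N,-N,-N,q}} \leq \|O_1 \Pi u\|_{H_\calc^{-N,\mathsf{s}',-N,q}} \lesssim \|N(P_\pm)O_2\Pi u\|_{H_\calc^{-N,\mathsf{s},-N-1,q}} + \|u\|_{H_\calczero^{-N,-N,-N}}.
\]

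The last step is to replace $N(P_\pm)$ with $P_\pm$ via the identity
\[
N(P_\pm)O_2\Pi u = O_2 \Pi P_\pm u + [N(P_\pm),O_2\Pi] u + O_2 \Pi(N(P_\pm)-P_\pm)u.
\]
The first term is bounded by $\|P_\pm u\|_{H_\calc^{-N-1,\mathsf{s},-N-1,q}}$ using that $O_2 \Pi \in \Psi_\calczero^{-\infty, 0, 0}$ can lower the df-order arbitrarily. The other two terms produce the $h \|Ou\|$ contribution: by Proposition~\ref{prop:normal} and the construction of the normal operator, $P_\pm - N(P_\pm) \in \Psi_\calc^{2, 0, 2, -1}$, and this $\mathrm{pf}$-order improvement translates into a factor of $h$ via $\rho_\mathrm{pf} \sim h$ on the support of $O_2 \Pi$ (which is microsupported near $\mathrm{pf}$), while the two differential orders lost account for the $\natural\mathrm{f}$-order on the RHS of \eqref{eq:misc_3p3} being $-N+2$ rather than $-N-1$. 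The commutator $[N(P_\pm), O_2 \Pi]$ is handled analogously (the principal symbol computation shows the corresponding symbol is supported near $\mathrm{pf}$ with the same size estimate). The operator $O$ is chosen so that its microsupport contains those of these residual operators, which is possible because $\operatorname{WF}'_\calczero(1-O)$ being disjoint from the zero section makes $O$ the identity microlocally near $\mathrm{pf}$. The main bookkeeping obstacle is reconciling the $+1+\delta$ bf-loss inherent to the Schr\"odinger estimate (Lemma~\ref{lem:Schrodinger_input_rephrased}) with the $-1$ pf-gain from $N(P_\pm) - P_\pm$ so that the final orders match \eqref{eq:misc_3p3} exactly; the hypothesis $\mathsf{s} > 1/2$ at $\calR_\varsigma$ provides precisely the room needed.
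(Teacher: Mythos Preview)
Your proposal is correct and follows essentially the same route as the paper: apply \Cref{lem:Schrodinger_input_rephrased} with the shifted order $\mathsf{s}-1-\delta$ (which is why $\mathsf{s}>1/2$ at $\calR_\varsigma$ is needed), then replace $N(P_\pm)$ by $P_\pm$ via a three-term decomposition using $P_\pm-N(P_\pm)\in\Psi_{\calc}^{2,0,2,-1}$. The paper differs only cosmetically: it first reduces to the case where $\Pi$ has small essential support and then applies the lemma with $\Pi=\Id$, and it writes the decomposition with $O_2$ on the right of $P_\pm-N(P_\pm)$ and with commutator $[P_\pm,O_2]$ rather than $[N(P_\pm),O_2\Pi]$.

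One point in your write-up is stated backwards. You say the commutator's symbol is ``supported near $\mathrm{pf}$ with the same size estimate,'' but the mechanism is the opposite: the factor of $h$ comes because $[N(P_\pm),O_2\Pi]=-[N(P_\pm),1-O_2\Pi]$ has essential support contained in $\operatorname{WF}'_{\calc}(1-O_2\Pi)$, which is \emph{disjoint} from $\mathrm{pf}$ (since both $1-O_2$ and $1-\Pi$ are microtrivial near the zero section). Hence the commutator has order $-\infty$ at $\mathrm{pf}$, and this is what buys the $h$. The paper's version $[P_\pm,O_2]$ makes this transparent via $\operatorname{WF}'_{\calczero}([P_\pm,O_2])\subseteq\operatorname{WF}'_{\calczero}(O_2)\cap\operatorname{WF}'_{\calczero}(1-O_2)$, which lies in the transition annulus away from the zero section.
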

Note that $\Pi=1$ is allowed. 
\begin{proof}

We choose $\delta > 0$ sufficiently small so that $\mathsf{s}_1 := \mathsf{s} - 1 - \delta$ is above threshold at $\calR_{\varsigma}$. 
	Now let $O_j$ be as in the proof of \Cref{lem:Schrodinger_input_rephrased} (which in turn, is using \Cref{prop:par_Planck_comp_full})
	with this choice of $\delta$. That proof just required that the parameter $\varepsilon$ in the definition of $O_j$ be sufficiently small, so we may take it smaller if required without changing the conclusions of that argument. 
	Also, we can choose each $O_j$ such that $\operatorname{WF}'_{\calczero}(1-O_j)$ is disjoint from the zero section of ${}^{\calczero}\overline{T}^* \bbM$, and $\operatorname{WF}'_{\calczero}(O_j) \cap \operatorname{WF}'_{\calczero}(1-O) = \varnothing$. 
	
Microlocally away from the zero section of ${}^{\calczero}\overline{T}^* \bbM$, the left-hand side is controlled by the term $ \lVert u \rVert_{H_{\calczero}^{-N,-N,-N}}$. Thus, we may assume that $\operatorname{WF}'_{\calczero}(\Pi)$ is as close to the zero section as we wish. 

In particular,  it suffices to consider the case where $\operatorname{WF}'_{\calczero}(\Pi)\subseteq \operatorname{Ell}_{\calczero}^{0,0,0,0}(O_1)$. Then, applying \cref{eq:micro-elliptic-est}, we have 
\begin{equation}
\lVert \Pi u \rVert_{H_{\calc}^{-N,-N,-N,q} }\lesssim	\lVert O_1 u \rVert_{H_{\calc}^{-N,-N,-N,q} } + 	\lVert u \rVert_{H_{\calczero}^{-N,-N,-N}}.
\end{equation}

	The previous lemma (with  $\Pi = \Id$)  says 
	\begin{multline}
		\lVert O_1 u \rVert_{H_{\calc}^{-N,\mathsf{s}_1,-N,q}} \lesssim \lVert N(P_\pm ) O_2 u \rVert_{H_{\calc}^{-N,\mathsf{s}_1+1+\delta,-N-1,q} } + \lVert u \rVert_{H_{\calc}^{-N,-N,-N} } \\
		\lesssim \lVert N(P_\pm ) O_2 u \rVert_{H_{\calc}^{-N,\mathsf{s},-N-1,q} } + \lVert u \rVert_{H_{\calc}^{-N,-N,-N} }.
	\end{multline} 
	The first term on the right-hand side admits the bound
	\begin{multline}
		\lVert N(P_\pm) O_2  u \rVert_{H_{\calc}^{-N,\mathsf{s},-N-1,q} } \leq  \lVert O_2  P_\pm  u \rVert_{H_{\calc}^{-N,\mathsf{s},-N-1,q} } +\lVert [P_\pm,O_2]u \rVert_{H_{\calc}^{-N,\mathsf{s},-N-1,q} } \\ +\lVert (P_\pm-N(P_\pm))O_2 u \rVert_{H_{\calc}^{-N,\mathsf{s},-N-1,q} } .
	\end{multline}
	Let us examine each term on the right-hand side. First,
	\begin{equation}
			\lVert O_2  P_\pm  u \rVert_{H_{\calc}^{-N,\mathsf{s},-N-1,q} }  \lesssim 	\lVert P_\pm  u \rVert_{H_{\calc}^{-N,\mathsf{s},-N-1,q} }.
	\end{equation}
	Second, since $\operatorname{WF}_{\calczero}'([P_\pm,O_2]) \subseteq \operatorname{WF}_{\calczero}'(O_2) \cap \operatorname{WF}_{\calczero}'(1-O_2)$,  we have
		\begin{equation}
			[P_\pm,O_2]  \in \Psi_{\calczero}^{-\infty,-1,1}, 
		\end{equation}
		with essential support disjoint from the zero section at $h=0$.  
		Consequently, 
		\begin{multline}
			\lVert [P_\pm,O_2 ]u \rVert_{H_{\calc}^{-N,\mathsf{s},-N-1,q} }  \lesssim \lVert [P_\pm,O_2 ] u \rVert_{H_{\calc}^{-N,\mathsf{s},-N-1,q-1}}+ \lVert u \rVert_{H_{\calc}^{-N,-N,-N} } \\ \lesssim h\lVert Ou \rVert_{H_{\calc}^{-N,\mathsf{	s}-1,-N,q}}+ \lVert u \rVert_{H_{\calc}^{-N,-N,-N} }  .
		\end{multline}
		Finally, since $P_\pm - N(P_\pm) \in \Psi_{\calc}^{2,0,2,-1}$,
		\begin{multline}
			\lVert (P_\pm-N(P_\pm))O_2  u \rVert_{H_{\calc}^{-N,\mathsf{s},-N-1,q} } \lesssim \lVert O_2 u \rVert_{H_{\calc}^{-N+2,\mathsf{s},-N+1,q-1} } \\ \lesssim h\lVert Ou \rVert_{H_{\calc}^{-N,\mathsf{s},-N+2,q} } +\lVert u \rVert_{H_{\calczero}^{-N,-N,-N}}.
		\end{multline}
Putting these together yields \cref{eq:misc_3p3}. 
\end{proof}

In the next proposition, we improve Proposition~\ref{prop:error_bounding} by removing the extra assumption $\mathsf{s} > 1/2$ on the spacetime order at the above-threshold radial set, at the cost of weakening the factor of $h$ in front of the $Ou$ term to $h^\epsilon$ and removing the microlocalizer $O$. 

\begin{proposition}\label{prop:error_bounding2}
	Consider the setup of Proposition~\ref{prop:error_bounding}, except now let $\mathsf{s}$ be an arbitrary variable order satisfying the threshold and monotonicity conditions. We assume $N$ is large enough so that $\mathsf{s} > -N+1$.  Suppose that $\epsilon \in (0,1)$ is such that $\mathsf{s}-2\epsilon$ also satisfies the threshold conditions.
	Then, 
	\begin{equation}
	\lVert u \rVert_{H_{\calc}^{-N,-N,-N,q} } \lesssim \lVert P_\pm  u \rVert_{H_{\calc}^{-N,\mathsf{s}+1,-N,q} } + h^\epsilon\lVert  u \rVert_{H_{\calc}^{-N+1, \mathsf{s},-N+3,q  }} 
	\label{eq:misc_3p4}
	\end{equation}
	holds, in the usual strong sense, for all $u$ and $h$ satisfying the incoming/outgoing condition \eqref{eq:in-out condition}. 
	\label{prop:error_bounding_improved}
\end{proposition}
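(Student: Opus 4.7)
The plan is to reproduce the argument of Proposition~\ref{prop:error_bounding} with a weaker auxiliary variable order so as to invoke Lemma~\ref{lem:Schrodinger_input_rephrased} under the reduced hypothesis on $\mathsf{s}$. The crucial step in the previous proof was the choice $\mathsf{s}_1 = \mathsf{s} - 1 - \delta$, tailored so the Schr\"odinger estimate yielded a gain of a full derivative at $\mathrm{pf}$ (producing the factor $h^1$), but which required $\mathsf{s} > 1/2 + \delta$ at the above-threshold radial set $\calR_\varsigma$ to keep $\mathsf{s}_1$ above threshold. Under the new hypothesis --- namely that $\mathsf{s}-2\epsilon$ still satisfies the threshold conditions --- I would instead set $\mathsf{s}_1 = \mathsf{s} - \epsilon - \delta$ for some $\delta \in (0,\epsilon)$. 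Then $\mathsf{s}_1 > \mathsf{s} - 2\epsilon > -1/2$ at $\calR_\varsigma$ and $\mathsf{s}_1 < \mathsf{s} < -1/2$ at $\calR_{-\varsigma}$, so $\mathsf{s}_1$ is admissible for Lemma~\ref{lem:Schrodinger_input_rephrased}, giving
\begin{equation*}
\lVert O_1 u \rVert_{H_\calc^{-N,\mathsf{s}_1,-N,q}} \lesssim \lVert N(P_\pm) O_2 u \rVert_{H_\calc^{-N,\mathsf{s}+1-\epsilon,-N-1,q}} + \lVert u \rVert_{H_\calczero^{-N,-N,-N}},
\end{equation*}
since $\mathsf{s}_1 + 1 + \delta = \mathsf{s} + 1 - \epsilon$. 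Here the Schr\"odinger gain at $\mathrm{pf}$ is only $1-\epsilon$ instead of $1$, and this is precisely the origin of the weakened power $h^\epsilon$ in the final estimate.

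Next I would decompose $N(P_\pm)O_2 u = O_2 P_\pm u + [P_\pm,O_2]u - (P_\pm - N(P_\pm))O_2 u$ and bound each term. The main term $O_2 P_\pm u$ is handled by the Sobolev embedding $H_\calc^{-N,\mathsf{s}+1,-N,q} \hookrightarrow H_\calc^{-N,\mathsf{s}+1-\epsilon,-N-1,q}$, absorbing it into $\lVert P_\pm u \rVert_{H_\calc^{-N,\mathsf{s}+1,-N,q}}$. For the commutator $[P_\pm,O_2] \in \Psi_\calczero^{-\infty,-1,1}$, whose essential support is disjoint from the zero section, one adapts the microlocal conversion of the proof of Proposition~\ref{prop:error_bounding}: instead of trading the natural $h^1$ improvement against an $\lVert O u\rVert_{H_\calc^{-N,\mathsf{s},-N+2,q}}$-type norm, I would trade only $h^\epsilon$ of the improvement against the stronger, non-microlocalized norm $\lVert u \rVert_{H_\calc^{-N+1,\mathsf{s},-N+3,q}}$, leaving an unused $h^{1-\epsilon}$ as slack to absorb the loss of one bf-order between $\mathsf{s}+1-\epsilon$ and $\mathsf{s}$. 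The analogous conversion works for $(P_\pm - N(P_\pm))O_2 u$, using that $P_\pm - N(P_\pm) \in \Psi_\calc^{2,0,2,-1}$: here the $+1$ in the $m$ and $\ell$ orders on the RHS of \cref{eq:misc_3p4} (i.e.\ $-N+1$ and $-N+3$ rather than $-N$) provides exactly the extra room needed to accommodate the two-order loss at $\mathrm{df}$ and $\natural\mathrm{f}$ from $P_\pm - N(P_\pm)$, in combination with the smoothing of $O_2$ at $\mathrm{df}$.

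Finally, removing the microlocalizer $O_1$ from the left-hand side is a routine microlocal gluing step: since $\operatorname{WF}'_\calczero(1 - O_1)$ is disjoint from the zero section, the microlocal elliptic estimate \cref{eq:micro-elliptic-est} in $\Psi_\calczero$ controls $(1-O_1) u$ in $H_\calc^{-N,-N,-N,q}$ by $\lVert u \rVert_{H_\calczero^{-N,-N,-N}}$, which together with the residual term from Lemma~\ref{lem:Schrodinger_input_rephrased} is dominated by $h^\epsilon \lVert u\rVert_{H_\calc^{-N+1,\mathsf{s},-N+3,q}}$ when $N$ is large enough (the hypothesis $\mathsf{s} > -N+1$ ensures that the LHS $\mathrm{bf}$-order $\mathsf{s}_1$ is strictly above $-N$, so the Schr\"odinger estimate actually dominates $\lVert u \rVert_{H_\calc^{-N,-N,-N,q}}$). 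The main obstacle in executing this plan is the careful order bookkeeping across all four indices $(m,\mathsf{s},\ell,q)$ to check that the microlocal conversions used in Proposition~\ref{prop:error_bounding} still close up when the natural $h^1$ improvement is replaced by $h^\epsilon$; the extra slack in the $m$ and $\ell$ orders on the right-hand side is there precisely to make this possible.
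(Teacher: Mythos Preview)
Your proposal has a genuine gap at the step where you claim that the leftover factor $h^{1-\epsilon}$ can ``absorb the loss of one bf-order between $\mathsf{s}+1-\epsilon$ and $\mathsf{s}$.'' Recall that $h = \rho_{\natural\mathrm{f}}\rho_{\mathrm{pf}}$, so multiplication by a power of $h$ improves orders only at $\natural\mathrm{f}$ and $\mathrm{pf}$, never at $\mathrm{bf}$. Concretely, after your modified Schr\"odinger step, the term $(P_\pm - N(P_\pm))O_2 u$ must be controlled in $H_\calc^{-N,\mathsf{s}+1-\epsilon,-N-1,q}$, and since $P_\pm - N(P_\pm)\in\Psi_\calc^{2,0,2,-1}$ you obtain a bound by $h\lVert O_2 u\rVert_{H_\calc^{-N+2,\mathsf{s}+1-\epsilon,-N+1,q}}$. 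But the target norm on the right-hand side of \eqref{eq:misc_3p4} has bf-order $\mathsf{s}$, which is strictly smaller than $\mathsf{s}+1-\epsilon$; no amount of $h$-slack bridges that gap. The same obstruction appears for the commutator term.

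The paper's proof circumvents this by a different mechanism: it introduces the (non-classical) multiplier $a=(1+h^2|z|^2)^{-1/2}\in S_\calczero^{0,0,0}$ and sets $\tilde u = au$. Since $\langle z\rangle^{-1}a^{-1}\in S_\calczero^{0,0,0}$, one has $\lVert u\rVert_{H_\calc^{-N,-N,-N,q}}\lesssim\lVert\tilde u\rVert_{H_\calc^{-N+1,-N+1,-N+1,q}}$. Now Proposition~\ref{prop:error_bounding} is applied \emph{unchanged} to $\tilde u$ with order $\mathsf{s}'=\mathsf{s}+1-\epsilon$, which is legitimately above $1/2$ at $\calR_\varsigma$ because $\mathsf{s}-2\epsilon>-1/2$ there. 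The resulting term $h\lVert\tilde u\rVert_{H_\calc^{-N+1,\mathsf{s}+1-\epsilon,-N+3,q}}=\lVert ha\,u\rVert_{H_\calc^{-N+1,\mathsf{s}+1-\epsilon,-N+3,q}}$ is then handled via the key interpolation property
\[
h^{1-\epsilon}a\in S_\calczero^{0,-1+\epsilon,0},
\]
which is precisely the mechanism that trades $h$-decay for $\mathrm{bf}$-decay and yields $h^\epsilon\lVert u\rVert_{H_\calc^{-N+1,\mathsf{s},-N+3,q}}$. This multiplier trick is the missing idea in your approach.
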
 
\begin{proof}
The idea is to use a multiplier that will allow us to `trade' a power of $h$ with some extra spacetime decay on the right-hand side, using the fact that we can be profligate with spacetime decay since the norm is so weak (in the spacetime sense) on the left-hand side.  

We define 
\begin{equation} 
\tilde{u} = a u, \quad a=(1+h^2 |z|^2)^{-1/2} \in S^{0,0,0}_{\calczero}, \quad z = (t, x). 
\end{equation} 
Notice that $a$ is not a classical symbol: for each $h > 0$ it decays as $\ang{z}^{-1}$, but this is non-uniform and the optimum order as a $\calczero$-symbol is $(0,0,0)$, while  $a^{-1} \in S^{0,1,0}_{\calczero}$. We can view $a$, or $a^{-1}$, as a $\calc$-operator; however, there is a slight technicality that $a$ is \emph{not} in the $\Psi_{\calc}$ calculus as we have defined it, since it is not classical at the parabolic face. However, it \emph{is} in the sum of the natural calculus $\Psi_{\calczero}$ and the calculus $\tilde \Psi_{\mathrm{par,I,res}}^{m,\mathsf{s},\ell,q}$ defined in Section~\ref{subsec:parIres}, since neither of those calculi require more than conormal behaviour of the symbols at the boundary. We can use boundedness property \eqref{eq:action on par Sob spaces} in the special case $m = 0, \mathsf{s} = 0, \ell = 0, q =0$ to see that merely `conormal' operators $A_{\mathrm{con}}$ of order $(0,0,0,0)$, i.e.\ with symbols that are only conormal at each boundary hypersurface, map $L^2 \to L^2$ with a uniformly bounded norm as $h \to 0$. 
Then, by conjugating $A_{\mathrm{con}}$ with an elliptic element $\Lambda_{m, \mathsf{s}, \ell}$ of $\Psi_{\calc}^{m, \mathsf{s}, \ell,0}$, decomposing into the two constituent calculi $\Psi_{\calczero}$ and $\tilde \Psi_{\mathrm{par,I,res}}^{m,\mathsf{s},\ell,q}$, and using the boundedness properties of each, we see that $A_{\mathrm{con}}$ maps $\smash{H_{\calc}^{m, \mathsf{s}, \ell, q}}$ boundedly to itself for all $(m, \mathsf{s}, \ell, q)$.

In particular, since $u = a^{-1} \tilde u$ and $\ang{z}^{-1} a^{-1} \in S^{0,0,0}_{\calczero}$, we have 
\begin{equation}\label{eq:uutilde}
		\lVert u \rVert_{H_{\calc}^{-N,-N,-N,q} }  \lesssim \lVert \tilde{u} \rVert_{ H_{\calc}^{-N,-N+1,-N,q}} \lesssim \lVert \tilde{u} \rVert_{ H_{\calc}^{-N+1,-N+1,-N+1,q}}.
	\end{equation}
Thus it suffices to bound the right-hand side of \cref{eq:uutilde}.

To do this, we apply Proposition~\ref{prop:error_bounding} to $\tilde u$ and with the spacetime order $\mathsf{s}' = \mathsf{s} + 1 - \epsilon$. We obtain 
	\begin{equation}
		\lVert \tilde{u} \rVert_{ H_{\calc}^{-N+1,-N+1,-N+1,q}} \lesssim  \lVert P_\pm  \tilde{u} \rVert_{H_{\calc}^{-N,\mathsf{s}+1 - \epsilon,-N,q} } + h\lVert  \tilde{u} \rVert_{H_{\calc}^{-N+1, \mathsf{s}+1 - \epsilon,-N+3,q  }} + \lVert \tilde{u} \rVert_{H_{\calczero}^{-N+1,-N+1,-N+1}}. 
\label{eq:tilde u P}	\end{equation}
	Let us analyze each term on the right-hand side of \eqref{eq:tilde u P}. First, 
		\begin{align}
		\begin{split} 
		\lVert P_\pm  \tilde{u} \rVert_{H_{\calc}^{-N,\mathsf{s}+1 - \epsilon,-N,q} } &\leq \lVert a P_\pm  u \rVert_{H_{\calc}^{-N,\mathsf{s}+1 - \epsilon,-N,q} } + \lVert  [P_\pm ,a] u \rVert_{H_{\calc}^{-N,\mathsf{s}+1 - \epsilon,-N,q} } \\ 
		&\lesssim \lVert  P_\pm  u \rVert_{H_{\calc}^{-N,\mathsf{s}+1,-N,q} } + \lVert  [P_\pm ,a] u \rVert_{H_{\calc}^{-N,\mathsf{s}+1 - \epsilon,-N,q} }.
		\end{split} 
		\end{align}
		The first term on the right-hand side is a good term, appearing on the right-hand side of \cref{eq:misc_3p4}. 
		
	As for the second term, a computation shows that $[P_\pm ,a]$ is a first-order differential operator whose coefficients (when written in terms of $\partial_t,\partial_{x_j}$) are given by $ha$ times a symbol in $S^{0}(\overline{\mathbb{R}^{1,d}})$. 
		 Consequently, 
		\begin{equation}
		\lVert  [P_\pm ,a] u \rVert_{H_{\calc}^{-N,\mathsf{s}+1 - \epsilon,-N,q} } \leq  h \lVert  \tilde{u}  \rVert_{H_{\calc}^{-N+1,\mathsf{s}+1 - \epsilon,-N,q} }. 
		\end{equation}
		So, we can control this term provided we can control the next term on the right-hand side of \cref{eq:tilde u P}.

 To control $h\lVert  \tilde{u} \rVert_{H_{\calc}^{-N+1, \mathsf{s}+1 - \epsilon,-N+3,q  }} = \lVert  h a u \rVert_{H_{\calc}^{-N+1, \mathsf{s}+1 - \epsilon,-N+3,q  }}$, we can use the following property of $a$:  
		\begin{equation}
		h^{1-\epsilon} a \in S_{\calczero}^{0,-1+\epsilon,0} \text{ for all } \epsilon \in [0,1]. 
		\end{equation}
		(This is the exact place where `trading' between spacetime decay and decay in $h$ takes place.) 
		Thus, 
\begin{equation}
\lVert  h a u \rVert_{H_{\calc}^{-N+1, \mathsf{s}+1 - \epsilon,-N+3,q  }}\lesssim h^\epsilon \lVert u \rVert_{H_{\calctwo}^{-N+1, \mathsf{s},-N+3,q } }.
\label{eq:trading}\end{equation}
	
Finally, using the fact that $a \in S^{0,0,0}_{\calczero}$, the third term on the right-hand side of \cref{eq:tilde u P} is bounded by $\lVert u \rVert_{H_{\calczero}^{-N+1,-N+1,-N+1}}$. This term can be absorbed into the right-hand side of \cref{eq:trading}.  

Putting these estimates together yields \cref{eq:misc_3p4}. 
\end{proof}

\begin{remark} Another way to achieve the removal of the extra assumption $\mathsf{s} > 1/2$ at the above-threshold radial set in Proposition~\ref{prop:error_bounding} is to mimic the argument in \cite[Section 4.1]{VasyN0}, where a similar issue was encountered. 
\end{remark}

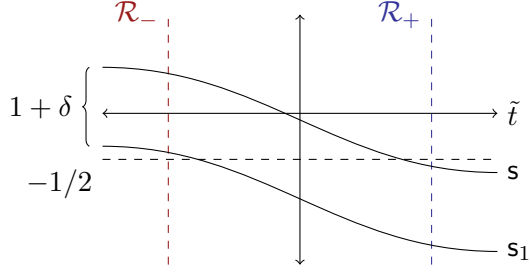
\begin{figure}
	\centering
	\begin{tikzpicture}[scale=1.75]
		\draw[<->] (0,-.9) -- (0,1);
		\draw[<->] (-1.5,.25) -- (1.5,.25) node[right] {$\tilde{t}$};
		\draw[darkred, dashed] (-1,-.9) -- (-1,1) node[left] {$\calR_-$};
		\draw[darkblue, dashed] (1,-.9) -- (1,1) node[left] {$\calR_+$};
		\draw[dashed] (-1.5,-.1) node[below left] {$-1/2$} -- (1.5,-.1);
		\draw (-1.5,.6) to[out=0, in=180] (1.5,-.2) node[right] {$\mathsf{s}$};
		\draw (-1.5,0) to[out=0, in=180] (1.5,-.8) node[right] {$\mathsf{s}_1$};
		\draw [decorate,
		decoration = {brace}] (-1.6,0) --  (-1.6,.6);
		\node[left] () at (-1.65,.3) {$1+\delta$};
	\end{tikzpicture}
	\caption{The variable orders $\mathsf{s}_1,\mathsf{s}$ involved in \Cref{prop:error_bounding} and its proof (in the $\varsigma=+$ case), as a function of a parameter $\tilde{t}$ parametrizing the Hamiltonian flow. As long as $-N< \inf \{\mathsf{s}\}$, then we can find such $\mathsf{s}_1$.}
	\label{fig:s}
\end{figure}

Before we use these estimates to improve \Cref{prop:symbolic_global}, we need to sort out the relation between the conditions 
\begin{itemize}
	\item $\operatorname{WF}_{\mathrm{par}}^{\ell,s_0}(u) \cap \calR_{-\varsigma}^{\mathrm{Schr}}=\varnothing$, 
	\item  $\operatorname{WF}_{\mathrm{sc}}^{\ell,s_0}(u) \cap \calR_{-\varsigma}(h)=\varnothing$.
\end{itemize}

Recall that the pullback map $\beta^*$ used in \Cref{lem:par_Planck_comp} is a diffeomorphism between $\mathrm{pf}$ and ${}^{\mathrm{par}}\overline{T}^* \bbM$, identifying the portions of our radial set $\calR_\varsigma$ at $\mathrm{pf}$ with the corresponding radial sets 
\begin{equation} 
	\calR_\varsigma^{\mathrm{Schr}} \subset {}^{\mathrm{par}}\overline{T}^* \bbM
\end{equation} 
for the Schr\"odinger equation. If $h$ is very small, then $\calR(h)$ is close to $\calR^{\mathrm{Schr}}$ at finite frequencies, so one should be able to propagate control between the two, if the forcing is sufficiently nice. So, it is natural to expect that the incoming/outgoing condition on $\calR_{-\varsigma}(h)$ (which is an `open' condition), phrased in terms of sc-wavefront set, implies the incoming/outgoing condition on $\calR_{-\varsigma}^{\mathrm{Schr}}$, phrased in terms of par-wavefront set. The incomparability of fiber infinity in the par-cotangent bundle with fiber infinity in the sc-cotangent bundle, depicted in Figure~\ref{fig:degen}, means that this cannot quite work as stated.
However, we do have:

\begin{lemma} Let $B\in \Psi_{\calc}^{0,0,0,0}$ be elliptic at $\calR_{-\varsigma}$, and let $\Pi \in \Psi_{\calc}^{-\infty,0,0,0}$ be such that 
	\begin{equation} 
		\operatorname{WF}'_{\calc}(\Pi )\Subset \{\rho_{\mathrm{pf}} < \varepsilon\}
	\end{equation} 
	for some $\varepsilon>0$. As long as $\varepsilon$ is sufficiently small (in terms of $B$), then, for all $m,\ell\in \bbR$, $s_0>-1/2$, 
	\begin{equation}
		 Bu(h) \in H_{\mathrm{sc}}^{m,s_0}  \Longrightarrow  \operatorname{WF}_{\mathrm{par}}^{\ell,s_0 }(\Pi u(h)) \cap \calR_{-\varsigma}^{\mathrm{Schr}} = \varnothing
	\end{equation}
	holds for all $h>0$. 
	\label{lem:automatic_threshold}
\end{lemma}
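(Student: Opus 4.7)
The plan is to produce, for each $p \in \calR^{\mathrm{Schr}}_{-\varsigma}$, a par-pseudo $A \in \Psi_{\mathrm{par}}^{\ell, s_0}$, elliptic at $p$, with $A \Pi u(h) \in L^2$. This gives $p \notin \operatorname{WF}_{\mathrm{par}}^{\ell, s_0}(\Pi u(h))$, and hence the conclusion, since $p$ was arbitrary. The strategy is to express $A \Pi$ as an element of $\Psi_{\calc}$ via \Cref{lem:par_Planck_comp} and then use the $\calc$-microlocal parametrix for $B$ to reduce the claim to mapping properties of $\calc$-smoothing operators on sc-Sobolev spaces at fixed $h$.

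I fix $p$ and pick $A \in \Psi_{\mathrm{par}}^{\ell, s_0}$ elliptic at $p$ with par-essential support in a small neighborhood of $p$ in ${}^{\mathrm{par}}\overline{T}^* \bbM$, identified with $\mathrm{pf}$ via \Cref{prop:pf=par_phase_space}. By the converse direction of \Cref{lem:par_Planck_comp}, $A \Pi \in \Psi_{\calc}^{-\infty, \mathsf{s}, \ell, 0}$ for some variable order $\mathsf{s}$; by the flexibility in choosing $\mathsf{s}$ and the constancy of $A$'s par-spacetime order, I arrange $\mathsf{s} \equiv s_0$ near $\operatorname{WF}_{\calc}^{\prime}(A\Pi)$. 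Since $B$ is $\calc$-elliptic on an open $V \subset {}^{\calc}\overline{T}^* \bbM$ containing $\calR_{-\varsigma} \supset \calR^{\mathrm{Schr}}_{-\varsigma}$, by shrinking $\varepsilon$ and the par-support of $A$ I can ensure $\operatorname{WF}_{\calc}^{\prime}(A\Pi) \subset V$. The microlocal parametrix construction (\Cref{prop:parametrix_const}) then produces $\tilde B \in \Psi_{\calc}^{0,0,0,0}$ with $A\Pi = A\Pi \tilde B B + A\Pi R$, where $\operatorname{WF}_{\calc}^{\prime}(A\Pi R) = \varnothing$ and hence $A\Pi R \in \Psi_{\calc}^{-\infty,-\infty,-\infty,0}$.

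Applying this identity to $u$ yields $A\Pi u(h) = A\Pi \tilde B (Bu(h)) + A\Pi R\, u(h)$. At fixed $h > 0$, the slice of ${}^{\calc}\overline{T}^* \bbM$ is ${}^{\mathrm{sc}}\overline{T}^* \bbM$, and $A\Pi \tilde B$ restricts to an element of $\Psi_{\mathrm{sc}}^{-\infty, s_0}$, which maps $H_{\mathrm{sc}}^{m, s_0} \to H_{\mathrm{sc}}^{\infty, 0} \hookrightarrow L^2$ boundedly at that $h$. Thus the first term is in $L^2$ by the hypothesis $Bu(h) \in H_{\mathrm{sc}}^{m, s_0}$. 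For the residual term: $A\Pi R$ at fixed $h$ has Schwartz kernel (it lies in $\Psi_{\mathrm{sc}}^{-\infty,-\infty}$), and so sends any tempered distribution into Schwartz, hence into $L^2$. Combining gives $A\Pi u(h) \in L^2$, as required.

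The main obstacle lies in the bookkeeping in \Cref{lem:par_Planck_comp}: one must check that the variable $\calc$-order $\mathsf{s}$ for $A\Pi$ can be taken to equal the constant $s_0$ in a neighborhood of $\operatorname{WF}_{\calc}^{\prime}(A\Pi)$, and that $\varepsilon$ may be chosen small enough (depending on $B$) that $\operatorname{WF}_{\calc}^{\prime}(A\Pi)$ fits inside the $\calc$-elliptic set of $B$. Both are readily handled using the structure of the calculus developed in \Cref{sec:calculus} and the fact that $\calR^{\mathrm{Schr}}_{-\varsigma} \subset \calR_{-\varsigma}$ is contained in the open $\calc$-elliptic set of $B$.
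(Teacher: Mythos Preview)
Your proof is correct and follows essentially the same strategy as the paper's: reduce, via a $\calc$-microlocal parametrix for $B$, to showing that certain operators are sc-smoothing (of controlled spacetime order) at fixed $h$. The paper separates your operator $A$ into a product $\Lambda E$ with $E \in \Psi_{\mathrm{par}}^{0,0}$ and $\Lambda \in \Psi_{\mathrm{par}}^{\ell,s_0}$ elliptic and invertible, and inserts an auxiliary $Q_1 \in \Psi_{\calc}^{-\infty,0,0,0}$ between $E$ and $\Pi$; your version streamlines this by working directly with $A\Pi$. The one point the paper handles more explicitly is the geometric claim you flag as the ``main obstacle'': that $\operatorname{WF}'_{\calc}(A\Pi)$ can be made to fit inside $\operatorname{Ell}_{\calc}(B)$. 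The paper isolates this as the inclusion $W_1 \cap V \Subset U$ (where $V = \beta^* V_0 \setminus \mathrm{df}$ for a fixed par-neighborhood $V_0 \supset \calR_{-\varsigma}^{\mathrm{Schr}}$ with $\overline{V_0} \Subset U \cap \mathrm{pf}$), argued via a distance argument and illustrated with a figure. Making this precise also clarifies the order of quantifiers: one first fixes $V_0$ depending only on $B$, then chooses $\varepsilon$ small (again depending only on $B$), and only then picks $A$ with par-support in $V_0$ for each $p$ --- so $\varepsilon$ is indeed uniform in $p$, as the lemma requires.
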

Note that $\Pi u(h)$ is smooth for each $h>0$, which is why the order $\ell$ does not matter. Indeed, $\Pi (h)\in \Psi_{\mathrm{par}}^{-\infty,0}$ for each $h>0$, even though $\Pi $, the element of $\Psi_{\calc}$, can be elliptic at points in $\natural\mathrm{f}$. The behavior at $\natural\mathrm{f}$ is irrelevant for individual  $h>0$. 
\begin{proof}	
	Let $V$ denote an open subset of ${}^{\calc}\overline{T}^* \bbM\backslash \mathrm{df}$ of the form $\beta^*V_0\backslash \mathrm{df}$ for $V_0$ an open neighborhood in ${}^{\mathrm{par}}\overline{T}^* \bbM$ of $\calR_{-\varsigma}^{\mathrm{Schr}}$. 
	Identifying $\mathrm{pf}$ with ${}^{\mathrm{par}}\overline{T}^* \bbM$, we can choose $V_0$ such that $\overline{V}_0 \Subset U \cap \mathrm{pf}$, where 
	\begin{equation} 
		U :=\operatorname{Ell}_{\calc}^{0,0,0,0}(B)
	\end{equation} 
	 is an open neighborhood of $\calR_{-\varsigma}$.  
	Let $W= \{\rho_{\mathrm{pf}} < \varepsilon\}$, $W_1=\{\rho_{\mathrm{pf}}<2\varepsilon\}$, and take $\varepsilon$ sufficiently small such that $W_1$, and therefore $W$, is disjoint from $\mathrm{df}$. 
	The key observation that we need to make is that if $\varepsilon$ is sufficiently small, then 
	\begin{equation}
		W_1 \cap V\Subset U,\label{eq:misc_349}
	\end{equation}
	which is true because $\overline{V}\cap \mathrm{pf} = \overline{V}_0\Subset U$, so $\overline{V}\cap \mathrm{pf}$ has a nonzero distance from $U^\complement$ with respect to any metric on ${}^{\calc}\overline{T}^* \bbM$. See \Cref{fig:degen}.

	\begin{figure}
		\begin{tikzpicture}[scale=.8]
			\draw[fill=lightgray!30] (0,0) circle (3);
			\draw[fill=lightgray!30] (0,0) circle (1.5);
			\fill[darkblue, fill opacity=.25] (-3,0) to[out=10,in=130] ({-1.5/1.44},{1.5/1.4+.3}) to[out=-50,in=180] (0,.3) to[out=0,in=230] ({1.5/1.44},{1.5/1.4+.3}) to[out=50, in=170] (3,0) to[out=190, in=50] ({1.5/1.44},{1.5/1.4-.3}) to[out=230, in=0] (0,-.3) to[out=180, in=-50] ({-1.5/1.44},{1.5/1.4-.3}) to[out=130, in=-10] (-3,0);
			\fill[darkgreen,opacity=.5] ({3.15/1.44-.375},{3.15/1.4+.13}) -- ({1.5/1.44},{1.5/1.4+.15}) to[out=235, in=0] (0,.1) to[out=180, in=-55] ({-1.5/1.44},{1.5/1.4+.15}) --  ({-3.15/1.44+.375},{3.15/1.4+.13}) to[out=218, in=50] ({-3.15/1.44},{3.15/1.4-.195}) -- ({-1.5/1.44},{1.5/1.4-.15}) to[out=-50, in=180] (0,-.1) to[out=0,in=230] ({1.5/1.44},{1.5/1.4-.15}) -- ({3.15/1.44},{3.1/1.4-.15}) -- ({3.15/1.44-.375},{3.15/1.4+.13}) -- ({1.5/1.44},{1.5/1.4+.15});
			\node[darkgreen, opacity=.5] at (-1.25,2.25) {$V$};
			\node[darkblue, opacity=.5] at (-2.5,.75) {$U$};
			\draw[darkblue] (-3,0) node[below right] {$\calR_{-\varsigma}$} to[out=0,in=130] ({-1.5/1.44},{1.5/1.4}) to[out=-50,in=180] (0,0) to[out=0,in=230] ({1.5/1.44},{1.5/1.4}) to[out=50, in=180] (3,0);
			\node at (0,-2.25) {$\natural\mathrm{f}$};
			\node at (0,-.8) {$\mathrm{pf}$};
			\node[above right] at ({3/1.44},{3/1.4}) {$\mathrm{df}_1$};
			\node at (-2.55,2.7) {(a)};
		\end{tikzpicture}
		\qquad 
		\begin{tikzpicture}[scale=.8]
			\draw[fill=lightgray!30] (0,0) circle (3);
			\draw[fill=lightgray!30] (0,0) circle (1.5);
			\fill[darkblue, fill opacity=.25] ({-3/1.44-.12},{3/1.4-.1}) -- ({-1.5/1.44},{1.5/1.4-.4}) to[out=-50,in=180] (0,-.3) to[out=0,in=230] ({1.5/1.44},{1.5/1.4-.4}) -- ({3/1.44+.12},{3/1.4-.1}) -- ({3/1.44-.1},{3/1.4+.1}) -- ({1.5/1.44},{1.5/1.4+.3}) to[out=220, in=0] (0,.3) to[in=-50, out=180] ({-1.5/1.44},{1.5/1.4+.3}) -- ({-3/1.44+.1},{3/1.4+.1}) -- cycle;
			\fill[darkgreen, opacity=.5] (0,3) to[out=210, in=130] ({-1.5/1.44-.1},{1.5/1.4-.1}) to[out=-50,in=180] (0,-.1) to[out=0,in=230] ({1.5/1.44+.1},{1.5/1.4-.1}) to[out=50, in=-30] (0,3) to[out=-70, in=50] ({1.5/1.44-.1},{1.5/1.4}) to[out=-130, in=0] (0,.1) to[out=180, in=-50] ({-1.5/1.44+.1},{1.5/1.4}) to[out=120, in=250] (0,3);
			\draw[darkblue] ({-3/1.44},{3/1.4}) -- ({-1.5/1.44},{1.5/1.4}) to[out=-50,in=180] (0,0) to[out=0,in=230] ({1.5/1.44},{1.5/1.4}) -- ({3/1.44},{3/1.4});
			\node at (0,-2.25) {$\natural\mathrm{f}$};
			\node at (0,-.8) {$\mathrm{pf}$};
			\node[above right] at ({3/1.44},{3/1.4}) {$\mathrm{df}$};
			\node at (-2.55,2.7) {(b)};
		\end{tikzpicture}
		\caption{
		The neighborhoods $U\supseteq \calR_{-\varsigma}$, $V\supseteq \mathrm{pf} \cap \calR_{-\varsigma}$ in \Cref{lem:automatic_threshold} and its proof, as they intersect $\mathrm{pf},\natural\mathrm{f}$ in ${}^{\mathrm{par,I,res}}\overline{T}^* \bbM$ (a) and ${}^{\calc}\overline{T}^* \bbM$ (b). The degrees-of-freedom associated with the base variables are not depicted. In the left figure, the angular figure in the diagram is the angular variable at fiber infinity in the par-cotangent bundle. In the right figure, it is just the angle of $\zeta_{\natural}$. The key fact depicted by both figures is that, near $\mathrm{pf}$, $V$ stays within $U$.}
		\label{fig:degen} 
	\end{figure}

	Let $E \in \Psi_{\mathrm{par}}^{0,0}$ be elliptic at $\calR_{-\varsigma}^{\mathrm{Schr}}$, and let $Q_1 \in \Psi_{\calc}^{-\infty,0,0,0}$ have essential support in $W_1$ and be elliptic on the essential support of $\Pi $. This implies that the operator $Q_1(h)\in \Psi_{\mathrm{par}}^{-\infty,0}$ satisfies 
	\begin{equation}
		\operatorname{Ell}^{\bullet,0}_{\mathrm{par}}(Q_1(h))\supseteq \operatorname{WF}'_{\mathrm{par}}(\Pi (h))
	\end{equation}
	for each $h>0$. Here, $\bullet$ is an arbitrary order on which $\operatorname{Ell}^{\bullet,0}_{\mathrm{par}}(Q_1(h))$ does not depend. We now have
	\begin{multline}
		\operatorname{WF}_{\mathrm{par}}^{\ell,s_0 }(\Pi u(h)) \cap \calR_{-\varsigma}^{\mathrm{Schr}} = \varnothing  \iff \operatorname{WF}_{\mathrm{par}}^{\ell,s_0 }(EQ_1 \Pi u(h)) \cap \calR_{-\varsigma}^{\mathrm{Schr}} = \varnothing 
		\Longleftarrow \Lambda E Q_1 \Pi  u(h) \in L^2, 
	\end{multline}
	where $\Lambda \in \Psi_{\mathrm{par}}^{\ell,s_0}$ is elliptic. For simplicity, we can assume that it is invertible, with $\Lambda^{-1}\in \Psi_{\mathrm{par}}^{-\ell,-s_0}$. 
	
	Let $D\in \Psi_{\calc}^{0,0,0,0}$ denote a two-sided parametrix for $B$ on $U$. Then, 
	\begin{equation}
			\Lambda E Q_1 \Pi  u(h)  = 	(\Lambda E Q_1 \Pi  D \Lambda^{-1} )\Lambda B u(h) + \Lambda  E Q_1 \Pi  R u(h)
	\end{equation}
	for some $R\in \Psi_{\calc}^{-\infty,-\infty,-\infty,0}$ with essential support in the complement of $U$. 
	
	If we choose $E$ such that $\operatorname{WF}'_{\mathrm{par}}(E)\Subset V_0$, then $\operatorname{WF}'_{\calc}(\Lambda EQ_1) \Subset W_1\cap V \subset U$ (using \cref{eq:misc_349}). Then, 
	\begin{align}
		 \Lambda  E Q_1 \Pi  R(h) &\in \Psi_{\mathrm{sc}}^{-\infty,-\infty} \\
		 \Lambda E Q_1 \Pi  D \Lambda^{-1}(h) & \in \Psi_{\mathrm{sc}}^{-\infty,0}
	\end{align}
	for each $h>0$. 
	The first of these implies that $\Lambda  E Q_1 \Pi  R u(h)$ is Schwartz for each $h>0$. This, together with the latter, gives $\Lambda E Q_1 \Pi  u(h) \in L^2  \Longleftarrow  	\Lambda Bu(h) \in L^2 \Longleftarrow Bu(h) \in H_{\mathrm{sc}}^{m,s_0}$.
\end{proof}

We now achieve our goal of estimating $u$ by $P_\pm u$, up to an `absorbable' error term. 

\begin{proposition}
	Consider the setup of \Cref{prop:error_bounding2}, 
and let $B \in \Psi_{\calczero}^{0,0,0}$ satisfy: 
\begin{itemize}
\item $\operatorname{WF}'_{\calc}(B) \backslash \operatorname{Ell}_{\calc}^{0,0,0,0}(G) \subseteq \operatorname{Ell}_{\calc}^{0,0,0,0}(Z)$, i.e.\ the portion of the essential support of $B$ not already contained in the elliptic set of $G$ is covered by the elliptic set of $Z$,
\item $\Sigma_{\mathrm{bad}} \cap \operatorname{WF}'_{\calc}(B)=\varnothing$, i.e.\ the essential support of $B$ does not intersect the ``bad'' component of the characteristic set, 
\item $B$ is elliptic at the zero section over $\natural\mathrm{f}$. 
\end{itemize}
Then, there exists some $h_0$ (depending on everything except $u,h$), such that
	\begin{multline}
		\lVert B u \rVert_{ H_{\calc}^{m,\mathsf{s},\ell,q} }\lesssim \lVert  G P_\pm u \rVert_{H_{ \calc}^{m-1,\mathsf{s}+1,\ell-1,q}} + \lVert  Z P_\pm u \rVert_{H_{ \calc}^{m-2,\mathsf{s},\ell-2,q}} 
		+ \lVert P_\pm u\rVert_{H_{\calc}^{-N,\mathsf{s}+1,-N,q} }
		\\ +  h^\epsilon\lVert  u \rVert_{H_{\calc}^{-N, \mathsf{s},-N,q  }}
		\label{eq:misc_3455}
	\end{multline}
	holds, in the usual strong sense, 
	for all $h\in (0,h_0)$ and $u\in \calS'$ such that the incoming/outgoing condition $\operatorname{WF}_{\mathrm{sc}}^{-N,s_0}(u)\cap \calR_{\varsigma}(h) = \varnothing$ is satisfied. In particular, we have 
	\begin{equation}
		\lVert B u \rVert_{ H_{\calc}^{m,\mathsf{s},\ell,q} }\lesssim \lVert  P_\pm u \rVert_{H_{ \calc}^{m-1,\mathsf{s}+1,\ell-1,q}} + h^\epsilon\lVert  u \rVert_{H_{\calc}^{-N, \mathsf{s},-N,q  }} 
		\label{eq:misc_34555}
	\end{equation}
	under the same assumptions. 
	\label{prop:monosheet}
\end{proposition}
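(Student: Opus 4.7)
The plan is to combine the symbolic global estimate of Proposition~\ref{prop:symbolic_global_better} with the normal-operator-based remainder estimate of Proposition~\ref{prop:error_bounding2}, using Lemma~\ref{lem:automatic_threshold} to translate the sc incoming/outgoing hypothesis into the parabolic one. First I would verify a-priori finiteness: for each fixed $h>0$, the hypothesized sc condition $\operatorname{WF}_{\mathrm{sc}}^{-N,s_0}(u)\cap\calR_{\varsigma}(h)=\varnothing$ with $s_0>-1/2$ supplies above-threshold control at $\calR_\varsigma$, while the below-threshold condition $\mathsf{s}|_{\calR_{-\varsigma}}<-1/2$ makes the corresponding radial point estimate at $\calR_{-\varsigma}$ unconditional. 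Combined with the assumed finiteness of $\|GP_\pm u\|$ and $\|ZP_\pm u\|$ on their respective microsupports, sc propagation and elliptic estimates applied at fixed $h$ yield $Bu(h)\in H^{m,\mathsf{s}(h),\ell}_{\mathrm{sc}}$, so the left-hand side of \eqref{eq:misc_3455} is finite for each $h$.

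With finiteness in hand, Proposition~\ref{prop:symbolic_global_better} (applied with $Q$ chosen elliptic at $\calR_\varsigma$ and absorbed into the left-hand side using the above-threshold interpolation bound from its proof) gives, for any $N'\in\mathbb{R}$,
\begin{equation*}
\lVert Bu\rVert_{H^{m,\mathsf{s},\ell,q}_{\calc}}
\lesssim \lVert GP_\pm u\rVert_{H^{m-1,\mathsf{s}+1,\ell-1,q}_{\calc}}
+\lVert ZP_\pm u\rVert_{H^{m-2,\mathsf{s},\ell-2,q}_{\calc}}
+\lVert u\rVert_{H^{-N',-N',-N',q}_{\calc}},
\end{equation*}
uniformly in $h\in(0,h_0)$. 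The task is now to estimate the last term. Here I would invoke Proposition~\ref{prop:error_bounding2} with $N'$ chosen large (say $N'=N+3$) so that $-N'+3\le -N$ and $\mathsf{s}>-N'+1$. Its parabolic incoming/outgoing hypothesis is obtained from the sc hypothesis by Lemma~\ref{lem:automatic_threshold} --- this is the step where the ellipticity of $B$ at the zero section over $\natural\mathrm{f}$ is used, as it lets one choose the microlocalizer $\Pi$ supported near $\mathrm{pf}$ with $\operatorname{WF}'_\calczero(\Pi)$ inside the elliptic set of $B$ after propagating across $\Sigma$ (the below-threshold structure at $\calR_{-\varsigma}$ ensures there is no obstruction along the Hamiltonian flow).

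Feeding the resulting bound
\begin{equation*}
\lVert u\rVert_{H^{-N',-N',-N',q}_{\calc}}
\lesssim \lVert P_\pm u\rVert_{H^{-N',\mathsf{s}+1,-N',q}_{\calc}}
+h^{\epsilon}\lVert u\rVert_{H^{-N,\mathsf{s},-N,q}_{\calc}}
\end{equation*}
into the previous inequality yields \eqref{eq:misc_3455}. The companion bound \eqref{eq:misc_34555} then follows by Sobolev inclusion of $\calc$-spaces, provided $N$ is taken large enough that $-N\le m-2$ and $-N\le\ell-2$: the norms $\|GP_\pm u\|_{H^{m-1,\mathsf{s}+1,\ell-1,q}_{\calc}}$, $\|ZP_\pm u\|_{H^{m-2,\mathsf{s},\ell-2,q}_{\calc}}$ and $\|P_\pm u\|_{H^{-N,\mathsf{s}+1,-N,q}_{\calc}}$ are all dominated by $\|P_\pm u\|_{H^{m-1,\mathsf{s}+1,\ell-1,q}_{\calc}}$.

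The main obstacle is the verification of the parabolic incoming/outgoing condition at $\mathrm{pf}$ via Lemma~\ref{lem:automatic_threshold}: this requires choosing the cutoffs near $\mathrm{pf}$ carefully so that their par-wavefront sets lie inside the elliptic sets of the sc-microlocalizers guaranteed by the hypothesis, exploiting the canonical identification $\mathrm{pf}\cong{}^{\mathrm{par}}\overline{T}^*\bbM$ and the fact that the sc- and par-compactifications of frequency space agree near the radial set but differ near fiber infinity (cf.\ Figure~\ref{fig:degen}). The remaining steps --- a-priori finiteness, applying two earlier propositions, and bookkeeping of orders --- are essentially routine given the machinery already developed.
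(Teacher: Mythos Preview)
Your proposal is correct and follows essentially the same route as the paper: apply Proposition~\ref{prop:symbolic_global} with the improvement of Proposition~\ref{prop:symbolic_global_better} to obtain the estimate with remainder $\lVert u\rVert_{H_{\calc}^{-N',-N',-N',q}}$, verify the parabolic incoming/outgoing condition via Lemma~\ref{lem:automatic_threshold}, then invoke Proposition~\ref{prop:error_bounding2} (with $N'=N+3$) to bound that remainder and substitute back.

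One small inaccuracy in your discussion: the hypothesis that $B$ is elliptic at the zero section over $\natural\mathrm{f}$ is \emph{not} what feeds into Lemma~\ref{lem:automatic_threshold}. The paper instead introduces a separate operator $\tilde B$, chosen elliptic at the relevant radial set (this is permitted since Proposition~\ref{prop:symbolic_global} places no restriction on ellipticity of the test operator at $\calR_\varsigma$), and uses the finiteness of $\lVert\tilde B u\rVert_{H_{\calc}^{m,\mathsf{s},\ell,q}}$ from the symbolic estimate as the input to Lemma~\ref{lem:automatic_threshold}. The zero-section ellipticity of $B$ itself is not actually invoked in this proof; it is a hypothesis carried along for later use in combining the two sheets. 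This does not affect your argument's validity, only the attribution of which hypothesis does what.
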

\begin{proof}
	It suffices to prove the proposition for $u \in H_{\mathrm{sc}}^{-N',-N'}$,  $N'$ sufficiently large.
	We apply \Cref{prop:symbolic_global} with the improvement noted in \Cref{prop:symbolic_global_better} to get 
	\begin{equation}
		\lVert \tilde B u \rVert_{H_{\calc}^{m,\mathsf{s},\ell,q}}   \lesssim \lVert G P_\pm u \rVert_{H_{\calc}^{m-1,\mathsf{s}+1,\ell-1,q}} + \lVert Z P_\pm u \rVert_{H_{\calc}^{m-2,\mathsf{s},\ell-2,q}}  + \lVert u \rVert_{H_{\calc}^{-N',-N',-N',q} } 
		\label{eq:misc_34s}
	\end{equation}
	for all $N'>N$. Here $\tilde B$ can be any operator of order $(0,0,0,0)$ such that
	\begin{equation}
		\operatorname{WF}'_{\calc}(\tilde B) \subset \operatorname{Ell}_{\calc}^{0,0,0,0}(G) \setminus \Sigma_{\mathrm{bad}};
	\end{equation}
	in particular, $\tilde B$ can be elliptic at $\calR_{\varsigma}$.	 
	Let $\Pi$ be as in \Cref{lem:automatic_threshold}, and we can choose $\Pi \in \Psi_{\calczero}^{0,0,0}$ such that $1-\Pi$ has essential support disjoint from the zero section over $\natural\mathrm{f}$. Then,
	\begin{multline}
		 \lVert u \rVert_{H_{\calc}^{-N',-N',-N',q} }  \leq \lVert \Pi u \rVert_{H_{\calc}^{-N',-N',-N',q} }  +\lVert (1-\Pi)u \rVert_{H_{\calc}^{-N',-N',-N',q} }   
		 \\
		 \leq  \lVert \Pi u \rVert_{H_{\calc}^{-N',-N',-N',q} }  + \lVert u \rVert_{H_{\calczero}^{-N',-N',-N'}}.
\label{eq:u decomposed Pi}	\end{multline}
	We just need to bound the $\Pi u$ term. If $h$ is such that the right-hand side of \cref{eq:misc_3455} is finite (which is the only case where the estimate is nontrivial), then \cref{eq:misc_34s} gives that $\tilde Bu(h) \in H_{\mathrm{sc}}^{m,\mathsf{s}}$, in which case \Cref{lem:automatic_threshold} says that $\operatorname{WF}_{\mathrm{par}}^{\ell_0,s_0 }(\Pi u(h)) \cap \calR_{-\varsigma}^{\mathrm{Schr}}$ for any $\ell_0$. 
	Then, \Cref{prop:error_bounding2} applies, giving the bound
	\begin{equation}
		\lVert \Pi u \rVert_{H_{\calc}^{-N',-N',-N',q} } \lesssim \lVert P_\pm  u \rVert_{H_{\calc}^{-N',\mathsf{s}+1,-N',q} } + 
		h^\epsilon\lVert  u \rVert_{H_{\calc}^{-N'+1, \mathsf{s},-N'+3,q  }}. 
	\end{equation}
	Of course we trivially have for the last term in \cref{eq:u decomposed Pi}
\begin{equation}
\lVert u \rVert_{H_{\calczero}^{-N',-N',-N'}} \lesssim h^\epsilon\lVert  u \rVert_{H_{\calc}^{-N'+1, \mathsf{s},-N'+3,q  }}. 
\label{eq:error comparison}\end{equation}
We choose $N' = N+3$, combine these estimates, and obtain \cref{eq:misc_3455}.  
\end{proof}

\subsection{Combining the estimates on both components of the characteristic set}
\label{subsec:combined}
The estimate \Cref{prop:monosheet} is still not ready to be applied to the analysis of the PDE because of the appearance of $u$ (without $P_\pm$ applied to it) on the right-hand side, in the final term. 
Even though 
\begin{equation}
	\lVert u \rVert_{H_{\calczero}^{-N,-N,-N} }\lesssim h \lVert u \rVert_{ H_{\calc}^{m,\mathsf{s},\ell,q} }
\end{equation}
if $N$ is sufficiently large, we cannot absorb this term into the left-hand side of that estimate because the left-hand side is written in terms of $Bu$. But we cannot take $B\to 1$ because it was essential that $B$ have essential support disjoint from $\Sigma_{\mathrm{bad}}$ --- the estimates done so far are only valid on $\Sigma$ (or in the elliptic region). So, if we want estimates only involving the forcing on the right-hand side, we are going to need to combine the estimates for $P_-$, $P_+$. The $\calctwo$-Sobolev norms 
 \begin{equation} 
 	\lVert - \rVert_{H_{\calctwo}^{m,\mathsf{s},\ell;q_-,q_+}}
 \end{equation} 
 are ideal for this purpose. For simplicity, we take $q_\pm = 0$. We will also initially state the estimates in terms of the $\calc$-Sobolev norms.

Unfortunately, our notation must now keep track of the sign in $P_\pm$, whereas this dependence was previously suppressed. Our convention will be to add a `$\pm$' \emph{superscript} when considering objects associated with $P_\pm$. For example, $\Sigma_{\mathrm{bad}}^-$ is the sheet of the $\calczero$-characteristic set of $P_-$ that does not intersect the zero section, and $\Sigma^-$ is the other component of the $\calc$-characteristic set. In contrast, $\Sigma_-$ will denote the negative energy sheet of the $\calczero$-characteristic set of $P$. 
Additionally, we use a subscript of $\alpha\in \bbR$ to denote that we are translating a subset of ${}^{\calczero}\overline{T}^* \bbM$ up $\alpha$ units in $\tau_{\natural}$ (not altering any points at fiber infinity). For example, $\Sigma^+_{+1}$ is the result of translating $\Sigma^+$ up one unit. This will be necessary when converting between $\calc$-Sobolev regularity and $\calctwo$-Sobolev regularity. 

Choose $Q_\pm \in \Psi_{\calczero}^{0,0,0}$, $Q_- + Q_+ = \Id$,  as in \cref{eq:Qpm-properties,eq:definition_2res norm} where the $\calctwo$-Sobolev norms were defined. In addition, we shall choose $Q_\pm$ so that 
\begin{equation}
	\operatorname{WF}'_{\calczero}(Q_-)\cap \underbrace{\operatorname{char}_{\calczero}^{0,0,0}(P)\cap \{\tau_{\natural} >0\}}_{\Sigma_+=\Sigma^-_{\mathrm{bad},-1}}  = \varnothing = \operatorname{WF}'_{\calczero}(Q_+)\cap  \underbrace{\operatorname{char}_{\calczero}^{0,0,0}(P)\cap \{\tau_{\natural} <0\}}_{\Sigma_-=\Sigma^+_{\mathrm{bad},+1}}.
	\label{eq:misc_320}
\end{equation}
This is possible because $\operatorname{char}_{\calczero}^{0,0,0}(P)$ is disjoint from the closure in ${}^{\natural}\overline{T}^* \bbM$ of the hyperplane $\taun = 0$. Concretely, we can choose 
\begin{equation}
\sigma^{0,0,0,0}_{\calc}(Q_+) = \chi \big( \frac{\taun}{\ang{\xin}} \big), 
\end{equation}
where $\chi(s) = 1$ for $s \geq 1/2$, $\chi(s) = 0$ for $s \leq -1/2$. This satisfies \eqref{eq:misc_320} at least for small $h$, since at $h=0$, $\Sigma_\pm = \{ \taun = \pm \ang{\xin} \}$. 
Note that we are looking at the $\calczero$-characteristic set of $P$, \emph{not} $P_\pm$.  Roughly, applying $Q_+$ to a distribution restricts attention to those oscillations lying on the positive-energy (meaning $\taun > 0$) component $\Sigma_+$ of the $\calczero$-characteristic set of $P$, while applying $Q_-$ restricts attention to oscillations on the negative energy $(\taun < 0$) component.

We can write any $v\in \calS'$ as 
\begin{equation} 
	v = e^{-i c^2 t} v_- + e^{ic^2 t} v_+ 
	\label{eq:vpm_def}
\end{equation} 
for $v_- = e^{ic^2 t} Q_- v$ and $v_+ = e^{-ic^2 t} Q_+ v$. 
Let $\mathsf{s}_\pm \in C^\infty({}^{\calc}\overline{T}^* \bbM)$ be variable orders as in \eqref{eq:spm orders}. Then, using the definition of the $\calctwo$-norm from \eqref{eq:definition_2res norm}, we have 
\begin{align}
	\lVert v \rVert_{H_{\calctwo}^{m,\mathsf{s},\ell;0,0}} =  \lVert v_- \rVert_{H_{\calc}^{m,\mathsf{s}_-,\ell,0} }  +\lVert v_+ \rVert_{H_{\calc}^{m,\mathsf{s}_+,\ell,0} } . 
	\label{eq:misc_379}
\end{align}

The auxiliary functions $u_\pm$ satisfy the PDE 
\begin{equation}\label{eq:u+u comparison}
	P_\pm u_\pm = e^{\mp i c^2 t } P e^{\pm i c^2   t} u_\pm= (Pu)_\pm+ f_\pm 
\end{equation}
for $(Pu)_- = e^{ic^2 t}Q_- Pu$, $(Pu)_+ = e^{-ic^2 t} Q_+Pu$, and 
\begin{equation}
	f_\pm = 
	\begin{cases}
		e^{- i c^2 t} [P, Q_+] u = -e^{- i c^2 t} [P, Q_-] u & (+\text{ case}),  \\
		e^{ic^2 t} [P, Q_-] u & (-\text{ case}).
	\end{cases}
	\label{eq:f}
\end{equation} 

\begin{proposition} \label{prop:semiFred estimate}
Let the spacetime order $\mathsf{s}$ be as in Theorem~\ref{thm:inhomog}, and let $m, \ell \in \mathbb{R}$. Then, for $h>0$ sufficiently small and for all functions $u = u(h)$ such that $u$ satisfies an above-threshold condition 
\begin{equation}\label{eq:u above threshold cond}
\operatorname{WF}_{\mathrm{sc}}^{-N,s_0}(u)\cap \calR_{\varsigma}(h) = \varnothing \text{ for some } s_0 > -\frac1{2},
\end{equation}
at the above-threshold radial set $\calR_{\varsigma}(h)$ on each component $\Sigma_\pm$ of the characteristic set of $P$, we have the estimate, 
\begin{equation}\label{eq:semiFred estimate}
\lVert u \rVert_{H_{\calctwo}^{m,\mathsf{s},\ell;0,0}} \lesssim  \lVert  Pu  \rVert_{H_{ \calctwo}^{m-1,\mathsf{s}+1,\ell-1,0,0}}.
\end{equation}
\end{proposition}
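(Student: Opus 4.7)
The idea is to decompose $u$ into its two conjugated pieces living near the two components of the characteristic set of $P$, apply the monosheet estimate \eqref{eq:misc_34555} from \Cref{prop:monosheet} to each piece, and then absorb the small-$h$ error into the left-hand side.

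First, write $u = e^{-ic^2 t}u_- + e^{ic^2 t}u_+$ with $u_\pm = e^{\mp ic^2 t}Q_\pm u$ as in \cref{eq:vpm_def}, so that by \eqref{eq:misc_379},
\begin{equation*}
\lVert u \rVert_{H_{\calctwo}^{m,\mathsf{s},\ell;0,0}} = \lVert u_-\rVert_{H_{\calc}^{m,\mathsf{s}_-,\ell,0}} + \lVert u_+\rVert_{H_{\calc}^{m,\mathsf{s}_+,\ell,0}}.
\end{equation*}
By \Cref{prop:translation}, the operator $e^{\mp ic^2 t}Q_\pm e^{\pm ic^2 t}$ lies in $\Psi_{\calczero}^{0,0,0}$ with essential support equal to $\operatorname{WF}'_{\calczero}(Q_\pm)$ shifted by $\mp 1$ in $\taun$; consequently the $\calc$-wavefront set of $u_\pm$ lies in this shifted set, which by \eqref{eq:misc_320} is disjoint from $\Sigma^\pm_{\mathrm{bad}}$. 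The above-threshold hypothesis \eqref{eq:u above threshold cond} on $u$ at $\calR_\varsigma(h) \subset \Sigma_\pm$ likewise translates directly to the corresponding hypothesis for $u_\pm$ at $\calR_\varsigma \subset \Sigma^\pm$ after the shift.

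Second, for each sign I choose $B_\pm \in \Psi_{\calc}^{0,0,0,0}$ elliptic on the $\calc$-wavefront set of $u_\pm$, essentially supported away from $\Sigma^\pm_{\mathrm{bad}}$, and elliptic at the zero section over $\natural\mathrm{f}$ (these conditions are compatible because $\operatorname{WF}'_{\calc}(u_\pm)$ already meets $\mathrm{pf}$ at the zero section). Combining \eqref{eq:misc_34555} with a microlocal elliptic estimate that upgrades $\lVert B_\pm u_\pm\rVert$ to $\lVert u_\pm\rVert$ (using that $(1-B_\pm)u_\pm$ is $\calc$-residual), one obtains
\begin{equation*}
\lVert u_\pm\rVert_{H_{\calc}^{m,\mathsf{s}_\pm,\ell,0}} \lesssim \lVert P_\pm u_\pm\rVert_{H_{\calc}^{m-1,\mathsf{s}_\pm+1,\ell-1,0}} + h^\epsilon\,\lVert u_\pm\rVert_{H_{\calc}^{-N,\mathsf{s}_\pm,-N,0}} + \lVert u\rVert_{H_{\calczero}^{-N,-N,-N}}.
\end{equation*}

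Third, bound $P_\pm u_\pm$ by $Pu$. By \eqref{eq:u+u comparison}, $P_\pm u_\pm = (Pu)_\pm + f_\pm$, and summing the $\lVert (Pu)_\pm\rVert_{H_{\calc}^{m-1,\mathsf{s}_\pm+1,\ell-1,0}}$ over $\pm$ reproduces $\lVert Pu\rVert_{H_{\calctwo}^{m-1,\mathsf{s}+1,\ell-1;0,0}}$. For the commutator term $f_\pm = e^{\mp ic^2 t}[P,Q_\pm]u$, the crucial observation is that $\operatorname{WF}'_{\calczero}([P,Q_\pm])$ lies in the interface of the essential supports of $Q_+$ and $Q_-$, which by \eqref{eq:misc_320} is disjoint from the $\calczero$-characteristic set of $P$; hence $P$ is $\calczero$-elliptic there. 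A $\calczero$-microlocal elliptic parametrix writes $[P,Q_\pm] = E_\pm P + R_\pm$ with $E_\pm \in \Psi_{\calczero}^{-1,-1,-1}$ essentially supported in the $\calczero$-elliptic set of $P$ and $R_\pm \in \Psi_{\calczero}^{-\infty,-\infty,-\infty}$. After the oscillatory conjugation (\Cref{prop:translation}), the contribution $\lVert e^{\mp ic^2 t}E_\pm Pu\rVert_{H_{\calc}^{m-1,\mathsf{s}_\pm+1,\ell-1,0}}$ is controlled by $\lVert Pu\rVert_{H_{\calctwo}^{m-1,\mathsf{s}+1,\ell-1;0,0}}$ because the essential-support condition places it in a region where $\calc$-norms are dominated by the $\calctwo$-norm on $Pu$, while the $R_\pm u$ piece is absorbed into the weak error term.

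Assembling all of this yields, schematically,
\begin{equation*}
\lVert u\rVert_{H_{\calctwo}^{m,\mathsf{s},\ell;0,0}} \lesssim \lVert Pu\rVert_{H_{\calctwo}^{m-1,\mathsf{s}+1,\ell-1;0,0}} + h^\epsilon\,\lVert u\rVert_{H_{\calctwo}^{m,\mathsf{s},\ell;0,0}} + \lVert u\rVert_{H_{\calczero}^{-N,-N,-N}},
\end{equation*}
and for $h$ sufficiently small -- and $N$ so large that the residual $\calczero$-norm is dominated by $h^N$ times the $\calctwo$-norm -- the last two terms are absorbed into the left-hand side, giving \eqref{eq:semiFred estimate}. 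The main obstacle is the careful bookkeeping for the commutator $f_\pm$: one must verify that the $\calczero$-elliptic parametrix for $[P,Q_\pm]$ interacts compatibly with the $\calc$- and $\calctwo$-norms after oscillatory conjugation, and that the choice of $B_\pm$ covers the full $\calc$-wavefront set of $u_\pm$ so that the microlocal elliptic upgrade from $\lVert B_\pm u_\pm\rVert$ to $\lVert u_\pm\rVert$ loses nothing beyond the harmless residual term.
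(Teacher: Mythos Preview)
Your proposal is correct and follows essentially the same route as the paper: decompose $u$ via $Q_\pm$, apply the monosheet estimate \eqref{eq:misc_34555} to each conjugated piece, handle the commutator $[P,Q_\pm]u$ using that it is microsupported in the $\calczero$-elliptic set of $P$ and away from $\mathrm{pf}_\pm$, and finally absorb the $h^\epsilon$ and residual $\calczero$-errors into the left-hand side for small $h$. The paper organizes the bookkeeping slightly differently---it applies \Cref{prop:monosheet} to $\tilde Q_\pm u_\pm$ for an auxiliary $\tilde Q_\pm$ rather than choosing $B_\pm$ elliptic on $\operatorname{WF}'(u_\pm)$, and it bounds the commutator by first passing to a $\calczero$-norm, then applying the microlocal elliptic estimate \eqref{eq:micro-elliptic-est} for $P$, then converting back to the $\calctwo$-norm---but your parametrix decomposition $[P,Q_\pm]=E_\pm P+R_\pm$ achieves the same end, and the obstacles you flag (compatibility of the $\calczero$-parametrix with the $\calc$/$\calctwo$ norms after conjugation, and the upgrade from $\lVert B_\pm u_\pm\rVert$ to $\lVert u_\pm\rVert$) are exactly the technical points the paper addresses.
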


\begin{proof}
We  apply Proposition~\ref{prop:monosheet} to $u_\pm$. More precisely, we apply the Proposition to $\tilde Q_\pm u_\pm$, where $\tilde Q_\pm \in \Psi_{\calczero}^{0,0,0}$ is equal to the identity microlocally on $\operatorname{WF}'_{\calczero}(Q_\pm)$, and is microlocally trivial at $\Sigma_\mp$. Since $u_\pm$ has already been microlocalized using $Q_\pm$, the difference between $u_\pm$ and $\tilde Q_\pm u_\pm$ is a Schwartz function with seminorms rapidly vanishing in $h$, and its contribution to the left-hand side can be estimated by $\lVert u \rVert_{H_{\calczero}^{-N,-N,-N}}$. Thus, Proposition~\ref{prop:monosheet} yields 
\begin{equation}\begin{aligned}
\lVert  u_+ \rVert_{ H_{\calc}^{m,\mathsf{s_+},\ell,0} } &\lesssim \lVert  P_+ u_+ \rVert_{H_{ \calc}^{m-1,\mathsf{s_+}+1,\ell-1,0}}  + h^\epsilon\lVert  u_+ \rVert_{H_{\calc}^{-N, \mathsf{s_+},-N,0  }} +  \lVert u \rVert_{H_{\calczero}^{-N,-N,-N}}, 
\\
\lVert  u_- \rVert_{ H_{\calc}^{m,\mathsf{s_-},\ell,0} } &\lesssim \lVert  P_- u_- \rVert_{H_{ \calc}^{m-1,\mathsf{s_-}+1,\ell-1,0}}  + h^\epsilon\lVert  u_- \rVert_{H_{\calc}^{-N, \mathsf{s_-},-N,0  }} +  \lVert u \rVert_{H_{\calczero}^{-N,-N,-N}}.
\end{aligned}\end{equation}
Combined with \eqref{eq:u+u comparison}, and undoing the modulations (which has the effect of changing $\mathsf{s_\pm}$ to $\mathsf{s}$) on the commutator term,  this yields 
\begin{multline}
\lVert  u_+ \rVert_{ H_{\calc}^{m,\mathsf{s_+},\ell,0} } \lesssim   \lVert  (P u)_+ \rVert_{H_{ \calc}^{m-1,\mathsf{s_+}+1,\ell-1,0}}   + \lVert  [P, Q_+ ]u  \rVert_{H_{ \calc}^{m-1,\mathsf{s}+1,\ell-1,0}} \\ + h^\epsilon\lVert  u_+ \rVert_{H_{\calc}^{-N, \mathsf{s_+},-N,0  }} +  \lVert u \rVert_{H_{\calczero}^{-N',-N',-N'}}, 
\end{multline}
\begin{multline}
\lVert  u_- \rVert_{ H_{\calc}^{m,\mathsf{s_-},\ell,0} } \lesssim  \lVert  (Pu)_- \rVert_{H_{ \calc}^{m-1,\mathsf{s_-}+1,\ell-1,0}} + \lVert  [P, Q_- ]u  \rVert_{H_{ \calc}^{m-1,\mathsf{s}+1,\ell-1,0}} \\ + h^\epsilon\lVert  u_- \rVert_{H_{\calc}^{-N, \mathsf{s_-},-N,0  }} +  \lVert u \rVert_{H_{\calczero}^{-N',-N',-N'}},
\end{multline}
where $(Pu)_\pm$ are defined as in \Cref{eq:vpm_def}.
Using \eqref{eq:misc_379} (three times!) we obtain (remembering that $[P, Q_+] = - [P, Q_-]$) 
\begin{equation}
	\lVert u \rVert_{H_{\calctwo}^{m,\mathsf{s},\ell;0,0}} \lesssim  \lVert  Pu  \rVert_{H_{ \calctwo}^{m-1,\mathsf{s}+1,\ell-1;0,0}} + \lVert  [P, Q_+ ]u  \rVert_{H_{ \calc}^{m-1,\mathsf{s}+1,\ell-1,0}} + h^\epsilon\lVert  u \rVert_{H_{\calctwo}^{-N, \mathsf{s},-N; 0, 0  }} +  \lVert u \rVert_{H_{\calczero}^{-N',-N',-N'}}. 	\label{eq:misc_3799}
\end{equation}

We now claim we have the following estimate on the commutator term: 


\begin{equation}
 \lVert  [P, Q_+ ]u  \rVert_{H_{ \calc}^{m-1,\mathsf{s}+1,\ell-1,0}} \lesssim  \lVert  P u  \rVert_{H_{ \calctwo}^{m-2,\mathsf{s},\ell,0,0}} + \lVert u \rVert_{H_{\calczero}^{-N',-N',-N'}}.
 \label{eq:comm_term} \end{equation}
 
To show this claim, first observe that, because of the way $Q_+$ was chosen, the commutator $[P, Q_+]$ is microsupported away from $\{ h=0, \taun = \pm 1, \xin = 0 \}$ which is the location of the parabolic faces $\mathrm{pf}_\pm$. Therefore, up to an error term of the form $\lVert u \rVert_{H_{\calczero}^{-N',-N',-N'}}$, the left-hand side of \cref{eq:comm_term} is bounded by 
 \begin{equation} 
  \lVert  [P, Q_+ ]u  \rVert_{H_{ \calczero}^{m-1,\mathsf{s}+1,\ell-1}},
 \end{equation} 
  that is, the order at the parabolic face is irrelevant up to an error term $\lVert u \rVert_{H_{\calczero}^{-N',-N',-N'}}$.

  Next, we observe that $[P, Q_\pm]$ is microsupported on the elliptic set of $P$. It follows that we can use  \eqref{eq:micro-elliptic-est} to estimate 
  \begin{equation} 
  \lVert  [P, Q_+ ]u  \rVert_{H_{ \calczero}^{m-1,\mathsf{s}+1,\ell-1}} \lesssim  \lVert  \tilde Q Pu  \rVert_{H_{ \calczero}^{m-2,\mathsf{s},\ell-2}} + \lVert u \rVert_{H_{\calczero}^{-N',-N',-N'}}
  \end{equation} 
  where $\tilde Q$ is elliptic on the microsupport of $[P, Q_\pm]$, but is microsupported away from $\{ h=0, \taun = \pm 1, \xin = 0 \}$. 
  
  Finally, using this microsupport property,  we can replace the $\calczero$-norm of $\tilde Q Pu$ with a $\calctwo$-norm with arbitrary orders at the two parabolic faces, up to another error term $\lVert u \rVert_{H_{\calczero}^{-N',-N',-N'}}$, yielding \cref{eq:comm_term} and proving the claim.

Choosing $N' \geq N+\epsilon$, we can absorb the error term    $\lVert u \rVert_{H_{\calczero}^{-N',-N',-N'}}$ into the term 
$h^\epsilon\lVert  u \rVert_{H_{\calctwo}^{-N, \mathsf{s},-N; 0, 0  }}$. 
We can thus combine \cref{eq:misc_3799} and \cref{eq:comm_term}  to deduce that 
  \begin{equation}
	\lVert u \rVert_{H_{\calctwo}^{m,\mathsf{s},\ell;0,0}} \lesssim  \lVert  Pu  \rVert_{H_{ \calctwo}^{m-1,\mathsf{s}+1,\ell-1;0,0}}  + h^\epsilon\lVert  u \rVert_{H_{\calctwo}^{-N, \mathsf{s},-N;0 ,0 }}  . 	\label{eq:misc_37999}
\end{equation}
The second term can be absorbed on the left-hand side for sufficiently  small $h$. This yields \cref{eq:semiFred estimate}. 
\end{proof}
    
\begin{proof}[Proof of Theorem~\ref{thm:inhomog}] Let $u$ be an element of 
 \begin{equation} 
 \mathcal{X}^{m,\mathsf{s},\ell} := \big\{ u \in H_{\calctwo}^{m,\mathsf{s},\ell;0,0} : Pu \in H_{\calctwo}^{m-1,\mathsf{s}+1,\ell-1;0,0} \big\}.
 \end{equation} 
 Since, by assumption, $\mathsf{s}$ is above threshold at one of the two radial sets corresponding to each component $\Sigma_\pm$ of the the characteristic set of $P$, $u$ satisfies condition \eqref{eq:u above threshold cond}. It follows that the estimate \eqref{eq:semiFred estimate} of Proposition~\ref{prop:semiFred estimate} holds, which we express here in the form 
 \begin{equation}\label{eq:semiFred XY}
 \lVert u \rVert_{\mathcal{X}^{m,\mathsf{s},\ell}} \lesssim  \lVert  Pu  \rVert_{\mathcal{Y}^{m-1,\mathsf{s}+1,\ell-1}}.
 \end{equation}
 This estimate shows that $P$, viewed as a map from  $\mathcal{X}^{m,\mathsf{s},\ell}$ to $ \mathcal{Y}^{m-1,\mathsf{s}+1,\ell-1}$, is injective and has closed range. Indeed any convergent sequence in the range may be expressed uniquely in the form $Pu_j$ thanks to injectivity. The estimate \eqref{eq:semiFred XY} shows that $u_j$ is Cauchy, therefore converges to $u \in \mathcal{X}^{m,\mathsf{s},\ell}$. Then by continuity of $P$, the limit of $Pu_j$ is $Pu$, showing that the range of $P$ contains all of its limit points. 
 
 To prove invertibility, it thus suffices to show that the orthogonal complement of the range is trivial. We can view the orthogonal complement as living in the dual space $H_{\calctwo}^{-m+1, -\mathsf{s}-1, -\ell +1; 0, 0}$ via the $L^2$-pairing. So suppose that $v \in  H_{\calctwo}^{-m+1, -\mathsf{s}-1, -\ell +1; 0, 0}$ is orthogonal to the range of $P$ acting on $\mathcal{X}^{m,\mathsf{s},\ell}$. In particular, it is orthogonal to the range of $P$ acting on Schwartz functions:
 \begin{equation} 
 \ang{P \phi, v} = 0 \text{ for all } \phi \in \mathcal{S}(\mathbb{R}^{n+1}). 
 \end{equation} 
 This implies that 
 \begin{equation} 
 \ang{ \phi, P^* v} = 0 \text{ for all } \phi \in \mathcal{S}(\mathbb{R}^{n+1}),
 \end{equation} 
 and hence that $P^* v = 0$. We notice that $\tilde{\mathsf{s}} := -1 - \mathsf{s}$ is above threshold where $\mathsf{s}$ is below threshold, which by assumption is true on one of the two radial sets corresponding to each component of the characteristic set. Therefore, we can apply Proposition~\ref{prop:semiFred estimate} to $v$ with $P^*$ replacing $P$ and with exponents $(1-m, \tilde{\mathsf{s}}, 1-\ell)$ in place of $(m, \mathsf{s}, \ell)$, and we find that $P^* v = 0$ implies that $v=0$. This shows the triviality of the orthogonal complement of the range of $P$ acting between $\mathcal{X}^{m,\mathsf{s},\ell}$ and $ \mathcal{Y}^{m-1,\mathsf{s}+1,\ell-1}$, and thus shows that $P$ is a bijection, and therefore invertible. 
\end{proof}

\section*{Acknowledgements}

This work began at the \href{https://www.matrix-inst.org.au/}{MATRIX} workshop ``\href{https://www.matrix-inst.org.au/events/hyperbolic-pdes-and-nonlinear-evolution-problems/}{Hyperbolic PDEs and Nonlinear Evolution Problems}.'' We thank MATRIX for the hospitality and inspiring environment.
We are particularly grateful to Timothy Candy, whose talk at MATRIX, on \cite{CaHe}, served as the immediate impetus for this project. A.H. and Q.J. are supported by the Australian Research Council through grant FL220100072. E.S.\ is supported by the National Science Foundation under grant number DMS-2401636.
A.V. gratefully acknowledges support from the National Science
Foundation under grant number DMS-2247004 and from a Simons Fellowship.

\appendix
\section{Index of notation}
\label{sec:notation}\noindent Some standard notations:
\begin{itemize}
	\item $\triangle = -\sum_{j=1}^d \partial_{x_j}^2$ is the positive semi-definite Euclidean Laplacian.
	\item $D_{\bullet} = -i\partial_{\bullet}$, with $\bullet$ being one of the coordinates. 
	\item  $r=(x_1^2+\dots+x_d^2)^{1/2}= \lVert x \rVert$ is the usual Euclidean radial coordinate, and $\langle r\rangle = (1+r^2)^{1/2}$ is the ``Japanese bracket'' of $r$.
	\item $\calF$ is the spacetime Fourier transform, defined using the convention 
	\begin{equation}
		\calF u(\tau,\xi) = \int_{\bbR^{1+d}} e^{-it \tau -ix\cdot \xi } u(t,x) \dd t \dd^d x .
	\end{equation}
	\item $H_p$ is the Hamiltonian vector field associated to the symbol $p\in C^\infty(T^* \bbR^{1,d})$. See \cref{eq:Hp-def}.
    \item $\{ \cdot , \; \cdot \}$, the Poisson bracket. See \cref{eq:poisson_convention}.
\end{itemize}
Operators:
\begin{itemize}
	\item $P$ the Klein--Gordon operator considered; see \S\ref{subsec:operators}. 
	\item $P_0$ is usually the free Klein--Gordon equation, but sometimes includes some other terms. 
	\item $P_\pm = e^{\mp ic^2 t} P e^{\pm i c^2 t}$, see \cref{eq:P+-_form}
\end{itemize}
Manifolds-with-corners (mwc):
\begin{itemize}
	\item $\overline{\bbR^d}=\bbR^d\sqcup \infty\bbS^{d-1}$ is the radial compactification of $\bbR^d$. As a special case, $\bbM = \overline{\bbR^{1,d}}$. 
	\item  ${}^\bullet\overline{T}^* \bbM$ is a compactification of $T^* \bbR^{1,d}$ associated to the $\bullet$-calculus, the \emph{$\bullet$-phase space}. We use the label `$\mathrm{df}$' (or variants thereof) to refer to the closure of fiber infinity, which is `infinity part' of fibers in whatever phase space (i.e., cotangent bundle) we are considering. The `d' stands for differential. Also, 
	\begin{equation}
		{}^\bullet T^* \bbM={}^\bullet\overline{T}^* \bbM\backslash \mathrm{df}. 
	\end{equation}
	\item $\mathrm{df},\mathrm{bf},\natural\mathrm{f},\mathrm{pf}$ are the boundary hypersurfaces of the $\calc$-phase space. We also use `$\natural\mathrm{f}$' to refer to the analogous boundary hypersurfaces of the $\calczero$- and $\calctwo$- phase spaces.  
	\item $\mathrm{pf}_\pm$ the analogues of $\mathrm{pf}$ in the $\calctwo$-phase space. 
	\item $\rho_{\mathrm{f}}$ is, for each boundary hypersurface $\mathrm{f}$ of a mwc, a boundary-defining-function of that face.
	\item $\calV_\bullet$ is the space of sections of the dual bundle ${}^\bullet T \bbM$ to ${}^{\bullet}T^* \bbM$.
	\item $\tilde{\calV}_{\mathrm{b}}(\bullet)$  b-vector fields with a small amount of extra decay.  See \cref{eq:tildeVb-nat}, \cref{eq:tildeVb-par}, \cref{eq:tildeVb-parIres}, \cref{eq:tildeVb-general} \cref{eq:tildeV-calc}.
\end{itemize}
Pseudodifferential calculi: 
\begin{itemize}
	\item $\Psi_{\mathrm{sc}}(\bbR^N)$ the scattering- (sc-) calculus on $\overline{\bbR^N}$. 
	\item $\Psi_{\mathrm{par}}=\Psi_{\mathrm{par}}(\bbR^{1,d})$ is the parabolic scattering calculus on $\bbR^{1+d}$, defined in \cite{Parabolicsc} in order to study the Schr\"odinger equation. 
	\item $\Psi_{\mathrm{par,I,res}}$ is the result of second- microlocalizing the calculus of smooth families of elements of $\Psi_{\mathrm{par}}$, specifically second- microlocalizing at the corner of 
	\begin{equation} 
		{}^{\mathrm{par,I}}\overline{T}^* \bbM = [0,\infty)\times {}^{\mathrm{par}}\overline{T}^* \bbM.
	\end{equation}
	\item $\Psi_{\calczero}$ is the natural ($\natural$) calculus, defined in \S\ref{subsec:natural_calculus}.
	\item $\Psi_{\calc}$ is the $\calc$-calculus, defined in \S\ref{subsec:calc}.
	\item $\Psi_{\calctwo}$ is the $\calctwo$-calculus, defined in \S\ref{sec:2-res_construction}.
	\item $\Psi_\bullet^{m,s,\dots}$ is the subset of $\Psi_\bullet$ consisting of operators of orders $m,s,\dots$. 
	\item $\sigma_\bullet^{m,s,\dots}$ is the principal symbol map associated to the $\bullet$-calculus and orders $m,s,\dots$.
	\item $N(A)$ is the leading-order part of $A\in \Psi_{\calc}$ at $\mathrm{pf}$, 
	\item $\operatorname{WF}'_\bullet(A)$ is the operator wavefront of $A$ associated to the $\bullet$-calculus.
\end{itemize}
Function spaces: 
\begin{itemize}
	\item If $X$ is a manifold-with-corners and $I\subseteq \bbC$, then $C^\infty(X;I)$ denotes the set of smooth functions $f:X^\circ \to I$ such that, if $X_0\supseteq X$ is a closed manifold in which $X$ is embedded, then $f$ extends to an element of $C^\infty(X_0)$. 
	\item If $X$ is a manifold-with-boundary, then $\calS(X)$ denotes the space of Schwartz functions on $X$, i.e.\ smooth functions on $X^\circ$ decaying rapidly, together with all derivatives, at $\partial X$. 
	\item $H_\bullet^{m,s,\dots}$ is the $L^2$-based Sobolev space associated to $\Psi_\bullet$, such that $\Psi_\bullet^{m,s,\dots} H_\bullet^{m,s,\dots} =L^2$. Note that 
	\begin{equation}
		H_\bullet^{0,0,\dots}= L^2.
	\end{equation}
\end{itemize}
Miscellaneous:
\begin{itemize}
	\item ${}^\bullet\mathsf{H}_p$ is $H_p$ multiplied by just enough boundary-defining-functions to make a b-vector field on the $\bullet$-phase space.
	\item $\Sigma,\Sigma_{\pm},\Sigma_{\mathrm{bad}}$ all denote portions of various characteristic sets. In this paper, it is mainly the $\calctwo$-characteristic set of $P$ which matters. We will usually denote this $\Sigma$.
	\item $\calR^\bullet_\bullet$ denotes the radial sets. The subscript specifies whether we are looking over the future (+) or past ($-$) hemisphere of spacetime infinity, and the superscript specifies whether we are looking at positive/negative energy (i.e.\ particles vs. anti-particles).
	\item $Q=Q_-$, $Q_+$ in \S\ref{subsec:combined}, \S\ref{sec:further} are microlocalizers microlocalizing near the components $\Sigma_\pm$ of the $\calczero$-characteristic set of the Klein--Gordon operator; $Q_-$ microlocalizes to negative frequencies $\tau_\natural$ (``negative energy'') and $Q_+$ microlocalizes to positive frequencies (``positive energy'').
\end{itemize}

\section{The half-Fredholm estimate for the Schr\"odinger problem }
\label{sec:schrodinger_estimate}

Notation below will be as in \S\ref{subsec:remainder}. 
For the reader's convenience, we recall that $\calR^{\mathrm{Schr}}_\pm$ is the radial set from \cite{Parabolicsc} over the future(+)/past($-$) hemisphere of spacetime infinity, and also that,  
according to \cref{eq:Ppm normal}, $N(P_-)$ is given by
\begin{equation}
	N(P_-) =  2i\partial_t + \triangle +i  B \cdot \partial_x + V,
	\label{eq:misc_623}
\end{equation}
where $V=W-\beta-\aleph$, and where we are dropping from the notation the evaluation at $c=\infty$ present \cref{eq:Ppm normal}.

In this appendix, we discuss how to adapt the proof of the estimate \cite[Thm.\ 1.1, Thm.\ 6.1]{Parabolicsc} to obtain \eqref{eq:par_estimate}, which we recall here: for each $\ell'\in \bbR$ and variable order $\mathsf{s} \in C^\infty({}^{\mathrm{par}}\overline{T}^* \bbM )$ such that 
\begin{itemize}
	\item $\overline{\mathsf{s}}$ is monotonic under the Hamiltonian flow,
	\item $\overline{\mathsf{s}}|_{\calR_{-}^{\mathrm{Schr}} } > -1/2 > \overline{\mathsf{s}}|_{\calR_+^{\mathrm{Schr}}} $,
\end{itemize}
the estimate 
\begin{equation}
	\lVert v \rVert_{H_{\mathrm{par}}^{\ell',\overline{\mathsf{s}}}} \lesssim \lVert N(P_-)v\rVert_{H_{\mathrm{par}}^{\ell'-1,\overline{\mathsf{s}}+1}}
	\label{eq:Schrodinger_est}
\end{equation}
holds for all 
\begin{equation}  
	v \in \calX^{\ell',\overline{\mathsf{s}}}_{\mathrm{Schr}} := \{v\in H_{\mathrm{par}}^{\ell',\overline{\mathsf{s}}} : N(P_-)v \in H_{\mathrm{par}}^{\ell'-1,\overline{\mathsf{s}}+1} \}.
\end{equation}
This estimate is essentially the same as the cited estimate from ibid.\ except for the difference that the latter is stated (and proved) for compactly supported potentials. 

We begin by noting that the propagation of singularities/regularity argument from \cite[\S5]{Parabolicsc} applies unchanged, since whatever extra terms we have here do not enter the principal symbol, and the skew-symmetric part in the sub-principal symbol (which has the effect of shifting the threshold of radial point estimates) is absent due to the assumption \eqref{eq:misc_158}. Consequently, the estimate
\cite[Prop.\ 5.11]{Parabolicsc} applies: 
\begin{equation}
	\lVert v \rVert_{H_{\mathrm{par}}^{\ell',\overline{\mathsf{s}}}} \lesssim \lVert N(P_-)v\rVert_{H_{\mathrm{par}}^{\ell'-1,\overline{\mathsf{s}}+1}} + \lVert v \rVert_{H_{\mathrm{par}}^{M,N}}, 
\end{equation}
where $M,N\in \bbR$ are arbitrary. This estimate is almost the desired \cref{eq:Schrodinger_est}, but not quite.
What we need to explain is how to remove the last term. By a standard argument, it suffices to show that $N(P_-)$ is injective on the space $\smash{\calX^{\ell',\overline{\mathsf{s}}}_{\mathrm{Schr}}}$:
\begin{equation}
	v \in \ker_{\calX^{\ell',\overline{\mathsf{s}}}_{\mathrm{Schr}}}  N(P_-) \Rightarrow v=0.
	\label{eq:misc_625}
\end{equation}
This is done in \cite[\S6]{Parabolicsc}, at the level of generality considered there, but the argument there does not apply verbatim here. A modification works, however.

The basic idea of the proof of \cref{eq:misc_625} will be to use the fact that 
\begin{equation}
	v \in \ker_{\calX^{\ell',\overline{\mathsf{s}}}_{\mathrm{Schr}}}  N(P_-) \Rightarrow v\text{ Schwartz in any backwards cone}
	\label{eq:misc_six}
\end{equation}
(\cref{eq:misc_six} being a consequence of the propagation and above-threshold radial point estimates proven for $\calX_{\mathrm{Schr}}^{*,\overline{\mathsf{s}}}$ in ibid.) to 
bound the behavior as $t\to -\infty$ of the $L^2(\bbR^d)$ norm of $v(t,-)$,  
\begin{equation}
	M(t) = \int_{\bbR^d} |v(t,x)|^2 \dd^d x.
\end{equation}
What we will do is show that, if $v \in \ker_{\calX^{\ell',\overline{\mathsf{s}}}_{\mathrm{Schr}}}  N(P_-)$, then
\begin{enumerate}[label=(\roman*)]
	\item there exists a constant $C>0$ such that $M(T_1) \leq CM(T_0)$ for any $T_0<T_1$ (essentially an energy/mass estimate), \label{it:misc_h4}
	\item $\lim_{t\to-\infty} M(t) = 0$.  
\end{enumerate}
Combining these, taking $T_0\to -\infty$ in \cref{it:misc_h4}, it follows that $M(T_1)=0$ for all $T_1\in \bbR$, which implies that $v$ vanishes identically, thus completing the proof of \cref{eq:misc_625}.
The proof of the energy estimate $M(T_1) \leq CM(T_0)$ will be elementary. (If $N(P_-)$ were exactly $L^2$-symmetric, which is often true in the cases of interest, then $M$ would be constant, owing to conservation of mass.) The second required result,  $\lim_{t\to-\infty} M(t) = 0$, is obviously consistent with the fact that $v$ is Schwartz in any backwards cone, but the latter is not quite sufficient. 
Ultimately, the proof of  $\lim_{t\to-\infty} M(t) = 0$ will use notions from \cite{gell2023scattering} (specifically the notion of so-called ``module regularity''), not just \cite{Parabolicsc}, in order to control $v$ at large $x/|t|$ (a region which includes the complement of arbitrarily wide backwards cones, thus not controlled by \cref{eq:misc_six}).

We begin with the energy/mass estimate. The quantity $M(t)$ satisfies 
\begin{align} \label{eq: time derivative of mass}
	\begin{split}
		\frac{\mathrm{d} M(t)}{\mathrm{d} t}  = \int_{\bbR^d} ((\partial_t v) \overline{v} + v \partial_t\overline{v}) \dd^d x
		& = 2\Re \int_{\bbR^d} (\partial_t u) \overline{u} \dd^d x
		\\ & = -\Im \Big[ \int_{\bbR^d}( ( \triangle + i B\cdot \partial_x+V) u )\bar{u} \dd^d x \Big], 
	\end{split}
	\intertext{using $N(P_-)v=0$. Taking the imaginary part inside the integral, the result is} 
	\begin{split}  \label{eq: time derivative of mass-2}
		\frac{\mathrm{d} M(t)}{\mathrm{d} t} &= -\int_{\bbR^d}\Big[ \Big( i (\Im B)\cdot \partial_x- \frac{\nabla\cdot \overline{B}}{2}+\Im V\Big) u \Big]\bar{u} \dd^d x \\ 
		&= \int_{\bbR^d}\Big[ \Big(\frac{\nabla\cdot \overline{B}}{2}-\Im V\Big) u \Big]\bar{u} \dd^d x,
	\end{split} 
\end{align}
using \cref{eq:misc_170}, which says that $\Im B=0$ exactly. (Recall that, in this appendix, the coefficients $W,B$ etc.\ are all being evaluated at $c=\infty$.) Now, $\nabla\cdot \overline{B}, \Im V \in S^{-2}(\bbR^{1,d})$ (using \eqref{eq:misc_158}), so we can conclude that 
\begin{equation}
	\Big|\frac{\mathrm{d} M}{\mathrm{d} t}\Big| \leq C \langle t \rangle^{-2}M(t)
\end{equation}
for some $C>0$. 
We can now use Gr\"onwall's inequality. Since
\begin{equation} 
	\int_{-\infty}^{\infty} \langle t \rangle^{-2}\dd t <\infty,
\end{equation}
we can conclude from Gr\"onwall that there exists some $C>0$ such that $M(T_1) \leq C M(T_0)$ holds whenever $T_0,T_1$ are real numbers such that $T_0 \leq T_1 $. Thus, the desired energy/mass estimate holds.

Next, we turn to the (more difficult) proof that $\lim_{t\to-\infty} M(t) = 0$. The idea, which is adapted from the construction of the scattering matrix in \cite{gell2023scattering}, will be to show that $v$ has some ``backwards scattering data'' $\frakv_\infty$. This will be defined by
\begin{equation}
	\frakv_\infty(\mathsf{X}) = \lim_{t\to-\infty} (2\pi i t)^{d/2} e^{-i t |\mathsf{X}|^2 /2} v(t,t\mathsf{X}) \in \langle \mathsf{X} \rangle^{-k} L^2(\bbR^d_{\mathsf{X}})
	\label{eq:misc_632}
\end{equation}
once it is known that this limit exists (for arbitrary $k\in \bbR$). The key calculation that relates this to $M(t)$ is 
\begin{equation}
	M(t) = (2\pi)^{-d} \int_{\bbR^d} |\frakv(t,\mathsf{X})|^2 \dd^d \mathsf{X} =  (2\pi)^{-d} \lVert \frakv(t,\mathsf{X}) \rVert_{L^2(\bbR^d_{\mathsf{X}})}^2,
\end{equation}
where 
\begin{equation}
	\frakv(t,\mathsf{X}) = (2\pi i t)^{d/2} e^{-i t |\mathsf{X}|^2 /2} v(t,t\mathsf{X}) \in C^\infty( (-\infty,0)_t\times \bbR^d_{\mathsf{X}}) 
\end{equation}
is what $\frakv_\infty$ is supposed to be the limit of.
(The point is that the factor of $t^{d/2}$ in the definition of $\frakv$ cancels the factor of $t^{-d}$ in the Jacobian of the coordinate transformation $(t,x)\mapsto (t,\mathsf{X}=x/t)$.)
So, once it is known that the limit on the right-hand side of \cref{eq:misc_632}, $\lim_{t\to-\infty} \frakv(t,\mathsf{X})$,  exists in $L^2(\bbR^d)$, we get that 
\begin{equation} 
	\lim_{t\to-\infty} M(t) = (2\pi)^{-d} \lim_{t\to\infty} \lVert \frakv(t,\mathsf{X}) \rVert_{L^2(\bbR^d_{\mathsf{X}})}^2 =(2\pi)^{-d} \lVert \frakv_\infty(\mathsf{X}) \rVert_{L^2(\bbR^d_{\mathsf{X}})}^2.
	\label{eq:misc_o63}
\end{equation} 
But, the fact that $v$ is Schwartz in backwards cones in spacetime implies that $\frakv_\infty=0$ identically. So, we can immediately conclude from \cref{eq:misc_o63} that $\lim_{t\to-\infty} M(t) = 0$. Thus, what we need to do to complete our proof is establish the existence of the limit in \cref{eq:misc_632}, i.e.\ to prove the existence of backwards scattering data.

In order to establish the existence of the limit \cref{eq:misc_632}, it suffices to prove that 
\begin{equation}
	\partial_t \frakv(t,\mathsf{X}) \in L^1((-\infty,-T) ; \langle \mathsf{X} \rangle^{-k} L^2(\bbR^d_{\mathsf{X}})) 
	\label{eq:misc_k31}
\end{equation}
for all $k\in \bbR$, $T>0$. Actually, it ends up being slightly cleaner to show that 
\begin{equation}
	\partial_t \tilde{\frakv}(t,\mathsf{X}) \in L^1((-\infty,-T) ; \langle \mathsf{X} \rangle^{-k} L^2(\bbR^d_{\mathsf{X}})), 
	\label{eq:misc_k32}
\end{equation}
where 
\begin{equation}
	\tilde{\frakv}(t,\mathsf{X}) =  e^{i\tilde{V}(t,\mathsf{X})}e^{i \mathsf{X} \cdot \tilde{B}(t,\mathsf{X})} \frakv(t,\mathsf{X}),
\end{equation}
where $\tilde{V},\tilde{B}$ are given by
\begin{equation}
	\tilde{V}(t,\mathsf{X}) = \frac{1}{2} \int_{-T}^t V(\tau,\tau\mathsf{X})\dd \tau \quad\text{ and }\quad \tilde{B}_j(t,\mathsf{X})
	= -\frac{1}{2}\int_{-T}^t B_j(\tau,\tau \mathsf{X})\dd \tau. 
\end{equation}
Because $\Im V \in S^{-2}(\bbR^{1,d})$ (this was \cref{eq:misc_6x6}) and $\Im B=0$ \cref{eq:misc_170}, the exponentials $\exp( i\tilde{V})$, $\exp( i\mathsf{X} \cdot \tilde{B})$ are bounded and also bounded away from zero. Consequently, the two statements \cref{eq:misc_k31}, \cref{eq:misc_k32} are equivalent. We will prove the latter by simply computing $\partial_t \tilde{\frakv}$ and estimating each term in the resultant expression.

This paragraph contains the computation.
Letting $L$ denote the differential operator given in terms of the original coordinates $(t,x)$ by $L=-2i(\partial_t + t^{-1} x\cdot \partial_x)$, 
\begin{align}
	\partial_t \tilde{\frakv}(t,\mathsf{X})  = & \partial_t (e^{i\tilde{V}(t,\mathsf{X})}e^{ i\mathsf{X} \cdot \tilde{B}(t,\mathsf{X})}) \frakv(t,\mathsf{X}) + e^{i\tilde{V}(t,\mathsf{X})}e^{i \mathsf{X} \cdot \tilde{B}(t,\mathsf{X})} \partial_t\frakv(t,\mathsf{X})
	\\ = & \frac{i}{2} \Big[  \Big( V(t,t\mathsf{X}) -\mathsf{X} \cdot B(t,t\mathsf{X}) 
	-  \frac{id}{t} - |\mathsf{X}|^2 \Big) \tilde{\frakv}(t,\mathsf{X})
	+ e^{i\tilde{V}+i\mathsf{X}\cdot \tilde{B} - i t |\mathsf{X}|^2/2} (2\pi it)^{d/2} (Lv)(t,t\mathsf{X}) \Big] ,
	\label{eq:misc_622}
\end{align}
where the partial derivative $\partial_t$ in \cref{eq:misc_622} (but not in $Lv$) is computed while holding $\mathsf{X}$ fixed, rather than holding $x$ fixed. Using the identity 
\begin{equation}
	-2 i t^{-1} x\cdot \partial_x = -t^{-2} (it\partial_x+x)^2 + \triangle + i t^{-1} d +  |\mathsf{X}|^2,
\end{equation}
\cref{eq:misc_622} can be rewritten
\begin{equation}
	\partial_t \tilde{\mathfrak{v}}(t,\mathsf{X})= \frac{i}{2} \Big[  \Big( V(t,t\mathsf{X}) -\mathsf{X} \cdot B(t,t\mathsf{X}) 
	\Big) \tilde{\frakv}(t,\mathsf{X})
	+ e^{i\tilde{V}+i\mathsf{X}\cdot \tilde{B} - i t |\mathsf{X}|^2/2} (2\pi it)^{d/2} (Qv)(t,t\mathsf{X}) \Big]
	\label{eq:misc_636}
\end{equation}
for $Q$ the differential operator given in terms of the original coordinates $(t,x)$ by $Q = -2i \partial_t -t^{-2} (it\partial_x+x)^2 + \triangle$. The operator $Q$ can be written, using the formula \cref{eq:misc_623} for $N(P_-)$, as $Q =  N(P_-) - t^{-2}(it\partial_x+x)^2 - iB\cdot \partial_x - V $. So, \cref{eq:misc_636} becomes 
\begin{equation}
	\partial_t \tilde{\frakv}(t,\mathsf{X})  = \frac{i}{2}e^{i\tilde{V}+i\mathsf{X}\cdot \tilde{B} - i t |\mathsf{X}|^2/2} (2\pi it)^{d/2} (Q_1 v) (t,t\mathsf{X})
	\label{eq:misc_6x6}
\end{equation}
for 
\begin{equation} 
	Q_1 = N(P_-) - t^{-2}(it\partial_x+x)^2 - t^{-1}B\cdot (it\partial_x+ x).
\end{equation} 
We can now just estimate each term on the right-hand side of \cref{eq:misc_6x6}.

First, we recall the scale of Sobolev spaces from \cite{Parabolicsc, gell2023scattering}:
for $S,L \in \mathbb{R}$, $K \in \mathbb{N}$, $H_{\mathrm{par}}^{S,L;K}$ is the set of $u\in \calS'(\bbR^{1+d})$ such that 
\begin{equation}
	u \in H_{\mathrm{par}}^{S,L}, \quad V_1...V_Ku \in H_{\mathrm{par}}^{S,L},
	\label{eq:misc_0k1}
\end{equation}
for all possible combinations of $V_i \in \{\partial_{x_j}, \partial_t, x_i\partial_{x_j}-x_j\partial_{x_i}, it\partial_{x_j}+x_j, 2t\partial_t + x \cdot \partial_x, \mathrm{Id} \}$, and $H_{\mathrm{par}}^{S,L;K}$ is equipped with the norm
\begin{equation}
	\| u \|_{H_{\mathrm{par}}^{S,L;K}}: 
	= \Big( \sum  \| V_1...V_Ku \|_{H_{\mathrm{par}}^{S,L}}^2 \Big)^{1/2},
\end{equation}
where the sum is taken over those combinations of $V_i$ as above.
Then,
assuming that $v\in \calX_{\mathrm{Schr}}^{\ell',\overline{\mathsf{s}}}$ and $N(P_-)v=0$, we have,  as in the proof of \cite[Thm.\ 6.1]{Parabolicsc}\cite[Thm.\ 1.1]{gell2023scattering},
\begin{equation} 
	v \in H_{\mathrm{par}}^{S,-\frac{1}{2}-\epsilon;K}(\mathbb{R}^{1,d}), \qquad \chi_R\Big( - \frac{x}{t} \Big) \psi(t) v \in \calS(\bbR^{1+d} )
	\label{eq:u_module_regularity}
\end{equation}
for all $T>0$, $S,L\in \mathbb{R}$, $K\in \bbN$, and $\epsilon>0$, and for any $\psi\in C^\infty(\bbR)$ identically $1$ in $(-\infty,-2)$ and identically vanishing on $(-1,\infty)$, where $\chi_R \in C_{\mathrm{c}}^\infty(\bbR^d)$ is identically $1$ on a ball of radius $R$ centered at the origin. The latter statement in \cref{eq:u_module_regularity} is the precise version of the statement that $v$ is Schwartz in backwards cones. The first statement in \cref{eq:u_module_regularity} is the one we will use to control $\partial_t \tilde{\frakv}$.

First, we note that each term of $Q_1$ applied to $v$ is under control:
\begin{itemize}
	\item we are assuming that $N(P_-) v=0$, 
	\item the vector field $it \partial_{x_j} +x_j$ is one of our ``test operators'' considered in \cref{eq:misc_0k1}, and we saw already in \cref{eq:u_module_regularity} that $u$ has iterated par-Sobolev regularity under such operators.
\end{itemize}
Recalling that $B \in S^{-1}(\bbR^{1,d})$, this latter fact gives 
\begin{equation}
	(1+r^2+t^2)^{-1/2}(it\partial_x+x)^2 v, B\cdot (it\partial_x+ x)v \in   H_{\mathrm{par}}^{S,\frac{1}{2}-\epsilon;K}(\bbR^{1,d})
	\label{eq:misc_637}
\end{equation}
for all $S\in \bbR$, $\epsilon>0$, and $K\in \bbN$. We can now use the inclusion  
\begin{equation}
	\bigcap_{S\in \bbR,K\in \bbN} H_{\mathrm{par}}^{S,\frac{1}{2}-\epsilon;K}(\bbR^{1,d}) \hookrightarrow \bigcap_{K\in \bbR} t^{-\frac{(d+1)}{2}+\epsilon } \langle \mathsf{X} \rangle^{-K } L^2( (-\infty,-T)_t\times \bbR^d_{\mathsf{X}} ), 
	\label{eq:misc_9k1}
\end{equation}
where the measure used to define the $L^2$-space on the right-hand side is the Lebesgue measure on $\bbR^{1+d}_{t,\mathsf{X}}$. (The $t^{-d/2}$ factor in \cref{eq:misc_9k1} comes from the Jacobian of the transformation $(t,x)\mapsto (t,\mathsf{X})$.)
A key point is that infinite module regularity, as we are told by \cref{eq:u_module_regularity} that $v$ has with respect to the par-Sobolev spaces with $-1/2-\epsilon$ orders of decay, gives us infinite decay as $\langle \mathsf{X} \rangle\to\infty$. The reasoning is the same as in \cite[\S4.1]{gell2023scattering}. Specifically, 
\cite[Prop.\ 4.2]{gell2023scattering} allows us to convert the module regularity to regularity with respect to the ``flat module'' $\calF$ defined in \cite[eq.\ 4.5]{gell2023scattering}, when restricting to $t< -T$. One of the elements of $\calF$ is $\langle x \rangle / \langle t \rangle \sim \langle \mathsf{X} \rangle$, so iterated regularity under $\calF$ implies decay as $\langle \mathsf{X} \rangle \to\infty$.

The inclusion \cref{eq:misc_9k1}, when combined with \cref{eq:misc_637}, yields 
\begin{multline}
	t^{-2} (it\partial_x+x)^2 v, t^{-1}B\cdot (it\partial_x+ x)v  \in \bigcap_{K\in \bbR} t^{-\frac{(d+3)}{2}+\epsilon } \langle \mathsf{X} \rangle^{-K } L^2( (-\infty,-T)_t\times \bbR^d_{\mathsf{X}} ) \\ = t^{-(d+3)/2+\epsilon} \langle \mathsf{X} \rangle^{-\infty} L^2( (-\infty,-T)_t\times \bbR^d_{\mathsf{X}} ). 
\end{multline}
So, $(2\pi it)^{d/2}Q_1 v \in t^{-3/2+\epsilon} \langle \mathsf{X} \rangle^{-\infty} L^2( (-\infty,-T)_t\times \bbR^d_{\mathsf{X}} )$. 
Finally, because $\Im V, \Im B \in S^{-2}(\bbR^{1,d})$ by \cref{eq:misc_158}, $\exp(i \tilde{V}(t,\mathsf{X}) )\exp(i\mathsf{X} \cdot \tilde{B}(t,\mathsf{X}))   \in L^\infty((-\infty,0)_t\times \bbR^d_{\mathsf{X}} )$, as already discussed.

Plugging all of this into \cref{eq:misc_6x6}, we can conclude that 
\begin{equation}
	\partial_t \tilde{\frakv} \in t^{-3/2+\epsilon} \langle \mathsf{X} \rangle^{-\infty} L^2((-\infty,-T)_t\times \bbR^d_{\mathsf{X}} ).
\end{equation}
An application of the Cauchy--Schwarz inequality shows that $t^{-3/2+\epsilon} \langle \mathsf{X} \rangle^{-\infty} L^2((-\infty,-T)_t\times \bbR^d_{\mathsf{X}} )$ is a subset of 
\begin{equation}
	L^1((-\infty,-T)_t; \langle \mathsf{X} \rangle^{-k} L^2(\bbR^d_{\mathsf{X}}) ) 
\end{equation}
for every $k\in \bbR$. So, $\partial_t \tilde{\frakv} \in L^1((-\infty,-T)_t; \langle \mathsf{X} \rangle^{-k} L^2(\bbR^d_{\mathsf{X}}) )$, as desired.

\begin{remark}
	A conceptual way of thinking about the preceding proof is
	in terms of M{\o}ller/wave maps (cf.\ \cite{ReedSimon}), which, if they make sense at all, should allow us to relate sufficiently nice future/past scattering data with initial data. These are linear maps, so if a solution to the PDE has trivial scattering data in at least one of the future/past, then the solution must vanish identically. 
	The computations above amount to showing that the M{\o}ller/wave maps make sense in the present setting with a domain large enough to handle elements of $\ker_{\calX^{\ell',\overline{\mathsf{s}}}} N(P_\pm)$.
\end{remark}

\begin{remark}  The key fact $\lim_{t \to -\infty} M(t) = 0$  also follows from the proof of \cite[Prop.\ 11]{vasy2020selfadjoint}\footnote{In older versions of \cite{vasy2020selfadjoint}, such as that currently on the arXiv, this is Prop.\ 7.} when $N(P_\pm)$ is symmetric. Indeed, that argument shows that $v$ is Schwartz, not just in backwards spacetime cones but in any spacetime cone.
	Though the referred to proof was for the Klein--Gordon equation and using $\Psi_{\mathrm{sc}}$, it applies to our setting as well, with the sc-calculus replaced by $\Psi_{\mathrm{par}}$. The key point is that the coefficient of $\rho_{\mathrm{bf}}\partial_{\rho_{\mathrm{bf}}}$ in the (rescaled) Hamiltonian vector field has a definite sign near $\calR_{\mathrm{Schr}}$.
	
	The reason that, in this approach, it is necessary to add the requirement that $N(P_\pm)$ be symmetric is that the commutant constructed in \cite[Prop.\ 11]{vasy2020selfadjoint} is not a bounded operator (on any Sobolev space), hence we cannot afford an error term in the integration-by-parts step in the commutator argument.
\end{remark}

\section{Further microlocal propagation estimates} 
\label{sec:further}

In preparation for the sequel \cite{NRL_II} to this article, we prove some variants of the results in \S\ref{sec:estimates}, which we will need to analyze the Cauchy problem. 
\subsection{Estimates with additional microlocalizers}

The next result is a variant of  \Cref{prop:monosheet}, with a weaker assumption on the operator $B$. (When we study the Cauchy problem, it will be useful to allow $B$ to be essentially supported away from certain sets that intersect $\mathrm{pf}$.) Also, the final term on the right-hand side has been improved:
\begin{equation}
	 h^\epsilon\lVert  u \rVert_{H_{\calc}^{-N, \mathsf{s},-N,q  }}  \rightsquigarrow \lVert u \rVert_{H_{\calczero}^{-N,-N,-N} },
\end{equation} 
using a second application of  \Cref{prop:symbolic_global}, \Cref{prop:symbolic_global_better}. We could have carried out the same improvement in \Cref{prop:monosheet}, but it was not necessary for the proof of \Cref{thm:inhomog}. 

\begin{proposition}
	Consider the setup of \Cref{prop:error_bounding2}, with the additional assumption that $B$ is elliptic on $\calR_{\varsigma}$ (the above-threshold radial set), and let $N'\in \bbR$. Then, there exists some $h_0$ (depending on everything except $u,h$), such that
	\begin{multline}
		\lVert B u \rVert_{ H_{\calc}^{m,\mathsf{s},\ell,q} }\lesssim \lVert  G P_\pm u \rVert_{H_{ \calc}^{m-1,\mathsf{s}+1,\ell-1,q}} + \lVert  Z P_\pm u \rVert_{H_{ \calc}^{m-2,\mathsf{s},\ell-2,q}} 
		+ \lVert P_\pm u\rVert_{H_{\calc}^{-N,\mathsf{s}+1,-N,q} }
		\\ + \lVert u \rVert_{H_{\calczero}^{-N,-N,-N} }
		\label{eq:misc_345}
	\end{multline}
	holds, in the usual strong sense, 
	for all $h\in (0,h_0)$ and $u\in H_{\mathrm{sc}}^{-N',-N'}$ such that the incoming/outgoing condition $\operatorname{WF}_{\mathrm{sc}}^{-N',s_0}(u)\cap \calR_{-\varsigma}(h) = \varnothing$ is satisfied. 
	\label{prop:monosheet_NRL2}
\end{proposition}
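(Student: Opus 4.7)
The proof proceeds along the same lines as \Cref{prop:monosheet}, reassembling the same two ingredients (the symbolic estimate of \Cref{prop:symbolic_global_better} and the Schr\"odinger normal operator estimate of \Cref{prop:error_bounding2}) to take advantage of the strengthened hypothesis that $B$ is elliptic at $\calR_\varsigma$ and of the relocation of the incoming/outgoing condition from $\calR_\varsigma$ (in \Cref{prop:monosheet}) to $\calR_{-\varsigma}$ (in the present statement). The different error term on the right-hand side of \eqref{eq:misc_345}, namely $\lVert u\rVert_{H_{\calczero}^{-N,-N,-N}}$ in place of $h^\epsilon\lVert u\rVert_{H_{\calc}^{-N,\mathsf{s},-N,q}}$, arises because the hypothesis $u \in H_{\mathrm{sc}}^{-N',-N'}$ makes available a global (non-microlocal) comparison of norms.

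The first step is to apply \Cref{prop:symbolic_global_better} to obtain, in the strong sense,
\begin{equation}
\lVert Bu\rVert_{H_{\calc}^{m,\mathsf{s},\ell,q}} \lesssim \lVert GP_\pm u\rVert_{H_{\calc}^{m-1,\mathsf{s}+1,\ell-1,q}} + \lVert ZP_\pm u\rVert_{H_{\calc}^{m-2,\mathsf{s},\ell-2,q}} + \lVert u\rVert_{H_{\calc}^{-N,-N,-N,q}}.
\end{equation}
Since $B$ is elliptic at $\calR_\varsigma$, the above-threshold radial point estimate at $\calR_\varsigma$ underlying \Cref{prop:symbolic_global_better} can be applied without an explicit a priori wavefront condition: in the regularization argument of \Cref{prop:propagation_out}, the finiteness of $\lVert Bu\rVert_{H_{\calc}^{m,\mathsf{s},\ell,q}}$ serves in place of the $\lVert Qu\rVert_{H_{\calc}^{-N,s_0,-N,q}}$ term, with $s_0 = \mathsf{s}|_{\calR_\varsigma} > -1/2$. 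Separately, the given incoming/outgoing condition at $\calR_{-\varsigma}(h)$ yields, via \Cref{lem:automatic_threshold}, the par-wavefront hypothesis $\operatorname{WF}_{\mathrm{par}}^{\ell_0,s_0}(\Pi u(h)) \cap \calR_{-\varsigma}^{\mathrm{Schr}} = \varnothing$ needed to apply \Cref{prop:error_bounding2}, which then produces
\begin{equation}
\lVert u\rVert_{H_{\calc}^{-N,-N,-N,q}} \lesssim \lVert P_\pm u\rVert_{H_{\calc}^{-N,\mathsf{s}+1,-N,q}} + h^\epsilon \lVert u\rVert_{H_{\calc}^{-N+1,\mathsf{s},-N+3,q}}.
\end{equation}

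The final step is to absorb the residual $h^\epsilon$-weighted term into $\lVert u\rVert_{H_{\calczero}^{-N,-N,-N}}$. For each fixed $h>0$ the $\calc$-norm $\lVert u\rVert_{H_{\calc}^{-N+1,\mathsf{s},-N+3,q}}$ is comparable to an sc-Sobolev norm of $u$, with constants that may degenerate as $h \to 0^+$; using the standing hypothesis $u \in H_{\mathrm{sc}}^{-N',-N'}$ with $N'$ chosen sufficiently large relative to $N$ and the wavefront data, this norm is bounded by a fixed (in $u$) function of $h$ times $\lVert u\rVert_{H_{\mathrm{sc}}^{-N',-N'}}$, and the $h^\epsilon$ prefactor suffices, for $h \in (0,h_0)$ with $h_0$ small, to dominate this by $\lVert u\rVert_{H_{\calczero}^{-N,-N,-N}}$. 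Combining everything then gives \eqref{eq:misc_345}. The main technical obstacle is the bootstrap in the first step --- justifying that the ellipticity of $B$ at $\calR_\varsigma$ can replace the explicit a priori wavefront condition in the strong-sense radial point estimate --- which is handled by threading the regularization argument of \Cref{prop:propagation_out} using the strong-sense hypothesis that the right-hand side of \eqref{eq:misc_345} is finite.
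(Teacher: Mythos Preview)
Your first two steps are essentially correct and match the paper's approach: apply \Cref{prop:symbolic_global_better} to get the estimate with error $\lVert u\rVert_{H_{\calc}^{-N,-N,-N,q}}$, then feed in \Cref{prop:error_bounding2} (after using \Cref{lem:automatic_threshold} to convert the sc-wavefront hypothesis at $\calR_{-\varsigma}(h)$ into the par-wavefront hypothesis at $\calR_{-\varsigma}^{\mathrm{Schr}}$).

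The gap is in your final step. You claim that $h^\epsilon\lVert u\rVert_{H_{\calc}^{-N+1,\mathsf{s},-N+3,q}}$ can be dominated by $\lVert u\rVert_{H_{\calczero}^{-N,-N,-N}}$, arguing via the hypothesis $u\in H_{\mathrm{sc}}^{-N',-N'}$ and the comparability of norms for fixed $h$. This does not work: the $\calc$-norm on the left has spacetime order $\mathsf{s}$, which satisfies $\mathsf{s}|_{\calR_\varsigma}>-1/2$, while the $\calczero$-norm on the right has spacetime order $-N$ (arbitrarily negative). The factor $h^\epsilon$ only buys decay at $\mathrm{pf}$ and $\natural\mathrm{f}$; it cannot compensate for the mismatch of spacetime orders at $\mathrm{bf}$. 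No choice of $N'$ fixes this, because making $N'$ large only weakens the a priori hypothesis further.

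What the paper does instead is apply \Cref{prop:symbolic_global}/\Cref{prop:symbolic_global_better} a \emph{second} time, now to the microlocalized term $Ou$ (with $O$ chosen so that $\operatorname{WF}'_{\calczero}(O)\cap\Sigma_{\mathrm{bad}}=\varnothing$), obtaining
\[
\lVert Ou\rVert_{H_{\calc}^{-N',\mathsf{s},-N'+2,q}}\lesssim \lVert P_\pm u\rVert_{H_{\calc}^{-N'-1,\mathsf{s}+1,-N'+1,q}}+\lVert u\rVert_{H_{\calc}^{-N',-N',-N',q}}.
\]
Combining yields an inequality of the form $\lVert u\rVert_{H_{\calc}^{-N',-N',-N',q}}\lesssim\lVert P_\pm u\rVert+\lVert u\rVert_{H_{\calczero}^{-N,-N,-N}}+h^\epsilon\lVert u\rVert_{H_{\calc}^{-N',-N',-N',q}}$. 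Now the $h^\epsilon$ term involves the \emph{same} norm as the left-hand side, and the hypothesis $u\in H_{\mathrm{sc}}^{-N',-N'}$ guarantees it is finite, so it can be absorbed for $h$ small. This second application of the symbolic estimate is the missing idea.
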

\begin{proof}
	It suffices to prove the proposition for $N'$ sufficiently large.
	We apply \Cref{prop:symbolic_global} with the improvement noted in \Cref{prop:symbolic_global_better} to get 
	\begin{equation}
		\lVert B u \rVert_{H_{\calc}^{m,\mathsf{s},\ell,q}}   \lesssim \lVert G P_\pm u \rVert_{H_{\calc}^{m-1,\mathsf{s}+1,\ell-1,q}} + \lVert Z P_\pm u \rVert_{H_{\calc}^{m-2,\mathsf{s},\ell-2,q}}  + \lVert u \rVert_{H_{\calc}^{-N',-N',-N',q} } 
		\label{eq:misc_34ss}
	\end{equation}
	for all $N'>N$. Let $\Pi$ be as in \Cref{lem:automatic_threshold}, and we can choose $\Pi \in \Psi_{\calczero}^{0,0,0}$ such that $1-\Pi$ has essential support disjoint from the zero section. Then,
	\begin{multline}
		 \lVert u \rVert_{H_{\calc}^{-N',-N',-N',q} }  \leq \lVert \Pi u \rVert_{H_{\calc}^{-N',-N',-N',q} }  +\lVert (1-\Pi)u \rVert_{H_{\calc}^{-N',-N',-N',q} }   
		 \\
		 \lesssim  \lVert \Pi u \rVert_{H_{\calc}^{-N',-N',-N',q} }  + \lVert u \rVert_{H_{\calczero}^{-N,-N,-N}}.
	\end{multline}
	We just need to bound the $\Pi u$ term. 
	
	If $h$ is such that the right-hand side of \cref{eq:misc_345} is finite (which is the only case where the estimate is nontrivial), then \cref{eq:misc_34ss} gives that $Bu(h) \in H_{\mathrm{sc}}^{m,\mathsf{s}}$, in which case \Cref{lem:automatic_threshold} says that $\operatorname{WF}_{\mathrm{par}}^{\ell_0,s_0 }(\Pi u(h)) \cap \calR_{-\varsigma}^{\mathrm{Schr}}=\varnothing$ for any $\ell_0$. 
	Then, \Cref{prop:error_bounding2} applies, giving the bound
	\begin{equation}
		\lVert \Pi u \rVert_{H_{\calc}^{-N',-N',-N',q} } \lesssim \lVert P_\pm  u \rVert_{H_{\calc}^{-N'-1,\mathsf{s}+1,-N'-1,q} } + h^\epsilon\lVert  Ou \rVert_{H_{\calc}^{-N', \mathsf{s},-N'+2,q  }} + \lVert u \rVert_{H_{\calczero}^{-N,-N,-N}}.
	\end{equation}
	We can choose $O$ to have essential support disjoint from $\Sigma_{\mathrm{bad}}$, in which case we can estimate $Ou$ using \Cref{prop:symbolic_global}, \Cref{prop:symbolic_global_better} (this time with $G=1,Z=0$)  to get
	\begin{equation}
		\lVert O u \rVert_{H_{\calc}^{-N',\mathsf{s},-N'+2,q}}   \lesssim \lVert  P_\pm u \rVert_{H_{\calc}^{-N'-1,\mathsf{s}+1,-N'+1,q}} +  \lVert u \rVert_{H_{\calc}^{-N',-N',-N',q} } .
	\end{equation}
	Combining these, we get
	\begin{equation}
		\lVert u \rVert_{H_{\calc}^{-N',-N',-N',q} } \lesssim \lVert P_\pm  u \rVert_{H_{\calc}^{-N'-1,\mathsf{s}+1,-N',q} }  + \lVert u \rVert_{H_{\calczero}^{-N,-N,-N}} + 	h^\epsilon\lVert u \rVert_{H_{\calc}^{-N',-N',-N',q} }.
	\end{equation}
	Note that the last term is finite (since $u\in H_{\mathrm{sc}}^{-N',-N'}$), so, if $h$ is sufficiently small, we can absorb it into the left-hand side to get 
	\begin{equation}
		\lVert u \rVert_{H_{\calc}^{-N',-N',-N',q} } \lesssim \lVert P_\pm  u \rVert_{H_{\calc}^{-N'-1,\mathsf{s}+1,-N',q} }  + \lVert u \rVert_{H_{\calczero}^{-N,-N,-N}}.
	\end{equation}
	Plugging this into \cref{eq:misc_34ss} and taking $N'$ sufficiently large, we get \cref{eq:misc_345}. 
\end{proof}

The following lemma bounds $P_\pm u_\pm$ (the result of applying the conjugated differential operator to the conjugated $\pm$-energy portion of $u$, $u_\pm = e^{\mp i c^2 t} Q_\pm u $) in terms of $(Pu)_\pm$ (the conjugation of the $\pm$-energy portion of $Pu$):
\begin{lemma}
	Let $Q=Q_-$ be as in \S\ref{subsec:combined}.
	Let $G_\pm\in \Psi_{\calc}^{0,0,0,0}$, $Z_\pm\in \Psi_{\calczero}^{0,0,0}$ satisfy 
	\begin{equation}
		\operatorname{Ell}_{\calczero}^{0,0,0}(Z_\pm) \supseteq \operatorname{WF}'_{\calc}(G_\pm)_{\mp 1}\cap \operatorname{WF}'_{\calczero}(Q)\cap\operatorname{WF}'_{\calczero}(1-Q).
		\label{eq:misc_370}
	\end{equation}
	Then, for any $m,\ell,q,N\in \bbR$ and variable orders $\mathsf{s}_\pm$, 
	\begin{equation}
		\lVert G_\pm P_\pm u_\pm \rVert_{H_{\calc}^{m-1,\mathsf{s}_\pm+1,\ell-1,q}} \lesssim \lVert G_\pm (Pu)_\pm \rVert_{H_{\calc}^{m-1,\mathsf{s}_\pm+1,\ell-1,q}}+ \lVert Z_\pm P u \rVert_{H_{\calczero}^{m-2,\mathsf{s}_\pm,\ell-2} } + \lVert u \rVert_{H_{\calczero}^{-N,-N,-N}}
		\label{eq:misc_380}
	\end{equation}
	for all $u\in \calS'$. (Here, $\bullet_\pm$ are defined by \cref{eq:vpm_def}.) Consequently, letting $G \in \Psi_{\calctwo}^{0,0,0;0,0}$ be defined by 
	\begin{equation} 
		G = M_{\exp(-ic^2 t)} G_- M_{\exp(ic^2 t)}Q+ M_{\exp(ic^2 t)} G_+ M_{\exp(-ic^2 t)}(1-Q),
	\end{equation}  
	and letting $Z\in \Psi_{\calczero}^{0,0,0}$ be elliptic on the elliptic sets of both $Z_\pm$, 
	then  
	\begin{equation}
		\lVert G_\pm P_\pm u_\pm \rVert_{H_{\calc}^{m-1,\mathsf{s}_\pm+1,\ell-1,0}} \lesssim \lVert G Pu \rVert_{H_{\calctwo}^{m-1,\mathsf{s}+1,\ell-1;0,0}}+ \lVert Z P u \rVert_{H_{\calczero}^{m-2,\mathsf{s},\ell-2} } + \lVert u \rVert_{H_{\calczero}^{-N,-N,-N}}
		\label{eq:misc_382}
	\end{equation}
	holds. 
	\label{lem:forcing_bound}
\end{lemma}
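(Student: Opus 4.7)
The strategy is to use the identity $P_\pm u_\pm = (Pu)_\pm + f_\pm$ from \eqref{eq:u+u comparison}, where (using $Q_+ = 1 - Q_-$ so that $[P, Q_+] = -[P, Q_-]$) the commutator term is $f_\pm = \mp e^{\mp i c^2 t}[P, Q_-] u$. Applying $G_\pm$ and the triangle inequality, the proof of \eqref{eq:misc_380} reduces to bounding
\begin{equation}
\lVert G_\pm f_\pm \rVert_{H_{\calc}^{m-1, \mathsf{s}_\pm + 1, \ell-1, q}} \lesssim \lVert Z_\pm P u \rVert_{H_{\calczero}^{m-2, \mathsf{s}_\pm, \ell-2}} + \lVert u \rVert_{H_{\calczero}^{-N, -N, -N}}.
\end{equation}

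The key observation is the microlocal structure of the commutator $R := [P, Q_-]$. By Proposition \ref{prop:Planck_composition}, $R \in \Psi_{\calczero}^{1, -1, 1}$ with $\operatorname{WF}'_{\calczero}(R) \subseteq \operatorname{WF}'_{\calczero}(Q) \cap \operatorname{WF}'_{\calczero}(1-Q)$; by the choice \eqref{eq:misc_320} of $Q = Q_-$, this operator wavefront is disjoint from both $\mathrm{pf}_+$ and $\mathrm{pf}_-$ and is contained in the elliptic set of $P$, since it lies away from the characteristic set of $P$. After multiplication by $e^{\mp i c^2 t}$, which shifts the $\tau_\natural$-frequency by $\mp 1$, the distribution $f_\pm$ has its $\calc$-wavefront disjoint from $\mathrm{pf}$. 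This allows us to pass, via Lemma \ref{lem:pf_elim}, from the $H_{\calc}$-norm to an $H_{\calczero}$-norm with a suitable constant variable order, modulo a residual $\lVert u \rVert_{H_{\calczero}^{-N, -N, -N}}$. Commuting the modulation $e^{\mp i c^2 t}$ through $G_\pm$ via Proposition \ref{prop:translation}, and using Proposition \ref{prop:oscillation_mapping_boundedness} for the uniform boundedness of such modulations on $\calczero$-Sobolev spaces, we reduce to estimating $\lVert \widetilde{G}_\pm R u \rVert_{H_{\calczero}^{m-1, \mathsf{s}_\pm + 1, \ell-1}}$, where $\widetilde{G}_\pm$ denotes the operator whose symbol is that of $G_\pm$ translated by $\mp 1$ in $\tau_\natural$. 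By Proposition \ref{prop:Planck-2res_composition_residual}, applied with $\widetilde{G}_\pm$ viewed as a $\calctwo$-operator concentrated near $\mathrm{pf}_\pm$ and $R$ as a $\calczero$-operator microlocally trivial near $\mathrm{pf}_\pm$, the composition $\widetilde{G}_\pm R$ lies in $\Psi_{\calczero}^{1, -1, 1}$ with operator wavefront contained in $\operatorname{WF}'_{\calc}(G_\pm)_{\mp 1} \cap \operatorname{WF}'_{\calczero}(R)$, which by hypothesis \eqref{eq:misc_370} lies in $\operatorname{Ell}_{\calczero}^{0, 0, 0}(Z_\pm)$. Since $P$ is elliptic on this region, the microelliptic estimate \eqref{eq:micro-elliptic-est} with $A = P$ and $B = \widetilde{G}_\pm R$ gives the desired bound.

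For \eqref{eq:misc_382}, a direct computation based on the form of $G$ and the relation $Q_+ = 1 - Q_-$ yields $GPu = e^{-ic^2 t} G_-(Pu)_- + e^{ic^2 t} G_+(Pu)_+$. Combined with the definition \eqref{eq:definition_2res norm} of the $\calctwo$-norm and the support conditions on $Q_\pm$, this shows that $\lVert G_\pm (Pu)_\pm \rVert_{H_{\calc}^{m-1, \mathsf{s}_\pm + 1, \ell-1, 0}}$ is bounded (modulo residual terms absorbable into the $\lVert u \rVert_{H_{\calczero}^{-N, -N, -N}}$ error, coming from commutators between $Q_\pm$ and appropriate modulated copies of $G_\mp$) by $\lVert GPu \rVert_{H_{\calctwo}^{m-1, \mathsf{s}+1, \ell-1; 0, 0}}$; similarly $\lVert Z_\pm P u \rVert_{H_{\calczero}^{m-2, \mathsf{s}_\pm, \ell-2}}$ is bounded by $\lVert Z P u \rVert_{H_{\calczero}^{m-2, \mathsf{s}, \ell-2}}$ via a standard microelliptic estimate in $\Psi_{\calczero}$ using the ellipticity of $Z$ on the elliptic sets of $Z_\pm$.

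The main technical obstacle is the careful bookkeeping associated with the $\tau_\natural$-translation induced by $e^{\mp i c^2 t}$ and the verification that $\widetilde{G}_\pm R$ lies in $\Psi_{\calczero}$ with the stated operator wavefront structure; once this is handled, the estimates follow by routine applications of the microelliptic estimate already established.
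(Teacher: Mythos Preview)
Your proposal is correct and follows essentially the same approach as the paper. Both arguments reduce \eqref{eq:misc_380} to an elliptic estimate for the commutator piece $G_\pm f_\pm$, exploiting that $\operatorname{WF}'_{\calczero}([P,Q])$ lies in $\operatorname{WF}'_{\calczero}(Q)\cap\operatorname{WF}'_{\calczero}(1-Q)$, hence in the elliptic set of $P$ and away from the relevant parabolic faces; the hypothesis \eqref{eq:misc_370} then supplies ellipticity of $Z_\pm P$ on the operator wavefront of the full composition. The only organizational difference is that the paper conjugates the commutator (writing $G_\pm f_\pm = G_\pm\, M_{e^{\pm ic^2 t}}[P,Q]M_{e^{\mp ic^2 t}}(e^{\pm ic^2 t}u)$, so the composite lies in $\Psi_{\calc}^{1,-1,1,-\infty}$, and then introduces an auxiliary $Q_0\in\Psi_{\calczero}$ elliptic on its wavefront), whereas you conjugate $G_\pm$ to obtain $\widetilde G_\pm$ and appeal to \Cref{prop:Planck-2res_composition_residual} to place $\widetilde G_\pm R$ directly in $\Psi_{\calczero}$. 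Your route via \Cref{lem:pf_elim} is a slight detour, since that lemma requires its operator to lie in $\Psi_{\calczero}$ with wavefront off the zero section---a fact you only establish afterwards via \Cref{prop:Planck-2res_composition_residual}; reordering those two steps makes the logic cleaner, but the content is unchanged.
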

Note that $\operatorname{WF}'_{\calczero}(Q)_{\pm 1}\cap\operatorname{WF}'_{\calczero}(1-Q)_{\pm 1}$ is disjoint from the zero section, so the right-hand side of \cref{eq:misc_370}, which equals to $\big( \operatorname{WF}'_{\calc}(G_\pm) \cap \operatorname{WF}'_{\calczero}(Q)_{\pm 1} \cap\operatorname{WF}'_{\calczero}(1-Q)_{\pm 1} \big)_{\mp 1}$, 
 makes sense as a subset of ${}^{\calczero}\overline{T}^* \bbM$ even though $\operatorname{WF}'_{\calc}(G_\pm)$ is only a subset of ${}^{\calc}\overline{T}^* \bbM$. 
\begin{proof}
	We discuss the $-$ case, and the $+$ case is analogous, and we will drop the subscripts on $G_\bullet,Z_\bullet,\mathsf{s}_\bullet$. 
	We have 
	\begin{align}
		\begin{split} 
		\lVert  GP_-u_- \rVert_{H_{\calc}^{m-1,\mathsf{s}+1,\ell-1,q}} &\leq 
		\lVert G(Pu)_- \rVert_{H_{\calc}^{m-1,\mathsf{s}+1,\ell-1,q}}+
		\lVert Gf_- \rVert_{H_{\calc}^{m-1,\mathsf{s}+1,\ell-1,q}} \\ 
		&= \lVert G(Pu)_- \rVert_{H_{\calc}^{m-1,\mathsf{s}+1,\ell-1,q}} + \lVert  G M_{e^{ic^2 t}}  [P,Q] M_{e^{-ic^2 t}} ( e^{ic^2 t}u) \rVert_{H_{\calc}^{m-1,\mathsf{s}+1,\ell-1,q}},
		\end{split}
	\end{align}
	where $f_-$ is as in \cref{eq:f}.
	Since $P,Q \in \Psi_{\calczero}$, we have  $M_{\exp(ic^2 t)}  [P,Q] M_{\exp(-ic^2 t)} \in \Psi_{\calczero}$, and \Cref{prop:translation} tells us that its essential support satisfies 
	\begin{equation}
		\operatorname{WF}'_{\calczero}(M_{\exp(ic^2 t)}  [P,Q] M_{\exp(-ic^2 t)}) =\operatorname{WF}_{\calczero}'([P,Q])_{+1} \subseteq \operatorname{WF}_{\calczero}'(1-Q)_{+1}.
	\end{equation}
	This set is disjoint from a neighborhood of the zero section. Consequently,
	\begin{equation}
		GM_{\exp(ic^2 t)}  [P,Q] M_{\exp(-ic^2 t)}\in \Psi_{\calc}^{1,-1,1,-\infty}. 
	\end{equation}
	Let $Q_0 \in \Psi_{\calczero}^{0,0,0}$ be elliptic on $\operatorname{WF}_{\calczero}'(GM_{\exp(ic^2 t)}  [P,Q] M_{\exp(-ic^2 t)})$. 
	Then, we have the elliptic bound
	\begin{equation}
		\lVert   GM_{e^{ic^2 t}}  [P,Q] M_{e^{-ic^2 t}} ( e^{ic^2 t}u) \rVert_{H_{\calc}^{m-1,\mathsf{s}+1,\ell-1,q}}\lesssim \lVert  Q_0 (e^{ic^2 t} u) \rVert_{H_{\calczero}^{m,\mathsf{s},\ell} } + \lVert u \rVert_{H_{\calczero}^{-N,-N,-N}}
	\end{equation}
	for any $N$.

	By \cref{eq:misc_370}, we have
	\begin{equation}
		\operatorname{WF}_{\calczero}'(GM_{\exp(ic^2 t)}  [P,Q] M_{\exp(-ic^2 t)}) \subseteq \operatorname{WF}'_{\calczero}(G)\cap 	\operatorname{WF}'_{\calczero}(Q)_{+ 1}\cap\operatorname{WF}'_{\calczero}(1-Q)_{+ 1} \Subset \operatorname{Ell}_\natural^{0,0,0}(Z)_{+1}.
	\end{equation}
	Actually, since the $\natural$-characteristic set of $P$ is disjoint from $\operatorname{WF}'_\calczero(Q)\cap \operatorname{WF}'_\calczero(1-Q)$, 
	we can improve this to 
	\begin{equation}
		\operatorname{WF}_{\calczero}'(GM_{\exp(ic^2 t)}  [P,Q] M_{\exp(-ic^2 t)}) \subseteq \operatorname{WF}'_{\calczero}(G)\cap 	\operatorname{WF}'_{\calczero}(Q)_{+ 1}\cap\operatorname{WF}'_{\calczero}(1-Q)_{+ 1} \Subset \operatorname{Ell}_\natural^{2,0,2}(ZP)_{+1}.
	\end{equation}
	Consequently, we can choose $Q_0$ such that $\operatorname{WF}'_{\calczero}(Q_0)\Subset \operatorname{Ell}_{\calczero}^{2,0,2}(ZP)_{+1}$, which is equivalent to
	\begin{equation} 
		\operatorname{WF}'_{\calczero}(Q_0)_{-1}\Subset \operatorname{Ell}_{\calczero}^{2,0,2}(ZP).
	\end{equation}
	Then, 
	\begin{equation}
		\lVert  Q_0 (e^{ic^2 t} u) \rVert_{H_{\calczero}^{m,\mathsf{s},\ell} }\lesssim \lVert  M_{e^{-ic^2 t}} Q_0 (e^{ic^2 t} u) \rVert_{H_{\calczero}^{m,\mathsf{s},\ell} } \lesssim \lVert ZPu \rVert_{H_{\calczero}^{m-2,\mathsf{s},\ell-2} } + \lVert u \rVert_{H_{\calczero}^{-N,-N,-N}}.
	\end{equation}
	Summing up, we have the desired estimate. 
\end{proof}

\begin{figure}
	\begin{tikzpicture}
		\filldraw[fill=lightgray!25] (0,0) circle (2.5);
		\fill[orange, opacity=.5] (1.77,1.75) to[out=230, in=-50] (-1.77,1.75) to[out=220, in=60] (-2.19,1.2) node[left] {$\operatorname{WF}'_{\calczero}(Q)^\complement $} to[out=-50,in=230] (2.19,1.2) to[out=125, in=-37] cycle;
		\fill[orange, opacity=.3] (1.73,1.79) to[out=230, in=-50] (-1.73,1.79) to[out=50, in=210] (-1.2,2.2) node[above left, opacity=.5] {$\operatorname{WF}'_{\calczero}(Q)^\complement_{+1} $} to[out=-40, in=220] (1.2,2.2) to[out=-30, in=-230] cycle;
		\fill[orange, opacity=.3] (1.73,-1.79) to[out=-230, in=50] (-1.73,-1.79) node[below left, opacity=.5] {$\operatorname{WF}'_{\calczero}(1-Q)^\complement_{-1} $} to[out=-50, in=140] (-1.2,-2.2) to[out=40, in=-220] (1.2,-2.2) to[in=220, out=30] cycle;
		\fill[orange, opacity=.5] (1.77,-1.75) to[out=-230, in=50] (-1.77,-1.75) node[above left] {$\operatorname{WF}'_{\calczero}(1-Q)^\complement\quad $} to[out=-220, in=-60] (-2.19,-1.2) to[out=50,in=-230] (2.19,-1.2) to[out=-125, in=37] (1.77,-1.75);
		\fill[darkblue, opacity=.5] (-1.3,-2.13) to[out=50,in=-230] (1.3,-2.13) to[out=30, in=-130] (1.67,-1.85) to[out=130,in=50] (-1.67,-1.85)  to[out=-50, in=150] cycle;
		\fill[darkred, opacity=.5] (-1.3,2.13) to[out=-50,in=230] (1.3,2.13) to[out=-30, in=130] (1.67,1.85) to[out=-130,in=-50] (-1.67,1.85)  to[out=50, in=-150] cycle;
		\fill[darkblue, opacity=.7] (-1.4,-2.075) to[out=50,in=-230] (1.4,-2.075) --  (1.62,-1.9) to[out=130,in=50] (-1.62,-1.9)  -- cycle;
		\fill[darkred, opacity=.7] (-1.4,2.075) to[out=-50,in=230] (1.4,2.075) --  (1.62,1.9) to[out=-130,in=-50] (-1.62,1.9)  -- cycle;
		\draw[dashed] (-1.5,2) to[out=-50,in=230] (1.5,2) node[right] {$\;\Sigma^-_{\mathrm{bad}}$};
		\draw[dashed] (-2,1.5) to[out=-55,in=235] (2,1.5) node[right] {$\;\Sigma_+=\Sigma^-_{\mathrm{bad},-1}$};
		\draw[dashed] (-1.5,-2) to[out=50,in=-230] (1.5,-2)  node[right] {$\;\Sigma^+_{\mathrm{bad}}$};
		\draw[dashed] (-2,-1.5) to[out=55,in=-235] (2,-1.5) node[right] {$\;\Sigma_-=\Sigma^+_{\mathrm{bad},+1}$};
		\node at (2.8,0) {$\mathrm{df}$};
		\draw (0,0) circle (2.5);
		\node at (-2,0) {$\natural$f};
		\begin{scope}[shift={(2,-.75)}]
			\filldraw[fill= darkred, fill opacity=.75] (3,1.2) rectangle (3.45,1.65); 
			\filldraw[fill= darkred, fill opacity=.35] (3,.65) rectangle (3.45,1.1); 
			\filldraw[fill= darkblue, fill opacity=.75] (3,.1) rectangle (3.45,0.55); 
			\filldraw[fill= darkblue, fill opacity=.35] (3,-.45) rectangle (3.45,0); 
			\node[right] at (3.5,1.425) {$\operatorname{WF}'_{\calczero}(1-O_-)$};
			\node[right] at (3.5,-.275) {$\operatorname{WF}'_{\calczero}(O_+)^\complement$};
			\node[right] at (3.5,.825) {$\operatorname{WF}'_{\calczero}(O_-)^\complement$};
			\node[right] at (3.5,.275) {$\operatorname{WF}'_{\calczero}(1-O_+)$};
		\end{scope} 
	\end{tikzpicture}
	\caption{Various subsets of ${}^{\calczero}\overline{T}^* \bbM$ mentioned in \Cref{lem:forcing_bound}, \Cref{prop:horrid} and their proofs. } 
	\label{fig:horrid}
\end{figure}

\begin{proposition}\label{prop:horrid}
	 Choose $\varsigma_-,\varsigma_+\in \{-,+\}$.
	Let $G_\bullet,Z_\bullet,B_\bullet \in \Psi_{\calc}^{0,0,0,0}$ satisfy:
	\begin{itemize}		
		\item  $\operatorname{Ell}_{\calc}^{0,0,0,0}(B_\pm) \supseteq \calR_{-\varsigma_\pm}$ , 
		\item $\operatorname{Ell}_{\calc}^{0,0,0,0}(G_\pm) \supseteq \Sigma^\pm$ (note that this is different from $\Sigma_\pm$; the latter are the components of the $\natural$-characteristic set of $P$, whereas the former are the good components of the $\calc$-characteristic sets of $P_\pm$), 
		\item $G_\pm$, $Z_\pm$ are elliptic on a sufficiently large set. Specifically:
		\begin{equation} 
			\operatorname{WF}'_{\calc}(B_\pm)\cup \big(\operatorname{WF}'_{\calc}(G_\pm)_{\mp 1}\cap \operatorname{WF}'_{\calczero}(Q)\cap\operatorname{WF}'_{\calczero}(1-Q) \big) \\ 
			\subseteq \operatorname{Ell}_{\calc}^{0,0,0,0}(G_\pm) \cup \operatorname{Ell}_{\calczero}^{0,0,0}(Z_\pm).
			\label{eq:misc_390}
			\end{equation} 
	\end{itemize}
	(For example, we can take $G_\pm,Z_\pm,B_\pm = 1$.)
	Let $\mathsf{s}_\pm,\in C^\infty({}^{\calc} \overline{T}^* \bbM;\bbR)$ be variable orders such that
	\begin{equation}
		\mathsf{s}_\pm|_{\calR^\pm_{-\varsigma_\pm}}  > -1/2 > 	\mathsf{s}_\pm|_{\calR^\pm_{\varsigma_\pm}}  .
		\label{eq:misc_331}
	\end{equation}
	In addition, suppose that $\mathsf{s}_\pm$ are monotonic under the Hamiltonian flow $\mathsf{H}_{p^\pm}$ near $\Sigma^\pm$. Let $m,\ell,q,N,m_0\in \bbR$, $s_0>-1/2$. Also, let $\mathsf{S}$ be a variable order such that $\mathsf{S}\geq \mathsf{s}_+,\mathsf{s}_-$. 
	
	Given all of this, there exists an $h_0>0$ such that 
	\begin{align}
		\begin{split} 
		\lVert  B_- u_- \rVert_{H_{\calc}^{m,\mathsf{s}_-,\ell,q}} +\lVert B_+ u_+ \rVert_{H_{\calc}^{m,\mathsf{s}_+,\ell,q}} &\lesssim \lVert G_- (Pu)_+ \rVert_{H_{\calc}^{m-1,\mathsf{s}_-+1,\ell-1,q}} + \lVert G_+ (Pu)_+ \rVert_{H_{\calc}^{m-1,\mathsf{s}_++1,\ell-1,q}}\\ 
		 &+\lVert Z_- (Pu)_- \rVert_{H_{\calc}^{m-2,\mathsf{s}_-,\ell-2,q}} + \lVert Z_+ (Pu)_+ \rVert_{H_{\calc}^{m-2,\mathsf{s}_+,\ell-2,q}}  \\ + \lVert  (Pu)_- \rVert_{H_{\calc}^{-N,\mathsf{s}_-+1,-N,q}} &+ \lVert  (Pu)_+ \rVert_{H_{\calc}^{-N,\mathsf{s}_++1,-N,q}} + \lVert Pu \rVert_{H_{\calczero}^{-N,\mathsf{S},-N}}
		 \end{split}
		 \label{eq:main}
	\end{align}
	holds (in the usual strong sense) for all $u\in \calS'$ and $h\in (0,h_0)$ such that $u$ satisfies the ``incoming/outgoing'' condition 
	\begin{equation}
		\operatorname{WF}_{\mathrm{sc}}^{m_0,s_0}(u(h) ) \cap (\calR^{\mathrm{KG},-}_{-\varsigma_-}\cup \calR^{\mathrm{KG},+}_{-\varsigma_+}) = \varnothing,
		\label{eq:misc_332}
	\end{equation}
	where $\calR^{\mathrm{KG},\pm}_\varsigma$ is the radial set of the Klein--Gordon equation in the component of the characteristic set of energy $\pm \tau>0$ and over the hemisphere of $\partial \bbM$ with $\varsigma t>0$. 
\end{proposition}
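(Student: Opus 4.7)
The plan is to estimate $u_-$ and $u_+$ separately by applying Proposition \ref{prop:monosheet_NRL2} to the conjugated operators $P_\pm$, then use Lemma \ref{lem:forcing_bound} to re-express the resulting right-hand sides in terms of $Pu$ rather than $P_\pm u_\pm$, and finally eliminate the residual $\|u\|_{H_\calczero^{-N,-N,-N}}$ error via a global absorption argument parallel to the proof of Proposition \ref{prop:semiFred estimate}.

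First, I would translate the incoming/outgoing hypothesis \eqref{eq:misc_332} into hypotheses on each $u_\pm = e^{\mp ic^2 t} Q_\pm u$. By Proposition \ref{prop:translation}, the conjugation by $e^{\mp i c^2 t}$ shifts $\tau_\natural$ by $\mp 1$, mapping $\calR^{\mathrm{KG},\pm}_{-\varsigma_\pm}$ to the $P_\pm$-radial set $\calR^\pm_{-\varsigma_\pm}$; since $Q_\pm$ is microlocally the identity on the component of $\operatorname{char}_\calczero(P)$ containing $\calR^{\mathrm{KG},\pm}$, \eqref{eq:misc_332} becomes the incoming/outgoing condition on $u_\pm$ needed to apply Proposition \ref{prop:monosheet_NRL2}. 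Applying that proposition with $(B, P, \mathsf{s}, \varsigma) = (B_\pm, P_\pm, \mathsf{s}_\pm, \varsigma_\pm)$ and using Proposition \ref{prop:oscillation_mapping_boundedness} to replace $\|u_\pm\|_\calczero$ by $\|u\|_\calczero$ yields bounds on $\|B_\pm u_\pm\|_{H_\calc^{m,\mathsf{s}_\pm,\ell,q}}$ in terms of $G_\pm P_\pm u_\pm$, $Z_\pm P_\pm u_\pm$, and $P_\pm u_\pm$-norms, plus the residual $\|u\|_{H_\calczero^{-N,-N,-N}}$.

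Next, I would invoke Lemma \ref{lem:forcing_bound} to convert each $P_\pm u_\pm$-norm on the right-hand side into the corresponding $(Pu)_\pm$-norm plus a $\calczero$-norm $\|Z Pu\|_{H_\calczero^{\bullet,\mathsf{s}_\pm,\bullet}}$ for some common $Z \in \Psi_\calczero$, plus another copy of the residual error $\|u\|_{H_\calczero^{-N,-N,-N}}$. The essential-support hypothesis \eqref{eq:misc_370} of the lemma is guaranteed by \eqref{eq:misc_390} after possibly enlarging $Z_\pm$; and since $\mathsf{s}_\pm \leq \mathsf{S}$, the $Z Pu$-norm is bounded by $\|Pu\|_{H_\calczero^{-N,\mathsf{S},-N}}$. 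Summing the $+$ and $-$ estimates recovers the desired right-hand side of \eqref{eq:main} up to a single residual $\|u\|_{H_\calczero^{-N,-N,-N}}$.

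The key obstacle is removing this residual. I would do so by mimicking the absorption strategy of Proposition \ref{prop:semiFred estimate}: apply Proposition \ref{prop:error_bounding2} to each $u_\pm$ with a variable order $\tilde{\mathsf{s}}_\pm \leq \mathsf{S}$ satisfying the threshold side dictated by \eqref{eq:misc_332}, convert the resulting $P_\pm u_\pm$-terms into $Pu$-terms via Lemma \ref{lem:forcing_bound}, and combine via the $\calctwo$-norm definition \eqref{eq:definition_2res norm} to obtain an estimate of the form
\[
\|u\|_{H_\calczero^{-N,-N,-N}} \,\lesssim\, \|Pu\|_{H_\calczero^{-N,\mathsf{S},-N}} + h^\epsilon \|u\|_{H_\calctwo^{-N+1,\tilde{\mathsf{s}},-N+3;0,0}},
\]
in which the $h^\epsilon$-term can then be absorbed into the left-hand side of the combined estimate \eqref{eq:main} for $h$ sufficiently small. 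The main technical subtlety is ensuring that the two $\tilde{\mathsf{s}}_\pm$ can be packaged into a single variable order $\tilde{\mathsf{s}}$ on ${}^\calctwo\overline{T}^*\bbM$ whose restrictions agree with the thresholds imposed by \eqref{eq:misc_332} on both components of the Klein--Gordon characteristic set simultaneously; once this is arranged, the monotonicity hypotheses required by Proposition \ref{prop:error_bounding2} follow automatically from the monotonicity of $\mathsf{s}_\pm$ already assumed.
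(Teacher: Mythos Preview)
Your overall strategy is right, but there is a genuine gap in the second step. You propose to apply Proposition~\ref{prop:monosheet_NRL2} directly with $B=B_\pm$, but the chain of hypotheses underlying that proposition (through Proposition~\ref{prop:error_bounding2} back to Proposition~\ref{prop:symbolic_global}) requires $\operatorname{WF}'_{\calc}(B)\cap\Sigma_{\mathrm{bad}}=\varnothing$. Proposition~\ref{prop:horrid} places no such constraint on $B_\pm$; indeed, the statement explicitly allows $B_\pm=1$. So the proposition cannot be invoked as you describe. The paper handles this by inserting auxiliary microlocalizers $O_\pm\in\Psi_{\calczero}^{0,0,0}$ with $\operatorname{WF}'_{\calczero}(O_\pm)\cap\Sigma_{\mathrm{bad}}^\pm=\varnothing$ and replacing $B_\pm u_\pm$ by $B_\pm O_\pm u_\pm$ plus a remainder $B_\pm R_\pm(e^{\mp ic^2t}u)$. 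The key point, which takes several paragraphs to verify, is that $O_\pm$ can be chosen so that $R_\pm=(1-O_\pm)M_{e^{\mp ic^2t}}Q_\pm M_{e^{\pm ic^2t}}$ lies in $\Psi_{\calczero}^{-\infty,-\infty,-\infty}$; this requires the compatibility conditions $\operatorname{WF}'_{\calczero}(1-O_-)\cap\operatorname{WF}'_{\calczero}(Q)_{+1}=\varnothing$ and $\operatorname{WF}'_{\calczero}(1-O_+)\cap\operatorname{WF}'_{\calczero}(1-Q)_{-1}=\varnothing$, which in turn rest on the specific way $Q_\pm$ was chosen relative to $\Sigma_\pm$ in \eqref{eq:misc_320}. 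This is not a mere technicality---it is precisely how the argument interfaces the microlocalization $Q_\pm$ (which lives on the unconjugated phase space) with the bad-sheet avoidance needed in the conjugated picture.

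Your final absorption step also differs from the paper. You propose to go through $\calctwo$-norms and Proposition~\ref{prop:error_bounding2}, packaging $\tilde{\mathsf s}_\pm$ into a single $\calctwo$-order. The paper instead stays in the $\calc$-setting throughout: it reapplies the estimate \eqref{eq:misc_384} with $B_\pm=1$, $G_\pm=1$, $\tilde Z_\pm=0$ to bound $\lVert u_\pm\rVert_{H_{\calczero}^{-N',\mathsf{s}_\pm,-N'}}$, and then absorbs the resulting $\lVert u\rVert_{H_{\calc}^{-N'',-N'',-N''}}$ using $h\lesssim$-type inequalities coming from taking $N''$ large. This is more self-contained and sidesteps the issue you flag about reconciling the two $\tilde{\mathsf s}_\pm$ into one $\calctwo$-order.
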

When $\varsigma_-=\varsigma_+$, then we are considering one of the advanced/retarded problems. Otherwise, we are considering one of the Feynman/anti-Feynman problems.

\begin{proof}
	Let $O_\pm \in \Psi_{\calczero}^{0,0,0}$ satisfy $\operatorname{WF}'_{\calczero}(O_\pm)\cap \Sigma_{\mathrm{bad}}^\pm = \varnothing$.  We have
	\begin{equation}
		\lVert B_\pm u_- \rVert_{H_{\calc}^{m,\mathsf{s}_\pm,\ell,q}} \leq  \lVert  B_\pm O_\pm u_\pm \rVert_{H_{\calc}^{m,\mathsf{s}_\pm,\ell,q}} + \lVert B_\pm R_\pm  (e^{\mp ic^2 t} u) \rVert_{H_{\calc}^{m,\mathsf{s}_\pm,\ell,q}}
	\end{equation}
	for 
	\begin{equation}
		R_- = (1-O_-) M_{e^{ic^2 t}} Q M_{e^{-ic^2 t}},\quad 
		R_+ = (1-O_+) M_{e^{-ic^2 t}} (1-Q) M_{e^{ic^2 t}} .
		\label{eq:R_cons}
	\end{equation}
	By \Cref{prop:translation}, $R_\pm \in \Psi_{\calc}^{0,0,0,0}$. Suppose that we can choose $O_\pm$ such that 
	\begin{equation}
		R_\pm \in \Psi_{\calczero}^{-\infty,-\infty,-\infty} .
		\label{eq:R_resid_cond}
	\end{equation}
	Then, we get
	\begin{multline}
		\lVert  B_- u_- \rVert_{H_{\calc}^{m,\mathsf{s}_-,\ell,q}} +\lVert B_+ u_+ \rVert_{H_{\calc}^{m,\mathsf{s}_+,\ell,q}}  \lesssim \lVert  B_- O_- u_- \rVert_{H_{\calc}^{m,\mathsf{s}_-,\ell,q}} +\lVert  B_+ O_+u_+ \rVert_{H_{\calc}^{m,\mathsf{s}_+,\ell,q}} \\ + \lVert u \rVert_{H_{\calczero}^{-N',-N',-N'} }
		\label{eq:misc_38s}
	\end{multline}
	for any $N'$. 
	We will come back later to why we can choose $O_\pm,R_\pm$ such that this holds. First, let us see how \cref{eq:misc_38s} yields the proposition. 
	
	Because $\operatorname{WF}'_{\calczero}(O_\pm)\cap \Sigma_{\mathrm{bad}}^\pm = \varnothing$, we can apply \Cref{prop:monosheet} with $B_\pm O_\pm$ in place of $ B$.
	There exists $\tilde{Z}_\pm$ satisfying the same hypotheses as $Z_\pm$ in that proposition, but with essential support contained in the elliptic set of $Z_\pm$. 
	The operators $B_\pm O_\pm ,G_\pm,\tilde{Z}_\pm$ satisfy the hypotheses of \Cref{prop:monosheet}, which then gives
	\begin{multline}
		\lVert  B_- O_- u_- \rVert_{H_{\calc}^{m,\mathsf{s}_-,\ell,q}} +\lVert  B_+ O_+u_+ \rVert_{H_{\calc}^{m,\mathsf{s}_+,\ell,q}} \lesssim \lVert G_- P_- u_- \rVert_{H_{\calc}^{m-1,\mathsf{s}_-+1,\ell-1,q}} + \lVert G_+ P_+u_+ \rVert_{H_{\calc}^{m-1,\mathsf{s}_++1,\ell-1,q}}   \\
		+\lVert  \tilde{Z}_- P_- u_- \rVert_{H_{ \calc}^{m-2,\mathsf{s}_-,\ell-2,q}} +\lVert  \tilde{Z}_+ P_+ u_+ \rVert_{H_{ \calc}^{m-2,\mathsf{s}_+,\ell-2,q}} 
		\\+ \lVert P_- u_-\rVert_{H_{\calc}^{-N,\mathsf{s}_-+1,-N,q} }+ \lVert P_+ u_+\rVert_{H_{\calc}^{-N,\mathsf{s}_++1,-N,q} }
		 + \lVert u \rVert_{H_{\calczero}^{-N',-N',-N'} }
		\label{eq:misc_384}
	\end{multline}
	for any $N'\in \bbR$. (Apply that lemma for $N'$ larger than $N$.)
	We now apply \Cref{lem:forcing_bound} to bound each of the terms involving $P_\pm u_\pm$ on the right-hand side. So, we apply that lemma once with the $G,Z$ being the $G_\pm,Z_\pm$ here, once with the $G,Z$ in the lemma being $\tilde{Z}_\pm,Z_\pm$, and once with them both being the identity. We then get \cref{eq:main} except with $ B_\pm O_\pm$ in place of $B_\pm$ on the left-hand side and an extra $\lVert u \rVert_{H_{\calczero}^{-N,-N,-N} }$ on the right-hand side. 
	
	Combining this with \cref{eq:misc_38s}, we get \cref{eq:main} exactly, except for the extra  $\lVert u \rVert_{H_{\calczero}^{-N,-N,-N} }$ on the right-hand side. 
	
	So, what we want to show is that, if $N'$ is sufficiently large, and $h$ is sufficiently small, then 
	\begin{equation}
		\lVert u \rVert_{H_{\calczero}^{-N',-N',-N'} } \lesssim \lVert P_- u_-\rVert_{H_{\calc}^{-N,\mathsf{s}_-+1,-N,q} }+ \lVert P_+ u_+\rVert_{H_{\calc}^{-N,\mathsf{s}_++1,-N,q} }.
		\label{eq:misc_385}
	\end{equation}
	We have
	\begin{equation}
		\lVert u \rVert_{H_{\calczero}^{-N',-N',-N'} } \lesssim \lVert u_- \rVert_{H_{\calczero}^{-N',-N',-N'} } + \lVert u_+ \rVert_{H_{\calczero}^{-N',-N',-N'} }  \lesssim \lVert u_- \rVert_{H_{\calczero}^{-N',\mathsf{s}_-,-N'} } + \lVert u_+ \rVert_{H_{\calczero}^{-N',\mathsf{s}_+,-N'} }
		\label{eq:misc_386}
	\end{equation}
	for $N'$ sufficiently large. Applying \cref{eq:misc_384} (with $B_\pm = 1$, $\tilde{Z}_\pm = 0$, $G_\pm=1$ and, some $N''$ in place of $N'$), we have 
	\begin{multline}
		\lVert u_- \rVert_{H_{\calczero}^{-N',\mathsf{s}_-,-N'} } + \lVert u_+ \rVert_{H_{\calczero}^{-N',\mathsf{s}_+,-N'} } \lesssim \lVert P_- u_-\rVert_{H_{\calc}^{-N,\mathsf{s}_-+1,-N,q} }+ \lVert P_+ u_+\rVert_{H_{\calc}^{-N,\mathsf{s}_++1,-N,q} }  \\ 
		+ \lVert u \rVert_{H_{\calc}^{-N'',-N'',-N''}}.
		\label{eq:misc_387}
	\end{multline}
	The last term satisfies 
	\begin{multline}
		\lVert u \rVert_{H_{\calc}^{-N'',-N'',-N''}} \lesssim \lVert u_- \rVert_{H_{\calc}^{-N'',-N'',-N''}}+\lVert u_+ \rVert_{H_{\calc}^{-N'',-N'',-N''}} \\ \leq h(\lVert u_- \rVert_{H_{\calczero}^{-N',\mathsf{s}_-,-N'} } + \lVert u_+ \rVert_{H_{\calczero}^{-N',\mathsf{s}_+,-N'} }) 
	\end{multline}
	if $N''$ is sufficiently large. So, we can absorb the last term on the right-hand side of \cref{eq:misc_387} into the left-hand side if $h$ is sufficiently small, and then combining \cref{eq:misc_387} with \cref{eq:misc_386} yields the desired \cref{eq:misc_385}. 
	
	This completes the proof once $O_\pm \in \Psi_{\calczero}^{0,0,0}$ with the properties required above are known to exist. We will choose $O_\pm $ such that
	\begin{align}
		\operatorname{WF}'_{\calczero}(O_\pm) \cap \Sigma^\pm_{\mathrm{bad}} &= \varnothing, \label{eq:misc_782}\\
		\operatorname{WF}'_{\calczero}(1-O_-) \cap \operatorname{WF}_{\calczero}'(Q)_{+1} &= \varnothing, \label{eq:misc_783} \\
		\operatorname{WF}'_{\calczero} (1-O_+)\cap \operatorname{WF}'_{\calczero}(1-Q)_{-1} &= \varnothing. \label{eq:misc_784}
	\end{align}
	Then, using \Cref{prop:translation} to compute the effect of conjugation on essential supports, 
	\begin{align}
		\begin{split} 
		\operatorname{WF}'_{\calczero}(R_-) &\subseteq \operatorname{WF}_{\calczero}'(1-O_-) \cap \operatorname{WF}'_{\calczero}(M_{e^{ic^2 t}} Q M_{e^{-ic^2 t}}) \\ 
		&=  \operatorname{WF}'_{\calczero}(1-O_-) \cap \operatorname{WF}_{\calczero}'(Q)_{+1} = \varnothing ,
		\end{split} 
	\intertext{by \cref{eq:misc_783}. Similarly, }
	\begin{split} 
		\operatorname{WF}'_{\calczero}(R_+) &\subseteq \operatorname{WF}_{\calczero}'(1-O_+) \cap \operatorname{WF}'_{\calczero}(M_{e^{-ic^2 t}} (1-Q) M_{e^{ic^2 t}}) \\ 
		&=  \operatorname{WF}'_{\calczero}(1-O_+) \cap \operatorname{WF}_{\calczero}'(1-Q)_{-1} = \varnothing ,
	\end{split} 
	\end{align} 
	by \cref{eq:misc_784}. So, \cref{eq:R_resid_cond} holds. The only other required condition on $O_\pm$ was \cref{eq:misc_782}. So, if we can construct $O_\pm$ satisfying \cref{eq:misc_782}, \cref{eq:misc_783}, and \cref{eq:misc_784}, then we are done.
	This can be done via the explicit quantization of a symbol if $\Sigma^+_{\mathrm{bad}}$ is disjoint from $\operatorname{WF}'_{\calczero}(1-Q)_{-1}$ and $\Sigma^-_{\mathrm{bad}}$ is disjoint from $\operatorname{WF}'_{\calczero}(Q)_{+1}$. Indeed, this is just a restatement of \cref{eq:misc_320}, translating things by one unit in $\tau_{\natural}$. 
\end{proof}

\subsection{Radial point estimates for the Cauchy problem}

When we study the Cauchy problem, it will be useful to have an estimate that allows one to propagate control from the middle of phase space outwards and into the various radial sets (at below-threshold decay).  When studying the Klein--Gordon equation for fixed $c$, ``middle' can be taken to mean the portion of the phase space over the closure of $\{t=0\}$. This does not quite work when studying the $c\to\infty$ limit because $\calR_\varsigma$ lie over the equator of $\partial \bbM$ at $\natural\mathrm{f}$. (This is because the Schr\"odinger radial sets $\calR_{\mathrm{Schr}}$ do the same thing. See \Cref{fig:individual_flows}(b).) What we do instead is propagate control from $\Sigma\backslash (U_-\cup U_+)$ throughout $\Sigma$ for some neighborhoods $U_\pm$ of $\calR_\pm$. (These $U_\pm$ can be quite large, but not large enough to together cover $\Sigma$.) Specifically:

\begin{proposition}
	Let  $U_\pm$ be neighbourhoods of $\calR_\pm$ such that $U_+ \cap U_- = \varnothing$.
	Suppose that $m,\ell,q\in \bbR$ and that $\mathsf{s}\in C^\infty({}^{\calc}\overline{T}^* \bbM)$ satisfies both of
	\begin{equation} 
		\mathsf{s}|_{\calR_{-}},\mathsf{s}|_{\calR_+} < -1/2.
	\end{equation}  
	Let $G,B,E,Z\in \Psi^{0,0,0,0}_{\calc}$ satisfy
	\begin{itemize}
		\item $\Sigma\subset \operatorname{Ell}_{\calc}^{0,0,0,0}(G)$, 
		\item  $\operatorname{WF}'_{\calc}(B) \subseteq \Big( \operatorname{Ell}_{\calc}^{0,0,0,0}(G) \cup \operatorname{Ell}_{\calc}^{0,0,0,0}(Z) \Big) \backslash \Sigma_{\mathrm{bad}}$ , 
		\item $\operatorname{Ell}_{\calc}^{0,0,0,0}(E)\supseteq \Sigma\backslash (U_-\cup U_+)$. 
	\end{itemize}
	In addition, suppose that $\mathsf{s}$ is monotonic under the Hamiltonian flow except possibly inside of some subset $K\Subset \operatorname{Ell}_{\calc}^{0,0,0,0}(E)$. 
	Then, for every $N,\in \bbR$, the estimate 
	\begin{multline}
		\lVert B u \rVert_{H_{\calc}^{m,\mathsf{s},\ell,q}}   \lesssim \lVert G P_\pm u \rVert_{H_{\calc}^{m-1,\mathsf{s}+1,\ell-1,q}} + \lVert Z P_\pm u \rVert_{H_{\calc}^{m-2,\mathsf{s},\ell-2,q}}   + \lVert E u \rVert_{ H_{\calc}^{m,\mathsf{s},\ell,q}}  + \lVert u \rVert_{H_{\calc}^{-N,-N,-N,q} } 
	\end{multline}
	holds, in the usual strong sense.
	\label{prop:symbolic_estimate_for_Cauchy}
\end{proposition}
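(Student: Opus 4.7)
The plan is to combine three ingredients already established in this section: the elliptic estimate (\Cref{prop:S5elliptic}), the propagation estimate (\Cref{prop:global_propagation}), and the below-threshold radial point estimate (\Cref{prop:propagation_in}) applied separately at each of $\calR_-$ and $\calR_+$. The novelty compared to \Cref{prop:symbolic_global} is that we are below threshold at \emph{both} radial sets, so neither can serve as an above-threshold ``seed''; instead, the $\lVert E u \rVert$ term plays this role, and propagation transports control from $\operatorname{Ell}_{\calc}(E)$ into the annular regions where \Cref{prop:propagation_in} takes its input.

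First, choose open neighborhoods $U_-^0 \Subset U_-$ of $\calR_-$ and $U_+^0 \Subset U_+$ of $\calR_+$, and write $\operatorname{WF}'_{\calc}(B) = \Omega_{\mathrm{ell}} \cup \Omega_- \cup \Omega_+ \cup \Omega_{\mathrm{mid}}$, with $\Omega_{\mathrm{ell}} \cap \Sigma = \varnothing$ contained in $\operatorname{Ell}_{\calc}(G) \cup \operatorname{Ell}_{\calc}(Z)$, $\Omega_\pm \subset U_\pm^0$, and $\Omega_{\mathrm{mid}} \subset \Sigma \setminus (U_-^0 \cup U_+^0)$. Decompose $B$ via a microlocal partition of unity into $B = B_{\mathrm{ell}} + B_- + B_+ + B_{\mathrm{mid}}$ with essential supports in the respective pieces. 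The contribution of $B_{\mathrm{ell}}$ is estimated by $\lVert G P_\pm u \rVert + \lVert Z P_\pm u \rVert + \lVert u \rVert_{-N}$ via \Cref{prop:S5elliptic}.

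For the contribution of $B_\pm$, apply \Cref{prop:propagation_in}: at $\calR_+$ with the choice $\varsigma = -$ (so $\mp\varsigma \mathsf{H}_p = +\mathsf{H}_p$ flows into $\calR_+$), and at $\calR_-$ with $\varsigma = +$. The required monotonicity $\mp\varsigma\mathsf{H}_p \mathsf{s}\leq 0$ near each radial set is guaranteed by the assumption that $\mathsf{s}$ is monotonic outside $K$ combined with $\mathsf{s}|_{\calR_\pm} < -1/2$, which forces $\mathsf{s}$ to be decreasing toward each radial set. \Cref{prop:propagation_in} then bounds $\lVert B_\pm u \rVert_{H_{\calc}^{m,\mathsf{s},\ell,q}}$ by $\lVert G_\pm P_\pm u \rVert_{H_{\calc}^{m-1,\mathsf{s}+1,\ell-1,q}} + \lVert \tilde E_\pm u \rVert_{H_{\calc}^{m,\mathsf{s},\ell,q}} + \lVert u \rVert_{-N}$, where $\tilde E_\pm \in \Psi_{\calc}^{0,0,0,0}$ is elliptic on a thin annular shell $A_\pm \subset U_\pm \setminus U_\pm^0$ surrounding $\calR_\pm$, and $G_\pm$ is elliptic on a small neighborhood of $\{\varrho_\pm + |\mathsf{p}| \leq 4\epsilon\}$, which can be arranged to lie in $\operatorname{Ell}_{\calc}(G)$.

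The terms $\lVert \tilde E_\pm u \rVert$ and $\lVert B_{\mathrm{mid}} u \rVert$ are then controlled by \Cref{prop:global_propagation}: every point in $A_\pm$ or in $\Omega_{\mathrm{mid}}$ lies on a bicharacteristic segment (in the appropriate $\mp\varsigma\mathsf{H}_p$ direction) emanating from $\Sigma \setminus (U_- \cup U_+) \subseteq \operatorname{Ell}_{\calc}(E)$, as ensured by the source-to-sink structure in \Cref{prop:sourcesink_global}. Since $A_\pm \subset U_\pm \setminus U_\pm^0$ and $\Omega_{\mathrm{mid}} \subset \Sigma \setminus (U_-^0 \cup U_+^0)$, we can arrange that all the relevant bicharacteristic segments avoid $K \Subset \operatorname{Ell}_{\calc}(E)$ outside of their initial point (or pass through $\operatorname{Ell}_{\calc}(E)$ throughout), so the monotonicity hypothesis on $\mathsf{s}$ is satisfied along them. \Cref{prop:global_propagation}, together with enlargement of $G$ and $Z$ if needed (still within their given elliptic sets by the hypothesis $\operatorname{WF}'_{\calc}(B)\subseteq \operatorname{Ell}_{\calc}(G)\cup \operatorname{Ell}_{\calc}(Z)$, extended by a small neighborhood), produces the bounds $\lVert \tilde E_\pm u \rVert, \lVert B_{\mathrm{mid}} u \rVert \lesssim \lVert E u \rVert + \lVert G P_\pm u \rVert + \lVert Z P_\pm u \rVert + \lVert u \rVert_{-N}$.

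Summing the pieces yields the desired estimate. The main subtlety to verify is the geometric step in the previous paragraph: that the bicharacteristic segments from $\operatorname{Ell}_{\calc}(E)$ to points of $A_\pm$ and $\Omega_{\mathrm{mid}}$ can be chosen to remain in $\operatorname{Ell}_{\calc}(G) \cup \operatorname{Ell}_{\calc}(E)$ and to traverse $K$ only at their starting endpoint, so that the propagation estimate applies cleanly. This is a routine consequence of the source-to-sink dynamics and the p-submanifold structure of $\calR_\pm$ (\Cref{prop:calR}), together with the freedom to shrink the neighborhoods $U_\pm^0, A_\pm$ of the radial sets.
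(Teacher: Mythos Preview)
Your proposal is correct and follows the same strategy as the paper, which gives only a one-line sketch: ``This can be proven by gluing two below-threshold radial point estimates from \Cref{prop:propagation_in}.'' Your decomposition of $B$ into elliptic, middle, and near-radial pieces, followed by applying \Cref{prop:propagation_in} at each $\calR_\pm$ and feeding the annular $\tilde E_\pm$ terms back through \Cref{prop:global_propagation} to land on $\operatorname{Ell}_{\calc}(E)$, is exactly what the paper intends.

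One remark: your sentence ``combined with $\mathsf{s}|_{\calR_\pm} < -1/2$, which forces $\mathsf{s}$ to be decreasing toward each radial set'' is not quite right as stated --- the threshold value itself does not force any particular monotonicity direction. What makes the argument work is that the hypothesis allows $\mathsf{s}$ to be non-monotone inside $K\Subset \operatorname{Ell}_{\calc}(E)$; on each bicharacteristic segment from $\operatorname{Ell}_{\calc}(E)$ into $U_+$ (resp.\ $U_-$) the relevant monotonicity is in the forward (resp.\ backward) flow direction, and these segments are separated by $K$, where no monotonicity is required. This is implicit in your final paragraph, but the earlier justification should be phrased in terms of the location of $K$ rather than the threshold value.
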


This can be proven by gluing two below-threshold radial point estimates from \Cref{prop:propagation_in}. 
The case of interest is when $B$ is elliptic near $\mathcal{R}_\pm$ and $E$ is microlocally trivial there. Notice that the elliptic set of $E$ must intersect every bicharacteristic moving between $\calR_-$ and $\calR_+$ due to the assumed disjointness of the sets $U_-$ and $U_+$, allowing regularity to be propagated from the elliptic set of $E$ along each bicharacteristic. 

\begin{figure}
	\includegraphics{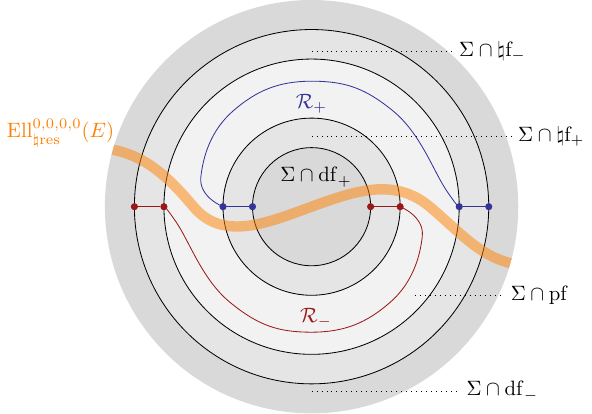}
	\caption{The sort of set on which the $E$ in \Cref{prop:symbolic_estimate_for_Cauchy} is required to be elliptic.}
\end{figure} 

In the same way that the error term in \Cref{prop:monosheet} could be improved to that in \Cref{prop:monosheet_NRL2}, the error term in the previous proposition can be improved slightly, at the cost of a global term involving the forcing:
\begin{proposition}
	Consider the setup of \Cref{prop:symbolic_estimate_for_Cauchy}. There exists some $h_0$ such that 
	\begin{multline}
		\lVert B u \rVert_{ H_{\calc}^{m,\mathsf{s},\ell,q} }\lesssim \lVert  G P_\pm u \rVert_{H_{ \calc}^{m-1,\mathsf{s}+1,\ell-1,q}} + \lVert  Z P_\pm u \rVert_{H_{ \calc}^{m-2,\mathsf{s},\ell-2,q}} 
		+ \lVert P_\pm u\rVert_{H_{\calc}^{-N,\mathsf{s}+1,-N,q} }
		\\ +\lVert E u \rVert_{ H_{\calc}^{m,\mathsf{s},\ell,q} } + \lVert u \rVert_{H_{\calczero}^{-N,-N,-N} }
	\end{multline}
	holds, in the usual strong sense, 
	for all $h\in (0,h_0)$ and $u\in \calS'$. 
	\label{prop:monosheet_cauchy}
\end{proposition}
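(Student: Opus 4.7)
The strategy is a direct parallel to the passage from \Cref{prop:monosheet} to \Cref{prop:monosheet_NRL2}. We begin by applying \Cref{prop:symbolic_estimate_for_Cauchy} to get the preliminary estimate
\begin{equation}
\lVert B u \rVert_{H_{\calc}^{m,\mathsf{s},\ell,q}} \lesssim \lVert G P_\pm u \rVert_{H_{\calc}^{m-1,\mathsf{s}+1,\ell-1,q}} + \lVert Z P_\pm u \rVert_{H_{\calc}^{m-2,\mathsf{s},\ell-2,q}} + \lVert E u \rVert_{H_{\calc}^{m,\mathsf{s},\ell,q}} + \lVert u \rVert_{H_{\calc}^{-N',-N',-N',q}}
\end{equation}
for any $N'\in \bbR$. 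The goal is to improve the final term on the right-hand side from a $\calc$-norm to a much weaker $\calczero$-norm, at the cost of the extra forcing term $\lVert P_\pm u\rVert_{H_{\calc}^{-N,\mathsf{s}+1,-N,q}}$. Choose $\Pi \in \Psi_{\calczero}^{0,0,0}$ such that $\operatorname{WF}'_{\calczero}(1-\Pi)$ is disjoint from the zero section; then $(1-\Pi)u$ is harmless, contributing only $\lVert u \rVert_{H_{\calczero}^{-N,-N,-N}}$ to the final estimate. It therefore suffices to estimate $\lVert \Pi u \rVert_{H_{\calc}^{-N',-N',-N',q}}$.

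The key new ingredient is a below-threshold Cauchy analogue of \Cref{lem:Schrodinger_input_rephrased}. Specifically, given any variable par-order $\overline{\mathsf{s}}$ which is monotone along the par-Hamiltonian flow and satisfies $\overline{\mathsf{s}} < -1/2$ at both $\calR^{\mathrm{Schr}}_\pm$, and given $E' \in \Psi_{\mathrm{par}}^0$ elliptic on $\Sigma^{\mathrm{Schr}} \setminus (U'_- \cup U'_+)$ for neighborhoods $U'_\pm$ of $\calR^{\mathrm{Schr}}_\pm$ with $U'_+ \cap U'_- = \varnothing$, one has the Schr\"odinger Cauchy estimate
\begin{equation}
\lVert v \rVert_{H_{\mathrm{par}}^{\ell',\overline{\mathsf{s}}}} \lesssim \lVert N(P_\pm) v \rVert_{H_{\mathrm{par}}^{\ell'-1,\overline{\mathsf{s}}+1}} + \lVert E' v \rVert_{H_{\mathrm{par}}^{\ell',\overline{\mathsf{s}}}}.
\end{equation}
This estimate is a standard consequence of the par-elliptic estimate, par-propagation of singularities, and the below-threshold par-radial point estimate of \cite{Parabolicsc}: the $E' v$ term provides control in the middle of the flow in $\Sigma^{\mathrm{Schr}}$, which then propagates forward and backward into both radial sets by below-threshold propagation, exactly as in the proof of \Cref{prop:symbolic_estimate_for_Cauchy}.

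Choosing $\overline{\mathsf{s}}$ to dominate $\mathsf{s}|_{\mathrm{pf}}$ up to a small loss $\delta > 0$ and arranging $E' = \beta^*E$ modulo a harmless error (using the canonical identification $\mathrm{pf} \cong {}^{\mathrm{par}}\overline{T}^*\bbM$), an application of \Cref{prop:par_Planck_comp_full} together with the normal operator approximation $P_\pm - N(P_\pm) \in \Psi_{\calc}^{2,0,2,-1}$ yields, by the same argument as in \Cref{prop:error_bounding,prop:error_bounding2},
\begin{equation}
\lVert \Pi u \rVert_{H_{\calc}^{-N',-N',-N',q}} \lesssim \lVert P_\pm u \rVert_{H_{\calc}^{-N,\mathsf{s}+1,-N,q}} + \lVert E u \rVert_{H_{\calc}^{m,\mathsf{s},\ell,q}} + h^\epsilon \lVert u \rVert_{H_{\calc}^{-N'+1,\mathsf{s},-N'+3,q}} + \lVert u \rVert_{H_{\calczero}^{-N,-N,-N}},
\end{equation}
where the $h^\epsilon$ gain arises from the spacetime-decay--$h$-trading multiplier $a = (1 + h^2|z|^2)^{-1/2}$ of \Cref{prop:error_bounding2}. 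Applying \Cref{prop:symbolic_estimate_for_Cauchy} once more to the $h^\epsilon \lVert u \rVert$ term on the right (with a suitably enlarged $B$) and absorbing this contribution into the left-hand side for $h$ sufficiently small yields the stated estimate.

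The main obstacle is establishing the below-threshold Cauchy Schr\"odinger estimate and coordinating the $\calc$-microsupport of $E$ with the par-microsupport needed for $E'$. The fact that $E$ was required in \Cref{prop:symbolic_estimate_for_Cauchy} to be elliptic on a neighborhood of $\Sigma \setminus (U_- \cup U_+)$ (with $U_\pm$ open neighborhoods of $\calR_\pm$), combined with the identification of $\calR_\pm \cap \mathrm{pf}$ with $\calR^{\mathrm{Schr}}_\pm$, ensures that the restriction of the characteristic set to $\mathrm{pf}$ intersected with the elliptic set of $E$ contains the par-characteristic set minus neighborhoods of the Schr\"odinger radial sets, which is precisely what is needed to invoke the Schr\"odinger Cauchy estimate. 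The remainder of the argument is then a mechanical repetition of the normal-operator technique developed in \S\ref{subsec:remainder}.
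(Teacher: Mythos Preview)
Your approach is correct and matches what the paper intends. The paper's own proof is a single sentence---``Same as \Cref{prop:monosheet_NRL2}, but without needing to worry about the threshold conditions''---and your sketch faithfully unpacks this: start from \Cref{prop:symbolic_estimate_for_Cauchy}, localize near $\mathrm{pf}$ via $\Pi$, invoke a below-threshold (Cauchy-type) Schr\"odinger estimate in place of the half-Fredholm one underlying \Cref{prop:error_bounding2}, trade decay for $h^\epsilon$ via the multiplier $a = (1+h^2|z|^2)^{-1/2}$, iterate \Cref{prop:symbolic_estimate_for_Cauchy}, and absorb.

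One small correction: your displayed Schr\"odinger Cauchy estimate
\[
\lVert v \rVert_{H_{\mathrm{par}}^{\ell',\overline{\mathsf{s}}}} \lesssim \lVert N(P_\pm) v \rVert_{H_{\mathrm{par}}^{\ell'-1,\overline{\mathsf{s}}+1}} + \lVert E' v \rVert_{H_{\mathrm{par}}^{\ell',\overline{\mathsf{s}}}}
\]
should carry a residual term $\lVert v \rVert_{H_{\mathrm{par}}^{-N,-N}}$ on the right; the purely symbolic argument (elliptic, propagation, two below-threshold radial estimates) does not eliminate it, and there is no injectivity mechanism analogous to the one in \S\ref{sec:schrodinger_estimate} to remove it here. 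This is harmless, since after passing through \Cref{prop:par_Planck_comp_full} that term is controlled by $\lVert u \rVert_{H_{\calczero}^{-N,-N,-N}}$, which you already keep in your subsequent bound on $\lVert \Pi u\rVert$.
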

\begin{proof}
	Same as \Cref{prop:monosheet_NRL2}, but without needing to worry about the threshold conditions. 
\end{proof}

We can combine the $\pm$ cases of this estimate, yielding an estimate like \Cref{prop:horrid}. The following simpler statement suffices for our purpose:

\begin{proposition}
	Let $\mathsf{s},E$ satisfy the assumptions of \Cref{prop:monosheet_cauchy} on each component of the characteristic set.
	Then, for any $m,\ell,q\in \bbR$, there exists an $h_0>0$ such that 
	\begin{equation} 
		\lVert   u \rVert_{H_{\calctwo}^{m,\mathsf{s},\ell;0,0}}  \lesssim \lVert  Pu \rVert_{H_{\calctwo}^{m-1,\mathsf{s}+1,\ell-1;0,0}}  +\lVert E u \rVert_{H_{\calctwo}^{m,\mathsf{s},\ell;0,0}} 
	\end{equation} 
	holds for all $u\in \calS'$ and $h\in (0,h_0)$, in the usual strong sense. 
	\label{prop:Cauchy_main_estimate}
\end{proposition}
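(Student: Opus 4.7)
The strategy is to apply \Cref{prop:monosheet_cauchy} separately to the two microlocally split pieces $u_\pm$ of $u$ and to combine the two estimates, following very closely the argument in the proof of \Cref{prop:semiFred estimate}. First I would decompose $u = e^{-ic^2 t} u_- + e^{ic^2 t} u_+$ with $u_\pm = e^{\mp ic^2 t} Q_\pm u$ for $Q_\pm \in \Psi_{\calczero}^{0,0,0}$ as in \S\ref{subsec:combined}, so that by \cref{eq:misc_379} the $\calctwo$-norm of $u$ is equivalent to the sum of the $\calc$-norms of $u_\pm$ with variable orders $\mathsf{s}_\pm$. Each $u_\pm$ satisfies $P_\pm u_\pm = (Pu)_\pm + f_\pm$ with $f_\pm$ as in \cref{eq:f}. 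Similarly, the operator $E \in \Psi_{\calctwo}^{0,0,0;0,0}$ is decomposed via \Cref{prop:2_res_symbol_decomposition} into pieces $E_\pm \in \Psi_{\calc}$ micro-supported near $\mathrm{pf}_\pm$, plus a $\Psi_{\calczero}$ piece $E'$ micro-supported away from $\mathrm{pf}_\pm$; the hypothesis on $\mathsf{s}$ and $E$ then translates, under the conjugations, into the hypotheses of \Cref{prop:monosheet_cauchy} for $P_\pm$ with spacetime order $\mathsf{s}_\pm$ and microlocalizer $\tilde E_\pm$ obtained from $E_\pm + E'$.

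Next, I would apply \Cref{prop:monosheet_cauchy} to each of $u_\pm$ (with $B = G = \operatorname{Id}$ and $Z$, $\tilde E_\pm$ chosen accordingly) to obtain
\begin{equation}
\lVert u_\pm \rVert_{H_{\calc}^{m,\mathsf{s}_\pm,\ell,0}} \lesssim \lVert P_\pm u_\pm \rVert_{H_{\calc}^{m-1,\mathsf{s}_\pm+1,\ell-1,0}} + \lVert P_\pm u_\pm \rVert_{H_{\calc}^{-N,\mathsf{s}_\pm+1,-N,0}} + \lVert \tilde E_\pm u_\pm \rVert_{H_{\calc}^{m,\mathsf{s}_\pm,\ell,0}} + \lVert u \rVert_{H_{\calczero}^{-N,-N,-N}}.
\end{equation}
Using \cref{eq:u+u comparison}, each $P_\pm u_\pm$ is bounded by $(Pu)_\pm$ plus the commutator forcing $f_\pm$. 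I would estimate the commutator term $f_\pm$ exactly as in the proof of \Cref{prop:semiFred estimate} (see the argument leading to \cref{eq:comm_term}): because $[P, Q_\pm]$ is micro-supported on the elliptic set of $P$ and micro-supported away from the parabolic faces $\{\tau_\natural = \pm 1, \xi_\natural = 0, h = 0\}$, a microlocal elliptic estimate together with the freedom to choose an arbitrary $\calctwo$-order at $\mathrm{pf}_\pm$ gives
\begin{equation}
\lVert f_\pm \rVert_{H_{\calc}^{m-1,\mathsf{s}_\pm+1,\ell-1,0}} \lesssim \lVert Pu \rVert_{H_{\calctwo}^{m-2,\mathsf{s},\ell;0,0}} + \lVert u \rVert_{H_{\calczero}^{-N',-N',-N'}},
\end{equation}
which can be absorbed into the principal $\lVert Pu\rVert_{H_{\calctwo}^{m-1,\mathsf{s}+1,\ell-1;0,0}}$ term.

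Summing the $\pm$ estimates, using \cref{eq:definition_2res norm} to identify the sum of $\calc$-norms of $u_\pm$ with the $\calctwo$-norm of $u$, and similarly for $E u$ (with a negligible cross term coming from the residual part of $E$, handled by \cref{eq:micro-elliptic-est}), I arrive at
\begin{equation}
\lVert u \rVert_{H_{\calctwo}^{m,\mathsf{s},\ell;0,0}} \lesssim \lVert Pu \rVert_{H_{\calctwo}^{m-1,\mathsf{s}+1,\ell-1;0,0}} + \lVert E u \rVert_{H_{\calctwo}^{m,\mathsf{s},\ell;0,0}} + \lVert u \rVert_{H_{\calczero}^{-N',-N',-N'}}.
\end{equation}
Finally, I would absorb the residual $\calczero$-error for $N'$ sufficiently large: since $\rho_{\natural\mathrm{f}} \sim h$ away from $\mathrm{pf}_\pm$, an elementary Sobolev comparison (cf.\ \cref{eq:error comparison}) gives $\lVert u \rVert_{H_{\calczero}^{-N',-N',-N'}} \leq C h^\epsilon \lVert u \rVert_{H_{\calctwo}^{m,\mathsf{s},\ell;0,0}}$ for some $\epsilon > 0$, and thus the last term is absorbed into the left-hand side once $h$ is small.

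The main obstacle, and really the only nontrivial part, is the bookkeeping in decomposing $E$ as a $\calctwo$-operator into pieces compatible with the $u_\pm$ setup and verifying that the hypothesis ``$\mathsf{s}, E$ satisfy the assumptions of \Cref{prop:monosheet_cauchy} on each component of the characteristic set'' correctly translates to a usable form after conjugation. No threshold conditions appear in \Cref{prop:monosheet_cauchy}, so no above/below-threshold matching is needed between the two sheets, and the gluing is strictly easier than the combined estimate leading to \Cref{prop:semiFred estimate}.
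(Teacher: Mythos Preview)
Your proposal is correct and follows essentially the approach the paper intends: the paper does not give an explicit proof of this proposition, but simply states that one combines the $\pm$ cases of \Cref{prop:monosheet_cauchy} in the manner of \Cref{prop:horrid} (itself patterned on \Cref{prop:semiFred estimate}), which is exactly what you do. Your handling of the commutator forcing $f_\pm$ via the argument leading to \cref{eq:comm_term}, and the absorption of the residual $\calczero$-error via \cref{eq:error comparison}, are both faithful to the paper's template.
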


\section{\texorpdfstring{Why not just use $\Psi_{\calczero}$?}{Why not just use the natural calculus?}}
\label{sec:whynot}

It may not be clear why $\Psi_{\calczero}$, by itself, is not sufficient to analyze the non-relativistic limit -- why, to get any sort of complete result, it is necessary to work with the second- microlocalized calculi $\Psi_{\calc},\Psi_{\calctwo}$.
The answer has to do with the dynamics of the geodesic flow in the phase space ${}^{\calczero}\overline{T}^* \bbM$. In this appendix, we look at that flow for the free Klein--Gordon operator 
\begin{equation}
	P = h^4 \partial_t^2 +h^2 \triangle +1 \in \operatorname{Diff}^{2,0,0}_{\calczero}, 
\end{equation}
where recall that $h=1/c$.
(This differs from what we call $P,P_0$ elsewhere by an overall factor of $c^2$.) 
It is the dynamics within the $\natural$-characteristic set, which is given by 
\begin{equation}
	\Sigma_\natural = \{\tau_{\natural}^2 = \xi_{\natural}^2+1\},
\end{equation}
that governs the microlocal analysis in the $\calczero$-calculus.

The symbol is $p = -h^4 \tau^2 + h^2 \xi^2 +1$, whose Hamiltonian vector field is (up to an overall sign convention) $H_p = 2h^4 \tau \partial_t  - 2 h^2 \xi\cdot \partial_x$. So, in terms of the natural frequency coordinates $\xi_{\natural} = h \xi$ and $\tau_{\natural} = h^2 \tau$, 
\begin{equation}
	{}^{\calczero}\mathsf{H}_p\propto  h^{-1} H_p = 2 h \tau_{\natural}\partial_t - 2 \xi_{\natural}\cdot \partial_x,
	\label{eq:misc_148}
\end{equation}
where the partial derivatives are taken with respect to the coordinate system $h,t,x,\tau_{\calczero},\xi_{\calczero}$, and where $\propto$ means proportionality up to a factor of the boundary-defining-functions of $\mathrm{df}$ and $\mathrm{bf}$. So, at $h=0$, 
\begin{equation}
	{}^{\calczero} \mathsf{H}_p \propto \xi_{\calczero}\cdot \partial_x . \label{eq:misc_149}
\end{equation} 
The key point is that the extra factor of $h$ on the $\partial_t$ in \cref{eq:misc_148} leads to the temporal component of the flow degenerating at $h=0$. The consequence is that ${}^{\calczero}\mathsf{H}_p$ vanishes on the closure of 
\begin{equation}
	\{\xi_{\calczero}=0 \}\subset T^* \bbR^{1,d}\times \{h=0\}
\end{equation}
in ${}^{\calczero} \overline{T}^* \bbM$. This includes a portion of the characteristic set $\Sigma_\natural$
that is not only over $\partial \bbM$ (where the rest of the vanishing set of ${}^{\calczero}\mathsf{H}_p$ is); it contains points over the interior $\bbR^{1,d}=\bbM^\circ$ as well.

This vanishing of ${}^{\calczero}\mathsf{H}_p$ means that $\calczero$-propagation estimates break down at $\{\xi_{\natural}=0 \}$. Nevertheless, depending on the structure of the flow nearby, one might hold out hope that it is possible to prove some sort of estimate. Microlocal estimates at hyperbolically trapped sets might come to mind. (If the issue were just over $\partial \bbM$, then one could hope to get away with a radial point estimate.) Unfortunately, as the formula \cref{eq:misc_149} makes clear, the vanishing of ${}^{\calczero} \mathsf{H}_p$ at $\{\xi_{\calczero}=0 \}$ is not of an amenable sort.

Note that conjugating $P$ by $\exp(\pm i t/h^2)$, the resulting operator $P_\pm$ has the same Hamiltonian flow as $P$ at $\{h=0\}$, but everything is shifted by one natural unit:
\begin{equation}
	\Sigma_{\natural}(P_\pm) = \{(\tau_{\calczero} \pm 1)^2 = \xi_{\calczero}^2+1 \}.
\end{equation}
Now, the problematic set $\{\xi_{\calczero}=0\}$ intersects $\Sigma(P_\pm)$ at the zero section of ${}^{\calczero} T^* \bbM$. This is the locus in ${}^{\calczero}T^* \bbM$ that we blow up to get ${}^{\calc} T^* \bbM$. As mentioned elsewhere in this paper, this blowup turns out to be exactly what is needed to remove from the Hamiltonian flow the degeneracy that we have just seen; see \S\ref{subsec:H}.

\section{Some facts about parabolic compactification}
\label{sec:parabolic}

The (compactified) parabolic phase space is defined by
\begin{equation} 
	{}^{\mathrm{par}}\overline{T}^* \bbM = \bbM \times \overline{(\bbR^{1,d}_{\tau,\xi  })}_{\mathrm{par}},
\end{equation} 
where $\smash{\overline{(\bbR^{1,d}_{\tau,\xi  })}_{\mathrm{par}}}$
is the parabolic compactification of $\bbR^{1,d}$. 
So, 
\begin{equation} 
	\overline{(\bbR^{1,d}_{\tau,\xi  })}_{\mathrm{par}} \cong \underbrace{\overline{\bbR_{\tau,\xi}}}_{\text{radial compactification}}
\end{equation} 
as mwcs --- they are both just $(1+d)$-dimensional balls --- but the canonical identification of their interiors does not extend smoothly, or even continuously, to the boundary. Indeed, the boundary of the mwc on the left can be identified with the (linear) rays through the origin, while the boundary of the mwc on the right, with the parabolic compactification, can be identified with the ``parabolic'' rays   
\begin{equation} 
	\{ (\tau_0 s^2, v s) :s\in \bbR^+  \} ,\quad \tau_0\in \bbR,v\in \bbR^d
\end{equation} 
(where $\tau_0,v$ are not both zero).
An explicit atlas is:
\begin{equation} 
	\overline{(\bbR^{1,d}_{\tau,\xi  })}_{\mathrm{par}} =  \bbR^{1,d}_{\tau,\xi }  \cup \Big( \bigcup_\pm  (\overline{(\bbR^{1,d}_{\tau,\xi  })}_{\mathrm{par}} \cap \{\pm \tau>0\}) \Big) \cup \Big( \bigcup_{\pm, k\in \{1,\ldots,d\}} (\overline{(\bbR^{1,d}_{\tau,\xi  })}_{\mathrm{par}} \cap \{\pm \xi_k>0\}) \Big)\;\,
\end{equation} 
where, in the sense of equality of compactifications,  
\begin{align}
	&\overline{(\bbR^{1,d}_{\tau,\xi  })}_{\mathrm{par}} \cap \{\pm\tau>0\} = [0,\infty)_{1/(\pm \tau)^{1/2}} \times \bbR^d_{\xi/(\pm\tau)^{1/2}},\\
	&\overline{(\bbR^{1,d}_{\tau,\xi  })}_{\mathrm{par}} \cap \{\pm \xi_k>0\} =  [0,\infty)_{\pm 1/\xi_k} \times \bbR_{\tau/\xi_k^2}\times  \bbR^{d-1}_{\hat{\xi}_k}, 
\end{align}
where $\hat{\xi}_k$ is the $(d-1)$-tuple $\{\xi_j/\xi_k\}_{j\neq k}$. 

We can observe that $\ang{z}^{-1}$ is a boundary defining function for spacetime infinity, and $\aang{\zeta}^{-1}$ is a boundary defining function for fiber-infinity, where 
\begin{equation} 
	\aang{\zeta} = (1 + \tau^2 + |\xi|^4)^{1/4}, \quad \zeta = (\tau, \xi).
\end{equation} 

\begin{figure}
	\begin{tikzpicture}[scale=1]
		\draw[fill=gray!10] (0,0) circle (2);
		\draw[dotted] (0,-2) -- (0, 2);
		\draw[dotted] (-2,0) -- (2,0);
		\draw[->] (0,0) -- (0,1);
		\draw[darkred] (-2,0) to[out=70, in=180] (-1.6,.2) to[out=0,in=180] (0,0) to[out=0, in=180] (1.6,.2) to[out=0,in=110] (2,0);
		\draw[darkred] (-2,0) to[out=75, in=180] (-1.7,.5) to[out=0,in=180]  (-.1,0) -- (0,0) -- (.1,0) to[out=0, in=180] (1.7,.5) to[out=0,in=105] (2,0);
		\draw[darkred] (-2,0) to[out=82, in=180] (-1.64,.75) to[out=0,in=160] (-.57,.1)  -- (-.2,0) -- (0,0) -- (.2,0) -- (.57,.1) to[out=20, in=180] (1.64,.75) to[out=0,in=98] (2,0);
		\draw[darkred] (-2,0) to[out=90, in=240] (-1.71,1) to[out=60, in=120] (-1.5,1) to[out=-55, in=180] (0,0) to[out=0, in=235] (1.5,1) to[out=60, in=120] (1.71,1) to[in=90, out=-60] (2,0);
		\draw[orange] (-1.6,1.2) to[out=-60, in=180] (0,0) to[out=0, in=240] (1.6,1.2);
		\draw[lightgray] (-2,0) to[out=-40, in=180] (0,-.75) to[out=0,in=220] (2,0);
		\draw[lightgray] (-2,0) to[out=-60, in=180] (0,-1.2) to[out=0,in=240] (2,0);
		\draw[lightgray] (-2,0) to[out=-75, in=180] (0,-1.6) to[out=0,in=255] (2,0);
		\draw[lightgray] (-2,0) to[out=-90, in=180] (0,-1.9) to[out=0,in=270] (2,0);
		\node[above] (nu) at (0,.9) {$\pm\tau$};
		\draw[->] (0,0) -- (1,0);
		\node[below] (aleph) at (1,0) {$\xi$};
		\node[darkred] at (-.5,-.25) {$\Gamma_h$};
		\node[gray] at (2,-1.5) {$\Sigma_{\mathrm{bad}}$};
		\node at (-2,2) {(a)};
		\draw (0,0) circle (2);
	\end{tikzpicture}
	\qquad 
	\begin{tikzpicture}[scale=1]
		\draw[fill=gray!10] (0,0) circle (2);
		\draw[dotted] (0,-2) -- (0, 2);
		\draw[dotted] (-2,0) -- (2,0);
		\draw[orange] (0,0) to[out=0, in=0] (0,2) to[out=180, in=180] (0,0);
		\draw[darkred] (-1,1.725) to[out=240, in=180]  (0,0) to[out=0, in=-60] (1,1.725);
		\draw[darkred] (-1.43,1.4) to[out=260, in=180]  (0,0) to[out=0, in=-80] (1.43,1.4);
		\draw[darkred] (-1.725,1) to[out=280, in=180]  (0,0) to[out=0, in=-100] (1.725,1);
		\draw[lightgray] (-1.725,-1) to[out=10, in=180]  (0,-.7) to[out=0, in=170] (1.725,-1);
		\draw[lightgray] (-1.4,-1.43) to[out=5, in=180]  (0,-1.15) to[out=0, in=175] (1.4,-1.43);
		\draw[lightgray] (-1,-1.725) to[out=0, in=180]  (0,-1.7) to[out=0, in=180] (1,-1.725);
		\draw[lightgray] (-.7,-1.88) to[out=-10, in=190] (.7,-1.88);
		\node at (-2,2) {(b)};
		\draw (0,0) circle (2);
	\end{tikzpicture} 
	\caption{(a) The family of curves $\Gamma_h=\{h^2 \tau^2 + 2\tau = \xi^2\}$ in $(\overline{\bbR^{2}_{\tau,\xi}})_{\mathrm{par}}$, depicted for several values of $h$. The limiting parabola $\Gamma_0$ is in orange. The bunching up of $\Gamma_h$ near fiber infinity as $h\to 0^+$ corresponds to the portion of the $\calc$-characteristic set located at $\natural\mathrm{f}.$ (b) The same family of curves, but in $\overline{\bbR^2_{\tau,\xi}}$, showing the importance of the choice of compactification.}
\end{figure}
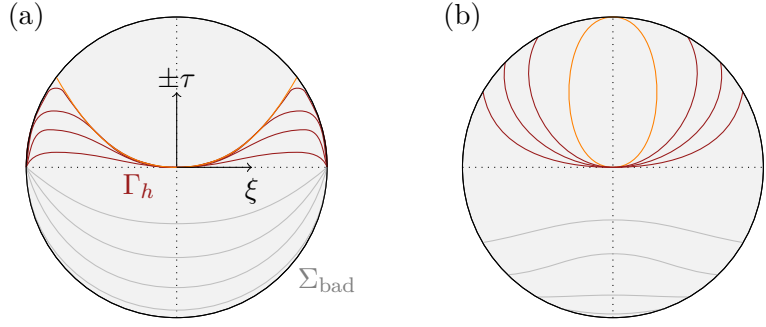

\begin{lemma}
	$\partial_\tau \in \operatorname{Diff}_{\mathrm{b}}^{1,-2}(\overline{(\bbR^{1,d}_{\tau,\xi  })}_{\mathrm{par}})$ and $\partial_{\xi_j} \in \operatorname{Diff}_{\mathrm{b}}^{1,-1}(\overline{(\bbR^{1,d}_{\tau,\xi  })}_{\mathrm{par}})$.
	\label{lem:misc_scc}
\end{lemma}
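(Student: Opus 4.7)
The statement is purely local, and in the interior of $\overline{(\bbR^{1,d}_{\tau,\xi})}_{\mathrm{par}}$ both $\partial_\tau$ and $\partial_{\xi_j}$ are smooth first-order differential operators, which trivially satisfy the claimed orders. So the task reduces to checking the claimed weights at fiber infinity, and for this it suffices to work in the two types of local coordinate charts constituting the atlas \cref{eq:misc_006}--\cref{eq:misc_007}.

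In a ``$\tau$-dominant'' chart covering $\{\pm \tau >0\}$, with coordinates $\rho = (\pm \tau)^{-1/2}$ (a boundary defining function for fiber infinity) and $\eta_j = \xi_j/(\pm\tau)^{1/2}$, a direct application of the chain rule would give
\begin{equation}
\partial_\tau \;=\; \mp \tfrac{\rho^2}{2}\Bigl(\rho \partial_\rho + \sum_j \eta_j \partial_{\eta_j}\Bigr), \qquad \partial_{\xi_j} \;=\; \rho\, \partial_{\eta_j}.
\end{equation}
Each right-hand side is a power of $\rho$ times a vector field tangent to $\{\rho=0\}$, hence a prefactor times a b-vector field. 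The powers $\rho^2$ and $\rho$ account exactly for the claimed weights $-2$ and $-1$ at fiber infinity. In a ``$\xi_k$-dominant'' chart with $\rho = \pm 1/\xi_k$, $\sigma = \tau/\xi_k^2$ and $\hat{\xi}_{k,j} = \xi_j/\xi_k$ (for $j \neq k$), the chain rule similarly produces $\partial_\tau = \rho^2 \partial_\sigma$, $\partial_{\xi_k} = \mp \rho \bigl(\rho \partial_\rho + 2\sigma \partial_\sigma + \sum_{j\neq k} \hat{\xi}_{k,j} \partial_{\hat{\xi}_{k,j}}\bigr)$, and $\partial_{\xi_j} = \pm \rho \partial_{\hat{\xi}_{k,j}}$ for $j\neq k$, all of which are again of the form $\rho^2$ (resp.\ $\rho$) times a b-vector field.

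The conceptual content is simply that the parabolic compactification assigns weight $2$ to $\tau$ and weight $1$ to each $\xi_j$, so a $\tau$-derivative should eat two orders of growth at fiber infinity while a $\xi_j$-derivative eats one. There is no real obstacle; the only care required is to verify that the chain-rule computations yield genuine b-vector fields (rather than, e.g., $\partial_\rho$ unpaired with a compensating factor of $\rho$). Summing the contributions via a partition of unity subordinate to the atlas then yields the global statements $\partial_\tau \in \operatorname{Diff}_{\mathrm{b}}^{1,-2}$ and $\partial_{\xi_j} \in \operatorname{Diff}_{\mathrm{b}}^{1,-1}$.
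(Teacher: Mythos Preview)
Your proposal is correct and takes essentially the same approach as the paper: both proofs reduce to the two types of coordinate charts \cref{eq:misc_006}--\cref{eq:misc_007} near fiber infinity and compute $\partial_\tau$, $\partial_{\xi_j}$ via the chain rule, observing that the result is $\rho^2$ (resp.\ $\rho$) times a b-vector field. The paper only writes out the $+$ case explicitly, while you track the $\pm$ signs throughout; otherwise the computations are identical up to notation.
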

Here, the first superscript in $\operatorname{Diff}^{m,s}_{\mathrm{b}}$ is the differential order, and $s$ is the number of orders of decay.

\begin{proof}
	Since the desired result holds in the interior (where it only expresses that $\partial_\tau,\partial_{\xi_j}$ are first-order in the differential sense), it suffices to examine the situation in the coordinate charts on the right-hand sides of \cref{eq:misc_006}, \cref{eq:misc_007}. Again, we only check the $+$ case of these coordinate charts, the $-$ case being analogous.
	
	Beginning with the charts in \cref{eq:misc_006}, we have, in terms of the coordinates $\rho = 1/\tau^{1/2}$ and $\hat{\xi} = \xi/\tau^{1/2}$,  
	\begin{align}
		\frac{\partial }{\partial \tau} &= \frac{\partial \rho}{\partial \tau} \frac{\partial}{\partial \rho} + \sum_{j=1}^d \frac{\partial \hat{\xi}_j}{\partial \tau}  \frac{\partial}{\partial \hat{\xi}_j} = - \frac{\rho^3}{2}\frac{\partial}{\partial \rho} -  \frac{\rho^2}{2}\sum_{j=1}^d  \hat{\xi}_j\frac{\partial}{\partial \hat{\xi}_j}, \\ 
		\frac{\partial}{\partial \xi_j} &= \frac{\partial \hat{\xi}_j}{\partial \xi_j}\frac{\partial}{\partial \hat{\xi}_j} = \rho \frac{\partial}{\partial \hat{\xi}_j}.
	\end{align}
	So, in this coordinate chart, $\partial_\tau \in \operatorname{Diff}_{\mathrm{b}}^{1,-2}(\overline{(\bbR^{1,d}_{\tau,\xi  })}_{\mathrm{par}})$ and $\partial_{\xi_j} \in \operatorname{Diff}_{\mathrm{b}}^{1,-1}(\overline{(\bbR^{1,d}_{\tau,\xi  })}_{\mathrm{par}})$.
	
	Moving on to the coordinates in \cref{eq:misc_007}: $\rho = 1/\xi_k$, $\hat{\tau}=\tau/\xi_k^2$, and $\hat{\xi}_j = \xi_j/\xi_k$ for $j\neq k$, we have 
	\begin{equation}
		\frac{\partial}{\partial \xi_k} = \frac{\partial \rho}{\partial \xi_k}\frac{\partial}{\partial \rho} + \frac{\partial \hat{\tau}}{\partial \xi_k} \frac{\partial}{\partial \hat{\tau}} + \sum_{j\neq k} \frac{\partial \hat{\xi}_j}{\partial \xi_k} \frac{\partial}{\partial \hat{\xi}_j} = - \rho^2 \frac{\partial}{\partial \rho} - 2\hat{\tau} \rho \frac{\partial}{\partial \hat{\tau}} - \sum_{j\neq k} \hat{\xi}_j \rho \frac{\partial}{\partial \hat{\xi}_j}, 
	\end{equation}
	\begin{equation}
		\frac{\partial}{\partial \xi_j}  = \frac{\partial \hat{\xi}_j}{\partial \xi_j} \frac{\partial}{\partial \hat{\xi}_j}  = \rho \frac{\partial}{\partial \hat{\xi}_j}, \qquad 
		\frac{\partial}{\partial \tau} = \frac{\partial \hat{\tau}}{\partial \tau}\frac{\partial}{\partial \hat{\tau}} = \rho^2 \frac{\partial}{\partial \hat{\tau}},
	\end{equation}
	for $j\neq k$. 
	So again, we see that $\partial_\tau \in \operatorname{Diff}_{\mathrm{b}}^{1,-2}(\overline{(\bbR^{1,d}_{\tau,\xi  })}_{\mathrm{par}})$ and $\partial_{\xi_j} \in \operatorname{Diff}_{\mathrm{b}}^{1,-1}(\overline{(\bbR^{1,d}_{\tau,\xi  })}_{\mathrm{par}})$ locally.
\end{proof}

\section{A global proof of composition}
\label{sec:reduction}

	Because the elements of $\Psi_{\calc}$ are families of pseudodifferential operators, they admit the usual algebraic operations. For example, if $A,B\in \Psi_{\calc}$, then the composition $AB=A\circ B$ is a well-defined family of pseudodifferential operators as well, as follows from the reduction formula \cref{eq:moyal_explicit_natural}, \cite[Prop.\ 5.1 in \S5.3.3]{VasyGrenoble}. But how do we know that $AB\in \Psi_{\calc}$?
	One argument is in the body of this paper. Here we provide another, directly using the reduction formula and not relying on \cite{Parabolicsc}.

	The basic idea behind this second proof is that, in the formula for the Moyal product, each term is manifestly well-behaved on the $\calc$-phase space, because it results from applying well-behaved vector fields to symbols on that phase space. Each subsequent term is better than the previous at all of the boundary hypersurfaces except for $\mathrm{pf}$. So, $a\star b$ should lie in $S_{\calc}$ modulo a term in $C^\infty([0,1)_h;\calS(T^* \bbR^{1,d}))$. But this is also in $S_{\calc}$ (trivially). So, $a\star b \in S_{\calc}$. This sketch differs from the argument below only in terms of the technical details.

	To handle the technical details, it is useful to work with slightly larger classes of symbols than those introduced already. (This is one way of proving properties of $\Psi_{\mathrm{par}}$.)
	For $m,\ell,\ell'\in \bbR$, $\rho\in [0,1)$, and $\delta,\delta'\in [0,1/2)$ such that $\rho+2\delta<1$, let 
	\begin{equation} 
		S_{\rho,\delta,\delta'}^{m,\ell,\ell'} \subset C^\infty(\bbR^D_z\times \bbR^D_\zeta\times \bbR^D_{z'}) ,\qquad D=d+1
	\end{equation} 
	denote the set of all smooth functions $a(z,\zeta,z') $ such that 
	\begin{multline}
		\sup_{z,\zeta,z' \in \bbR^d} \langle z \rangle^{-\ell + |\alpha| } \langle z' \rangle^{-\ell' + |\beta| } (\langle z \rangle + \langle z' \rangle)^{-\delta' (|\alpha|+|\beta|+|\gamma|) } 
		\langle \zeta \rangle^{-m+(1-\rho)|\gamma| - \delta(|\alpha|+|\beta|+|\gamma|)  }  \\ \times| \partial_z^\alpha \partial_{z'}^{\beta}  
		\partial_\zeta^\gamma a(z,\zeta,z') | < \infty 
	\end{multline}
	for all  multi-indices $\alpha,\beta,\gamma\in \bbN^D$. The seminorms above naturally endow each of these symbol classes with the structure of a Fr\'echet space. 
	We will see (\Cref{prop:Moyal_helper}) that $\rho=1/2$ allows us to describe parabolic symbols.
	
	These symbol classes are closely related to the standard $S^{m,\ell,\ell'}_{\delta,\delta'}$ symbols used in \cite{VasyGrenoble}. Trivially, 
	\begin{equation} 
		S^{m,\ell,\ell'}_{\rho,\delta,\delta'} \subseteq S^{m,\ell,\ell'}_{\rho+\delta,\delta'}.
		\label{eq:symbol_inc_tech}
	\end{equation} 
	The key thing which membership in the left-hand side affords that membership in the right-hand side does not is that differentiating in the spatial variables yields only a $\langle \zeta \rangle^{\delta}$ loss in frequency decay, not $\langle \zeta \rangle^{\rho+\delta}$. The latter type of loss only occurs when differentiating in the frequency variables.
	
	The theory of the $S_{\rho,\delta,\delta'}$ symbol classes can be developed along the lines of \cite{VasyGrenoble}.  Vasy assumes, from the beginning, that the $\delta$ in $\smash{S^{m,\ell,\ell'}_{\delta,\delta'}}$ satisfies $\delta<1/2$, whereas our $\rho+\delta$ can be $1/2$ or larger.  Fortunately, the assumption that $\delta<1/2$ is not required until the proof of \cite[Prop.\ 5.1 in \S5.3.3]{VasyGrenoble}, and the proof of that proposition goes through without modification for the more general symbol classes here. The reason is explained in the next paragraph.

	The conclusion is that, for any symbol $a\in S_{\rho,\delta,\delta'}^{m,\ell,\ell'}$, we have 
	\begin{equation}
		\operatorname{Op}(a) = \operatorname{Op}(a_{\mathrm{L}}) = \operatorname{Op}(a_{\mathrm{R}}) 
	\end{equation}
	for unique symbols $a_{\mathrm{L}}(z,\zeta) \in \smash{S_{\rho,\delta,\delta'}^{m,\ell+\ell',0}}$ and $a_{\mathrm{R}}(\zeta,z') \in \smash{S_{\rho,\delta,\delta'}^{m,0,\ell+\ell'}}$ -- where the key point is that $a_{\mathrm{L}}$ is independent of $z'$ and $a_{\mathrm{R}}$ is independent of $z$ -- and moreover we have the reduction formulae
	\begin{align}
		a_{\mathrm{L}}(z,\zeta) &\sim \sum_{\alpha \in \bbN^D}\frac{i^{|\alpha|}}{\alpha!} D_{z'}^\alpha D^\alpha_\zeta a(z,\zeta,z')|_{z'=z} \\ a_{\mathrm{R}}(\zeta,z') &\sim \sum_{\alpha \in \bbN^D}\frac{(-i)^{|\alpha|}}{\alpha!} D_{z}^\alpha D^\alpha_\zeta a(z,\zeta,z')|_{z=z'}.
	\end{align}
	Here, $\sim$ has the usual meaning; for example, for each $N\in \bbN$, we can write 
	\begin{equation}
		a_{\mathrm{L}} - \sum_{|\alpha|\leq N} \frac{i^{|\alpha|}}{\alpha!} D_{z'}^\alpha D^\alpha_\zeta a(z,\zeta,z')|_{z'=z}  \in S_{\rho,\delta,\delta'}^{m-N + (\rho+2\delta)N,\ell+s-N+2\delta' N,0},
		\label{eq:hb341z1}
	\end{equation}
	with this depending smoothly on $a\in \smash{S_{\rho,\delta,\delta'}^{m,\ell,\ell'}}$. As long as $\rho+2\delta<1$, as we are assuming, the right-hand side is lower order than $a_{\mathrm{L}}$ in every sense. This is the only place that Vasy uses the inequality $\delta<1/2$, so our assumption $\rho+2\delta<1$ suffices to prove the reduction formulae for elements of the symbol classes here. The point is that in the Moyal formula, each subsequent term is differentiated twice, once in the spatial variables and once in the frequency variables. Consequently, if $\rho$ describes the loss in frequency decay when differentiating in \emph{both} frequency and space, then we need $2\rho<1$ for subsequent terms to be better than previous terms. However, if $\delta$ describes the loss only when differentiating in space and $\delta+\rho$ describes the loss when differentiating in frequency, then we only need $2\delta+\rho<1$.

	Symbols depending only on $z,\zeta$ are called \emph{left} symbols, while symbols depending only on $\zeta,z'$ are called \emph{right} symbols. The reduction formula shows that every pseudodifferential operator in the relevant class is the quantization of a left symbol, and moreover the uniqueness clause of \cite[Prop.\ 5.1]{VasyGrenoble} means that $\operatorname{Op}$ is injective when restricted to left symbols. The same statements apply to right symbols.

	One standard application of the reduction formulae is understanding compositions of pseudodifferential operators: given left symbols $a,b$,  we have 
	\begin{equation} 
		\operatorname{Op}(a)\operatorname{Op}(b) = \operatorname{Op}(a)\operatorname{Op}(b_{\mathrm{R}}) = \operatorname{Op}(a b_{\mathrm{R}} ) = \operatorname{Op}(a \star b),
	\end{equation} 
	where $a \star b$ is the \emph{Moyal(--Weyl--Groenewold product)} $a\star b = (a b_{\mathrm{R}})_{\mathrm{L}}$. The explicit formula that this gives for the Moyal product is
	\begin{equation}
		a \star b(z,\zeta) \sim  \sum_{\alpha,\beta\in \bbN^D} \frac{i^{|\alpha|} (-i)^{|\beta|} }{ \alpha! \beta!} D_\zeta^\alpha (a(z,\zeta) D_{z}^{\alpha+\beta} D_\zeta^\beta b(z,\zeta)   ) .
		\label{eq:moyal_explicit}
	\end{equation}

	For one-parameter families $a=\{a(h)\}_{h>0}$, $b=\{b(h)\}_{h>0}$ of left symbols, we define $a\star b = \{(a\star b)(h)=a(h)\star b(h)\}_{h>0}$ pointwise, for each individual value of the parameter $h$. The fact that the error term in \cref{eq:hb341z1} depends smoothly on $a,b$ means that 
	\begin{multline}
		a \in h^{-q} C^\infty([0,1)_h; S_{1/2,0,0}^{m,s,0} ),b \in h^{-q'} C^\infty([0,1)_h; S_{1/2,0,0}^{m',s',0} ) \\ \Rightarrow 
		a \star b(z,\zeta) - \sum_{|\alpha| + |\beta|\leq N} \frac{i^{|\alpha|} (-i)^{|\beta|} }{ \alpha! \beta!} D_\zeta^\alpha (a(z,\zeta) D_{z}^{\alpha+\beta} D_\zeta^\beta b(z,\zeta)   ) \in  h^{-q-q'} C^\infty([0,1)_h; S_{1/2,0,0}^{m-N/2,\ell+s-N,0}) 
		\label{eq:moyal_explicit_error}
	\end{multline}
	for each $N\in \bbN$. 
	
	In order to analyze the terms appearing in the Moyal expansion, we note:
	\begin{lemma}$
		\langle z \rangle \partial_{z_j} ,  \rho_{\natural\mathrm{f} }^{-2} \rho_{\mathrm{df}}^{-1} \partial_\tau, \rho_{\natural\mathrm{f}}^{-1} \rho_{\mathrm{df}}^{-1} \partial_{\xi_k} ,h \partial_h \in \calV_{\mathrm{b}}({}^{\calc}\overline{T}^* \bbM )$ for each $j\in \{0,\dots,d\}$ and $k \in \{1,\ldots,d\}$, i.e.\ these vector fields are tangent to all of the boundary hypersurfaces of ${}^{\calc}\overline{T}^* \bbM$.
		\label{lem:reduction_vector_fields}
	\end{lemma}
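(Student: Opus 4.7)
The plan is to verify each of the four claims in local coordinate charts that collectively cover the boundary of ${}^{\calc}\overline{T}^* \bbM$. From the enumeration in \S\ref{subsec:char}, four chart types suffice near the boundary: (i) the interior of $\natural\mathrm{f}$, with coordinates $(z, \taun, \xin, h)$ and $\rho_{\natural\mathrm{f}} = h$, $\rho_{\mathrm{df}} = \rho_{\mathrm{pf}} = 1$; (ii) near $\mathrm{df} \cap \natural\mathrm{f}$, with projective frequency coordinates $\rho_{\mathrm{df}} = 1/|\taun|$, $\hat\xi_\natural = \xin/|\taun|$ and $\rho_{\natural\mathrm{f}} = h$, $\rho_{\mathrm{pf}} = 1$; (iii) near $\mathrm{pf}$ away from $\mathrm{df} \cup \natural\mathrm{f}$, with $(z, \tau, \xi, h)$ and $\rho_{\mathrm{pf}} = h$, $\rho_{\natural\mathrm{f}} = \rho_{\mathrm{df}} = 1$; and (iv) near $\mathrm{pf} \cap \natural\mathrm{f}$, with parabolic projective coordinates such as $\rho_{\natural\mathrm{f}} = 1/|\tau|^{1/2}$, $\hat\xi = \xi/|\tau|^{1/2}$, $\rho_{\mathrm{pf}} = h|\tau|^{1/2}$ (and the analogous chart with a dominant $\xi_k$). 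In each chart a b-vector field is, by definition, a smooth combination of the local $\rho_\bullet \partial_{\rho_\bullet}$ and of $\partial$ in the non-defining coordinates.

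The claim for $\ang{z} \partial_{z_j}$ is immediate since this vector field involves only the base coordinates and is a standard b-vector field on the radial compactification $\bbM$ of $\bbR^{1,d}$, and each boundary hypersurface of ${}^{\calc}\overline{T}^* \bbM$ projects to a subset of $\bbM$. For $h \partial_h$, one converts from $\partial_h|_{\tau,\xi}$ to $\partial_h$ at fixed coordinates of each chart. For instance, in chart (i),
\begin{equation}
h\partial_h\bigr|_{\tau,\xi} \;=\; h\partial_h\bigr|_{\taun,\xin} \,+\, 2\taun\,\partial_\taun \,+\, \xin\cdot\partial_{\xin},
\end{equation}
and in chart (iv) one finds $h\partial_h|_{\tau,\xi} = \rho_{\mathrm{pf}}\,\partial_{\rho_{\mathrm{pf}}}$. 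The remaining two charts are analogous, and in every case the resulting expression is manifestly a b-vector field.

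The nontrivial cases are $\rho_{\natural\mathrm{f}}^{-2}\rho_{\mathrm{df}}^{-1}\partial_\tau$ and $\rho_{\natural\mathrm{f}}^{-1}\rho_{\mathrm{df}}^{-1}\partial_{\xi_k}$. The normalizing prefactors are exactly those needed to convert $\partial_\tau, \partial_{\xi_k}$ into b-derivatives relative to the two rescalings performed to construct the $\calc$-phase space: first the passage to natural frequency variables, then the parabolic blow up. Concretely, in chart (i), $\rho_{\natural\mathrm{f}}^{-2}\partial_\tau = h^{-2}\partial_\tau = \partial_{\taun}$ and $\rho_{\natural\mathrm{f}}^{-1}\partial_{\xi_k} = \partial_{\xi_{\natural, k}}$, which are smooth and b-tangent to $\{h = 0\}$. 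In chart (ii), taking $\taun > 0$ for concreteness,
\begin{equation}
\partial_\taun\bigr|_{\xin} \;=\; -\rho_{\mathrm{df}}^{2}\,\partial_{\rho_{\mathrm{df}}} \,-\, \rho_{\mathrm{df}}\,\hat\xi_\natural\cdot\partial_{\hat\xi_\natural}, \qquad \partial_{\xi_{\natural,k}}\bigr|_{\taun} \;=\; \rho_{\mathrm{df}}\,\partial_{\hat\xi_{\natural,k}},
\end{equation}
so that multiplication by $\rho_{\mathrm{df}}^{-1}$ (and by the appropriate power of $\rho_{\natural\mathrm{f}}^{-1} = h^{-1}$ arising from the $h \leftrightarrow \taun, \xin$ rescaling) yields b-vector fields. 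Chart (iii) is immediate because $\rho_{\natural\mathrm{f}} = \rho_{\mathrm{df}} = 1$ and $\partial_\tau, \partial_{\xi_k}$ do not touch $h$. Chart (iv) runs along the same lines as chart (ii) but uses the parabolic projective coordinates; the key observation is that the weight $\rho_{\natural\mathrm{f}}^{-2}$ on $\partial_\tau$ matches the quadratic scaling $\tau \sim \rho_{\natural\mathrm{f}}^{-2}$ and $\rho_{\natural\mathrm{f}}^{-1}$ on $\partial_{\xi_k}$ matches the linear scaling $\xi \sim \rho_{\natural\mathrm{f}}^{-1}$, which is precisely what makes the resulting vector fields smooth and tangent to $\rho_{\natural\mathrm{f}} = 0$ and $\rho_{\mathrm{pf}} = 0$.

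The main obstacle is purely one of bookkeeping: keeping track of which boundary-defining functions are in effect in each chart, and performing the chain rule correctly across the changes of variables corresponding to the natural rescaling and the parabolic blow up. No conceptual difficulty arises, and the claim reduces in each chart to a short explicit calculation of the type just indicated.
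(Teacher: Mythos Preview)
Your proposal is correct and takes the same approach as the paper, which simply writes ``Easily checked in local coordinates.'' You have supplied considerably more detail than the paper does, carrying out the chart-by-chart verification explicitly using the coordinate systems enumerated in \S\ref{subsec:char}, which is exactly what the paper's one-line proof is alluding to.
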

	\begin{proof}
		Easily checked in local coordinates.
	\end{proof}

	The reason for introducing the symbol classes $S_{\rho,\delta,\delta'}^{m,\ell,\ell'}$ is the following:
	\begin{lemma}
		For any $s,q,m\in \bbR$, $\ell\leq q$, and $K\in \bbN$,  
		we have 
		\begin{equation} 
			S^{m,s,\ell,q}_{\calc} \subseteq h^{-q} C^K ( [0,1)_h; S_{1/2,0,0}^{M+1+K,s,0})
		\end{equation} 
		for $M= \max\{m,(\ell-q)/2\}$.
		\label{prop:Moyal_helper}
	\end{lemma}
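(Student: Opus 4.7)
The plan is to prove the lemma by using the decomposition of Proposition~\ref{prop:gluing_pseudos} to write any $a \in S^{m,s,\ell,q}_\calc$ as a sum $a = b + c$, where $b \in S^{-\infty, s_{\mathrm{par}}, \ell, q}_{\mathrm{par, I, res}}$ is supported near $\mathrm{pf}$ (and away from $\mathrm{df}_1$) and $c \in S^{m, s_{\calczero}, \ell}_\calczero$ is supported away from $\mathrm{pf}$. The two pieces can then be handled separately by direct calculation, with the frequency-order trade $h^{-1} \lesssim \langle \zeta \rangle^{1/2}$ playing a central role in both (though in rather different guises) and with the hypothesis $\ell \leq q$ ensuring that multiplication by $h^q$ absorbs the $\rho_{\natural\mathrm{f}}^{-\ell}$ blow-up along the natural face.

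For the natural piece $c$, the support condition forces $h\aang{\zeta} \geq c_0 > 0$, hence $h^{-1} \leq c_0^{-1}\aang{\zeta} \lesssim \langle \zeta \rangle^{1/2}$, allowing us to convert any $h$-power blow-up into loss of frequency decay. Changing from the natural frequency variables $\zeta_\natural = (h^2\tau, h\xi)$ back to $\zeta = (\tau, \xi)$ via $\partial_{\xi_j} = h\partial_{\xi_{\natural,j}}$ and $\partial_\tau = h^2\partial_{\tau_\natural}$, a short computation shows that each derivative in $\zeta$ gains at least $\langle \zeta \rangle^{-1/2}$ (the bound $h\langle \zeta_\natural \rangle^{-1} \lesssim \langle \zeta \rangle^{-1/2}$ is where the support condition is used), so the $S_{1/2,0,0}$ symbol estimates hold. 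After multiplying by $h^q$, the bare factor becomes $h^{q-\ell} \leq 1$, and each further $h$-derivative contributes at most $h^{-1} \lesssim \langle \zeta \rangle^{1/2}$, so $K$ derivatives in $h$ cost at most $K/2$ frequency orders, giving $c$ in $h^{-q}C^K([0,1); S^{m + K/2, s, 0}_{1/2,0,0})$.

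For the parabolic piece $b$, the support condition (away from $\mathrm{df}_1$) gives the dual bound $h\aang{\zeta} \leq C$, which in turn gives $\rho_{\natural\mathrm{f}} = h + \aang{\zeta}^{-1} \sim \aang{\zeta}^{-1}$ on the support. Exploiting classicality at $\mathrm{pf}$, we factor $b = \rho_{\mathrm{pf}}^{-q}\rho_{\natural\mathrm{f}}^{-\ell}\tilde b$ with $\tilde b$ smooth at $\mathrm{pf}$ and of parabolic type in $\zeta$. The identity $h/\rho_{\mathrm{pf}} = \rho_{\natural\mathrm{f}}$ then yields
\begin{equation*}
	h^q b = \rho_{\natural\mathrm{f}}^{q-\ell}\,\tilde b,
\end{equation*}
which is manifestly $C^\infty$ in $h$ at fixed $\zeta$ because $\rho_{\natural\mathrm{f}}$ and $\rho_{\mathrm{pf}}$ are, and is bounded on supp $b$ since $\ell \leq q$. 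Each $\partial_h$ differentiates either the factor $\rho_{\natural\mathrm{f}}^{q-\ell}$ (noting $\partial_h\rho_{\natural\mathrm{f}} = 1$) or $\tilde b$ (which is smooth in $\rho_{\mathrm{pf}}$ with $\partial_h\rho_{\mathrm{pf}}$ bounded by $\rho_{\natural\mathrm{f}}^{-1}$), each step losing at most $\rho_{\natural\mathrm{f}}^{-1} \sim \aang{\zeta} \lesssim \langle \zeta \rangle^{1/2}$. Thus $|\partial_h^K(h^q b)| \lesssim \aang{\zeta}^{K + \ell - q}\langle z \rangle^s \lesssim \langle z \rangle^s \langle \zeta \rangle^{(K + \ell - q)/2}$, and the standard parabolic-symbol estimates on $\zeta$-derivatives give $S_{1/2,0,0}$ behaviour via $\aang{\zeta} \sim \langle \zeta \rangle^{1/2}$.

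Combining the two estimates, the $c$-piece gives frequency order at most $m + K/2$ and the $b$-piece gives at most $(\ell-q)/2 + K/2$, both of which are dominated by $M + 1 + K$ with $M = \max(m, (\ell-q)/2)$. The slack $+1$ in the frequency order accommodates the $\delta$-loss at $\mathrm{bf}$ coming from the variable spacetime order (the $\rho_{\mathrm{bf}}^\delta$ factor in $\tilde{\mathcal V}_\calc$) and absorbs the constants in the trade $\aang{\zeta} \sim \langle \zeta \rangle^{1/2}$. The main technical obstacle is justifying smoothness in $h$ for the parabolic piece: a naive Leibniz expansion of $\partial_h^K(h^q b)$ produces apparently unbounded $\rho_{\mathrm{pf}}^{-(K-j)}$ terms as $h \to 0$, and the resolution is precisely the rewriting $h^q b = \rho_{\natural\mathrm{f}}^{q-\ell}\tilde b$ made possible by the classical structure at $\mathrm{pf}$ together with the identity $h = \rho_{\mathrm{pf}}\rho_{\natural\mathrm{f}}$.
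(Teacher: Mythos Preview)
Your decomposition approach (split into a piece $b$ near $\mathrm{pf}$ and a piece $c$ supported away) is genuinely different from the paper's, which works globally via the two-sided inequality $\rho_{\mathrm{df}}\rho_{\natural\mathrm{f}}^2 \lesssim \langle\zeta\rangle^{-1} \lesssim \rho_{\mathrm{df}}\rho_{\natural\mathrm{f}}$ (Lemma~\ref{lem:Moyal_helper2}) together with $\rho_{\natural\mathrm{f}}\rho_{\mathrm{pf}}\lesssim h$, never splitting the symbol. Your treatment of the $b$-piece is essentially correct and the rewriting $h^q b=\rho_{\natural\mathrm{f}}^{q-\ell}\tilde b$ is a nice way to see smoothness in $h$.

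There is, however, a real gap in the $c$-piece analysis. You claim the base frequency order (before any $\partial_h$) is $m$, but this is only valid when $m\geq 0$. For $m<0$ the estimate $|c|\lesssim h^{-\ell}\langle\zeta_\natural\rangle^{m}\langle z\rangle^{s}$ does \emph{not} give $\langle\zeta\rangle^{m}$-decay: one has only $\langle\zeta_\natural\rangle\geq h^{2}\langle\zeta\rangle$, so $\langle\zeta_\natural\rangle^{m}\leq h^{2m}\langle\zeta\rangle^{m}$, which introduces an extra $h^{2m}$ blow-up. Trading this against $h^{-1}\lesssim\langle\zeta\rangle^{1/2}$ on $\operatorname{supp}c$ pushes the order up to $(\ell-q)/2$ whenever $m<(\ell-q)/2$. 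In other words, the $c$-piece already has base order $M=\max\{m,(\ell-q)/2\}$, not $m$; your final inequality still follows, but the intermediate claim ``$c\in h^{-q}C^K([0,1);S^{m+K/2,s,0}_{1/2,0,0})$'' is false in general. This is exactly the phenomenon the paper's two-case split ($-M-j\geq 0$ versus $\leq 0$) is handling.

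Two minor corrections: the order $s$ here is a constant, so there is no $\delta$-loss at $\mathrm{bf}$ to absorb; the ``$+1$'' in $M+1+K$ comes instead from the elementary fact that bounding $K{+}1$ derivatives $\partial_h^j a\in S^{M+j,s,0}_{1/2,0,0}$ (so all lying in $S^{M+K+1,s,0}_{1/2,0,0}$) is what yields $C^K$-regularity in $h$, as the paper makes explicit in its final line.
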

	So, elements of $S_{\calc}^{m,s,\ell,0}$ are just $C^K$ one-parameter families of symbols lying in some big but \emph{fixed} (meaning $h$-independent) symbol class. 
	
	It does not matter below what the particular value of $M$ is, just that it goes to $-\infty$ as $m,\ell\to -\infty$ together.
	
	\begin{proof}
		Since $S^{m,s,\ell,q}_{\calc} = h^{-q} S^{m,s,\ell-q,0}_{\calc}$, it suffices to prove the $q=0$ case. Then, $\ell\leq 0$. 
		
		In order to prove the lemma, let us estimate, for $a(h,z,\tau,\xi) \in \smash{S^{m,s,\ell,q}_{\calc}}$, the derivatives $\smash{\partial_h^j \partial_z^\alpha \partial_\tau^k \partial_\xi^\beta a}$ for $j,k\in \bbN$, $\alpha\in \bbN^D$, and $\beta\in \bbN^d$. It follows from \Cref{lem:reduction_vector_fields} that 
		\begin{equation}
			(h\partial_h)^j \partial_z^\alpha \partial_\tau^k \partial_\xi^\beta a \in \rho_{\mathrm{df}}^{-m+k+|\beta|} \rho_{\mathrm{bf}}^{-s+|\alpha|} \rho_{\natural\mathrm{f}}^{-\ell+2k+|\beta|} \rho_{\mathrm{pf}}^{j} L^\infty({}^{\calc}\overline{T}^* \bbM ).
			\label{eq:misc_012}
		\end{equation}
		The fact that we get one factor of $ \rho_{\mathrm{pf}}$ on the right-hand side for each $\partial_h$ on the left-hand side is because $a$ is assumed to be smooth at $\mathrm{pf}$ (in the appropriate sense which is uniform up to the corners).

		Next, note that 
		\begin{equation} 
			\rho_{\mathrm{df}} \rho_{\natural\mathrm{f}}^2 \lesssim \langle \zeta \rangle^{-1} \lesssim \rho_{\mathrm{df}} \rho_{\natural\mathrm{f}}.
			\label{eq:misc_j12}
		\end{equation} 
		We have recorded this as \Cref{lem:Moyal_helper2} below, and the proof is written there.
		Similarly, it is possible to show that $\rho_{\natural\mathrm{f}}\rho_{\mathrm{pf}} \in h L^\infty({}^{\calc}\overline{T}^* \bbM )$. Indeed, away from $\mathrm{pf}$, this follows from the fact that $\rho_{\natural\mathrm{f}}\cong h$ here. Away from $\mathrm{df}$, this follows because we can arrange $\rho_{\natural\mathrm{f}} \rho_{\mathrm{pf}} \cong h$.

		Combining the previous paragraph with $\rho_{\mathrm{bf}} \cong \langle z \rangle^{-1}$ and \cref{eq:misc_012} gives 
		\begin{equation}
			\partial_h^j \partial_z^\alpha \partial_\tau^k \partial_\xi^\beta a \in \langle \zeta \rangle^{-k-|\beta|/2} \langle z \rangle^{s-|\alpha|} \rho_{\mathrm{df}}^{-m}  \rho_{\natural\mathrm{f}}^{-\ell-j}  L^\infty.
			\label{eq:misc_655}
		\end{equation}
		
		Now we split into two cases:
		\begin{itemize}
			\item First suppose that $-M-j \geq 0$. Depending on $M$, this may cover the first few $j$. We can write 
			\begin{equation} 
				\rho_{\mathrm{df}}^{-m}  \rho_{\natural\mathrm{f}}^{-\ell-j}  \in(\rho_{\mathrm{df}}\rho_{\natural\mathrm{f}}^2 )^{-M-j} L^\infty\subseteq \langle \zeta \rangle^{M+j} L^\infty,
			\end{equation}   
			using the left inequality in \cref{eq:misc_j12}.
			\item If $-M-j \leq 0$, we instead note $\rho_{\mathrm{df}}^{-m}  \rho_{\natural\mathrm{f}}^{-\ell-j}  \in(\rho_{\mathrm{df}}\rho_{\natural\mathrm{f}} )^{-M-j} L^\infty$. (Here is where we use $\ell\leq 0$.) Then, using the right inequality in \cref{eq:misc_j12} instead gives 
			\begin{equation}
				(\rho_{\mathrm{df}}\rho_{\natural\mathrm{f}} )^{-M-j} L^\infty \subseteq \langle \zeta \rangle^{M+j} L^\infty, 
			\end{equation}
			as before.\footnote{This case actually does not matter for the proof of \Cref{prop:Moyal_our}, since in that proof we can restrict attention to a single $K$ but take $m,\ell$ arbitrarily negative relative to that.}
		\end{itemize}
		So, no matter what, $\rho_{\mathrm{df}}^{-m}  \rho_{\natural\mathrm{f}}^{-\ell-j}\subseteq \langle \zeta \rangle^{M+j} L^\infty$.
		
		So, \cref{eq:misc_655} implies 
		\begin{equation}
			\partial_h^j \partial_z^\alpha \partial_\tau^k \partial_\xi^\beta a \in \langle \zeta \rangle^{M+j-k-|\beta|/2} \langle z \rangle^{s-|\alpha|} L^\infty = \langle \zeta \rangle^{M+j-k-|\beta|/2} \langle z \rangle^{s-|\alpha|} L^\infty.
		\end{equation} 
		Since this holds for all $\alpha,k,\beta$, the conclusion is that $\partial^j_h a \in S_{1/2,0,0}^{M+j,s,0}$; so,  
		\begin{equation}
			a, \partial_h a,\dots,\partial_h^{K+1}a \in S_{1/2,0,0}^{M+1+K,s,0}
		\end{equation}
		for any $K\in \bbN$. This implies 
		$a \in C^K([0,1)_h ; S_{1/2,0,0}^{M+1+K,s,0} )$.
	\end{proof}
	\begin{lemma}
		\Cref{eq:misc_j12} holds.
		\label{lem:Moyal_helper2}
	\end{lemma}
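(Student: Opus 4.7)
The two inequalities are equivalent to $\rho_{\mathrm{df}}^{-1}\rho_{\natural\mathrm{f}}^{-1} \lesssim \langle \zeta \rangle \lesssim \rho_{\mathrm{df}}^{-1}\rho_{\natural\mathrm{f}}^{-2}$, and since all functions involved are smooth and positive in the interior of ${}^{\calc}\overline{T}^* \bbM$, it suffices to check that these ratios are uniformly bounded in each coordinate chart covering a neighborhood of the boundary. The plan is to do a direct case analysis in the charts from \S\ref{subsec:phase}.

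First, I would work on the natural chart away from $\mathrm{pf}$, where $\rho_{\natural\mathrm{f}} = h$, $\rho_{\mathrm{pf}} = 1$, and $\rho_{\mathrm{df}}=\langle \zeta_\natural \rangle^{-1}$. Using $\tau=\tau_\natural/h^2$ and $\xi=\xi_\natural/h$ one computes $\langle\zeta\rangle^2 = h^{-4}(h^4+\tau_\natural^2+h^2\xi_\natural^2)$, so the inequalities reduce to
\begin{equation*}
h^2(1+\tau_\natural^2+\xi_\natural^2) \lesssim h^4+\tau_\natural^2+h^2\xi_\natural^2 \lesssim 1+\tau_\natural^2+\xi_\natural^2.
\end{equation*}
The right-hand bound is immediate from $h \leq 1$. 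The left-hand bound is where the condition ``away from $\mathrm{pf}$'' is used: in the relevant region one has $|(\tau_\natural,\xi_\natural)|$ bounded below by a positive constant, and a short case analysis separating $|\tau_\natural|\gtrsim h|\xi_\natural|$ from the reverse yields the desired inequality (in the first case $\tau_\natural^2$ dominates and gives $\gtrsim h^2\tau_\natural^2$; in the second, $h^2\xi_\natural^2$ dominates and gives $\gtrsim h^2\xi_\natural^2$; together with $\tau_\natural^2+\xi_\natural^2$ bounded below these handle the $h^2$ term).

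Second, near $\mathrm{pf} \cap \natural\mathrm{f}$ I would use the parabolic coordinates from (\ref{it:parres_chart_1}), e.g.\ $(\rho_{\natural\mathrm{f}}=|\tau|^{-1/2},\ \hat\xi=\xi/|\tau|^{1/2},\ \rho_{\mathrm{pf}}=h|\tau|^{1/2})$, together with $\rho_{\mathrm{df}} = 1$. In this chart
\begin{equation*}
\langle\zeta\rangle^2 = 1+\rho_{\natural\mathrm{f}}^{-4}+\hat\xi^2\rho_{\natural\mathrm{f}}^{-2} \asymp \rho_{\natural\mathrm{f}}^{-4}
\end{equation*}
for $\hat\xi$ bounded, so $\langle\zeta\rangle^{-1}\asymp \rho_{\natural\mathrm{f}}^2=\rho_{\mathrm{df}}\rho_{\natural\mathrm{f}}^2$; the first inequality is sharp, while the second reduces to $\rho_{\natural\mathrm{f}}\lesssim 1$. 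The analogous chart (\ref{it:parres_chart_2}), where some $|\xi_k|$ is dominant, is handled by essentially the same computation. Finally, the remaining region (in which $h$ and $\zeta$ are both bounded and bounded away from $\infty$) requires nothing, since $\rho_{\mathrm{df}}$, $\rho_{\natural\mathrm{f}}$ and $\langle\zeta\rangle$ are there all comparable to positive constants.

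The main obstacle, modest as it is, lies in the natural chart away from $\mathrm{pf}$: one has to make precise the sense in which being ``away from $\mathrm{pf}$'' translates, in that chart, into a lower bound of the form $|(\tau_\natural,\xi_\natural)|\gtrsim 1$, and to track carefully the transitional regime in which $|\tau_\natural|$, $h|\xi_\natural|$, and $h^2$ are all of comparable size. Everything else reduces to a clean computation in explicit coordinates.
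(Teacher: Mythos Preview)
Your proposal is correct and follows essentially the same approach as the paper: a direct check in coordinate charts, using the natural boundary-defining-functions $\rho_{\natural\mathrm{f}}=h$, $\rho_{\mathrm{df}}=\langle\zeta_\natural\rangle^{-1}$ away from $\mathrm{pf}$ and the parabolic ones near $\mathrm{pf}$. The only differences are organizational: the paper treats the two inequalities separately and, near $\mathrm{pf}$, works with the single global choice $\rho_{\natural\mathrm{f}}=\aang{\zeta}^{-1}=(1+\tau^2+\xi^4)^{-1/4}$ rather than the projective charts \ref{it:parres_chart_1}, \ref{it:parres_chart_2}; this makes the algebra marginally shorter (no separate $|\xi_k|$-dominant case), but the content is the same.
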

	\begin{proof}
		 Indeed, away from $\mathrm{pf}$, we can take $\rho_{\mathrm{df}} = (1+ h^4 \tau^2+h^2 \xi^2 )^{-1/2}$ and $\rho_{\natural\mathrm{f}} = h$. Then, 
		\begin{equation}
			\rho_{\mathrm{df}} \rho_{\natural\mathrm{f}}^2 = \frac{h^2}{(1 + h^4 \tau^2+h^2 \xi^2)^{1/2}} = \frac{1}{(h^{-4}+\tau^2+h^{-2} \xi^2)^{1/2}} \leq \frac{1}{\langle \zeta \rangle} 
		\end{equation}
		for $h\leq 1$. 
		On the other hand, near $\mathrm{pf}$, we can take $\rho_{\mathrm{df}}=1$ and $\rho_{\natural\mathrm{f}} = (1+ \tau^2 + \xi^4)^{-1/4}$. Since $1+\xi^4 \geq \xi^2$, we have $1+\xi^4 \geq 2^{-1}(1+\xi^2)$ and therefore 
		\begin{equation} 
			\rho_{\natural\mathrm{f}}^2 \leq \frac{1}{( 2^{-1}(1+\xi^2)+\tau^2 )^{1/2}} \leq  \frac{2^{1/2}}{ \langle \zeta \rangle}.
		\end{equation} 
		So, $\rho_{\mathrm{df}}\rho_{\natural\mathrm{f}}^2\lesssim \langle \zeta \rangle^{-1}$ everywhere. 
		
		To prove the lower bound, $ \langle \zeta \rangle^{-1}\lesssim \rho_{\mathrm{df}} \rho_{\natural\mathrm{f}}$: away from pf, say $h^2\tau^2+\xi^2>1/h^2$, this follows from 
		\begin{align}
			\begin{split} 
			\rho_{\mathrm{df}} \rho_{\natural\mathrm{f}}\approx \frac{h}{(1 + h^4 \tau^2+h^2 \xi^2)^{1/2}} &=\frac{1}{ (h^{-2} + h^2\tau^2 +  \xi^2)^{1/2}} \\ 
			&\geq \frac{1}{\sqrt{2} (1+  h^2\tau^2+  \xi^2 )^{1/2}} \geq \frac{1}{\sqrt{2} \langle \zeta \rangle}.
			\end{split}
		\end{align}
		Away from df, we can instead take $\rho_{\mathrm{df}}=1$ and $\rho_{\natural\mathrm{f}} =(1+\tau^2+\xi^4)^{-1/4}$, so 
		\begin{equation}
			\rho_{\mathrm{df}}\rho_{\natural \mathrm{f}} = \frac{1}{(1+\tau^2+\xi^4)^{1/4}} \geq \frac{1}{(2(1+\tau^4)+\xi^4 )^{1/4}} \geq  \frac{1}{2^{1/4}(1+\tau^4+\xi^4)^{1/4}} \gtrsim \frac{1}{\langle \zeta \rangle}.
		\end{equation}
		So, $\rho_{\mathrm{df}}\rho_{\natural\mathrm{f}}\gtrsim \langle \zeta \rangle^{-1}$ everywhere.
	\end{proof}
	
	Finally:
	\begin{proposition}
		If $a\in S_{\calc}^{m,\mathsf{s},\ell,q}$ and $b\in S_{\calc}^{m',\mathsf{s}',\ell',q'}$, then $a\star b \in S_{\calc}^{m+m',\mathsf{s}+\mathsf{s}',\ell+\ell',q+q'}$, and \cref{eq:moyal_explicit} holds in the sense that, for each $N\in \bbN$, 
		\begin{equation}
			a\star b - \sum_{|\alpha| + |\beta| \leq N } \frac{i^{|\alpha|} (-i)^{|\beta|} }{ \alpha! \beta!} D_\zeta^\alpha  (a(z,\zeta) D_{z}^{\alpha+\beta} D_\zeta^\beta b(z,\zeta)   )  \in S_{\calc}^{m+m'-N-1,\mathsf{s}+\mathsf{s}'-N-1+\varepsilon, \ell+\ell'-N-1,q+q'}
			\label{eq:moyal_our_explicit}
		\end{equation}
		for every $\varepsilon>0$. If $\mathsf{s},\mathsf{s}'$ are constant, then we can take $\varepsilon=0$. 
		\label{prop:Moyal_our}
	\end{proposition}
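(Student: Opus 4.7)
The plan is to combine three ingredients: a direct term-by-term analysis of the summands in \cref{eq:moyal_our_explicit} using \Cref{lem:reduction_vector_fields}, a Borel-type asymptotic summation within the $\calc$-symbol class, and the broader-class reduction recorded in \cref{eq:moyal_explicit_error} (applicable via the embedding supplied by \Cref{prop:Moyal_helper}). Let me abbreviate the $(\alpha,\beta)$-th summand in \cref{eq:moyal_our_explicit} as $t_{\alpha,\beta}$ and write $t_k = \sum_{|\alpha|+|\beta|=k} t_{\alpha,\beta}$. First, I would verify that $t_k \in S_{\calc}^{m+m'-k,\mathsf{s}+\mathsf{s}'-k+\delta,\ell+\ell'-k,q+q'}$ for every $\delta>0$ (with $\delta=0$ if $\mathsf{s},\mathsf{s}'$ are constant). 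Expanding $D_\zeta^\alpha$ by Leibniz, each summand becomes a product $(D_\zeta^{\gamma_1} a)(D_z^{\alpha+\beta} D_\zeta^{\gamma_2} b)$ with $|\gamma_1|+|\gamma_2|=|\alpha|+|\beta|$. \Cref{lem:reduction_vector_fields} expresses each $D_\tau$, $D_{\xi_j}$, $D_{z_j}$ as a boundary-defining factor ($\rho_{\mathrm{df}}\rho_{\natural\mathrm{f}}^2$, $\rho_{\mathrm{df}}\rho_{\natural\mathrm{f}}$, $\rho_{\mathrm{bf}}$ respectively) times an element of $\calV_{\mathrm{b}}({}^{\calc}\overline{T}^*\bbM)$, and b-vector fields preserve the symbol class up to a $+\delta$ loss at $\mathrm{bf}$ in the variable-order setting. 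Multiplying out gives the claimed orders at $\mathrm{df}$, $\mathrm{bf}$, $\natural\mathrm{f}$; and, since no derivative in $h$ appears, the $\mathrm{pf}$ order is unchanged.

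Second, I would apply a standard Borel summation argument to produce $c \in S_{\calc}^{m+m',\mathsf{s}+\mathsf{s}',\ell+\ell',q+q'}$ that formally sums the $t_k$, i.e.\ satisfies, for every $N$ and every $\delta>0$,
\begin{equation*}
c - \sum_{k \leq N} t_k \in S_{\calc}^{m+m'-N-1,\mathsf{s}+\mathsf{s}'-N-1+\delta,\ell+\ell'-N-1,q+q'}.
\end{equation*}
The construction is $c = \sum_k \chi_k t_k$, where the cutoffs $\chi_k \in C^\infty({}^{\calc}\overline{T}^*\bbM)$ depend only on $(\rho_{\mathrm{df}}^{-1},\rho_{\mathrm{bf}}^{-1},\rho_{\natural\mathrm{f}}^{-1})$ (so as to preserve classical behavior at $\mathrm{pf}$), are localized to shrinking neighborhoods of $\mathrm{df} \cup \mathrm{bf} \cup \natural\mathrm{f}$, and are chosen with scale parameters going to infinity fast enough that each seminorm of $\chi_k t_k$ in $S_{\calc}^{m+m'-k_0,\mathsf{s}+\mathsf{s}'-k_0+\delta,\ell+\ell'-k_0,q+q'}$ is summable. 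The key structural reason this works is that the orders at $\mathrm{df}, \mathrm{bf},\natural\mathrm{f}$ all improve together as $k$ grows, while the $\mathrm{pf}$ order is held fixed; this is exactly the setup in which standard Borel summation (as is used in the $\calczero$ and $\mathrm{par,I,res}$ calculi) applies.

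Third, I would show that the remainder $r := a \star b - c$ lies in $S_{\calc}^{-\infty,-\infty,-\infty,q+q'}$, so that $a \star b = c + r \in S_{\calc}^{m+m',\mathsf{s}+\mathsf{s}',\ell+\ell',q+q'}$ and \cref{eq:moyal_our_explicit} is inherited from $c$. To this end, \Cref{prop:Moyal_helper} allows us to regard $a,b$ as one-parameter families in an appropriate $h^{-q} C^K([0,1)_h; S_{1/2,0,0}^{\,\cdot\,,\mathsf{s}_0,0})$ class, where $\mathsf{s}_0$ is any constant upper bound for $\mathsf{s}$ (similarly for $b$). The broader-class expansion \cref{eq:moyal_explicit_error} then bounds $a \star b - \sum_{k \leq N} t_k$ in $h^{-q-q'} C^\infty([0,1)_h; S_{1/2,0,0}^{\,\cdot\,,\,\cdot\,,0})$ with orders as negative as we wish, as $N$ grows. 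Combining with the step-two bound on $c - \sum_{k \leq N} t_k$ gives that $r$ is Schwartz in $(z,\zeta)$ uniformly in $h$, smoothly depending on $h \in [0,1)$ up to a factor $h^{-q-q'}$. This is exactly the defining property of $S_{\calc}^{-\infty,-\infty,-\infty,q+q'}$: the smoothness at $\mathrm{pf}$ is automatic from $C^\infty$-in-$h$ behaviour (using $\rho_{\mathrm{pf}} \approx h$ as a local defining function away from $\natural\mathrm{f}$), while rapid decay in $(z,\zeta)$ supplies arbitrary decay at the other three faces.

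I expect the asymptotic summation in step two to be the most technically delicate point, specifically the need to choose the cutoffs $\chi_k$ so that they respect simultaneously the smoothness at $\mathrm{pf}$, the conormal structure (accommodating the $\delta>0$ loss) at $\mathrm{bf}$, and uniformity in $h$. However, since the $\mathrm{pf}$ order is not required to improve, no cutoff in $\rho_{\mathrm{pf}}$ is needed, which sidesteps the main source of difficulty and reduces the construction to a three-parameter Borel summation of the sort already used in the underlying building-block calculi.
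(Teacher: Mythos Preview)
Your proposal is correct and follows essentially the same approach as the paper: analyze the individual terms via \Cref{lem:reduction_vector_fields}, asymptotically sum to produce $c$, then use \Cref{prop:Moyal_helper} to embed into the $S_{1/2,0,0}$ classes and compare with the broader-class expansion \cref{eq:moyal_explicit_error} to conclude that $a\star b - c$ is residual. The only minor point you elide is that, when ``combining'' in step three, the $\calc$-bound on $c - \sum_{k\leq N} t_k$ must itself be passed through \Cref{prop:Moyal_helper} to land in the same $h^{-q-q'} C^K([0,1)_h; S_{1/2,0,0}^{m_0,s_0,0})$ space before subtracting; the paper makes this step explicit.
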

	\begin{proof}
		Let $\varepsilon>0$.
		Via an asymptotic summation argument, we can find $c\in S_{\calc}^{m+m',\mathsf{s}+\mathsf{s}',\ell+\ell',q+q'}$ such that 
		\begin{equation}
			c \sim   \sum_{\alpha,\beta\in \bbN^D} \frac{i^{|\alpha|} (-i)^{|\beta|} }{ \alpha! \beta!} D_\zeta^\alpha  (a(z,\zeta) D_{z}^{\alpha+\beta} D_\zeta^\beta b(z,\zeta)), 
			\label{eq:y4vv}
		\end{equation}
		meaning that, for each $N\in \bbN$, 
		\begin{equation}
			c- \sum_{|\alpha| + |\beta| \leq N } \frac{i^{|\alpha|} (-i)^{|\beta|} }{ \alpha! \beta!} D_\zeta^\alpha  (a(z,\zeta) D_{z}^{\alpha+\beta} D_\zeta^\beta b(z,\zeta)   )  \in S_{\calc}^{m+m'-N-1,\mathsf{s}+\mathsf{s}'-N-1+\varepsilon, \ell+\ell'-N-1,q+q'}. \label{eq:moyal_our_explicit2}
		\end{equation}
		The key point here is that \Cref{lem:reduction_vector_fields} implies that each term in \cref{eq:y4vv} is suppressed by one order at each of the faces $\mathrm{df},\mathrm{bf},\natural\mathrm{f}$ compared to the previous terms. This is what allows the asymptotic summation.

		Since
		\begin{equation}
			S_{\calc}^{m+m'-N-1,\mathsf{s}+\mathsf{s}'-N-1+\varepsilon, \ell+\ell'-N-1,q+q'} \subseteq 	S_{\calc}^{m+m'-N-1,\sup_{h\leq 1/2} (\mathsf{s}+\mathsf{s}') -N-1+2\varepsilon, \ell+\ell'-N-1,q+q'} 
			\label{eq:o1124}
		\end{equation}
		in $h\in [0,1/2]$, 
		\Cref{prop:Moyal_helper} allows us to deduce that, for any $K\in \bbN$, $m_0,s_0\in \bbR$,
		\begin{equation}
			c- \sum_{|\alpha| + |\beta| \leq N } \frac{i^{|\alpha|} (-i)^{|\beta|} }{ \alpha! \beta!} D_\zeta^\alpha  (a(z,\zeta) D_{z}^{\alpha+\beta} D_\zeta^\beta b(z,\zeta)   )  \in h^{-q-q'} C^K( [0,1)_h; S_{1/2,0,0}^{m_0,s_0,0})
		\end{equation}
		if $N$ is sufficiently large relative to $K,m_0,s_0$.

		On the other hand, \Cref{prop:Moyal_helper} (together with an inclusion like \cref{eq:o1124} in the case where $\mathsf{s},\mathsf{s}'$ are variable) tells us that 
		\begin{equation} 
			a \in  h^{-q}C^K( [0,1)_h; S_{1/2,0,0}^{M,S,0}), \quad 
			b \in  h^{-q'}C^K( [0,1)_h; S_{1/2,0,0}^{M,S,0})
		\end{equation} 
		for some possibly very large $M=M(K),S\in \bbR$. \Cref{eq:moyal_explicit_error} therefore tells us that, if $N$ is sufficiently large, then
		\begin{equation}
			a\star b- \sum_{|\alpha| + |\beta| \leq N } \frac{i^{|\alpha|} (-i)^{|\beta|} }{ \alpha! \beta!} D_\zeta^\alpha  (a(z,\zeta) D_{z}^{\alpha+\beta} D_\zeta^\beta b(z,\zeta)   )  \in h^{-q-q'}C^K ([0,1)_h; S_{1/2,0,0}^{m_0,s_0,0}), 
		\end{equation}
		as well,
		so that $c - a\star b \in h^{-q-q'} C^\infty([0,1)_h; S_{1/2,0,0}^{m_0,s_0,0})$. Since $m_0,s_0\in \bbR$ were arbitrary, we conclude that 
		\begin{equation}
			c - a\star b \in \bigcap_{m_0,s_0\in \bbR} h^{-q-q'} C^\infty([0,1)_h; S_{1/2,0,0}^{m_0,s_0,0}) = h^{-q-q'} C^\infty([0,1)_h;  \calS(T^* \bbR^{1,d}) ).
		\end{equation}
		Evidently, $C^\infty([0,1)_h; \calS(T^* \bbR^{1,d}) ) \subseteq S_{\calc}^{-\infty,-\infty,-\infty,0}$.  
		So, $a \star b - c \in  S_{\calc}^{-\infty,-\infty,-\infty,q+q'}$, which implies 
		\begin{equation} 
			a\star b \in S_{\calc}^{m+m',\mathsf{s}+\mathsf{s}',\ell+\ell',q+q'},
		\end{equation} 
		as claimed.
		
		\Cref{eq:moyal_our_explicit2} says that \cref{eq:moyal_our_explicit} holds with $c$ in place of $a\star b$.
		Since $c$ differs from $a\star b$ by a residual symbol, \cref{eq:moyal_our_explicit} holds as well. 
		
		If $\mathsf{s},\mathsf{s}'$ are constant, then we can choose $c$ such that \cref{eq:moyal_our_explicit2} holds with $\varepsilon=0$, so the same applies to  \cref{eq:moyal_our_explicit}.
	\end{proof}

\printbibliography

\end{document}